\documentclass[a4paper,12pt]{amsart}

\usepackage{tikz-cd}

\usepackage[latin1]{inputenc} 
\usepackage{graphicx}
\usepackage[matrix,arrow,tips,curve]{xy}
\usepackage[english]{babel}
\usepackage{amsmath}
\usepackage{amssymb}

\usepackage{mathrsfs}

\usepackage{times}

\usepackage{enumerate,bm}

\newtheorem{thm}{Theorem}[section]
\newtheorem{lemma}[thm]{Lemma}
\newtheorem{thmdef}[thm]{Theorem - Definition}
\newtheorem{corollary}[thm]{Corollary}
\newtheorem{proposition}[thm]{Proposition}

\newtheorem{question}[thm]{Question}

\newtheorem*{thm*}{Theorem}

\theoremstyle{definition}
\newtheorem{definition}[thm]{Definition}

\newtheorem{example}[thm]{Example}
\newtheorem{remark}[thm]{Remark}
\newtheorem{remdefi}[thm]{Definition - Remark}
\newtheorem{prg}[thm]{}

\numberwithin{figure}{section}
\numberwithin{table}{section}

\newcommand{\ph}{\varphi}
\newcommand{\w}{\widetilde}
\newcommand{\ma}{\mathcal}
\newcommand{\la}{\longrightarrow}
\newcommand{\ol}{\mathcal{O}}
\newcommand{\wi}{\widehat}
\newcommand{\pr}{\mathbb{P}}
\newcommand{\Q}{\mathbb{Q}}
\newcommand{\C}{\mathbb{C}}
\newcommand{\R}{\mathbb{R}}
\newcommand{\Z}{\mathbb{Z}}
\newcommand{\N}{\mathcal{N}_1}
\newcommand{\Nu}{\mathcal{N}^1}

\newcommand{\dom}{\operatorname{dom}}

\newcommand{\Spec}{\operatorname{Spec}}
\newcommand{\Sing}{\operatorname{Sing}}
\newcommand{\Pic}{\operatorname{Pic}}

\newcommand{\im}{\operatorname{Im}}
\newcommand{\NE}{\operatorname{NE}}
\newcommand{\Exc}{\operatorname{Exc}}
\newcommand{\Bs}{\operatorname{Bs}}
\newcommand{\Supp}{\operatorname{Supp}}
\newcommand{\Lo}{\operatorname{Locus}}
\newcommand{\codim}{\operatorname{codim}}
\newcommand{\Eff}{\operatorname{Eff}}
\newcommand{\Nef}{\operatorname{Nef}}
\newcommand{\Chow}{\operatorname{Chow}}
\newcommand{\Mov}{\operatorname{Mov}}
\newcommand{\MCD}{\operatorname{MCD}}
\newcommand{\mov}{\operatorname{mov}}

\newcommand{\Bl}{\operatorname{Bl}}

\newcommand{\sm}{\text{\it sm}}
\newcommand{\pt}{\text{\it pt}}
\newcommand{\pts}{\text{\it pts}}
\newcommand{\reg}{\text{\it reg}}
\newcommand{\s}{\scriptscriptstyle}

\usepackage{etoolbox}
\patchcmd{\section}{\normalfont}{\normalfont\large}{}{}
\patchcmd{\subsection}{\bfseries}{\scshape\centering}{}{}
\patchcmd{\subsection}{-.5em}{.5em}{}{}

\makeatletter
\@namedef{subjclassname@2020}{\textup{2020} Mathematics Subject Classification}
\makeatother

\title[Fano $4$-folds with $b_2\geq 7$]{Towards the classification of Fano $4$-folds with $b_2\geq 7$}
\author{C.~Casagrande}
\address{Universit\`a di Torino,
Dipartimento di Matematica,
via Carlo Alberto 10,
10123 Torino - Italy}
\email{cinzia.casagrande@unito.it}

\setlength{\textwidth}{1.15\textwidth}
\calclayout 

\setcounter{tocdepth}{1}

\begin{document}
\begin{abstract}
  We study (smooth, complex) Fano $4$-folds $X$ with Picard number $\rho_X\geq 7$. We show that if $\rho_X>9$, then $X$ is a product of del Pezzo surfaces (Th.~\ref{sharpbound}), thus improving results in \cite{32,3folds}; the statement is now optimal.  In the range $\rho_X\in\{7,8,9\}$ we show that if $X$ is not a product of surfaces, and has no small elementary contraction, then it is the blow-up of a cubic $4$-fold along a special configuration of planes
  (Th.~\ref{finalintro}). When instead $\rho_X\geq 7$ and $X$ has a small elementary contraction, we study $X$ depending on its fixed prime divisors, giving explicit results on the geometry of $X$ in the framework of birational geometry.
  In particular for the boundary case $\rho_X=9$ we show that either $X$ is a product of surfaces, or $X$ belongs to two explicit families, or there is a sequence of flips $X\dasharrow X'$ where $X'$ is a smooth projective $4$-fold with an elementary contraction onto a $3$-fold (Th.~\ref{explicit}).

  In the paper we also give several results on rational contractions of fiber type of Fano $4$-folds, and more generally of Mori dream spaces; in particular we use some properties of del Pezzo surfaces over non-closed fields, applied to generic fibers.
\end{abstract}
\maketitle
\section{Introduction}
\noindent
Since the classification of  Fano $3$-folds in the 80's, there has been a lot of interest in the study of higher dimensional Fano varieties, starting from dimension $4$. With the introduction of the Lefschetz defect (see below), the author has developped a program to study (smooth, complex) Fano $4$-folds $X$ with large second Betti number $b_2(X)$
via birational geometry, in the framework of the Minimal Model Program.  Let us recall that, since $X$ is Fano, we have $b_2(X)=\rho_X$ where $\rho_X$ is the Picard number of $X$.

In this paper we focus on the explicit study and classification of  Fano $4$-folds $X$ with Picard number $\rho_X\geq 7$. 
Our first result is the following.
\begin{thm}[Cor.~\ref{explicitfinal}]\label{sharpbound}
Let $X$ be a smooth Fano $4$-fold with  $\rho_X>9$. Then $X\cong S_1\times S_2$, with $S_i$ del Pezzo surfaces.
\end{thm}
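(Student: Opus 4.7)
The plan is to derive Theorem~\ref{sharpbound} as a direct corollary of the two main classification results previewed in the abstract (Th.~\ref{finalintro} and Th.~\ref{explicit}), via a case split on whether $X$ admits a small elementary contraction. I would argue by contradiction: suppose $X$ is a smooth Fano $4$-fold with $\rho_X>9$ that is not a product of del Pezzo surfaces. Since $\rho_X\geq 7$, the classification machinery of the paper applies to $X$.

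If $X$ has no small elementary contraction, then Th.~\ref{finalintro} forces $X$ to be the blow-up of a cubic $4$-fold $Y\subset\mathbb{P}^5$ along a special configuration of $k$ planes, so $\rho_X=1+k$. The ``special'' configurations should be constrained by linear-algebra and positivity conditions coming from the Fano property of the blow-up (negativity of the exceptional divisors combined with nefness of the anticanonical class of $X$), and the resulting explicit classification should yield $k\leq 8$, contradicting $\rho_X>9$.

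If instead $X$ has a small elementary contraction, then Th.~\ref{explicit} describes the non-product possibilities at the boundary value $\rho_X=9$: two explicit families, or a flipping scenario $X\dashrightarrow X'$ in which $X'$ admits an elementary contraction onto a $3$-fold. Since flips preserve the Picard number, extending the argument to $\rho_X>9$ amounts to bounding $\rho_{X'}$. Here I would use that the contraction $X'\to Z$ has generic fiber a Fano curve $\mathbb{P}^1$, and apply the paper's techniques on rational contractions of fiber type together with its results on del Pezzo generic fibers over non-closed fields, to force $\rho_{X'}\leq 9$ and a contradiction. Once $X\cong S_1\times S_2$ is established, the Fano property of $X$ forces each $S_i$ to be a del Pezzo surface.

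The main obstacle is the small-contraction case: controlling the Picard number through the sequence of flips together with the fiber-type contraction onto a $3$-fold, and ensuring that no new phenomena arise for $\rho_X>9$ beyond those already classified at $\rho_X=9$. This is where the paper's novel results on rational contractions of fiber type and on generic fibers of Mori dream spaces over non-closed fields should play the decisive role, converting the ``no product'' hypothesis into a sharp numerical bound on $\rho_X$.
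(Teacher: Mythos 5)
Your top-level case split (no small elementary contraction versus some small elementary contraction) is exactly the one used in the paper's proof of Cor.~\ref{explicitfinal}, and the first case is essentially fine: Th.~\ref{final} already carries the bound $\rho_X\leq 9$ in its statement, obtained there by computing $h^0(X,-K_X)=\tfrac12(\rho_X^2-23\rho_X+132)$ along the chain of blow-ups and invoking $h^0(X,-K_X)\geq 2$ (Th.~\ref{h0}). Be aware, though, that the mechanism you propose for bounding the number of planes --- positivity constraints on the exceptional divisors --- only yields $\rho_X\leq 10$: it amounts to saying that the transform of each plane, after blowing up the others, is a del Pezzo surface of Picard number $\rho_X-1\leq 9$ (this is \eqref{conti} in the paper). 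The sharp bound genuinely needs the anticanonical section count.

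The real gap is in the small-contraction case, where your argument is circular. Th.~\ref{explicit} is stated only for $\rho_X=9$ and is itself one of the two consequences of Cor.~\ref{explicitfinal}; it is proved simultaneously with Th.~\ref{sharpbound}, not before it, so you cannot ``extend'' its conclusion to $\rho_X>9$ without supplying precisely the proof you are being asked for. What the paper actually does is: a small elementary contraction forces the existence of a fixed prime divisor of type $(3,0)^{\sm}$, $(3,1)^{\sm}$, or $(3,0)^Q$ (Rem.~\ref{united}, which rests on Cor.~\ref{summary}); Th.~\ref{30} handles the first type, yielding either an elementary rational contraction onto a $3$-fold or the exceptional case $\rho_X=7$, and Th.~\ref{31} handles the other two types, yielding a rational contraction onto a $3$-fold whenever $\rho_X\geq 9$. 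In every branch the decisive numerical bound $\rho_X\leq 9$ then comes from Th.~\ref{CS}, the classification of Fano $4$-folds with a rational contraction onto a $3$-fold imported from the companion paper --- not from the del-Pezzo-over-non-closed-fields analysis, which concerns contractions onto surfaces and enters only indirectly (through Th.~\ref{scala}, Cor.~\ref{summary}, Th.~\ref{zucca}, and Th.~\ref{sharp32}) in producing the $3$-fold contraction in the first place. Without identifying the classification by fixed prime divisor type as the organizing principle and Th.~\ref{CS} as the source of the bound, your sketch does not close.
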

Let us note that the statement above is proven in \cite{32} for $\rho>12$, and in \cite[Cor.~1.4]{3folds} for $\rho=12$, therefore the theorem is new for the cases $\rho=10$ and $\rho=11$. It is now optimal, as we know one family of Fano $4$-folds with $\rho=9$ that are not products of surfaces.

We recall that, so far, there are only 5 known families of Fano $4$-folds with $\rho\in\{7,8,9\}$ that are not products. Three such families, with $\rho=7,8,9$, are given by the Fano models of the blow-up of $\pr^4$ at $6,7,8$ general points respectively (see \S\ref{fanomodel}), and two additional families with $\rho=7$ are constructed in \cite[Prop.~1.9]{3folds} (see \S\ref{newex}). Some candidates are also given in [\emph{ibid.}, Questions 7.6 and 7.25];
in this range of Picard numbers  we expect few families.

Besides Th.~\ref{sharpbound}, in this paper we perform a detailed study of Fano $4$-folds $X\not\cong S_1\times S_2$ with $\rho_X\in\{7,8,9\}$; we obtain many explicit results on the geometry of $X$, including some classification results, and new candidates (see Questions \ref{questionY}, \ref{questionquadric}, \ref{questioncubic}).
In particular we give a complete characterization of the case where $X$ does not have small elementary contractions, as follows.
\begin{thm}[Th.~\ref{final}]\label{finalintro}
         Let $X$ be a smooth Fano $4$-fold with $\rho_X\geq 7$, not isomorphic to a product of surfaces, and without small elementary contractions.
             
             Then $\rho_X\leq 9$ and there are a cubic $4$-fold $Z\subset \pr^5$ with at most isolated ordinary double points, and $s=\rho_X-1$ distinct planes  $A_1,\dotsc,A_s\subset Z$ intersecting pairwise in a point, such that $X$ is obtained from $Z$ by blowing-up one plane $A_i$ and successively the transforms of the other ones.\footnote{The resulting $4$-fold does not depend on the order of the blow-ups, see Lemma \ref{blowuporder}.} 
     \end{thm}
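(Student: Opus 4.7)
The plan is to construct, starting from $X$, a chain of divisorial elementary contractions
\[
X = X_0 \xrightarrow{f_0} X_1 \xrightarrow{f_1} \cdots \xrightarrow{f_{s-1}} X_s = Z,
\]
in which each $f_i$ is the inverse of the blow-up of a plane, and then to identify the endpoint $Z$ as a cubic $4$-fold in $\pr^5$ with at most isolated ordinary double points, containing $s = \rho_X - 1$ planes pairwise meeting in a point. Since $X$ has no small elementary contractions, every extremal ray of $\NE(X)$ yields either a divisorial or a fiber-type contraction. I would begin by ruling out the case where all elementary contractions are of fiber type: together with $\rho_X \geq 7$ and the results of this paper on rational contractions of Fano $4$-folds of Mori dream space type, this case should force $X$ to be a product of surfaces, against hypothesis. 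Hence $X$ admits at least one divisorial elementary contraction.

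Next I would use the classification of divisorial extremal rays of smooth Fano $4$-folds (Andreatta--Wisniewski, Kawakita, and the author's earlier work) together with the analysis of fixed prime divisors carried out earlier in the paper to isolate a divisorial elementary contraction $f_0 \colon X \to X_1$ whose image $X_1$ is a smooth Fano $4$-fold and whose exceptional divisor is the smooth blow-up of a plane $\pr^2 \subset X_1$. All other divisorial types (contractions to a point, to a curve, or to surfaces not isomorphic to $\pr^2$) should be eliminated by combining the non-product assumption with the absence of small elementary contractions, both of which must be checked to propagate to $X_1$ so that the argument can be iterated. The non-dependence on the order of the blow-ups, needed to extract a coherent final picture, is granted by Lemma~\ref{blowuporder}.

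After at most $\rho_X - 1$ steps the procedure ends with a Fano $4$-fold $Z$ of Picard number $1$. To identify $Z$ as a (possibly singular) cubic in $\pr^5$, I would combine a classification of Fano $4$-folds of Picard number $1$ containing the required planar configuration with the numerical data inherited along the chain (anticanonical degrees, Fano index, Hodge numbers accumulated by the exceptional divisors). The ordinary double points of $Z$ should correspond precisely to those steps in the chain where the center of $f_i$ meets previously contracted planes, and a local computation at each such blow-up shows that the singularities produced can only be ODPs. The pairwise incidence of the planes $A_i \subset Z$ at a single point is then verified by tracking, at each stage, the proper transforms of the earlier centers inside the new one, using smoothness of $X$. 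The bound $\rho_X \leq 9$ finally follows from a combinatorial bound on how many planes in a cubic $4$-fold with isolated ODPs can pairwise meet in a point.

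The two main obstacles I anticipate are the following. First, showing that at every stage of the reduction the image is a Fano $4$-fold satisfying the same hypotheses (so that the induction on $\rho$ closes), which amounts to excluding, via the geometry of fixed prime divisors of $X$, all divisorial contraction types other than the blow-up of a $\pr^2$; in particular, distinguishing contractions to $\pr^2$ from contractions to other surfaces requires delicate use of the Fano condition and the absence of small contractions. Second, identifying the terminal variety $Z$ as specifically a cubic $4$-fold and not, say, a smooth quadric, a del Pezzo $4$-fold, or a higher-index Fano: this is the step where the $s$ pairwise-intersecting planes, together with the list of admissible ODPs, must be used to pin down both the degree $3$ embedding in $\pr^5$ and the singularity type of $Z$.
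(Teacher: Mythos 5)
Your overall shape (contract divisors one at a time down to a Picard number one Fano $Z$, then identify $Z$ and the centers) matches the paper's, but three of your concrete steps would fail as stated. First, it is not true that each elementary divisorial contraction in the chain is the blow-up of a plane: the paper shows (Prop.~\ref{cono}, Prop.~\ref{rex}) that the center of each step is a smooth del Pezzo surface $S$ with $K_S=K_{Y|S}$, and for the intermediate contractions these centers are the planes $A_i$ blown up at the points where they meet the other $A_j$ — so they have Picard number up to $9$ and are certainly not $\pr^2$. Trying to ``eliminate contractions to surfaces not isomorphic to $\pr^2$'' would eliminate the contractions that actually occur. The planarity of the centers is a statement about $Z$ only, and it is a \emph{conclusion} of the degree computation, not an input to the reduction. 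Relatedly, the fact that $\N(A_i,Z)=\N(Z)$ (from Th.~\ref{sharp32}) forces $\rho_{A_i}\geq\rho_Z$ in general; the existence of even one center with $\rho_{A_1}=1$ for some contraction to a $\rho_Z=1$ target is a separate, nontrivial step (Lemma~\ref{dolonne}) that your plan silently assumes.

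Second, the identification of $Z$ as a cubic cannot be done by ``a classification of Fano $4$-folds of Picard number $1$ containing the required planar configuration'': no such classification is invoked or available. The paper's mechanism is that $\Pic(Z)\to\Pic(A_1)$ is an isomorphism when $A_1\cong\pr^2$, forcing $Z$ to be a del Pezzo $4$-fold of index $3$ and some degree $d$ with $h^0(Z,-K_Z)=15d+10$; the value $d=3$ is then extracted by building an auxiliary elementary contraction $Z'\to W$ onto a double cover of $\pr^2\times\pr^2$ branched in degree $(2,2)$, computing $h^0(W,-K_W)=45$, and comparing anticanonical sections along the chain. Third, the bound $\rho_X\leq 9$ does not come from a combinatorial bound on pairwise-meeting planes in a cubic with ODPs — cubic $4$-folds can contain large such configurations, and indeed the paper leaves open whether the relevant $X$ even exist (Question~\ref{questioncubic}). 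The bound comes from the formula $h^0(X,-K_X)=\tfrac{1}{2}(\rho^2-23\rho+132)$ together with the effective nonvanishing $h^0(X,-K_X)\geq 2$ (Th.~\ref{h0}). Your proposed route to the bound would not close.
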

     We do not know whether a Fano $4$-fold as in the statement above does exist, see \S\ref{excubics} and Question \ref{questioncubic}, so on one hand we get more candidate families of Fano $4$-fold as blow-ups of cubic $4$-folds, on the other hand it may very well be that this case does not happen. Let us note that Th.~\ref{finalintro} builds on the study in \cite{32}, where under the same assumptions, the bound $\rho_X\leq 12$ was proven. We refer the reader to \ref{overviewfinal} for an overview of the proof of Th.~\ref{finalintro}.
     
\medskip
     
Consider now the remaining case of a Fano $4$-fold $X\not\cong S_1\times S_2$,   with $\rho_X\geq 7$, and having small elementary contractions.
Here we rely on the classification of {\bf fixed prime divisors}\footnote{A  prime divisor $D\subset X$ is fixed if  $D=\Bs|mD|$ for all $m\in\Z_{>0}$.} $D\subset X$ from \cite{eff} (see p.~\pageref{fixedprime} and Section \ref{prelfixed}). Given such $D$,
 there
 are a sequence of flips $X\dasharrow X'$ with $X'$ smooth, and an elementary divisorial contraction $\sigma\colon X'\to Y$, such that
$D$ is the transform of $\Exc(\sigma)$, and $Y$ is Fano. Moreover we have only four possibilities for $\sigma$: it is 
 the blow-up of a smooth point, or of a smooth irreducible curve, or
of
an isolated, locally factorial, terminal singularity with
$\Exc(\sigma)$ a quadric, or finally of an irreducible surface.
We call $D$ {\bf of type $(3,0)^{\sm}$, $(3,1)^{\sm}$, $(3,0)^Q$, or $(3,2)$} accordingly.

When $X$ has a small elementary contraction, it must contain some fixed prime divisor $D$ of type $(3,0)^{\sm}$, $(3,1)^{\sm}$, or $(3,0)^Q$ (Rem.~\ref{united}). 
We study $X$ depending on the type of fixed prime divisors that it contains; in particular in the first case, when $D$ is of type $(3,0)^{\sm}$, we show the following.
\begin{thm}[Th.~\ref{30}]\label{30intro}
Let $X$ be a smooth Fano $4$-fold with $\rho_X\geq 7$, having a fixed prime divisor of type $(3,0)^{\sm}$. Then  one of the following holds:
  \begin{enumerate}[$(i)$]
  \item $\rho_X\leq 9$ and $X$ has an elementary rational contraction onto a $3$-fold;\footnote{A \emph{rational contraction} of $X$ is a rational map $f\colon X\dasharrow Y$ that factors as a sequence of flips $X\dasharrow X'$ followed by a contraction  $f'\colon X'\to Y$, namely a morphism such that $f'_*\ol_{X'}=\ol_Y$ with $Y$ projective,
 see \S\ref{notation}; $f$ is \emph{elementary} if $\rho_X-\rho_Y=1$.}
  \item $\rho_X=7$,
$h^0(X,-K_X)\leq 15$,
and there is a sequence of flips $X\dasharrow X'$ such that $X'=\Bl_{6\mskip1mu\pts}Y$, $Y$ a smooth Fano $4$-fold with $\rho_Y=1$.
    \end{enumerate}
\end{thm}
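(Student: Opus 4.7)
The starting point is the classification of fixed prime divisors in~\cite{eff}: the hypothesis that $D\subset X$ is of type $(3,0)^{\sm}$ yields a sequence of flips $X\dasharrow X_1$ followed by an elementary divisorial contraction $\sigma\colon X_1\to Y$ that is the blow-up of $Y$ at a smooth point, with $Y$ a smooth Fano $4$-fold of Picard number $\rho_Y=\rho_X-1\geq 6$. My plan is to iterate this reduction and then analyse the terminus via the main classification theorems of the paper.

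Iterating: if $Y$ itself carries a fixed prime divisor of type $(3,0)^{\sm}$, I apply the same construction to $Y$, producing a maximal chain
\[
X\dasharrow X_1\stackrel{\sigma_1}{\to}Y_1\dasharrow X_2\stackrel{\sigma_2}{\to}Y_2\dasharrow\cdots\stackrel{\sigma_k}{\to}Y_k,
\]
in which each $Y_i$ is smooth Fano with $\rho_{Y_i}=\rho_X-i$ and each $\sigma_i$ blows down a smooth point. At the terminus, either $Y_k$ has no fixed prime divisor of type $(3,0)^{\sm}$, or $\rho_{Y_k}$ has dropped to a value too small to iterate further.

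I would then split the analysis of $Y_k$ according to $\rho_{Y_k}$. If $\rho_{Y_k}\geq 10$, Th.~\ref{sharpbound} forces $Y_k\cong S_1\times S_2$ with $S_i$ del Pezzo surfaces, and the product structure supplies a fiber-type contraction of $Y_k$ onto a $3$-fold. If $7\leq\rho_{Y_k}\leq 9$, then by maximality $Y_k$ has no fixed prime divisor of type $(3,0)^{\sm}$, so Th.~\ref{finalintro} together with the companion analyses of the paper for fixed prime divisors of the remaining types $(3,1)^{\sm}$, $(3,0)^Q$, $(3,2)$ provides a fiber-type rational contraction. In either case I would lift this fibration of $Y_k$ through the chain of flips and point blow-ups to an elementary rational contraction of $X$ onto a $3$-fold, yielding case $(i)$ and in particular forcing $\rho_X\leq 9$. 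The only remaining possibility is $\rho_{Y_k}=1$, which forces $\rho_X=7$ and $k=6$; here I would verify that the six successive blow-up centers descend to six distinct points of $Y=Y_6$, so that $X_1=\Bl_{6\mskip1mu\pts}Y$, and deduce $h^0(X,-K_X)\leq 15$ from $-K_{X_1}=\sigma^*(-K_Y)-3\sum_{i=1}^6 E_i$ and the known bounds on $h^0(Y,-K_Y)$ for a Fano $4$-fold with $\rho_Y=1$, giving case $(ii)$.

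The principal obstacle is the lifting step: each blow-up of a point and each flip along the chain introduces extremal rays in the Mori and movable cones, so a naive pullback of a rational fibration on $Y_k$ to $X$ will typically not be \emph{elementary}. One must pick the contraction on $Y_k$ carefully and track the evolution of $\Mov(X_1),\ldots,\Mov(Y_k)$ across the chain, in order to arrive at a single extremal ray of $\Mov(X)$ whose rational contraction lands on a $3$-fold. A secondary technical point, relevant to case $(ii)$, is verifying that the six point blow-ups really descend to a simultaneous blow-up at six distinct points of $Y$: one has to check that no intermediate blow-up center accidentally meets the exceptional locus of a previous blow-up after the intervening flips.
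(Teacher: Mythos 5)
Your reduction scheme has the right flavor, but there are several genuine gaps, two of which are fatal as written. First, the terminus analysis is circular: Th.~\ref{sharpbound} (for $\rho_{Y_k}\geq 10$) and the ``companion analyses'' for divisors of type $(3,1)^{\sm}$ and $(3,0)^Q$ (Th.~\ref{31}) are proved \emph{after} and \emph{using} Th.~\ref{30}, so you cannot invoke them here. Second, your case split on $\rho_{Y_k}$ omits the range $2\leq\rho_{Y_k}\leq 6$ entirely, and nothing prevents the chain from landing there (e.g.\ $\rho_X=7$ and $k=5$ gives $\rho_{Y_k}=2$). This is not a boundary annoyance: in the paper's argument the terminal object is a Fano $4$-fold $Y$ with \emph{no} fixed prime divisors and an elementary contraction of fiber type $Y\to Z$, which forces $\rho_Y\leq 3$, and the entire difficulty of Prop.~\ref{all30} is ruling out $\rho_Y=2$ with two fiber-type contractions onto $\pr^1$ or $\pr^2$ (see \ref{dimZ2}--\ref{verylast}: index computations, sections of rationally connected fibrations, and $h^0$ estimates via the relation $n(-K_Y)=a_1\w{D}_1+a_2\w{D}_2$). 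Your proposal contains no mechanism for excluding this case. Moreover, almost all the tools of the paper (Th.~\ref{starting}, Cor.~\ref{summary}, Th.-Def.~\ref{fixed} itself) require $\rho\geq 7$, so once $\rho_{Y_i}$ drops below $7$ your iteration loses its footing.

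Two further points. Your claim that $\rho_{Y_k}=1$ forces $\rho_X=7$ is asserted, not proved; it comes from the quantitative bound $h^0(Y,-K_Y)\leq 105$ for $Y\not\cong\pr^4$ with $\rho_Y=1$, combined with $h^0(X,-K_X)=h^0(Y,-K_Y)-15r\geq 2$, which yields $r\leq 6$ (and $Y\cong\pr^4$ must be treated separately, landing in case $(i)$). Finally, when $X$ carries fixed prime divisors of types other than $(3,0)^{\sm}$, the paper does not iterate at all: Prop.~\ref{one30} finds adjacent fixed divisors $D$ of type $(3,0)^{\sm}$ and $B$ of another type, and in every case produces a fixed prime divisor $E$ of type $(3,2)$ with $D\cap E=\emptyset$, so that $\N(E,X)\subsetneq\N(X)$ and Th.~\ref{sharp32} gives an elementary rational contraction onto a $3$-fold directly on $X$ --- bypassing the lifting problem you correctly identify but do not solve.
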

Th.~\ref{30intro} refines the study in \cite{blowup}, where under the same assumptions, the bound $\rho_X\leq 12$ was proven. We refer the reader to \ref{overview30} for an overview of the proof of Th.~\ref{30intro}.

\medskip

The remaining possibility is that of a  Fano $4$-fold $X$ with $\rho_X\geq 7$, having a fixed prime divisor of type $(3,1)^{\sm}$ or $(3,0)^Q$, and no fixed prime divisor of type $(3,0)^{\sm}$; we study this case
by refining the results in \cite{small}. This allows to complete the proof of Th.~\ref{sharpbound}, and also
to give a partial result on the case of  Picard number $\rho=9$, as follows.
\begin{thm}[Cor.~\ref{explicitfinal}]\label{explicit}
  Let $X$ be a smooth Fano $4$-fold with $\rho_X= 9$.
Then one of the following holds:
\begin{enumerate}[$(i)$]
\item $X\cong S_1\times S_2$, with $S_i$ del Pezzo surfaces;
\item $X$ has an elementary rational contraction onto a $3$-fold;
\item  $X$ is the blow-up of $W$ along a normal surface $S$,
  where $W$ is the Fano model of $\Bl_{7\mskip1mu\pts}\pr^4$, and $S\subset W$ is the transform of a cubic scroll or
  a cone over a twisted cubic in
$\pr^4$, containing the blown-up points;
\item  $X$ is a blow-up of a cubic $4$-fold as in Th.~\ref{finalintro}.
     \end{enumerate}
   \end{thm}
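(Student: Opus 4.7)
The plan is to run a case analysis driven by the dichotomy between existence and non-existence of a small elementary contraction on $X$, and, in the latter case, by the types of fixed prime divisors that $X$ carries. The statement is the $\rho_X=9$ specialization of a more uniform classification, so I would first isolate the three regimes and then invoke Theorems \ref{finalintro} and \ref{30intro} where possible, leaving the hardest regime (no fixed prime divisor of type $(3,0)^{\sm}$, but a fixed prime divisor of type $(3,1)^{\sm}$ or $(3,0)^Q$) to be treated by a refinement of the results of \cite{small}.

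First, if $X$ has no small elementary contraction, then by Theorem \ref{finalintro} the Picard number satisfies $\rho_X\leq 9$ and $X$ is obtained from a cubic $4$-fold with isolated ordinary double points by blowing up a configuration of $s=\rho_X-1=8$ planes intersecting pairwise in a point; this is exactly case $(iv)$ of the statement. So we may assume that $X$ does have a small elementary contraction. By the structural remark (Rem.~\ref{united}) recalled just before Theorem \ref{30intro}, $X$ then contains a fixed prime divisor of one of the types $(3,0)^{\sm}$, $(3,1)^{\sm}$, or $(3,0)^Q$.

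If $X$ carries a fixed prime divisor of type $(3,0)^{\sm}$, I would simply apply Theorem \ref{30intro}: its alternative $(ii)$ forces $\rho_X=7$ and is ruled out by our hypothesis $\rho_X=9$, so we are in alternative $(i)$, i.e.\ $X$ has an elementary rational contraction onto a $3$-fold, which is case $(ii)$ of the statement. Note that $X\cong S_1\times S_2$ is not excluded a priori here, but if it occurs we are in case $(i)$; otherwise case $(ii)$ follows.

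The main obstacle — and the step I would treat last — is the remaining regime, in which $X$ has a small elementary contraction, no fixed prime divisor of type $(3,0)^{\sm}$, but does carry one of type $(3,1)^{\sm}$ or $(3,0)^Q$. This is the case studied in \cite{small}, where only a bound on $\rho_X$ was extracted; here I need to refine that analysis so as to either conclude the existence of an elementary rational contraction onto a $3$-fold (case $(ii)$), identify $X$ as a product of del Pezzo surfaces (case $(i)$), or recognize $X$ as a blow-up along a surface $S\subset W$ of the prescribed type inside the Fano model $W$ of $\Bl_{7\mskip1mu\pts}\pr^4$ (case $(iii)$). Concretely I would follow a small elementary contraction of $X$, run the MMP to reach a smooth model $X'$ with a good elementary divisorial contraction $\sigma\colon X'\to Y$ arising from the fixed divisor, analyze $Y$ as a Fano $4$-fold with $\rho_Y=8$ falling into a classification inherited from the preceding theorems, and then describe the inverse birational modification $X\dasharrow X'\to Y$. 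The delicate point is to show, using the general facts on rational contractions of fiber type of Fano $4$-folds (and the generic-fiber arguments over non-closed fields announced in the abstract), that whenever one is not in case $(i)$ or $(ii)$, the target of $\sigma$ must be the Fano model $W=\Bl_{7\mskip1mu\pts}\pr^4$ and the center of $\sigma$ the transform of a cubic scroll or a cone over a twisted cubic passing through the seven base points, thereby yielding case $(iii)$ and completing the proof (and in passing also Theorem \ref{sharpbound}).
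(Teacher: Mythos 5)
Your case decomposition is exactly the paper's: no small elementary contraction is handled by Th.~\ref{finalintro} (giving case $(iv)$), and the presence of a small elementary contraction forces, via Rem.~\ref{united}, a fixed prime divisor of type $(3,0)^{\sm}$, $(3,1)^{\sm}$, or $(3,0)^Q$; in the $(3,0)^{\sm}$ case you correctly discard alternative $(ii)$ of Th.~\ref{30intro} because it forces $\rho_X=7$. Up to that point the argument is complete and matches the paper.

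The gap is the remaining regime, which you explicitly defer: a fixed prime divisor of type $(3,1)^{\sm}$ or $(3,0)^Q$ and no divisor of type $(3,0)^{\sm}$. What is needed there is precisely the statement the paper proves as Th.~\ref{31}: \emph{if $\rho_X\geq 9$ and $X$ has a fixed prime divisor of type $(3,1)^{\sm}$ or $(3,0)^Q$, then $X$ has a rational contraction onto a $3$-fold (and $\rho_X=9$)}. This is the bulk of the work --- it occupies all of Section \ref{secsmall} and is a genuine refinement of \cite[Th.~8.1, Th.~9.1]{small} that only becomes possible because of the new inputs Cor.~\ref{summary}, Th.~\ref{zucca}, and Th.~\ref{sharp32}; your one-sentence program (``run the MMP, analyze $Y$ with $\rho_Y=8$, describe the inverse modification'') does not supply it. Moreover, once that existence statement is in hand, you should not try to derive case $(iii)$ by identifying the target of the divisorial contraction $\sigma$ associated to the $(3,1)^{\sm}$ or $(3,0)^Q$ divisor with the Fano model $W$ of $\Bl_{7\mskip1mu\pts}\pr^4$: the blow-up $X\to W$ in case $(iii)$ is a different contraction (its exceptional divisor is of type $(3,2)$), and the dichotomy between $(ii)$ and $(iii)$ is obtained for free from the already-available classification Th.~\ref{CS} of Fano $4$-folds with $\rho\geq 7$ admitting a rational contraction onto a $3$-fold (the K3 alternative there being excluded since it forces $\rho_X=7$). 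So the correct closing step is: Th.~\ref{31} gives a rational contraction onto a $3$-fold, and Th.~\ref{CS} then yields $(ii)$ or $(iii)$.
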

   We note that our only example of Fano $4$-fold $X\not\cong S_1\times S_2$ with $\rho_X=9$ has an  elementary rational contraction onto a $3$-fold (see \S\ref{fanomodel}), while we do not know whether cases $(iii)$ and $(iv)$ do happen (see \cite[\S 7.2]{3folds} for $(iii)$, and \S\ref{excubics} for $(iv)$).

   \medskip
   
 Our general approach is to study Fano $4$-folds via their contractions and rational contractions, and
 there are a few tools and results that we use systematically and
 are crucial in proving the previous theorems; let us introduce them.

\medskip

\noindent{\bf Rational contractions of fiber type.} We recall that a \emph{rational contraction} is a rational map $f\colon X\dasharrow Y$ that factors as a sequence of flips $X\dasharrow X'$ followed by a \emph{contraction}  $f'\colon X'\to Y$, namely a morphism such that $f'_*\ol_{X'}=\ol_Y$, with $Y$ projective; $f$ is \emph{of fiber type} when $\dim Y<\dim X$.
We study in detail rational contractions of fiber type $f\colon X\dasharrow Y$ of Fano $4$-folds (more generally of Mori dream spaces),
see also \cite{fanos, fibrations}.

The case where $\dim Y=3$ is treated in \cite{3folds}, where a sharp bound on $\rho_X$ is given, together with a partial classification for the range $\rho_X\geq 7$, as follows.
\begin{thm}[\cite{3folds}, Theorems 1.3, 1.10, and 1.11]\label{CS}
    Let $X$ be a smooth Fano $4$-fold with $\rho_X\geq 7$, having a rational contraction onto a $3$-fold,
and    not isomorphic to a product of surfaces.
    Then $\rho_X\leq 9$, and one of the following holds:
    \begin{enumerate}[$(i)$]
\item $X$ has an elementary rational contraction onto a $3$-fold;
\item  $X$ is the blow-up of $W$ along a normal surface $S$,
where $W$ is the Fano model of $\Bl_{\rho_{\s X}-2\mskip2mu\pts}\pr^4$, and $S\subset W$ is the transform of a surface $A\subset\pr^4$ containing the blown-up points. The surface $A$ is either
 a cubic scroll, or
  a cone over a twisted cubic, or a sextic K3 surface with $A_1$ or $A_2$ singularities at the blown-up points; if $A$ is a K3 surface then $\rho_X=7$.
\end{enumerate}
 \end{thm}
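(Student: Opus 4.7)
The plan is a dichotomy: either case $(i)$ holds, or one extracts from $X$ the blow-up structure described in $(ii)$; the bound $\rho_X\leq 9$ falls out in parallel. The first move is to use that $X$, being Fano, is a Mori dream space, and to factor the given rational contraction $f\colon X\dasharrow Y$ (with $\dim Y=3$) through its elementary rational contractions in the Mori chamber decomposition of $\Eff(X)$. If any elementary step in this factorization is of fiber type onto a $3$-fold, then $X$ itself admits an elementary rational contraction onto a $3$-fold and we are in case $(i)$. Henceforth I assume this does not happen.

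Under this assumption, every elementary rational contraction of fiber type of $X$ has target of dimension at most $2$. Pick one such $g\colon X\dasharrow Z$, pass to a model $X'$ where $g$ becomes a morphism, and study the generic fiber $X'_\eta$, a smooth Fano variety of dimension $\geq 2$ over the non-closed field $k(Z)$. Using the theory of del Pezzo surfaces (and Fano $3$-folds) over non-closed fields highlighted in the abstract and developed in \cite{fanos,fibrations}, bound the rank of $\Pic(X'_\eta)$ in terms of its geometric Picard rank; combining with $\rho_Z$ via the Picard-rank sequence for the fibration yields $\rho_X\leq 9$.

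To produce the blow-up structure of case $(ii)$, extract a fixed prime divisor $D\subset X$ and, after a sequence of flips $X\dasharrow X'$, an elementary divisorial contraction $\sigma\colon X'\to W$ onto a smooth surface $S\subset W$. The induced rational contraction on $W$ to a $3$-fold, together with the standing assumption, forces $W$ to admit a chain of further elementary divisorial contractions — each the blow-up of a smooth point — terminating in a $4$-fold birational to $\pr^4$. Identifying this chain with the Fano model of $\Bl_{(\rho_X-2)\,\pts}\pr^4$ determines $W$; accordingly $S$ is the transform of a surface $A\subset\pr^4$ containing the $\rho_X-2$ blown-up points. Finally, classify $A$ by combining the projective geometry in $\pr^4$ (degree, linear span, permissible singularities at the marked points) with the numerical constraints $(-K_X)^4>0$ and smoothness of $X$: the list reduces to cubic scrolls, cones over twisted cubics, and sextic K3 surfaces with $A_1$ or $A_2$ singularities at the blown-up points, and in the K3 case a positivity argument forces $\rho_X=7$.

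The decisive difficulties lie in the last step. Identifying $W$ \emph{explicitly} as the Fano model of $\Bl_{(\rho_X-2)\,\pts}\pr^4$, rather than some other Fano $4$-fold with a fibration onto a $3$-fold, relies on iteratively peeling off smooth point blow-ups from $W$ and controlling what remains; this is where the hypothesis that no elementary rational contraction of $X$ lands on a $3$-fold is used most strongly. The classification of $A$ is equally delicate, especially the sextic K3 case: the exact degree and the precise singularity types $A_1,A_2$ admissible at the blown-up points — so that $X$ remains smooth and Fano — require a careful local and global analysis, and constitute the technical core of the result.
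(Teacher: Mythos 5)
This statement is not proved in the paper at all: Theorem \ref{CS} is imported verbatim from \cite{3folds} (Theorems 1.3, 1.10, 1.11), so there is no internal proof to compare against, and any argument for it must stand on its own (in particular you cannot invoke the present paper's machinery such as Th.~\ref{scala}, Th.~\ref{brasile}, or Th.~\ref{zucca}, all of which are themselves proved \emph{using} Th.~\ref{CS}). Judged on its own terms, your proposal has a genuine gap at its central quantitative step. The ``Picard-rank sequence'' you invoke to get $\rho_X\leq 9$ does not exist in the form you need: for a fibration $f'\colon X'\to Z$ the kernel of the restriction $\Pic(X')\otimes\R\to\Pic(X'_\eta)\otimes\R$ is spanned by \emph{all} classes of divisors whose support does not dominate $Z$ (this is exactly Lemmas \ref{supp} and \ref{genrho} of the paper), and its dimension $\rho_X-d_f$ is not controlled by $\rho_Z$ --- reducible fibers contribute vertical divisor classes without bound a priori. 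So even granting $\rho_{X'_\eta}\leq 9$ from del Pezzo theory over $\C(Z)$, you obtain no bound whatsoever on $\rho_X$. The actual bound requires $\delta_X\leq 1$ (to force $\rho_X-\rho_Y\leq 2$ for the $3$-fold target $Y$, cf.\ \cite[Lemma 3.12]{3folds}) together with a classification of the possible $3$-folds $Y$ up to flops (cf.\ \cite[Cor.~6.4]{3folds}, quoted after Th.~\ref{sharp32}); nothing in your soft argument substitutes for this. There is also a smaller foundational issue: under the negation of $(i)$ an elementary rational contraction of fiber type of $X$ need not exist at all (it exists only when some facet of $\Mov(X)$ lies in $\partial\Eff(X)$, cf.\ Rem.~\ref{movdual}), so ``pick one such $g$'' is unjustified.

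The case-$(ii)$ portion is a restatement of the conclusion rather than a proof: the assertions that each divisorial step peeled off $W$ is the blow-up of a \emph{smooth point} (rather than of type $(3,1)^{\sm}$, $(3,0)^Q$, or $(3,2)$), that the chain terminates in $\pr^4$, and that $W$ is precisely the Fano model of $\Bl_{\rho_X-2\,\pts}\pr^4$, constitute the technical core of \cite{3folds} and receive no argument here. The same applies to the list of surfaces $A$ (cubic scroll, cone over a twisted cubic, sextic K3 with $A_1$ or $A_2$ singularities) and to $\rho_X=7$ in the K3 case: you correctly flag these as the decisive difficulties, but flagging them does not close them. As it stands the proposal is a plausible outline of the shape of the result, not a proof of it.
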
   
 We note that, in the setting of Th.~\ref{CS}, the case where we do not yet have a classification is when $X$ has an elementary rational contraction onto a $3$-fold.

 \medskip

 Consider now a rational contraction of fiber type $f\colon X\dasharrow Y$   with $\dim Y\in\{1,2\}$. We can choose a factorization of $f$ as $X\stackrel{\ph}{\dasharrow} X'\stackrel{f'}{\to} Y$ where $\ph$ is a sequence of flips, $X'$ is smooth,  and $f'$ is a $K$-negative contraction of fiber type. Let $F\subset X'$ be a general fiber; then $F$ is smooth and is either a del Pezzo surface, or a Fano $3$-fold.

Let $\N(X')$ be the vector space of one-cycles in $X'$, with $\R$-coefficients, modulo numerical equivalence, and
 let us consider $\N(F,X'):=\iota_*(\N(F))\subset\N(X')$, where $\iota\colon F\hookrightarrow X'$ is the inclusion. Then $d_f:= \dim\N(F,X')$ is an invariant of $f$ and does not depend on the choice of the resolution $f'$ (Def.-Rem.~\ref{dimfiber}).
 Moreover $d_f$ is equal to the Picard number of the generic fiber $X_\eta$ of $f'$, which is a smooth Fano variety over the non-closed field $K=\C(Y)$ of rational functions on $Y$ (Lemma \ref{genrho}).

 When $d_f$ is large enough, we show that $f$ factors through a rational contraction onto a $3$-fold, as follows.
  \begin{thm}[Th.~\ref{brasile}]\label{brasileintro}
    Let $X$ be a smooth Fano $4$-fold and $f\colon X\dasharrow Y$ a
    non-trivial$\,$\footnote{Namely  $Y\neq\{\pt\}$.} rational contraction  of fiber type.
  If $d_f\geq 5$, then
 $f$ can be factored as $X\stackrel{g}{\dasharrow} Z\stackrel{h}{\to} Y$, where $\dim Z=3$.
\end{thm}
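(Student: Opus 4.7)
The plan is to pass to the generic fiber of $f$ over $Y$, exhibit a $K$-rational contraction of fiber type on it whose target has dimension one less, and then spread it out to produce the intermediate $3$-fold $Z$. Concretely, following the discussion after Theorem~\ref{CS}, choose a factorization $X\stackrel{\ph}{\dasharrow}X'\stackrel{f'}{\to}Y$ with $\ph$ a sequence of flips, $X'$ smooth, and $f'$ a $K$-negative contraction. Set $K=\C(Y)$ and let $X_\eta$ denote the generic fiber of $f'$. By Lemma~\ref{genrho}, $X_\eta$ is a smooth Fano variety over $K$ of dimension $n=4-\dim Y\in\{2,3\}$ and Picard number $\rho(X_\eta)=d_f\geq 5$.

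The core step is to construct a $K$-contraction $\alpha_\eta\colon X_\eta\to T_\eta$ of fiber type with $\dim T_\eta=n-1$. Granting such an $\alpha_\eta$, I would spread it out over $Y$: after shrinking $Y$ one obtains a projective model $T\to Y$ whose generic fiber recovers $T_\eta\to\Spec K$, and $\alpha_\eta$ extends to a rational map $X'\dasharrow T$ over $Y$. Since $X$ is a Mori dream space, running a relative MMP over $T$ (equivalently, inspecting the corresponding face of the movable cone of $X$) then produces a rational contraction $g\colon X\dasharrow Z$ and a morphism $h\colon Z\to Y$ with $\dim Z=\dim T_\eta+\dim Y=3$ and $f=h\circ g$, as required.

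To produce $\alpha_\eta$ I would argue separately in the two cases. When $n=2$, $X_\eta$ is a del Pezzo surface over $K$ with $\rho(X_\eta)\geq 5$, hence of geometric degree at most $5$; here I would invoke the properties of del Pezzo surfaces over non-closed fields foreshadowed in the abstract, to the effect that with $\rho\geq 5$ the Galois action on the $(-1)$-curves is forced to fix at least one of the geometric pencils of conics, yielding a $K$-morphism $X_\eta\to C_\eta$ onto a smooth conic. When $n=3$, $X_\eta$ is a smooth Fano threefold over $K$ with $\rho\geq 5$, and I would combine the Mori--Mukai classification of complex Fano threefolds of large Picard number (which supplies many fiber-type contractions to surfaces) with Galois descent of extremal rays to obtain a $K$-contraction of fiber type whose target is a surface.

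The principal obstacle is the construction of $\alpha_\eta$ as a \emph{fiber-type} contraction rather than a divisorial one. Extremal rays of $\overline{\NE}(X_{\eta,\bar K})$ descend to $K$ as Galois orbits, and a priori Galois could permute all the conic-bundle structures non-trivially, leaving only divisorial $K$-extremal rays available. Verifying that the bound $d_f\geq 5$ is precisely the numerical threshold at which a Galois-stable fiber-type ray must exist is the technical heart of the argument, and it is where the hypothesis enters in an essential way.
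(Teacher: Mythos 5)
Your overall architecture coincides with the paper's: for $\dim Y=2$ the paper also passes to the generic fiber $X_\eta$ over $K=\C(Y)$ (Lemma~\ref{genrho}), produces a morphism onto a curve, spreads it out over an open subset of $Y$ (Lemma~\ref{spread}), and then uses the Mori dream space structure to promote this to a genuine rational contraction $g\colon X\dasharrow Z$ with $h\colon Z\to Y$ (Lemma~\ref{induced}). For $Y\cong\pr^1$ the paper works instead with the monodromy action of $\pi_1(Y_0,y)$ on $\Nu(F)$ for a general \emph{closed} fiber $F$ (Prop.~\ref{P1}); your Galois-descent formulation over $\C(t)$ is the arithmetic avatar of the same argument, so no genuine divergence there.

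The gap is exactly the step you flag as ``the technical heart'': the existence of the fiber-type contraction on the generic fiber, and you do not supply it. For $\dim Y=2$ this is the paper's Lemma~\ref{cabofrio}: a del Pezzo surface over a field of characteristic zero with $\rho\geq 4$ either admits a morphism onto a curve or has $\rho=4$ and $K^2=1$. The proof is a nontrivial run through minimal models and Iskovskikh's elementary links of type II, tracking the degrees of the blown-up closed points; your one-line heuristic that Galois must fix a geometric pencil of conics when $\rho\geq 5$ is plausible but is not an argument, and the existence of the genuine exception $(\rho,K^2)=(4,1)$ (which is why Th.~\ref{brasile} has a separate part $(b)$ at $d_f=4$) shows the threshold is delicate. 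For $Y\cong\pr^1$ you correctly identify the danger that Galois/monodromy permutes all the conic-bundle structures, but you do not resolve it; the paper's resolution is structural: when $\rho_F\geq 6$ one has $F\cong\pr^1\times S$ and the projection to $S$ is the \emph{unique} elementary contraction of fiber type, hence automatically monodromy-invariant, while when $\rho_F=5$ the hypothesis $d_f\geq 5$ forces $\N(F)\to\N(X')$ to be injective, so the monodromy action is trivial and every contraction of $F$ extends. Without these two observations (or equivalents), your sketch does not close.
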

To prove this, we use different strategies depending on the dimension of the base $Y$. When $\dim Y=2$, we consider the generic fiber $X_{\eta}$ of $f'\colon X'\to Y$ (notation as above),
which is a smooth del Pezzo surface  over the field $K=\C(Y)$.
When $d_f=\rho_{X_{\eta}}\geq 5$, there is a morphism $X_{\eta}\to C$ onto a curve over $K$ (Lemma \ref{cabofrio}), and we use results on spreading out to get the statement (see Lemmas \ref{spread} and \ref{induced}, and Section \ref{secbrasile}).

Instead, when $Y\cong\pr^1$, a general fiber $F\subset X'$ of $f'$ is a smooth, complex Fano $3$-fold with $\rho_F\geq d_f=\dim\N(F,X')$. When $d_f\geq 5$, we  use the monodromy action on $\Nu(F)$ and $\Nef(F)$ to extend a contraction  $F\to S$ with $\dim S=2$ to a relative contraction over some open subset of $Y=\pr^1$, see Prop.~\ref{P1}. 

\medskip

Using the results from \cite{3folds} on rational contractions onto $3$-folds, we deduce from Th.~\ref{brasileintro} the following result.
\begin{thm}[Th.~\ref{scala}]\label{scalaintro}
  Let $X$ be a smooth Fano $4$-fold with $\rho_X\geq 7$, not isomorphic to a product of surfaces, and $f\colon X\dasharrow Y$ a non-trivial rational contraction of fiber type.
 Then $d_f\leq 4$.
\end{thm}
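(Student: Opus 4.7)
The plan is to argue by contradiction: assume $d_f\geq 5$. Theorem \ref{brasileintro} then produces a factorization $X\stackrel{g}{\dasharrow} Z\stackrel{h}{\to} Y$ of $f$ with $\dim Z=3$; in particular $g$ is a non-trivial rational contraction of fiber type of $X$ onto a $3$-fold. Since $X\not\cong S_1\times S_2$ and $\rho_X\geq 7$, Theorem \ref{CS} applied to $g$ gives $\rho_X\leq 9$ and places $X$ in one of the two structural alternatives $(i)$ or $(ii)$ stated there. It suffices to bound $d_f$ under each of these alternatives.

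By Lemma \ref{genrho}, $d_f$ equals the Picard number $\rho_{X_\eta}$ of the generic fiber of a resolution $f'\colon X'\to Y$ of $f$, regarded as a smooth Fano variety over $K=\C(Y)$. The factorization through $Z$ is a crucial input here: since $g$ is a $K$-negative contraction of relative dimension one, a general fiber of $g$ is a smooth rational curve, so after a common resolution of $f$ and $g$ the map $X_\eta\to Z_\eta$ over $K$ is a conic bundle onto the generic fiber $Z_\eta$ of $h$. Consequently $\rho_{X_\eta}$ is controlled by $\rho_{Z_\eta}$ together with the classes contributed by reducible fibers of the conic bundle.

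In alternative $(ii)$, $X=\Bl_SW$ where $W$ is the Fano model of $\Bl_{(\rho_{\s X}-2)\,\pts}\pr^4$ and $S$ is the transform of an explicit surface of degree at most $6$ in $\pr^4$. The geometry of $W$ is completely understood from its construction, and one can enumerate the rational contractions of fiber type of $X$ from the Mori chamber decomposition of $W$ combined with the exceptional divisor of $X\to W$; a direct case-by-case inspection then yields $d\leq 4$ for each of them, contradicting the hypothesis.

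The main obstacle is alternative $(i)$, where no explicit model for $X$ is available, only the abstract existence of an elementary rational contraction of $X$ onto some $3$-fold. Here the strategy is to combine this datum with constraints coming from $X$ being Fano and from $\rho_X\leq 9$, applied to the conic bundle $X_\eta\to Z_\eta$: bounding $\rho_{Z_\eta}$ through the fact that $Z$ is itself fibered over $Y$ (with $\dim Y\in\{1,2\}$), and ruling out sufficiently many independent reducible-fiber classes, one forces $\rho_{X_\eta}\leq 4$, again contradicting $d_f\geq 5$ and completing the proof.
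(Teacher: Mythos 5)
Your opening move is the same as the paper's: assume $d_f\geq 5$, invoke Theorem \ref{brasileintro} to factor $f$ as $X\stackrel{g}{\dasharrow}Z\stackrel{h}{\to}Y$ with $\dim Z=3$, and then derive a contradiction from the existence of this factorization. But the way you propose to derive the contradiction has a genuine gap. The paper's mechanism is purely numerical: since $X$ is not a product and $\rho_X\geq 7$, Theorem \ref{starting} gives $\delta_X\leq 1$, which by \cite[Lemma 3.12]{3folds} forces $\rho_X-\rho_Z\leq 2$ for any rational contraction onto a $3$-fold; then Lemma \ref{easy} applied to a resolution of $g$ gives $d_f=\dim\N(F,X')\leq(\rho_X-\rho_Z)+\dim\N(F_h,Z)\leq 2+\dim\N(F_h,Z)$, and one only has to check $\dim\N(F_h,Z)\leq 1$ when $\dim Y=2$ (where $F_h\cong\pr^1$) and $\leq 2$ when $Y\cong\pr^1$ (by further factoring $h$ through a surface or an elementary fibration). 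Your proposal never establishes the bound $\rho_X-\rho_Z\leq 2$, and without it your conic-bundle picture $X_\eta\to Z_\eta$ has no a priori control: the number of independent classes contributed by reducible fibers of a conic bundle over a non-closed field is exactly the quantity you would need to bound, and "ruling out sufficiently many independent reducible-fiber classes" is a restatement of the goal, not an argument. Likewise, in your alternative $(ii)$ the "direct case-by-case inspection" of the Mori chamber decomposition of $\Bl_SW$ is asserted but not performed, and in alternative $(i)$ the elementary rational contraction onto a $3$-fold produced by Theorem \ref{CS} is a different map from your $g$, so it does not directly constrain $\rho_{Z_\eta}$ or the discriminant of $X_\eta\to Z_\eta$.

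Concretely: replace the detour through Theorem \ref{CS} and the generic-fiber conic bundle by the Lefschetz-defect bound on $\rho_X-\rho_Z$ plus Lemma \ref{easy}. That is the step that actually closes the argument, and it is entirely absent from your proposal.
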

\noindent The bound  $d_f\leq 4$ is sharp, see the discussion after Th.~\ref{scala}.

Theorem \ref{scalaintro} is very much related to how 
the faces of the cone of effective divisors can intersect the cone of movable divisors.
More precisely, let us consider $\Nu(X)$, the vector space of divisors with $\R$-coefficients up to  numerical equivalence, and in it the cones $\Eff(X)$ and $\Mov(X)$ respectively of effective and movable divisors (see Section \ref{notation}); we have $\Mov(X)\subset\Eff(X)$, and since $X$ is Fano, both cones are rational polyhedral.
If $\tau$ is a face of $\Eff(X)$, we say that $\tau$ is a {\bf movable face} if $\tau\cap\Mov(X)\neq\{0\}$, otherwise we say that $\tau$ is a {\bf fixed face} (Def.~\ref{fmface}).

Given a non-trivial rational contraction of fiber type $f\colon X\dasharrow Y$, we have that $\tau_f:=\langle [D]\in\Nu(X)\,|D$ is effective and $\Supp D$ does not dominate $Y\rangle$ is a movable face of $\Eff(X)$, of dimension $\rho_X-d_f$ (Def.-Rem.~\ref{dimfiber}, Rem.~\ref{tauf}). Conversely, given a proper, movable face $\tau$ of $\Eff(X)$, there exists a non-trivial rational contraction of fiber type $f\colon X\dasharrow Y$ with $d_f\geq\rho_X-\dim\tau$ (see Rem.~\ref{zumba}). This gives the following.
\begin{corollary}[Cor.~\ref{summary}]\label{summaryintro}
 Let $X$ be a smooth Fano $4$-fold with $\rho_X\geq 7$, not isomorphic to a product of surfaces.
  If $\tau$ is a movable face of $\Eff(X)$, then 
  $\dim\tau\geq\rho_X-4$. In particular the cone $\Eff(X)$ is generated by classes of fixed prime divisors.
\end{corollary}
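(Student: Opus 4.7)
The plan is to deduce the statement directly from Theorem \ref{scalaintro} by using the correspondence between movable faces of $\Eff(X)$ and non-trivial rational contractions of fiber type that is recalled just before the statement (Remark \ref{zumba}).

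First I would dispose of the trivial case $\tau=\Eff(X)$, where $\dim\tau=\rho_X\geq\rho_X-4$. So let $\tau$ be a proper movable face of $\Eff(X)$. By Remark \ref{zumba} one associates to $\tau$ a non-trivial rational contraction of fiber type $f\colon X\dasharrow Y$ with $d_f\geq\rho_X-\dim\tau$. Since $X$ is a smooth Fano $4$-fold with $\rho_X\geq 7$ that is not a product of surfaces, Theorem \ref{scalaintro} applies to $f$ and gives $d_f\leq 4$. Combining these two inequalities yields $\dim\tau\geq\rho_X-4$, which is the first assertion.

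For the second assertion I would use that, $X$ being Fano, it is a Mori dream space, so $\Eff(X)$ is rational polyhedral and every extremal ray of $\Eff(X)$ is generated by the class of a prime divisor $D$. Such a ray has dimension $1$, and since $\rho_X\geq 7$ we have $\rho_X-4\geq 3>1$; by the first part, a $1$-dimensional face cannot be movable, hence $\R_{\geq 0}[D]\cap\Mov(X)=\{0\}$. A standard fact for Mori dream spaces then ensures that such an extremal prime divisor is a fixed prime divisor in the strict sense $D=\Bs|mD|$ for all $m>0$ (see Section \ref{prelfixed} and \cite{eff}). Since $\Eff(X)$ is the convex hull of its extremal rays, it is therefore generated by classes of fixed prime divisors.

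The main obstacle I anticipate is the final step: verifying that an extremal prime divisor $D$ whose ray avoids $\Mov(X)$ must be fixed in the precise sense above. This should be addressed either by invoking the Mori chamber decomposition of the MDS $X$ (whose chambers inside $\Mov(X)$ are the pullbacks of nef cones of SQMs of $X$, so a divisor class outside $\Mov(X)$ admits a Zariski-type decomposition with fixed-prime-divisor negative part), or by quoting the corresponding characterization of fixed prime divisors established in \cite{eff} and recalled in the preliminaries.
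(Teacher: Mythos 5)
Your proof is correct and follows essentially the same route as the paper: Remark \ref{zumba} produces a non-trivial rational contraction of fiber type with $d_f\geq\rho_X-\dim\tau$, and Theorem \ref{scala} gives $d_f\leq 4$. Your handling of the ``in particular'' clause via the fixedness of one-dimensional faces and Remark \ref{fixedprime} is exactly what the paper leaves implicit.
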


\bigskip

 \noindent{\bf The Lefschetz defect.}
Another essential tool 
is the Lefschetz defect, an invariant of $X$ defined as follows. 
For any prime divisor $\iota\colon D\hookrightarrow X$, set $\N(D,X):=\iota_*(\N(D))\subset\N(X)$. Then we define the Lefschetz defect of $X$ as
$$\delta_X:=\max\{\codim\N(D,X)\,|\,D\text{ a prime divisor in }X\}.$$
We refer the reader to \cite{codim,delta3} for results on the Lefschetz defect of Fano varieties of arbitrary dimension; in dimension $4$ we have the following.
\begin{thm}[\cite{codim}, Cor.~1.3, \cite{3folds}, Th.~1.8]\label{starting}
Let $X$ be a smooth Fano $4$-fold with $\rho_X\geq 7$, not isomorphic to a product of surfaces. Then $\delta_X\leq 1$.
\end{thm}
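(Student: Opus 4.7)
The plan is to argue by contradiction: assume $X$ is a smooth Fano $4$-fold with $\rho_X\geq 7$, not isomorphic to a product of surfaces, and yet carrying a prime divisor $D\subset X$ with $\codim_{\N(X)}\N(D,X)\geq 2$. The goal is to extract from the defect of $D$ a non-trivial rational contraction of fiber type $f\colon X\dasharrow Y$ with invariant $d_f\geq 5$, so that Theorems~\ref{brasileintro} and~\ref{CS} apply and force $X$ into an explicit list incompatible with our assumptions.

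The first stage builds the fibration. Every irreducible curve contained in $D$ has numerical class in $\N(D,X)$, so any covering family of rational curves on $D$ — produced by bend-and-break applied to minimal free families on $X$ meeting $D$ — has classes concentrated in a subspace of codimension at least two. Running the appropriate relative MMP on $X$ adapted to such a family, one extracts a non-trivial rational contraction of fiber type $f\colon X\dasharrow Y$ whose associated movable face $\tau_f$ sits in that subspace. Via the structure results of \cite{codim,delta3}, one arranges $\dim\tau_f\leq\rho_X-5$, equivalently $d_f\geq 5$; morally, the codimension-two defect of $D$ supplies enough ``horizontal'' directions on top of the surface of rational curves in $D$, so that once $\rho_X\geq 7$ the invariant $d_f$ crosses the required threshold.

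The second stage is essentially bookkeeping. By Theorem~\ref{brasileintro}, $f$ factors as $X\dasharrow Z\to Y$ with $\dim Z=3$, hence $X$ admits a rational contraction onto a $3$-fold, and Theorem~\ref{CS} leaves two cases. In case~$(ii)$, $X$ is the blow-up of the Fano model of $\Bl_{(\rho_X-2)\,\pts}\pr^4$ along a specified surface, and a direct check using this explicit geometry rules out the existence of a prime divisor with $\codim\N(D,X)\geq 2$. In case~$(i)$, an elementary rational contraction onto a $3$-fold combined with a divisor of defect $\geq 2$ triggers the product-splitting criteria of \cite{codim}, producing $X\cong S_1\times S_2$ with $S_i$ del Pezzo surfaces, against the hypothesis.

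The hard part is the first stage: passing from a \emph{single} divisor with large defect to an actual rational fibration with controlled $d_f$. This is the technical core of the Lefschetz-defect theory developed in \cite{codim,delta3} and requires delicate control over covering families of rational curves on $D$, their deformations inside $X$, and an MMP argument on $X$ that preserves the defect information encoded by $\N(D,X)$. Once that input is granted, the reduction through Theorems~\ref{brasileintro} and~\ref{CS} proceeds along the lines above and closes the argument.
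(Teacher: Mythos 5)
This statement is not proved in the paper at all: it is imported verbatim from \cite{codim} (Cor.~1.3, covering $\delta_X\geq 3$) and \cite{3folds} (Th.~1.8, covering $\delta_X=2$), so there is no internal proof to compare against. Your sketch therefore has to stand on its own, and its first stage does not. The mechanism you invoke --- from a single prime divisor $D$ with $\codim\N(D,X)\geq 2$, produce a rational contraction of fiber type with $d_f\geq 5$ --- is not available in the hard case. The structure theory of the Lefschetz defect developed in \cite{codim,delta3} extracts a fibration (onto a curve or surface, with controlled relative Picard number) only when $\codim\N(D,X)\geq 3$; for codimension exactly $2$ no such statement holds, and plenty of Fano $4$-folds with small $\rho$ carry divisors of defect $2$ without any associated fibration. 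That is precisely why $\delta_X=2$ with $\rho_X\geq 7$ needed the separate, lengthy argument of \cite{3folds}. Even where a fibration does exist, the numerology you assert is off: from $\delta_X=2$ the best general bound in this circle of ideas (cf.\ Lemma~\ref{coop}, $\rho_Z\leq\delta_X+2$) gives a fiber-type contraction onto a base with $\rho\leq 4$, hence only $d_f\geq\rho_X-4\geq 3$, not the threshold $d_f\geq 5$ you need to trigger Th.~\ref{brasileintro}. The examples in \S\ref{es1}--\ref{es2} show the inequality between $\delta_X$ and achievable $d_f$ is genuinely this weak.

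There is also a circularity hazard in your second stage. Theorem~\ref{CS} is quoted from \cite{3folds}, whose arguments are built on the Lefschetz defect bound you are trying to prove (the paper presents Th.~\ref{starting} as an ``essential tool'' used systematically, and inside this paper Th.~\ref{scala} and essentially everything downstream invoke Th.~\ref{starting} explicitly). Unless you can certify that \cite{3folds}, Theorems 1.3/1.10/1.11 are proved independently of \cite{3folds}, Th.~1.8, routing the proof of $\delta_X\leq 1$ through Th.~\ref{CS} assumes the conclusion. The honest version of this statement's proof is the citation itself; a self-contained argument would have to reproduce the bend-and-break and fibration constructions of \cite{codim} for $\delta_X\geq 3$ \emph{and} the independent analysis of \cite{3folds} for $\delta_X=2$, neither of which your sketch supplies.
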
  
 
\medskip

\noindent{\bf Fixed prime divisors of type $(3,2)$.}
A fixed prime divisor  of type $(3,2)$ is  the exceptional divisor $E$ of an elementary divisorial contraction $\sigma\colon X\to Y$ such that $\dim \sigma(E)=2$ (see Th.-Def.~\ref{fixed}). If $X\not\cong S_1\times S_2$ and $\rho_X\geq 7$, we have $\delta_X\leq 1$ by Th.~\ref{starting}, therefore $\N(E,X)$ has  codimension at most one in $\N(X)$. We show the following.
\begin{thm}[Th.~\ref{sharp32}]\label{sharp32intro}
  Let $X$ be a smooth Fano $4$-fold with $\rho_X\geq 7$, not isomorphic to a product of surfaces, and $E\subset X$ a fixed prime divisor of type $(3,2)$ such that $\N(E,X)\subsetneq\N(X)$.
  Then $\rho_X\leq 9$ and
  $X$ has an elementary rational contraction onto a  $3$-fold.
\end{thm}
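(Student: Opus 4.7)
The plan is to pass to the smooth birational model $X'$ on which the $(3,2)$ structure is realised as an elementary divisorial contraction $\sigma:X'\to Y$, to transfer the codimension hypothesis from $E$ to the surface $S:=\sigma(E')\subset Y$, and to use this to produce a rational contraction of $X$ onto a $3$-fold; the conclusion will then follow from Th.~\ref{CS}.

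First I would fix the birational setup. By definition of type $(3,2)$, there exist a sequence of flips $X\dasharrow X'$ with $X'$ smooth and an elementary divisorial contraction $\sigma:X'\to Y$ onto a Fano $4$-fold $Y$ with $\rho_Y=\rho_X-1\geq 6$, such that the exceptional divisor $E'\subset X'$ is the transform of $E$ and $\sigma(E')$ is a surface $S$. The natural identification $\N(X)\cong\N(X')$ induced by the flips sends $\N(E,X)$ to $\N(E',X')$, so the hypothesis reads $\N(E',X')\subsetneq\N(X')$; by Th.~\ref{starting} the codimension is exactly $1$. Since the surjection $\sigma_*:\N(X')\to\N(Y)$ has kernel $\R R_\sigma\subset\N(E',X')$ and sends $\N(E',X')$ onto $\N(S,Y)$, we obtain $\N(S,Y)\subsetneq\N(Y)$ of codimension $1$ in $\N(Y)$.

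Second, I would exploit this codimension-$1$ condition on $S\subset Y$, together with the fact that $Y$ is a Mori dream space, to produce a rational contraction $g:Y\dasharrow Z$ of fiber type collapsing $S$. The annihilator of $\N(S,Y)$ in $\Nu(Y)$ is a line $L$, and a well-chosen representative $\xi\in L$ lying on a proper face of $\Mov(Y)$ determines, via the Mori chamber decomposition of $Y$, a rational contraction of fiber type which, because $\xi\cdot c=0$ for every $c\in\N(S,Y)$, contracts every curve in $S$ and hence maps the irreducible surface $S$ to a point. The composition $X\dasharrow X'\stackrel{\sigma}{\to} Y\dasharrow Z$ is then a non-trivial rational contraction of $X$ onto a variety of dimension at most $3$; if that dimension is strictly less than $3$, an argument in the spirit of Th.~\ref{brasileintro} refactors it through a $3$-fold. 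Once a rational contraction of $X$ onto a $3$-fold is in hand, Th.~\ref{CS} applies (as $X\not\cong S_1\times S_2$ and $\rho_X\geq 7$) and yields $\rho_X\leq 9$ with $X$ in case $(i)$ or $(ii)$; in case $(i)$ we are done, and in case $(ii)$ the required elementary rational contraction onto a $3$-fold is inherited from the rich contraction structure of the Fano model of $\Bl_{k\,\mathrm{pts}}\pr^4$.

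The hardest step is the second one, namely extracting a genuine fiber-type rational contraction of $Y$ from the orthogonality condition alone: a priori the class $\xi$ could give a birational contraction of $Y$ instead (as already happens in low Picard number examples). Ensuring that it is of fiber type should rely on both $\rho_Y\geq 6$ and the very special origin of $S$ as the centre of a type-$(3,2)$ blow-up, most likely through a careful inspection of the fixed prime divisors of $Y$ (Th.-Def.~\ref{fixed}) and of the facets of $\Eff(Y)$, in the same spirit as the treatment of types $(3,1)^{\sm}$ and $(3,0)^Q$ carried out in \cite{small}.
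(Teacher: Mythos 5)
Your step 1 is fine (in fact for a divisor of type $(3,2)$ one has $X=\w{X}$ by Th.-Def.~\ref{fixed}$(c)$, so no flips are needed, and $\codim\N(S,Y)=1$ follows as you say). The gap is exactly where you locate it, in step 2, and it is not a technical point that can be patched: the construction you propose goes in the wrong direction. The annihilator $L=\N(S,Y)^{\perp}\subset\Nu(Y)$ is a line, and there is no reason for it to meet $\Eff(Y)$, let alone $\Mov(Y)$, outside the origin; Lemma~\ref{general} produces such a face \emph{from} a given contraction of fiber type, it does not produce a contraction from an orthogonality condition. Worse, a fiber-type rational contraction $g\colon Y\dasharrow Z$ collapsing $S$ would force $g^{*}(\Nef(Z))\subset L$, hence $\rho_Z\leq 1$, and this is contrary to the geometry that actually occurs: in the structure the theorem establishes (diagram \eqref{cavour}) the surface $\sigma(E)$ \emph{dominates} a del Pezzo surface $Z$ with $\rho_Z\geq 5$, and the elementary fiber-type contraction $\tilde{f}$ onto the $3$-fold is \emph{finite} on the transform of $E$. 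In the known examples (\S\ref{newex}) the divisor $E$ likewise maps onto the base surface of the quasi-elementary contraction. So the object you are trying to build does not exist in the relevant cases, and the whole second step stalls. A secondary problem: if your composite $X\dasharrow Z$ had $\dim Z\leq 2$, you could not "refactor through a $3$-fold in the spirit of Th.~\ref{brasileintro}", since that theorem needs $d_f\geq 5$, which is impossible here by Th.~\ref{scalaintro}; and in Th.~\ref{CS} case $(ii)$ the existence of an \emph{elementary} rational contraction onto a $3$-fold is not automatic (the natural one there has relative Picard number $2$).

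The paper's actual argument is dual to yours and stays on $X$. One considers the cone $\Mov(X)^{\vee}\subset\N(X)$, of which $\langle[C_E]\rangle$ is a one-dimensional face, and proves that some two-dimensional face $\langle[C_E]\rangle+\alpha$ has $\alpha\subset\mov(X)$. This is done by contradiction: if every such face were $\langle[C_E],[C_D]\rangle$ with $D$ fixed, one manufactures a special rational contraction $\psi\colon X\dasharrow T$ with $\rho_X-\rho_T=2$ contracting $C_E$, and the hypothesis $\N(E,X)\subsetneq\N(X)$ then contradicts Lemma~\ref{classification} (if $\dim T=3$) or the quasi-elementary analysis of Th.~\ref{zucca} (if $\dim T=2$). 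The ray $\alpha$ yields an elementary rational contraction of fiber type $f\colon X\dasharrow Y$; a further analysis of the face $\eta=C_E^{\perp}\cap\alpha^{\perp}\cap\Mov(X)$, again using $\N(E,X)\subsetneq\N(X)$ to rule out the divisorial and small alternatives, shows $\dim Y=3$, and only then does Th.~\ref{CS} give $\rho_X\leq 9$. In short, the hypothesis on $\N(E,X)$ is exploited to kill unwanted factorizations, not to collapse $E$; any correct proof along your lines would have to replace step 2 by something equivalent to this face analysis.
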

This improves the bound $\rho_X\leq 12$ shown in  \cite[Prop.~5.32]{blowup}. Besides the bound $\rho_X\leq 9$, in Th.~\ref{sharp32} we give several geometric properties of $X$, that could lead to an explicit classification of this case.

We refer the reader to \ref{overviewsharp32} for an overview of the proof of Th.~\ref{sharp32intro}.
 In particular,  we need  a generalization of Theorems \ref{brasileintro} and \ref{scalaintro} in the case of
rational contractions $f\colon X\dasharrow S$ with $\dim S=2$ that are {\bf quasi-elementary}, namely with $d_f=\rho_X-\rho_S$ (see Def.~\ref{defqes}), as follows.
\begin{thm}[Th.~\ref{zucca}]\label{zuccaintro}
  Let $X$ be a smooth Fano $4$-fold with $\rho_X\geq 7$, not isomorphic to a product of surfaces, and  $f\colon X\dasharrow S$ a quasi-elementary rational contraction onto a surface.   Then $\rho_X-\rho_S\leq 3$,
  and if $\rho_X-\rho_S>1$, then $f$ factors as $X\stackrel{g}{\dasharrow}Y\stackrel{h}{\dasharrow} S$ where $\dim Y=3$. 
\end{thm}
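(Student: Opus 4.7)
The plan is to combine Theorem~\ref{scalaintro}, which gives $d_f\leq 4$ and hence the initial bound $\rho_X-\rho_S\leq 4$ (since $f$ is quasi-elementary), with a factorization of $f$ through a $3$-fold for $d_f\geq 2$ and with Theorem~\ref{CS} applied to the resulting rational contraction to a $3$-fold, in order to improve to $\rho_X-\rho_S\leq 3$ and to rule out $d_f=4$.

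For the factorization, fix a resolution $X\stackrel{\ph}{\dasharrow}X'\stackrel{f'}{\to}S$ with $\ph$ a sequence of flips, $X'$ smooth, and $f'$ a $K$-negative contraction of fiber type. The generic fiber $X_\eta$ over $K=\C(S)$ is a smooth del Pezzo surface of Picard rank $d_f$ (Lemma~\ref{genrho}). For $d_f\geq 5$ the factorization is Theorem~\ref{brasileintro}, which is vacuous in our range. For $d_f\in\{2,3,4\}$ the quasi-elementary condition $\N(F,X')=\ker(f'_*)$ must be exploited: every class in $\N(X')$ contracted by $f'$ comes from the general fiber $F$. By Rem.~\ref{zumba}, producing a factorization $X\stackrel{g}{\dasharrow}Y\stackrel{h}{\dasharrow}S$ with $\dim Y=3$ amounts to a movable face of $\Eff(X)$ properly containing $\tau_f$ of the appropriate dimension, equivalently to a Galois-invariant extremal ray of $\NE(X_\eta\otimes_K\overline{K})$ giving a $K$-morphism $X_\eta\to C$ onto a $K$-curve; spreading out as in Lemmas~\ref{spread} and~\ref{induced} then produces $Y$ and $h$.

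Once the factorization is in hand, $X\dasharrow Y$ is a non-trivial rational contraction onto a $3$-fold, so Theorem~\ref{CS} gives $\rho_X\leq 9$ and places us in case~(i) or~(ii) of that theorem. A case analysis, combining the quasi-elementary hypothesis on $f$ with the explicit geometric description of $X$ in each case and with Corollary~\ref{summaryintro} on movable faces of $\Eff(X)$, should rule out $d_f=4$ and conclude $\rho_X-\rho_S\leq 3$.

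\textbf{Main obstacle.} The main difficulty is the factorization for $d_f\in\{2,3,4\}$: a smooth del Pezzo surface of Picard rank at most $4$ over a non-closed field need not admit a morphism to a curve over that field (unlike the case $d_f\geq 5$ covered by Lemma~\ref{cabofrio}), so the quasi-elementary hypothesis must play an essential role. I expect the argument to use the polyhedral structure of $\Mov(X)$ and $\Eff(X)$ (available because $X$ is a Fano Mori dream space) to translate the quasi-elementary identification $\N(F,X')=\ker(f'_*)$ into the required Galois-invariance, possibly in combination with monodromy arguments on $\Nu(F)$ and $\Nef(F)$ in the spirit of the $\dim Y=1$ case of Theorem~\ref{brasileintro}.
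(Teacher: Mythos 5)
There is a genuine gap at the heart of your plan: the factorization through a $3$-fold when $d_f\in\{2,3\}$ is exactly the hard part of the theorem, and the mechanism you propose for it --- translating the quasi-elementary identification $\N(F,X')=\ker f'_*$ into a Galois-invariant extremal ray of $\NE(X_\eta\otimes_K\overline{K})$ inducing a $K$-morphism $X_\eta\to C$ --- cannot work as stated. Quasi-elementarity does not force the generic fiber to carry a conic-bundle structure over $K$: the paper's own Example in \S\ref{exX} exhibits a Fano $4$-fold with $\rho_X=3$ and a quasi-elementary contraction $X\to\pr^2$ with general fiber $\Bl_{2\mskip1mu\pts}\pr^2$ (so $d_f=2$) that does \emph{not} factor through a $3$-fold. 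So any correct argument must use $\rho_X\geq 7$ in an essential way beyond the generic-fiber structure, and your sketch does not indicate where that hypothesis would enter.

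The paper's actual proof (all of Section \ref{seczucca}) proceeds quite differently. The bound $\rho_X-\rho_S\leq 3$ is quick: $d_f\leq 4$ by Th.~\ref{scala}, and $d_f=4$ would force $S\cong\pr^2$, contradicting $\rho_S=\rho_X-d_f\geq 3$. For the factorization, Lemma \ref{quasiel2} shows that either $f$ factors through a $3$-fold outright, or it admits two distinct divisorial elementary factorizations, producing fixed prime divisors $E_1,E_2$ of type $(3,2)$ with $(E_1\cdot C_{E_2})(E_2\cdot C_{E_1})>1$. The hypothesis $\rho_X\geq 7$ then enters through $\rho_S\geq 4$: the pullbacks of $(-1)$-curves of $S$ are fixed prime divisors which are shown (using the classification of Section \ref{prelfixed}) to be of type $(3,1)^{\sm}$, and Lemma \ref{quadric} forces a general fiber of one factorization to be $\pr^1\times\pr^1$. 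When $\rho_X-\rho_S=3$ this yields the factorization; when $\rho_X-\rho_S=2$ the paper rules the configuration out by a long explicit analysis (a $\pr^2$-bundle structure on $Y_1$, reduction to blow-ups of $\pr^2\times\pr^2$ along lines, a bidegree $(2,1)$ divisor, and nets of conics). None of this is captured by your plan, so the proposal as written does not constitute a proof.
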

The bound $\rho_X-\rho_S\leq 3$ follows easily from the previous results, while the proof that, in the non-elementary case, $f$ factors through a $3$-fold,
 is rather long (see Section \ref{seczucca}). 
We work by contradiction: by assuming that the statement does not hold, we get an explicit description of $X$ as a sequence of flips and blow-ups from $\pr^2\times\pr^2$; finally we show that this construction never yields a Fano $4$-fold. See \ref{overviewzucca} for a more detailed overview of the proof.

\medskip

Let us conclude this Introduction by noting that the condition $\rho\geq 7$ arises naturally and is optimal for many of these results, like Th.~\ref{finalintro}, Th.~\ref{CS}, Th.~\ref{scalaintro}, Cor.~\ref{summaryintro},   Th.~\ref{starting}, and Th.~\ref{zuccaintro}; on the other hand
Fano $4$-folds with $\rho\leq 6$ exhibit different behaviours,
as we show in Examples \ref{es1}, \ref{es2}, and \ref{perone}, and in \S\ref{exX}.

\medskip

The structure of the paper is as follows.
In Section \ref{notation} we fix the notation and terminology, and we recall some preliminary results needed in the sequel.

Section \ref{ratfibertype} contains several preliminary notions and results on regular and rational contractions of fiber type $f\colon X\dasharrow Y$ of Mori dream spaces. In particular we define quasi-elementary and special rational contractions of fiber type and recall their main properties, and we introduce
the invariant $d_f$ mentioned above.

In Section \ref{secbrasile} we apply the previous results to Fano $4$-folds, and we prove Theorems \ref{brasileintro} and \ref{scalaintro} on rational contractions of fiber type, and Cor.~\ref{summaryintro} on movable faces of the cone of effective divisors.

 In Section \ref{prelfixed} we recall the classification of fixed prime divisors in Fano $4$-folds $X$ with 
 $\rho_X\geq 7$ in four types, and give many related properties that are used in the sequel.

 In Section \ref{seczucca} we prove Th.~\ref{zuccaintro} on quasi-elementary rational contractions onto a surface. In Section \ref{sec32} we prove Th.~\ref{sharp32intro} on fixed prime divisors of type $(3,2)$, and give some applications needed in the sequel.

 In Section \ref{cubic} 
  we prove 
  Th.~\ref{finalintro}, that characterizes Fano $4$-folds with no small contractions (with $\rho\geq 7$, and not products) as suitable blow-ups of cubic $4$-folds.

  In Section \ref{sec30} we prove
  Th.~\ref{30intro} on Fano $4$-folds with a fixed prime divisor of type $(3,0)^{\sm}$. Finally
in
 Section \ref{secsmall} we consider Fano $4$-folds with a fixed prime divisor of type $(3,1)^{\sm}$ or $(3,0)^Q$, and prove Theorems \ref{sharpbound} and \ref{explicit}; we also give a partial result on the case $\rho=8$ (Cor.~\ref{last}).

In Section \ref{secexamples} we collect several examples. We first recall the known examples of Fano $4$-folds that are not products, with $\rho\geq 7$ (\S\ref{fanomodel} and \S\ref{newex}), and then give other relevant explicit examples; in some cases we do not know whether these $4$-folds are actually Fano for large values of the Picard number.

\medskip

{\small \noindent{\bf Acknowledgements.}
I had the idea of using the properties of del Pezzo surfaces over non-closed fields at
the minicourse  ``Quotients of groups of birational transformations'' by Susanna Zimmermann, at the V Latin American School of Algebraic Geometry (V ELGA) in Brazil, August 2024;
I thank her and the organisers of the school.

I am also grateful to Brendan Hassett for explaining me that there exist families of cubic $4$-folds containing a configuration of planes as in Th.~\ref{finalintro}.

The author has been partially supported by PRIN 2022L34E7W ``Moduli spaces and birational geometry'', and is a member of GNSAGA, INdAM.}
 
{\small\tableofcontents}
    
    \section{Notation and preliminaries}\label{notation}
    \noindent If $\mathcal{N}$ is a finite-dimensional real vector space and $a_1,\dotsc,a_r\in \mathcal{N}$, $\langle a_1,\dotsc,a_r\rangle$ denotes the convex cone in $\mathcal{N}$ generated by $a_1,\dotsc,a_r$.
    Moreover, for every $a\neq 0$, $a^{\perp}$ is the hyperplane orthogonal to $a$ in the dual vector space $\ma{N}^*$.
    If $\alpha\in\ma{N}^*$ and $\tau:=\langle a\rangle$, we write $\alpha\cdot\tau >0$ (respectively $\alpha\cdot\tau <0$,  $\alpha\cdot\tau =0$) if
     $\alpha\cdot a>0$  (respectively $\alpha\cdot a<0$,  $\alpha\cdot a=0$).
    A facet of a cone is a face of codimension one. If $\ma{C}\subset\ma{N}$ is a convex polyhedral cone, we denote by $\ma{C}^{\vee}\subset\ma{N}^*$ its dual cone.
     \begin{lemma}\label{cones}
Let $\sigma$ be a convex polyhedral cone in a finite dimensional real vector
space $\ma{N}$. Let $\tau$ be a one-dimensional face of $\sigma$, and let $\alpha\in\ma{N}^*$ be such that
$\alpha\cdot\tau<0$ and $\alpha\cdot\tau'\geq 0$ for every one-dimensional face $\tau'\neq\tau$ of $\sigma$.
If $\eta$ is a face of $\sigma$ such that $\eta\subset \ker\alpha$, then $\tau+\eta$ is a face of $\sigma$.
\end{lemma}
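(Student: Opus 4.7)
The plan is to produce an explicit supporting functional $\gamma\in\sigma^{\vee}$ whose vanishing locus on $\sigma$ is exactly $\tau+\eta$: since $\sigma\cap\ker\gamma$ is automatically a face of $\sigma$, this will suffice.

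First I would fix a generator $v_\tau$ of $\tau$ and, using that $\eta$ is a face of $\sigma$, pick some $\beta\in\sigma^{\vee}$ with $\eta=\sigma\cap\ker\beta$. Because $\alpha(v_\tau)<0$ while $\alpha$ vanishes on $\eta$, we have $v_\tau\notin\eta$, and therefore $\beta(v_\tau)>0$. The candidate I would try is
\[
  \gamma\;:=\;-\alpha(v_\tau)\,\beta+\beta(v_\tau)\,\alpha,
\]
a positive combination of $\beta$ and $\alpha$ with coefficients tuned so that $\gamma(v_\tau)=0$.

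Next I would check the two required properties by looking at $\gamma$ on the one-dimensional faces of $\sigma$. On every one-dimensional face $\tau'\neq\tau$ both $\alpha$ and $\beta$ are non-negative, hence $\gamma\cdot\tau'\geq 0$, which together with $\gamma\cdot\tau=0$ yields $\gamma\in\sigma^{\vee}$. For the vanishing locus, on one-dimensional faces contained in $\eta$ both $\alpha$ and $\beta$ vanish, while on $\tau'\neq\tau$ not contained in $\eta$ one has $\beta\cdot\tau'>0$ (else $\tau'\subset\sigma\cap\ker\beta=\eta$) and $-\alpha(v_\tau)>0$, forcing $\gamma\cdot\tau'>0$. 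Thus $\gamma$ annihilates exactly the one-dimensional faces of $\tau+\eta$, and since $\sigma\cap\ker\gamma$ is a face of $\sigma$ recovered from its one-dimensional faces, it coincides with $\tau+\eta$.

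I do not foresee a serious obstacle. The only creative step is the choice of $\gamma$, but this is essentially forced by the twin constraints $\gamma\in\sigma^{\vee}$ and $\gamma(v_\tau)=0$ together with the signs of $\alpha(v_\tau)$ and $\beta(v_\tau)$; once the right linear combination is written down, the verifications reduce to reading off signs on the extreme rays.
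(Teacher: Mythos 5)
Your proof is correct, and it is the standard supporting-functional argument: the paper itself gives no details here (it only cites \cite[Lemma 4.7]{fibrations} for the case $\dim\eta=1$ and says the same proof extends), and your explicit $\gamma=-\alpha(v_\tau)\beta+\beta(v_\tau)\alpha$ is exactly the kind of construction that reference relies on. The sign checks on the extremal rays are all right, and the final identification of $\sigma\cap\ker\gamma$ with $\tau+\eta$ is justified because $\sigma$ is pointed (it has the one-dimensional face $\tau=\langle a\rangle$), so every face is generated by the extremal rays it contains.
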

\begin{proof}
This is proved in \cite[Lemma 4.7]{fibrations} when $\dim\eta=1$; the same proof applies to the general case.
  \end{proof}

\medskip

The \emph{index} of a locally factorial Fano variety $X$ is the divisibility of $-K_X$ in $\Pic(X)$.

  For the standard terminology and properties in birational geometry and about Mori dream spaces, we refer the reader to \cite{KollarMori,hukeel}, and we recall that smooth Fano varieties are Mori dream spaces by \cite[Cor.~1.3.2]{BCHM}.

 A {\bf contraction} is a projective, surjective morphism with connected fibers, between normal quasi-projective varieties. It can be birational or of fiber type. A contraction $f\colon X\to Y$ is $K$-negative if, for some $m\in\mathbb{N}$,
 $-mK_X$ is Cartier and $f$-ample.
 
 \medskip
 
  Let $X$ be a projective, normal, and $\Q$-factorial Mori dream space.
    As usual we denote by $\N(X)$ the real vector space of numerical equivalence classes of one-cycles in $X$ with real coefficients, and by $\Nu(X)$ the dual vector space of numerical equivalence classes of $\R$-divisors in $X$.
If $D$ is a divisor and $C$ a curve in $X$, we denote by $[D]\in\Nu(X)$ and $[C]\in\N(X)$ their numerical equivalence classes, and we  set $D^{\perp}:=[D]^{\perp}\subset\N(X)$ and $C^{\perp}:=[C]^{\perp}\subset\Nu(X)$.

A movable divisor is an effective divisor $D$ such that the stable
 base locus of the linear system $|D|$  has codimension $\geq 2$. 
We will consider the usual cones of divisors and of curves: $$\Nef(X)\subseteq\Mov(X)\subseteq\Eff(X)\subset\Nu(X)\quad\text{and}\quad
\mov(X)\subseteq \NE(X)\subset\N(X),$$ where 
all the notations are standard except maybe $\mov(X)$, which is the convex cone generated by classes of curves moving in a family covering $X$.
Since $X$ is a Mori dream space, all these cones are  rational polyhedral. An extremal ray $R$ of $\NE(X)$ is a one-dimensional face of this cone. Given  a divisor $D$ on $X$, we say that $R$ is $D$-negative (respectively $D$-positive,  $D$-trivial)  if $D\cdot R<0$ (respectively $D\cdot R>0$,  $D\cdot R=0$). We also write $\Lo(R)$ for the union in $X$ of all curves with class in $R$. 

Given a contraction $f\colon X\to Y$, we set $\NE(f):=(\ker f_*)\cap\NE(X)$, a face of $\NE(X)$ of dimension $\rho_X-\rho_Y$.
The contraction $f$ is {\bf elementary} if $\rho_X-\rho_Y=1$. An elementary contraction can be divisorial, small, or of fiber type.

An elementary contraction $f$ is {\bf of type $(a,b)$} if $\dim\Exc(f)=a$ and $\dim f(\Exc(f))=b$. If $n=\dim X$, we say that $f$ is {\bf of type $(n-1,b)^{\sm}$} if $f$ is the blow-up of a smooth, $b$-dimensional irreducible subvariety $A\subset Y_{\reg}$. Finally when $n=4$ we say that  $f$ is {\bf of type $(3,0)^{Q}$} if $\Exc(f)$ is an irreducible $3$-dimensional quadric with 
$\ma{N}_{\Exc(f)/X}\cong\ol_Q(-1)$.

A  flip is the flip of a small elementary contraction. Given  a divisor $D$ on $X$, a $D$-negative (respectively $D$-positive, $D$-trivial) flip is the flip of a small elementary contraction $f\colon X\to Y$ such that $D\cdot\NE(f)<0$
(respectively $D\cdot\NE(f)>0$, $D\cdot\NE(f)=0$). We do not assume that contractions or flips are $K$-negative, unless specified.

    A {\bf SQM} \emph{(small $\Q$-factorial modification)} is a birational map $\ph\colon X\dasharrow X'$ such that $X'$ is again projective, normal, and $\Q$-factorial, and $\ph$ is an isomophism in codimension one; since $X$ is a Mori dream space, $\ph$ always factors as a finite sequence of flips.

    A {\bf rational contraction} is a rational map $f\colon X\dasharrow Y$ that factors as $X\stackrel{\text{SQM}}{\dasharrow} X'\stackrel{f'}{\to} Y$, where $f'$ is a contraction; $f$ can be birational or of fiber type, and we say that $f$ is non-trivial if $Y\neq\{\pt\}$. We  call $f'$ a {\bf resolution} of $f$. We say that $f$ is elementary if $\rho_X-\rho_Y=1$; if $f$ is elementary and birational, then it can be divisorial or small, depending whether $\Exc(f')$ is a prime divisor or $\codim\Exc(f')>1$.

    There is a fan in $\Nu(X)$, called the {\bf Mori chamber decomposition} and denoted by $\MCD(X)$, supported on $\Mov(X)$, whose cones are in bijection with the set of rational contractions of $X$ (up to isomorphism of the target); the cone corresponding to $f\colon X\dasharrow Y$ is $f^*(\Nef(Y))$.
    \begin{remark}\label{factor}
Let $X$ be a projective, normal, and $\Q$-factorial Mori dream space, and $f\colon X\dasharrow Y$, $g\colon X\dasharrow Z$ two rational contractions. Then there exists a contraction $h\colon Z\to Y$ such that $f=h\circ g$ if and only if $f^*(\Nef(Y))\subset g^*(\Nef(Z))$.
    \end{remark}  
   A {\bf fixed prime divisor} is a prime divisor $D$ which is the stable base locus of the linear system
   $|D|$; then $[D]\in\Nu(X)$ generates a one-dimensional face of $\Eff(X)$.
   \begin{remark}[\cite{eff}, Rem.~2.19]\label{fixedprime}  Let $X$ be a projective, normal, and $\Q$-factorial Mori dream space.  The fixed prime divisors in $X$ are the exceptional divisors of divisorial elementary rational contractions of $X$.
     \end{remark}
   \begin{definition}\label{fmface}
Let $\tau$ be a face of $\Eff(X)$. We say that $\tau$ is a {\bf fixed face} if $\tau\cap\Mov(X)=\{0\}$, otherwise we say that $\tau$ is a {\bf movable face}.
\end{definition}
\begin{definition}\label{padj}
We say that two fixed prime divisors $D,E\subset X$ are {\bf adjacent} if $\langle [D],[E]\rangle$ is a fixed face of $\Eff(X)$.
   \end{definition}  
  
Let $Z\subset X$ be a closed subset. We set
$$\N(Z,X):=\iota_*(\N(Z))\subseteq\N(X),$$
where $\iota\colon Z\hookrightarrow X$ is the inclusion.
\begin{lemma}\label{easy}
Let $f\colon X\to Y$ be a contraction and $Z\subset X$ a closed subset. Then: 
$$\dim\N(Z,X)\leq \rho_X-\rho_Y+\dim\N\bigl(f(Z),Y\bigr).$$
\end{lemma}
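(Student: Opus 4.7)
The plan is to study the pushforward $f_*\colon\N(X)\to\N(Y)$ restricted to $\N(Z,X)$, and then apply rank-nullity.

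First I would recall that for any contraction $f\colon X\to Y$, the kernel of $f_*\colon\N(X)\to\N(Y)$ is the linear span of $\NE(f)$, and therefore has dimension exactly $\rho_X-\rho_Y$; this is immediate from the statement in \S\ref{notation} that $\NE(f)=(\ker f_*)\cap\NE(X)$ is a face of $\NE(X)$ of dimension $\rho_X-\rho_Y$, combined with the fact that $\NE(X)$ spans $\N(X)$.

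Next I would verify that $f_*$ carries $\N(Z,X)$ into $\N(f(Z),Y)$. Writing $\iota\colon Z\hookrightarrow X$ and $\iota'\colon f(Z)\hookrightarrow Y$ for the inclusions, the map $f\circ\iota\colon Z\to Y$ factors as $\iota'\circ(f|_Z)$ with $f|_Z\colon Z\to f(Z)$ proper (as $f$ is). Hence for every $\gamma\in\N(Z)$,
$$f_*(\iota_*\gamma)=(f\circ\iota)_*\gamma=\iota'_*\bigl((f|_Z)_*\gamma\bigr)\in\iota'_*\bigl(\N(f(Z))\bigr)=\N(f(Z),Y),$$
so the restriction $f_*|_{\N(Z,X)}$ takes values in $\N(f(Z),Y)$.

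Finally, applying rank-nullity to this restricted linear map yields
$$\dim\N(Z,X)=\dim\bigl(\N(Z,X)\cap\ker f_*\bigr)+\dim f_*(\N(Z,X))\leq(\rho_X-\rho_Y)+\dim\N(f(Z),Y),$$
which is the desired bound. There is no real obstacle here: the argument is essentially a rank-nullity count, and the only point to spell out is the compatibility of the factorization $f\circ\iota=\iota'\circ(f|_Z)$ with proper pushforward of $1$-cycles, which is automatic.
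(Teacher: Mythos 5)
Your argument is correct and is essentially the paper's own proof: both rest on the rank--nullity count for $f_*$ restricted to $\N(Z,X)$, using $\dim\ker f_*=\rho_X-\rho_Y$ and the compatibility of pushforward with the factorization through $f(Z)$. The only cosmetic difference is that the paper asserts the equality $f_*(\N(Z,X))=\N(f(Z),Y)$ while you only need (and prove) the inclusion, which suffices for the inequality.
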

\begin{proof}
The pushforward $f_*\colon\N(X)\to\N(Y)$ is a surjective linear map with kernel of dimension $\rho_X-\rho_Y$, and $f_*(\N(Z,X))=\N(f(Z),Y)$.
\end{proof}  
\begin{remark}\label{cassis}
  Let $Z\subset X$ a closed subset and $D\subset X$ a prime divisor such that $Z\cap D=\emptyset$. Then $\N(Z,X)\subset D^{\perp}\subsetneq\N(X)$.

  Indeed for every curve $C\subset Z$ we have $D\cdot C=0$, hence $[C]\in D^{\perp}$.
\end{remark}  
An {\bf ordinary double point}  is an isolated singularity $x_0$ of an $n$-dimensional variety $X$ that is locally analytically isomorphic to the vertex of the cone over a smooth $(n-1)$-dimensional quadric.
\begin{remark}\label{ODP}
  Let $X$ be a quasi-projective $4$-fold with an ordinary double point at $x_0$ and $\Sing(X)=\{x_0\}$. Let $f\colon X'\to X$ be the blow-up of $X$ at $x_0$. Then $X'$ is smooth, $\Exc(f)$ is a smooth quadric, and $f$ is a divisorial, $K$-negative elementary contraction of type $(3,0)^Q$; in particular $x_0$ is a locally factorial, terminal singularity.

  Let $A\subset X$ be a smooth irreducible surface containing $x_0$, and let $g\colon X''\to X$ be the blow-up of $A$. Then $X''$ is smooth and $g$  is a divisorial, $K$-negative elementary contraction of type $(3,2)$; we have $g^{-1}(x_0)\cong\pr^2$ while the other non-trivial fibers of $g$ are $\pr^1$'s.
\end{remark}

By a {\bf family of curves} in a projective variety $X$ we mean an irreducible closed subset $V\subset\Chow(X)$ such that for general $v\in V$, the corresponding cycle $C_v$ of $X$ is an integral curve; then for every $v\in V$ the cycle $C_v$ is a connected curve in $X$.
The numerical equivalence class $[C_v]\in\N(X)$  does not depend on $v\in V$, and we denote it by $[V]$.
The anticanonical degree of the family is $-K_X\cdot [V]$. We say that $V$ is a family of rational curves if for general $v\in V$ the curve $C_v$ is rational. We denote by $\Lo(V)$ the union of the curves $C_v$ for $v\in V$, and we say that $V$ is covering if $\Lo(V)=X$.

\bigskip

We work over the field of complex numbers, but we will occasionally also consider varieties defined over non-closed fields of characteristic zero, as follows. Let $X$ be a complex projective variety and $f\colon X\to Y$ a contraction of fiber type. The {\bf generic fiber} $X_\eta$ of $f$ is the fiber over the generic point $\eta$ of $Y$, and it is a projective variety over the field $K=\C(\eta)=\C(Y)$ of rational functions on $Y$. We will be interested in  the case where $X_{\eta}$ is a smooth del Pezzo surface;
we refer the reader to \cite{hassett} for properties of del Pezzo surfaces over non-closed fields.

\bigskip

\noindent {\bf Preliminaries on  Fano $4$-folds.}
We finally recall some preliminary results on contractions and rational contractions of $4$-folds and Fano $4$-folds.
\begin{lemma}[\cite{AW}, Th.~on p.~256; \cite{AW2}, Prop.~2.1]\label{singtarget}
  Let $X$ be a smooth projective $4$-fold and $f\colon X\to Y$ a $K$-negative elementary contraction of type $(3,2)$. Then $Y$ has at most isolated ordinary double points, in particular $Y$ has locally factorial, terminal singularities, and $S:= f(\Exc(f))$ has isolated singularities. If $y_0\in S$ is a singular point for $Y$ or for $S$, then $\dim f^{-1}(y_0)=2$.
  Moreover $f$ is the blow-up of $Y$ along $S$.\footnote{Namely the blow-up of $Y$ along the ideal sheaf $\ma{I}_{S/Y}$; $S$ is a reduced closed subscheme of $Y$ that may be singular.}
\end{lemma}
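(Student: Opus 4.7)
The plan is to deduce the statement from the general structure theorems for $K$-negative divisorial elementary contractions of smooth $4$-folds due to Andreatta--Wi\'sniewski, which I would invoke after reducing to their setting via fiber dimension analysis. Since $f$ is elementary of type $(3,2)$, a general fiber has dimension $1$, and the Ionescu--Wi\'sniewski inequality $\dim F + \dim\Exc(f)\ge\dim X + l(R) - 1$ forces the length $l(R)=1$ and, applied to an arbitrary fiber $F'$, yields $\dim F'\ge 1$; on the other hand $\dim F'\leq 2$ because $\Exc(f)$ is $3$-dimensional over the $2$-dimensional image $S$. So fiber dimensions lie in $\{1,2\}$, and I first want to pin down the loci where each occurs.

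Over the open set $U\subset S$ where $f^{-1}(y)$ is $1$-dimensional, a standard Cohen--Macaulay/flatness argument combined with the fact that $X$ is smooth and the generic fiber is a reduced $\mathbb{P}^1$ shows that $f$ is locally the smooth blow-up of a smooth surface, so $U$ is smooth in $Y$ and in $S$. Thus singularities of $Y$ or $S$ can occur only at points $y_0\in S$ with $\dim f^{-1}(y_0)=2$, which is the content of the last sentence of the statement, and these form a finite set since the $2$-dimensional fibers sweep out a closed subset of $\Exc(f)$ of dimension at most $2$ by fiber-dimension semicontinuity combined with the fact that $\Exc(f)$ is irreducible of dimension $3$. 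The classification of such $2$-dimensional fibers in \cite{AW} identifies the fiber as $\mathbb{P}^2$ (with the expected normal bundle) and yields that $Y$ has an ordinary double point at $y_0$ and that $S$ has an isolated singularity there; I would cite this directly rather than reproduce the local analytic computation.

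Once the ODP structure of the singular points of $Y$ is established, the remaining properties follow formally. An ODP of a $4$-fold is the vertex of a cone over a smooth quadric threefold, whose local ring is the complete intersection $xy-zw=0$ (up to changing coordinates); a direct divisor-class computation, or equivalently Rem.~\ref{ODP} applied to the blow-up at the ODP, shows that such a singularity is locally factorial and terminal. Finally, that $f$ is the blow-up of $Y$ along $S$ (with its reduced-but-possibly-singular scheme structure) is exactly Prop.~2.1 of \cite{AW2}: the universal property of the blow-up applies because $\Exc(f)$ is a Cartier divisor mapping onto $S$, so $f$ factors through $\mathrm{Bl}_S Y$ and then one checks that the induced map is an isomorphism using that both sides are normal and agree over $U$. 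The main obstacle, which I would not attempt to rework, is the classification of the $2$-dimensional fibers and the identification of the singularity of $Y$ as an ODP; this is the technical heart of \cite{AW, AW2} and must be cited.
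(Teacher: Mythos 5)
Your proposal is correct and matches the paper's treatment: the paper states this lemma without proof, simply attributing it to Andreatta--Wi\'sniewski (\cite{AW}, Th.~on p.~256 and \cite{AW2}, Prop.~2.1), and your argument is essentially an expanded reading of those same results, correctly deferring the technical core (the classification of the $2$-dimensional fibers, the identification of the singularity as an ordinary double point, and the blow-up description) to the citations.
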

\begin{lemma}\label{blowuporder}
  Let $Y$ be a smooth, irreducible, quasi-projective $4$-fold, and $S_1,S_2\subset Y$ two smooth, irreducible surfaces intersecting transversally in one point $y_0\in Y$. Then there is a commutative diagram:
  $$\xymatrix{X\ar[r]^{\tau_2}\ar[d]_{\tau_1}\ar[dr]^f&{X_1}\ar[d]^{\sigma_1}\\
    {X_2}\ar[r]_{\sigma_2}&Y
  }$$
  where $\sigma_i\colon X_i\to Y$ is the blow-up of $S_i$, and $\tau_i\colon X\to X_j$ is the blow-up of the transform $\w{S}_i\subset X_j$ of $S_i$, where $\{i,j\}=\{1,2\}$. The composition $f\colon X\to Y$ is a $K$-negative birational contraction with $f(\Exc(f))=S_1\cup S_2$, $f^{-1}(y)\cong\pr^1$ for every $y\in (S_1\cup S_2)\smallsetminus\{ y_0\}$, and $f^{-1}(y_0)\cong\pr^1\times\pr^1$.
\end{lemma}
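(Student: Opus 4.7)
The plan is to reduce everything to a local computation near $y_0$. Away from $y_0$ the surfaces $S_1$ and $S_2$ are disjoint, so the two iterated blow-ups trivially agree on this open set and $f$ is an isomorphism over $Y\smallsetminus(S_1\cup S_2)$. All content concentrates near $y_0$, where transversality (together with $\dim S_1 + \dim S_2 = \dim Y$) provides étale-local coordinates $x_1,\dotsc,x_4$ on $Y$ with $S_1 = V(x_1,x_2)$ and $S_2 = V(x_3,x_4)$.

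First I would compute the strict transform $\w S_2 \subset X_1$: in a standard chart of $\sigma_1$ with coordinates $(s,t,x_3,x_4)$ and $\sigma_1(s,t,x_3,x_4) = (s,st,x_3,x_4)$, one reads off $\w S_2 = V(x_3,x_4)$, so $\w S_2$ is a smooth codimension-$2$ subvariety of the smooth $4$-fold $X_1$; the restriction $(s,t)\mapsto(s,st)$ exhibits $\sigma_1\vert_{\w S_2}\colon\w S_2 \to S_2$ as the blow-up at $y_0$, and $\w S_2 \cap \Exc(\sigma_1)$ is the exceptional $\pr^1$ of this blow-up. Symmetrically for $\w S_1\subset X_2$. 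Hence both $\Bl_{\w S_2}X_1$ and $\Bl_{\w S_1}X_2$ are smooth $4$-folds.

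The commutativity of the square (that is, the isomorphism $\Bl_{\w S_2}X_1 \cong \Bl_{\w S_1}X_2$ over $Y$) is the only step that is not entirely formal, and is the main (mild) obstacle. I would prove it by identifying both iterated blow-ups, over a neighborhood of $y_0$, with the closure in $Y\times\pr^1\times\pr^1$ of the graph of the rational map $y\mapsto([x_1\!:\!x_2],[x_3\!:\!x_4])$: both are cut out by the equations $x_1v = x_2u$ and $x_3v' = x_4u'$ in $Y\times\pr^1_{[u:v]}\times\pr^1_{[u':v']}$, which is manifestly smooth and symmetric in the two $\pr^1$ factors. This is the standard commutativity of blow-ups along transversely intersecting smooth centers, applied in the extremal case where the centers meet in a single point.

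From the graph description the fiber structure of $f$ is immediate: a point over $Y\smallsetminus(S_1\cup S_2)$, a $\pr^1$ over each point of $(S_1\cup S_2)\smallsetminus\{y_0\}$ (the $\sigma_i$-fiber there is disjoint from the second blow-up center), and $\pr^1\times\pr^1$ over $y_0$. For $K$-negativity, note that $E_1 := \Exc(\sigma_1)$ does not contain the surface $\w S_2$, so $\tau_2^*E_1 = \w E_1$ (no exceptional contribution); therefore $K_X = f^*K_Y + \w E_1 + \hat E_2$ with $\hat E_2 := \Exc(\tau_2)$. A one-line intersection computation on each type of $f$-contracted curve — a ruling of $\hat E_2$, the strict transform of a $\sigma_1$-fiber over $S_1\smallsetminus\{y_0\}$, and a ruling of $\pr^1\times\pr^1$ over $y_0$ — yields $-K_X\cdot C = 1$, whence $-K_X$ is $f$-ample.
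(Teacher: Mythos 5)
Your proof is correct, but it establishes the key point --- the commutativity of the square --- by a genuinely different route than the paper. The paper never writes down coordinates: it computes $\ma{N}_{\w{S}_2/X_1}=(\sigma_{1|\w{S}_2})^*\ma{N}_{S_2/Y}$, deduces that this bundle is trivial on the curve $\Gamma=\w{S}_2\cap\Exc(\sigma_1)$, identifies $f^{-1}(y_0)=\tau_2^{-1}(\Gamma)\cong\pr^1\times\pr^1$ as a projectivized normal bundle, and then \emph{produces} $\tau_1$ as the contraction of the second extremal ray $\R_{\geq 0}[C]$ of $\NE(f)=\NE(\tau_2)+\R_{\geq 0}[C]$, asserting (without detail) that this contraction is the blow-down to $X_2$. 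You instead exhibit both iterated blow-ups, etale-locally near $y_0$, as the closure of the graph of $y\mapsto([x_1\!:\!x_2],[x_3\!:\!x_4])$ in $Y\times\pr^1\times\pr^1$, which is symmetric by inspection; this makes the commutativity fully explicit (arguably more convincingly than the paper's ``it is not difficult to see''), at the price of needing to glue the local identification over $y_0$ with the trivial one over $Y\smallsetminus\{y_0\}$ --- routine, since an isomorphism of integral separated $Y$-schemes is determined by its restriction to a dense open set, but worth a sentence. Your verification that $x_3,x_4$ cut out $\w{S}_2$ scheme-theoretically in $X_1$ (so that the graph closure really is the blow-up) is the one local computation that must not be skipped, and you do address it. The $K$-negativity computations are essentially identical in the two proofs; the paper's normal-bundle route additionally records $-K_{X|f^{-1}(y_0)}\cong\ol_{\pr^1\times\pr^1}(1,1)$ and the structure of $\NE(f)$, which it reuses later, whereas your argument is more elementary and self-contained.
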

\begin{proof}
  Let us consider the blow-up $\sigma_1\colon X_1\to Y$ of $S_1$. The surface $S_2$ is blown-up at $y_0$, and its transform $\w{S}_2\subset X_1$ intersects the exceptional divisor $\Exc(\sigma_1)$ along a smooth irreducible curve $\Gamma$, which is the exceptional curve of the blow-up $\sigma_{1|\w{S}_2}\colon \w{S}_2\to S_2$.
  We also have $\ma{N}_{\w{S}_2/X_1}=(\sigma_{1|\w{S}_2})^*\ma{N}_{S_2/Y}$, therefore
  $(\ma{N}_{\w{S}_2/X_1})_{|\Gamma}\cong\ol_{\pr^1}^{\oplus 2}$.

  Now let $\tau_2\colon X\to X_1$ be the blow-up of $\w{S}_2$, and $f:=\tau_2\circ\sigma_1\colon X\to Y$. We have  $f^{-1}(y)\cong\pr^1$ and $-K_X\cdot f^{-1}(y)=1$ for every $y\in (S_1\cup S_2)\smallsetminus\{y_0\}$.

  Moreover $f^{-1}(y_0)=(\tau_2)^{-1}(\Gamma)=\pr_{\Gamma}(\ma{N}_{\w{S}_2/X_1}^{\vee})_{|\Gamma}\cong\pr^1\times\pr^1$, and $\Exc(\tau_2)_{|f^{-1}(y_0)}\cong\ol_{\pr^1\times\pr^1}(-1,0)$. If $C\subset f^{-1}(y_0)$ is a curve $\{\pt\}\times\pr^1$ with $\tau_2(C)=\Gamma$, then $\Exc(\tau_2)\cdot C=0$ and hence $-K_X\cdot C=-K_Y\cdot \Gamma=1$, so that $-K_{X|f^{-1}(y_0)}\cong\ol_{\pr^1\times\pr^1}(1,1)$. In particular we see that $f$ is $K$-negative, and $\NE(f)=\NE(\tau_2)+\R_{\geq 0}[C]$. It is not difficult to see that $C$ is numerically equivalent to the transform of a general fiber of $\sigma_{1|\Exc(\sigma_1)}$, and that the contraction of the extremal ray $\R_{\geq 0}[C]$ is  $\tau_1\colon X\to X_2$ as in the statement.
\end{proof}
 \begin{lemma}\label{blowupformula}
    Let $X$ be a projective $4$-fold with locally factorial and terminal singularities, and $f\colon X\to Y$ a $K$-negative elementary contraction of type $(3,2)$; set $E:=\Exc(f)$ and $S:=f(E)\subset Y$. Then
     $\chi(X,-K_X)=\chi(Y,-K_Y)-\chi(S,-K_{Y|S})$.
   \end{lemma}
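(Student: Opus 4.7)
The plan is to write $-K_X = f^*(-K_Y) - E$, split $\chi(X,-K_X)$ via the short exact sequence of $\ol_E$, and then apply the projection formula twice: once for $f$, once for its restriction $g := f|_E \colon E \to S$.

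Since $f$ is $K$-negative elementary of type $(3,2)$, Lemma \ref{singtarget} tells us that $f$ is the blow-up of $Y$ along $S$, and that over the generic point of $S$ it is a standard $\pr^1$-bundle. Writing $K_X = f^*K_Y + aE$, intersecting with a general fiber $F\cong\pr^1$ of $f$ (on which $f^*K_Y \cdot F = 0$, $E\cdot F = -1$, and $-K_X\cdot F = 1$) forces $a = 1$, so $-K_X = f^*(-K_Y) - E$. Tensoring $0\to\ol_X(-E)\to\ol_X\to\ol_E\to 0$ with $\ol_X(f^*(-K_Y))$ and taking $\chi$ gives
$$\chi(X,-K_X)= \chi(X,f^*(-K_Y)) - \chi(E, f^*(-K_Y)|_E).$$

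For the first term, $X$ has terminal (hence rational) singularities, and by Lemma \ref{singtarget} the only singularities of $Y$ are isolated ODPs, which are also rational; thus $Rf_*\ol_X = \ol_Y$, and the projection formula yields $\chi(X, f^*(-K_Y)) = \chi(Y,-K_Y)$. For the second term, $g\colon E\to S$ is proper, surjective and has connected fibers; over the smooth locus of $S$ (contained in the smooth locus of $Y$) it is a $\pr^1$-bundle coming from $\pr(\ma{N}_{S/Y}^{\vee})$, while over the finitely many singular points of $Y$ or $S$ the fibers are $2$-dimensional (again by Lemma \ref{singtarget}; over an ODP of $Y$ the fiber is a $\pr^2$ by Remark \ref{ODP}). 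I would check that $Rg_*\ol_E = \ol_S$ by combining the $\pr^1$-bundle structure on the open part with the triviality of the higher cohomology of $\ol$ on the $2$-dimensional special fibers; given this, the projection formula gives $\chi(E, g^*(-K_Y|_S)) = \chi(S,-K_Y|_S)$, completing the proof.

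The main obstacle is verifying $Rg_*\ol_E = \ol_S$ over the finitely many singular points of $Y$ and of $S$, where $g$ is no longer equidimensional. The cleanest route is to identify a local analytic model for the blow-up at each such point (using the ODP description for singularities of $Y$, and the fact that $f$ is an isomorphism on a neighborhood of a smooth point of $Y$ on $S$) and then invoke a cohomology-and-base-change argument; alternatively, one can appeal to the fact that both $E$ and $S$ have rational singularities, together with the Leray spectral sequence and the known vanishing of $H^{>0}$ of $\ol$ on $\pr^1$ and $\pr^2$ fibers, to conclude $R^i g_*\ol_E = 0$ for $i>0$.
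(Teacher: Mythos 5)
Your decomposition is the right one (both you and the paper exploit $-K_X+E=f^*(-K_Y)$ together with the ideal-sheaf sequence of $E$), and your treatment of the first term $\chi(X,f^*(-K_Y))=\chi(Y,-K_Y)$ is fine, since $X$ and $Y$ both have rational singularities. But the step you yourself flag as the main obstacle, $Rg_*\ol_E=\ol_S$, is a genuine gap, and neither of your proposed routes closes it cleanly: cohomology-and-base-change is not available because $g$ fails to be flat exactly at the finitely many points where the fiber dimension jumps to $2$, and ``$E$ and $S$ have rational singularities'' is not the relevant tool for a morphism of fiber type (rational singularities control $R\pi_*\ol$ for \emph{birational} $\pi$, not for a conic-bundle-type map with special fibers). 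As stated, the crucial identification of the second term is asserted rather than proved.

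The paper sidesteps this entirely: it applies relative Kawamata--Viehweg vanishing in the form $R^if_*\ol_X(-K_X)=0$ for $i>0$ (\cite[Cor.~2.68]{KollarMori}, using that $-K_X$ is $f$-ample and $X$ is terminal), which first gives $\chi(X,-K_X)=\chi(Y,f_*\ol_X(-K_X))$, and then pushes forward the sequence
$0\to\ol_X(-K_X)\to f^*\ol_Y(-K_Y)\to f^*\ol_Y(-K_Y)\otimes\ol_E\to 0$
to obtain a \emph{short exact} sequence $0\to f_*\ol_X(-K_X)\to\ol_Y(-K_Y)\to\ol_Y(-K_Y)\otimes\ol_S\to 0$ on $Y$; the surjectivity forced by the $R^1$-vanishing is precisely what identifies $f_*(f^*\ol_Y(-K_Y)\otimes\ol_E)$ with $\ol_Y(-K_Y)\otimes\ol_S$, with no fiber-by-fiber analysis. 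If you want to salvage your two-step version, the correct way to get $f_*\ol_E=\ol_S$ and $R^{>0}f_*\ol_E=0$ is to push forward $0\to\ol_X(-E)\to\ol_X\to\ol_E\to 0$ and observe that $\ol_X(-E)=\omega_X\otimes f^*\ol_Y(-K_Y)$, so its higher direct images vanish by Grauert--Riemenschneider; together with $R^{>0}f_*\ol_X=0$ this kills $R^{>0}f_*\ol_E$ and exhibits $f_*\ol_E$ as a quotient of $\ol_Y$, hence equal to $\ol_S$ since $E$ is reduced and $S$ is reduced. Either way, the argument that actually works is a global pushforward with a vanishing theorem, not a local study of the two-dimensional fibers.
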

   \begin{proof}
     We note that $Y$ still has locally factorial and terminal singularities.
     
     We have $R^if_*\ol_X(-K_X)=0$ for every $i>0$ \cite[Cor.~2.68]{KollarMori}, thus $\chi(X,-K_X)=\chi(Y,f_*\ol_X(-K_X))$.
     Consider the exact sequence on $X$:
 $$0\la \ol_X(-K_X)\la\ol_X(-K_X+E)\la\ol_X(-K_X+E)\otimes\ol_E \la 0.$$
 We have $-K_X+E=f^*(-K_Y)$, thus using  $f_*\ol_X=\ol_Y$, $R^1f_*\ol_X(-K_X)=0$, and the projection formula, via $f_*$ we get
 $$ 0\la f_*\ol_X(-K_X)\la \ol_Y(-K_Y)\la \ol_Y(-K_Y)\otimes\ol_S\la 0,$$ therefore $\chi(Y,f_*\ol_X(-K_X))=\chi(Y,-K_Y)-\chi(S,-K_{Y|S})$.
\end{proof}
\begin{thm}[\cite{kawasian}, Th.~5.2; \cite{floris}; Th.~1.2]\label{h0}
    Let $X$ be a smooth Fano $4$-fold. Then 
    $h^0(X,-K_X)\geq 2$.
    \end{thm}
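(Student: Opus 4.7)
The plan is to combine Kawamata--Viehweg vanishing with Hirzebruch--Riemann--Roch and positivity of certain Chern numbers on Fano varieties to reduce the statement to an explicit numerical estimate. Since $-K_X$ is ample on the smooth variety $X$, Kawamata--Viehweg vanishing gives $H^i(X,\mathcal{O}_X(-K_X))=0$ for $i>0$, so
$$h^0(X,-K_X)=\chi(X,-K_X).$$
Moreover $\chi(X,\mathcal{O}_X)=1$ since $X$ is rationally connected (being smooth Fano), and all higher cohomology of the structure sheaf vanishes. The problem thus becomes showing $\chi(X,-K_X)\geq 2$.

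The next step is to apply Hirzebruch--Riemann--Roch on the $4$-fold $X$, expanding $\operatorname{ch}(\mathcal{O}_X(-K_X))\cdot\operatorname{td}(X)$ in codimension $4$. Using the identity $c_1(X)=-K_X$ many cross-terms collapse, and one can put the formula in a shape essentially governed by $(-K_X)^4$ and $(-K_X)^2\cdot c_2(X)$. Both quantities are non-negative: $(-K_X)^4>0$ because $-K_X$ is ample, and $(-K_X)^2\cdot c_2(X)\geq 0$ by Miyaoka's theorem on the generic nefness of $c_2$ for varieties with nef anticanonical class (equivalently, by Kawamata-type estimates on Chern numbers of Fano varieties). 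Combining these should give $\chi(X,-K_X)>1$, and the integrality of $\chi$ would then upgrade this to $\chi(X,-K_X)\geq 2$, as required.

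The main obstacle is ensuring that the Riemann--Roch computation does not leave behind uncontrolled sign-indefinite contributions. Depending on the precise form of the Todd class in degree $4$, mixed terms such as $(-K_X)\cdot c_3(X)$ and $c_4(X)$ must be accounted for: the class $c_4(X)$ is the topological Euler number $\chi_{\mathrm{top}}(X)$, which is positive for a rationally connected smooth projective variety, but a positivity statement for $(-K_X)\cdot c_3(X)$ is less standard and would have to be argued separately. An alternative route, which I expect is closer to the one used in \cite{kawasian} and \cite{floris}, is to first establish the weaker non-vanishing $h^0(X,-K_X)\geq 1$ via Kawamata's non-vanishing theorem and then promote the bound to $\geq 2$ by producing a non-trivial deformation of an effective anticanonical divisor: for example by applying the base-point-free theorem to $-mK_X$ for suitable $m$ and exploiting adjunction to a general member to move the pencil back to $|-K_X|$, or by invoking boundedness of smooth Fano $4$-folds to reduce to a finite family of cases in which the estimate can be checked directly.
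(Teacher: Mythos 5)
The reduction $h^0(X,-K_X)=\chi(X,-K_X)$ via Kodaira vanishing and the Riemann--Roch computation are fine; in fact the formula closes up more neatly than you fear, namely
$$\chi(X,-K_X)=\frac{(-K_X)^4}{6}+\frac{(-K_X)^2\cdot c_2(X)}{12}+\chi(X,\ol_X),$$
with $\chi(X,\ol_X)=1$, so the terms $(-K_X)\cdot c_3(X)$ and $c_4(X)$ you worry about are entirely absorbed into $\chi(X,\ol_X)$ and cause no trouble. The genuine gap is the inequality $(-K_X)^2\cdot c_2(X)\geq 0$, which is the whole content of the problem and which you justify by appeal to ``Miyaoka's theorem on the generic nefness of $c_2$ for varieties with nef anticanonical class''. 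There is no such theorem of Miyaoka: his generic semipositivity concerns the cotangent sheaf of \emph{non-uniruled} varieties, whereas a Fano $4$-fold is uniruled, so that circle of results says nothing here. The nonnegativity of $c_2(X)\cdot H_1\cdots H_{n-2}$ for varieties with nef anticanonical class was a well-known open problem at the time of the two references cited for this theorem --- which is precisely why Kawamata's effective non-vanishing for Fano $4$-folds is a substantial theorem rather than a two-line Riemann--Roch estimate. If you wish to use such a positivity statement you must invoke an actual (and much more recent) theorem on generic nefness of tangent sheaves, and even then explain why nefness on general complete-intersection curves yields a degree-two Chern class inequality.

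Your ``alternative route'' gestures at the correct strategy but does not carry it out. The non-vanishing theorem underlying the base-point-free theorem produces sections of $mD+\lceil A\rceil$ for $m\gg 0$; it does not give $h^0(X,-K_X)\geq 1$, which is itself the content of \cite[Th.~5.2]{kawasian}. The argument behind the statement as cited runs as follows: Kawamata proves that $|-K_X|\neq\emptyset$ and that a general member $Y\in|-K_X|$ has canonical singularities; by adjunction $Y$ is a Gorenstein canonical $3$-fold with $K_Y=0$ and $(-K_X)_{|Y}$ ample; the restriction sequence together with $H^1(X,\ol_X)=0$ gives $h^0(X,-K_X)=1+h^0(Y,(-K_X)_{|Y})$; and effective non-vanishing for ample divisors on canonical threefolds with numerically trivial canonical class (also due to Kawamata) gives $h^0(Y,(-K_X)_{|Y})\geq 1$. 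This is how \cite{floris} reaches the bound $2$. ``Producing a non-trivial deformation of an anticanonical divisor'' and ``reducing to a finite family by boundedness'' are not substitutes for these steps; in particular boundedness does not furnish a list of cases on which the estimate could be checked. Note finally that the paper offers no proof of this statement: it is quoted from \cite{kawasian} and \cite{floris}, so the comparison above is with those sources.
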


 Let $X$ be a normal projective $4$-fold. An {\bf exceptional plane} is a surface $L\subset X_{\reg}$ such that $L\cong\pr^2$ and $\ma{N}_{L/X}\cong\ol_{\pr^2}(-1)^{\oplus 2}$; we denote by $C_L\subset L$ a curve corresponding to a line in $\pr^2$. An {\bf exceptional line} is a smooth rational curve $\ell\subset X_{\reg}$ such that $\ma{N}_{\ell/X}\cong\ol_{\pr^1}(-1)^{\oplus 3}$. Note that $K_X\cdot C_L=-1$ while $K_X\cdot\ell=1$.
\begin{lemma}[\cite{kawsmall}, Th.~1.1]\label{kawamata}
Let $X$ be a smooth projective $4$-fold and $f\colon X\to Y$ a small elementary $K$-negative contraction. Then $\Exc(f)$ is a finite disjoint union of exceptional planes.
  \end{lemma}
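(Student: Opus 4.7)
The plan is to proceed in three stages: first pin down the dimensions, then identify each irreducible component of $\Exc(f)$ as a copy of $\pr^2$ with the prescribed normal bundle, and finally show that distinct components are pairwise disjoint.

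Since $f$ is small, $\dim\Exc(f)\leq\dim X-2=2$. Let $F$ be an irreducible component of $\Exc(f)$, and let $R=\NE(f)\subset\NE(X)$ be the contracted ray, with length $\ell(R):=\min\{-K_X\cdot C\,|\,C\text{ rational, }[C]\in R\}$. The Ionescu-Wisniewski inequality $\dim F+\dim\Exc(f)\geq\dim X+\ell(R)-1$ forces $\dim F=\dim\Exc(f)=2$, $\ell(R)=1$, and each such $F$ to be contracted to a single point $y_0\in Y$.

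Next, I would identify $F$ explicitly. Let $V$ be the minimal covering family of rational curves with class in $R$; a general $C_v$ is smooth with $-K_X\cdot C_v=\ell(R)=1$, and $\ma{N}_{C_v/X}$ has degree $-1$. Minimality of $\ell(R)$ prevents $C_v$ from degenerating into a cycle supported on $R$ with a component of strictly smaller positive $-K_X$-degree, so the family deforms inside $X$ with image sweeping out $F$. Because $F$ is contracted to a point by the $K$-negative ray, $-K_{X|F}$ is ample on $F$, so $F$ is a projective surface covered by a family of rational curves of anticanonical degree one, lying inside the smooth ambient $4$-fold $X$. A local analysis in the spirit of Ando-Wisniewski-Kawamata then pins down $F\cong\pr^2$ with the $C_v$ playing the role of lines. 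Adjunction combined with $K_X\cdot C_v=-1$ and $K_{\pr^2}\cdot C_v=-3$ yields $\det\ma{N}_{F/X}\cdot C_v=-2$, and smoothness of $X$ along $F$ together with the semicontinuity of the splitting type of $\ma{N}_{C_v/X}$ as $v$ varies in $V$ rules out every splitting of $\ma{N}_{F/X}$ other than $\ol_{\pr^2}(-1)^{\oplus 2}$.

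Finally, two distinct irreducible components $F_1,F_2$ of $\Exc(f)$ must be disjoint: if $f(F_1)\neq f(F_2)$ this is automatic, while if $f(F_1)=f(F_2)$, any curve $\Gamma\subset F_1\cap F_2$ would be a line in both copies of $\pr^2$, and the two normal bundle sequences $0\to\ma{N}_{\Gamma/F_i}\to\ma{N}_{\Gamma/X}\to\ma{N}_{F_i/X|\Gamma}\to 0$ would produce incompatible splittings of $\ma{N}_{\Gamma/X}$; an isolated intersection point is excluded by the smoothness of $X$ at that point together with the negativity of both $\ma{N}_{F_i/X}$. The main obstacle throughout is the central step in Stage 2: extracting the precise identification $F\cong\pr^2$ and the splitting $\ma{N}_{F/X}\cong\ol_{\pr^2}(-1)^{\oplus 2}$ from bare numerical data. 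This is exactly where Kawamata's deformation-theoretic analysis in \cite{kawsmall} is essential, and cannot be replaced by formal cone-theoretic manipulations.
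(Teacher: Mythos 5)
The paper does not prove this lemma: it is quoted as Theorem~1.1 of Kawamata's paper \cite{kawsmall} and used as a black box, so there is no internal argument to compare yours against. Judged on its own terms, your Stage~1 is correct and standard: the Ionescu--Wi{\'s}niewski inequality combined with $\dim\Exc(f)\leq 2$ forces $\ell(R)=1$ and shows that every irreducible component of $\Exc(f)$ is a two-dimensional fiber. But the proposal is not a proof, for two reasons.

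First, as you yourself concede, Stage~2 --- the identification $F\cong\pr^2$ together with $\ma{N}_{F/X}\cong\ol_{\pr^2}(-1)^{\oplus 2}$ --- \emph{is} the content of Kawamata's theorem, and you supply no argument for it beyond the numerical data $-K_X\cdot C_v=1$; a priori $F$ could be a quadric, a Hirzebruch surface, or a non-normal surface swept out by the curves $C_v$, and eliminating these possibilities requires the vanishing theorems and the analysis of $f_*\ol_X$ and of the scheme-theoretic fiber that occupy the bulk of \cite{kawsmall}. (Note also that you cannot speak of adjunction on $F$ or of the splitting type of $\ma{N}_{F/X}$ before knowing $F$ is smooth.) Second, Stage~3 has a genuine flaw even granting Stage~2: a curve in $F_1\cap F_2$ need not be a line in either plane, and, more seriously, two smooth surfaces in a smooth $4$-fold meet in isolated points precisely in the expected-dimension situation, so ``negativity of both normal bundles'' does not exclude $F_1\cap F_2=\{p\}$; some actual argument (in Kawamata's treatment, the irreducibility of the non-trivial fibers, which comes out of the same fiber-theoretic analysis as the identification of the components) is needed here. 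In sum, the proposal correctly reduces the statement to the hard step and then outsources that step to the very reference the lemma cites, while the disjointness argument as written would not go through.
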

  \begin{lemma}[\cite{eff}, Rem.~3.6]\label{SQMFano}
  Let  $X$ be a smooth Fano $4$-fold and $\ph\colon X\dasharrow X'$ a SQM.
  We have the following:
  \begin{enumerate}[$(a)$]
 \item $X'$ is smooth, $X\smallsetminus\dom(\ph)=L_1\cup\cdots\cup L_r$ where $L_i$ are pairwise disjoint exceptional planes,
and
$X'\smallsetminus\dom(\ph^{-1})=\ell_1\cup\cdots\cup\ell_r$
where $\ell_i$ are pairwise disjoint exceptional lines.
\item
  Let  $C\subset X'$ be an irreducible curve, different from $\ell_1,\dotsc,\ell_r$, and intersecting $\ell_1\cup\cdots\cup\ell_r$ in $s\geq 0$ points; then $-K_{X'}\cdot C\geq 1+s$.
  \item If $-K_{X'}\cdot C= 1$, then  $C\cap(\ell_1\cup\cdots\cup\ell_r)=\emptyset$.
\end{enumerate}  
\end{lemma}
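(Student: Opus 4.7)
My plan is to factor the SQM $\ph$ as a sequence of flips of the Atiyah-style form appearing in Lemma~\ref{kawamata}, and to deduce (b) and (c) via a discrepancy computation on a common resolution.

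For (a), since $X$ is Fano---hence a Mori dream space by \cite{BCHM}---the SQM $\ph$ factors through wall-crossings in the Mori chamber decomposition of $\Mov(X)$ as a chain
\[X = X_0 \dasharrow X_1 \dasharrow \cdots \dasharrow X_k = X'\]
whose steps are flips of small elementary contractions $f_i\colon X_i\to Z_i$. I would choose a path in $\Mov(X)^\circ$ from a point of $\Nef(X)^\circ$ to a point of $\ph^*\Nef(X')$ so that each $f_i$ is $K_{X_i}$-negative; this holds for $f_0$ because $-K_X$ is ample, and one propagates the property inductively by analysing each flip. Lemma~\ref{kawamata} then says that every $\Exc(f_i)$ is a disjoint union of exceptional planes, and the standard local picture of such a flip---blow up a plane $L\cong\pr^2$ with $\ma{N}_{L/X_i}\cong\ol_{\pr^2}(-1)^{\oplus 2}$, obtaining an exceptional divisor isomorphic to $\pr^2\times\pr^1$, then contract the other ruling to recover a curve $\ell\cong\pr^1$ with $\ma{N}_{\ell/X_{i+1}}\cong\ol_{\pr^1}(-1)^{\oplus 3}$---shows that $X_{i+1}$ is smooth. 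After cancelling any flip later reversed, the residual planes $L_1,\dots,L_r\subset X$ and lines $\ell_1,\dots,\ell_r\subset X'$ correspond bijectively to independent flips, from which one reads their pairwise disjointness.

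For (b), I would build a smooth common resolution $\pi\colon W\to X$, $\pi'\colon W\to X'$ of $\ph$ by iterating the blow-up diagrams of the individual flips. Its exceptional locus decomposes into $r$ disjoint divisors $G_1,\dots,G_r$, each isomorphic to $\pr^1\times\pr^2$; the map $\pi$ contracts $G_i$ via the second projection onto $L_i\cong\pr^2$ (discrepancy $1$, as $L_i$ has codimension $2$), while $\pi'$ contracts $G_i$ via the first projection onto $\ell_i\cong\pr^1$ (discrepancy $2$, as $\ell_i$ has codimension $3$). Comparing canonical classes yields
\[(\pi')^*(-K_{X'}) = \pi^*(-K_X) + \sum_{i=1}^r G_i.\]
For an irreducible curve $C\subset X'$ with $C\neq\ell_i$ meeting $\ell_1\cup\cdots\cup\ell_r$ in $s$ points, let $\w{C}\subset W$ be its strict transform under $\pi'$. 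Then $\w{C}\not\subset G_i$ for any $i$, so $\sum_i G_i\cdot\w{C}=s$; moreover $C':=\pi_*\w{C}$ is an integral curve in $X$, since $\pi|_{\w{C}}$ is birational onto its image. Intersecting the display with $\w{C}$ gives
\[-K_{X'}\cdot C = -K_X\cdot C' + s \geq 1+s,\]
as $-K_X$ is ample and Cartier on the smooth Fano $X$, whence $-K_X\cdot C'\geq 1$.

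Part (c) is immediate from (b): if $-K_{X'}\cdot C=1$, then $1\geq 1+s$ forces $s=0$. The main obstacle is the $K$-negative factorization in (a)---checking that Lemma~\ref{kawamata} applies at every step and that the net exceptional loci consist of pairwise disjoint planes in $X$ and lines in $X'$; once this factorization and the resulting common resolution are established, (b) and (c) reduce to the discrepancy bookkeeping above.
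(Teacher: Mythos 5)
First, note that the paper does not actually prove this lemma: it is imported from \cite[Rem.~3.6]{eff}, so your attempt can only be measured against the standard argument there, which your strategy essentially reproduces. Granted part $(a)$, your parts $(b)$ and $(c)$ are correct: the common resolution of the standard $(2,1)$-flip is indeed the simultaneous blow-up, the discrepancies $1$ and $2$ are right, and the inequality goes through (the only slips are harmless ones in your favour: $\sum_i G_i\cdot\w{C}$ is only $\geq s$ rather than $=s$, and $\pi_*\w{C}$ may be a multiple $d\,[\pi(\w{C})]$ with $d\geq 1$ rather than an integral curve). The $K$-negativity of every flip in the factorization is also true, but ``one propagates the property inductively'' should be replaced by the actual mechanism: take the straight segment $D_t=(1-t)(-K_X)+tA$ with $A$ general in the interior of $\ph^*\Nef(X')$; at each wall crossed at $t_i$ with flipping ray $R_i$ one has $D_t\cdot R_i>0$ for $t$ just below $t_i$ and $D_{t_i}\cdot R_i=0$, so by linearity $D_0\cdot R_i=-K_{X_i}\cdot R_i>0$.

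The genuine gap is the disjointness statement in $(a)$, which you flag but do not close, and on which your proof of $(b)$ silently depends. A plane $L'$ flipped at step $i\geq 1$ lives in $X_i$, and to conclude that it contributes an exceptional plane of $X$ disjoint from the earlier $L_j$ you must show that $L'$ avoids the exceptional lines $\ell$ created by the earlier flips; ``cancelling reversed flips'' does not address this, since a later flipping plane could a priori meet an earlier flipped line without either flip undoing the other. (That $\ell\not\subset L'$ is easy by degree reasons, since every curve in $L'\cong\pr^2$ has positive anticanonical degree while $-K_{X_i}\cdot\ell=-1$; but $\ell\cap L'=\emptyset$ is not.) The standard fix is to prove $(a)$ and $(b)$ simultaneously by induction on the number of flips: assuming $(b)$ for the partial SQM $X\dasharrow X_i$, a line $C_{L'}\subset L'$ through a putative point of $L'\cap\ell$ satisfies $-K_{X_i}\cdot C_{L'}=1$ yet meets $\ell$, contradicting $(b)$; hence each new plane lies in the locus where $X_i\dasharrow X$ is an isomorphism and descends to $X$ as claimed, the common resolution acquires the product structure you use, and $(b)$ for $X\dasharrow X_{i+1}$ follows from your discrepancy computation applied to the single flip $X_i\dasharrow X_{i+1}$ together with the inductive hypothesis. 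As written, your proof of $(a)$ is incomplete and your proof of $(b)$ presupposes exactly the structure that the missing step provides.
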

\begin{lemma}[\cite{eff}, Rem.~3.7]\label{Kneg}
  Let  $X$ be a smooth Fano $4$-fold and $f\colon X\dasharrow Y$ a rational contraction. Then there exists a resolution  $f'\colon X'\to Y$
of $f$ such that $f'$ is $K$-negative.
\end{lemma}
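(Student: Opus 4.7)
The plan is to exploit the Mori chamber decomposition $\MCD(X)$ of $\Mov(X)$, which exists since $X$ is a Mori dream space by \cite{BCHM}. As recalled in Section~\ref{notation}, the rational contraction $f$ corresponds to a cone $\tau := f^*\Nef(Y)$ in $\MCD(X)$, and by Remark~\ref{factor} a resolution $f' \colon X' \to Y$ of $f$ exists for any SQM $X\dasharrow X'$ whose nef chamber $\Nef(X')$ has $\tau$ as a face; so the problem is to choose $X'$ carefully. (If $f$ is itself an SQM, i.e.\ $\rho_X = \rho_Y$, then any resolution is an isomorphism and $K$-negativity is vacuous, so I henceforth assume $f$ is not an SQM.)

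The construction I would use is to fix a class $p$ in the relative interior of $\tau$ and, for $\epsilon > 0$, consider $p_\epsilon := p + \epsilon[-K_X]\in\Nu(X)$. Since $\tau\subset\Mov(X)$, $[-K_X]$ lies in the interior of $\Nef(X)$ (as $X$ is Fano) and hence in $\Mov(X)$, and $\Mov(X)$ is convex, each $p_\epsilon$ lies in $\Mov(X)$. For all but finitely many values of $\epsilon>0$, $p_\epsilon$ avoids the walls of $\MCD(X)$ crossed by the segment from $p$ to $[-K_X]$, so on some interval $(0,\epsilon_0)$ it lies in the interior of a single maximal chamber $\Nef(X')$, with $X\dasharrow X'$ an SQM. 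Letting $\epsilon\to 0^+$, the limit $p$ lies in the relative interior of $\tau$, and since $\MCD(X)$ is a fan this forces $\tau$ to be a face of $\Nef(X')$; this $X'$ is my candidate, giving $f'\colon X'\to Y$.

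To verify $K$-negativity, I need $-K_{X'}\cdot R > 0$ for every extremal ray $R$ of $\NE(X')$ contained in the face $\NE(f')$. Since $\Nef(Y)$ spans $\Nu(Y)$ one has $\NE(f') = \NE(X')\cap\tau^\perp$, and since $p$ is in the relative interior of $\tau$, such a ray is characterized by $R\cdot p = 0$; therefore
$$R\cdot p_\epsilon = \epsilon\, R\cdot [-K_{X'}],$$
where I use $[-K_X] = [-K_{X'}]$ under the numerical identification $\Nu(X)=\Nu(X')$ induced by the SQM. Because $p_\epsilon$ sits in the \emph{interior} of $\Nef(X')$, $R\cdot p_\epsilon > 0$, and dividing by $\epsilon$ gives the required strict inequality. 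The essential use of the Fano hypothesis is in this last step: $[-K_X]$ lies in the interior of the chamber $\Nef(X)$ rather than on a wall, which is precisely what ensures that the perturbation $p+\epsilon[-K_X]$ lands in the strict interior of an adjacent chamber, converting the tautological nef inequalities $R\cdot p_\epsilon\geq 0$ into the strict ones demanded by $K$-negativity. I do not foresee further obstacles beyond setting up this local chamber picture correctly.
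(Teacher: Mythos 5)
The paper does not prove this lemma but only cites \cite[Rem.~3.7]{eff}, so there is no in-paper argument to compare with; I assess your proposal on its own terms. The overall strategy --- perturbing a class $p$ in the relative interior of $\tau=f^*\Nef(Y)$ in the direction $[-K_X]$, landing in the interior of a maximal chamber $\Nef(X')$ having $\tau$ as a face, and reading off $f'$-ampleness of $-K_{X'}$ from $R\cdot p_\epsilon=\epsilon\,(-K_{X'})\cdot R>0$ --- is sound, and every step after the choice of $X'$ is correct.

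The one genuine gap is the assertion that ``for all but finitely many $\epsilon>0$, $p_\epsilon$ avoids the walls'', i.e.\ that the ray $\epsilon\mapsto p+\epsilon[-K_X]$ does not travel \emph{inside} a wall of $\MCD(X)$ for a whole interval $(0,e)$. For a fixed wall $W$ the set $\{\epsilon\geq 0: p_\epsilon\in W\}$ is a closed interval, and it has positive length exactly when $[-K_X]\in\operatorname{span}(W)$. Every wall is a facet $\Nef(X'')\cap C^{\perp}$ of a maximal chamber, with $C$ an irreducible curve generating an extremal ray of $\NE(X'')$ for some SQM $X''$ of $X$; so the bad case is $K_{X''}\cdot C=0$. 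This is not excluded by the fan structure alone: for a general Mori dream space a flop produces exactly such a $K$-trivial wall, the ray $p_\epsilon$ can slide along it, and your argument would then only yield $-K_{X'}\cdot R\geq 0$ on $\NE(f')$, i.e.\ $f'$-nefness of $-K_{X'}$ rather than $f'$-ampleness. For a smooth Fano $4$-fold the bad case is ruled out by Lemma~\ref{SQMFano}: an exceptional line $\ell\subset X''$ has $K_{X''}\cdot\ell=1$, and every other irreducible curve satisfies $-K_{X''}\cdot C\geq 1$, so no extremal ray of $\NE(X'')$ is $K$-trivial and each wall meets your ray in at most one value of $\epsilon$. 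With this observation inserted your proof closes; note that the Fano hypothesis is therefore used twice --- not only, as you say, to place $[-K_X]$ in the interior of $\Nef(X)$, but also to guarantee that the perturbation is transverse to every wall.
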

\begin{lemma}\label{coop}
Let $X$ be a smooth Fano $4$-fold and $g\colon X\to Z$ a contraction of fiber type. Then $\rho_Z\leq \delta_X+2$.
\end{lemma}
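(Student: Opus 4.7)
The plan is to apply Lemma~\ref{easy} to the contraction $g$ and a well-chosen prime divisor $D\subset X$. For any prime divisor $D$, Lemma~\ref{easy} gives
\[
\dim\N(D,X)\leq(\rho_X-\rho_Z)+\dim\N(g(D),Z),
\]
or equivalently
\[
\rho_Z\leq\codim\N(D,X)+\dim\N(g(D),Z)\leq\delta_X+\dim\N(g(D),Z),
\]
the last inequality by the definition of the Lefschetz defect. Hence it suffices to exhibit a prime divisor $D\subset X$ with $\dim\N(g(D),Z)\leq 2$.

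If $\dim Z\leq 1$ this is automatic, as $\N(g(D),Z)\subseteq\N(Z)$ has dimension at most $1$. For $\dim Z=2$ I would take $C\subset Z$ a general element of a very ample linear system (an irreducible curve) and set $D:=g^{-1}(C)$; by Bertini, using that $g$ is a contraction of fiber type from a smooth variety and so has irreducible general fibers, $D$ is a prime divisor in $X$ dominating $C$, and $\N(g(D),Z)=\R[C]$ has dimension $1$. This actually yields the stronger bound $\rho_Z\leq\delta_X+1$.

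The delicate case is $\dim Z=3$: the preimage of a general curve in $Z$ is only $2$-dimensional, so it is not a divisor of $X$. My plan here is to exploit the relative Mori cone $\NE(g)\subset\NE(X)$, which is a $K_X$-negative face of positive dimension since $g$ is of fiber type and $-K_X$ is ample, and hence admits $K_X$-negative extremal rays. Whenever such an extremal ray $R\subset\NE(g)$ is divisorial of type $(3,0)^{\sm}$, $(3,0)^Q$, or $(3,1)^{\sm}$, the elementary contraction $\varphi_R\colon X\to X_R$ factors $g$ as $g=g'\circ\varphi_R$ (because $R\subset\NE(g)$), and $\varphi_R(\Exc\varphi_R)$ has dimension $\leq 1$; then $D:=\Exc\varphi_R$ satisfies $\dim g(D)\leq 1$ and hence $\dim\N(g(D),Z)\leq 1$, concluding.

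The main obstacle I foresee is the configuration in which every divisorial extremal ray of $\NE(g)$ is of type $(3,2)$ while all other extremal rays are either small (their exceptional loci being disjoint unions of exceptional planes by Lemma~\ref{kawamata}, hence not divisors) or of fiber type. In this situation the required divisor cannot be read off $\NE(g)$ directly, and a finer analysis is needed: one can for instance factor $g$ through the contraction $\varphi_R$ of a type $(3,2)$ or fiber-type ray and apply the argument inductively to the resulting contraction $g'\colon X_R\to Z$, or use a suitable fixed prime divisor of $X$ that maps to a proper subvariety of $Z$ with small $\N(\cdot,Z)$.
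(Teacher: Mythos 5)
Your reduction via Lemma~\ref{easy} is the right move: for any prime divisor $D\subset X$ one gets $\rho_Z\leq\codim\N(D,X)+\dim\N(g(D),Z)\leq\delta_X+\dim\N(g(D),Z)$, and the cases $\dim Z\leq 2$ are then settled by taking $D=g^{-1}(C)$ for a general very ample curve $C\subset Z$ (Bertini irreducibility applies since $\dim Z\geq 2$). Note that the paper does not give its own argument here — it defers entirely to \cite[Lemma 2.6]{32} — so the comparison can only be with your proposal on its merits, and there the case $\dim Z=3$ is genuinely open in what you have written. The configuration you flag as "delicate" is not a corner case but the main one: if $g$ is an elementary conic bundle onto a $3$-fold, $\NE(g)$ is a single fiber-type ray and your construction produces no divisor at all, yet this is exactly where the lemma has content (it asserts $\rho_X\leq\delta_X+3$ for such $X$, which is a nontrivial statement). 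Likewise, when the only divisorial rays in $\NE(g)$ are of type $(3,2)$, the divisor $D=\Exc(\varphi_R)$ maps to an irreducible surface $\Sigma\subset Z$, and $\dim\N(\Sigma,Z)$ has no a priori bound.

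Your two fallback suggestions do not close this. The induction through $\varphi_R$ fails because the intermediate variety $X_R$ (target of a $(3,2)$-contraction, or the $3$-fold target of a fiber-type ray) is in general neither smooth nor Fano, so the lemma cannot be re-applied, and no comparison between $\delta_{X_R}$ and $\delta_X$ is offered. The suggestion to "use a suitable fixed prime divisor mapping to a subvariety with small $\N(\cdot,Z)$" is circular as stated: via Lemma~\ref{easy} its existence amounts to finding a prime divisor $B\subset Z$ with $\codim\N(B,Z)\geq\rho_Z-2$, i.e.\ a Lefschetz-defect-type bound on $Z$ itself, which is essentially the statement being proved. So there is a genuine gap in the case $\dim Z=3$, and closing it requires an input beyond the classification of extremal rays in $\NE(g)$ — which is presumably what the argument of \cite[Lemma 2.6]{32} supplies.
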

\begin{proof}
The proof of \cite[Lemma 2.6]{32} gives the statement. 
\end{proof} 
\section{Preliminaries on rational contractions of fiber type}\label{ratfibertype}
\noindent In this section we give many preliminary results on regular and rational contractions of fiber type of Mori dream spaces, that will be used throughout the paper.
\begin{lemma}[\cite{eff}, Lemma 2.21]\label{general}
      Let $X$ be a projective, normal, and $\Q$-factorial Mori dream space, $f\colon X\to Y$ a contraction of fiber type, and set  $F_y:=f^{-1}(y)$.

      Then there is a non-empty open subset $Y_0\subset Y$ such that the linear subspace $\N(F_y,X)\subset\N(X)$ is constant for $y\in Y_0$.
Moreover, for $y\in Y_0$,
       $\N(F_y,X)^{\perp}\cap\Eff(X)$
 is a face of $\Eff(X)$, has dimension  $\dim(\N(F_y,X)^{\perp})=\rho_X-\dim\N(F_y,X)$, and is the smallest face of $\Eff(X)$ containing $f^*(\Eff(Y))$.
\end{lemma}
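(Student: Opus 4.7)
The plan is to address the three claims in turn: first the constancy of $\N(F_y,X)$, then the face property, and finally the dimension count together with the identification of the smallest face. For the constancy, I would work with the relative Chow scheme $\Chow(X/Y)$ parametrising effective $1$-cycles contained in fibres of $f$. Each irreducible component $W$ carries a constant numerical class $\gamma_W\in\N(X)$, and either dominates $Y$ or has image a proper closed subset of $Y$. Let $V\subseteq\N(X)$ be the subspace spanned by $\gamma_W$ as $W$ ranges over components dominating $Y$; since $\N(X)$ is finite-dimensional, finitely many $\gamma_{W_1},\dots,\gamma_{W_m}$ already span $V$, and the intersection of their (constructible) images in $Y$ contains a non-empty open subset $U\subseteq Y$, giving $V\subseteq\N(F_y,X)$ for $y\in U$. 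By upper semi-continuity of $y\mapsto \dim\N(F_y,X)$ the minimum value is attained on an open subset of $Y$, and for $y$ very general the only classes arising in $F_y$ come from dominating components (as non-dominating ones contribute only over a countable union of proper closed subsets of $Y$), forcing the minimum to equal $\dim V$; on this open subset the inclusion becomes an equality, and we obtain the desired open $Y_0\subseteq U$.

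For the face property, the crucial input is that $V\subseteq\Eff(X)^{\vee}$. Every generator of $V$ is the class of an irreducible curve $C\subset F_y$ moving in a family dominating $Y$, and hence covering an open subset of $X$; given $[D]\in\Eff(X)$, one may represent $[D]$ by an effective divisor not containing $C$, so $[C]\cdot [D]\geq 0$. Each $v\in V$ therefore defines a linear form on $\Nu(X)$ that is non-negative on $\Eff(X)$, and its zero locus on $\Eff(X)$ is a face; intersecting over a basis of $V$ yields that $V^{\perp}\cap\Eff(X)$ itself is a face of $\Eff(X)$.

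For the dimension and minimality, note first that $f^*\Eff(Y)\subseteq V^{\perp}\cap\Eff(X)$: for $[D']\in\Eff(Y)$ and $[C]\in V\subseteq\ker f_*$ the projection formula gives $f^*[D']\cdot [C]=[D']\cdot f_*[C]=0$. It then suffices to show that for $[A']$ ample on $Y$ the pull-back $f^*[A']$ lies in the relative interior of $V^{\perp}\cap\Eff(X)$; this gives both $\dim(V^{\perp}\cap\Eff(X))=\rho_X-\dim V$ and the identification as the smallest face of $\Eff(X)$ containing $f^*\Eff(Y)$. Concretely, a supporting functional $w\in\N(X)$ of any proper face of $V^{\perp}\cap\Eff(X)$ that contains $f^*[A']$ would satisfy $0=w\cdot f^*[A']=f_*(w)\cdot [A']$, hence $f_*(w)=0$ by ampleness; combined with $w\cdot\Eff(X)\geq 0$ and with the description of $V^{\perp}\cap\Eff(X)$ as the cone generated by $f^*\Eff(Y)$ together with the classes of prime divisors whose image in $Y$ has codimension $\geq 1$, this must force $w\in V$, a contradiction.

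The main obstacle I anticipate is the upgrade from a very-general to a genuinely Zariski-open statement in the constancy step (controlling the countable family of non-dominating Chow components), and the final identification of the supporting functional $w$ as an element of $V$ in the minimality argument; both ultimately rely on the rational polyhedrality of $\Eff(X)$ and $\NE(X)$ coming from the Mori dream space hypothesis.
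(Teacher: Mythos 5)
This lemma is not proved in the paper: it is imported verbatim from \cite{eff}, Lemma 2.21, so there is no internal proof to compare against and I am judging your argument on its own terms. Your overall architecture (constancy via $\Chow(X/Y)$, then the face property, then dimension and minimality via the relative interior of $f^*\Eff(Y)$) is reasonable, but there is a genuine error at the heart of the face-property step. You assert that a family of curves in the fibers whose Chow component dominates $Y$ ``covers an open subset of $X$'', and deduce $\N(F_y,X)\subseteq\Eff(X)^{\vee}$. Both claims are false: take $X=S\times\pr^1\to Y=\pr^1$ with $S$ a del Pezzo surface containing a $(-1)$-curve $e$. The component of $\Chow(X/Y)$ parametrising the curves $e\times\{y\}$ dominates $\pr^1$, yet its locus is the divisor $e\times\pr^1$, which is effective and satisfies $(e\times\pr^1)\cdot(e\times\{y\})=-1<0$. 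So $\N(F_y,X)$ is in general \emph{not} contained in $\Eff(X)^{\vee}$, and you cannot realize $\N(F_y,X)^{\perp}\cap\Eff(X)$ as an intersection of supporting hyperplanes indexed by a basis of $\N(F_y,X)$; for an arbitrary subspace $V$, $V^{\perp}\cap\Eff(X)$ need not be a face at all. The repair requires a different idea: one cuts with only those classes in $\N(F_y,X)$ that genuinely lie in $\Eff(X)^{\vee}$ (e.g.\ general complete-intersection curves in each component of $F_y$, which pass through general points of $F_y$), and then one must separately show that for $y$ general and $D$ a prime generator of $\Eff(X)$, vanishing of $D$ against those particular curves already forces $D\cap F_y=\emptyset$, hence $[D]\in\N(F_y,X)^{\perp}$. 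That second step is where the finite generation of $\Eff(X)$ by prime divisors is used, and it is absent from your write-up.

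Two further points are left unproved and are not minor. First, the upgrade from ``very general'' to ``open'' in the constancy statement: the locus where $\N(F_y,X)$ strictly contains $V$ is a priori only a countable union of proper closed subsets, and the upper semicontinuity of $y\mapsto\dim\N(F_y,X)$ that you invoke is exactly the assertion needing proof; the finiteness has to come from the polyhedrality of $\Eff(X)$ (constancy of the dual face $\N(F_y,X)^{\perp}\cap\Eff(X)$ over an explicit open set), not from the Chow scheme alone. Second, in the dimension/minimality step you invoke ``the description of $V^{\perp}\cap\Eff(X)$ as the cone generated by $f^*\Eff(Y)$ together with the classes of prime divisors whose image in $Y$ has codimension $\geq 1$''. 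That description is essentially the content of Lemma \ref{supp} and Remark \ref{tauf} of this paper, which are themselves \emph{deduced from} the present lemma, so using it here is circular; and even granting it, the conclusion ``this must force $w\in V$'' is not justified ($f_*(w)=0$ only places $w$ in $\ker f_*$, which in general strictly contains $\N(F_y,X)$). The missing argument is that every generator $[D]$ of $V^{\perp}\cap\Eff(X)$ with $f(D)\subsetneq Y$ appears as a summand of $f^*A''$ for a suitable effective $A''\in|mA'|$ with $\Supp A''\supseteq f(D)$, which forces any supporting functional vanishing on $f^*[A']$ to vanish on $[D]$ as well.
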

\begin{remdefi}[$\boldsymbol{\tau_f,d_f}$]\label{dimfiber}
 Let $X$ be a projective, normal, and $\Q$-factorial Mori dream space, and $f\colon X\dasharrow Y$ a rational contraction of fiber type. We denote by 
 $\tau_f$ the minimal face of $\Eff(X)$ containing $f^*(\Eff(Y))$,\footnote{Equivalently, $\tau_f$ is the minimal face of $\Eff(X)$ containing $f^*(\Nef(Y))$.} and we set:
 $$d_f:=\rho_X-\dim\tau_f.$$
 We will use the following properties:
 \begin{enumerate}[$\bullet$]
   \item
  for every resolution
  $f'\colon X'\to Y$ of $f$, if $F\subset X'$ is a general fiber of $f'$, we have $\dim\N(F,X')=d_f$ (see Lemma \ref{general});
\item $d_f\leq\rho_X-\rho_Y$, in particular $d_f=1$ when $f$ is elementary;
\item $d_f=1$ when $\dim X-\dim Y=1$;
\item $\tau_f\cap\Mov(X)$ is a face of $\Mov(X)$ (because $\Mov(X)\subset\Eff(X)$) and contains $f^*(\Nef(Y))$, therefore $\dim(\tau_f\cap\Mov(X))\geq\rho_Y$, and $\tau_f$ is a movable face of $\Eff(X)$ if $Y\neq \pt$;
\item whenever $f$ is regular, or more generally regular and proper on some non-empty open subset of $X$, if $F\subset X$ is a general fiber, then $\tau_f=\N(F,X)^{\perp}\cap\Eff(X)$.
  Indeed if $\ph\colon X\dasharrow X'$ is a SQM such that $f\circ\ph^{-1}$ is regular, then $F$ must be contained in the open subset where $\ph$ is an isomorphism, hence $\N(F,X)\cong\N(\ph(F),X')$ under the natural isomorphism $\N(X)\cong\N(X')$
  (see \cite[Lemma 2.17]{fibrations}).
 \end{enumerate}
\end{remdefi}
        \begin{lemma}\label{supp}
      Let $X$ be a projective, normal, and $\Q$-factorial Mori dream space, $f\colon X\to Y$ a contraction of fiber type, and $F\subset X$ a general fiber.
Let $W\subset\Nu(X)$ be the linear subspace of classes of $\R$-divisors whose support does not dominate $Y$.  
      Then $W=\N(F,X)^{\perp}$.
\end{lemma}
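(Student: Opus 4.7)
The plan is to sandwich $W$ between the face $\tau_f$ and the linear subspace $\N(F,X)^{\perp}$, exploiting the fact that the first and the third span the same subspace by Lemma~\ref{general}.

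First I would check the easy inclusion $W\subseteq \N(F,X)^{\perp}$. Let $D\subset X$ be a prime divisor with $f(D)\subsetneq Y$. For every $y\in Y\smallsetminus f(D)$ the fiber $f^{-1}(y)$ is disjoint from $D$, so $D\cdot C=0$ for every curve $C\subset f^{-1}(y)$; applying this to a general fiber gives $[D]\in\N(F,X)^{\perp}$. Extending linearly over $\R$ to all classes represented by $\R$-divisors not dominating $Y$ yields $W\subseteq \N(F,X)^{\perp}$.

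Next I would prove the reverse-direction input $\tau_f\subseteq W$. Given an effective divisor $D$ with $[D]\in\tau_f$, decompose $D=\sum a_iD_i$ into prime components with $a_i>0$. Since $\tau_f$ is a face of $\Eff(X)$ (Lemma~\ref{general}), each class $[D_i]$ again lies in $\tau_f$, hence in $\N(F,X)^{\perp}$. I claim no $D_i$ can dominate $Y$: otherwise, a dimension count applied to the dominant map $f|_{D_i}\colon D_i\to Y$ shows that for a general fiber $F$ the scheme-theoretic intersection $D_i\cap F$ is a nonzero effective divisor on the projective scheme $F$, and a sufficiently general complete-intersection curve $C\subset F$ would satisfy $D_i\cdot C=(D_i|_F)\cdot C>0$, contradicting $[D_i]\in [C]^{\perp}$. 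Therefore each $D_i$ has $f(D_i)\subsetneq Y$, so $[D]\in W$.

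Finally, Lemma~\ref{general} gives $\dim\tau_f=\rho_X-\dim\N(F,X)=\dim\N(F,X)^{\perp}$, so the convex cone $\tau_f$, being contained in and of the same dimension as the linear subspace $\N(F,X)^{\perp}$, spans it. Since $W$ is a linear subspace with $\tau_f\subseteq W\subseteq\N(F,X)^{\perp}$, the equality $W=\N(F,X)^{\perp}$ follows.

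The one point that requires care, and which I regard as the main technical step, is the claim that a prime divisor $D_i$ dominating $Y$ cannot be orthogonal to $\N(F,X)$; this is a standard but non-trivial application of projective intersection theory to the effective Cartier divisor $D_i|_F$ on the (possibly reducible or singular) projective scheme $F$.
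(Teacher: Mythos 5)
Your proposal is correct and follows essentially the same route as the paper's proof: the easy inclusion $W\subseteq\N(F,X)^{\perp}$ via disjointness from a general fiber, then the observation from Lemma~\ref{general} that $\N(F,X)^{\perp}\cap\Eff(X)$ is a face of $\Eff(X)$ of full dimension in $\N(F,X)^{\perp}$, so that it suffices to treat prime divisors whose class lies in $\N(F,X)^{\perp}$, and finally the key point that such a prime divisor cannot dominate $Y$ because it would meet a general fiber in a nonempty proper subset and hence intersect some curve of the fiber positively. Your decomposition of an effective class in the face into prime components is just a mild repackaging of the paper's appeal to the one-dimensional faces of $\Eff(X)$, and the ``main technical step'' you flag is exactly the paper's final paragraph.
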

\begin{proof}
Let $Y_0\subset Y$ be an open subset as in Lemma \ref{general}, so that we can assume that $\N(F,X)=\N(F_y,X)$ for every $y\in Y_0$.
  
   Let $D$ be a divisor whose support does not dominate $Y$. Then there is $y\in Y_0$ such that $(\Supp D)\cap F_y=\emptyset$, hence $D\cdot C=0$ for every curve $C\subset F_y$, and $[D]\in \N(F_y,X)^{\perp}$. This shows that $W\subseteq \N(F,X)^{\perp}$.

  By Lemma \ref{general} $\N(F,X)^{\perp}\cap \Eff(X)$ is a face of $\Eff(X)$ of dimension equal to $\dim(\N(F,X)^{\perp})$, therefore  $\N(F,X)^{\perp}$ is generated by the one-dimensional faces of $\Eff(X)$ contained in it.
  On the other hand every one-dimensional face of $\Eff(X)$ is generated by the class of some prime divisor, and we conclude that $\N(F,X)^{\perp}$ can be generated by classes of prime divisors.

  Now let $D\subset X$ be a prime divisor such that $[D]\in\N(F,X)^{\perp}$. Then $D$ cannot dominate $Y$, otherwise we could choose $y\in Y_0$ such that $F_y\not\subset D$ and $F_y\cap D\neq\emptyset$. Thus we could find a curve $C\subset F_y$ such that $D\cdot C>0$, a contradiction. Therefore $[D]\in W$, and we conclude that $\N(F,X)^{\perp}\subseteq W$ and finally $\N(F,X)^{\perp}= W$.
\end{proof}
\begin{remark}\label{tauf}
  In the setting of Def.-Rem.~\ref{dimfiber}, it follows from Lemmas \ref{general} and \ref{supp} that
$$\tau_f=\langle [D]\,|\,\text{$D$ is effective and $\Supp D$  does not dominate $Y$}\rangle.$$
\end{remark}
\begin{lemma}\label{genrho}
Let $X$ be a projective, normal, and $\Q$-factorial Mori dream space, $f\colon X\to Y$ a contraction of fiber type, and $F\subset X$ a general fiber. Let also $\eta$ be the generic point of $Y$ and $X_{\eta}$ the generic fiber of $f$. Then $\rho_{X_\eta}=\dim\N(F,X)=d_f$.
\end{lemma}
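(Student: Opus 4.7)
The second equality $\dim \N(F, X) = d_f$ is the first bullet of Def.-Rem.~\ref{dimfiber}, so the task reduces to showing $\rho_{X_\eta} = \dim \N(F, X)$.

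My plan is to construct a natural restriction map $r\colon \Nu(X) \to N^1(X_\eta)_{\R}$ by pulling back along the inclusion of the generic fiber $X_\eta \hookrightarrow X$, and to show that $r$ is surjective with $\ker r$ equal to $W := \N(F,X)^\perp$. Granting both, the claim follows by a dimension count:
\begin{equation*}
\rho_{X_\eta} = \dim N^1(X_\eta)_{\R} = \rho_X - \dim W = \dim \N(F,X),
\end{equation*}
since $W$ has codimension $\dim \N(F,X)$ in $\Nu(X)$.

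For surjectivity I would use that $X$ is $\Q$-factorial and $X_\eta$ is regular in codimension one: any Weil divisor $D_\eta$ on $X_\eta$ is the restriction of its scheme-theoretic closure $\overline{D_\eta} \subset X$, and after tensoring with $\R$ this descends to a surjection on the spaces of divisor classes modulo numerical equivalence. The inclusion $W \subseteq \ker r$ is immediate, since a prime divisor $D \subset X$ whose support does not dominate $Y$ satisfies $D \cap X_\eta = \emptyset$ and hence $D_{|X_\eta} = 0$; combined with Lemma \ref{supp}, this yields $W \subseteq \ker r$.

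The delicate step is the reverse inclusion $\ker r \subseteq W$. To prove it I would argue via intersection numbers: $[D] \in \ker r$ means $D_{|X_\eta} \cdot C = 0$ for every irreducible curve $C \subset X_\eta$. The closure $\overline{C} \subset X$ is an irreducible surface dominating $Y$, and by flat base change on the flat locus of $\overline{C} \to Y$, the intersection number $D \cdot (\overline{C})_y$ is constant for $y$ in a dense open of $Y$ and equals $D_{|X_\eta} \cdot C$. As $C$ varies over curves in $X_\eta$, the classes $[(\overline{C})_y]$ span $\N(F_y, X) = \N(F, X)$ for general $y$, so $[D] \cdot \N(F,X) = 0$ and $[D] \in \N(F,X)^\perp = W$ by Lemma \ref{supp}. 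Matching numerical equivalence on the generic fiber $X_\eta$ over $K = \C(Y)$ with orthogonality to $\N(F,X)$ in $\Nu(X)$ is the main technical hurdle here; everything else is linear algebra together with Lemma \ref{supp} and the spreading-out argument for surjectivity.
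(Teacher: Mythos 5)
Your proposal has the same skeleton as the paper's proof: a restriction map to the generic fiber, surjectivity from $\Q$-factoriality, identification of the kernel with the subspace $W$ of classes of divisors not dominating $Y$, and then Lemma \ref{supp} plus a dimension count. The difference is in how the kernel is computed, and here the paper's route is substantially easier than yours. The paper works with $\Pic(X)\to\Pic(X_\eta)$ rather than with numerical classes (legitimate because for a Mori dream space $\Nu(X)\cong\Pic(X)\otimes\R$), so ``trivial on $X_\eta$'' means linearly trivial there; since $K(X_\eta)=K(X)$, a line bundle trivial on the generic fiber admits a rational section that is regular and invertible on $X_\eta$, hence its class is represented by a divisor whose support does not dominate $Y$, and the kernel computation is immediate. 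You instead test numerical triviality against curves in $X_\eta$ and reduce to the claim that, as $C$ ranges over irreducible curves of $X_\eta$, the fiber classes $[(\overline{C})_y]$ span $\N(F_y,X)$ for general $y$. That claim is true, but it is the genuinely hard point of your version and you leave it as a ``technical hurdle'': proving it requires the standard very-general-fiber argument (countably many irreducible components of the relative Chow/Hilbert scheme; those not dominating $Y$ miss a very general fiber; then constancy of $\N(F_y,X)$ on an open set from Lemma \ref{general} to pass from very general to general $y$), plus the same flat-base-change constancy of intersection numbers you invoke, which is also needed just to check that your map $r$ is well defined on numerical classes. So the proposal is workable, but the step you flag as delicate is precisely the one the paper's switch to linear equivalence renders trivial; if you keep your route, you should write out the spreading-out/countability argument in full.
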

\begin{proof}
Recall that since $X$ is a Mori dream space, 
$\Pic(X)$ is finitely generated, and $\Nu(X)\cong \Pic(X)\otimes\R$.

  Consider the restriction $r\colon \Pic(X)\to\Pic(X_{\eta})$ and the induced homomorphism $r_{\R}\colon \Pic(X)\otimes\R\to\Pic(X_{\eta})\otimes\R$. 
  Since $X$ is $\Q$-factorial, $r_{\R}$ is surjective. Moreover $\ker r_{\R}$ coincides with the  linear subspace $W\subset\Nu(X)$ of classes of $\R$-divisors whose support does not dominate $Y$.  Therefore by Lemma \ref{supp} we have $\dim\ker r_{\R}=\rho_X-\dim\N(F,X)$, which yields the statement.
\end{proof}
\begin{lemma}\label{spread}
  Let $X$ be a normal quasi-projective variety
  and
  $f\colon X\to Y$ a contraction of fiber type. Let  $\eta$ be the generic point of $Y$, $X_{\eta}$ the generic fiber of $f$, $Z_{\eta}$ a normal projective variety over $K=\C(\eta)$, and $g_\eta\colon X_{\eta}\to Z_{\eta}$ a projective morphism  over $K$ such that $(g_\eta)_*\ol_{X_{\eta}}=\ol_{Z_\eta}$. 

  Then there exists   an open subset $Y_0$ of $\,Y$ such that $f_{|f^{-1}(Y_0)}\colon f^{-1}(Y_0)\to Y_0$ factors as the composition of two contractions  $f^{-1}(Y_0)\to Z_0\to Y_0$ that extend $X_\eta\to Z_\eta\to\eta$. 
\end{lemma}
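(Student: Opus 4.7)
The plan is to apply standard spreading-out (see e.g.\ EGA IV, \S 8) to extend $Z_\eta$ and the morphism $g_\eta$ from the generic point $\eta$ of $Y$ to an open neighborhood $Y_0\subset Y$. Since $Z_\eta$ is a projective $K$-variety with $K=\C(Y)$, there exist an open subset $Y_0\subset Y$ and a projective morphism $h\colon Z_0\to Y_0$ whose generic fiber is $Z_\eta$. After replacing $Z_0$ by its normalization (whose generic fiber equals $Z_\eta$, since $Z_\eta$ is normal) we may assume that $Z_0$ is normal and integral, and $h$ remains projective since normalization is finite. Similarly, after a further shrinking of $Y_0$, the morphism $g_\eta$ spreads out to a $Y_0$-morphism $g\colon f^{-1}(Y_0)\to Z_0$ with generic fiber $g_\eta$; this $g$ is projective since $f_{|f^{-1}(Y_0)}=h\circ g$ is projective and $h$ is separated (so $g$ is proper, and it is quasi-projective from the quasi-projective embedding of $f^{-1}(Y_0)$ over $Y_0$).

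Next I would verify that $g_*\ol_{f^{-1}(Y_0)}=\ol_{Z_0}$, which is the condition needed to make $g$ a contraction in the sense of Section~\ref{notation}. Since $g$ is proper, $g_*\ol_{f^{-1}(Y_0)}$ is a coherent sheaf on $Z_0$, and by flat base change its restriction to the generic fiber $Z_\eta$ of $h$ equals $(g_\eta)_*\ol_{X_\eta}=\ol_{Z_\eta}$. Hence the natural map $\ol_{Z_0}\to g_*\ol_{f^{-1}(Y_0)}$ is an isomorphism at every point of $Z_\eta$, so its kernel and cokernel are coherent sheaves on $Z_0$ whose union of supports $C$ satisfies $C\cap Z_\eta=\emptyset$. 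Since $h$ is proper, $h(C)$ is closed in $Y_0$, and $\eta\notin h(C)$ forces $h(C)\subsetneq Y_0$; replacing $Y_0$ by $Y_0\setminus h(C)$ (and $Z_0$ by its preimage) yields the desired equality $g_*\ol_{f^{-1}(Y_0)}=\ol_{Z_0}$. In particular $g$ is surjective with connected fibers.

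Finally, to confirm that $h\colon Z_0\to Y_0$ is also a contraction: by construction $h\circ g=f_{|f^{-1}(Y_0)}$, and the restriction of $f_*\ol_X=\ol_Y$ to $Y_0$ gives $(f_{|f^{-1}(Y_0)})_*\ol_{f^{-1}(Y_0)}=\ol_{Y_0}$; applying $h_*$ to $g_*\ol_{f^{-1}(Y_0)}=\ol_{Z_0}$ then yields $h_*\ol_{Z_0}=\ol_{Y_0}$, and $h$ is projective and surjective by construction. This completes the desired factorization $f^{-1}(Y_0)\to Z_0\to Y_0$ extending $X_\eta\to Z_\eta\to\eta$. There is no real obstacle in the argument, only the bookkeeping of several successive shrinkings of $Y_0$ to enforce open conditions (normality of $Z_0$, projectivity and connectedness of fibers of $g$); each is controlled by the behavior of the data at the generic point $\eta$ through flat base change.
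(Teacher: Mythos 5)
Your argument is correct and follows essentially the same route as the paper: spread out $Z_\eta$ and $g_\eta$ over an open $Y_0$, pass to the normalization of $Z_0$ (harmless on the generic fiber since $Z_\eta$ is normal), and shrink $Y_0$ to enforce the contraction conditions. The only cosmetic difference is that the paper transfers the condition $(g_\eta)_*\ol_{X_\eta}=\ol_{Z_\eta}$ by spreading out geometric connectedness of fibers (citing Poonen), whereas you verify $g_*\ol_{f^{-1}(Y_0)}=\ol_{Z_0}$ directly by flat base change and a support argument; both are valid.
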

\begin{proof}
  We  apply the results on spreading out schemes and morphisms. By \cite[Th.~3.2.1(i) and (ii)]{poonen}, there exist an open subset $Y_0$ of $Y$ and a quasi-projective variety $Z_0$, with a  surjective projective morphism $h\colon Z_0\to Y_0$, that extend $Z_\eta\to\eta$.

  Consider the 
  normalization $\nu\colon Z_0^{\nu}\to Z_0$; restricting to the generic fibers 
we get $\nu_{\eta}\colon (Z_0^{\nu})_{\eta}\to Z_\eta$ which is the normalization of $Z_\eta$. Since $Z_{\eta}$ is normal, $\nu$ is an isomorphism on the generic fibers, and up to composing with $\nu$ we can assume that $Z_0$ is normal. 

   Note that  $(g_\eta)_*\ol_{X_{\eta}}=\ol_{Z_\eta}$ implies that every fiber of $g_{\eta}$ is geometrically connected \cite[Ch.~5, Th.~3.15 and Cor.~3.17]{liu}.
   Then by \cite[Th.~3.2.1(iii) and (iv)]{poonen},  up to shrinking $Y_0$, we can assume that there exists a projective morphism $g\colon f^{-1}(Y_0)\to Z_0$, with connected fibers, extending  $g_\eta$, so that $f_{|f^{-1}(Y_0)}=h\circ g$.
\end{proof}  
\begin{lemma}\label{elem}
Let $X$ be a projective, normal, and $\Q$-factorial Mori dream space. Let $X_0\subset X$ be an open subset and $f_0\colon X_0\to Y_0$ a contraction. Then there exists a rational contraction $f\colon X\dasharrow Y$ such that $Y$ contains $Y_0$ as an open subset and $f_{|X_0}=f_0$.
\end{lemma}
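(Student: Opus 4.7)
The plan is to exhibit $f$ via the Mori chamber decomposition $\MCD(X)\subset\Nu(X)$, whose cones are in bijection with the rational contractions of $X$ since $X$ is a Mori dream space. I will identify the correct cone starting from the data of $f_0$.

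First, I pick an ample Cartier divisor $H_0$ on $Y_0$ (possible as $Y_0$ is quasi-projective) and set $D_0:=f_0^{*}H_0$, an effective, globally generated divisor on $X_0$. Let $D$ be the Zariski closure of $D_0$ in $X$; since $X$ is $\Q$-factorial, $D$ is $\Q$-Cartier and yields a class $[D]\in\Eff(X)\subset\Nu(X)$. If $E_1,\dotsc,E_k$ are the prime divisors contained in $X\smallsetminus X_0$, I choose integers $n_i\geq 0$ large enough so that every section of $D_0$ on $X_0$ extends to a section of $D':=D+\sum_i n_iE_i$ on $X$; then $[D']\in\Mov(X)$, because the base locus of $|D_0|$ is empty on $X_0$ and adding sufficiently many $E_i$'s clears it along $X\smallsetminus X_0$, so the stable base locus of $|D'|$ has codimension $\geq 2$.

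Second, the class $[D']$ lies in some cone $\sigma\in\MCD(X)$; let $f\colon X\dasharrow Y$ be the associated rational contraction, with factorization $X\stackrel{\ph}{\dasharrow}X'\stackrel{f'}{\to}Y$ where $\ph$ is a SQM and $f'$ is a contraction. Since $\ph$ is an isomorphism in codimension one, the open set $U:=X_0\cap\dom(\ph)$ has complement of codimension $\geq 2$ in $X_0$, and $\ph|_U$ embeds $U$ as an open subset of $X'$. Under the canonical identification $\Nu(X)\cong\Nu(X')$, the restriction $[D'|_U]$ coincides with $[D_0|_U]=[f_0^{*}H_0|_U]$ and lies in $(f')^{*}\Nef(Y)$, i.e., equals the pullback of some ample class on $Y$. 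Thus on $U$ both $f_0|_U$ and $(f'\circ\ph)|_U$ are contractions associated to the same semi-ample class; by uniqueness of Stein factorization they agree up to a canonical identification of $Y_0$ with an open subset of $Y$.

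Finally, I extend the equality from the dense open subset $U\subset X_0$ to all of $X_0$: since $Y$ is separated, $X_0$ is normal, and both $f_0$ and $f'\circ\ph$ are morphisms on $X_0$ (the latter because $\ph$ is defined on $X_0$ outside a codimension-$\geq 2$ subset and $X_0$ is normal), two morphisms coinciding on a dense open set coincide globally, yielding $f_{|X_0}=f_0$. I expect the main obstacle to be the construction of the movable class $[D']$ together with the verification that the abstract target $Y$ produced by the Mori chamber decomposition genuinely contains $Y_0$ as an open subset under the constructed $f$; making this precise is a careful but standard application of the MDS formalism together with the uniqueness of the contraction attached to a semi-ample class.
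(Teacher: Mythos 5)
Your overall strategy is the paper's: produce a movable divisor class from the pullback of an ample system on $Y_0$, then invoke the Mori dream space machinery ($\MCD(X)$, respectively the finitely generated section ring of a movable divisor) to obtain the rational contraction. The gap is in the construction of the movable class $[D']$. First, the requirement that ``every section of $D_0$ on $X_0$ extends to a section of $D'=D+\sum_i n_iE_i$'' cannot be met: $H^0(X_0,D_0)$ (and already $H^0(Y_0,H_0)$ for quasi-projective $Y_0$) is in general infinite-dimensional, with sections having unbounded pole order along the boundary divisors $E_i$ --- think of $X_0=\mathbb{A}^1\subset\pr^1$ and $D_0=0$. One must instead work with a fixed finite-dimensional base-point-free subsystem, e.g.\ the restriction to $Y_0$ of the hyperplane system of a locally closed embedding $Y_0\subset\pr^N$. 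Second, even then, the assertion that ``adding sufficiently many $E_i$'s clears'' the base locus along $X\smallsetminus X_0$ is not an argument: adding $n_iE_i$ and multiplying sections by a defining equation a priori creates sections vanishing along $E_i$, and you never verify that some member of the resulting system avoids each $E_i$.

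The missing idea that makes this clean is normality of $X$ together with properness of the target of the embedding: composing $f_0$ with $Y_0\hookrightarrow\pr^N$ gives a rational map $F_0\colon X\dasharrow\pr^N$ which, since $X$ is normal and $\pr^N$ is projective, is defined outside a closed subset $B$ of codimension $\geq 2$ --- in particular at the generic point of every $E_i$. Hence for a general hyperplane $H$ the pullback $F_0^*(H)$ is an honest Weil divisor on $X$ (this already encodes the ``correct'' multiplicities $n_i$ you were trying to guess), its linear span has base locus contained in $B$, and movability is automatic; a suitable Cartier multiple then defines the desired rational contraction. Your concluding identification of $f_{|X_0}$ with $f_0$ is also loose (uniqueness of Stein factorization is invoked for a morphism to the non-proper $Y_0$), but that part is repairable since both maps are given on a common dense open set by the same linear system; the substantive defect is the unjustified movability of $D'$.
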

\begin{proof}
  Let $Y_0\subset \pr^N$ be an embedding as locally closed subset. Then $f_0$ gives a rational map $F_0\colon X\dasharrow\pr^N$ with $B:=X\smallsetminus\dom(F_0)$ closed subset of codimension $\geq 2$.

  Let $H\subset\pr^N$ be a general hyperplane and consider the divisor $F_0^*(H)$ on $X\smallsetminus B$. 
  Then $F_0^*(H)$ extends uniquely to a Weil divisor on $X$, that we still denote by $F_0^*(H)$. Since $X$ is $\Q$-factorial, there exists $m\in\Z_{>0}$ such that $D:=mF_0^*(H)$ is Cartier; then $D$ is also movable, because its base locus is contained in $B$. Moreover, since $X$ is a Mori dream space, up to taking a possibly larger $m$ we can assume that the complete linear system $|D|$ defines a rational map $F\colon X\dasharrow\pr^M=\pr(H^0(X,D))$ such that, if $Y:=\overline{F(\dom(F))}\subset\pr^M$,  the induced map $f\colon X\dasharrow Y$ is a rational contraction. Then $Y$ contains $Y_0$ as an open subset and $f_{|X_0}=f_0$.
\end{proof}
 We will work with  two particular types of rational contractions of fiber type, namely ``quasi-elementary'' and ``special'' contractions; let us recall the definition and a few properties, and refer the reader to \cite[\S 2.2]{eff} and \cite[\S 2]{fibrations} respectively for more details.
\begin{definition}\label{defqes}
  Let $f\colon X\to Y$ be a contraction of fiber type.
  
  We say that $f$ is {\bf quasi-elementary} if $\dim\N(F,X)=\rho_X-\rho_Y$ for every fiber $F\subset X$ of $f$, namely if
  $d_f=\rho_X-\rho_Y$.

We say that $f$ is {\bf special}  if $Y$ is $\Q$-factorial and $\codim f(D)\leq 1$ for every prime divisor $D\subset X$.
\end{definition}
An elementary contraction of fiber type is always quasi-elementary.
A quasi-elementary contraction is always special, by the following.
\begin{lemma}[\cite{fibrations}, Prop.~2.2]\label{equivalent}
 Let $X$ be a projective, normal, and $\Q$-factorial Mori dream space and $f\colon X\to Y$ a contraction of fiber type. Then $f$ is quasi-elementary if and only if $Y$ is $\Q$-factorial and $f^*(B)$ is an irreducible (possibly non-reduced) divisor for every prime divisor $B\subset Y$. 
\end{lemma}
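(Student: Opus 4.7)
The plan is to prove the two implications separately, relying principally on Lemma~\ref{supp}: for a general fiber $F$, the subspace $W:=\N(F,X)^{\perp}\subseteq\Nu(X)$ equals the span of classes of effective divisors whose support does not dominate $Y$, so $f$ is quasi-elementary if and only if $\dim W=\rho_Y$. Another ingredient used throughout is that the pullback $f^*\colon\Nu(Y)\hookrightarrow\Nu(X)$ is always injective with image $(\ker f_*)^{\perp}$, a $\rho_Y$-dimensional subspace contained in $W$ by the projection formula.

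For $(\Leftarrow)$, assume $Y$ is $\Q$-factorial and $f^*(B)$ is irreducible for every prime $B\subset Y$. The central step is to prove $W=f^*(\Nu(Y))$. Indeed, any prime divisor $D\subset X$ with $f(D)\neq Y$ has $f(D)$ contained in some prime $B\subset Y$, so $D$ is a divisorial component of $f^{-1}(B)\supseteq\Supp(f^*(B))$; by the irreducibility hypothesis this support is a single prime, whence $D=\Supp(f^*(B))$ and $[D]$ is a positive multiple of $f^*[B]\in f^*(\Nu(Y))$. Since $W$ is spanned by such classes (Lemma~\ref{supp} and Rem.~\ref{tauf}), we obtain $W=f^*(\Nu(Y))$, hence $\dim W=\rho_Y$, i.e.~$d_f=\rho_X-\rho_Y$. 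To strengthen this to the pointwise condition $\dim\N(F,X)=\rho_X-\rho_Y$ for every fiber $F$, I would invoke a standard specialization argument: classes of $f$-vertical curves in general fibers specialize, via the properness of the Chow scheme, to classes of curves realized in any given fiber, so $\N(F_0,X)\supseteq\ker f_*$ for every $F_0$, while the reverse inclusion is automatic since $F_0$ is contracted by $f$.

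For $(\Rightarrow)$, quasi-elementarity yields $\dim W=\rho_Y$ and combined with $f^*(\Nu(Y))\subseteq W$ and equality of dimensions, $W=f^*(\Nu(Y))$. For each prime $B\subset Y$, every divisorial component $D_i$ of $f^{-1}(B)$ therefore has $[D_i]\in f^*(\Nu(Y))$. To obtain the $\Q$-factoriality of $Y$, I would choose integers $n_i>0$ so that the Weil divisor $\sum n_iD_i$ is Cartier on $X$ (possible since $X$ is $\Q$-factorial), and then use the local presentation of this Cartier divisor together with $f_*\ol_X=\ol_Y$ to extract a local Cartier structure on a positive multiple of $B$. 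For the irreducibility of $f^*(B)$ (now a well-defined Cartier divisor), I would argue that if $s\geq 2$, the distinct components $D_1,D_2$ would both have class a positive multiple of $f^*[B]$, and combined with the structure of $\Eff(X)$ as a rational polyhedral cone of an MDS and the scheme-theoretic behaviour of preimages of locally principal divisors under the contraction $f$, this forces $s=1$.

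The main obstacle is the passage from the numerical condition to the $\Q$-factoriality of $Y$ in direction $(\Rightarrow)$: one must carefully handle the possibility that distinct components $D_i$ of $f^{-1}(B)$ have proportional numerical classes while being geometrically distinct, and extract a local Cartier presentation for a multiple of $B$ from the local equations of $\sum n_iD_i$ on $X$, ensuring that no cancellation occurs from components $D_i$ whose image is strictly smaller than $B$.
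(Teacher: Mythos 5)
The paper gives no proof of this lemma --- it is imported verbatim from \cite[Prop.~2.2]{fibrations} --- so there is nothing internal to compare against; I will judge your argument on its own. The first half of your $(\Leftarrow)$ direction is correct and is the right computation: every prime divisor not dominating $Y$ is the support of some $f^*(B)$, hence has class in $f^*(\Nu(Y))$, so $W=\N(F,X)^{\perp}=f^*(\Nu(Y))$ and $d_f=\rho_X-\rho_Y$. But your upgrade to the condition on \emph{every} fiber does not work as stated: an irreducible curve in a general fiber can be rigid (a $(-1)$-curve in a surface fiber, say), so it is an isolated point of $\Chow(X)$ and does not specialize to a cycle supported in a prescribed fiber $F_0$; ``properness of the Chow scheme'' gives nothing for such classes. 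Since Def.~\ref{defqes} already identifies the every-fiber condition with $d_f=\rho_X-\rho_Y$, you could stop at the general fiber, but as an argument from scratch this step is a genuine gap.

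The more serious problems are in $(\Rightarrow)$. For $\Q$-factoriality of $Y$, a local equation of the Cartier divisor $\sum n_iD_i$ on $f^{-1}(V)$ is a function on $f^{-1}(V)$, not a pullback from $V$, and $f_*\ol_X=\ol_Y$ cannot descend it: $\C(X)/\C(Y)$ is a nontrivial extension because $f$ is of fiber type, so there is no ``local Cartier structure on a multiple of $B$'' to be extracted this way. The step that actually works is global, and it is exactly the one you flag as the main obstacle without resolving it: pick a single component $D_1$ of $f^{-1}(B)$ dominating $B$; since $D_1$ misses a general fiber $F$ and $\N(F,X)=\ker f_*$ by quasi-elementarity, $mD_1\cdot\NE(f)=0$, so for $A$ ample on $Y$ and $k\gg 0$ the class $mD_1+kf^*A$ is nef with null face exactly $\NE(f)$, hence semiample (Mori dream space) and of the form $f^*M$ after passing to a multiple; the resulting section of $M-kA$ has divisor an effective Cartier divisor whose support must equal $B$, so $aB$ is Cartier and $f^*(aB)=mD_1$. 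This single computation also proves irreducibility, because every divisorial component of $f^{-1}(B)$ lies in $\Supp f^*(aB)=D_1$. By contrast, your irreducibility argument --- that two distinct components with classes proportional to $f^*[B]$ force $s=1$ --- is not valid: distinct prime divisors are routinely numerically proportional (two fibers of a morphism to a curve, for instance), so proportionality of classes alone yields no contradiction.
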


If $Y$ is a curve, then every contraction of fiber type $f\colon X\to Y$ is special. If $Y$ is a surface, then $f$ is special if and only if it is {\bf equidimensional} \cite[Lemma 2.7]{fibrations}.

\medskip

Consider now a {\bf rational} contraction of fiber type $f\colon X\dasharrow Y$. We say that $f$ is quasi-elementary, respectively special, if a resolution
of $f$ is quasi-elementary, respectively special; this does not depend on the choice of the resolution.
\begin{lemma}\label{eubea}
  Let $X$ be a projective, normal, and $\Q$-factorial Mori dream space and $f\colon X\dasharrow Y$ a rational contraction. The following are equivalent:
  \begin{enumerate}[$(i)$]
  \item  $f$ is of fiber type and special;
 \item $f$ is of fiber type and   $\dim(\tau_f\cap\Mov(X))=\rho_Y$ (see Def.-Rem.~\ref{dimfiber});
  \item there exists a proper face $\eta_f$ of $\Mov(X)$ such that 
    $f^*(\Nef(Y))\subset\eta_f$, $\dim\eta_f=\rho_Y$, and $\eta_f=\tau\cap \Mov(X)$ for some face $\tau$ of $\Eff(X)$.
  \end{enumerate}
  
  If these conditions hold and moreover $f$ is regular with general fiber $F\subset X$, then $\eta_f=(\N(F,X))^{\perp}\cap\Mov(X)$.
\end{lemma}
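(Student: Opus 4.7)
The plan is to first reduce to the case where $f$ is regular: if $\varphi\colon X\dasharrow X'$ is a SQM and $f'\colon X'\to Y$ a resolution of $f$, then $\varphi$ identifies $\Eff(X)\cong\Eff(X')$, $\Mov(X)\cong\Mov(X')$, and leaves $\tau_f$ unchanged (Def.-Rem.~\ref{dimfiber}); specialness is defined via any resolution, and general fibers of $f'$ lie in $\dom(\varphi^{-1})$. Hence we may assume $f$ is regular with general fiber $F$, and Lemma~\ref{supp} gives $\tau_f=\N(F,X)^\perp\cap\Eff(X)$. The equivalence $(ii)\Leftrightarrow(iii)$ is then a short dimension count. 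For $(ii)\Rightarrow(iii)$, take $\tau:=\tau_f$ and $\eta_f:=\tau_f\cap\Mov(X)$. For the converse, given (iii), the minimality of $\tau_f$ as a face of $\Eff(X)$ containing $f^*(\Nef(Y))$ yields $\tau_f\subseteq\tau$, so $\tau_f\cap\Mov(X)\subseteq\eta_f$; combined with $\dim(\tau_f\cap\Mov(X))\geq\rho_Y=\dim\eta_f$ (Def.-Rem.~\ref{dimfiber}), equality holds.

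The crux is $(i)\Leftrightarrow(ii)$. For $(i)\Rightarrow(ii)$, I would establish the stronger statement $\tau_f\cap\Mov(X)=f^*(\Mov(Y))$, which has dimension $\rho_Y$ since $f^*$ is injective. The inclusion $\supseteq$ is direct. For $\subseteq$, take $[D]\in\tau_f\cap\Mov(X)$ with $D$ effective. Using that $X$ is a Mori dream space (so numerical and $\Q$-linear equivalence agree), some positive multiple of $D$ is linearly equivalent to an effective divisor supported on primes $D_i$ not dominating $Y$. Specialness ensures each such $D_i$ is a component of $f^*(f(D_i))$, so we may write $D=\sum_{B,j}a_{B,j}D_{B,j}$ where $f^*B=\sum_j m_{B,j}D_{B,j}$ ranges over the primes $B\subset Y$ appearing in the image of the support. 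The key claim is that movability of $D$ forces the ratios $a_{B,j}/m_{B,j}$ to be independent of $j$ for each fixed $B$: setting $c_B:=\min_j(a_{B,j}/m_{B,j})$, if some ratio were strictly larger, the divisor $D-\sum_B c_B f^*B$ would still be effective but strictly omit some component $D_{B,j}$ from its support, forcing $D_{B,j}$ into the stable base locus of $|mD|$ and contradicting movability. These proportionalities identify $D$ with $f^*(\sum_B c_B B)$ up to $\Q$-linear equivalence, and using $f_*\ol_X=\ol_Y$ (which yields $|f^*E|=f^*|E|$) the movability of $D$ descends to that of $\sum_B c_B B$ on $Y$, placing $[D]$ in $f^*(\Mov(Y))$.

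For $(ii)\Rightarrow(i)$ I would argue contrapositively. Suppose $f$ is not special and pick a prime $D_0\subset X$ with $\codim f(D_0)\geq 2$; then $[D_0]\in\tau_f\setminus f^*(\Nu(Y))$. Using the MDS structure of $X$, one can find a factorization of $f$ as $X\dasharrow X_1\to Y$ in which the birational part $X\dasharrow X_1$ contracts $D_0$; the Mori chamber $f^*(\Nef(Y))$ then lies in the relative interior of the face of $\Mov(X)$ corresponding to the rational contraction $X\dasharrow X_1$, which has dimension $\rho_{X_1}>\rho_Y$ and is contained in $\tau_f\cap\Mov(X)$ (pullbacks of movable classes from $X_1$ have supports not dominating $Y$, since $X_1\to Y$ is of fiber type), contradicting $\dim(\tau_f\cap\Mov(X))=\rho_Y$. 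The final assertion is immediate from $\tau_f=\N(F,X)^\perp\cap\Eff(X)$, giving $\eta_f=\N(F,X)^\perp\cap\Mov(X)$. The main obstacle is the proportionality step in $(i)\Rightarrow(ii)$: balancing the MDS property (producing representative divisors supported on specific primes) against the codimension $\geq 2$ base-locus definition of movability is the technical heart; the dual direction requires locating the right factorization isolating $D_0$, which is standard Mori chamber geometry but still requires care.
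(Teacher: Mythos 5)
Your reduction to the regular case and your treatment of $(ii)\Leftrightarrow(iii)$ are fine and essentially coincide with the paper's argument (the paper only adds the observation, which you should make explicit, that in $(iii)\Rightarrow(ii)$ one must first deduce that $f$ is of fiber type from $\tau\subsetneq\Eff(X)$ before $\tau_f$ is even defined). The paper outsources $(i)\Leftrightarrow(ii)$ to \cite[Lemma 5.3]{small}; your direct argument for $(i)\Rightarrow(ii)$, via the identity $\tau_f\cap\Mov(X)=f^*(\Mov(Y))$ and the proportionality of coefficients along the components of $f^*B$, is a sound strategy, although the crucial rigidity step is only gestured at and is misstated: what movability rules out is not that the \emph{omitted} component $D_{B,j_0}$ lies in $\Bs|mD|$, but that the components actually present in $D':=D-\sum_Bc_Bf^*B$ do; the content is that $h^0(X,mD')=1$ (equivalently $f_*\ol_X(mD')=\ol_Y$) because any section of $mD'$ is the pullback of a rational function on $Y$ whose polar divisor is forced to vanish by the omitted-component condition.

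The direction $(ii)\Rightarrow(i)$, however, contains a genuine error. You factor $f$ as $X\dasharrow X_1\to Y$ with the \emph{birational} part first, contracting the bad divisor $D_0$, and claim that the cone of $\Mov(X)$ corresponding to $X\dasharrow X_1$ is contained in $\tau_f\cap\Mov(X)$ because ``pullbacks of movable classes from $X_1$ have supports not dominating $Y$.'' This is false: the pullback to $X$ of an ample class on $X_1$ is a big movable class (the map $X\dasharrow X_1$ is birational), so its general member dominates $Y$, and more decisively no big class can lie in $\tau_f$, which is a \emph{proper} face of $\Eff(X)$. So the cone you exhibit is not contained in $\tau_f\cap\Mov(X)$ and the contradiction with $\dim(\tau_f\cap\Mov(X))=\rho_Y$ does not follow. (You also leave untreated the second way specialness can fail, namely $Y$ not $\Q$-factorial with no divisor dropping to codimension $\geq 2$.) The correct route is to factor in the opposite order, using Prop.~\ref{calanques}: write $f=h\circ g$ with $g\colon X\dasharrow Z$ \emph{special of fiber type} and $h\colon Z\to Y$ birational. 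If $f$ is not special (for either reason) then $h$ is not an isomorphism, so $\rho_Z>\rho_Y$; and $g^*(\Nef(Z))\subset \N(F,X')^{\perp}\cap\Mov(X)=\tau_f\cap\Mov(X)$ because a general fiber $F$ of a resolution of $f$ is already contracted by $g$ ($h$ being birational). This gives $\dim(\tau_f\cap\Mov(X))\geq\rho_Z>\rho_Y$, which is the desired contrapositive.
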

\begin{proof}
The equivalence of $(i)$ and $(ii)$  is \cite[Lemma 5.3]{small}.
  
  Assume  $(ii)$. Note that $\eta_f:=\tau_f\cap\Mov(X)$ is a face of $\Mov(X)$ because $\Mov(X)\subset\Eff(X)$, it is proper because $\tau_f\subsetneq\Eff(X)$, it contains $f^*(\Nef(Y))$, and $\dim\eta_f=\rho_Y$, hence we get $(iii)$.

  Conversely, assume $(iii)$. Since $\eta_f\subsetneq\Mov(X)$, we have $\tau\subsetneq\Eff(X)$, hence $f^*(\Nef(Y))\subset\partial\Eff(X)$, and $f$ is of fiber type.
  By definition $\tau_f$ is the minimal face of $\Eff(X)$ containing $f^*(\Nef(Y))$, therefore $\tau_f\subset\tau$, and $\tau_f\cap \Mov(X)=\tau\cap \Mov(X)=\eta_f$ has dimension $\rho_Y$, and we get $(ii)$.

  Finally, when $f$ is regular, $\eta_f=\tau_f\cap \Mov(X)=(\N(F,X))^{\perp}\cap\Mov(X)$ by Lemma \ref{general}.
\end{proof}
\begin{remark}\label{zumba}
  Let $X$ be a projective, normal, and $\Q$-factorial Mori dream space, and  $\tau$  a proper, movable face of $\Eff(X)$. Then there exists a special rational contraction of fiber type $f\colon X\dasharrow Y$ with $\eta_f=\tau\cap\Mov(X)$ and $\tau_f\subset\tau$, hence $\rho_Y=\dim(\tau\cap\Mov(X))$ and $d_f=\rho_X-\dim\tau_f\geq\rho_X-\dim\tau$ (notation as in Lemma \ref{eubea} and Def.-Rem.~\ref{dimfiber}).

  Indeed it is enough to choose a cone $\eta_0\in\MCD(X)$ such that $\eta_0\subset
  \tau\cap\Mov(X)$ and $\dim\eta_0=\dim(\tau\cap\Mov(X))$; then the rational contraction $f\colon X\dasharrow Y$ such that $f^*(\Nef(Y))=\eta_0$  
 has the desired properties, by Lemma \ref{eubea}.
\end{remark}  
\begin{proposition}[\cite{fibrations}, Prop.~2.13]\label{calanques}
 Let $X$ be a projective, normal, and $\Q$-factorial Mori dream space and $f\colon X\dasharrow Y$ a rational contraction of fiber type. 
Then $f$ can be factored as $X\stackrel{g}{\dasharrow} Z\stackrel{h}{\to}Y$ where  $h$ is birational and $g$ is a special rational contraction of fiber type.
\end{proposition}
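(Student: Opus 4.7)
The plan is to enlarge $Y$ birationally to a target $Z$ such that the induced rational contraction $g\colon X \dasharrow Z$ is special of fiber type. Set $\sigma_f := f^*(\Nef(Y)) \in \MCD(X)$, let $\tau_f$ be the minimal face of $\Eff(X)$ containing $\sigma_f$ (as in Def.-Rem.~\ref{dimfiber}), and set $\eta := \tau_f \cap \Mov(X)$: this is a proper face of $\Mov(X)$ of dimension at least $\rho_Y$ and containing $\sigma_f$. By Lemma \ref{eubea}(iii), it suffices to exhibit a cone $\sigma \in \MCD(X)$ of dimension $\dim\eta$, having $\sigma_f$ as a face, and contained in $\eta$: the rational contraction $g\colon X \dasharrow Z$ with $g^*(\Nef(Z)) = \sigma$ is then special of fiber type with $\eta_g = \eta$, and the inclusion $\sigma_f \subset \sigma$ combined with Remark \ref{factor} furnishes a contraction $h\colon Z \to Y$ factoring $f = h \circ g$.

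For the existence of $\sigma$, I would note that since $\eta$ is a face of $\Mov(X)$ and $\MCD(X)$ is a fan supported on $\Mov(X)$, the cones of $\MCD(X)$ contained in $\eta$ form a subfan with support $\eta$, whose maximal cones have dimension $\dim\eta$. Since $\sigma_f \in \MCD(X)$ already lies in this subfan, its star within the subfan covers a neighborhood of $\sigma_f$ in $\eta$, so one may take $\sigma$ to be any maximal cone of the subfan having $\sigma_f$ as a face.

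It remains to verify that $h$ is birational, which I expect to be the main obstacle. By construction, the minimal face $\tau_g$ of $\Eff(X)$ containing $\sigma$ equals $\tau_f$: the inclusion $\sigma_f \subset \sigma$ gives $\tau_f \subseteq \tau_g$, while $\sigma \subset \eta \subset \tau_f$, together with $\tau_f$ being a face of $\Eff(X)$, forces $\tau_g \subseteq \tau_f$. Suppose for contradiction that $h$ is of fiber type, and pick a resolution $g'\colon X' \to Z$ of $g$, so that $f' := h \circ g'\colon X' \to Y$ is a resolution of $f$. A general very ample prime divisor $B \subset Z$ meets every fiber of $h$, hence $h(B) = Y$; then any top-dimensional irreducible component $D$ of $(g')^{-1}(B)$ is a prime divisor of $X'$ with $g'(D) = B \subsetneq Z$ and $f'(D) = h(B) = Y$. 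By Remark \ref{tauf}, the first property gives $[D] \in \tau_g$, whereas the fact that $D$ dominates $Y$ via $f'$ produces a curve $C$ in a general fiber $F$ of $f'$ with $D \cdot C > 0$ (since $D|_F$ is a nonzero effective divisor on the projective variety $F$); hence $[D] \notin \N(F, X')^{\perp} \supseteq \tau_f$ by Lemma \ref{general}, contradicting $\tau_g = \tau_f$. Therefore $h$ must be birational, completing the proof.
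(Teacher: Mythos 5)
Your argument is correct and is essentially the standard proof of this statement (the paper only cites \cite[Prop.~2.13]{fibrations}, whose argument proceeds exactly this way): pick a maximal cone $\sigma$ of $\MCD(X)$ inside the face $\eta=\tau_f\cap\Mov(X)$ having $f^*(\Nef(Y))$ as a face, use the characterization of special contractions (Lemma \ref{eubea}) to see that the corresponding $g\colon X\dasharrow Z$ is special of fiber type, and rule out $h$ being of fiber type via $\tau_g=\tau_f$ together with Rem.~\ref{tauf} and Lemma \ref{general}. All steps check out, including the fan-theoretic existence of $\sigma$ and the production of a curve in a general fiber of $f'$ meeting $D$ positively.
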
  
\begin{lemma}\label{properfiber}
  Let $X$ be a projective, normal, and $\Q$-factorial Mori dream space and $f\colon X\dasharrow Y$ a rational contraction of fiber type  that
 is regular and proper on some non-empty open subset of $X$.

  Then $f$ can be factored as $X\stackrel{g}{\dasharrow} Z\stackrel{h}{\to}Y$ where $h$ is birational and $g$ is a special rational contraction of fiber type 
  that is regular and proper on some non-empty open subset of $X$. In particular $f$ and $g$ have isomorphic general fibers.
\end{lemma}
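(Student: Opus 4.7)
The plan is to apply Proposition \ref{calanques} directly to obtain a factorization $X\stackrel{g}{\dasharrow} Z\stackrel{h}{\to}Y$ with $h$ birational and $g$ a special rational contraction of fiber type, and then to check that this $g$ automatically inherits the open-set property assumed of $f$, as well as that the general fibers of $g$ coincide with those of $f$. For the claim on general fibers: since $h$ is a birational morphism, there is an open subset $V\subset Y$ over which $h$ restricts to an isomorphism $h^{-1}(V)\stackrel{\sim}{\to}V$; for $z\in h^{-1}(V)$ general, $g^{-1}(z)=f^{-1}(h(z))$, so a general fiber of $g$ is a general fiber of $f$, which proves the ``in particular'' part.

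To see that $g$ is regular and proper on some non-empty open subset of $X$, let $U\subset X$ be a non-empty open subset on which $f$ is regular and proper, and fix an open $Y_0\subset Y$ such that $f|_U$ takes values in $Y_0$ and $f|_U\colon U\to Y_0$ is proper. After shrinking $Y_0$ we may assume $Y_0\subset V$, so that $Z_0:=h^{-1}(Y_0)$ is open in $Z$ and $h|_{Z_0}\colon Z_0\to Y_0$ is an isomorphism. Since $h$ is a morphism and $f=h\circ g$, we have $\dom(g)=\dom(f)$; hence $g$ is regular on $U':=U\cap f^{-1}(Y_0)$, where it agrees with $(h|_{Z_0})^{-1}\circ f$. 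Therefore $g|_{U'}\colon U'\to Z_0$ is proper, being the composition of the proper morphism $f|_{U'}\colon U'\to Y_0$ with an isomorphism.

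The argument presents no substantial obstacle; the only delicate point is to verify that \emph{regular and proper on a non-empty open subset} is preserved when we replace $f$ by $h^{-1}\circ f$ on the locus where $h$ is an isomorphism. This reduces to the observation that $\dom(f)=\dom(g)$, which holds because $h$ is everywhere defined, together with the fact that properness is preserved under base change to open subsets and under composition with isomorphisms.
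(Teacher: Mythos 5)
Your argument is correct in substance, but it takes a genuinely different route from the paper. The paper does not apply Proposition \ref{calanques} as a black box: it says the statement ``follows from the same proof as \cite[Prop.~2.13]{fibrations}'', i.e.\ one reruns the construction of the factorization and observes that it is compatible with the regularity/properness hypothesis. You instead take \emph{any} factorization $X\stackrel{g}{\dasharrow} Z\stackrel{h}{\to}Y$ produced by Proposition \ref{calanques} and check a posteriori that $g$ inherits the property because $h$ is a birational \emph{morphism}, hence an isomorphism over a dense open $V\subset Y$; on $U'=U\cap f^{-1}(Y_0)$ with $Y_0\subset V$ one has $g=(h|_{Z_0})^{-1}\circ f$, which is regular and proper, and the general fibers of $g$ and $f$ agree. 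This is arguably cleaner, since it shows the extra conclusion is automatic for \emph{every} factorization as in Proposition \ref{calanques}, not just for a carefully chosen one.

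One caveat: the justification ``$\dom(g)=\dom(f)$ because $h$ is everywhere defined'', which you single out in your last paragraph as the key observation, is not correct as a general principle. If $h$ is a morphism one only gets $\dom(g)\subseteq\dom(f)$; the reverse inclusion can fail at points $x$ with $f(x)$ in the non-isomorphism locus of $h$ (e.g.\ $f=\mathrm{id}_Y$, $h\colon\Bl_q Y\to Y$, $g=h^{-1}$). Fortunately your argument never needs the global equality: on $U'$ you have $f(U')\subseteq Y_0\subseteq V$, so $g$ agrees there with the morphism $(h|_{Z_0})^{-1}\circ f$ and is therefore regular on $U'$ --- which is exactly the parenthetical reason you already give. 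I would simply delete the $\dom(g)=\dom(f)$ claim and let the local identity $g|_{U'}=(h|_{Z_0})^{-1}\circ f|_{U'}$ carry the argument.
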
  
\begin{proof}
This follows from the same proof as \cite[Prop.~2.13]{fibrations}.
\end{proof}  
\begin{lemma}\label{induced}
  Let $X$ be a projective, normal, and $\Q$-factorial Mori dream space and $f\colon X\to Y$ a contraction of fiber type. Suppose that there exist a non-empty open subset $Y_0\subset Y$ and a contraction of fiber type $g_0\colon f^{-1}(Y_0)\to Z_0$ such that $f_{|f^{-1}(Y_0)}$ factors through $g_0$:
$$\xymatrix{{f^{-1}(Y_0)}\ar[r]_{\quad g_0}\ar@/^1pc/[rr]^{f}&{Z_0}\ar[r]&{Y_0}
    }$$
 Then $f$ can be factored as $X\stackrel{g}{\dasharrow} Z\stackrel{h}{\to}Y$, where $h$ is a contraction, and $g$ is 
 a special rational contraction  that coincides with $g_0$ on
$g_0^{-1}(U)$ for
 some non-empty open subset $U$ of $Z_0$.
\end{lemma}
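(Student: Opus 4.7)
The plan is to extend $g_0$ to a rational contraction $g'$ on all of $X$ using Lemma \ref{elem}, verify that $f$ factors through $g'$ via Remark \ref{factor}, and finally refine $g'$ to a special rational contraction using Proposition \ref{calanques}. First, I apply Lemma \ref{elem} to the contraction $g_0\colon f^{-1}(Y_0)\to Z_0$, regarded as a contraction defined on the open subset $f^{-1}(Y_0)\subset X$, obtaining a rational contraction $g'\colon X\dasharrow Z'$ with $Z_0\subset Z'$ an open subset and $g'_{|f^{-1}(Y_0)}=g_0$. Since $g_0$ is of fiber type, a general fiber of $g'$ coincides with one of $g_0$ and is positive-dimensional, so $g'$ is also of fiber type.

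Next, I verify that $f=h'\circ g'$ for some morphism $h'\colon Z'\to Y$. By Remark \ref{factor}, this is equivalent to the cone containment $f^*(\Nef(Y))\subset (g')^*(\Nef(Z'))$ in $\Nu(X)$. Given an ample Cartier class $\alpha$ on $Y$, I extend $h_0^*\alpha\in\Pic(Z_0)$ to a Cartier class $M$ on $Z'$ (using the $\Q$-factoriality of $Z'$). Since $f=h_0\circ g_0=h_0\circ g'$ on $f^{-1}(Y_0)$, the classes $(g')^*M$ and $f^*\alpha$ coincide on this dense open, so their difference is a Cartier divisor supported on $f^{-1}(Y\smallsetminus Y_0)$. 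Modifying $M$ by divisors on $Z'$ supported outside $Z_0$ and absorbing the $f$-vertical contributions, I arrange $(g')^*M=f^*\alpha$ in $\Nu(X)$; the resulting class on $Z'$ is nef by the projection formula and the surjectivity of $g'$.

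Finally, I apply Proposition \ref{calanques} to the rational contraction of fiber type $g'$ to obtain a factorization $g'=\bar{h}\circ g$, where $g\colon X\dasharrow Z$ is a special rational contraction of fiber type and $\bar{h}\colon Z\to Z'$ is birational. Setting $h:=h'\circ\bar{h}\colon Z\to Y$ gives a contraction with $f=h\circ g$. Since $\bar{h}$ is birational, there is a non-empty open $U\subset Z_0$ on which $\bar{h}^{-1}$ is a regular isomorphism onto $\bar{h}^{-1}(U)\subset Z$; on $g_0^{-1}(U)=(g')^{-1}(U)\subset f^{-1}(Y_0)$ we have $g=\bar{h}^{-1}\circ g'=\bar{h}^{-1}\circ g_0$, so $g$ coincides with $g_0$ on $g_0^{-1}(U)$ under the identification given by $\bar{h}^{-1}$, as required. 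The main obstacle is the middle step: verifying the chamber containment amounts to extending $h_0\colon Z_0\to Y_0$ to a morphism $h'\colon Z'\to Y$, which requires careful control of the prime divisors in $f^{-1}(Y\smallsetminus Y_0)$, their images under $g'$, and the flexibility in choosing extensions of Cartier classes across $Z'\smallsetminus Z_0$.
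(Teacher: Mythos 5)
Your first and last steps track the paper (Lemma \ref{elem} to extend $g_0$ to a rational contraction $g'\colon X\dasharrow Z'$, then a special refinement), but the middle step has a genuine gap: you cannot in general arrange $f^*(\Nef(Y))\subset (g')^*(\Nef(Z'))$, i.e.\ $f$ need not factor through the particular model $Z'$ handed to you by Lemma \ref{elem} via a \emph{morphism} $Z'\to Y$. The composite $h_0\circ g'$ does agree with $f$ as a rational map, but $h_0\colon Z_0\to Y_0$ need not extend to a morphism on the compactification $Z'$. Concretely: the divisor $f^*\alpha-(g')^*M$ is supported on prime divisors $D_i$ lying over $Y\smallsetminus Y_0$, and to ``absorb'' these into $M$ you would need each $D_i$ to be of the form $(g')^*(B_i)$ for a divisor $B_i$ on $Z'$; this fails whenever $g'(D_i)$ has codimension $\geq 2$, and even when $g'(D_i)$ is a divisor, $(g')^*(g'(D_i))$ need not equal $D_i$. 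Nor does the projection formula give nefness of $M$ on $Z'$, since $g'$ is only a rational map and you do not control $M$ against curves in $Z'\smallsetminus Z_0$ or in the images of the loci where $g'$ is modified.

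The paper circumvents exactly this point, and the fix is not to control the divisors over $Y\smallsetminus Y_0$ but to change the target by a SQM. One first replaces $g'$ by a special refinement $g''\colon X\dasharrow Z''$ that is still regular and proper over $Y_0$ (Lemma \ref{properfiber} rather than Prop.~\ref{calanques} --- this is what guarantees the general fiber of $g''$ is still a fiber of $g_0$, which you need for the final claim about $U$). By Lemma \ref{eubea}, $\eta_{g''}=\tau_{g''}\cap\Mov(X)=\N(F_g,X)^{\perp}\cap\Mov(X)$ is a face of $\Mov(X)$ of dimension $\rho_{Z''}$, and it is a \emph{union} of maximal chambers of $\MCD(X)$, only one of which is $(g'')^*(\Nef(Z''))$. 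Since a general fiber $F_g$ of $g_0$ sits inside a general fiber $F_f$ of $f$, we get $\N(F_f,X)^{\perp}\subset\N(F_g,X)^{\perp}$, so the chamber $f^*(\Nef(Y))$ lies in $\eta_{g''}$ and is therefore a face of \emph{some} maximal chamber $\eta\subseteq\eta_{g''}$ --- but not necessarily of $(g'')^*(\Nef(Z''))$. Taking $g$ to be the contraction with $g^*(\Nef(Z))=\eta$, one has $g=\psi\circ g''$ for a SQM $\psi\colon Z''\dasharrow Z$, so $g$ is still special with the same general fiber, and now $f^*(\Nef(Y))\subset g^*(\Nef(Z))$ holds by construction, so Rem.~\ref{factor} produces $h$. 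It is precisely this choice of the right chamber inside the face $\eta_{g''}$, i.e.\ the SQM adjustment of the target, that your argument is missing.
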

\begin{proof}
  By Lemma \ref{elem} there exists a rational contraction $g'\colon X\dasharrow Z'$ that restricts to $g_0$ on $f^{-1}(Y_0)$, in particular $g'$ is of fiber type and  is regular and proper on $f^{-1}(Y_0)$.

  Now we apply Lemma \ref{properfiber} to $g'$, and deduce that
  $g'$ factors as  $X\stackrel{g''}{\dasharrow}Z''\stackrel{\alpha}{\to} Z'$ where
 $\alpha$ is birational and $g''$ is a special rational contraction of fiber type that is regular and  proper on some non-empty open subset of $X$. In particular $g''$ has the same general fiber as $g_0$.
 
Let us consider $F_f,F_g\subset X$ general fibers of $f$ and $g_0$ respectively; we have $F_g\subset F_f$, so $\N(F_g,X)\subset\N(F_f,X)$, and hence
$\N(F_f,X)^{\perp}\subset\N(F_g,X)^{\perp}\subset\Nu(X)$.

 Let us set:
 $$\eta_{g''}:=\N(F_g,X)^{\perp}\cap\Mov(X)=\tau_{g''}\cap\Mov(X)$$
(see Def.-Rem.~\ref{dimfiber}), so that 
 $\eta_{g''}$ is a face of $\Mov(X)$, and it has dimension
 $\rho_{Z''}$ because $g''$ is special, by Lemma \ref{eubea}.

We note that $\eta_{g''}$ is a union of cones of $\MCD(X)$ of dimension $\rho_{Z''}$, one being $(g'')^*(\Nef(Z''))$.

The cone $f^*(\Nef(Y))$ belongs to the fan $\MCD(X)$ and is contained both in $\Mov(X)$ and in $\N(F_f,X)^{\perp}$, therefore 
in $\eta_{g''}$. Hence there exists a cone $\eta\in\MCD(X)$ of dimension $\rho_{Z''}$ such that $f^*(\Nef(Y))\subseteq\eta\subseteq \eta_{g''}$.

Let $g\colon X\dasharrow Z$ be the rational contraction of fiber type such that $\eta=g^*(\Nef(Z))$. Then $\rho_Z=\rho_{Z''}$ and
there is a SQM $\psi\colon Z''\dasharrow Z$ such that $g=\psi\circ g''$
 (see \cite[Th.~1.2]{okawa_MCD}),
in particular $g$ and $g''$ have the same general fiber $F_g$.
 Then $g$ is special again by Lemma \ref{eubea}, and it
 coincides with $g_0$  on
$g_0^{-1}(U)$ for
 some non-empty open subset $U$ of $Z_0$.

Finally $f^*(\Nef(Y))\subset g^*(\Nef(Z))$, therefore there exists a contraction $h\colon Z\to Y$ such that $f=h\circ g$ (see Rem.~\ref{factor}).
\end{proof}  
\begin{lemma}\label{h}
  Let $X$ be a projective, normal, and $\Q$-factorial Mori dream space, and $f\colon X\dasharrow Y$ a rational contraction of fiber type that factors as $X\stackrel{g}{\dasharrow}Z\stackrel{h}{\dasharrow}Y$ where  $g$ and $h$ are rational contractions, $h$ is of fiber type, and $Z$ is $\Q$-factorial.

  If $f$ is special (respectively, quasi-elementary), then $h$ is special (respectively, quasi-elementary).
\end{lemma}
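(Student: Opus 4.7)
The plan is to reduce both statements to a single dimension comparison between the face $\tau_h \subset \Eff(Z)$ and its counterpart $\tau_f \subset \Eff(X)$. By Def.-Rem.~\ref{dimfiber} and Lemma~\ref{eubea}, $h$ is quasi-elementary iff $\dim\tau_h=\rho_Y$, while $h$ is special iff $\dim(\tau_h\cap\Mov(Z))=\rho_Y$. Since $\tau_h\supseteq h^*(\Nef(Y))$ and $h^*(\Nef(Y))\subseteq\Mov(Z)$, the lower bounds $\dim\tau_h\geq\rho_Y$ and $\dim(\tau_h\cap\Mov(Z))\geq\rho_Y$ are automatic, and only the matching upper bounds need to be proved.

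To this end I would fix a resolution of $g$, namely an SQM $\varphi\colon X\dasharrow X'$ together with a contraction $g'\colon X'\to Z$, identify $\Nu(X)\cong\Nu(X')$ via $\varphi$, and define a pullback $g^*:=(g')^*\colon \Nu(Z)\to\Nu(X)$. This map is injective: since $g'$ is a contraction of fiber type, $(g')_*\colon\N(X')\to\N(Z)$ is surjective, so $(g')^*D\equiv 0$ implies $D\cdot (g')_*C=0$ for every curve $C\subset X'$, hence $D\equiv 0$. I would then verify two inclusions. First, $g^*(\tau_h)\subseteq\tau_f$: by Rem.~\ref{tauf}, $\tau_h$ is spanned by classes $[B]$ with $B\subset Z$ effective and $\Supp B$ not dominating $Y$; for such $B$, the pullback $(g')^*B$ is effective with support contained in $(g')^{-1}(\Supp B)$, which does not dominate $Y$ because $h\circ g'$ agrees with $f\circ\varphi^{-1}$ as rational maps $X'\dasharrow Y$; hence $g^*[B]\in\tau_f$ again by Rem.~\ref{tauf}. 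Second, $g^*(\Mov(Z))\subseteq\Mov(X)$: the base locus of $|(g')^*mB|$ is contained in $(g')^{-1}(\Bs|mB|)$, a preimage under a contraction, whose codimension equals $\codim\Bs|mB|$; hence if $B$ is movable then so is $(g')^*B$, and SQMs preserve $\Mov$ under the identification of numerical spaces.

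Combining injectivity of $g^*$ with these two inclusions yields $\dim\tau_h\leq\dim\tau_f$ and $\dim(\tau_h\cap\Mov(Z))\leq\dim(\tau_f\cap\Mov(X))$. If $f$ is quasi-elementary, then $\dim\tau_f=\rho_Y$, which together with the lower bound forces $\dim\tau_h=\rho_Y$, so $h$ is quasi-elementary; the same argument applied to the intersection with the movable cone treats the special case. The only delicate point I anticipate is the careful bookkeeping of the rational map $g$ via the regular resolution $g'$ and the SQM identification of numerical spaces, since none of the maps in the factorization $f=h\circ g$ need be regular on $X$ itself; once those identifications are justified, the proof reduces to an elementary dimension count on convex cones.
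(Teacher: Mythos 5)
Your argument for the quasi-elementary half is correct, and it takes a genuinely different route from the paper: you characterize quasi-elementary by $\dim\tau_h=\rho_Y$ and squeeze $\dim\tau_h$ between the automatic lower bound $\dim h^*(\Nef(Y))=\rho_Y$ and the upper bound coming from the injectivity of $g^*=(g')^*$ together with $g^*(\tau_h)\subseteq\tau_f$ (the latter via Rem.~\ref{tauf}, which is fine). The paper instead argues directly from Lemma \ref{equivalent}: for a prime divisor $B\subset Y$, $f^*(B)=g^*(h^*(B))$ is irreducible, hence $h^*(B)$ has a single component. Your version is a clean dimension count; the paper's is more elementary and needs no cone bookkeeping. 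One small slip: $g$ need not be of fiber type, but injectivity of $(g')^*$ holds for any contraction by the projection formula, so this is harmless.

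The special half, however, has a genuine gap: the inclusion $g^*(\Mov(Z))\subseteq\Mov(X)$ is false in general, and your justification for it fails. The codimension of $(g')^{-1}(\Bs|mB|)$ need not equal that of $\Bs|mB|$: if $g'$ has positive-dimensional fibers over the stable base locus, the preimage can acquire a divisorial component. Concretely, let $\psi\colon Z\dasharrow Z^{+}$ be a flop of a curve $C$ in a $3$-fold, $A$ ample on $Z^{+}$, and $B:=\psi^*A$, so that $[B]\in\Mov(Z)$ and $B\cdot C<0$. If $g'$ is the blow-up of $C$ with exceptional divisor $E$, then $(g')^*B$ is negative on the family of curves in $E$ mapping onto $C$, so $E\subseteq\Bs|m(g')^*B|$ for every $m$ and $[(g')^*B]\notin\Mov(X')$. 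The underlying point is that $(\psi\circ g')^*\neq (g')^*\circ\psi^*$ when $\psi$ is an SQM of the target, so pulling back other Mori chambers of $Z$ does not land in $\Mov(X)$. Since your upper bound $\dim(\tau_h\cap\Mov(Z))\leq\dim(\tau_f\cap\Mov(X))$ relies exactly on this inclusion (even the restriction to $\tau_h\cap\Mov(Z)$ contains classes pulled back from SQMs of $Z$ over $Y$, so the problem persists), the special case is not proved. The paper avoids the movable cone altogether: after making $f$, $g$, $h$ regular, it takes a prime divisor $D\subset Z$ with $h(D)\subsetneq Y$, picks a component $D_X$ of $g^{-1}(D)$ dominating $D$, and uses speciality of $f$ to conclude that $h(D)=f(D_X)$ is a divisor. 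I would replace your special-case argument by that one (or by any argument that does not pass through $g^*(\Mov(Z))$).
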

\begin{proof}
Up to composing with SQM's, we can assume that $f$, $g$, and $h$ are regular.
  
Suppose that $f$ is special; in particular $Y$ is $\Q$-factorial. Let $D\subset Z$ be a prime divisor such that $h(D)\subsetneq Y$.
Then $g^{-1}(D)$ has pure codimension one in $X$ and $g(g^{-1}(D))=D$, so there is an irreducible component 
 $D_X$ of $g^{-1}(D)$ such that $g(D_X)=D$. Since $f$ is special, $h(D)=f(D_X)$ must be a prime divisor in $Y$, so that $h$ is special.

Suppose now that $f$ is quasi-elementary. We use Lemma \ref{equivalent}:
first of all $Y$ is $\Q$-factorial. Let $B\subset Y$ be a prime divisor. Then $f^*(B)=g^*(h^*(B))$ is irreducible (possibly non-reduced), therefore $h^*(B)$ must have a unique irreducible component, and $h$ is quasi-elementary too.
\end{proof}
\begin{lemma}\label{montenero}
  Let $X$ be a projective, normal, and $\Q$-factorial Mori dream space, and $f\colon X\dasharrow Y$ a  rational contraction of fiber type. We have the following:
  \begin{enumerate}[$(a)$]
    \item
  $f$ factors as $X \stackrel{g}{\dasharrow} Z\stackrel{h}{\to} Y$
  where $h$ is a contraction and $g$ is an elementary rational contraction, either divisorial or of fiber type;
\item if $f$ is special, then $h$ is of fiber type or an isomorphism;
\item  
 when $g$ is divisorial:  
\begin{enumerate}[$\bullet$]
  \item if $f$ is special, then $\Exc(g)$ dominates $Y$ or a prime divisor in $Y$;
\item
if $f$ is quasi-elementary, then $\Exc(g)$ dominates $Y$.
\end{enumerate}
\end{enumerate}
\end{lemma}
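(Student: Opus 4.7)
For part (a), the plan is to work inside the Mori chamber decomposition $\MCD(X)$ supported on $\Mov(X)$. Since $f$ is of fiber type, $f^*\Nef(Y)$ lies in $\partial\Mov(X)$; choosing a class $\alpha$ in its relative interior, $\alpha$ must lie on some external facet $F$ of a maximal chamber, i.e., a codimension-one cone of $\MCD(X)$ contained in $\partial\Mov(X)$. Because $F\in\MCD(X)$ contains $\alpha$, it contains the minimal cone of the fan through $\alpha$, which is $f^*\Nef(Y)$ itself. Writing $F=g^*\Nef(Z)$ for the corresponding elementary rational contraction $g\colon X\dasharrow Z$, the fact that $F$ lies on the boundary of $\Mov(X)$ forces $g$ to be divisorial or of fiber type (ruling out small contractions and SQMs). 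Finally, Remark~\ref{factor} applied to the inclusion $f^*\Nef(Y)\subset g^*\Nef(Z)$ produces the required contraction $h\colon Z\to Y$ with $f=h\circ g$.

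For part (b), suppose $f$ is special. If $g$ is divisorial then $\dim Z=\dim X>\dim Y$, so $h$ is automatically of fiber type, and there is nothing to prove; assume therefore that $g$ is of fiber type. If $h$ were birational but not an isomorphism, there would be a prime divisor $D_Z\subset Z$ exceptional for $h$, so $\codim_Y h(D_Z)\geq 2$. Since $g$ is of fiber type, a standard dimension count on a common regular resolution produces at least one prime divisor $D_X\subset X$ with $g(D_X)=D_Z$. But then $f(D_X)=h(D_Z)$ has codimension at least two in $Y$, contradicting specialness of $f$. Hence $h$ is of fiber type or an isomorphism.

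For part (c), set $E:=\Exc(g)$, a prime divisor of $X$. The first bullet is immediate from specialness: $f(E)$ is irreducible with $\codim_Y f(E)\leq 1$, hence equals $Y$ or a prime divisor. For the second bullet, suppose $f$ is quasi-elementary and, for a contradiction, that $B:=f(E)$ is a prime divisor of $Y$. By passing to a simultaneous SQM of $X$ on which both $f$ and $g$ become morphisms---such a model exists because $f^*\Nef(Y)\subset g^*\Nef(Z)$ is contained in a maximal chamber of $\MCD(X)$---and observing that quasi-elementarity of $f$ and the prime exceptional divisor $E$ are preserved, I may assume $f$ and $g$ are both regular. Since $E$ does not dominate $Y$, Remark~\ref{tauf} gives $[E]\in\tau_f$; by Lemma~\ref{equivalent}, the linear span of $\tau_f$ equals $\N(F,X)^\perp=f^*\Nu(Y)$, where $F$ is a general fiber (both spaces have dimension $\rho_Y$ under quasi-elementarity). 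Thus $[E]=f^*[D_Y]$ for some class $[D_Y]\in\Nu(Y)$. Choose a curve $C\subset E$ generating $\NE(g)$; then $f=h\circ g$ contracts $C$, so $f_*C=0$, and the projection formula yields $E\cdot C=[D_Y]\cdot f_*C=0$, contradicting $E\cdot C<0$, which holds because $g$ is divisorial with exceptional divisor $E$.

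The step I expect to need the most care is the facet-selection argument in (a): verifying that an external facet through $\alpha$ actually contains the entire cone $f^*\Nef(Y)$ relies on $f^*\Nef(Y)$ being the minimal cone of $\MCD(X)$ through a relative-interior point, which is just the fact that $f^*\Nef(Y)$ is itself a cone of the fan. Once this is in hand, parts (b) and (c) are quite direct, with the projection-formula computation in (c) forming the quantitative heart of the argument, and being the only place where quasi-elementarity (as opposed to specialness) is used decisively via the identification $\operatorname{span}(\tau_f)=f^*\Nu(Y)$.
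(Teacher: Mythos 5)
Your parts (a) and (c) follow essentially the paper's own route. In (a) you select a codimension-one cone of $\MCD(X)$ lying in $\partial\Mov(X)$ and containing $f^*\Nef(Y)$, exactly as the paper does (it phrases this via a facet $\eta$ of $\Mov(X)$ and a cone $\eta_0\in\MCD(X)$ with $f^*(\Nef(Y))\subset\eta_0\subset\eta$), and the exclusion of small $g$ for boundary cones is the same observation. In (c) the first bullet is identical, and your second bullet is the dual of the paper's argument: the paper shows $\NE(g)\subset\ker f_*$ but $\NE(g)\not\subset\N(F,X)$ (using $F\cap\Exc(g)=\emptyset$ and $\Exc(g)\cdot\NE(g)<0$), whence $\N(F,X)\subsetneq\ker f_*$; you instead show $[\Exc(g)]\in\N(F,X)^{\perp}=f^*\Nu(Y)$ and pair against $\NE(g)$ by the projection formula. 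Both are correct and equivalent.

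The one genuine gap is in (b), where the paper simply cites \cite[Prop.~2.10]{fibrations} and you argue directly. Your dichotomy ``$h$ birational but not an isomorphism $\Rightarrow$ there is a prime divisor $D_Z\subset Z$ exceptional for $h$'' is false as stated: a birational contraction can be small, in which case $\Exc(h)$ contains no divisor and your dimension count produces nothing. You must dispose of the small case separately, and here the $\Q$-factoriality of $Y$ --- which is part of the definition of special --- is what saves you: if $h$ is small and $Y$ is $\Q$-factorial, then for any divisor $D$ on $Z$ the divisors $h^*(h_*D)$ and $D$ agree in codimension one and hence coincide, so $h^*\colon\Nu(Y)\to\Nu(Z)$ is surjective and $\rho_Z\leq\rho_Y$; then $\NE(h)=\{0\}$, so $h$ is finite and birational onto a normal variety, hence an isomorphism. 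With this case added, your argument for (b) is complete; the divisor-production step in the divisorial case is fine, since some component of $g^{-1}(D_Z)$ dominates $D_Z$ and has dimension $\dim X-1$, exactly as in the proof of Lemma \ref{h}.
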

\begin{proof}
Consider the cone $f^*(\Nef(Y))\in\MCD(X)$.
Since $f$ is of fiber type, $f^*(\Nef(Y))$ is contained in the boundary of $\Eff(X)$, hence also in the boundary of $\Mov(X)$.

Let us 
  consider a facet $\eta$ of $\Mov(X)$ containing $f^*(\Nef(Y))$,
and
$\eta_0\in\MCD(X)$ of dimension $\rho_X-1$ such that $f^*(\Nef(Y))\subset\eta_0\subset\eta$.
Then $\eta_0=g^*(\Nef(Z))$ for an elementary rational contraction $g\colon X\dasharrow Z$ and we have a factorization:
$$\xymatrix{X\ar@{-->}[r]_{g}\ar@{-->}@/^1pc/[rr]^{f}&{Z}\ar[r]_{h}&Y
    }$$
    where $h$ is a contraction (see Rem.~\ref{factor}).
By construction $\eta_0$ is contained in the boundary of $\Mov(X)$, therefore $g$ cannot be small. This gives $(a)$, and $(b)$ follows from \cite[Prop.~2.10]{fibrations}.

For $(c)$, suppose that $g$ is divisorial. The first statement follows from the definition of special contraction. For the second statement,
up to composing with a SQM we can assume that $g$ is regular; set $E:=\Exc(g)\subset X$, and let $F\subset  X$ be a general fiber of $f$.
If $f(E)\subsetneq Y$, then $F\cap E=\emptyset$, thus $\N(F,X)\subset E^{\perp}$ (see Rem.~\ref{cassis}). We also have $E\cdot \NE(g)<0$, thus $\NE(g) \not\subset E^{\perp}$, in particular $\NE(g)\not\subset\N(F,X)$. On the other hand $\NE(g)\subset\ker f_*$, and we conclude that $\N(F,X)\subsetneq\ker f_*$ and $f$ cannot be quasi-elementary.
\end{proof}
\begin{remark}\label{choices}
In the setting of Lemma \ref{montenero}, it follows from the proof
  that the choices for $g$ correspond to the facets of $\Mov(X)$ that contain $f^*(\Nef(Y))$. More precisely, given a facet $\eta$ of $\Mov(X)$ containing 
$f^*(\Nef(Y))$, we costruct $g\colon X\dasharrow Z$ as in the statement, with moreover $\eta=\Mov(X)\cap g^*(\Nu(Z))$. Then $g$ is of fiber type if and only if $\eta$ is contained in the boundary of $\Eff(X)$, otherwise $g$ is divisorial.
\end{remark}
\begin{lemma}\label{quasiel2}
  Let $X$ be a projective, normal, and $\Q$-factorial Mori dream space of dimension $n$, and $f\colon X\dasharrow Y$ a quasi-elementary rational contraction
  with $\dim Y=n-2$ and $\rho_X-\rho_Y=2$.
  Then one of the following holds:
  \begin{enumerate}[$(i)$]
    \item
      $f$ factors as $X\stackrel{g}{\dasharrow}Z\stackrel{h}{\to} Y$ where $\dim Z=n-1$ and $g$ and $h$ are elementary of fiber type;
    \item
      $f$ admits two factorizations 
 $X\stackrel{\alpha_i}{\dasharrow}W_i\stackrel{h_i}{\to}Y$ for $i=1,2$,
 where $\alpha_i$ is elementary and divisorial with exceptional locus dominating $Y$, $h_i$ is elementary of fiber type, and $\alpha_1^*(\Nu(W_1))\neq\alpha_2^*(\Nu(W_2))$.
 \end{enumerate}
\end{lemma}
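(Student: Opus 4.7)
The plan is to apply Lemma \ref{montenero}(a) to $f$, obtaining an elementary factorization $X\stackrel{g}{\dasharrow} Z\stackrel{h}{\to} Y$, and then to split into two cases according to whether some choice of $g$ can be taken of fiber type; note that $\rho_Z=\rho_Y+1$, so $h$ is automatically elementary.

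\emph{Case 1: some choice of $g$ is of fiber type.} The goal is to land in (i). Since $f$ is quasi-elementary it is special, so by Lemma \ref{montenero}(b) the map $h$ is of fiber type or an isomorphism; the latter is ruled out by $\rho_Z=\rho_Y+1\neq\rho_Y$, so $h$ is elementary of fiber type. A dimension count on general fibers---after making $g$ regular via a SQM, a general fiber of $f$ is the $g$-preimage of a general fiber of $h$, so the dimensions of the general fibers of $g$ and $h$ sum to $2$ with both summands $\geq 1$---forces both to equal $1$, hence $\dim Z=n-1$, which is case (i).

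\emph{Case 2: no such $g$ is of fiber type.} By Rem.~\ref{choices} every facet of $\Mov(X)$ containing $f^*(\Nef(Y))$ then produces a \emph{divisorial} elementary $g$, so the plan is to exhibit two distinct such facets, yielding two factorizations as in (ii). Since $f$ is of fiber type and special, Lemma \ref{eubea} provides a proper face $\eta_f$ of $\Mov(X)$ containing $f^*(\Nef(Y))$, of dimension $\rho_Y=\rho_X-2$. Thus $\eta_f$ is a codimension-$2$ face of the polyhedral cone $\Mov(X)$, and by the standard fact that every proper face of a polyhedral cone is the intersection of the facets containing it, $\eta_f$ lies in at least two distinct facets $\zeta_1,\zeta_2$. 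Applying Rem.~\ref{choices} to each $\zeta_i$ yields an elementary $\alpha_i\colon X\dasharrow W_i$ with $\zeta_i=\Mov(X)\cap\alpha_i^*(\Nu(W_i))$; since the two facets have distinct linear spans, $\alpha_1^*(\Nu(W_1))\neq\alpha_2^*(\Nu(W_2))$. By the Case 2 assumption each $\alpha_i$ is divisorial, Lemma \ref{montenero}(c) (using quasi-elementarity of $f$) forces $\Exc(\alpha_i)$ to dominate $Y$, and the induced $h_i\colon W_i\to Y$ is elementary of fiber type (as $\rho_{W_i}-\rho_Y=1$ and $\dim W_i-\dim Y=2$). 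This is case (ii).

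The main technical point is producing two distinct $g$'s in Case 2; the cleanest way is to recognise $\eta_f$ as a codimension-$2$ face of $\Mov(X)$ via Lemma \ref{eubea} and to appeal to Rem.~\ref{choices} for the correspondence between facets of $\Mov(X)$ containing $f^*(\Nef(Y))$ and admissible $g$'s, rather than trying to construct the second $g$ directly from geometric data.
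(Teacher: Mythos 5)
Your proof is correct and follows essentially the same route as the paper's: both identify $\eta_f$ (via Lemma \ref{eubea} and quasi-elementarity) as a codimension-$2$ face of $\Mov(X)$ containing $f^*(\Nef(Y))$, hence contained in two distinct facets, and then apply Lemma \ref{montenero} and Rem.~\ref{choices} to each facet, splitting according to whether one of them lies in $\partial\Eff(X)$ (equivalently, whether some $g$ is of fiber type). The only difference is cosmetic: you spell out the dimension count forcing $\dim Z=n-1$ in case $(i)$, which the paper leaves implicit.
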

\begin{proof}
 We note that since $f$ is quasi-elementary, the cone  $f^*(\Nef(Y))$ has dimension $\rho_X-2$ and is contained in a face of $\Mov(X)$ of dimension $\rho_X-2$ (see  Lemma \ref{eubea}). Therefore
$f^*(\Nef(Y))$  is contained 
in exactly two facets $\tau_1$ and $\tau_2$ of 
$\Mov(X)$.

If one of these facets is contained in $\partial\Eff(X)$, then by applying Lemma \ref{montenero} and Rem.~\ref{choices} we factor $f$ as in $(i)$.
Otherwise, if
 no $\tau_i$ is contained in 
$\partial\Eff(X)$, for $i=1,2$ again by applying Lemma \ref{montenero} and Rem.~\ref{choices}
 we get a factorization of $f$ as
 $$X\stackrel{\alpha_i}{\dasharrow}W_i\stackrel{h_i}{\to}Y,$$
where $\alpha_{i}$ is a divisorial elementary rational contraction 
with exceptional locus dominating $Y$, and $h_{i}$ is elementary.
Moreover  $\alpha_1^*(\Nu(W_1))\neq\alpha_2^*(\Nu(W_2))$ because
 $\tau_1\neq\tau_2$.
\end{proof}
We conclude this section by recalling two results on special or quasi-elementary rational contractions of Fano $4$-folds, that are needed in the sequel. 
\begin{lemma}[\cite{fibrations}, Lemma 4.3]\label{specialsurf}
 Let  $X$ be a smooth Fano $4$-fold and $f\colon X\dasharrow S$ a special rational contraction onto a surface. Then $S$ is smooth.
\end{lemma}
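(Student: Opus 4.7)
The plan is to pass to a $K$-negative resolution of $f$ and then deduce smoothness of $S$ from smoothness of the total space by a flatness argument, followed by a standard descent of regularity.

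First I would apply Lemma \ref{Kneg} to choose a $K$-negative resolution $f'\colon X'\to S$ of $f$; by Lemma \ref{SQMFano}(a) the variety $X'$ is again a smooth projective $4$-fold. Since $f$ is special and $\dim S = 2$, the reformulation recalled immediately after Lemma \ref{equivalent} (namely \cite[Lemma~2.7]{fibrations}) gives that $f'$ is \emph{equidimensional}, so every fiber of $f'$ has pure dimension $2$.

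Next I would establish that $f'$ is flat. The target $S$ is normal of dimension $2$, hence Cohen--Macaulay by Serre's criterion ($S_2$ holds automatically on a normal surface), and the source $X'$ is smooth, hence Cohen--Macaulay as well. Combined with the equidimensionality of $f'$, the miracle-flatness criterion then forces $f'$ to be flat. Granted flatness, I would invoke the classical descent of regularity along flat local maps: if $\varphi\colon A\to B$ is a flat local homomorphism of Noetherian local rings and $B$ is regular, then so is $A$ (Matsumura, \emph{Commutative Ring Theory}, Thm.~23.7). Applying this to $A=\mathcal{O}_{S,s}$ and $B=\mathcal{O}_{X',x}$ for any $x\in (f')^{-1}(s)$ and any $s\in S$, we conclude that each local ring of $S$ is regular, i.e.\ $S$ is smooth.

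The only delicate point is the flatness step, since the form of miracle flatness most often quoted requires a regular target -- which is precisely what we want to prove. One bypasses this either by invoking the Cohen--Macaulay-to-Cohen--Macaulay version of the miracle-flatness criterion, or, concretely, by first establishing flatness of $f'$ over the (open, dense) smooth locus $S^{\text{\it sm}}\subset S$ by the classical statement, and then extending across the finitely many singular points of the normal surface $S$ using purity and the Cohen--Macaulay property of $X'$. Once flatness is in hand, the rest of the argument is formal.
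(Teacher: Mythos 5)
The paper does not actually prove this lemma; it is imported verbatim from \cite{fibrations}, Lemma 4.3. Assessing your argument on its own: the reduction to a $K$-negative, equidimensional resolution $f'\colon X'\to S$ with $X'$ smooth is fine, but the flatness step on which everything afterwards rests has a genuine gap. Miracle flatness in the form you need requires the \emph{base} to be regular; there is no ``Cohen--Macaulay-to-Cohen--Macaulay'' version. A standard counterexample is the inclusion $A=k[[x^2,xy,y^2]]\hookrightarrow B=k[[x,y]]$: here $B$ is regular, $A$ is a normal (hence Cohen--Macaulay) surface singularity, the map is finite (hence equidimensional) with $\dim B=\dim A+\dim B/\mathfrak{m}_AB$, and yet $B$ is not flat over $A$ (a flat finite module would be free of rank equal to the generic rank $2$, whereas $\dim_k B/\mathfrak{m}_AB=3$). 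The same example defeats your proposed patch: that map \emph{is} flat over the punctured spectrum of $A$ and fails to be flat exactly at the closed point, so flatness does not extend across the singular points of the base by purity or by any Cohen--Macaulay consideration --- flatness is simply not a property that propagates from a dense open subset of the target. In your situation the flatness of $f'$ is essentially equivalent to the smoothness of $S$ you are trying to prove, so the argument is circular.

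The real content of the lemma lies in geometry that your argument never uses: the fibers of $f'$ have codimension $2$ in the smooth $4$-fold $X'$, and $f'$ is $K$-negative. One correct route is: relative Kawamata--Viehweg vanishing gives $R^if'_*\mathcal{O}_{X'}=0$ for $i>0$, so the normal surface $S$ has rational singularities; a putative singular point $s\in S$ is then excluded by a local analysis around the two-dimensional fiber $(f')^{-1}(s)$ (for instance via Mumford's criterion on the local fundamental group of the surface germ $(S,s)$, exploiting that removing the codimension-two fiber from a tubular neighbourhood in the smooth $X'$ does not change the fundamental group, while $(f')^{-1}(V\smallsetminus\{s\})\to V\smallsetminus\{s\}$ surjects on $\pi_1$). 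Some input of this kind is unavoidable: no purely commutative-algebra deduction from ``equidimensional with smooth, Cohen--Macaulay total space'' can yield smoothness of the target.
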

\begin{lemma}[\cite{small}, Lemma 5.9]\label{delpezzo}
 Let  $X$ be a smooth Fano $4$-fold with $\rho_X\geq 7$ and $f\colon X\dasharrow S$ a quasi-elementary rational contraction onto a surface. Then $S$ is a smooth del Pezzo surface.
\end{lemma}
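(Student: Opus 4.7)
The plan is to reduce to a concrete classification problem on smooth rational surfaces of small Picard number, and then rule out non--del~Pezzo candidates by lifting bad curves back to $X$. By Lemma~\ref{Kneg}, pick a $K$-negative resolution $f'\colon X'\to S$ of $f$; then $X'$ is smooth by Lemma~\ref{SQMFano}$(a)$. As noted after Def.~\ref{defqes}, a quasi-elementary contraction is in particular special, hence $f'$ is special and Lemma~\ref{specialsurf} yields that $S$ is smooth. It remains to prove that $-K_S$ is ample.

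Next I would bound $\rho_S$. Since $f'$ is of fiber type and $K$-negative, the proof of Lemma~\ref{coop} (which only uses the $f'$-ampleness of $-K_{X'}$, not the Fano-ness of the source) applies and gives $\rho_S\leq\delta_{X'}+2$; numerical classes of curves and prime divisors are preserved under the SQM $X\dasharrow X'$, so $\delta_{X'}=\delta_X$. In the degenerate case $X\cong S_1\times S_2$ the statement is immediate, because a quasi-elementary rational contraction onto a surface must factor through one of the two projections (the face $\tau_f$ of Def.-Rem.~\ref{dimfiber} is forced to be generated by classes pulled back from a factor), and each factor is a del Pezzo surface. Otherwise I may apply Theorem~\ref{starting} to get $\delta_X\leq 1$, whence $\rho_S\leq 3$. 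Moreover, since the Fano $4$-fold $X$ is rationally connected and dominates $S$, the smooth surface $S$ is rationally connected and therefore rational.

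The remaining task is to show that a smooth rational surface $S$ with $\rho_S\in\{1,2,3\}$ produced in this way is del~Pezzo. The case $\rho_S=1$ is trivial ($S\cong\pr^2$). For $\rho_S\in\{2,3\}$ I would argue by contradiction: suppose there exists an irreducible curve $C\subset S$ with $-K_S\cdot C\leq 0$ (so $C$ is a $(-n)$-curve on some Hirzebruch surface $F_n$ with $n\geq 2$, or its analogue after one blow-up). Since $f'$ is special over a surface it is equidimensional by \cite[Lemma 2.7]{fibrations}, and since it is quasi-elementary the divisor $D:={f'}^{*}C$ is irreducible by Lemma~\ref{equivalent}. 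The plan is to produce inside $D$ an irreducible curve $\Gamma$ which maps finitely onto $C$ and such that $-K_{X'}\cdot\Gamma\leq 0$, directly contradicting Lemma~\ref{SQMFano}$(b)$.

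The main obstacle is this last step: turning the non-positivity of $-K_S\cdot C$ into non-positivity of $-K_{X'}\cdot\Gamma$ for a suitable horizontal curve $\Gamma\subset D$. My approach would be to use the relative canonical bundle formula $K_{X'}={f'}^{*}K_S+K_{X'/S}$: the restriction $-K_{X'/S}|_F$ to a general fiber $F$ is $-K_F$, which is nef since $F$ is a general smooth $K$-negative fiber of $f'$ (so a smooth del Pezzo surface by the classification of $K$-negative fibrations of relative dimension two). Choosing $\Gamma$ as a complete intersection curve in $D$ that meets the general fiber transversally, the term ${f'}^{*}(-K_S)\cdot\Gamma=(-K_S\cdot C)\deg({f'}|_{\Gamma})$ is $\leq 0$, and the relative term $-K_{X'/S}\cdot\Gamma$ can be kept $\leq 0$ by choosing $\Gamma$ to avoid the locus where $-K_{X'/S}$ fails to be semi-ample over $S$; a careful analysis of this, using that $f'$ is equidimensional and quasi-elementary, is where the work lies. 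This contradicts the lower bound $-K_{X'}\cdot\Gamma\geq 1$ from Lemma~\ref{SQMFano}$(b)$ (as $\Gamma$ is not one of the finitely many flipped exceptional lines), completing the proof.
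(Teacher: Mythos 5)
The paper does not reprove this lemma: it is imported verbatim from [\emph{small}, Lemma 5.9], so the only internal comparison available is with that citation. Your opening steps are correct and surely coincide with the reference: a quasi-elementary contraction is special, Lemma \ref{specialsurf} gives smoothness of $S$, and the product case reduces to the projections. The argument breaks down at the Picard number bound.

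The claimed bound $\rho_S\leq\delta_X+2\leq 3$ is false, and the paper's own examples refute it: in \S\ref{fanomodel} the Fano model $X$ of $\Bl_{8\mskip1mu\pts}\pr^4$ (so $\rho_X=9$, not a product, hence $\delta_X\leq 1$) carries a quasi-elementary rational contraction onto $S=\Bl_{6\mskip1mu\pts}\pr^2$ with $\rho_S=7$, and the examples of \S\ref{newex} give $\rho_S=4$ with $\rho_X=7$. The error lies in transporting Lemma \ref{coop} across the SQM: that lemma concerns regular contractions of the Fano $X$ itself, and to run its proof on $f'\colon X'\to S$ you would need $\delta_{X'}\leq 1$. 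But the Lefschetz defect is not an SQM invariant: the transform in $X$ of the divisor $(f')^{-1}(C)\subset X'$ (for $C\subset S$ a curve) in general contains the exceptional planes flipped by the SQM, so its span of curve classes can be much larger than that of $(f')^{-1}(C)$ in $X'$ --- this is exactly why the paper needs Lemma \ref{dim32} to assert SQM-invariance of $\dim\N(E,\cdot)$ even in the restricted case of divisors of type $(3,2)$. Since everything after this point in your proposal is a case analysis over $\rho_S\in\{1,2,3\}$, the gap is fatal to the architecture of the proof.

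The final lifting step is also unsound as described. To contradict Lemma \ref{SQMFano}$(b)$ you need a horizontal curve $\Gamma\subset D=(f')^{-1}(C)$ with $-K_{X'/S}\cdot\Gamma\leq 0$; but $-K_{X'/S}$ is $f'$-ample, a complete-intersection curve in $D$ meeting the general fibre transversally will typically be strictly positive against it, and such a curve with $-K_{X'/S}\cdot\Gamma\leq 0$ need not exist at all (take $D\to C$ a $\pr^2$-bundle $\pr_C(\ma{E})$ with $\ma{E}$ sufficiently positive: every multisection is $(-K_{D/C})$-positive). Moreover the proposed mechanism has the sign backwards: forcing $\Gamma$ to avoid the locus where $-K_{X'/S}$ fails to be semi-ample would give $-K_{X'/S}\cdot\Gamma\geq 0$, the opposite of what you need. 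The genuine content of the lemma is precisely the positivity of $-K_S$ on the negative curves of $S$, and the route taken in the literature goes through the structure theory of quasi-elementary contractions and fixed prime divisors (the pullback of a fixed curve of $S$ is a fixed prime divisor, whose classification one then exploits; compare \ref{lucia} in the proof of Th.~\ref{zucca}), not through a bound on $\rho_S$.
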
 
\section{Factoring rational contractions of fiber type of Fano $4$-folds through $3$-folds}\label{secbrasile}
\noindent
In this section we consider rational contractions of fiber type $f\colon X\dasharrow Y$ of Fano $4$-folds.
When $\dim Y=3$, the results in \cite{3folds} give a good understanding of $X$ and $f$, see Th.~\ref{CS}.
We consider here the case
$\dim Y\in\{1,2\}$. We show that when $d_f\geq 5$, then $f$ always factors through a $3$-fold (Th.~\ref{brasileintro} from the Introduction, see Th.~\ref{brasile} below).
 This has in turn important applications on the geometry of Fano $4$-folds with $\rho_X\geq 7$, given in Th.~\ref{scala} and Cor.~\ref{summary}.
  \begin{thm}\label{brasile}
  Let $X$ be a smooth Fano $4$-fold and $f\colon X\dasharrow Y$ a non-trivial rational contraction of fiber type.
\begin{enumerate}[$(a)$]
\item
  If $d_f\geq 5$, then
 $f$ can be factored as $X\stackrel{g}{\dasharrow} Z\stackrel{h}{\to} Y$ where $\dim Z=3$, $g$ is a special rational contraction, and $h$ is a contraction of fiber type.
\item If $d_f=4$ and $\dim Y=2$, then either the statement in $(a)$ holds, or 
  $f$ is regular with general fiber a del Pezzo surface of degree one.
  \end{enumerate}
\end{thm}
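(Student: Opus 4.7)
The plan is to split on $\dim Y\in\{1,2\}$. By Lemma \ref{Kneg} I first fix a $K$-negative resolution $f'\colon X'\to Y$ of $f$, so that $X'$ is a smooth projective $4$-fold and $f'$ is a $K$-negative contraction of fiber type; by Lemma \ref{genrho}, $d_f$ equals the Picard number of the generic fiber $X_\eta$ of $f'$. In both cases, once a factorization of $f'_{|f'^{-1}(Y_0)}$ through an intermediate $3$-fold has been produced over some non-empty open $Y_0\subseteq Y$, Lemma \ref{induced} promotes it to a global factorization $f=h\circ g$ with $g$ a special rational contraction, $\dim Z=3$, and $h$ of fiber type (since $\dim Z>\dim Y$).

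When $\dim Y=2$, the generic fiber $X_\eta$ is a smooth del Pezzo surface over the function field $K:=\C(Y)$, of Picard number $\rho_{X_\eta}=d_f$. For $d_f\geq 5$, Lemma \ref{cabofrio} yields a $K$-morphism $X_\eta\to C_\eta$ onto a smooth projective curve, and via Lemma \ref{spread} this spreads out to a factorization $f'^{-1}(Y_0)\to Z_0\to Y_0$ with $\dim Z_0=3$, giving (a). For the case $d_f=4$ in (b), the same Lemma \ref{cabofrio} still provides a $K$-morphism to a curve when $X_\eta$ has degree at least $2$; the only remaining case is when $X_\eta$ is a del Pezzo surface of degree one. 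I would then argue separately that in this case $f$ must itself be a regular morphism: a direct analysis of the extremal rays of $X_\eta$ (all of anticanonical degree at most one over $K$) combined with Lemma \ref{SQMFano} rules out the configurations of exceptional planes in fibers of $f'$ that would be needed for a flip in $X$.

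When $\dim Y=1$, one has $Y\cong\pr^1$ since $X$ is Fano and hence rationally connected, and a general closed fiber $F\subset X'$ is a smooth Fano $3$-fold with $\rho_F\geq \dim\N(F,X')=d_f\geq 5$. The strategy is to produce a relative contraction over an open subset of $Y$ with three-dimensional target by extending a suitable contraction $F\to S$ onto a surface. The monodromy representation $\pi_1(Y_0)\to GL(\Nu(F))$ on the N\'eron--Severi group of the general fiber preserves $\Nef(F)$ and acts through a finite group, so one needs to exhibit a $2$-dimensional face of $\Nef(F)$ which is both monodromy-invariant and defines a contraction onto a surface. Invoking the classification of Fano $3$-folds of Picard rank $\geq 5$ (this is the content of Prop.~\ref{P1}), such a face exists; its class then extends to an $f'$-relatively semiample class over $Y_0$, defining the desired relative contraction to a $3$-fold, and one concludes via Lemma \ref{induced} as above. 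The main obstacle is precisely this monodromy step: guaranteeing a monodromy-invariant $2$-face of $\Nef(F)$ yielding a surface contraction is where the hypothesis $d_f\geq 5$ is essential, since it forces $\rho_F\geq 5$ and places $F$ in the regime where the combinatorics of $\Nef(F)$ is rich enough; the remaining delicate point is ruling out flips in the exceptional degree-one subcase of part (b).
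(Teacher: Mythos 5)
Your architecture is the same as the paper's: for $\dim Y=2$ pass to the generic fiber, a del Pezzo surface over $\C(Y)$ of Picard number $d_f$, apply Lemma \ref{cabofrio}, spread out via Lemma \ref{spread}, and globalize with Lemma \ref{induced}; for $Y\cong\pr^1$ extend a surface contraction of the general Fano $3$-fold fiber across an open subset of the base using monodromy. However, the two steps that carry the real weight are asserted rather than proved, and one is stated incorrectly. In the $\pr^1$ case, the invariant object you need is not a $2$-dimensional face of $\Nef(F)$: a contraction $F\to S$ onto a surface corresponds to the face $\pi_2^*\Nef(S)$, of dimension $\rho_S$, which for $\rho_F\geq 6$ (where $F\cong\pr^1\times S$ by classification) is a \emph{facet} of dimension $\rho_F-1\geq 5$. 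More importantly, "the combinatorics of $\Nef(F)$ is rich enough" is not an argument for monodromy-invariance. The actual mechanism is that $\pi_2$ is the \emph{unique} elementary contraction of fiber type of $\pr^1\times S$, so the monodromy, which permutes the facets of $\Nef(F)$, must fix $\pi_2^*\Nef(S)$; combined with invariance of $K_F=\pi_1^*K_{\pr^1}+\pi_2^*K_S$ this pins down the invariant class $\pi_2^*(-K_S)$, which then descends to $\Pic(X_0)$ and is relatively nef by Wi\'sniewski's deformation result. You also need to treat separately the boundary subcase $\rho_F=5=d_f$ (where $\iota_*$ is injective, so the restriction $\Nu(X_0)\to\Nu(F)$ is surjective, the monodromy is trivial, and one uses that every Fano $3$-fold with $\rho\geq 4$ carries a conic bundle). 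Without these ingredients the monodromy step is a gap, not a proof.

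The second gap is in part (b): the assertion that $f$ is regular when the generic fiber is a degree-one del Pezzo surface is the entire content of that alternative, and your sketch does not engage with the actual difficulty. The indeterminacy locus of $\ph^{-1}$ in $X'$ is a union of exceptional lines $\ell_i$ with $K_{X'}\cdot\ell_i=1$; since $f'$ is $K$-negative these are \emph{not} contracted by $f'$ and dominate curves in $Y$, so they need not meet a general fiber at all, and no configuration "needed for a flip" is visible in the generic fiber $X_\eta$. The paper outsources this step to \cite[Lemma 5.10]{small}; a substitute argument must produce something global, e.g.\ a curve of anticanonical degree one contracted by $f'$ through (the closure of) every point relevant to the $\ell_i$, which by Lemma \ref{SQMFano}$(c)$ cannot meet an exceptional line. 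As written, this step is missing.
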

As explained in the Introduction, for the proof of Th.~\ref{brasile} we will use different strategies depending on the dimension of the base $Y$.
We recall that $d_f=1$ when $\dim Y=3$, therefore $d_f\geq 4$ implies that either $\dim Y=2$ or $Y\cong\pr^1$.
For the case $\dim Y=2$, we will consider the generic fiber of a $K$-negative resolution $f'$ of $f$, and rely on Lemma \ref{cabofrio}; for the case $Y\cong\pr^1$, we will use Prop.~\ref{P1}, that exploits the monodromy action on the nef cone of a general fiber of $f'$.
  \begin{lemma}\label{cabofrio}
    Let $K$ be a field of characteristic zero and let $S$ be a smooth, projective del Pezzo surface over $K$.
      If $\rho_S\geq 4$, then one of the following holds:
      \begin{enumerate}[$(i)$]
        \item
          there exists a surjective morphism $S\to C$ onto a curve;
        \item $\rho_S=4$ and $K_S^2=1$.
          \end{enumerate}
      \end{lemma}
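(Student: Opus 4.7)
The plan is to run the $K$-Mori program on $S$ and appeal to Iskovskikh's classification of minimal smooth projective geometrically rational surfaces over non-closed fields. Since $-K_S$ is ample, the cone $\NE(S)$ is rational polyhedral over $K$, and every extremal ray admits a $K$-Mori contraction. An extremal ray of fiber type yields a surjective morphism $S\to C$ onto a curve, giving $(i)$. A divisorial extremal ray blows down a Galois orbit of $k\geq 1$ pairwise disjoint $(-1)$-curves of $S_{\bar K}$ to a smooth point, producing a smooth del Pezzo surface $S'$ over $K$ with $\rho_{S'}=\rho_S-1$ and $K_{S'}^2=K_S^2+k$.

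If $\NE(S)$ has no fiber-type extremal ray, I would iterate divisorial contractions and obtain a tower $S=S_0\to S_1\to\cdots\to S_m$ of smooth del Pezzo surfaces over $K$ terminating at a minimal model $T=S_m$. By Iskovskikh's theorem, $T$ is either a conic bundle $T\to C$ or satisfies $\rho_T=1$. In the conic-bundle case, or whenever some intermediate $S_i$ admits a fiber-type extremal ray, the composition of the birational morphism $S\to S_i$ (resp.\ $S\to T$) with the fibration gives a morphism $S\to C$, establishing $(i)$. Thus we may assume $\rho_T=1$ and that every $S_i$ admits only divisorial extremal contractions. This last condition amounts to every Galois orbit of $(-1)$-curves on each $S_{i,\bar K}$ being pairwise disjoint, since an orbit $\{E_1,\dots,E_k\}$ containing a pair with $E_1\cdot E_2=1$ would give rise, via the Galois-invariant class $[E_1]+\cdots+[E_k]$, to a fiber-type extremal ray of $\NE(S_i)$ corresponding to a singular fiber of a conic bundle structure on $S_{i,\bar K}$.

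Summing $K_{S_{i+1}}^2=K_{S_i}^2+k_i$ with $k_i\geq 1$ over the $\rho_S-1$ steps of the tower, together with $K_T^2\leq 9$ (as $T$ is a del Pezzo with $\rho_T=1$), gives the preliminary inequality $\rho_S+K_S^2\leq 10$, which matches but does not improve on the trivial bound from $\rho_S\leq\rho_{S_{\bar K}}$.

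The main obstacle is to sharpen this into the claim $\rho_S\leq 4$, with equality forcing $K_S^2=1$. I would approach this by a case-by-case analysis on the degree $d=K_S^2\in\{1,\dots,6\}$, using the classical description of the $(-1)$-curves of $S_{\bar K}$ as specific vectors in the lattice $K_S^\perp\subset\Pic(S_{\bar K})$, whose root system is of type $A_1+A_2$, $A_4$, $D_5$, $E_6$, $E_7$, $E_8$ for $d=6,5,4,3,2,1$ respectively. The constraint that every Galois orbit of $(-1)$-curves on $S_{\bar K}$ (and on each $S_{i,\bar K}$) be pairwise disjoint is a strong condition on the image of the Galois representation inside the Weyl group $W(R)$. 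A finite check on subgroups of $W(R)$ with invariant rank $\geq\rho_S-1$ in $K_S^\perp$ and with no orbit of two intersecting $(-1)$-curves should rule out every $d\geq 2$ when $\rho_S\geq 4$, and for $d=1$ force $\rho_S=4$. This combinatorial step is the heart of the argument and the main technical difficulty.
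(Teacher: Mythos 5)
Your reduction to a minimal model is sound and matches the opening of the paper's argument (if $S$ has no fibration onto a curve, any minimal model $S_0$ has $\rho_{S_0}=1$ by Iskovskikh/Hassett, and summing $K_{S_{i+1}}^2=K_{S_i}^2+k_i$ gives $\rho_S+K_S^2\leq 10$). But the proof stops exactly where the lemma begins: the claim that $\rho_S\leq 4$ with equality forcing $K_S^2=1$ is deferred to a ``finite check on subgroups of $W(R)$'' that you do not carry out and yourself call the heart of the argument. Worse, the reformulation on which that check would rest is false: a Galois orbit $\{E_1,\dotsc,E_k\}$ of $(-1)$-curves containing a pair with $E_1\cdot E_2=1$ does \emph{not} produce a fiber-type extremal ray. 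The invariant class $E_1+\cdots+E_k$ is in general big (on a minimal cubic surface with all $27$ lines in one orbit it equals $-9K$, which is ample), not a conic class, and such surfaces have $\rho=1$ and no conic bundle. So the constraint you would feed into the Weyl-group computation is strictly stronger than what the hypothesis gives, and the deduction would be invalid even where the conclusion is true.

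The paper closes the gap by a different and much lighter mechanism. Among the finitely many birational morphisms $S\to S_0$ onto a rank-one del Pezzo, it fixes one with $K_{S_0}^2$ maximal; writing $S\to S_0$ as the blow-up of $r=\rho_S-1$ closed points of degrees $d_1,\dotsc,d_r$, each $\Bl_{p_i}S_0$ sits in an elementary link of type II $S_0\leftarrow \Bl_{p_i}S_0\to S_0'$, and Iskovskikh's classification of these links together with the maximality of $K_{S_0}^2$ forces $d_i\geq 2$ when $K_{S_0}^2\in\{4,6,9\}$ and $d_i\geq 3$ when $K_{S_0}^2\in\{5,8\}$. Combined with $r\leq K_{S_0}^2-K_S^2-$(correction) this pins down $K_{S_0}^2=9$, and a short argument with pencils of conics shows at most one $d_i$ equals $2$, whence $r=3$, $(d_1,d_2,d_3)=(2,3,3)$, $\rho_S=4$ and $K_S^2=1$. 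If you want to complete your write-up, you either need to import this link-theoretic input or genuinely perform (and correctly formulate) the Galois-cohomological classification you allude to; as it stands the proof is incomplete and its proposed completion rests on a wrong equivalence.
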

      \begin{proof}
        We assume that $S$ has no surjective morphism to a curve, and show $(ii)$.
        
        There exists a birational map $S\to S_0$ where $S_0$ is a minimal del Pezzo surface over $K$. By our assumptions $S_0$ does not have a conic bundle structure, hence $\rho_{S_0}=1$ (see  \cite[Th.~3.9]{hassett}). Among the (finitely many) possible  birational maps $S\to S_0$ with $\rho_{S_0}=1$, let us choose one with $K_{S_0}^2$ maximal.  Note that $K_{S_0}^2\neq 7$ (see [\emph{ibid.}, Ex.~3.1.2]).

        The map $S\to S_0$ is the blow-up of $r$ distinct (closed) points, of degrees $d_1,\dotsc,d_r$;\footnote{The degree of a closed point $p$ is dimension of its residue field $K(p)$ over the base field $K$.} we have
        \stepcounter{thm}
        \begin{equation}\label{fabri}
        4\leq \rho_S=1+r\quad\text{ and }\quad 
      r\leq \sum_{i=1}^rd_i  =K_{S_0}^2-K_S^2\leq K_{S_0}^2-1,\end{equation}
in particular $r\geq 3$ and $K_{S_0}^2\geq 4$.
        
        Let $i\in\{1,\dotsc,r\}$ and      let us consider $S_1:=\Bl_{p_i}S_0$. Then $S_1$ is a del Pezzo surface with $\rho_{S_1}=2$, and $S_1$ does not have a morphism onto a curve, therefore  there is another birational map $S_1\to S_0'$ which blows-up  a point of degree $d_i'$. We have $K_{S_1}^2=K_{S_0}^2-d_i=K_{S_0'}^2-d_i'$, and $K_{S_0}^2\geq K_{S_0'}^2$
    by the maximality of $K_{S_0}^2$, hence $d_i\geq d_i'$.

  The birational map $$S_0\longleftarrow S_1\la S_0'$$ is called an elementary link of type II, see \cite[(2.2.2)]{isk96}, and these links are classified in [\emph{ibid.}, Th.~2.6(ii)].
       It follows from this classification and from $d_i\geq d_i'$
       that $d_i\geq 2$ if      $K_{S_0}^2\in\{4,6,9\}$, and   $d_i\geq 3$ if      $K_{S_0}^2\in\{5,8\}$.  By \eqref{fabri} this implies that  $K_{S_0}^2=9$, therefore  $(S_0)_{\bar K}\cong\pr^2_{\bar K}$, where $\bar K$ is the algebraic closure of $K$ and $(S_0)_{\bar K}=S_0\times_{\Spec K}\Spec {\bar K}$. Let $\pi\colon\pr^2_{\bar K}\to S_0$ be the projection. 

       We show that there is at most one index $i\in\{1,\dotsc,r\}$ such that $d_i=2$.       
       Assume that this is not the case, for instance that $d_1=d_2=2$. This means that, for $i=1,2$,
       $\pi^{-1}(p_i)=\{p_{i1},p_{i2}\}\subset \pr^2_{\bar K}$ is an orbit for the action of the Galois group $\text{Gal}(\bar K/K)$. Then the line $\ell_i:=\overline{p_{i1}p_{i2}}\subset \pr^2_{\bar K}$ is fixed by the action of the Galois group, hence $\ell_i=\pi^*(\ell_{i,K})$ where $\ell_{i,K}\subset S_0$ is a curve, and $H^0(S_0,\ell_{1,K}+\ell_{2,K})\cong H^0(\pr^2_{\bar K},\ol_{\pr^2_{\bar K}}(2))$. 
   In the linear system $|\ell_{1,K}+\ell_{2,K}|$, the pencil of curves     
     containing $p_1$ and $p_2$ defines a map $\Bl_{p_1,p_2}S_0\to\pr^1_K$, against our assumptions.

     Therefore  there is at most one  $i$ such that $d_i=2$, and $d_j\geq 3$ for every $j\neq i$.       
Again by \eqref{fabri} we get $8\leq 2+3(r-1)\leq\sum_id_i=9-K_S^2\leq 8$, and we conclude that $r=3$, $\rho_S=4$, $(d_1,d_2,d_3)=(2,3,3)$ up to order, and $K_S^2=1$.
        \end{proof}
        \begin{proof}[Proof of Th.~\ref{brasile} in the case $\dim Y=2$]
  Let $\ph\colon X\dasharrow X'$ be a SQM such that $f':=f\circ\ph^{-1}\colon X'\to Y$ is a $K$-negative resolution of $f$, and $F\subset X'$ a general fiber of $f'$.
    Let also $\eta$ be the generic point of $Y$ and $X_\eta$ the generic fiber of $f'$; then $X_\eta$ is a smooth del Pezzo surface over $K=\C(\eta)$, with $K_{X_{\eta}}^2=K_F^2$ (by generic flatness) and $\rho_{X_{\eta}}=
  \dim\N(F,X')=d_f$ (by Lemma \ref{genrho}).

  If $d_f=\rho_{X_{\eta}}\geq 4$, then
  we can apply Lemma \ref{cabofrio}. If we are in case $(i)$,
 there is a surjective morphism $g_\eta\colon X_\eta\to C_{\eta}$ onto a curve, defined over $K$. By normalizing and taking the Stein factorization, we can assume that $C_{\eta}$ is smooth and $(g_\eta)_*\ol_{X_\eta}=\ol_{C_\eta}$.

  Then by Lemma \ref{spread} there are a non-empty open subset $Y_0\subset Y$,
  a quasi-projective $3$-fold $Z_0$ with a contraction $Z_0\to Y_0$ extending $C_\eta\to\eta$, and a contraction $g_0\colon (f')^{-1}(Y_0)\to Z_0$, such that $f'_{|(f')^{-1}(Y_0)}$ factors through $g_0$.
  Finally using Lemma \ref{induced} we get $(a)$.

  Suppose now that we are in case $(ii)$ of Lemma \ref{cabofrio}, namely $d_f=\rho_{X_{\eta}}= 4$ and $K_{ X_{\eta}}^2=1$. Then  $K_F^2=1$ and $\ph$ is an isomorphism by \cite[Lemma 5.10]{small}, so $f$ is regular and we get $(b)$.
\end{proof} 
\begin{proposition}\label{P1}
  Let $X$ be a smooth Fano $4$-fold, $X\dasharrow X'$ a SQM, and $f\colon X'\to\pr^1$ a $K$-negative contraction. Let $\iota\colon F\hookrightarrow X'$ be a general fiber of $f$.

  If $\rho_F\geq 6$, or if $\rho_F=4,5$ and $\iota_*\colon\N(F)\to\N(X')$ is injective, 
  then $f$ can be factored as $X\stackrel{g}{\dasharrow} Z\stackrel{h}{\to} \pr^1$ where $\dim Z=3$, $g$ is a special rational contraction, and $h$ is a contraction.
 \end{proposition}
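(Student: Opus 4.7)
My plan is to use the monodromy representation on the numerical classes of a general fiber of $f$. Since $f$ is $K$-negative and $X'$ is smooth, a general fiber $F$ is a smooth complex Fano $3$-fold, and $f$ is smooth over some non-empty Zariski open subset $U\subset\pr^1$. This yields a monodromy representation $\pi_1(U,y_0)\to\Aut(\Nu(F))$ that preserves $-K_F$, $\Nef(F)$, and $\Eff(F)$. Let $V\subset\Nu(F)$ denote the monodromy-invariant subspace. By Deligne's invariant cycle theorem $V$ coincides (rationally) with the image of the restriction $r\colon\Nu(X')\to\Nu(F)$, and since $r$ is adjoint to $\iota_*$ under the intersection pairing, $\dim V=\rho_F-\dim\ker\iota_*=d_f$.

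The heart of the argument is to exhibit a monodromy-invariant extremal face of $\Nef(F)$ whose associated contraction $p\colon F\to S$ has $\dim S=2$. Under the first hypothesis, $\rho_F\in\{4,5\}$ and $\iota_*$ injective, one has $d_f=\rho_F$, hence $V=\Nu(F)$ and the whole cone $\Nef(F)$ is monodromy-invariant; since any smooth Fano $3$-fold with $\rho\geq 4$ admits a contraction onto a surface (by the Mori--Mukai classification of Fano $3$-folds), such a $p$ exists. Under the second hypothesis $\rho_F\geq 6$, the Mori--Mukai classification forces $F\cong S\times\pr^1$ with $S$ a del Pezzo surface of Picard number $\geq 5$; such an $S$ is not itself a $\pr^1$-bundle, so the projection $\pi\colon F\to S$ is the unique $\pr^1$-bundle structure on $F$ and hence its associated nef ray is numerically rigid and automatically monodromy-invariant.

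With $p\colon F\to S$ in hand, I would lift the class $[p^*A]$, for $A$ ample on $S$, from $V$ to a class on $X'$ via the invariant cycle theorem. Equivalently, over the generic point $\eta$ of $\pr^1$, this furnishes a projective morphism $X_\eta\to S_\eta$ onto a smooth projective surface over $K=\C(\pr^1)$ with connected fibers. Applying Lemma \ref{spread} extends this to a factorization $f^{-1}(U')\to Z_0\to U'$ for some smaller open $U'\subset U$, with $\dim Z_0=3$; then Lemma \ref{induced} promotes it to a special rational contraction $g\colon X\dasharrow Z$ of fiber type, together with a contraction $h\colon Z\to\pr^1$ with $\dim Z=3$, such that $f=h\circ g$ at the level of rational maps from $X$.

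The main obstacle I anticipate is the monodromy-invariance step when $\rho_F\geq 6$, where one must leverage the Mori--Mukai classification to decompose $F$ as a product and verify that the projection onto the del Pezzo factor is preserved by the discrete $\pi_1(U)$-action (via uniqueness of the $\pr^1$-bundle structure). Once the monodromy-invariant contraction on the generic fiber is constructed, the remaining spreading-out and extension via Lemmas \ref{spread} and \ref{induced} are essentially formal.
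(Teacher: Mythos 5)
Your proposal follows essentially the same route as the paper's proof: the monodromy action on $\Nu(F)$ of a general fiber, the global invariant cycle theorem identifying the invariant subspace with the image of the restriction (of dimension $d_f$), the classification of Fano $3$-folds with $\rho\geq 4$ (resp.\ $\geq 6$) to produce a contraction onto a surface (resp.\ the splitting $F\cong S\times\pr^1$), and Lemmas \ref{spread} and \ref{induced} to spread out and globalize.

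One step, as written, would fail. In the case $\rho_F\geq 6$ you propose to lift $[p^*A]$ for an \emph{arbitrary} ample $A$ on $S$; but invariance of the facet $p^*\Nef(S)$ as a cone does not make the individual classes inside it invariant. The monodromy can act on $\pi_2^*\Nu(S)$ by a nontrivial symmetry of $\Nef(S)$ (e.g.\ of Weyl-group type, permuting exceptional classes of the del Pezzo factor), in which case $[p^*A]\notin V$ and the class does not lift to $\Pic(X_0)$. You must exhibit a class in the relative interior of that facet which is itself monodromy-invariant: the paper takes $\pi_2^*(-K_S)$, which is forced to be invariant because $K_F=\pi_1^*K_{\pr^1}+\pi_2^*K_S$ is invariant and the decomposition $\Nu(F)=\pi_1^*\Nu(\pr^1)\oplus\pi_2^*\Nu(S)$ is preserved; alternatively one can average $p^*A$ over the (finite) monodromy image. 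Relatedly, your justification that the facet $p^*\Nef(S)$ is monodromy-stable ("unique $\pr^1$-bundle structure, hence numerically rigid") should be replaced by the actual mechanism: Wi\'sniewski's rigidity gives $g(\Nef(F))=\Nef(F)$, so $g$ permutes the facets, and the facet of $\pi_2$ is fixed because $\pi_2$ is the \emph{unique elementary contraction of fiber type} of $F$, all others being birational (the paper cites \cite{CFST} for this analysis). With these repairs the argument closes exactly as in the paper.
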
  
 \begin{proof}
   Let $Y_0\subseteq\pr^1$ be an open subset over which $f$ is smooth, and set $X_0:=f^{-1}(Y_0)$ and  $F:=f^{-1}(y)$
for $y\in Y_0$.  Note that $F$ is a smooth Fano $3$-fold. We consider the monodromy action of $\pi_1(Y_0,y)$ on $H^2(F,\R)=\Nu(F)$;  the image of the restriction $\Nu(X_0)\to\Nu(F)$ is the invariant subspace $\Nu(F)^{\pi_1(Y_0,y)}$, see
\cite[Th.~16.24]{voisin}.

\medskip

   Assume that
   $\rho_F\geq 6$. Then $F\cong\pr^1\times S$ where $S$ is a del Pezzo surface \cite[\S 12.6]{fanoEMS}; consider the projections $\pi_1\colon F\to\pr^1$ and $\pi_2\colon 
   F\to S$. 

   Set $\tau:=\pi_1^*\Nef(\pr^1)$ and $\sigma:=\pi_2^*\Nef(S)$. Then
$\sigma$ is a facet of $\Nef(F)$,
   $\tau$ is a one-dimensional face of $\Nef(F)$, and every other one-dimensional face of $\Nef(F)$  is contained in $\sigma$, because it corresponds to a contraction of $F$ that factors through $\pi_2$.

Let  $g\in\pi_1(Y_0,y)$.
By \cite{wisndef,wisnrigidity}, we have $g(\Nef(F))=\Nef(F)$, so that $g$ permutes the facets of $\Nef(F)$; see \cite[\S 2]{CFST} for an analysis of the monodromy action on the nef cone of $F$. In particular
$g(\sigma)=\sigma$ by \cite[Th.~2.7]{CFST}, because $\pi_2$ is the unique elementary contraction of fiber type of $F$, the other elementary contractions of $F$ being birational. This also implies that $g(\tau)=\tau$; in particular $g$ preserves the linear subspaces $\pi_{1\,}^*\Nu(\pr^1)$ and $\pi_{2\,}^*\Nu(S)$ of $\Nu(F)$.

We have $K_F=\pi_1^*K_{\pr^1}+\pi_2^*K_S$, and $g(K_F)=K_F$ because $K_F=K_{X_0|F}$. Therefore $g$ must fix also $\pi_2^*K_S$, so that $\pi_2^*(-K_S)\in\Nu(F)^{\pi_1(Y_0,y)}$. We conclude that there exists $H\in\Pic(X_0)$ such that $H_{|F}=\pi_2^*(-K_S)$. It follows from \cite[Prop.~1.3]{wisndef}
that $H$ is relatively nef over $Y_0$, hence
it induces a contraction 
$g_0\colon X_0\to Z_0$ such that  $f_{|X_0}$ factors through $g_0$ and $g_{0|F}=\pi_2$, so that the general fiber of $g_0$ is $\pr^1$. Finally using Lemma \ref{induced} we get the statement.

\medskip

Suppose now that
$\rho_F=4,5$ and $\iota_*\colon\N(F)\to\N(X')$ is injective. Then the restriction $\Nu(X')\to\Nu(F)$ is surjective, hence  $\Nu(X_0)\to\Nu(F)$ is surjective too,
and the monodromy action is trivial. Thus again by \cite[Prop.~1.3]{wisndef}
 we see that every contraction of $F$ extends to a relative contraction of $X_0$ over $Y_0$.

 Since $F$ is a Fano $3$-fold with $\rho_F\geq 4$, by \cite[Theorem on p.~141]{fanoEMS} it has a conic bundle onto a surface. Then $f_{|X_0}$ factors through  a contraction $g_0\colon X_0\to Z_0$ with general fiber $\pr^1$, and we conclude as above by Lemma \ref{induced}.
\end{proof}
\begin{proof}[Proof of Th.~\ref{brasile} in the case $Y\cong\pr^1$]
Let $f'\colon X'\to\pr^1$ be a $K$-negative resolution of $f$, and $\iota\colon F\hookrightarrow X'$ a general fiber.
 If $d_f=\dim\N(F,X')\geq 5$, then either $\rho_F\geq 6$, or $\rho_F=5$ and $\iota_*\colon\N(F)\to\N(F,X')$ is an isomorphism. Thus the statement follows from Prop.~\ref{P1}.
\end{proof}
The following implies Th.~\ref{scalaintro} from the Introduction.
\begin{thm}\label{scala}
  Let $X$ be a smooth Fano $4$-fold with $\rho_X\geq 7$, not isomorphic to a product of surfaces, and $f\colon X\dasharrow Y$ a non-trivial rational contraction of fiber type.

  Then $d_f\leq 4$. If moreover $d_f= 4$ and $\dim Y=2$, then  $Y\cong\pr^2$ and $f\colon X\to\pr^2$ is regular and equidimensional, with general fiber a del Pezzo surface of degree one.
\end{thm}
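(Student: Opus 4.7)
The plan is to argue by contradiction using Theorems~\ref{brasile} and~\ref{CS}, and then to conclude the structural part via a descending analysis of $\rho_Y$.

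For the bound $d_f\leq 4$, suppose $d_f \geq 5$. Theorem~\ref{brasile}(a) produces a factorization $X \stackrel{g}{\dasharrow} Z \stackrel{h}{\to} Y$ with $\dim Z=3$, $g$ a special rational contraction of fiber type, and $h$ a contraction of fiber type; in particular $g$ is a non-trivial rational contraction of fiber type of $X$ onto a $3$-fold. Since $X \not\cong S_1 \times S_2$ and $\rho_X \geq 7$, Theorem~\ref{CS} applies, giving $\rho_X \leq 9$ together with one of its two structural alternatives. In case (i) one uses the elementary rational contraction of $X$ onto a $3$-fold to identify the relevant facets of $\Mov(X)$ containing $f^*(\Nef(Y))$; in case (ii) one exploits the explicit description $X = \Bl_S W$ to compute $\MCD(X)$ directly. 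In both cases the analysis shows that every movable face of $\Eff(X)$ has dimension at least $\rho_X - 4$, so $\dim\tau_f \geq \rho_X - 4$, i.e., $d_f \leq 4$, contradicting the assumption.

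Assume now $d_f=4$ and $\dim Y=2$. Theorem~\ref{brasile}(b) offers two alternatives: either $f$ factors through a $3$-fold, or $f$ is regular with general fiber a smooth del Pezzo surface of degree one. The first alternative is ruled out by the same kind of analysis as above, which under Theorem~\ref{CS} forces $d_f \leq 3$ for a rational contraction of $X$ factoring through a $3$-fold. Hence $f$ is regular with del Pezzo degree-one fibers, and Lemma~\ref{cabofrio}(ii) gives $\rho_{X_\eta}=4$.

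To conclude $Y\cong \pr^2$ and $f$ equidimensional, I first show that $f$ is special. By Proposition~\ref{calanques} write $f = h' \circ g'$ with $g'$ special of fiber type and $h'$ birational; since $f$ is regular and its generic fiber is irreducible, $h'$ must be an isomorphism, so $f$ itself is special. Lemma~\ref{specialsurf} then gives that $Y$ is smooth, and $Y$ is rational as a quotient of the rationally connected $X$. If $\rho_Y \geq 2$, the classification of smooth rational projective surfaces produces a non-trivial contraction $\pi\colon Y \to \pr^1$; setting $f' := \pi \circ f\colon X \to \pr^1$, the generic fiber $X_{\eta'}$ contains $X_\eta$ as a divisor together with an independent section class, so Lemma~\ref{genrho} gives $\rho_{X_{\eta'}} \geq \rho_{X_\eta} + 1 = 5$ and hence $d_{f'} \geq 5$, contradicting the first part. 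Therefore $\rho_Y=1$ and $Y \cong \pr^2$; equidimensionality of $f$ then follows from \cite[Lemma~2.7]{fibrations} since $f$ is special and $\dim Y = 2$.

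The main obstacle is the first part, where under each alternative of Theorem~\ref{CS} one must carefully analyze the movable faces of $\Eff(X)$ and rule out any of dimension less than $\rho_X - 4$. Case (i) is particularly delicate: one must show that after fixing the elementary rational contraction of $X$ onto a $3$-fold, no further fiber-type direction in $\Mov(X)$ can produce a face of $\Eff(X)$ of dimension less than $\rho_X - 4$. A secondary technical point is the dimension count $\rho_{X_{\eta'}} \geq \rho_{X_\eta} + 1$ in the argument for $\rho_Y = 1$, which requires verifying that composing with $\pi$ genuinely adds one independent curve class to the generic fiber.
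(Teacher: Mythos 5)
Your overall skeleton (reduce via Theorem~\ref{brasile} to the case where $f$ factors through a $3$-fold, then derive the bound) matches the paper's, but the decisive step is missing. When $d_f\geq 5$ and you obtain $X\stackrel{g}{\dasharrow}Z\stackrel{h}{\to}Y$ with $\dim Z=3$, you invoke Theorem~\ref{CS} and then assert that a case analysis of its two alternatives ``shows that every movable face of $\Eff(X)$ has dimension at least $\rho_X-4$.'' That assertion is exactly Corollary~\ref{summary}, which in the paper is \emph{deduced from} Theorem~\ref{scala} (via Remark~\ref{zumba}); taking it as an intermediate output of an unspecified analysis is circular, and neither alternative of Theorem~\ref{CS} gives you, without substantial further work, a computation of $\MCD(X)$ or of all movable faces of $\Eff(X)$. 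The paper's argument at this point is quite different and much more direct: since $\delta_X\leq 1$ (Theorem~\ref{starting}), one has $\rho_X-\rho_Z\leq 2$ by \cite[Lemma~3.12]{3folds}, and the structural results of \cite{3folds} force every elementary divisorial contraction of (an SQM of) $Z$ to be of type $(2,0)$; combining this with Lemma~\ref{easy} gives $d_f\leq 2+\dim\N(F_h,Z)$, where $F_h$ is a general fiber of $h$, and then $\dim\N(F_h,Z)=1$ if $\dim Y=2$ (so $d_f\leq 3$) and $\dim\N(F_h,Z)\leq 2$ if $Y\cong\pr^1$ (so $d_f\leq 4$). Nothing in your proposal substitutes for this computation.

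A secondary but genuine error: in the second part you claim that in the factorization $f=h'\circ g'$ from Proposition~\ref{calanques}, the birational part $h'$ is an isomorphism ``since $f$ is regular and its generic fiber is irreducible.'' Regularity and irreducibility of the generic fiber do not prevent $f$ from contracting a divisor to a point, so they do not force $h'$ to be trivial. The paper argues in the opposite order: the intermediate surface $Y'$ is smooth and rational (Lemma~\ref{specialsurf}); if $Y'\not\cong\pr^2$ it maps onto $\pr^1$, and composing with $g'$ produces a fibration whose general fiber strictly contains the degree-one del Pezzo fiber, giving $d>4$ and contradicting the first part; hence $Y'\cong\pr^2$, which forces $h'$ to be an isomorphism and $f$ to be special. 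Your $\rho_Y\geq 2$ argument is essentially this, so the fix is to drop the unjustified specialness claim and run that argument on $Y'$ rather than on $Y$.
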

The bound $d_f\leq 4$ is sharp: see
Ex.~\ref{es3} for an example where $d_f=4$, with $Y=\pr^1$ and $\rho_X=7$.
 Moreover the condition $\rho_X\geq 7$ is necessary, as the following examples show.
\begin{example}\label{es1}
  Let $X$ be a Fano $4$-fold with $\delta_X=3$ and $\rho_X=6$ which is not a product of surfaces. Such $4$-folds have been classified in \cite[\S 7 and Corrigendum]{delta3}, there are 9 families. Each such $X$ has a quasi-elementary contraction $f\colon X\to S$ where $S\cong \pr^1\times\pr^1$ or $S\cong\mathbb{F}_1$, thus $d_f=\rho_X-\rho_S=4$.
\end{example}
\begin{example}\label{es2}
  Let $X$ be  as in Ex.~\ref{es1}.
  By composing $f\colon X\to S$ with a $\pr^1$-bundle $S\to\pr^1$, we get a quasi-elementary contraction $f'\colon X\to\pr^1$ with $d_{f'}=\rho_X-1=5$.
\end{example}  
\begin{proof}[Proof of Th.~\ref{scala}]
  We note first of all that if $\dim Y=3$, then the general fiber of $f$ is $\pr^1$ and $d_f=1$, therefore we can assume that $\dim Y\in\{1,2\}$.
  \begin{prg}\label{firstcase}
  Suppose that  $f$ factors as   $X\stackrel{g}{\dasharrow}Z\stackrel{h}{\to}Y$ where $\dim Z=3$, $g$ is a special rational contraction, and $h$ is a contraction of fiber type.
 Then $d_f\leq 3$ if $\dim Y=2$ and  $d_f\leq 4$ if $Y\cong\pr^1$.
\end{prg}
\begin{proof}
We apply the results in \cite{3folds} to $g\colon X\dasharrow Z$. First of all, 
we have $\delta_X\leq 1$ by Th.~\ref{starting}, hence $\rho_X-\rho_Z\leq 2$ by [\emph{ibid.}, Lemma 3.12]. Moreover $Z$ has the following property:
if $Z\dasharrow Z'$ is a SQM, and $Z'\to W$ is a sequence of elementary divisorial contractions, then every contraction of the sequence is of type $(2,0)$. This is shown in [\emph{ibid.}, Lemma 5.9, 5.13, Lemma 5.15] when $\rho_X-\rho_Z=2$, and in [\emph{ibid.}, proof of Th.~6.3] and \cite[\S 4.2]{eff}
when $g$ is elementary.

  Let $g'\colon X'\to Z$ be a resolution of $g$, and note that $h\circ g'\colon X'\to Y$ is a resolution of $f$. Let  $F_h\subset Z$ be a general fiber of $h\colon Z\to Y$, so that $F:=(g')^{-1}(F_h)\subset X'$ is a general fiber of  $h\circ g'$. By Def.-Rem.~\ref{dimfiber} and Lemma \ref{easy} we get $d_f=\dim\N(F,X')\leq \rho_X-\rho_Z+\dim\N(F_h,Z)\leq 2+\dim\N(F_h,Z)$.

  If $\dim Y=2$, then $F_h\cong\pr^1$, hence $\dim\N(F_h,Z)=1$ and $d_f\leq 3$.

  \medskip
  
Assume that $Y\cong\pr^1$. We show that $\dim\N(F_h,Z)\leq 2$, which implies that $d_f\leq 4$.
  
  By Lemma \ref{montenero} we can factor $h$ as $Z\stackrel{\alpha}{\dasharrow} W\stackrel{\beta}{\to}\pr^1$ where $\alpha$ is an elementary rational contraction, either divisorial or of fiber type, and $\beta$ is a  contraction.

  If $\alpha$ is divisorial, we apply again Lemma \ref{montenero} to $\beta$, and we proceed in this way until we find a factorization of $h$ as: $$\xymatrix{{Z}\ar@/^1pc/[rr]^h\ar@{-->}[r]_{\alpha'}&{W'}\ar[r]_{\beta'}
    &{\pr^1}}\quad\text{or}\quad
\xymatrix{{Z}\ar@/^1pc/[rrr]^h\ar@{-->}[r]_{\alpha'}&{W'}\ar@{-->}[r]_{\beta''}&S\ar[r]_{\gamma}
    &{\pr^1}}
  $$
  where $\alpha'$ is a (possibly empty) sequence of elementary divisorial rational contractions, $\beta'$ and $\beta''$ are elementary of fiber type,  $\dim S=2$, and $\beta'$ and $\gamma$ are regular.

  Up to composing with SQM's, we can assume that $\beta''$  and $\alpha'$ are regular (note that $\dim\N(F_h,Z)$ does not vary, see Def.-Rem.~\ref{dimfiber}). Then $\alpha'(F_h)\subset W'$ is a general fiber of $\beta'$ or of $\gamma\circ \beta''$, and $\dim\N(\alpha'(F_h),W')\leq 2$ because both contractions $\beta'$ and $\beta''$ are elementary, and the general fiber of $\gamma$ is $\pr^1$. Moreover, since $\dim \alpha'(\Exc(\alpha'))=0$ by the properties of $Z$, we have $\alpha'(F_h)\cap \alpha'(\Exc(\alpha'))=\emptyset$, hence $\dim\N(F_h, Z)=\dim\N(\alpha'(F_h),W')\leq 2$ (see \cite[Lemma 2.17]{fibrations}).
\end{proof}
\begin{prg}
  We consider now the case where $f$ cannot be factored as in \ref{firstcase}; then we get $d_f\leq 4$ by
  Th.~\ref{brasile}$(a)$.

  Suppose moreover  that
$d_f= 4$ and $\dim Y=2$. Then Th.~\ref{brasile}$(b)$ implies that
$f\colon X\to Y$ is regular, with general fiber a del Pezzo surface of degree one.

We show that $f$ is special and $Y\cong\pr^2$.
By Prop.~\ref{calanques} we can factor $f$ as $X\stackrel{f'}{\dasharrow} Y'\stackrel{\ph}{\to}Y$ where $f'$ is a special rational contraction of fiber type, and $\ph$ is birational; then $Y'$ is a smooth rational surface (Lemma \ref{specialsurf}). If $Y'\cong\pr^2$, then $\ph$ is an isomorphism, $f$ is special, and we get the statement.

Assume by contradiction that $Y'\not\cong\pr^2$. Then $Y'$ is obtained as a blow-up of some Hirzebruch surface, therefore it has a contraction $\psi\colon Y'\to\pr^1$. Consider $\psi\circ f'\colon X\dasharrow\pr^1$. We show that $d_{\psi\circ f'}>4$, contradicting the first part of the proof.

  Let $\zeta\colon X\dasharrow X'$ be a SQM such that
  $f'':=f'\circ\zeta^{-1}\colon X'\to Y'$ is regular.
$$\xymatrix{ X\ar@{-->}[r]_{f'}\ar@/^1pc/[rr]^f\ar@{-->}[d]_{\zeta}&
  {Y'}\ar[r]_{\ph}\ar[dr]_{\psi}& Y\\
  {X'}\ar[ur]_{f''}&&{\pr^1}
  }$$
 Since $\ph$ is birational, $f''$ and $\ph\circ f''$ have the same general fiber, and
 $d_{f''}=d_{\ph\circ f''}=d_f=4$; moreover  $d_{\psi\circ f'}=d_{\psi\circ f''}$ (see Def.-Rem.~\ref{dimfiber}). 
 If $F_2\subset X'$ is a general fiber of $\psi\circ f''$, and $F_1\subset F_2\subset X'$ a general fiber of $f''$, then $d_{\psi\circ f''}=\dim\N(F_2,X')>\dim\N(F_1,X')= 4$. This concludes the proof.  \qedhere
\end{prg}
\end{proof}
The following implies Cor.~\ref{summaryintro} from the Introduction.
\begin{corollary}\label{summary}
 Let $X$ be a smooth Fano $4$-fold with $\rho_X\geq 7$, not isomorphic to a product of surfaces.
  If $\tau$ is a movable face of $\Eff(X)$, then 
  $\dim\tau\geq\rho_X-4\geq 3$.

  In particular, every face of dimension $1$ or $2$ of $\Eff(X)$ is fixed.
\end{corollary}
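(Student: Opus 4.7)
The plan is to combine Theorem \ref{scala} (the sharp bound $d_f\leq 4$) with Remark \ref{zumba} (which produces a rational contraction of fiber type out of a movable face), reducing the statement to a one-line inequality.

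First I would dispose of the trivial case $\tau=\Eff(X)$, where $\dim\tau=\rho_X\geq\rho_X-4$. So I may assume that $\tau$ is a proper movable face of $\Eff(X)$. Applying Remark \ref{zumba} to $\tau$, I obtain a non-trivial rational contraction of fiber type $f\colon X\dasharrow Y$ satisfying
\[
d_f\geq\rho_X-\dim\tau.
\]
Since $X$ is a smooth Fano $4$-fold with $\rho_X\geq 7$ and is not a product of surfaces, Theorem \ref{scala} gives $d_f\leq 4$. Combining the two inequalities, $\rho_X-\dim\tau\leq 4$, i.e.\ $\dim\tau\geq\rho_X-4$, and this is $\geq 3$ because $\rho_X\geq 7$.

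For the last assertion, any face of $\Eff(X)$ of dimension $1$ or $2$ has dimension strictly less than $\rho_X-4\geq 3$, so by the part just proved it cannot be movable; by Definition \ref{fmface} it must be fixed. There is no real obstacle here — the whole statement is a formal consequence of Theorem \ref{scala} together with the dictionary between movable faces of $\Eff(X)$ and rational contractions of fiber type recorded in Remark \ref{zumba}.
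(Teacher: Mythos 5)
Your proposal is correct and is essentially the paper's own argument: apply Remark \ref{zumba} to the (proper) movable face $\tau$ to get a non-trivial rational contraction of fiber type $f$ with $d_f\geq\rho_X-\dim\tau$, then invoke Theorem \ref{scala} to get $d_f\leq 4$. Your explicit handling of the trivial case $\tau=\Eff(X)$ and the derivation of the final assertion are fine and add nothing beyond what the paper leaves implicit.
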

Again, the condition $\rho_X\geq 7$ is necessary, as the following example shows.
\begin{example}\label{perone}
Let $X$ be a Fano $4$-fold as in Examples \ref{es1} and \ref{es2}; we have $\rho_X=6$ and $X$ is not a product of surfaces. The cone $\Eff(X)$ has a one-dimensional movable face, given by $(f')^*\Nef(\pr^1)$ where $f'\colon X\to\pr^1$ is a quasi-elementary contraction as in Ex.~\ref{es2}.
\end{example}
\begin{proof}[Proof of Cor.~\ref{summary}]
By Rem.~\ref{zumba} there is  a non-trivial rational contraction of fiber type 
$f\colon X\dasharrow Y$
such that $\dim\tau\geq \rho_X-d_f$. We have 
$d_f\leq 4$ by Th.~\ref{scala},  hence $\dim\tau\geq\rho_X-4$.
\end{proof}
\section{Preliminaries on fixed prime divisors of Fano $4$-folds}\label{prelfixed}
\noindent In this section we recall the classification of fixed prime divisors in Fano $4$-folds $X$ with 
 $\rho_X\geq 7$,
     or $\rho_X=6$ and $\delta_X\leq 2$, and give many related properties that are used in the sequel.
     \begin{thmdef}[{\bf the type of a fixed prime divisor}]\label{fixed}  {\em (See \cite[Th.~5.1,
Cor.~5.2, Lemma 5.25]{blowup}.)}
  \  Let $X$ be a smooth Fano $4$-fold with  $\rho_X\geq 7$,
     or $\rho_X=6$ and $\delta_X\leq 2$,
    and $D$ a fixed prime divisor in $X$.
      \begin{enumerate}[$(a)$]
      \item There exists a unique diagram:
        $$X\stackrel{\xi}{\dasharrow}\w{X}\stackrel{\sigma}{\la}Y$$
        where $\xi$ is a SQM, $\sigma$ is a divisorial  elementary contraction with exceptional divisor the transform $\w{D}$ of $D$, and $Y$ is Fano (possibly singular);
      \item $\sigma$ is of type $(3,0)^{\sm}$, $(3,0)^Q$, $(3,1)^{\sm}$, or $(3,2)$, and we define $D$ to be of type  $(3,0)^{\sm}$, $(3,0)^Q$, $(3,1)^{\sm}$, or $(3,2)$, accordingly;
        \item if $D$ is of type $(3,2)$, then $X=\w{X}$. In the other cases $\xi$ factors as a sequence of at least $\rho_X-4$ $D$-negative and $K$-negative flips.
        \item We define $C_D\subset D\subset X$ to be the transform of a general irreducible curve $C_{\w{D}}\subset \w{D}\subset \w{X}$ contracted by $\sigma$, of minimal anticanonical degree. Then $C_D\cong\pr^1$, $D\cdot C_D=-1$, and $C_D\subset\dom(\xi)$.
        \item Given a SQM $\ph\colon X\dasharrow X'$ and a divisorial elementary  contraction $\sigma'\colon X'\to Y'$ with $\Exc(\sigma')$ the transform of $D$, there is a commutative diagram:          $$\xymatrix{X\ar@/^1pc/@{-->}[rr]^{\ph}\ar@{-->}[r]_{\xi}&{\w{X}}\ar@{-->}[r]_{\psi_{\s X}}\ar[d]_{\sigma}&{X'}\ar[d]^{\sigma'}\\
            & Y\ar@{-->}[r]^{\psi_{\s Y}}&{Y'}
          }$$
          where $\psi_{\s X}$ and $\psi_{\s Y}$ are SQM's, $\w{D}\subset\dom(\psi_{\s X})$,
          and $\sigma(\w{D})\subset\dom(\psi_{\s Y})$.
      \end{enumerate}  
    \end{thmdef}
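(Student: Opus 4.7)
Existence in (a) is essentially a repackaging of Rem.~\ref{fixedprime}: since $D$ is fixed, $D$ is the exceptional divisor of some elementary divisorial rational contraction $g\colon X\dashrightarrow Y$, and every such $g$ factors as $X\stackrel{\xi}{\dashrightarrow}\widetilde X\stackrel{\sigma}{\to}Y$ with $\xi$ a SQM and $\sigma$ a divisorial contraction. I would pick the factorization with $\sigma$ $K$-negative, via Lemma~\ref{Kneg}; then $\widetilde X$ is smooth by Lemma~\ref{SQMFano}$(a)$. Uniqueness of the diagram, and simultaneously part (e), follow from the observation that $\sigma^{*}\Nef(Y)$ is the unique chamber of $\MCD(X)$ of dimension $\rho_X-1$ whose closure meets $\R_{\geq 0}[D]$: any second factorization $\sigma'\colon X'\to Y'$ contracting the transform of $D$ must produce the same chamber, so the induced birational maps $\psi_X\colon\widetilde X\dashrightarrow X'$ and $\psi_Y\colon Y\dashrightarrow Y'$ are SQMs, whose domains contain $\widetilde D$ and $\sigma(\widetilde D)$ automatically because SQMs are isomorphisms in codimension one. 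For the Fano-ness of $Y$ I would invoke standard MMP theory: since $X$ is Fano and each step of $\xi$ and $\sigma$ can be chosen $K$-negative by Lemma~\ref{Kneg}, the pushforward $\sigma_{*}(-K_{\widetilde X})=-K_Y$ is ample on the terminal, locally factorial target $Y$.

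Part (b) invokes the classification of $K$-negative divisorial elementary contractions of smooth projective $4$-folds. Since $\Exc(\sigma)=\widetilde D$ has dimension $3$, we have $\dim\sigma(\widetilde D)\in\{0,1,2\}$: the case $\dim=2$ is Lemma~\ref{singtarget} (type $(3,2)$); the case $\dim=1$ is the smooth blow-up of a smooth curve (type $(3,1)^{\sm}$); and the case $\dim=0$ splits according to whether $\widetilde D\cong\pr^3$ (type $(3,0)^{\sm}$) or $\widetilde D$ is a smooth $3$-dimensional quadric with normal bundle $\ol(-1)$ (type $(3,0)^Q$, cf.\ Rem.~\ref{ODP}). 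For (d), a case-by-case inspection using (b) yields $-K_{\widetilde X}\cdot C_{\widetilde D}=1$ and $\widetilde D\cdot C_{\widetilde D}=-1$, with $C_{\widetilde D}\cong\pr^1$ (a line in a fiber for type $(3,0)$, a $\pr^1$-fiber for type $(3,1)^{\sm}$, a general fiber for type $(3,2)$). Applying Lemma~\ref{SQMFano}$(c)$ to $\xi^{-1}$, any curve of anticanonical degree one in $\widetilde X$ is disjoint from the indeterminacy locus of $\xi^{-1}$; hence $C_{\widetilde D}\subset\dom(\xi^{-1})$ and the transform $C_D\subset X$ is isomorphic to $\pr^1$ with $D\cdot C_D=\widetilde D\cdot C_{\widetilde D}=-1$.

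The substantive work is in (c). For type $(3,2)$, I would argue $X=\widetilde X$ by showing that the ray $\R_{\geq 0}[C_D]$ is already extremal and contractible on $X$, equivalently that no flip separates $[C_D]$ from the chamber $\sigma^{*}\Nef(Y)$; here the point is that a type $(3,2)$ contraction already saturates the codimension bound $\codim\N(\widetilde D,\widetilde X)\leq\delta_X\leq 2$ on $\widetilde X$, so introducing any preceding flip would violate it. For the other three types, $\sigma(\widetilde D)$ is zero- or one-dimensional, whence Lemma~\ref{easy} gives $\dim\N(\widetilde D,\widetilde X)\leq 2$; on the other hand $\dim\N(D,X)\geq\rho_X-\delta_X\geq\rho_X-2$ by the Lefschetz defect bound. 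Since each flip in $\xi$ changes the dimension of $\N(D_{\text{current}},\text{current model})$ by at most one (it replaces an exceptional plane by an exceptional line whose class is simply added), $\xi$ must consist of at least $(\rho_X-2)-2=\rho_X-4$ flips; that each flip can be chosen $K$- and $D$-negative comes from walking along a general straight segment from the ample chamber of $\MCD(X)$ to $\sigma^{*}\Nef(Y)$, combined with Lemma~\ref{Kneg}.

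The main obstacle I anticipate is precisely the count in (c): one needs both that every wall crossed on this walk is a genuine small flip (as opposed to a divisorial contraction) and that each wall-crossing contributes to a strict change in $\dim\N(D_{\text{current}},\cdot)$, promoting the soft Lefschetz defect bound into the sharp inequality $\geq\rho_X-4$. Everything else reduces to chamber-theoretic observations in $\MCD(X)$ together with the classification of extremal contractions recalled in Section~\ref{notation}.
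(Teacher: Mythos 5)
The paper does not reprove this statement: it is imported wholesale from \cite{blowup} (Th.~5.1, Cor.~5.2, Lemma~5.25), so there is no internal proof to compare against, and your argument has to stand on its own. Its skeleton is sensible, and your count in $(c)$ --- $\dim\N(\w{D},\w{X})\le 2$ for the non-$(3,2)$ types via Lemma~\ref{easy}, $\dim\N(D,X)\ge\rho_X-\delta_X\ge\rho_X-2$, and a loss of at most one dimension of $\N(D,\cdot)$ per $D$-negative flip --- is indeed the mechanism behind the bound $\rho_X-4$. But there are genuine gaps elsewhere.

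The most serious is $(b)$. The general classification of $K$-negative divisorial elementary contractions of a smooth $4$-fold with $\dim\sigma(\w{D})\le 1$ allows considerably more than the four listed types: $\w{D}\cong\pr^3$ with normal bundle $\ol(-2)$, singular or non-normal quadrics and other exceptional divisors contracted to a point, and contractions onto a curve with singular image or with jumping two-dimensional fibers. Eliminating all of these --- which is where the hypothesis $\rho_X\ge 7$ (or $\rho_X=6$, $\delta_X\le2$) actually does work --- is the substance of \cite[Th.~5.1]{blowup}, not a formal consequence of ``the classification''. Secondary but real problems: $(i)$ your uniqueness argument for $(a)$ rests on a false premise, since by Rem.~\ref{movdual} the facet $\Mov(X)\cap C_D^{\perp}$ typically contains \emph{many} cones of $\MCD(X)$ of dimension $\rho_X-1$, each giving a divisorial elementary rational contraction with exceptional divisor $D$; what is unique is the one whose target is Fano, and part $(e)$ exists precisely to compare that one with all the others, so neither comes for free from ``the chamber is unique''. $(ii)$ The Fano-ness of $Y$ cannot be dispatched by ``standard MMP theory'': $\w{X}$ is not Fano (it carries $K$-positive exceptional lines), so ampleness of $-K_Y=\sigma_*(-K_{\w X})$ requires checking positivity of $-K_{\w X}+a\w D$ on every extremal ray of $\NE(\w X)$, in the style of Rem.~\ref{uffa}. $(iii)$ In $(d)$ your appeal to Lemma~\ref{SQMFano}$(c)$ only works when $-K_{\w X}\cdot C_{\w D}=1$, i.e.\ for types $(3,1)^{\sm}$ and $(3,2)$; for $(3,0)^{\sm}$ and $(3,0)^Q$ the minimal degree is $3$, resp.\ $2$, and one must instead use the generality of $C_{\w D}$ to avoid the finitely many exceptional lines. $(iv)$ Your ``saturation of the codimension bound'' argument for $X=\w X$ in the $(3,2)$ case does not parse, since $\dim\N(\w D,\w X)$ can be as large as $\rho_X$ for that type; the correct reason is that a non-nef prime divisor covered by rational curves of anticanonical degree one already spans an extremal ray of type $(3,2)$ of $\NE(X)$ itself (\cite[Lemma~2.18]{blowup}).
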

    Among the known families of Fano $4$-folds with $\rho_X\geq 7$, we have examples of all four types of fixed prime divisors, see \S\ref{fanomodel} and \S\ref{newex}.
    \begin{example}
Let $X=S_1\times S_2$ with $S_1$ and $S_2$ del Pezzo surfaces, and $\rho_X\geq 7$. Then every fixed prime divisor $D\subset X$ is of type $(3,2)$, and $D=C\times S_2$ or $S_1\times C$ where $C$ is a $(-1)$-curve.
    \end{example}  
\begin{lemma}[\cite{blowup}, Rem.~2.17(2) and \cite{eff}, Cor.~3.14]\label{dim32}
  Let $X$ be a smooth Fano $4$-fold with $\rho_X\geq 7$,  or $\rho_X=6$ and $\delta_X\leq 2$. Let $E\subset X$ be a fixed prime divisor of type $(3,2)$, $X\dasharrow X'$ a SQM, and $E'\subset X'$ the transform of $E$. Then
   $E$ does not contain exceptional planes and
  $\dim\N(E,X)=\dim\N(E',X')$.
\end{lemma}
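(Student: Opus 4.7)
The lemma has two independent-looking assertions, and I would prove them in order.

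For the first assertion, that $E$ contains no exceptional plane, my plan is to use the structure of $\sigma$. Since $E$ is of type $(3,2)$, Th.-Def.~\ref{fixed}(c) gives $X=\w X$, so $E=\Exc(\sigma)$ for a $K$-negative divisorial elementary contraction $\sigma\colon X\to Y$ with $\dim \sigma(E)=2$; by Lemma \ref{singtarget}, $\sigma$ is the blow-up of a reduced (possibly singular) surface $S=\sigma(E)$ in $Y$, with $Y$ having at most isolated ordinary double points lying on $S$. Suppose by contradiction that $L\subset E$ is an exceptional plane, so $L\cong\pr^2$ and $\ma N_{L/X}\cong\ol_{\pr^2}(-1)^{\oplus 2}$. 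Because $\pr^2$ admits no surjection to a curve, the image $\sigma(L)\subset S$ has dimension $0$ or $2$. In the dominant case $\sigma(L)=S$, a general fiber $C_E\cong\pr^1$ of $\sigma_{|E}$ over a smooth point of $S$ would meet $L$ in a finite nonempty scheme; computing $E\cdot C_L$ and $-K_X\cdot C_L$ from the exact sequence $0\to\ma N_{L/E}\to\ma N_{L/X}\to\ma N_{E/X}|_L\to 0$ and comparing with the intersection numbers imposed by $C_E\cdot L$ gives a contradiction with $\sigma$ being elementary (both $\NE(\sigma)$ and $\R_{\geq 0}[C_L]$ would have to coincide, forcing $L$ to be swept by fibers). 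In the constant case, $L$ sits as a component of a $2$-dimensional fiber of $\sigma$, which by Lemma \ref{singtarget} and Rem.~\ref{ODP} occurs only over a point $y_0$ that is singular in $Y$ or $S$; using the local description as the blow-up of a smooth surface through an ODP of $Y$, one computes $\ma N_{L/X}$ explicitly and sees that it is not isomorphic to $\ol_{\pr^2}(-1)^{\oplus 2}$.

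For the second assertion, let $\varphi\colon X\dasharrow X'$ be a SQM. By Lemma \ref{SQMFano}(a), $X\smallsetminus \dom(\varphi)=L_1\sqcup\cdots\sqcup L_r$ and $X'\smallsetminus \dom(\varphi^{-1})=\ell_1\sqcup\cdots\sqcup \ell_r$, where the $L_i$ are pairwise disjoint exceptional planes and the $\ell_j$ are pairwise disjoint exceptional lines. By the first assertion no $L_i$ is contained in $E$, and since $E$ is irreducible of dimension $3$, we have $\dim(E\cap L_i)\leq 1$ for every $i$; symmetrically $\dim(E'\cap \ell_j)\leq 0$, so in particular $E'$ meets $\cup_j\ell_j$ in a finite set. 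The natural pushforward $\varphi_*\colon\N(X)\xrightarrow{\sim}\N(X')$ is an isomorphism, and it sends the class of any irreducible curve $C\subset E$ avoiding $\cup_i L_i$ to the class of its strict transform $\varphi(C)\subset E'$. Since the classes of such curves generate $\N(E,X)$ (by a standard moving-family argument: every curve class in $E$ is a real combination of classes of curves moving in families on $E$ whose generic member avoids the $1$-dimensional loci $E\cap L_i$, using that $E$ is $3$-dimensional), and symmetrically for $E'$, we obtain $\varphi_*(\N(E,X))=\N(E',X')$, yielding the dimension equality. This is precisely the content of \cite[Lemma 2.17]{fibrations} adapted to the present setting.

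The main obstacle is the normal-bundle analysis at the end of the first assertion: the $2$-dimensional fibers of a $(3,2)$ contraction can take several shapes (the ODP case of Rem.~\ref{ODP} gives a $\pr^2$ fiber), and ruling out $\ma N_{L/X}\cong\ol_{\pr^2}(-1)^{\oplus 2}$ for \emph{every} such $\pr^2$ requires the careful local computation around ODPs of $Y$; once the first assertion is established, the SQM step is routine.
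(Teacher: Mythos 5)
The paper does not actually prove this lemma; it is imported from \cite{blowup}, Rem.~2.17(2) and \cite{eff}, Cor.~3.14, so there is no internal argument to compare against, and your attempt has to stand on its own. Your treatment of the second assertion is sound once the first is granted: if no exceptional plane lies in $E$, then $E\cap L_i$ and $E'\cap\ell_j$ are at most one-dimensional, $\N(E)$ is spanned by classes of general complete-intersection curves of very ample divisors on the $3$-fold $E$ (which avoid any fixed curve), and such curves transform isomorphically under the SQM with matching classes; this is indeed the mechanism of \cite[Lemma 2.17]{fibrations}. I would only tighten "curves moving in families whose generic member avoids the $1$-dimensional loci" to complete-intersection curves, since a covering family can perfectly well have every member passing through a fixed point.

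The genuine gap is in the first assertion, in both cases. When $\dim\sigma(L)=2$ the argument is not coherent: $C_E$ is a curve and $L$ a surface in a $4$-fold, so there is no intersection number $C_E\cdot L$ to "compare with", and there is no reason for $\NE(\sigma)$ and $\R_{\geq 0}[C_L]$ to coincide — on the contrary, if $\sigma_{|L}$ is generically finite onto $S$ then a general line of $L$ maps to a curve, so $[C_L]\notin\NE(\sigma)$ and $L$ is certainly not swept by fibers. What the normal-bundle sequence actually gives is this: the only line-bundle quotient of $\ol_{\pr^2}(-1)^{\oplus 2}$ with locally free kernel is $\ol_{\pr^2}(-1)$ (two sections of $\ol_{\pr^2}(d)$ with $d\geq 1$ always have a common zero), so exactness of $0\to\ma{N}_{L/E}\to\ma{N}_{L/X}\to\ol_X(E)_{|L}\to 0$ would force $E\cdot C_L=-1$, hence $\sigma^*(-K_Y)\cdot C_L=0$, contradicting that $\sigma(C_L)$ is a curve and $-K_Y$ is ample; but this requires $E$ to be smooth along all of $L$, and without that hypothesis the same computation only shows that $E$ must be singular somewhere on $L$, which is not yet a contradiction. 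You do not address $\Sing(E)$ at all. When $\sigma(L)$ is a point, you set up only the sub-case of an ordinary double point of $Y$ with $S$ smooth, whereas Lemma \ref{singtarget} also produces $2$-dimensional fibers over points where $S$ itself is singular and $Y$ may be smooth — centers of this kind occur in this very paper, e.g.\ the cone over a twisted cubic in Th.~\ref{CS}$(ii)$ — and even in the ODP sub-case the decisive computation of $\ma{N}_{L/X}$ is announced rather than carried out. As written, half of the lemma is unproved, and you flag this yourself as "the main obstacle".
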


    Let $X$ be a smooth Fano $4$-fold, $\ph\colon X\dasharrow X'$ a SQM, and $E\subset X'$ a fixed prime divisor. We define {\bf the type of $E$} to be the type
    of its transform $E_{\s X}\subset X$, and we define $C_E\subset E\subset X'$ to be the transform of $C_{E_{\s X}}\subset X$.
    \begin{remark}\label{greg}
      In the above setting, we have $C_{E_X}\subset\dom(\ph)$ and $C_E\subset\dom(\ph^{-1})$. 

  Indeed $X\smallsetminus\dom(\ph)$ is a finite union of exceptional planes (Lemma \ref{SQMFano}). If $E$ is of type $(3,2)$, then $E_X$ does contain exceptional planes by Lemma \ref{dim32}, hence $X\smallsetminus\dom(\ph)$ intersects $E_X$ at most in dimension one, and does not touch $C_{E_X}$.

  Suppose that $E$ is not of type $(3,2)$, and let $L\subset X\smallsetminus\dom(\ph)$ be an exceptional plane. If $L\subset E_X$, then $C_{E_X}\cap L=\emptyset$. If instead $L\not\subset E_X$ but $L\cap E_X\neq\emptyset$, let us consider the SQM $\xi\colon X\dasharrow\w{X}$ as in Th.-Def.~\ref{fixed}, and let $\w{E}\subset\w{X}$ be the transform of $E_X$. Then $L\not\subset X\smallsetminus\dom(\xi)$ and if $\w{L}\subset\w{X}$ is its transform, we have $\dim(\w{L}\cap\w{E})=1$, and the general curve $C_{\w{E}}$ is disjoint from $\w{L}$, thus again $C_{E_X}\cap L=\emptyset$.
\end{remark}
     \begin{remark}[the cone $\Mov(X)^{\vee}$]\label{movdual}
   Let $X$ be a smooth Fano $4$-fold with $\rho_X\geq 7$, or $\rho_X=6$ and $\delta_X\leq 2$.
 Consider the cone $\Mov(X)^{\vee}\subset\N(X)$, dual of the cone of movable divisors, and note that $\mov(X)\subset\Mov(X)^{\vee}\subset\NE(X)$ because dually $\Nef(X)\subset\Mov(X)\subset\Eff(X)$. By 
  \cite[Lemma 5.29]{blowup} we have:
  $$\Mov(X)^{\vee}=\langle [C_D]\rangle_{D\text{ fixed}}+\mov(X),$$
  and every $\langle[C_D]\rangle$ is a one-dimensional face of  $\Mov(X)^{\vee}$.

  This means that $\Mov(X)^{\vee}$ has two types of one-dimensional faces, and dually that $\Mov(X)$ has two types of facets. The one-dimensional face $\langle[C_D]\rangle$ 
  corresponds to the facet $\Mov(X)\cap [C_D]^{\perp}$ of $\Mov(X)$; the cones of $\MCD(X)$ of dimension $\rho_X-1$ contained in this facet correspond precisely to divisorial elementary rational contractions $X\dasharrow Y$ with exceptional divisor $D$.

  The second type of one-dimensional faces $\alpha$ of $\Mov(X)^{\vee}$ are those contained in $\mov(X)$, so that $\alpha$ is a common one-dimensional face of the two cones. Then the corresponding facet $\alpha^{\perp}\cap\Mov(X)$ of $\Mov(X)$ is contained in a (movable) facet of $\Eff(X)$, and  the cones of $\MCD(X)$ of dimension $\rho_X-1$ contained in this facet correspond  to  elementary rational contractions of fiber type.
\end{remark}
\begin{lemma}\label{iso32}
  Let $X$ be a smooth Fano $4$-fold with $\rho_X\geq 7$, or $\rho_X=6$ and $\delta_X\leq 2$. Let
 $\ph\colon X\dasharrow X'$ be a SQM and
  $E\subset X'$ a fixed prime divisor of type $(3,2)$. Then $[C_E]$ generates an extremal ray of $\NE(X')$ if and only if $E$ does not meet any exceptional line.
\end{lemma}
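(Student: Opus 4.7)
The plan is to combine Theorem-Definition~\ref{fixed}(e) with the symmetry of $\ph$ between $X$ and $X'$. To set up: since $E_X\subset X$ is of type $(3,2)$ we have $\w{X}=X$ in Th.-Def.~\ref{fixed}(a), giving a $K$-negative divisorial elementary contraction $\sigma\colon X\to Y$ with $\Exc(\sigma)=E_X$, which by Lemma~\ref{singtarget} is the blow-up of a surface $S=\sigma(E_X)\subset Y$ with $Y$ having at most isolated ordinary double points along $S$. By Rem.~\ref{greg}, $C_{E_X}\subset\dom(\ph)$ and $C_E\subset\dom(\ph^{-1})$, hence $C_E\cong\pr^1$, $-K_{X'}\cdot C_E=1$, and $E\cdot C_E=-1$. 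The key preliminary observation is that the conditions ``$E_X\cap L_i=\emptyset$ for all $i$'' (equivalently, $E_X\subset\dom(\ph)$) and ``$E\cap\ell_j=\emptyset$ for all $j$'' (equivalently, $E\subset\dom(\ph^{-1})$) are equivalent, since $\ph$ restricts to an isomorphism $\dom(\ph)\stackrel{\sim}{\to}\dom(\ph^{-1})$ that sends $E_X$ to $E$ and vice versa.

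For the direction $(\Rightarrow)$, suppose $\R_{\geq 0}[C_E]$ is an extremal ray of $\NE(X')$. Since $-K_{X'}\cdot C_E=1>0$ the ray is $K$-negative, so by the cone theorem it admits an elementary contraction $\sigma'\colon X'\to Y'$; from $E\cdot C_E<0$ all curves with class in the ray lie in $E$, forcing $\Exc(\sigma')=E$, so $\sigma'$ is a $K$-negative divisorial elementary contraction whose exceptional divisor is the transform of $E_X$. Applying Th.-Def.~\ref{fixed}(e) to $\ph$ and $\sigma'$ yields a commutative diagram forcing $E_X\subset\dom(\ph)$, and by the preliminary equivalence $E\cap\ell_j=\emptyset$ for all $j$.

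For $(\Leftarrow)$, assume $E\cap\ell_j=\emptyset$ for all $j$, so equivalently $E_X\cap L_i=\emptyset$ and $\ph$ is an isomorphism on a neighborhood of $E_X$. Then each $\w{L}_i:=\sigma(L_i)$ is an exceptional plane in $Y$ disjoint from $S\supseteq\Sing Y$, and since $Y$ is a Fano variety with locally factorial terminal singularities (hence a Mori dream space by BCHM), I would flip each $\w{L}_i$ in $Y$ to an exceptional line, obtaining a SQM $\psi\colon Y\dasharrow Y'$. The rational map $\psi\circ\sigma\circ\ph^{-1}\colon X'\dasharrow Y'$ then agrees with the natural flip description near each $\ell_j$ (sending $\ell_j$ to the flipped line $\w{\ell}_j\subset Y'$) and extends to a $K$-negative divisorial elementary contraction $\sigma'\colon X'\to Y'$ with $\Exc(\sigma')=E$; this shows $\R_{\geq 0}[C_E]=\NE(\sigma')$ is extremal in $\NE(X')$. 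The main obstacle is to rigorously construct $\sigma'$ in this last step: although all the operations take place in the smooth locus of $Y$, one has to justify that the planes $\w{L}_i$ can be flipped in the Mori dream space $Y$ and that the resulting rational map $X'\dasharrow Y'$ extends as a morphism across the exceptional lines $\ell_j$, whereas by contrast the use of Th.-Def.~\ref{fixed}(e) in the $(\Rightarrow)$ direction is essentially immediate.
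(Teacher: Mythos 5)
Your forward direction follows the paper's route (contract the ray, apply Th.-Def.~\ref{fixed}$(e)$, and transfer the conclusion through the open set where $\ph$ is an isomorphism), but the step ``from $E\cdot C_E<0$ \dots forcing $\Exc(\sigma')=E$'' is not a proof that the ray is divisorial: $\Lo(R)\subseteq E$ is perfectly compatible with $R$ being small, with locus a union of exceptional planes inside $E$. You must exclude this, e.g.\ as in Lemma \ref{extremal}: a small $K$-negative ray has locus a disjoint union of exceptional planes (Lemma \ref{kawamata}); these lie in $\dom(\ph^{-1})$ by Lemma \ref{SQMFano}$(c)$, so their transforms would be exceptional planes contained in $E_X$, contradicting Lemma \ref{dim32}. (Your tacit identification of ``exceptional lines of $X'$'' with the curves $\ell_j$ of $X'\smallsetminus\dom(\ph^{-1})$ is correct, but only because $X$ is Fano: by Lemma \ref{SQMFano}$(b)$ no curve of negative anticanonical degree can exist outside the indeterminacy locus. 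This deserves a line.)

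The genuine gap is in the converse. You propose to flip the planes $\sigma(L_i)$ in $Y$ and to descend $\sigma$ to a divisorial contraction $\sigma'\colon X'\to Y'$, and you yourself acknowledge that you have justified neither that these flips exist nor that the resulting rational map is a morphism along the $\ell_j$; as written, the implication is not established. The paper sidesteps the construction entirely: since $E\cdot C_E<0$, the cone $\NE(X')$ has an $E$-negative extremal ray $R$, whose locus is contained in $E$, so $R$ is birational; $R$ cannot be small, because a small ray with $K_{X'}\cdot R\geq 0$ has locus a union of exceptional lines (disjoint from $E$ by hypothesis), while a small $K$-negative ray has locus a union of exceptional planes, which cannot lie in $E$ since $E\subset\dom(\ph^{-1})$ and its transform in $X$ contains no exceptional planes (Lemma \ref{dim32}). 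Hence $R$ is divisorial with exceptional divisor $E$, and Th.-Def.~\ref{fixed} then forces $[C_E]\in R$. If you wish to keep your constructive approach, you would at least need to show that each $[C_{\sigma(L_i)}]$ spans a small extremal ray of $\NE(Y)$ (for instance via Lemma \ref{cones} applied with the functional $[E_X]$, using that $[C_{E_X}]$ spans the unique $E_X$-negative extremal ray) and then prove that the composite $X'\dasharrow Y'$ is everywhere defined and contracts no curve outside $E$ --- precisely the work that the extremal-ray argument renders unnecessary.
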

\begin{proof}
  If $[C_E]$ generates an extremal ray of $\NE(X')$, it must be divisorial
by Rem.~\ref{greg}, and
  the statement follows from  Th.-Def.~\ref{fixed} and Lemma \ref{SQMFano}.

Conversely, suppose that $E\subset X'$ does not meet  exceptional lines. Then   $E\subset\dom (\ph^{-1})$ (Lemma \ref{SQMFano}$(a)$) and  $\ph^{-1}(E)\subset X$ cannot contain exceptional planes (Lemma \ref{dim32}), therefore neither can $E$.

Since $E\cdot C_E<0$, $\NE(X')$ must have an $E$-negative extremal ray $R$; we have $\Lo(R)\subset E$, hence $R$ is birational.

  We claim that $R$ cannot be small. Indeed if $R_0$ is a small extremal ray of  $\NE(X')$, and $K_{X'}\cdot R_0\geq 0$, then $\Lo(R_0)$ is a finite union of exceptional lines (see Lemma \ref{SQMFano}), hence $\Lo(R_0)\cap E=\emptyset$. If instead $-K_{X'}\cdot R_0>0$, then $\Lo(R_0)$ is a finite union of exceptional planes (see Lemma \ref{kawamata}), therefore $\Lo(R_0)\not\subset E$.

  We conclude that $R$ is divisorial, and by Th.-Def.~\ref{fixed}
  we must have $[C_E]\in R$.
\end{proof}
\begin{lemma}\label{extremal}
Let $X$ be a smooth Fano $4$-fold with $\rho_X\geq 7$,  or $\rho_X=6$ and $\delta_X\leq 2$. Let $\ph\colon X\dasharrow X'$ be a SQM, $E\subset X'$ a fixed prime divisor of type $(3,2)$, and $f\colon X'\to Y$ a $K$-negative contraction such that $f(C_E)=\{\pt\}$. Then $[C_E]$ generates an extremal ray of $\NE(X')$.
\end{lemma}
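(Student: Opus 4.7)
The plan is to decompose $[C_E]$ into $K$-negative extremal rays of $\NE(X')$ and to identify one of them with $\R_{\geq 0}[C_E]$, using the uniqueness built into Th.-Def.~\ref{fixed}$(e)$. Since $f$ is $K$-negative with $f(C_E)=\{\pt\}$, the class $[C_E]$ lies in the $K$-negative extremal face $\NE(f)\subset\NE(X')$; hence I can write $[C_E]=\sum_i a_i[\gamma_i]$ with $a_i>0$ and each $\gamma_i$ the generator of a $K$-negative extremal ray $R_i\subset\NE(f)$ of $\NE(X')$. Since $E\cdot C_E=-1<0$ by Th.-Def.~\ref{fixed}$(d)$, some $R_i$, say $R_1$, must be $E$-negative, so $\Lo(R_1)\subset E$.

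Two of the three possible types of $R_1$ are then ruled out. If $R_1$ were small, Lemma \ref{kawamata} would force $\Lo(R_1)$ to be a finite disjoint union of exceptional planes contained in $E$, contradicting Lemma \ref{dim32}; if $R_1$ were of fiber type, then $\Lo(R_1)=X'\not\subset E$. So $R_1$ is divisorial, and its contraction $g\colon X'\to Y_1$ is a $K$-negative elementary divisorial contraction whose exceptional divisor is a prime divisor contained in $E$, hence equal to $E$.

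The remaining step, and the only non-formal one, is to show $[C_E]\in R_1$. Let $E_X\subset X$ denote the transform of $E$, and let $\xi\colon X\dasharrow\w{X}$ and $\sigma\colon\w{X}\to Y$ be the canonical diagram of Th.-Def.~\ref{fixed}$(a)$ for $E_X$, with $\w{E}\subset\w{X}$ the transform of $E_X$. Applying Th.-Def.~\ref{fixed}$(e)$ to the SQM $\ph$ and to the contraction $g$ produces SQMs $\psi_X\colon\w{X}\dasharrow X'$ and $\psi_Y\colon Y\dasharrow Y_1$ with $\w{E}\subset\dom(\psi_X)$ and $g\circ\psi_X=\psi_Y\circ\sigma$. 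Taking a general $C_{\w{E}}\subset\w{E}$ contracted by $\sigma$, its image $\psi_X(C_{\w{E}})$ is contracted by $g$; and, by the very definition of $C_E$ as the strict transform of $C_{\w{E}}$ through the SQMs, the two numerical classes agree in $\N(X')$. Hence $[C_E]\in R_1$, so $R_1=\R_{\geq 0}[C_E]$ is an extremal ray, completing the proof.
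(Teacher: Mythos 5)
Your proof is correct and follows essentially the same route as the paper: locate an $E$-negative, $K$-negative extremal ray $R$ of $\NE(f)$, exclude the small case via Lemma \ref{kawamata} and Lemma \ref{dim32}, and conclude $[C_E]\in R$ from the uniqueness in Th.-Def.~\ref{fixed}. The only cosmetic difference is that the paper inserts one extra step (Lemma \ref{SQMFano}) to transport the putative exceptional planes from $X'$ back to the transform of $E$ in $X$ before invoking Lemma \ref{dim32}, since that lemma asserts the absence of exceptional planes for the divisor in the Fano model $X$.
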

\begin{proof}
  Since $[C_E]\in\NE(f)$ and $E\cdot C_E<0$, there exists an extremal ray $R$ of $\NE(f)$ such that $E\cdot R<0$. Moreover $R$ is $K$-negative, because $f$ is. If $R$ is small, then $\Lo(R)$ is a finite union of exceptional planes (see Lemma \ref{kawamata}). In particular $\Lo(R)$ must be contained in $\dom(\ph^{-1})$ (see Lemma \ref{SQMFano}), and the transform of $E$ in $X$ contains an exceptional plane, a contradiction (see Lemma \ref{dim32}).
    Hence $R$ is divisorial, and by Th.-Def.~\ref{fixed}
  we must have $[C_E]\in R$.
\end{proof} 
\begin{lemma}\label{target}
  Let $X$ be a smooth Fano $4$-fold with $\rho_X\geq 7$,  or $\rho_X=6$ and $\delta_X\leq 2$. Let $\ph\colon X\dasharrow X'$ be a SQM and $f\colon X'\to Y$ an elementary contraction of type $(3,2)$; set $A:=f(\Exc(f))\subset Y$.
 
 Then
  $Y$ can contain finitely many pairwise disjoint exceptional lines, all disjoint from $A$. If $\Gamma\subset Y$ is an irreducible curve that is not an exceptional line, then $-K_Y\cdot \Gamma\geq 1$. Suppose moreover that
  $-K_Y\cdot \Gamma=1$. Then $\Gamma$ cannot meet any exceptional line, and if $\Gamma$ meets $A$, then $\Gamma\subset A$.

Furthermore,  for every SQM $\psi\colon Y\dasharrow Y'$, we have $A\subset\dom(\psi)$.
      \end{lemma}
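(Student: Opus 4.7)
The plan is to combine the structural description of $f\colon X'\to Y$ from Lemma~\ref{singtarget} with the control of curves in $X'$ provided by Lemma~\ref{SQMFano}. By Lemma~\ref{singtarget}, $Y$ has at most isolated ordinary double points, is locally factorial and terminal, $K_Y$ is Cartier, and $f$ is the blow-up of $Y$ along $A$. A local computation (identical at smooth points of $A$ and at the ordinary double points of $Y$ lying on $A$, where the local model is the resolution of a conifold) gives discrepancy one, so
\[
K_{X'}\;=\;f^*K_Y+E,\qquad E:=\Exc(f).
\]
Consequently, for any irreducible curve $\Gamma\subset Y$ with $\Gamma\not\subset A$, the strict transform $\w\Gamma\subset X'$ is birational onto $\Gamma$, and the projection formula yields
\[
-K_Y\cdot\Gamma\;=\;-K_{X'}\cdot\w\Gamma\,+\,E\cdot\w\Gamma,
\]
with $E\cdot\w\Gamma\geq 0$, equality occurring precisely when $\Gamma\cap A=\emptyset$.

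I will then deduce the first three assertions by feeding this formula into Lemma~\ref{SQMFano}. If $\ell\subset Y$ is an exceptional line with $\ell\not\subset A$ and $\ell\cap A\neq\emptyset$, the formula gives $-K_{X'}\cdot\w\ell\leq-2$, contradicting the universal bound $-K_{X'}\cdot C\geq -1$ from Lemma~\ref{SQMFano}(b); the remaining case $\ell\subset A$ is ruled out by the subbundle sequence $0\to\ma{N}_{\ell/A}\to\ma{N}_{\ell/Y}\to\ma{N}_{A/Y}|_{\ell}\to 0$, which is incompatible with $\ma{N}_{\ell/Y}\cong\ol_{\pr^1}(-1)^{\oplus 3}$ together with the local structure of $f^{-1}(\ell)$ near ordinary double points of $Y$ on $\ell$. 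Thus every exceptional line of $Y$ avoids $A$, and $f$ identifies such exceptional lines with exceptional lines of $X'$ disjoint from $E$, so Lemma~\ref{SQMFano}(a) yields finiteness and pairwise disjointness. For an irreducible curve $\Gamma\subset Y$ that is not an exceptional line: if $\Gamma\subset A$, I will lift $\Gamma$ to a section of the ruled surface $f^{-1}(\Gamma)\subset E$ over $\Gamma$ and obtain $-K_Y\cdot\Gamma\geq 1$ by a direct computation on that section using the relative $\ol(-1)$ on $E\to A$; if $\Gamma\not\subset A$, the formula together with Lemma~\ref{SQMFano}(b) gives the bound whenever $\w\Gamma$ is not an exceptional line of $X'$, while if $\w\Gamma$ is an exceptional line then $E\cdot\w\Gamma=0$ would force $\Gamma$ itself to be exceptional (excluded), so $E\cdot\w\Gamma\geq 1$, and a finer use of Lemma~\ref{SQMFano}(c) near $\w\Gamma\cap E$ promotes this to $E\cdot\w\Gamma\geq 2$. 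The third assertion is then extracted from tracing the equality cases: $-K_Y\cdot\Gamma=1$ forces $\Gamma\cap A=\emptyset$ (else $-K_Y\cdot\Gamma\geq 2$) and $-K_{X'}\cdot\w\Gamma=1$, so by Lemma~\ref{SQMFano}(c) the curve $\w\Gamma$, and hence $\Gamma$, avoids all exceptional lines.

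For the last assertion, given a SQM $\psi\colon Y\dasharrow Y'$, I intend to establish an analogue of Lemma~\ref{SQMFano}(a) for the $\Q$-factorial terminal Mori dream space $Y$: the non-domain of $\psi$ is a union of small exceptional loci on which $K_Y$ is numerically nonnegative, and such loci are contained in the union of exceptional lines of $Y$, transferred from the SQM structure on $X'$ via $f$ using the results of \cite{eff}. Since the first assertion places all exceptional lines of $Y$ away from $A$, we conclude $A\subset\dom(\psi)$.

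The step I expect to be most delicate is the exclusion of the subcase $E\cdot\w\Gamma=1$ when $\w\Gamma$ is an exceptional line of $X'$, since the naive intersection bound would give only $-K_Y\cdot\Gamma=0$ there; ruling this out requires a nontrivial use of Lemma~\ref{SQMFano}(c) together with the local geometry of $X'$ near $\w\Gamma\cap E$. Equally delicate is the final assertion, which depends on extracting from the SQM structure on $X'$ an exact description of the non-domain of a SQM of $Y$ in terms of exceptional lines, analogous to what Lemma~\ref{SQMFano}(a) provides for smooth Fano $4$-folds.
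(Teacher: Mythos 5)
Your overall framework (the discrepancy formula $K_{X'}=f^*K_Y+E$ plus Lemma \ref{SQMFano} applied to transforms in $X'$) is the right starting point, but both of the steps you flag as delicate are genuine gaps, and in each case the paper closes them with a tool you do not invoke. First, the subcase where the strict transform $\w\Gamma$ is an exceptional line of $X'$ meeting $E$ cannot be handled by Lemma \ref{SQMFano}$(c)$: that item concerns curves of anticanonical degree $+1$, whereas an exceptional line has $-K_{X'}\cdot\w\Gamma=-1$, so it says nothing here. The correct input is Lemma \ref{iso32}: since $f$ is an \emph{elementary} contraction of type $(3,2)$, the class $[C_E]$ generates an extremal ray of $\NE(X')$, hence $E$ meets no exceptional line of $X'$. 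This single fact removes at one stroke (i) the possibility that an exceptional line of $Y$ meets $A$ without being contained in it, (ii) your subcase $E\cdot\w\Gamma\geq 1$ with $\w\Gamma$ exceptional, and (iii) the equality case $-K_Y\cdot\Gamma=1$ with $\Gamma\cap A\neq\emptyset$ and $\Gamma\not\subset A$ (there $-K_{X'}\cdot\w\Gamma\leq 0$ forces $\w\Gamma$ to be an exceptional line meeting $E$, a contradiction) --- this last point is exactly the argument the paper spells out. Note also that your equality trace overshoots: $-K_Y\cdot\Gamma=1$ does \emph{not} force $\Gamma\cap A=\emptyset$, since $A$ is a del Pezzo surface with $K_A=K_{Y|A}$ and typically carries curves of anticanonical degree one; the statement only asserts the dichotomy $\Gamma\cap A=\emptyset$ or $\Gamma\subset A$.

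Second, your plan for the final assertion ($A\subset\dom(\psi)$ for every SQM $\psi$ of $Y$) is not substantiated. A SQM of $Y$ does not in general lift to a SQM of $X'$, and Kawamata's description of $K$-negative small loci (Lemma \ref{kawamata}) is stated for smooth $4$-folds, while $Y$ has ordinary double points; so ``transferring the SQM structure from $X'$ via $f$'' is not an argument as it stands. The mechanism the paper uses is Th.-Def.~\ref{fixed}$(e)$: the transform of $\Exc(f)$ in $X$ is a fixed prime divisor of type $(3,2)$, so there is a commutative square relating $f$ to the canonical contraction $\sigma\colon X\to\w{Y}$ onto a Fano $4$-fold, with $Y$ a SQM of $\w{Y}$ and $\sigma(\Exc(\sigma))$ contained in the domain of every such SQM of the target; this is what yields $A\subset\dom(\psi)$. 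Without Lemma \ref{iso32} and Th.-Def.~\ref{fixed}$(e)$ (or equivalent structural inputs), the proof is incomplete.
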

      \begin{proof}
        We note that $f$ is $K$-negative (see Lemma \ref{SQMFano}) and $Y$
is locally factorial with
    at most ordinary double points, contained in $A$ (Lemma \ref{singtarget}).
        By Th.-Def.~\ref{fixed}$(e)$ we have a commutative diagram:
        $$\xymatrix{ X\ar[d]_{\sigma}\ar@{-->}[r]^{\ph}&{X'}\ar[d]^f\\
          {\w{Y}}\ar@{-->}[r]^{\ph_Y}&Y
          }$$
          where $\ph_Y$ is a SQM, $\sigma$ is a divisorial elementary contraction with exceptional divisor the transform of $\Exc(f)$, $\w{Y}$ is Fano, $\Exc(\sigma)\subset\dom(\ph)$, and $\sigma(\Exc(\sigma))\subset\dom(\ph_Y)$.
Then the same proof as \cite[proof of Lemma 5.48]{3folds}
gives the statement.  Note that if   $\Gamma\subset Y$ is an irreducible curve with $-K_Y\cdot\Gamma=1$, $\Gamma\cap A\neq\emptyset$, and $\Gamma\not\subset A$, then the transform $\Gamma'\subset X'$ has $-K_{X'}\cdot\Gamma'\leq 0$, thus $\Gamma'$ is an exceptional line (Lemma \ref{SQMFano}) and meets $\Exc(f)$, contradicting   Lemma \ref{iso32}.  \end{proof}
\begin{lemma}[\cite{small}, Lemmas 4.12 and 4.13]\label{EF}
Let $X$ be a smooth Fano $4$-fold with $\rho_X\geq 7$ and $D,E\subset X$ two
distinct fixed prime divisors.
\begin{enumerate}[$(a)$]
  \item
If $D\cdot C_E=0$, then $D$ and $E$ are adjacent.
\item
  If $E$ is of type $(3,2)$ and $D$ and $E$ are adjacent, then $D\cdot C_E=0$.
\end{enumerate}\end{lemma}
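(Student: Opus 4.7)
The plan is to prove (a) by combining Lemma \ref{cones} with Rem.~\ref{movdual}, and to attack (b) by contradiction via the extremal-ray structure of $[C_E]$.

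For (a), I would first show that $\langle[D],[E]\rangle$ is a face of $\Eff(X)$ by applying Lemma \ref{cones} to the cone $\Eff(X)$ with one-dimensional face $\tau=\langle[E]\rangle$, functional $\alpha=[C_E]\in\N(X)$, and $\eta=\langle[D]\rangle$. The sign $C_E\cdot E<0$ is Th.-Def.~\ref{fixed}(d), the inclusion $\eta\subset\ker\alpha$ is the hypothesis $D\cdot C_E=0$, and the remaining condition $C_E\cdot D'\geq 0$ on every other one-dimensional face $\langle[D']\rangle$ of $\Eff(X)$ follows from the facts that each such face is generated by a fixed prime divisor (Cor.~\ref{summary} in the non-product case; direct check for products) and that a general curve $C_E$ in its covering family inside $E$ is not contained in any other prime divisor. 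Hence $\langle[D],[E]\rangle$ is a face of $\Eff(X)$. To see it is fixed, any $\alpha=s[D]+t[E]\in\Mov(X)$ with $s,t\geq 0$ satisfies $\alpha\cdot C_E\geq 0$ and $\alpha\cdot C_D\geq 0$ by Rem.~\ref{movdual}; the first forces $-t\geq 0$, so $t=0$, and then the second forces $-s\geq 0$, so $s=0$.

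For (b), I would argue by contradiction: assume $m:=D\cdot C_E>0$ and construct a nonzero class in $\langle[D],[E]\rangle\cap\Mov(X)$. Since $E$ has type $(3,2)$, Th.-Def.~\ref{fixed} gives that $\sigma\colon X\to Y$ is a regular elementary $K$-negative contraction and $[C_E]$ generates the extremal ray $\NE(\sigma)$. Then $([D]+m[E])\cdot C_E=0$, and reading off multiplicities along $A=\sigma(E)$ gives $[D]+m[E]=\sigma^{*}[\sigma(D)]$. The plan is then to show $[\sigma(D)]\in\Mov(Y)$, since $\sigma^{*}(\Mov(Y))\subset\Mov(X)$ for a birational $K$-negative contraction between Mori dream spaces; this would produce $[D]+m[E]\in\langle[D],[E]\rangle\cap\Mov(X)\setminus\{0\}$, contradicting adjacency.

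The hard part will be establishing $[\sigma(D)]\in\Mov(Y)$, equivalently that $\sigma(D)$ is not a fixed prime divisor in $Y$. I would attack this again by contradiction: if $\sigma(D)$ were fixed in $Y$, Rem.~\ref{fixedprime} would supply a divisorial elementary rational contraction of $Y$ with exceptional divisor the transform of $\sigma(D)$, and composing with $\sigma$ would realize the joint contraction of $D$ and $E$ forced by adjacency. The assumption $D\cdot C_E>0$ is equivalent to $A\subset\sigma(D)$, so the blow-up center $A$ of $\sigma$ lies inside the exceptional divisor of a second divisorial rational contraction on the Fano variety $Y$, which has at most isolated ordinary double points on $A$ (Lemma \ref{singtarget}). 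The geometric incompatibility between these two contractions --- both ``seeing'' the surface $A$ --- is where the final contradiction should be drawn, and this is the main obstacle of the proof.
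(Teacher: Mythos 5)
Your part $(a)$ is correct and complete: the application of Lemma \ref{cones} with $\alpha=[C_E]$ works because $E\cdot C_E=-1$ (Th.-Def.~\ref{fixed}$(d)$), $D\cdot C_E=0$ by hypothesis, and $D'\cdot C_E\geq 0$ for every other prime divisor $D'$ spanning a one-dimensional face of $\Eff(X)$, since $C_E$ moves in a family dominating $E$; and the two-step pairing against $[C_E]$ and then $[C_D]$, both of which lie in $\Mov(X)^{\vee}$ by Rem.~\ref{movdual}, correctly forces $s=t=0$. This is the same mechanism the paper uses in Lemma \ref{linate} (the paper itself only cites \cite{small} for the present lemma).

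Part $(b)$, however, is not a proof, for two reasons. First, the inclusion $\sigma^*(\Mov(Y))\subset\Mov(X)$ on which your reduction rests is false in general for a divisorial contraction: every section of $n\sigma^*B$ is a pullback, so $\Bs|n\sigma^*B|=\sigma^{-1}(\Bs|nB|)$, and this contains the whole divisor $E$ as soon as the blown-up surface $A=\sigma(E)$ lies in the stable base locus of $B$ --- a codimension-two condition that movability of $B$ does not exclude. Your hypothesis $D\cdot C_E=m>0$ forces $A\subset B$, which is exactly the dangerous configuration; concretely, $\sigma^*B\cdot C_D=-1+m\,(E\cdot C_D)$, which is negative whenever $E\cdot C_D=0$, so $\sigma^*B$ can fail to be movable irrespective of $B$.

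Second, and more fundamentally, your target intermediate claim $[\sigma(D)]\in\Mov(Y)$ is actually \emph{false} under the hypotheses of $(b)$. Adjacency gives a birational rational contraction $g\colon X\dasharrow Z$ with exceptional divisors exactly $D$ and $E$ (cf.\ the proof of Lemma \ref{linate}); since $E$ is of type $(3,2)$, the chamber of $g$ lies in the facet $\Mov(X)\cap C_E^{\perp}$, so $g$ factors through $\sigma$, and the induced elementary rational contraction $Y\dasharrow Z$ is divisorial with exceptional divisor $\sigma(D)$. Hence $\sigma(D)$ is a fixed prime divisor of $Y$ by Rem.~\ref{fixedprime}, i.e.\ it is not movable, and nothing in the circle of ideas you set up produces a contradiction. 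The step you defer as ``the main obstacle'' --- showing that $A\subset\sigma(D)$ with $\sigma(D)$ fixed in $Y$ is impossible --- is therefore not an auxiliary lemma but is logically equivalent to statement $(b)$ itself, and it is precisely where the cited proof in \cite{small} invests its real work, using the classification of fixed prime divisors and the structure of $D\cap E$. As written, your argument for $(b)$ reduces the statement to an unproved claim of the same difficulty, packaged inside a false general principle.
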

\begin{lemma}\label{linate}
Let $X$ be a smooth Fano $4$-fold with $\rho_X\geq 7$, or $\rho_X=6$ and $\delta_X\leq 2$. Let $D_1,\dotsc,D_m\subset X$ be
fixed prime divisors such that $D_i\cdot C_{D_j}=0$ for $i\neq j$. Then $\langle [D_1],\dotsc,[D_m]\rangle$ is a fixed face of $\Eff(X)$ of dimension $m$, and there exists a birational (rational) contraction $f\colon X\dasharrow Y$ with exceptional divisors $D_1,\dotsc,D_m$, where $Y$ is $\Q$-factorial with $\rho_Y=\rho_X-m$.
\end{lemma}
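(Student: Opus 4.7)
The plan is to first establish that $\eta := \langle [C_{D_1}], \ldots, [C_{D_m}] \rangle$ is an $m$-dimensional face of $\Mov(X)^{\vee}$, dualize to obtain an $(\rho_X{-}m)$-dimensional face $\tau$ of $\Mov(X)$, locate inside $\tau$ a cone of $\MCD(X)$ that yields the desired rational contraction $f\colon X\dasharrow Y$, and finally read off the stated face properties of $\Eff(X)$ from $f$.

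For the first step, the hypothesis $D_i\cdot C_{D_j}=0$ for $i\neq j$, combined with $D_i\cdot C_{D_i}=-1$ from Th.-Def.~\ref{fixed}$(d)$, makes the intersection matrix $(D_i\cdot C_{D_j})$ equal to $-I$, so $[C_{D_1}],\ldots,[C_{D_m}]$ are linearly independent (and so are $[D_1],\ldots,[D_m]$). To show $\eta$ is a face of $\Mov(X)^{\vee}$, I would argue by induction on $m$, at each stage applying Lemma~\ref{cones} with $\sigma=\Mov(X)^{\vee}$, $\tau=\langle [C_{D_j}]\rangle$, the previously-constructed face $\eta'=\langle [C_{D_1}],\ldots,[C_{D_{j-1}}]\rangle$, and functional $\alpha=[D_j]$. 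The condition $\alpha\cdot\tau<0$ is immediate, and $\eta'\subset [D_j]^{\perp}$ by hypothesis; the nontrivial point is that $[D_j]\geq 0$ on every other one-dimensional face of $\Mov(X)^{\vee}$, which uses Rem.~\ref{movdual}: for a ray $\langle [C_D]\rangle$ with $D\neq D_j$ a fixed prime, $C_D$ is the transform of a general curve in a covering family of $\w{D}$, not contained in the distinct prime divisor $\w{D}_j$, so $D_j\cdot C_D\geq 0$; and for rays in $\mov(X)$ non-negativity is automatic since $D_j$ is effective. Dualizing, $\tau:=\Mov(X)\cap\bigcap_i [C_{D_i}]^{\perp}$ is a face of $\Mov(X)$ of dimension $\rho_X-m$.

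Next I would verify that the relative interior of $\tau$ is contained in the big cone of $X$. The key observation is $\mov(X)\cap\mathrm{span}(\eta)=\{0\}$: a movable $[\beta]=\sum a_i[C_{D_i}]$ satisfies $[\beta]\cdot[D_j]=-a_j\geq 0$ so each $a_j\leq 0$, while $[\beta]\cdot H\geq 0$ for an ample $H$ then forces $[\beta]=0$. Consequently no relatively interior point $x$ of $\tau$ can lie on $\partial\Eff(X)$: otherwise $x\cdot [C]=0$ for some $0\neq [C]\in\mov(X)$, and $x$ being in the relative interior of $\tau$ forces $\tau\subset [C]^{\perp}$, hence $[C]\in\mathrm{span}(\eta)$ (as the linear span of $\tau$ equals $\eta^{\perp}$), yielding a contradiction. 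Pick now a maximal cone $\sigma_0\in\MCD(X)$ contained in $\tau$: it has dimension $\rho_X-m$, and corresponds to a rational contraction $f\colon X\dasharrow Y$ with $\rho_Y=\rho_X-m$, with $Y$ a $\Q$-factorial Mori dream space; since the relative interior of $\sigma_0$ meets the big cone, $f$ is birational. Each $D_i$ is contracted by $f$ because $\sigma_0\subset [C_{D_i}]^{\perp}$, so $[D_i]\in\ker f_*$; as $\ker f_*$ has dimension $m$ and is spanned by the $m$ exceptional prime divisor classes of any birational resolution of $f$, the linearly independent $[D_1],\ldots,[D_m]$ must coincide with these exceptional classes. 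Finally $\langle [D_1],\ldots,[D_m]\rangle=\Eff(X)\cap\ker f_*$ is the $f_*$-preimage of the vertex $\{0\}\subset\Eff(Y)$ intersected with $\Eff(X)$, hence a face of $\Eff(X)$ of dimension $m$; and it is fixed because any $[\beta]=\sum a_i[D_i]\in\Mov(X)$ satisfies $-a_j=[\beta]\cdot [C_{D_j}]\geq 0$ (using $[C_{D_j}]\in\Mov(X)^{\vee}$ from Rem.~\ref{movdual}), forcing every $a_j=0$.

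The step I expect to be the main obstacle is the verification that the relative interior of $\tau$ lies in the big cone, since this is what guarantees $f$ is birational rather than of fiber type, and hence has exactly $m$ exceptional divisors matching the $D_i$; this step hinges on the special role of the $[C_{D_i}]$ as the \emph{fixed-type} one-dimensional faces of $\Mov(X)^{\vee}$ highlighted in Rem.~\ref{movdual}, which separates them cleanly from $\mov(X)$.
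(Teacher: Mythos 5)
Your proof is correct, but it runs dual to the paper's. The paper applies Lemma~\ref{cones} directly to $\sigma=\Eff(X)$ in $\Nu(X)$, with $\tau=\langle[D_i]\rangle$ and $\alpha$ the functional ``intersection with $C_{D_i}$'', to build the face $\langle[D_1],\dotsc,[D_m]\rangle$ of $\Eff(X)$ inductively; fixedness is the same two-line computation you give (a nonzero effective class in the cone is negative on some $C_{D_i}$, hence not movable); and the existence of $f$ is simply delegated to \cite[Lemma 4.2]{small}. You instead apply Lemma~\ref{cones} to $\sigma=\Mov(X)^{\vee}$ in $\N(X)$ with the roles of $[D_i]$ and $[C_{D_i}]$ exchanged, obtain the face $\langle[C_{D_1}],\dotsc,[C_{D_m}]\rangle$ of $\Mov(X)^{\vee}$, dualize to a face of $\Mov(X)$ of dimension $\rho_X-m$, pick a top-dimensional chamber of $\MCD(X)$ inside it, and read the $\Eff(X)$ statement off the resulting contraction. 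Both routes rest on the same geometric input, namely that $D_j$ is non-negative on every one-dimensional face of the relevant cone other than the one it is ``dual'' to (Rem.~\ref{movdual} plus the fact that $C_D$ moves in a family covering $D$). What your route buys is an explicit construction of $f$ in place of the citation; your observation that the relative interior of $\tau$ lies in the big cone --- proved via $\mov(X)$ meeting the span of the $[C_{D_i}]$ only in $0$ --- is exactly the right pivot, since it is what forces $f$ to be birational. The one place you are terser than you should be is at the end: to conclude that the exceptional prime divisors of $f$ are \emph{precisely} $D_1,\dotsc,D_m$ (not merely that their classes span $\ker f_*$) and that $Y$ is $\Q$-factorial, one should note that each $D_i$ is genuinely contracted because it is covered by deformations of $C_{D_i}$, on which $f^*$ of an ample class vanishes while $D_i\cdot C_{D_i}=-1$, and that a birational contraction of a Mori dream space with exactly $\rho_X-\rho_Y$ exceptional prime divisors has $\Q$-factorial target; this is in effect the content of \cite[Lemma 4.2]{small} that the paper cites and you are reproving.
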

\begin{proof}
  It is easy to see that $[D_1],\dotsc,[D_m]\in\Nu(X)$ are linearly independent. Moreover if $D$ is an effective  divisor with $[D]\in\langle [D_1],\dotsc,[D_m]\rangle$, $D\neq 0$, then $D\cdot C_{D_i}<0$ for some $i\in\{1,\dotsc,m\}$, therefore $D$ is not movable, and $\langle [D_1],\dotsc,[D_m]\rangle\cap\Mov(X)=\{0\}$.

  Each $[D_i]$ generates a one-dimensional face of $\Eff(X)$.
  To show that $\langle [D_1],\dotsc,[D_m]\rangle$ is a face, we proceed by induction on $m$, by applying Lemma \ref{cones} with the linear maps on $\Nu(X)$ given by intersection with $C_{D_i}$.

  Finally, for the existence of the map $f$, see \cite[Lemma 4.2 and its proof]{small}.
\end{proof}  
\begin{lemma}\label{30sm}
  Let $X$ be a smooth Fano $4$-fold with $\rho_X\geq 7$ and $D$ and $E$ two adjacent fixed prime divisors, 
 of type $(3,0)^{\sm}$ and $(3,2)$ respectively. Then $D\cap E=\emptyset$.
\end{lemma}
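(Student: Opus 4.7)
The plan is to prove $D\cap E=\emptyset$ by contradiction: if $D\cap E\neq\emptyset$, it contains an irreducible component $T$, necessarily a surface since $D$ and $E$ are distinct divisors of a smooth $4$-fold. The contradiction will come from a case analysis on $\dim\sigma_E(T)\in\{0,1,2\}$, where $\sigma_E\colon X\to Y_E$ is the elementary contraction associated to $E$ (no SQM needed, since $E$ is of type $(3,2)$).

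The preparatory step combines adjacency with the $(3,0)^{\sm}$ structure of $D$. By Lemma~\ref{EF}$(b)$, $D\cdot C_E=0$. Let $\xi\colon X\dasharrow\widetilde{X}$ and $\sigma_D\colon\widetilde{X}\to Y_D$ be the diagram of Th.-Def.~\ref{fixed} for $D$, so that $\widetilde{D}\cong\pr^3$ and $(-K_{\widetilde{X}})|_{\widetilde{D}}\cong\ol_{\pr^3}(3)$. By Rem.~\ref{greg}, $C_E\subset\dom(\xi)$, so $\widetilde{C}_E:=\xi(C_E)\subset\widetilde{X}$ satisfies $\widetilde{D}\cdot\widetilde{C}_E=0$ and $-K\cdot\widetilde{C}_E=1$. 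The crucial observation $(\star)$ is that any irreducible curve in $\widetilde{D}=\pr^3$ has $-K$-degree $3\deg\geq 3$; hence no irreducible curve of class $[\widetilde{C}_E]$ can lie in $\widetilde{D}$. In particular $\widetilde{C}_E\cap\widetilde{D}=\emptyset$, which yields $F_s\cap D=\emptyset$ for the general fibre $F_s\cong\pr^1$ of $\sigma_E|_E\colon E\to S$.

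If $\dim\sigma_E(T)=2$, then $T\to S$ is generically finite, and for general $s\in S$ the non-empty finite set $T\cap F_s$ lies in $D\cap F_s=\emptyset$, contradiction. If $\dim\sigma_E(T)=1$, set $\gamma:=\sigma_E(T)$: for general $s\in\gamma$, the fibre of $T\to\gamma$ over $s$ has dimension one inside $F_s\cong\pr^1$, hence equals $F_s$, so $F_s\subset T\subset D$; then the irreducible transform $\widetilde{F}_s\subset\widetilde{X}$ has class $[\widetilde{C}_E]$ and is contained in $\widetilde{D}$, contradicting $(\star)$. We use here that $F_s\not\subset X\smallsetminus\dom(\xi)$ for general $s\in\gamma$: otherwise $T=\bigcup F_s$ would be contained in a single exceptional plane $L$ and, by dimension, equal $L$, making $E$ contain an exceptional plane, against Lemma~\ref{dim32}.

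The remaining case $\dim\sigma_E(T)=0$ is the delicate one: $\sigma_E(T)=\{q\}$ for a singular point $q$ of $S$ or $Y_E$, and $T\subset\sigma_E^{-1}(q)$ is a $2$-dimensional irreducible component of the fibre. As above, Lemma~\ref{dim32} ensures $T\cap\dom(\xi)\neq\emptyset$; the transform $\widetilde{T}\subset\widetilde{X}$ is then an irreducible surface contained in $\widetilde{D}\cap\widetilde{E}$, hence a hypersurface of positive degree in $\widetilde{D}=\pr^3$. A general irreducible curve $C'\subset\widetilde{T}$ has, by pushforward from $\N(\widetilde{D})=\R\cdot[C_D]$, class $d'[C_D]$ in $\N(\widetilde{X})$ with $d'\geq 1$; on the other hand, $\xi^{-1}(C')\subset T\subset\sigma_E^{-1}(q)$ is contracted by the elementary contraction $\sigma_E$, so $[C']\in\R_{\geq 0}[\widetilde{C}_E]$. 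Hence $d'[C_D]=\lambda[\widetilde{C}_E]$ with $d',\lambda>0$, and intersecting with $\widetilde{D}$ gives $-d'=\lambda\cdot 0=0$, the contradiction. I expect this third case to be the main technical hurdle: it is where one genuinely uses both the $(3,0)^{\sm}$ structure of $D$ (to identify $\widetilde{D}\cong\pr^3$ and get the class $d'[C_D]$) and Lemma~\ref{dim32} (to transfer $T$ to a well-defined irreducible surface $\widetilde{T}\subset\widetilde{X}$); the final contradiction amounts to the fact that $[C_D]$ and $[\widetilde{C}_E]$ in $\NE(\widetilde{X})$ cannot be proportional since $\widetilde{D}\cdot[C_D]=-1$ while $\widetilde{D}\cdot[\widetilde{C}_E]=0$.
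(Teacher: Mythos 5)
Your strategy is sound and self-contained, whereas the paper disposes of this lemma in one line by combining Lemma~\ref{dim32} with an external result (\cite[Lemma 4.9]{fibrations}); a direct argument is therefore a genuinely different route. The preparatory step and the case $\dim\sigma_E(T)=2$ are correct.

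There is, however, a gap in the way you transport numerical classes of curves across the SQM $\xi\colon X\dasharrow\widetilde{X}$, and it affects both remaining cases. For an irreducible curve $C\subset X$ not contained in the indeterminacy locus $\cup_i L_i$, the class of its transform agrees with $[C]$ under the natural identification $\N(X)\cong\N(\widetilde{X})$ only when $C\cap(\cup_i L_i)=\emptyset$; if $C$ meets a flipped plane, both its class and its anticanonical degree change. In the case $\dim\sigma_E(T)=1$ you only exclude $F_s\subset\cup_i L_i$, not $F_s\cap(\cup_i L_i)\neq\emptyset$ (which is a priori possible, since $T\cap L_i$ can be a curve dominating $\gamma$), so the assertion that $\widetilde{F}_s$ has class $[\widetilde{C}_E]$ is not justified. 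The conclusion nevertheless survives: the flips in $\xi$ are $K$-negative by Th.-Def.~\ref{fixed}$(c)$, hence $-K_{\widetilde{X}}\cdot\widetilde{F}_s\leq -K_X\cdot F_s=1$, which already contradicts $(\star)$. In the case $\dim\sigma_E(T)=0$ the same issue occurs, undocumented, in the step ``$[C']\in\R_{\geq 0}[\widetilde{C}_E]$'': contracting $\xi^{-1}(C')$ by $\sigma_E$ gives information about a class in $\N(X)$, and to convert it into a statement about $[C']\in\N(\widetilde{X})$ you need $C'\cap(\cup_i\ell_i)=\emptyset$. The repair here is to note that no exceptional line $\ell_i$ can lie in $\widetilde{D}\cong\pr^3$, because $-K_{\widetilde{X}}\cdot\ell_i=-1$ while by $(\star)$ every curve of $\widetilde{D}$ has anticanonical degree $3\deg\geq 3$; hence $\widetilde{T}\cap(\cup_i\ell_i)$ is finite and a general hyperplane section $C'$ of $\widetilde{T}\subset\pr^3$ avoids it, after which your computation $0=\lambda\,\widetilde{D}\cdot\widetilde{C}_E=\widetilde{D}\cdot C'=-d'<0$ goes through. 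With these two repairs the proof is complete.
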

\begin{proof}
  This follows from Lemma \ref{dim32} and \cite[Lemma 4.9]{fibrations}.
\end{proof}
\begin{lemma}[\cite{small}, proof of Lemma 7.2]\label{30Q}
Let $X$ be a smooth Fano $4$-fold with $\rho_X\geq 7$. Let $D$ be a fixed prime divisor of type $(3,0)^{Q}$, and $E_1$, $E_2$ fixed prime divisors of type $(3,2)$, both adjacent to $D$, such that $D\cap E_i\neq\emptyset$ for $i=1,2$. Then $E_1\cdot C_{E_2}=E_2\cdot C_{E_1}=1$.
\end{lemma}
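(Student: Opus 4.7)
The plan is to compute $E_1\cdot C_{E_2}$ (the symmetric case $E_2\cdot C_{E_1}$ being analogous) by pulling divisors back along the elementary $(3,2)$ contraction $\sigma_{E_2}\colon X\to Y_{E_2}$, and then using the $(3,0)^Q$ structure of $D$ to force the resulting multiplicity to equal one.

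By Lemma~\ref{EF}$(b)$, the adjacency of $D$ with $E_i$ together with the $(3,2)$ type of $E_i$ gives $D\cdot C_{E_i}=0$ for $i=1,2$. Set $D^{Y_{E_2}}:=\sigma_{E_2}(D)\subset Y_{E_2}$ and $E_1^{Y_{E_2}}:=\sigma_{E_2}(E_1)\subset Y_{E_2}$; these are prime divisors since $D,E_1\neq E_2$. Writing $\sigma_{E_2}^*D^{Y_{E_2}}=D+k\mskip2mu E_2$ and intersecting with $C_{E_2}$ (which is contracted by $\sigma_{E_2}$, so the pullback has zero intersection) yields $0=D\cdot C_{E_2}+k\mskip2mu E_2\cdot C_{E_2}=-k$, so $k=0$ and $\sigma_{E_2}^*D^{Y_{E_2}}=D$. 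Analogously $\sigma_{E_2}^*E_1^{Y_{E_2}}=E_1+n\mskip2mu E_2$ with $n=E_1\cdot C_{E_2}$, so $E_1\cdot C_{E_2}$ equals the multiplicity of $E_1^{Y_{E_2}}$ along the image surface $A_2:=\sigma_{E_2}(E_2)$.

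It remains to show $n=1$. The hypotheses $D\cap E_i\neq\emptyset$ give $D^{Y_{E_2}}\cap A_2\neq\emptyset$ and $D^{Y_{E_2}}\cap E_1^{Y_{E_2}}\neq\emptyset$. Applying Th.-Def.~\ref{fixed} to $D$ in $X$ and combining with $\sigma_{E_2}$ via part~$(e)$ of that statement, the transform of $D^{Y_{E_2}}$ is a $(3,0)^Q$ fixed prime divisor of the (at worst ODP) Fano $4$-fold $Y_{E_2}$: after a SQM of $Y_{E_2}$ and a divisorial $(3,0)^Q$ contraction, it becomes a smooth three-dimensional quadric $Q^3\cong\w{D}$ collapsed to an ordinary double point, and the transforms of $E_1^{Y_{E_2}}$ and $A_2$ meet it in effective divisors in $\Pic(Q^3)=\Z H$. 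Using $\ma{N}_{Q/\w{X}}\cong\ol_Q(-1)$, adjunction on $\w{X}$ (yielding $-K_{\w{X}}|_Q=2H$), and the anticanonical degree $-K_X\cdot C_{E_2}=1$, the intersection of the transform of $E_1$ with $\w{D}$ must be a single hyperplane section transverse to the transform of $A_2$; in particular $E_1^{Y_{E_2}}$ contains $A_2$ with multiplicity exactly one along its generic point, giving $n=1$.

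The principal obstacle is this final multiplicity computation: one must rule out that $E_1^{Y_{E_2}}$ is tangent to $A_2$ along $A_2\cap D^{Y_{E_2}}$ or that the general $C_{E_2}$ meets $E_1$ through an exceptional line in the sense of Lemmas~\ref{iso32} and \ref{target}. The hypothesis $\rho_X\geq 7$, together with Th.~\ref{starting} giving $\delta_X\leq 1$, is what permits applying Th.-Def.~\ref{fixed}$(e)$ in the required commutative diagram and excluding these degenerate configurations.
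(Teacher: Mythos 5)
Your opening reductions are correct and cleanly set up: since $E_2$ is of type $(3,2)$ its contraction $\sigma_{E_2}$ is regular on $X$ itself (Th.-Def.~\ref{fixed}$(c)$), $D\cdot C_{E_i}=0$ does follow from Lemma~\ref{EF}$(b)$, and intersecting pullbacks with $C_{E_2}$ (using $E_2\cdot C_{E_2}=-1$) correctly identifies $E_1\cdot C_{E_2}$ with the multiplicity of $\sigma_{E_2}(E_1)$ along $A_2$. Note that the paper gives no internal proof of this lemma --- it is quoted from the proof of Lemma~7.2 of \cite{small} --- so the only question is whether your argument stands on its own.

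It does not: the final step, which carries all the content, has a genuine gap. You assert that ``the intersection of the transform of $E_1$ with $\w{D}$ must be a single hyperplane section,'' but the degree of $\w{E}_1|_{Q}$ on the quadric $Q=\w{D}$ is computed against lines $\ell\subset Q$, i.e.\ against the class $C_D$: one gets $\deg(\w{E}_1|_Q)=E_1\cdot C_D$, which is a priori unrelated to $E_1\cdot C_{E_2}$. Moreover, since $\ma{N}_{Q/\w{X}}\cong\ol_Q(-1)$ gives $-K_{\w{X}|Q}=2H$, every curve contained in $Q$ has even anticanonical degree, so no curve of the family $C_{E_2}$ (anticanonical degree one) lies in $Q$ at all; invoking $-K_X\cdot C_{E_2}=1$ therefore places no constraint on $\w{E}_1|_Q$, and even the claim $\w{E}_1|_Q\in|H|$ is unproved (only $\w{E}_1|_Q\in|aH|$ with $a\geq 1$ follows from $D\cap E_1\neq\emptyset$). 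To pass from data on $Q\subset\w{X}$ to the multiplicity of $\sigma_{E_2}(E_1)$ along $A_2\subset Y_{E_2}$ you would need an explicit relation in $\N(X)$ between $C_D$, $C_{E_2}$, and the classes of the curves flipped by $\xi$ (compare $C_{D_i}\equiv C_E+C_{L_i}$ in Rem.~\ref{uffa} for the $(3,1)^{\sm}$ case), together with an analysis of how $D\cap E_2$ --- a union of fibers of $E_2\to A_2$, because $D\cdot C_{E_2}=0$ --- is modified by the $D$-negative flips; none of this appears, and the phrase ``transverse to the transform of $A_2$'' compares objects living in the two different models $\w{X}$ and $Y_{E_2}$ with no map between them in place. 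The statement is genuinely delicate (contrast Lemma~\ref{31sm}, where for $D$ of type $(3,1)^{\sm}$ the same hypotheses allow $E_1\cdot C_{E_2}=0$), so this step cannot be waved through.
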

\begin{lemma}\label{disjoint}
  Let $X$ be a smooth Fano $4$-fold with $\rho_X\geq 7$, $\psi\colon X\dasharrow X'$ a SQM,
  and $D,E\subset X'$ two adjacent fixed prime divisors, of type $(3,1)^{\sm}$ and $(3,2)$ respectively, with $E\cdot C_D>0$.
Then $E\cdot C_D=1$ and $E \cap L=\emptyset$ for every  exceptional plane $L\subset D$.
\end{lemma}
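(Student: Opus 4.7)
The plan is to apply Theorem--Definition~\ref{fixed} to the fixed prime divisor $D$ of type $(3,1)^{\sm}$: this yields a SQM $\xi\colon X'\dasharrow\w{X}$ and a divisorial elementary contraction $\sigma\colon\w{X}\to Y$ that blows up a smooth irreducible curve $\Gamma\subset Y_{\reg}$. Then $\pi:=\sigma|_{\w D}\colon\w{D}\to\Gamma$ is a $\pr^2$-bundle, and the curve $C_D$ corresponds under $\xi$ to a line $C_{\w D}$ in a general fiber $\pr^2_y:=\pi^{-1}(y)$. Let $\w{E}\subset\w{X}$ be the transform of $E$. Since $E$ is of type $(3,2)$, one has the divisorial elementary contraction $\sigma_E\colon X'\to Y_E$ with $\Exc(\sigma_E)=E$, general fiber $C_E\cong\pr^1$, and target $Y_E$ with at most ordinary double points lying on $A_E:=\sigma_E(E)$ (Lemma~\ref{singtarget}, Rem.~\ref{ODP}); by Lemma~\ref{EF}$(b)$, $D\cdot C_E=0$.

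First I would show that every fiber of $\sigma_E|_E$ is either disjoint from $D$ or entirely contained in $D$: every curve in such a fiber has class proportional to $[C_E]$, so $D\cdot C_E=0$ together with a Bezout argument in the $\pr^2$-fibers yields the dichotomy. Thus $D\cap E$ is a union of fibers of $\sigma_E|_E$ over some subset $B_0\subset A_E$. I would then rule out the case that $B_0$ contains a curve: for a generic $C_E\subset D\cap E$, Rem.~\ref{greg} and Lemma~\ref{dim32} imply $C_E\subset\dom(\xi)$, and its transform $\w{C}_E\subset\w{E}\cap\w{D}$ satisfies $\w{D}\cdot\w{C}_E=0$. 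Since $\w{D}|_{\w D}$ is relatively anti-ample for $\pi$, $\w{C}_E$ lies in a single fiber $\pr^2_y$ as a plane curve of some degree $d\geq 1$. The blow-up formula $K_{\w X}=\sigma^*K_Y+2\w{D}$ gives $-K_{\w X}\cdot\w{C}_E=2d$, while SQM-invariance yields $-K_{\w X}\cdot\w{C}_E=-K_{X'}\cdot C_E=1$; hence $2d=1$, a contradiction. So $B_0$ is zero-dimensional, and the two-dimensional components of $D\cap E$ are $\pr^2$-fibers of $\sigma_E$ over ordinary double points of $Y_E$ lying on $A_E\cap\sigma_E(D)$.

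Next I would compute $E\cdot C_D=\w{E}\cdot C_{\w D}$. By Lemma~\ref{dim32} the $\pr^2$-fibers of $\sigma_E$ in $E$ are not exceptional planes, so they lie in $\dom(\xi)$; since $\pr^2$ admits no nonconstant morphism to a curve, each such $\pr^2$ corresponds under $\xi$ to a specific $\pr^2$-fiber $\pr^2_{y_{0,i}}$ of $\pi$. A generic $C_{\w D}\subset\pr^2_y$ with $y\neq y_{0,i}$ is therefore disjoint from the transforms of the $\pr^2$-components of $D\cap E$, so the positive intersection number $E\cdot C_D$ must arise entirely from intersections of $C_{\w D}$ with exceptional lines of $\xi$ in $\w{X}$, which are the flips of exceptional planes $L\subset D$ meeting $E$. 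Using Lemma~\ref{SQMFano}$(b)$ together with $-K_{\w X}\cdot C_{\w D}=2$, the curve $C_{\w D}$ meets at most one exceptional line, giving $E\cdot C_D\leq 1$ and hence $E\cdot C_D=1$. For the second assertion, the same counting shows that at most one exceptional plane of $D$ can contribute to $E\cdot C_D$, and a further refinement of this contribution analysis then forces $L\cap E=\emptyset$ for every exceptional plane $L\subset D$.

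The main obstacle is the careful intersection accounting in the third paragraph: bounding the number of exceptional lines $\ell\subset\w{X}$ that meet $C_{\w D}$ and lie in $\w{E}$, and using this to pin down both $E\cdot C_D=1$ and the disjointness $E\cap L=\emptyset$ for all exceptional planes $L\subset D$. This requires a precise understanding of how the $D$-negative flips in $\xi$ reshape the divisor $E$ in a neighborhood of the flipping loci.
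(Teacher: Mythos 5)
Your proposal does not match the paper's argument, which is a short reduction: by Rem.~\ref{greg} and Lemma~\ref{SQMFano} one transfers the statement from $X'$ to $X$ (where $E\cdot C_D$ and the incidence with exceptional planes are unchanged), and then quotes \cite[Lemma 4.23]{small}, which establishes exactly this for adjacent divisors of types $(3,1)^{\sm}$ and $(3,2)$ in the Fano $X$ itself. A from-scratch proof is legitimate in principle, but yours has genuine errors. First, in the step excluding a one-dimensional $B_0$: you compute $\w{D}\cdot\w{C}_E=0$ and then invoke relative anti-ampleness of $\w{D}|_{\w D}$ to conclude that $\w{C}_E$ lies in a fiber of $\pi$ --- this is backwards. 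Relative anti-ampleness means every curve \emph{in} a fiber has strictly negative intersection with $\w{D}$, so $\w{D}\cdot\w{C}_E=0$ forces $\w{C}_E$ to dominate $\Gamma$; the ``$2d=1$'' contradiction evaporates, and what remains ($-K_Y\cdot\Gamma=1$ and $\sigma_*\w{C}_E=\Gamma$) is not a contradiction. (There is also a foundational gap: the contraction $\sigma_E$ with exceptional divisor $E$ need not exist on $X'$ --- by Lemma~\ref{iso32} the class $[C_E]$ is extremal in $\NE(X')$ only when $E$ avoids the exceptional lines --- so the fibration $E\to A_E$ your whole accounting rests on is not available on $X'$ without further argument.)

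Second, and more seriously, the computation of $E\cdot C_D$ is wrong. By Th.-Def.~\ref{fixed}$(d)$ we have $C_D\subset\dom(\xi)$, hence $C_{\w D}\subset\dom(\xi^{-1})$ and $C_{\w D}$ is \emph{disjoint} from every exceptional line of $\xi^{-1}$; so the positive number $\w{E}\cdot C_{\w D}$ cannot ``arise entirely from intersections of $C_{\w D}$ with exceptional lines'' --- under your accounting you would get $E\cdot C_D=0$, contradicting the hypothesis. The actual source of positivity is the two-dimensional horizontal part of $\w{E}\cap\w{D}$: in the situation at hand $\w{D}\cong\pr_{\pr^1}(\ol^{\oplus 2}\oplus\ol(1))$ and $\w{E}\cap\w{D}$ is the sub-bundle $\pr(\ol^{\oplus 2})\cong\pr^1\times\pr^1$ (the exceptional divisor of $\w{D}\to\pr^3$, cf.\ the proofs of Lemmas~\ref{unique} and~\ref{square}), which meets each $\pr^2$-fiber of $\pi$ in a line; a general line $C_{\w D}$ in a general fiber then meets it transversally in one point, which is where $E\cdot C_D=1$ really comes from. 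Finally, the assertion that ``a further refinement of this contribution analysis forces $L\cap E=\emptyset$'' is not a proof; since the analysis it refines is itself incorrect, the second half of the statement is left entirely unestablished.
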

\begin{proof}
  Let $D_X,E_X\subset X$ be the transforms of $D,E$ respectively.
  It is shown in \cite[Lemma 4.23]{small} that  $E_X\cdot C_{D_X}=1$ and
  that $E_X$ is disjoint from every exceptional plane contained in $D_X$.
  This implies the statement,
  indeed $E\cdot C_D=E_X\cdot C_{D_X}$ (see Rem.~\ref{greg}), and if  $L\subset D$ is an exceptional plane,
  then $L\subset\dom(\psi^{-1})$ (see Lemma \ref{SQMFano}), and its transform $L_X\subset X$ is an exceptional contained in $D_X$, therefore $L_X\cap E_X=\emptyset$, and hence $L\cap E=\emptyset$.
\end{proof}
\begin{lemma}\label{square}
  Let $X$ be a smooth Fano $4$-fold with $\rho_X\geq 7$,
  $X\dasharrow\wi{X}$ a SQM, and
  $\sigma\colon \wi{X}\to Y$ and  $\alpha\colon \wi{X}\to Z$ divisorial elementary contractions of type $(3,2)$ and $(3,1)^{\sm}$ respectively. Set $E:=\Exc(\sigma)$ and $D:=\Exc(\alpha)$, and assume that $D\cdot C_E=0$ and $E\cdot C_D>0$.

 Then 
 $\langle [C_D],[C_E]\rangle$ is a face of $\NE(\wi{X})$ and we have a commutative diagram:
 \stepcounter{thm}
 \begin{equation}\label{sq} \xymatrix{
{\wi{X}}\ar[d]_{\alpha}\ar[r]^{\sigma}&Y\ar[d]^{\alpha'}\\
Z\ar[r]^{\sigma'}&W
}\end{equation}
where $\sigma'$ is an elementary contraction of type $(3,2)$ with exceptional divisor  $\alpha(E)$, and $\alpha'$ is the blow-up of a smooth point with exceptional divisor $\sigma(D)$ (see Fig.~\ref{figura_square}). Moreover $D\cong\pr_{\pr^1}(\ol^{\oplus 2}\oplus\ol(1))$ and $E\cdot C_D=1$.
\end{lemma}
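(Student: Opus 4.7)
My plan is to combine the numerical input from Lemmas~\ref{EF}(a) and~\ref{disjoint} with a direct geometric analysis of $D\cap E$, and then to build the square (\ref{sq}) by exhibiting $\sigma(D)$ as a copy of $\pr^3$ that can be blown down to a smooth point. First, since $D\cdot C_E=0$, Lemma~\ref{EF}(a) gives that $D$ and $E$ are adjacent. Applying Lemma~\ref{disjoint} with $D$ of type $(3,1)^{\sm}$, $E$ of type $(3,2)$, and $E\cdot C_D>0$, I obtain $E\cdot C_D=1$ and the fact that $E$ is disjoint from every exceptional plane contained in $D$; this already settles the last numerical assertion of the statement.

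The heart of the argument is the analysis of $D\cap E$. Since $\alpha$ is the blow-up of a smooth curve $\Gamma\subset Z$, $D$ is a $\pr^2$-bundle over $\Gamma$, and the equality $E\cdot C_D=1$ forces $D\cap E\to\Gamma$ to be a $\pr^1$-bundle, cut out fibrewise by a line. On the other side, $Y$ is $\Q$-factorial (Lemma~\ref{singtarget}) and $D\cdot C_E=0$, so $D=\sigma^*D_Y$ for a prime divisor $D_Y:=\sigma_*D\subset Y$; in particular a general $C_E$ is disjoint from $D$. Hence $\sigma(D\cap E)=:\Gamma'$ is only a curve in $T:=\sigma(E)$, and $D\cap E\to\Gamma'$ is a second $\pr^1$-fibration. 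A ruled surface admitting two distinct rulings is $\pr^1\times\pr^1$, so $\Gamma\cong\pr^1\cong\Gamma'$, and $\sigma|_D\colon D\to D_Y$ is the blow-up of $D_Y$ along $\Gamma'$. The sub-$\pr^1$-bundle $D\cap E\cong\pr^1\times\pr^1$ sits as a relative hyperplane of $D=\pr_{\pr^1}(\mathcal{E})$, which pins down (up to twist) $\mathcal{E}\cong\ol^{\oplus 2}\oplus\ol(1)$, giving $D\cong\pr_{\pr^1}(\ol^{\oplus 2}\oplus\ol(1))$ and $D_Y\cong\pr^3$ with $\Gamma'$ a line inside it.

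Next, combining $K_{\widetilde{X}}=\sigma^*K_Y+E$ with $K_{\widetilde{X}}\cdot C_D=-2$ and $E\cdot C_D=1$ gives $K_Y\cdot\sigma(C_D)=-3$, whence adjunction on $D_Y\cong\pr^3$ forces $\mathcal{N}_{D_Y/Y}\cong\ol_{\pr^3}(-1)$. Fujiki--Nakano's contractibility criterion (or Kawamata basepoint-freeness applied to the $K_Y$-negative ray $\R_{\geq 0}[\sigma(C_D)]$ on the Fano $Y$) then contracts $D_Y$ to a smooth point $w_0$ of a smooth $4$-fold $W$, defining $\alpha'\colon Y\to W$. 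The composition $\pi:=\alpha'\circ\sigma\colon\widetilde{X}\to W$ contracts $C_E$ (through $\sigma$) and contracts $D$ to the point $w_0$ (since $\pi(D)=\{w_0\}$), so $\langle[C_D],[C_E]\rangle\subseteq\NE(\pi)$; as $[C_D]$ and $[C_E]$ are distinct extremal rays of $\NE(\widetilde{X})$ and $\rho_{\widetilde{X}}-\rho_W=2$, this inclusion is an equality and $\langle[C_D],[C_E]\rangle$ is a face of $\NE(\widetilde{X})$. Because the face contains $\R_{\geq 0}[C_D]=\NE(\alpha)$, $\pi$ factors uniquely as $\sigma'\circ\alpha$ for a contraction $\sigma'\colon Z\to W$ with $\Exc(\sigma')=\alpha(E)$; since a general $C_E$ is disjoint from $D$, its image $\alpha(C_E)\cong\pr^1$ is a general fiber of $\sigma'|_{\alpha(E)}$, making $\sigma'$ elementary of type $(3,2)$.

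The main obstacle I anticipate lies in the third step: rigorously pinning down $D\cong\pr_{\pr^1}(\ol^{\oplus 2}\oplus\ol(1))$ and $\mathcal{N}_{D_Y/Y}\cong\ol_{\pr^3}(-1)$ so as to invoke the contractibility criterion, and verifying that the possible ordinary double points of $Y$ contained in $T$ do not meet $D_Y$ in a way that obstructs the smooth point blow-down. The disjointness of $E$ from exceptional planes in $D$ given by Lemma~\ref{disjoint} is precisely what should rule out such bad incidences.
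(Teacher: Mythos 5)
Your opening reductions (adjacency via Lemma~\ref{EF}, then $E\cdot C_D=1$ via Lemma~\ref{disjoint}) coincide with the paper's, but from there you take a genuinely different route: you analyse $D\cap E$ geometrically, identify $\sigma(D)\cong\pr^3$ with normal bundle $\ol_{\pr^3}(-1)$, contract it inside $Y$ to produce $\alpha'$, and only then recover the face as $\NE(\alpha'\circ\sigma)$. The paper goes the other way: it first builds the face, by exhibiting the nef divisor $H+mD$ on $\wi{X}$ (with $H$ the pullback of an ample divisor on $Y$ and $m=H\cdot C_D$) whose null locus in $\NE(\wi{X})$ is exactly $\langle[C_D],[C_E]\rangle$, using that $D\cdot R\geq 0$ for every extremal ray $R\neq\R_{\geq0}[C_D]$; the whole square and the structural statements about $D$, $\sigma(D)$, and $\alpha'$ are then read off from the contraction of that face, with the details delegated to \cite[proof of Lemma 4.23]{small}.

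Two steps of your argument do not go through as written. First, the assertion that a sub-$\pr^1$-bundle $D\cap E\cong\pr^1\times\pr^1$ ``pins down $\ma{E}\cong\ol^{\oplus2}\oplus\ol(1)$ up to twist'' is false in general: such a sub-bundle only yields a rank-two quotient $\ol(a)^{\oplus2}$ of $\ma{E}$, and for instance $\ol\oplus\ol(1)\oplus\ol(2)$ admits $\ol(2)^{\oplus2}$ as a quotient. You need further input (e.g.\ the class of $E_{|D\cap E}$, or the fact that $\sigma_{|D}$ contracts one ruling of $D\cap E$ and is an isomorphism elsewhere, so that $\sigma_{|D}$ is the blow-up of a line in a $3$-fold of Picard number one) before concluding $D\cong\pr_{\pr^1}(\ol^{\oplus2}\oplus\ol(1))$ and $\sigma(D)\cong\pr^3$. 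Second, the existence of $\alpha'$ as a \emph{projective} contraction is not justified: Fujiki--Nakano only gives an analytic blow-down, while ``Kawamata basepoint-freeness applied to the ray $\R_{\geq0}[\sigma(C_D)]$'' presupposes that this ray is extremal in $\NE(Y)$, which you never establish (and $Y$ is in general only a SQM of a Fano $4$-fold with ordinary double points, not itself Fano). This is exactly the issue the paper's $H+mD$ computation is designed to settle; on your side the fix is the analogous computation on $Y$: once $\sigma(D)\cong\pr^3$ with $\ma{N}_{\sigma(D)/Y}\cong\ol(-1)$ and $\dim\N(\sigma(D),Y)=1$ are known, the divisor $H_Y+m\,\sigma(D)$ with $H_Y$ ample and $m=H_Y\cdot\sigma(C_D)$ is nef with null face exactly $\R_{\geq0}[\sigma(C_D)]$, which is therefore a $K_Y$-negative extremal ray and can be contracted. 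Finally, two smaller points you should address: rule out that a fiber of $D\to\Gamma$ is contained in $E$ (such a fiber is not an exceptional plane, so Lemma~\ref{disjoint} does not apply to it; instead observe it would force $\sigma(E)\subset\sigma(D)$, contradicting $\sigma^*\sigma_*D=D$), and justify that $\sigma(D\cap E)$ is a curve rather than a finite set.
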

\begin{figure}\caption{The varieties in Lemma \ref{square}.}\label{figura_square}

  \bigskip

  \medskip
  
  \input{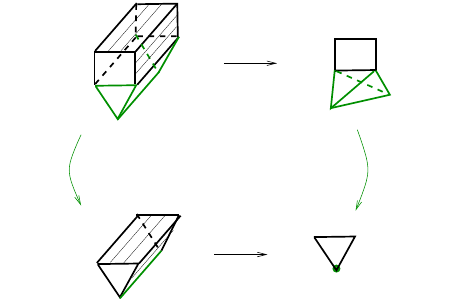tex_t}
\end{figure}
\begin{proof}
  Since $D\cdot C_E=0$, $D$ and $E$ are adjacent by Lemma \ref{EF}, and
  $E\cdot C_D=1$ by Lemma \ref{disjoint}.

  We note that $D\cdot R\geq 0$ for every extremal ray $R$ of $\NE(\wi{X})$ different from $\R_{\geq 0}[C_D]$; this follows from Th.-Def.~\ref{fixed},
    Lemma \ref{SQMFano}, and \cite[Rem.~5.6]{blowup}.  

 Let $H\in\Pic(\wi{X})$ be the pullback under $\sigma$ of an ample divisor on $Y$, and set $m:=H\cdot C_D>0$. Then $(H+mD)\cdot C_E=
 (H+mD)\cdot C_D=0$, and if $R$ is an extremal ray of $\NE(\wi{X})$
different from $\R_{\geq 0}[C_D]$ and $\R_{\geq 0}[C_E]$,
then $H\cdot R>0$, $D\cdot R\geq 0$, and hence $(H+mD)\cdot R>0$. This shows that
$H+mD$ is nef and $(H+mD)^{\perp}\cap\NE(\wi{X})=\langle [C_D],[C_E]\rangle$ is a face of $\NE(\wi{X})$, hence we have a commutative diagram as \eqref{sq} where $\sigma'$ and $\alpha'$ are divisorial elementary contractions with exceptional divisors the images of $E$ and $D$ respectively. Then it is not difficult to see as in \cite[proof of Lemma 4.23]{small} that $\sigma'$ is of type $(3,2)$, $\alpha$ is the blow-up of a one-dimensional fiber of $\sigma'$,
$D\cong\pr_{\pr^1}(\ol^{\oplus 2}\oplus\ol(1))$, $\sigma(D)\cong\pr^3$, and $\alpha'$ is of type $(3,0)^{\sm}$. See also \cite[Lemma 2.18]{3folds}.
\end{proof}
\begin{lemma}\label{unique}
  Let $X$ be a smooth Fano $4$-fold with $\rho_X\geq 7$ and $D$ a fixed prime divisor of type $(3,1)^{\sm}$. Then there is at most one fixed prime divisor $E$ of type $(3,2)$ which is adjacent to $D$ and such that $E\cdot C_D>0$.
\end{lemma}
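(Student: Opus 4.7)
The plan is to argue by contradiction: assume there exist two distinct fixed prime divisors $E_1,E_2\subset X$ of type $(3,2)$, both adjacent to $D$ and satisfying $E_i\cdot C_D>0$, and deduce $E_1=E_2$. First I pin down the numerics: by Lemma \ref{disjoint}, each $E_i\cdot C_D=1$, and by Lemma \ref{EF}$(b)$, applied to the $(3,2)$-divisor $E_i$ and the adjacent $D$, each $D\cdot C_{E_i}=0$. Consequently both pairs $(D,E_i)$ satisfy the hypotheses of Lemma \ref{square}.

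Next I pass to a SQM $\widehat{X}$ of $X$ on which the contraction $\alpha\colon\widehat{X}\to Z$ of $D$ and both contractions $\sigma_i\colon\widehat{X}\to Y_i$ of $E_i$ are simultaneously regular, and apply Lemma \ref{square} to each pair $(D,E_i)$. This produces, for $i=1,2$, a commutative square
\[
\xymatrix{\widehat{X}\ar[r]^{\sigma_i}\ar[d]_{\alpha} & Y_i\ar[d]^{\alpha_i}\\ Z\ar[r]_{\sigma'_i} & W_i}
\]
where $\sigma'_i$ is a divisorial elementary contraction of type $(3,2)$ in $Z$ with exceptional divisor $\alpha(E_i)$, and $\alpha_i$ is the blow-up of a smooth point $w_i\in W_i$ with exceptional divisor $\sigma_i(D)$.

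The core observation is the following. Let $\Gamma\subset Z$ denote the smooth curve blown up by $\alpha$. Since $E_i\cdot C_D=1$, the surface $E_i\cap D$ meets each fiber of $\alpha_{|D}\colon D\to\Gamma$ in a single point, so $\Gamma\subset\alpha(E_i)$ for both $i$. Commutativity of the square gives $\sigma'_i(\Gamma)=\alpha_i(\sigma_i(D))=w_i$, a single point; hence $[\Gamma]\in\NE(\sigma'_i)$ for $i=1,2$. Since $Z$ is Fano we have $[\Gamma]\neq 0$ in $\N(Z)$, and each $\sigma'_i$ being elementary, $\NE(\sigma'_i)$ is a one-dimensional face of $\NE(Z)$; therefore $\NE(\sigma'_1)=\R_{\geq 0}[\Gamma]=\NE(\sigma'_2)$, and uniqueness of the contraction associated to an extremal ray yields $\sigma'_1=\sigma'_2$, in particular $\alpha(E_1)=\alpha(E_2)$. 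As $\alpha$ is an isomorphism outside $D$ and neither $E_i$ equals $D$, this forces $E_1=E_2$, the desired contradiction.

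The step I expect to be most delicate is the simultaneous regularization of $\alpha$, $\sigma_1$, and $\sigma_2$ on a common $\widehat{X}$. If a direct argument via the Mori chamber decomposition is insufficient, I will instead apply Lemma \ref{square} on two separate SQMs $\widehat{X}_i$ and compare the outputs via the canonical isomorphisms between spaces of one-cycles on SQM-equivalent varieties: the blow-up formula $K_{\widehat{X}}=\alpha^*K_Z+2D$ together with $D\cdot C_{E_i}=0$ gives $-K_Z\cdot\alpha_*[C_{E_i}]=1$, so the proportionality of each $\alpha_*[C_{E_i}]$ to $[\Gamma]$ forces $\alpha_*[C_{E_1}]=\alpha_*[C_{E_2}]$. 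Intersecting with $D$ then yields $[C_{E_1}]=[C_{E_2}]$ in $\N(X)$, and the identity $E_1\cdot C_{E_2}=E_1\cdot C_{E_1}=-1$, combined with the fact that the curves $C_{E_2}$ form a covering family of $E_2$, yields $E_2\subseteq E_1$, hence $E_1=E_2$.
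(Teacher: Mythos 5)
Your proof is correct, and it rests on the same structural input as the paper's: the square of Lemma \ref{square} (equivalently [small, Lemma 4.23]). The only difference is where uniqueness is read off. The paper stays upstairs: $\w{D}\cong\Bl_{\text{line}}\pr^3$, the intersection $\w{E}\cap\w{D}$ is the exceptional divisor of the blow-down $\w{D}\to\pr^3$, and the fibers of that blow-down are numerically equivalent to $C_{\w{E}}$, so $[C_E]$ (hence $E$, as the locus of the corresponding divisorial extremal ray) is determined by $D$ alone. You instead go downstairs to $Z$: the curve $\Gamma=\alpha(D)$ is contracted by each $\sigma_i'$, so $\NE(\sigma_i')=\R_{\geq 0}[\Gamma]$ is the same extremal ray for $i=1,2$, whence $\alpha(E_1)=\Exc(\sigma_1')=\Lo(\R_{\geq 0}[\Gamma])=\Exc(\sigma_2')=\alpha(E_2)$ and $E_1=E_2$ since $\alpha$ is an isomorphism off $D$. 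These are two views of the same square, and both are valid.

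The step you flag as delicate is in fact already covered by the paper's toolkit, so your fallback is unnecessary: take $\widehat{X}=\w{X}$, the (unique, by Th.-Def.~\ref{fixed}$(a)$) SQM on which $D$ is contracted. The SQM $\xi$ is a sequence of $D$-negative, $K$-negative flips, whose loci are exceptional planes contained in (the transform of) $D$; by Lemma \ref{disjoint} each $E_i$ is disjoint from all such planes, so $E_i\subset\dom(\xi)$ and its transform meets no exceptional line of $\w{X}$, whence by Lemma \ref{iso32} the class $[C_{\w{E}_i}]$ is still extremal and its contraction is regular of type $(3,2)$ on $\w{X}$. This is exactly the hypothesis needed to apply Lemma \ref{square} to both pairs on the same model. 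One cosmetic point: the fibers of $\alpha_{|D}\colon D\to\Gamma$ are $\pr^2$'s, not $\pr^1$'s, so $E_i\cap D$ does not meet each fiber in a single point; but the relevant conclusion $\Gamma\subset\alpha(E_i)$ still follows from $E_i\cdot C_D=1>0$, since a fiber disjoint from $E_i$ would force $E_i\cdot C_D=0$.
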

\begin{proof}
This follows from \cite[Lemma 4.23 and its proof]{small}; see also Lemma \ref{square}. Let $X\stackrel{\xi}{\dasharrow}\w{X}\to Y$ be the contraction of $D$ as in Th.-Def.~\ref{fixed}$(a)$, and $\w{D},\w{E}\subset\w{X}$ the transforms of $D,E$ respectively. Then $\w{D}$ is isomorphic to the blow-up of $\pr^3$ along a line, $E\subset\dom(\xi)$ so that $\w{E}\cong E$,  $\w{E}
\cap\w{D}$ is the exceptional divisor of $\w{D}\to\pr^3$, and the fibers of the blow-up  $\w{D}\to\pr^3$ are numerically equivalent to $\xi(C_E)$. Therefore the class $[C_E]$ is uniquely determined by $D$.
\end{proof}
\begin{lemma}\label{31sm}
  Let $X$ be a smooth Fano $4$-fold with $\rho_X\geq 7$. Let $D$ be a fixed prime divisor of type $(3,1)^{\sm}$, and $E_1$, $E_2$ distinct fixed prime divisors of type $(3,2)$, both adjacent to $D$, such that $D\cap E_i\neq\emptyset$, for $i=1,2$. Then one of the following holds:
  \begin{enumerate}[$(i)$]
     \item  
  $E_1\cdot C_{E_2}=E_2\cdot C_{E_1}=0$;
  \item  up to exchanging $E_1$ and $E_2$, we have
   $E_1\cdot C_D=0$ and  $E_2\cdot C_D=E_2\cdot C_{E_1}=1$.
\end{enumerate}  
\end{lemma}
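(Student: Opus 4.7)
\medskip

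\noindent\textbf{Setup and reduction to cases.} The plan is to first apply Lemma~\ref{EF}(b) to each adjacent pair $(D,E_i)$ with $E_i$ of type $(3,2)$, obtaining $D\cdot C_{E_i}=0$ for $i=1,2$. For the other direction, since $C_D$ moves in a family covering $D$ while $E_i\cap D\subsetneq D$, the general $C_D$ is not contained in $E_i$, so $E_i\cdot C_D\geq 0$; by Lemma~\ref{disjoint}, if $E_i\cdot C_D>0$ then in fact $E_i\cdot C_D=1$, and by Lemma~\ref{unique} at most one of $E_1\cdot C_D$, $E_2\cdot C_D$ is nonzero. This splits the analysis into Case~A (both $E_i\cdot C_D=0$, aiming at $(i)$) and Case~B (without loss of generality $E_1\cdot C_D=0$ and $E_2\cdot C_D=1$, aiming at $(ii)$).

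\medskip

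\noindent\textbf{Case~B.} I would apply Lemma~\ref{square} to the pair $(D,E_2)$, producing the commutative square with $\sigma\colon\widehat X\to Y$ contracting $E_2$, $\alpha\colon\widehat X\to Z$ contracting $D$, $\alpha'\colon Y\to W$ the blow-up of a smooth point with exceptional divisor $\sigma(D)\cong\pr^3$, and $\sigma'\colon Z\to W$ of type $(3,2)$ with exceptional divisor $F_2:=\alpha(\widehat{E_2})$. Setting $F_1:=\alpha(\widehat{E_1})$, intersecting $\alpha^*F_i$ with $C_D$ fixes the multiplicities along $\Gamma:=\alpha(D)$: one gets $\alpha^*F_2=\widehat{E_2}+D$ (from $E_2\cdot C_D=1$, forcing $\Gamma\subset F_2$ with multiplicity~$1$) and $\alpha^*F_1=\widehat{E_1}$ (from $E_1\cdot C_D=0$, forcing $\Gamma\not\subset F_1$). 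Since $D\cdot C_{E_1}=0$ and $C_{E_1}\not\subset D$, the curve $C_{E_1}$ is disjoint from $D$, so $\alpha$ restricts to an isomorphism on $C_{E_1}$ and $\alpha_*[C_{E_1}]=[C_{F_1}]$ in $\N(Z)$. The projection formula then yields $E_2\cdot C_{E_1}=F_2\cdot C_{F_1}$. I would conclude by reading off this last intersection from the square: $F_2$ is the exceptional divisor of the type $(3,2)$ contraction $\sigma'$ and the fibers of $\sigma'|_{F_2}$ are numerically $C_{F_2}$; using the description of $D\cong\pr_{\pr^1}(\ol^{\oplus 2}\oplus\ol(1))$ and the fact that $\widehat{E_1}\cap\widehat{E_2}\cap D$ consists of finitely many points (one over each $q\in\Gamma\cap F_1$, coming from the intersection of the section $\Sigma=\widehat{E_2}\cap D$ with each fiber of $D\to\Gamma$ contained in $\widehat{E_1}\cap D$), a direct count shows that $C_{F_1}$ meets $F_2$ transversally in one point, giving $E_2\cdot C_{E_1}=1$.

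\medskip

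\noindent\textbf{Case~A.} Here I would argue by contradiction: assume, say, $E_2\cdot C_{E_1}>0$. Passing to a SQM, contract the transform of $E_1$ via an elementary $(3,2)$ contraction $\sigma_1\colon\widehat X_1\to Z_1$. Since $D\cdot C_{E_1}=0$, the image $\sigma_1(\widehat D)$ is a prime divisor (of the same type $(3,1)^{\sm}$), and since $E_2\cdot C_{E_1}>0$, the image $\sigma_1(\widehat{E_2})$ is a prime divisor that dominates the surface $\sigma_1(\widehat{E_1})\subset Z_1$. Adjacency of $D$ and $E_2$ in $X$ descends to adjacency of their images in $Z_1$, which still has Picard number $\geq 6$. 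I would then apply Lemma~\ref{EF}(b) and Lemma~\ref{disjoint} in $Z_1$ to the descended pair and use the hypothesis $E_2\cdot C_D=0$ (so the push-forward $C_{\sigma_1(\widehat D)}$ has trivial intersection with $\sigma_1(\widehat{E_2})$), to contradict the fact that $\sigma_1(\widehat{E_2})$ has been fattened along $\sigma_1(\widehat{E_1})$. By symmetry the same argument eliminates $E_1\cdot C_{E_2}>0$, and $(i)$ follows.

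\medskip

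\noindent\textbf{Expected main obstacle.} Case~B amounts essentially to bookkeeping on the diagram of Lemma~\ref{square}. The hard part is Case~A, where the vanishing data is ``symmetric'' and Lemma~\ref{square} does not apply directly; the difficulty is to transfer the contradiction across a $(3,2)$ contraction while correctly tracking the types of the images and the hypothesis $D\cap E_i\neq\emptyset$, so that Th.-Def.~\ref{fixed} and the lemmas above can be invoked in the quotient Fano variety $Z_1$.
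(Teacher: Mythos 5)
Your opening reductions are fine (Lemma \ref{EF}$(b)$ gives $D\cdot C_{E_i}=0$; Lemmas \ref{disjoint} and \ref{unique} give $E_i\cdot C_D\in\{0,1\}$ with at most one value equal to $1$), but the case split on $(E_1\cdot C_D,E_2\cdot C_D)$ is misaligned with the disjunction to be proved, and both branches stop exactly where the real work begins. The single ingredient your proposal is missing is the numerical decomposition that the paper imports from \cite[Lemma 6.9 and Prop.~6.1]{small}: under the hypotheses $D\cap E_1\neq\emptyset$, $D\cdot C_{E_1}=0$, $E_1\cdot C_D=0$, there is an exceptional plane $L\subset D$ with $C_D\equiv C_L+C_{E_1}$. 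The paper assumes $(i)$ fails, so by Lemma \ref{EF} both $E_1\cdot C_{E_2}>0$ and $E_2\cdot C_{E_1}>0$; Lemma \ref{unique} gives (up to exchange) $E_1\cdot C_D=0$; then $L\not\subset E_2$ by Lemma \ref{dim32}, so $E_2\cdot C_D=E_2\cdot C_L+E_2\cdot C_{E_1}\geq E_2\cdot C_{E_1}>0$, and Lemma \ref{disjoint} forces $E_2\cdot C_D=1$, hence $E_2\cdot C_{E_1}=1$. This relation $C_D\equiv C_L+C_{E_1}$ is the only bridge between $E_2\cdot C_D$ and $E_2\cdot C_{E_1}$, and it is precisely the bridge both of your cases need.

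Concretely: in your Case B, $E_2\cdot C_D=1$ does not by itself force $E_2\cdot C_{E_1}=1$ --- a priori $E_2\cdot C_{E_1}=0$ is consistent (and then $(i)$ holds, by Lemma \ref{EF}), because in the decomposition above the unit of intersection could be absorbed by $C_L$ rather than $C_{E_1}$. So the asserted ``direct count'' showing that $C_{F_1}$ meets $F_2$ transversally in one point is not bookkeeping on the diagram of Lemma \ref{square}; it is the statement to be proved, and as stated it is not even true in all configurations compatible with your hypotheses. In your Case A no contradiction is actually derived (``fattened along'' is not an argument), and the lemmas you propose to apply in $Z_1$ (Lemmas \ref{EF} and \ref{disjoint}) are only stated for Fano $4$-folds with $\rho\geq 7$, whereas $\rho_{Z_1}=\rho_X-1$ may equal $6$. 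Note that once the paper's implication ``not $(i)$ $\Rightarrow$ $E_2\cdot C_D=1$'' is available, your Case A is an immediate consequence, so no separate descent argument is needed.
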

\begin{proof}
We assume that $(i)$ does not hold, and show $(ii)$.
If $E_1\cdot C_{E_2}=0$, then $E_1$ and $E_2$ are adjacent by Lemma \ref{EF}, hence
$E_2\cdot C_{E_1}=0$ by the same lemma; similarly if $E_2\cdot C_{E_1}=0$. Therefore we can assume that 
$E_1\cdot C_{E_2}>0$ and $E_2\cdot C_{E_1}>0$.

By Lemma \ref{unique} there exists $i\in\{1,2\}$ such that $E_{i}\cdot C_D=0$; up to exchanging $E_1$ and $E_2$, we can suppose that  $E_{1}\cdot C_D=0$. Note that $D\cdot C_{E_1}=0$ because $D$ and $E_1$ are adjacent (Lemma \ref{EF}).

Then by \cite[Lemma 6.9 and Prop.~6.1]{small} there exists an exceptional plane $L\subset D$ such that $C_D\equiv C_L+C_{E_1}$. By Lemma \ref{dim32} we have $L\not\subset E_2$, thus $E_2\cdot C_L\geq 0$ and 
 $E_2\cdot C_D\geq E_2\cdot C_{E_1}>0$. 
  By  Lemma \ref{disjoint}
  we have  $E_2\cdot C_D=1$, which implies that $E_2\cdot C_{E_1}=1$.
\end{proof}
\begin{lemma}\label{deltaY}
Let $X$ be a smooth Fano $4$-fold with $\rho_X\geq 7$, $D$ a fixed prime divisor of type $(3,0)^{\sm}$ or $(3,1)^{\sm}$, and $X\stackrel{\xi}{\dasharrow} \w{X}\stackrel{\sigma}{\to}Y$ the contraction of $D$ as in Th.-Def.~\ref{fixed}$(a)$. Then $\delta_Y\leq 2$.
\end{lemma}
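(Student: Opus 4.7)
The strategy is to bound $\delta_Y$ by a two-step reduction to the known inequality $\delta_X\leq 1$, passing through $\w{X}$. First, observe that since every fixed prime divisor on a product $S_1\times S_2$ is of type $(3,2)$, the existence of $D$ of type $(3,0)^{\sm}$ or $(3,1)^{\sm}$ excludes $X\cong S_1\times S_2$, and Theorem~\ref{starting} gives $\delta_X\leq 1$.

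Let $B\subset Y$ be an arbitrary prime divisor; the aim is to show $\codim_{\N(Y)}\N(B,Y)\leq 2$. As $\dim\sigma(\w{D})\leq 1<3=\dim B$, the divisor $B$ is not the image of $\w{D}$, so its strict transform $\w{B}\subset\w{X}$ is a prime divisor with $\sigma(\w{B})=B$. Applying Lemma~\ref{easy} to $\sigma$ and $\w{B}$ yields
$$\dim\N(\w{B},\w{X})\leq (\rho_{\w{X}}-\rho_Y)+\dim\N(B,Y)=1+\dim\N(B,Y),$$
which rewrites as $\codim_{\N(Y)}\N(B,Y)\leq\codim_{\N(\w{X})}\N(\w{B},\w{X})$.

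For the second reduction, let $B_X\subset X$ be the prime divisor corresponding to $\w{B}$ under the SQM $\xi$, and work in the common space $\N(X)=\N(\w{X})$. Classes of curves in $B_X\cap\dom(\xi)$ coincide with classes of their strict transforms in $\w{B}\cap\dom(\xi^{-1})$, so $\N(B_X,X)$ and $\N(\w{B},\w{X})$ share a common subspace; the only possibly discrepant generators come from the exceptional planes $L_j\subset X$ and the corresponding exceptional lines $\ell_j\subset\w{X}$ of Lemma~\ref{SQMFano}, whose classes satisfy $[C_{L_j}]+[\ell_j]=0$ (as one checks by computing $K$-intersections, since $-K_X\cdot C_{L_j}=1$ and $-K_{\w{X}}\cdot\ell_j=-1$). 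Decomposing $\xi$ as a sequence of flips (Th.-Def.~\ref{fixed}$(c)$) and proceeding flip by flip, one shows that
$$\codim_{\N(\w{X})}\N(\w{B},\w{X})\leq\codim_{\N(X)}\N(B_X,X)+1\leq\delta_X+1\leq 2.$$
Combining with the first step gives $\codim_{\N(Y)}\N(B,Y)\leq 2$, and maximizing over $B$ yields $\delta_Y\leq 2$.

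The main obstacle is the SQM comparison in the second step, namely controlling by one the total discrepancy between $\N(B_X,X)$ and $\N(\w{B},\w{X})$ accumulated across the sequence of flips. The key point is that each $L_j$ (resp.\ $\ell_j$) contributes a generator $\pm[C_{L_j}]$ to the corresponding $\N$-subspace only when it is contained in $B_X$ (resp.\ in $\w{B}$); the configuration of such containments is constrained by the fact that $B_X$ and $\w{B}$ descend from the same divisor $B\subset Y$ via Th.-Def.~\ref{fixed}$(e)$, applied to the various intermediate varieties in the decomposition of $\xi$, which forces the net change in dimension to be at most one.
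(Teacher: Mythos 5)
Your first reduction (from $Y$ to $\w{X}$ via Lemma~\ref{easy}) is fine, but the second reduction contains a genuine gap. You need, for an \emph{arbitrary} prime divisor, the inequality $\dim\N(\w{B},\w{X})\geq\dim\N(B_X,X)-1$ across the SQM $\xi$, and the argument you sketch does not establish it. The assertion that ``classes of curves in $B_X\cap\dom(\xi)$ coincide with classes of their strict transforms'' only holds for curves \emph{disjoint} from the exceptional planes $L_1\cup\cdots\cup L_r$; a curve in $B_X$ meeting some $L_j$ transversally (without being contained in it) lies in $\dom(\xi)$ away from finitely many points, yet its numerical class changes under the flip. So the discrepancy between $\N(B_X,X)$ and $\N(\w{B},\w{X})$ is not governed merely by which $L_j$'s (resp.\ $\ell_j$'s) are contained in $B_X$ (resp.\ $\w{B}$), and $\xi$ is a composition of at least $\rho_X-4\geq 3$ flips, each of which can perturb the span of curve classes. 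Your closing paragraph names this as ``the main obstacle'' and claims Th.-Def.~\ref{fixed}$(e)$ ``forces the net change in dimension to be at most one,'' but no mechanism is given; this is precisely the statement that would need a proof, and for general prime divisors I see no reason it holds.

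The paper sidesteps this entirely by arguing by contradiction and exploiting the structure theory of the Lefschetz defect: if $\delta_Y\geq 3$, then $\delta_Y$ is computed by the exceptional divisor $E=\Lo(R)$ of an elementary contraction of type $(3,2)$ (this is the nontrivial input from \cite[Rem.~3.2.6]{codim} and \cite{delta3}). A general fiber $\Gamma$ of that contraction avoids $\sigma(\Exc(\sigma))$ and the exceptional lines, so its transform in $X$ has anticanonical degree one, whence the transform $E_X\subset X$ is a \emph{fixed prime divisor of type $(3,2)$}. For divisors of this special type, Lemma~\ref{dim32} gives the exact invariance $\dim\N(E_X,X)=\dim\N(E_{\w{X}},\w{X})$ under the SQM --- exactly the transfer your argument is missing, but available only because $E_X$ is covered by degree-one rational curves that stay away from the flipping loci. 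Combining this with Lemma~\ref{easy} and $\delta_X\leq 1$ yields $\codim\N(E,Y)\leq 2$, contradicting $\delta_Y\geq 3$. If you want to salvage your approach, you would have to restrict attention from arbitrary $B$ to divisors of this special type, which is in effect reproducing the paper's proof.
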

\begin{proof}
  We note first of all that $X$ is not isomorphic to a product of surfaces because it has a  fixed prime divisor of type $(3,0)^{\sm}$ or $(3,1)^{\sm}$,
therefore
$\delta_X\leq 1$ by Th.~\ref{starting}. Moreover $Y$ is a smooth Fano $4$-fold.

 If by contradiction $\delta_Y\geq 3$, then $\NE(Y)$ has
 an extremal ray $R$ of type $(3,2)$ with $\codim\N(E,Y)=\delta_Y$ where $E=\Lo(R)$ (see \cite[Rem.~3.2.6]{codim} for the case $\delta_Y\geq 4$, and \cite[Th.~1.4 and proof of Lemmas 3.2 and 4.5]{delta3} for the case $\delta_Y=3$).

Consider the transforms $E_X\subset X$ and $E_{\w{X}}\subset\w{X}$.
Let $\Gamma\subset E$ be a general fiber of the contraction of $R$, and let
$\Gamma_X\subset X$ and $\Gamma_{\w{X}}\subset\w{X}$ be its transforms. By generality we have $\Gamma\cap \sigma(\Exc(\sigma))=\emptyset$, hence   $-K_{\w{X}}\cdot\Gamma_{\w{X}}=-K_Y\cdot\Gamma=1$. By Lemma \ref{SQMFano} this implies that $\Gamma_{\w{X}}\subset\dom(\xi^{-1})$, therefore $-K_X\cdot \Gamma_X=1$ and $E_X\cdot\Gamma_X=E\cdot\Gamma=-1$. Thus $E_X\subset X$ is a non-nef prime divisor covered by rational curves of anticanonical degree one, so that  $E_X$ is a fixed prime divisor of type $(3,2)$ by \cite[Lemma 2.18]{blowup}.
Then using Lemmas \ref{easy}  and \ref{dim32} we get
$$\dim\N(E,Y)\geq\dim\N(E_{\w{X}},\w{X})-1=\dim\N(E_X,X)-1\geq\rho_X-2=\rho_Y-1,$$ a contradiction.
\end{proof}   
  \begin{lemma}\label{lifting}
  Let $X$ be a Fano $4$-fold with $\rho_X\geq 7$ and $\delta_X\leq 1$, $X\dasharrow X'$ a SQM, and $f\colon X'\to S$ a  contraction.
Assume that  $\rho_S\geq 3$, and  
  let $\Gamma\subset S_{\reg}$ be a $(-1)$-curve. Then there is an exceptional line $\ell\subset X'$, whose class is extremal in $\NE(X')$, such that $f(\ell)=\Gamma$.
  \end{lemma}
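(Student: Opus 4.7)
The plan is to use the $(-1)$-curve $\Gamma$ to force an extremal ray $R$ of $\NE(X')$ whose curves map onto $\Gamma$ via $f$, and then to show that $R$ must be generated by an exceptional line.

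First I would let $h\colon S\to S_0$ be the contraction of $\Gamma$ and set $g:=h\circ f\colon X'\to S_0$. The face $\NE(g)$ of $\NE(X')$ has dimension $\rho_X-\rho_S+1$, strictly larger than $\dim\NE(f)$, so there exists an extremal ray $R$ of $\NE(X')$ contained in $\NE(g)$ but not in $\NE(f)$. Any such $R$ satisfies $f_*(R)\subset\ker h_*\cap\NE(S)=\R_{\geq 0}[\Gamma]$ with $f_*(R)\neq 0$; since $\Gamma^2=-1$ forces $\Gamma$ to be the unique irreducible curve in its numerical class, every curve of $R$ maps onto $\Gamma$.

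The key step is to rule out $R$ being $K_{X'}$-negative. If $R$ were of fiber type then $\Lo(R)=X'$, forcing $f(X')\subseteq\Gamma$, which contradicts $\dim S\geq 2$. If $R$ were small and $K$-negative then Lemma~\ref{kawamata} would make $\Lo(R)$ a union of exceptional planes $L\cong\pr^2$; lines of $L$ cover $L$ and each maps onto $\Gamma$, so $f(L)=\Gamma$, producing a non-constant morphism $\pr^2\to\pr^1$, which is impossible. If $R$ were divisorial, $\Lo(R)=E$ would be a fixed prime divisor whose defining ruling (lines in the $\pr^3$ or quadric, lines in the $\pr^2$-fibers, or fibers of the $\pr^1$-bundle, according to the type in Th.-Def.~\ref{fixed}) sweeps out $E$; each such ruling maps onto $\Gamma$, so $f(E)=\Gamma$. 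Then Lemma~\ref{easy} gives $\dim\N(E,X')\leq\rho_X-\rho_S+1\leq\rho_X-2$; but $\delta_X\leq 1$ combined with Lemma~\ref{dim32} (and its analogues for the other three types) yields $\dim\N(E,X')\geq\rho_X-1$, a contradiction.

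Hence $R$ is $K_{X'}$-non-negative. Since $X'$ is a SQM of a smooth Fano $4$-fold, every $K_{X'}$-positive extremal ray of $\NE(X')$ corresponds to a flip in the sequence of SQMs connecting $X'$ to $X$, and the flipped curves on the $K$-positive side are exceptional lines (dually to the exceptional planes of Lemma~\ref{kawamata}, and compatibly with Lemma~\ref{SQMFano}). Hence $R=\R_{\geq 0}[\ell]$ for some exceptional line $\ell\subset X'$, and $f_*[\ell]\neq 0$ forces $f(\ell)=\Gamma$. The main obstacle I foresee is the divisorial case of the middle step, in particular establishing the uniform bound $\dim\N(E,X')\geq\rho_X-1$ for all four types of fixed prime divisors, since Lemma~\ref{dim32} as stated covers only type $(3,2)$.
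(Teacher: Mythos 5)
Your overall strategy --- produce an extremal ray $R$ of $\NE(X')$ with $f_*(R)=\R_{\geq 0}[\Gamma]$, exclude the $K$-negative possibilities, and conclude via Lemma \ref{SQMFano} --- is the same as the paper's, and your fiber-type, small, and type-$(3,2)$ cases are handled correctly. The gap is exactly the one you flagged: in the divisorial case your argument needs the lower bound $\dim\N(E,X')\geq\rho_X-1$ for \emph{every} type of fixed prime divisor, and this fails outside type $(3,2)$. If the contraction of $R$ were of type $(3,0)$, the whole divisor $E$ would be contracted to a point, so every curve in $E$ would have class in $R$ and $\dim\N(E,X')=1$; if it were of type $(3,1)$, Lemma \ref{easy} already gives $\dim\N(E,X')\leq 2$. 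There is no analogue of Lemma \ref{dim32} for types $(3,0)^{\sm}$, $(3,0)^Q$, $(3,1)^{\sm}$, and $\delta_X\leq 1$ puts no useful constraint on such divisors, so the contradiction you aim for cannot be reached along that route.

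The missing observation, which is how the paper disposes of all the non-$(3,2)$ cases at once, is that $f$ is \emph{finite on every fiber of the contraction $c_R$ of $R$}: no curve with class in $R$ is contracted by $f$, and each such fiber maps into the one-dimensional set $\Gamma$, so every fiber of $c_R$ has dimension $\leq 1$. This immediately rules out a small $K$-negative ray (whose locus would be a union of exceptional planes contracted to points, by Lemma \ref{kawamata}) and a divisorial ray with $\dim c_R(E)\leq 1$ (which would have a two-dimensional fiber). Hence a $K$-negative $R$ must be divisorial of type $(3,2)$ with $E=f^{-1}(\Gamma)$, and only there does one invoke Lemma \ref{easy}, Lemma \ref{dim32} and $\delta_X\leq 1$ --- exactly your computation $\rho_X-1\leq\dim\N(E,X')\leq 1+\rho_X-\rho_S\leq\rho_X-2$. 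With this substitution your proof closes and coincides with the paper's.
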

\begin{proof}
  Since $\R_{\geq 0}[\Gamma]$ is an extremal ray of $\NE(S)$, there is an extremal ray $R$ of $\NE(X')$ such that $f_*(R)=\R_{\geq 0}[\Gamma]$, so that the contraction of $R$ must be birational, with fibers of dimension at most one (see \cite[\S 2.5]{fanos}).  If $K_{X'}\cdot R\geq 0$, then $R$ contains the class of  an exceptional line $\ell\subset X'$ (see Lemma \ref{SQMFano}) and we have $f(\ell)=\Gamma$.

 We suppose that $K_{X'}\cdot R< 0$ and show that this gives a contradiction.
 Then $R$ cannot be small (Lemma \ref{kawamata}), thus it must be of type $(3,2)$  with locus $D:=f^{-1}(\Gamma)$, and $D$ is a fixed prime divisor of type $(3,2)$ (see Th.-Def.~\ref{fixed}). By Lemma \ref{easy} we have $\dim\N(D,X')\leq 1+\rho_X-\rho_S\leq\rho_X-2$.
 On the other hand, if $D_X\subset X$ is the transform of $D$,  Lemma \ref{dim32} gives $\dim\N(D_X,X)=\dim\N(D,X')$,  contradicting $\delta_X\leq 1$.
\end{proof}
\begin{remark}\label{uffa}
  Let $X$ be a smooth Fano $4$-fold with $\rho_X\geq 7$ and
  $E,D_1,\dotsc,D_m$ fixed prime divisors such that $E$ is of type $(3,2)$, each $D_i$ is of type $(3,1)^{\sm}$, and $E\cdot C_{D_i}=D_i\cdot C_{D_j}=0$ for every $i,j=1,\dotsc,m$, $i\neq j$.

  By Lemma \ref{EF} this implies that $D_i\cdot C_{E}=0$  for every $i=1,\dotsc,m$, and Lemma \ref{linate} gives that
  $\langle [E],[D_1],\dotsc,[D_m]\rangle$ is a fixed face of $\Eff(X)$, and $E,D_1,\dotsc,D_m$ can be contracted together by a birational (rational) contraction with $\Q$-factorial target. However the geometry of this contraction depends on the order in which we contract the divisors; let us describe this. See also \cite[Lemma 2.19]{3folds} for the case $m=1$, and \cite[Lemma 6.9, Prop.~6.1, Prop.~6.4]{small}.

Let us first describe how the divisors $E,D_1,\dotsc,D_m$ intersect in $X$.
For every $i,j\in\{1,\dotsc,m\}$, $i\neq j$, $D_i\cap D_j$ is either empty or a disjoint union of exceptional planes \cite[Lemma 4.13(b)]{small}. Let us also note that $X$ is not a product of surfaces because it has fixed prime divisors of type $(3,1)^{\sm}$, thus $\delta_X\leq 1$ by Th.~\ref{starting}, and $E$ can be disjoint from at most one $D_i$ (otherwise, if $E$ is disjoint from $D_i$ and $D_j$ with $i\neq j$, then $\N(E,X)\subset D_i^{\perp}\cap D_j^{\perp}$ by Rem.~\ref{cassis}, a contradiction).
Moreover
by \cite[Lemma 2.17$(ii)$]{3folds}, for every $i=1,\dotsc,m$ such that $D_i\cap E\neq\emptyset$, there exists an exceptional plane $L_i\subset D_i$ such that $D_i\cdot C_{L_i}=-1$, $E\cdot C_{L_i}=1$, $C_{D_i}\equiv C_E+C_{L_i}$, and $E\cap L=\emptyset$ for every exceptional plane $L\subset D_i$ such that $C_L\not\equiv C_{L_i}$; we also have that $E\cap D_i$ is a disjoint union of  surfaces isomorphic to $\mathbb{F}_1$. For $j\neq i$ we get $D_j\cdot C_{L_i}=0$ and hence $D_j\cap L_i=\emptyset$, because $D_j\cdot C_{L_0}<0$ for every exceptional plane $L_0\subset D_j$.

We can first perform a sequence of $(D_1+\cdots+D_m)$-negative and $E$-trivial flips, and get a SQM $\xi_1\colon X\dasharrow\w{X}$. Then $E\subset\dom(\xi_1)$, so that $[C_E]$ is still extremal in $\NE(\w{X})$ (Lemma \ref{iso32}), and we have a diagram:
$$\xymatrix{X\ar@{-->}[r]^{\xi_1}\ar[d]_{\alpha}&{\w{X}}\ar[dr]^f\ar[d]^{\tilde{\alpha}}&\\
Y\ar@{-->}[r]^{\xi_Y}&{\w{Y}}\ar[r]^{\sigma_Y}&W
 }$$
 where $\alpha$ and $\tilde\alpha$ are the contractions of $\R_{\geq 0}[C_E]$, of type $(3,2)$, and locally isomorphic, $Y$ is Fano with at most isolated, locally factorial, terminal singularities, and $\xi_Y$ is a SQM (Th.-Def.~\ref{fixed}, Lemma \ref{singtarget}).

\begin{figure}\caption{The varieties in Rem.~\ref{uffa}, with $m=1$.}\label{figura2}

\bigskip
  
  \input{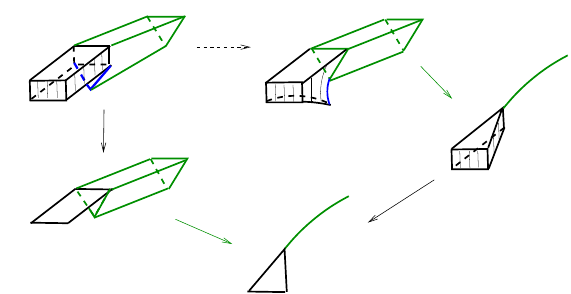tex_t}
\end{figure}
 
 In $\w{Y}$ the divisors $D_1,\dotsc,D_m$ are pairwise disjoint, are contained in $\w{Y}_{\reg}$, and are the exceptional divisors of $\sigma_Y\colon\w{Y}\to W$ blow-up of $m$ pairwise disjoint smooth irreducible curves $\Gamma_1,\dotsc,\Gamma_m\subset W_{\reg}$;  $W$ has the same singularities as $Y$ and $\w{Y}$ (see Fig.~\ref{figura2}).

 Set $f:=\sigma_Y\circ\tilde\alpha\colon\w{X}\to W$. Then $f$ is
a resolution of the birational map $X\dasharrow W$, with
  exceptional divisors $E,D_1,\dotsc,D_m$. In $\w{X}$ the relative cone of $f$ is $\NE(f)=\langle [C_E],[C_{L_1}],\dotsc,[C_{L_m}]\rangle$, and $C_{D_i}\equiv C_E+C_{L_i}$ for every $i$,\footnote{Or  $\NE(f)=\langle [C_E],[C_{L_1}],\dotsc,[C_{L_{m-1}}],[C_{D_m}]\rangle$ if $E\cap D_m=\emptyset$ in $X$.} while in $\w{Y}$ we have $\NE(\sigma_Y)=
 \langle [C_{D_1}],\dotsc,[C_{D_m}]\rangle$.

 Moreover $W$ is Fano. To see this, consider $f^*(-K_W)=-K_{\w{X}}+E+2(D_1+\cdots+D_m)$, and let $R$ be an extremal ray of 
$\NE(\w{X})$. If $K_{\w{X}}\cdot R\geq 0$, then there is an exceptional line
$\ell\subset\w{X}$ with $[\ell]\in R$ (see Lemma \ref{SQMFano}), and we have 
$(D_1+\cdots+D_m)\cdot\ell>0$, $-K_{\w{X}}\cdot\ell=-1$, $E\cdot\ell=0$, thus $f^*(-K_W)\cdot\ell>0$. 
Suppose instead that $R$ is $K$-negative. 
The extremal rays of $\NE(\w{X})$ that are negative for some exceptional divisor of $f$ are precisely those in $\NE(f)$, therefore if $R\not\subset\NE(f)$ we have $-K_{\w{X}}\cdot R>0$, $(E+2(D_1+\cdots+D_m))\cdot R\geq 0$, and
again $f^*(-K_W)\cdot R>0$. We conclude that $f^*(-K_W)$ is nef and $f^*(-K_W)^{\perp}\cap\NE(\w{X})=\NE(f)$, which implies that $-K_W$ is ample.
 
 On the other hand from $\w{X}$ we can also consider the remaining  $(D_1+\cdots+D_m)$-negative flips, that will be given by the extremal rays generated by $[C_{L_1}],\dotsc,[C_{L_m}]$ (which are $E$-positive), and get a SQM $\xi_2\colon \w{X}\dasharrow\wi{X}$. Now we get a birational map $\sigma\colon \wi{X}\to Z$ which is the blow-up of $m$ pairwise disjoint smooth irreducible curves $\Gamma'_1,\dotsc,\Gamma'_m\subset Z$, and $Z$ is smooth and Fano (this can be seen with a similar argument as for $W$). In $Z$ we have $E\cdot \Gamma'_i>0$ for every $i=1,\dotsc,m$. Moreover $[C_E]$ is extremal in $\NE(Z)$, and there is an associated elementary contraction of type $(3,2)$ $\alpha_0\colon Z\to W$.
 $$\xymatrix{X\ar@{-->}[r]^{\xi_1}\ar[d]_{\alpha}&{\w{X}}\ar[dr]^f\ar[d]^{\tilde{\alpha}}\ar@{-->}[r]^{\xi_2}&{\wi{X}}\ar[d]^g\ar[r]^{\sigma}&Z\ar[dl]^{\alpha_0}\\
Y\ar@{-->}[r]^{\xi_Y}&{\w{Y}}\ar[r]^{\sigma_Y}&W&
 }$$
 
 Set $g:=\alpha_0\circ\sigma\colon \wi{X}\to W$. Then $g$ is another resolution of the birational map $X\dasharrow W$, and now we have $\NE(g)=\langle [C_{D_1}],\dotsc,[C_{D_m}],[\ell_1],\dotsc,[\ell_m]\rangle$ where $\ell_i\subset\wi{X}$ is the exceptional line corresponding to the exceptional plane $L_i\subset\w{X}$.\footnote{Or  $\NE(g)=\langle [C_{D_1}],\dotsc,[C_{D_m}],[\ell_1],\dotsc,[\ell_{m-1}]\rangle$ if $E\cap D_m=\emptyset$ in $X$.} Moreover $C_E\equiv C_{D_i}+\ell_i$ for every $i$.
\end{remark}
\begin{lemma}\label{carolina}
  In the setting of Rem.~\ref{uffa}, let $i\in\{1,\dotsc,m\}$ be such that $E\cap D_i\neq\emptyset$,
  $A\subset W$ be the image of $E$, and $\Gamma_i\subset W$ the image of $D_i$, so that $A$ is an irreducible surface and $\Gamma_i$ a smooth irreducible curve contained in $W_{\reg}$. Then $A$ and $\Gamma_i$ intersect transversally at finitely many points, contained in $A_{\reg}$.
\end{lemma}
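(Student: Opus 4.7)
The plan is to pull back the intersection $A\cap\Gamma_i\subset W$ through the two maps $\sigma_Y\colon \widetilde{Y}\to W$ and $\tilde\alpha\colon \widetilde{X}\to\widetilde{Y}$, relating it to the explicit description of $E\cap D_i$ from Remark~\ref{uffa}: namely that $E\cap D_i = S_1\sqcup\cdots\sqcup S_r$ with each component $S_j\cong\mathbb{F}_1$. First I would observe that $\Gamma_i\not\subset A$: otherwise the proper transform of $A$ under the blow-up $\sigma_Y$ would have to contain the exceptional divisor $D_i$, which is three-dimensional, contradicting $\dim A_Y = 2$. Hence $A\cap\Gamma_i$ is a finite set and $A_Y=\tilde\alpha(E)$ is precisely the proper transform of $A$ under $\sigma_Y$.

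Next I would analyze $A_Y\cap D_i\subset\widetilde{Y}$. Since $D_i\cdot C_E = 0$ in $\widetilde{X}$, a general fiber $C_E$ of $\tilde\alpha|_E$ is disjoint from $D_i$; hence the ruling fibers of each $S_j\cong\mathbb{F}_1$ (which do lie in $D_i$) must be numerically proportional to $C_E$ and thus contracted by $\tilde\alpha$. Therefore $\Lambda_j:=\tilde\alpha(S_j)\cong\mathbb{P}^1$ and $A_Y\cap D_i = \Lambda_1\cup\cdots\cup\Lambda_r$. I would then verify that each $\Lambda_j$ is contracted by $\sigma_Y$ to a single point $p_j\in\Gamma_i$, by showing that the composition $f=\sigma_Y\circ\tilde\alpha$ contracts all of $S_j$: classes of curves in $S_j$ decompose into the ruling class $[C_E]$ and a section class which, via $C_{D_i}\equiv C_E+C_{L_i}$ from Remark~\ref{uffa}, also lies in $\NE(f)=\langle [C_E],[C_{L_1}],\dotsc,[C_{L_m}]\rangle$. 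Since $f(S_j)\subset A\cap\Gamma_i$ and is connected, it equals the single point $p_j$.

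Finally, for smoothness of $A$ at $p_j$ and transversality, I would exploit that $\Gamma_i\subset W_{\reg}$, so $\sigma_Y$ is a standard smooth blow-up near $p_j$ with $\sigma_Y^{-1}(p_j)\cong\mathbb{P}^2$. A projection-formula computation using $\tilde\alpha^*K_{\widetilde{Y}}=K_{\widetilde{X}}-E$, the intersection numbers $D_i\cdot C_{L_i}=-1$ and $E\cdot C_{L_i}=1$ from Remark~\ref{uffa}, and adjunction on $S_j\cong\mathbb{F}_1$ would identify the class $[\Lambda_j]$ in $\widetilde{Y}$ with that of a line $[C_{D_i}]$ in the $\mathbb{P}^2$-fiber of $\sigma_Y|_{D_i}$. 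By the standard correspondence between the local structure of a proper transform at a blow-up center and the tangent configuration in the base, having $A_Y$ meet the fiber $\mathbb{P}^2$ in a single reduced line forces $A$ to be smooth at $p_j$ with $T_{p_j}A\cap T_{p_j}\Gamma_i = 0$. The main obstacle is precisely this last step: establishing both that $[\Lambda_j]=[C_{D_i}]$ as a class in the fiber (so $\Lambda_j$ is a line) and that it appears with scheme-theoretic multiplicity one, and moreover excluding the possibility that any isolated singular point of $A_Y$ permitted by Lemma~\ref{singtarget} lies on some $\Lambda_j$, which would spoil $p_j\in A_{\reg}$.
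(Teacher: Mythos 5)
Your overall route is the same as the paper's: decompose $E\cap D_i$ into its $\mathbb{F}_1$-components $S_j$, push them forward to $\widetilde{Y}$, show each image $\Lambda_j=\tilde\alpha(S_j)$ is a fiber-class curve of $\sigma_Y$ contracted to a single point of $A\cap\Gamma_i$, and then read off smoothness and transversality from the local picture of the blow-up. However, there is a genuine gap exactly where you flag "the main obstacle": the multiplicity-one (reducedness) claim and the exclusion of singular points of $A_Y$ on $\Lambda_j$ are not side issues to be checked later --- they \emph{are} the content of the lemma, and your proposal does not prove them. Moreover, your opening argument for $\Gamma_i\not\subset A$ is incorrect: if $\Gamma_i\subset A$, the strict transform of $A$ under $\sigma_Y$ would meet $D_i$ in $\pr(\ma{N}_{\Gamma_i/A})$, a \emph{section curve} of $D_i\to\Gamma_i$, not in the whole three-dimensional divisor $D_i$. (Finiteness of $A\cap\Gamma_i$ does follow, but only as a consequence of the later step that every connected component of $A_Y\cap D_i$ is contracted by $\sigma_Y$.) Finally, your concluding appeal to a "standard correspondence" runs in the wrong direction: knowing that the strict transform meets the fiber $\pr^2$ in a reduced line does not by itself give smoothness of $A$ at $p_j$ without first controlling the tangent cone, which is again the point at issue.

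The paper closes these gaps as follows. Since $S_j=\tilde\alpha^{-1}(\Lambda_j)$ and $\tilde\alpha_{|S_j}$ is a $\pr^1$-bundle onto $\Lambda_j$, the fibers of $\tilde\alpha$ over $\Lambda_j$ are one-dimensional, so Lemma \ref{singtarget} forces both $\widetilde{Y}$ and $A_Y$ to be smooth at every point of $\Lambda_j$; this disposes of the possible isolated singularities of $A_Y$. Next, writing $\tilde\alpha(D_i)_{|A_Y}=m\Lambda_j+B$ with $B$ supported on the other components, and using $\Lambda_j\equiv C_{D_i}$, one computes
$$-1=\tilde\alpha(D_i)\cdot C_{D_i}=\tilde\alpha(D_i)_{|A_Y}\cdot_{A_Y}\Lambda_j=m\,\Lambda_j^2,$$
whence $m=1$ and $\Lambda_j$ is a $(-1)$-curve in the smooth surface $A_Y$. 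Since $\sigma_Y$ contracts $\Lambda_j$, the restriction $\sigma_{Y|A_Y}\colon A_Y\to A$ contracts a $(-1)$-curve, so $A$ is smooth at $p_j$ and $A_Y\to A$ is the blow-up of the reduced point $p_j$; transversality of $A$ and $\Gamma_i$ at $p_j$ then follows, with no need for the tangent-cone correspondence you invoke. If you add the self-intersection computation and the fiber-dimension argument via Lemma \ref{singtarget}, your proof becomes essentially the paper's.
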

\begin{proof}
Let $S$ be a connected component of $E\cap D_i$ in $X$. Then $S\cong\mathbb{F}_1$ and if $\ell\subset\mathbb{F}_1$ is the fiber of the $\pr^1$-bundle and $e\subset\mathbb{F}_1$ is the $(-1)$-curve, we have $\ell\equiv C_E$, $e=S\cap L_i$, and $e\equiv C_{L_i}$ (notation as in Rem.~\ref{uffa}).

The surface $S$ is contained in $\dom(\xi_1)$, therefore it is mapped isomorphically to $\w{X}$, where we still denote it by $S$. Set $A_Y:=\tilde\alpha(E)\subset\w{Y}$ and $\Lambda:=\tilde\alpha(S)\subset A_Y$;
then $\Lambda$ is a connected component of $A_Y\cap \tilde\alpha(D_i)$.

We note that $S=\tilde\alpha^{-1}(\Lambda)$ and $\tilde\alpha_{|S}$ is a $\pr^1$-bundle, so that $\tilde\alpha$ has one-dimensional fibers over $\Lambda$, hence both $A_Y$ and $\w{Y}$  are smooth at every point of $\Lambda$
(Lemma \ref{singtarget}).

Moreover $\Lambda\cong\pr^1$ and $\Lambda\equiv C_{D_i}$. If $\tilde\alpha(D_i)_{|A_Y}=m\Lambda+B$ where $B$ is a divisor in $A_Y$ supported on the other connected components of $A_Y\cap \tilde\alpha(D_i)$ (note that $A_Y\cap \tilde\alpha(D_i)\subset (A_Y)_{\reg}$), we get $-1=\tilde\alpha(D_i)\cdot C_{D_i}=\tilde\alpha(D_i)\cdot \Lambda=\tilde\alpha(D_i)_{|A_Y}\cdot_{A_Y} \Lambda=m\Lambda^2$, so that $m=1$ and $\Lambda$ is a $(-1)$-curve in $A_Y$.

This shows that, in $W$, $w_0:=\sigma_Y(\Lambda)=f(S)$ is a point, and $A$ is smooth at $w_0$. Moreover $\sigma_{Y|A_Y}$ is the blow-up of the reduced point $w_0$, therefore the curve $\Gamma_i$ and $A$ intersect transversally at $w_0$.
\end{proof}  
\begin{lemma}\label{classification}
  Let $X$ be a smooth Fano $4$-fold with $\rho_X\geq 7$, not isomorphic to a product of surfaces, $X\dasharrow \w{X}$ a SQM, and $f\colon \w{X}\to Y$ a special contraction with $\dim Y=3$ and $\rho_X-\rho_Y=2$.
  Then  $\NE(f)=\langle [C_{E_1}],[C_{E_2}]\rangle$ where  $E_1,E_2\subset \w{X}$  are  fixed prime divisors of type $(3,2)$ such that
  $\N(E_i,\w{X})=\N(\w{X})$ for $i=1,2$.
\end{lemma}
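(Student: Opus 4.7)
The plan is to analyze the two-dimensional face $\NE(f)=\ker f_*\cap \NE(\w X)$ of $\NE(\w X)$, find a fixed prime divisor of type $(3,2)$ corresponding to each of its two extremal rays, and then separately argue the Lefschetz-nondefectiveness of these divisors.

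First I would apply Lemma \ref{montenero} to $f$, giving a factorization $f=h\circ g$ with $g\colon\w X\dasharrow Z$ an elementary rational contraction and $h\colon Z\to Y$ a contraction. If $g$ were of fiber type, then $g$ elementary and $\dim Z\geq\dim Y=3$ force $\dim Z=3$, so $h$ would be birational; by Lemma~\ref{montenero}$(b)$, specialness of $f$ makes $h$ an isomorphism, and then $f=g$ is elementary, contradicting $\rho_X-\rho_Y=2$. Hence $g$ is divisorial, and by Rem.~\ref{fixedprime} the exceptional divisor $E_1:=\Exc(g)$ is a fixed prime divisor. Specialness of $f$ gives $\dim f(E_1)\geq\dim Y-1=2$; if $E_1$ were of type $(3,0)$ or $(3,1)$, then $g(E_1)$ would have dimension $\leq 1$ and therefore $\dim f(E_1)=\dim h(g(E_1))\leq 1$, a contradiction. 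So $E_1$ is of type $(3,2)$, and $[C_{E_1}]\in\NE(g)$ generates one extremal ray $R_1$ of $\NE(f)$.

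To produce $E_2$, I would use Rem.~\ref{choices}: the choices for $g$ in Lemma \ref{montenero} correspond to facets of $\Mov(\w X)$ containing $f^*(\Nef(Y))$. By Lemma~\ref{eubea}, since $f$ is special, $\dim(\tau_f\cap\Mov(\w X))=\rho_Y=\rho_X-2$, so $f^*(\Nef(Y))$ sits in a codimension-two face of $\Mov(\w X)$ and hence in exactly two facets $\tau_1,\tau_2$, exactly as in the proof of Lemma \ref{quasiel2}. Since a fiber-type choice for $g$ was excluded above, each $\tau_i$ yields a divisorial elementary rational contraction $g_i$ with exceptional divisor a fixed prime divisor $E_i$ of type $(3,2)$; the distinctness of $\tau_1,\tau_2$, together with Rem.~\ref{movdual}, gives $E_1\neq E_2$ and distinct rays $R_i=\R_{\geq 0}[C_{E_i}]$. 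Since $\NE(f)$ has exactly two extremal rays, $\NE(f)=\langle[C_{E_1}],[C_{E_2}]\rangle$. As a byproduct, because $-K_{\w X}\cdot C_{E_i}=1>0$, the contraction $f$ is automatically $K$-negative, confirming via Lemma~\ref{extremal} that $[C_{E_i}]$ is extremal in $\NE(\w X)$.

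The remaining assertion $\N(E_i,\w X)=\N(\w X)$ is where the main difficulty lies. By Th.~\ref{starting} one has $\delta_X\leq 1$, and by Lemma~\ref{dim32} $\dim\N(E_i,\w X)=\dim\N(E_{i,X},X)\geq\rho_X-1$, so only codimension one must be ruled out. I would proceed by contradiction: assuming $\dim\N(E_1,\w X)=\rho_X-1$, the $\pr^1$-bundle structure of the type-$(3,2)$ contraction $g_1\colon\w X\dasharrow Z_1$ onto the image surface $S_1\subset Z_1$ gives $\dim\N(E_1,\w X)=1+\dim\N(S_1,Z_1)$, so $\N(S_1,Z_1)$ would have codimension one in $\N(Z_1)$; pushing forward by $h_1\colon Z_1\to Y$ via Lemma~\ref{easy} then forces the prime divisor $\Delta_1:=f(E_1)\subset Y$ to have $\dim\N(\Delta_1,Y)\geq\rho_Y-1$. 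A direct fiber computation using $E_i\cdot C_{E_i}=-1$ and $E_i\cdot F=0$ for a general fiber $F\cong\pr^1$ of $f$ (disjoint from $E_i$ since $\dim f(E_i)=2<3$) gives $E_1\cdot C_{E_2}=E_2\cdot C_{E_1}=1$, so $[E_1+E_2]\in f^*(\Nu(Y))$. The plan is then to exploit this rigidity together with the Lefschetz-defect bound $\delta_X\leq 1$ and the adjacency criterion of Lemma~\ref{EF} to contradict the assumed defectiveness of $E_1$, finishing the proof.
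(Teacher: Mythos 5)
Your handling of the first assertion is broadly workable and close in spirit to what the paper does (the paper simply imports the description of $\NE(f)$ from \cite[Lemma 5.6]{3folds}): specialness of $f$ forces both facets of $\Mov(\w{X})$ through $f^*(\Nef(Y))$ to be non-movable, so both choices of $g$ in Lemma \ref{montenero} are divisorial, and their exceptional divisors must be of type $(3,2)$ because their images under $f$ have dimension $2$. One circularity to repair there: you cannot conclude that $f$ is $K$-negative from $-K_{\w{X}}\cdot C_{E_i}=1$ before knowing that $[C_{E_1}]$ and $[C_{E_2}]$ already span $\NE(f)$, so you still owe an argument that the two-dimensional face $\NE(f)$ has no further (e.g.\ small) extremal rays and that the $[C_{E_i}]$ sit on its two extremal rays rather than in its interior.

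The genuine gap is in the last paragraph. The assertion $\N(E_i,\w{X})=\N(\w{X})$ is the entire content of the lemma, and your proposal stops exactly where the proof has to begin: after correctly reducing to excluding $\dim\N(E_1,\w{X})=\rho_X-1$ and recording $E_1\cdot C_{E_2}=E_2\cdot C_{E_1}=1$ and $[E_1+E_2]\in f^*(\Nu(Y))$, you announce a ``plan'' to combine these with $\delta_X\leq 1$ and Lemma \ref{EF}, but give no argument. There is strong evidence that no such purely numerical argument exists: the paper's proof invokes the full classification of these $4$-folds from \cite[Th.~5.1]{3folds}, splits into cases accordingly, and in the hardest case (where, up to SQM, one of the exceptional divisors lies over the transform of a smooth cubic scroll $A\subset\pr^4$ in the Fano model of a blow-up of $\pr^4$ at points) the conclusion for $E_2$ is obtained by showing that the class of that surface is \emph{not} a restricted class on a certain $\pr^1$-bundle isomorphic to $E_2$, which ultimately rests on the geometric fact that $K_A$ is not the restriction of a class from $\pr^4$. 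That input depends on the specific identity of the blown-up surface, not on the intersection numbers you have collected. Note also that you cannot shortcut via Th.~\ref{sharp32}, since its proof uses this very lemma. As it stands, the proposal has an unfilled hole at its central claim.
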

\begin{proof}
  The Fano $4$-folds as in the statement are classified in \cite[Th.~5.1]{3folds}.
  The description of $\NE(f)$ is in [\emph{ibid.}, Lemma 5.6], and we keep the same notation. In particular
  let $\tilde\alpha\colon\w{X}\to \w{W}$ the elementary contraction of type $(3,2)$ with exceptional divisor $E_1$, and $S:=\tilde\alpha(E_1)\subset \w{W}$. Then 
  $\N(S,\w{W})=\N(\w{W})$ by [\emph{ibid.}, 5.67]; on the other hand
  $\ker\tilde\alpha_*\subset \N(E_1,\w{X})$ and $\tilde\alpha_*(\N(E_1,\w{X}))=
  \N(S,\w{W})$, and we conclude that $\N(E_1,\w{X})=\N(\w{X})$.
  
  Let us consider now $E_2$. If we are in case $(b)$ of [\emph{ibid.}, Lemma 5.39], then the situation for $E_1$ and $E_2$ is symmetric [\emph{ibid.}, Lemma 5.46, 5.60], and as before we conclude that $\N(E_2,\w{X})=\N(\w{X})$. If we are in case $(a)$ and the surface $S$ is singular, then $\tilde\alpha\colon\w{X}\to\w{W}$ has a $2$-dimensional fiber $F_0$ (Lemma \ref{singtarget}). We have $E_2\cdot\NE(\tilde\alpha)>0$, hence $\dim(F_0\cap E_2)\geq 1$ and $\ker\tilde\alpha_*\subset\N(E_2,\w{X})$. On the other hand $T:=\tilde\alpha(E_2)\supset S$, so that $\tilde\alpha_*(\N(E_2,\w{X}))=\N(T,\w{W})\supset\N(S,\w{W})=\N(\w{W})$, and we conclude again that $\N(E_2,\w{X})=\N(\w{X})$.

  We are left with the case where we are in case $(a)$ and $S$ is smooth.
  Then by [\emph{ibid.}, Th.~5.1, Lemma 5.68, 5.70] there is a SQM $\w{W}\dasharrow\wi{W}:=\Bl_{q_0,\dotsc,q_r}\pr^4$ (see \S\ref{fanomodel}), with $r=\rho_X-3$, and $S\subset \w{W}$ is the transform of a cubic scroll $A\subset\pr^4$ containing the points $q_0,\dotsc,q_r$. Moreover $Y\cong\Bl_{p_1,\dotsc,p_r}\pr^3$ and there is a $\pr^1$-bundle $\pi\colon \w{W}\to Y$, induced by the projection $\pr^4\dasharrow\pr^3$ from $q_0$, such that $f=\pi\circ\tilde\alpha$.

  The points $q_0,\dotsc,q_r\in\pr^4$ are in general linear position [\emph{ibid.}, 5.63], therefore up to exchanging some points we can assume that $q_0$ does not belong to the $(-1)$-curve in $A\cong\mathbb{F}_1$. Then the projection of $A$ from $q_0$ is a smooth quadric surface $B_0\subset\pr^3$, and $B:=\pi(S)=\Bl_{p_1,\dotsc,p_r}B_0$ is a smooth surface with $\rho_B=2+r=\rho_X-1$. Finally $E_2\subset\w{X}$ is the transform of $T=\pi^{-1}(B)\subset\w{W}$, which is a $\pr^1$-bundle over $B$, thus $T$ is smooth with $\rho_T=\rho_X=\rho_{\w{W}}+1$. We have $E_2\cong T$ because
  both $T$ and $S$ are smooth,  $S\subset T$, and $\tilde\alpha$ is the blow-up along $S$.

  Let us consider the restriction $r\colon \Nu(\w{W})\to\Nu(T)$.
  Since 
  $\N(T,\w{W})=\N(S,\w{W})=\N(\w{W})$, dually $r$  is injective.
  
  We claim that the class  $[S]\in\Nu(T)$ is not in the image $r(\Nu(\w{W}))$. Let us first show that this implies the statement.  Consider the commutative diagram: $$\qquad\qquad\qquad\xymatrix{{\Nu(\w{W})}\ar[r]^{r}\ar[d]_{\tilde\alpha^*}&{\Nu(T)}
    \ar[d]^{\ph:=(\tilde\alpha^*)_{|\Nu(T)}=(\tilde\alpha_{|E_2})^*}\\
{\Nu(\w{X})}\ar[r]^{r'}&{\Nu(E_2)}
}$$
where the horizontal maps are the restrictions. Since $r$ is injective and $\ph$ is an isomorphism, $\ph( r(\Nu(\w{W})))$ is a hyperplane contained in $\im r'$. Moreover $\ph([S])$ is outside this hyperplane and  $\ph([S])=[E_{1|E_2}]\in\im r'$, and we conclude that $r'$ is surjective, hence an isomorphism as $\rho_{E_2}=\rho_X$, and finally that $\N(E_2,\w{X})=\N(\w{X})$.

\medskip

  To prove our claim, note that the canonical class $K_A$ is not the restriction of a class in $\Nu(\pr^4)$.
  Let $\sigma\colon \wi{W}\to \pr^4$ be the blow-up of $q_0,\dotsc,q_r$, and $S'\subset\wi{W}$  the transform of $A$. Then $\sigma_{|S'}$ is the blow-up of $A$ along $q_0,\dotsc,q_r$, and $K_{S'}=(\sigma_{|S'})^*(K_A)+\sum_{i=0}^r e_i$, where $e_i\subset S'$ are the $(-1)$-curves, that are restrictions to $S'$ of the exceptional divisors of $\sigma$. Hence we see that $K_{S'}$ is not the restriction of a class in $\Nu(\wi{W})$. Moreover $S\subset\w{W}$ is contained in the open subset where the SQM $\w{W}\dasharrow \wi{W}$ is an isomorphism \cite[Lemma 5.48]{3folds}, therefore  $K_{S}$ is not the restriction of a class in $\Nu(\w{W})$.
We have $S\subset T$ and $K_S=(K_T+S)_{|S}$, hence $K_T+S\not\in r(\Nu(\w{W}))$. On the other hand $K_T=(K_{\w{W}}+T)_{|T}$, and we conclude that $[S]\not\in r(\Nu(\w{W}))$.
\end{proof}
\section{Quasi-elementary rational contractions onto surfaces}\label{seczucca}
\noindent In this section we prove the following result (Th.~\ref{zuccaintro} from the Introduction), that will be needed in the next section; it improves Theorems \ref{brasile} and \ref{scala} in the case of a quasi-elementary rational contraction onto a surface.
\begin{thm}\label{zucca}
  Let $X$ be a smooth Fano $4$-fold with $\rho_X\geq 7$, not isomorphic to a product of surfaces, and  $f\colon X\dasharrow S$ a quasi-elementary rational contraction onto a surface.  
  Then $\rho_X-\rho_S\leq 3$, and if $\rho_X-\rho_S>1$, then $f$ factors as $X\stackrel{g}{\dasharrow}Y\stackrel{h}{\dasharrow} S$ where $\dim Y=3$,
  $g$ and $h$ are rational contractions, $g$ is special, and $h$
is elementary. 
\end{thm}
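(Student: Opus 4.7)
The plan is as follows. To bound $\rho_X-\rho_S$, observe that since $f$ is quasi-elementary we have $d_f=\rho_X-\rho_S$; by Theorem~\ref{scala} then $d_f\le 4$, and equality $d_f=4$ (with $\dim S=2$) forces $S\cong\pr^2$, hence $\rho_S=1$ and $\rho_X=5$, contradicting $\rho_X\ge 7$. This proves $\rho_X-\rho_S\le 3$.

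For the factorization I treat the two remaining values of $\rho_X-\rho_S$ separately. When $\rho_X-\rho_S=2$ the desired $Y$ has $\rho_Y=\rho_S+1$, so $g\colon X\dashrightarrow Y$ must be elementary; applying Lemma~\ref{quasiel2} to a resolution of $f$ yields either case $(i)$, giving directly the factorization through a $3$-fold with $g$ and $h$ elementary of fiber type (so $g$ is quasi-elementary and hence special by Lemma~\ref{equivalent}), or case $(ii)$, two divisorial elementary rational contractions $\alpha_i\colon X\dashrightarrow W_i$ whose exceptional divisors dominate $S$. When $\rho_X-\rho_S=3$ the intermediate $Y$ has $\rho_Y=\rho_X-2$ and $g$ is a non-elementary special fiber-type contraction with one-dimensional general fiber; I construct $g$ by exhibiting a proper face $\tau$ of $\Eff(X)$ strictly containing $\tau_f$ with $\dim(\tau\cap\Mov(X))=\rho_S+1$, invoking Lemma~\ref{eubea} and Remark~\ref{zumba}, and I read off the elementary $h\colon Y\dashrightarrow S$ via Remark~\ref{factor}. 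The absence of such $\tau$ forces the same obstruction configuration as in case $(ii)$ above, now with several divisorial elementary contractions compatible with $f$.

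In each obstruction case, Lemma~\ref{delpezzo} forces $S$ to be a smooth del Pezzo surface. I then classify the exceptional divisors $D_i$ of the offending divisorial contractions according to Theorem-Definition~\ref{fixed}, use the adjacency and intersection constraints of Lemmas~\ref{EF}, \ref{linate}, \ref{square}, \ref{unique}, \ref{disjoint}, \ref{31sm} and Remark~\ref{uffa}, together with the equidimensionality of $f$ over $S$, to reconstruct $X$ as an explicit sequence of $K$-negative flips and blow-ups starting from $\pr^2\times\pr^2$ fibered over $S$ by one of its $\pr^2$-bundle projections. The main obstacle, and the bulk of the work, is to show that no such reconstruction can produce a smooth Fano $4$-fold with $\rho_X\ge 7$: at each step one tracks intersection numbers of $-K$ with transforms of lines in the exceptional divisors and with sections of the fibration, and exhibits a curve $C$ along which $-K_X\cdot C\le 0$, contradicting the Fano hypothesis.
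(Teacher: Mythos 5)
Your bound $\rho_X-\rho_S\leq 3$ and your treatment of the case $\rho_X-\rho_S=2$ up to the dichotomy of Lemma~\ref{quasiel2} match the paper. The genuine gap is in how you propose to dispose of the obstruction configurations. You plan, in \emph{each} obstruction case, to reconstruct the variety from $\pr^2\times\pr^2$ and contradict the Fano hypothesis. But the paper only derives a contradiction when $\rho_X-\rho_S=2$; when $\rho_X-\rho_S=3$ the obstruction configuration (two fixed prime divisors $E_1,E_2$ of type $(3,2)$ contracted by a resolution $\tilde f$ with $(E_1\cdot C_{E_2})(E_2\cdot C_{E_1})>1$) is \emph{not} shown to be impossible --- instead one still has to produce the factorization. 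The paper does this via the key Lemma~\ref{quadric}, absent from your outline: the pullback $D=\tilde f^*\Gamma$ of a $(-1)$-curve of $S$ is a fixed prime divisor of type $(3,1)^{\sm}$ (the other types being excluded by $\delta_X\leq 1$ and Lemmas~\ref{30sm}, \ref{30Q}), Lemma~\ref{31sm} pins down the intersection numbers, and Lemma~\ref{quadric} then shows that the general fiber of the induced contraction $g_2\colon Y_2\to S$ is $\pr^1\times\pr^1$. For $\rho_X-\rho_S=3$ this rules out case $(ii)$ of Lemma~\ref{quasiel2} applied to $g_2$ (a divisorial contraction cannot restrict to a nontrivial birational contraction of $\pr^1\times\pr^1$), so case $(i)$ gives the factorization through a $3$-fold; no contradiction is sought or obtained, and nothing in the paper excludes that this configuration is realized by an actual Fano $4$-fold. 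Your attempted contradiction would then simply fail.

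The same missing lemma also undermines your $\rho_X-\rho_S=2$ contradiction: the $\pr^2\times\pr^2$ model is not available a priori. It is the conclusion $F_2\cong\pr^1\times\pr^1$ from Lemma~\ref{quadric}, combined with the identities $\rho_F=\rho_{F_i}+r_i$ and $r_1E_2\cdot C_{E_1}=r_2E_1\cdot C_{E_2}$ for the degrees $r_i$ of $(g_i)_{|\sigma_i(E_i)}$, that forces $r_2=1$, $F_1\cong\pr^2$ and $r_1=E_1\cdot C_{E_2}=2$; only then does the anticanonical identity $-K_{\w{X}}+\tilde f^*(-K_S)=2E_1+3E_2$ make $Y_1$ a $\pr^2$-bundle over $S$ via \cite[Prop.~3.2.1]{beltrametti_sommese}, and only after an MMP relative to a birational morphism $S\to\pr^2$ does one land on $W\cong\pr^2\times\pr^2$ blown up along lines. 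Your outline jumps to that model without this chain, and the final contradiction is not a generic search for a curve with $-K\cdot C\leq 0$ but a delicate analysis of the bidegree-$(2,1)$ divisor $B_2$, the surface $A=\sigma_1(E_1)$ inside it, and the associated net of conics. So while your architecture is recognizably that of the paper for $\rho_X-\rho_S=2$, the proposal as written would not close either case.
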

The condition $\rho_X\geq 7$ is necessary, see Ex.~\ref{es1}; see moreover \S\ref{exX} for an example of a Fano $4$-fold $X$ with $\rho_X=3$ and a quasi-elementary contraction $X\to\pr^2$, with general fiber $\Bl_{2\mskip1mu\pts}\pr^2$, that does not factor through a $3$-fold. We also note that the bound $\rho_X-\rho_S\leq 3$ is sharp, as the following examples show.
\begin{example}\label{es3}
  Let $X$ be one of the two families of Fano $4$-folds with $\rho_X=7$ described in \S\ref{newex}. Then there are a
   quasi-elementary rational contraction  $f\colon X\dasharrow S$
  with $\dim S=2$ and 
    $d_f=\rho_X-\rho_S=3$, and
      a rational contraction $f'\colon X\dasharrow\pr^1$ with $d_{f'}=4$.
\end{example}  
\begin{example}
  Let $X$ be the Fano model of the blow-up of $\pr^4$ at $r$ general points, with $r\in\{6,7,8\}$, so that $\rho_X\in\{7,8,9\}$ (see \S\ref{fanomodel}). Then there is a
  a quasi-elementary rational contraction $f\colon X\dasharrow S$
  with $\dim S=2$ and   $d_f=\rho_X-\rho_S=2$. 
\end{example}
Theorems \ref{zucca} and \ref{CS} allow to classify complete the case where $\rho_X-\rho_S= 3$, as follows.
\begin{corollary}
 Let $X$ be a smooth Fano $4$-fold with $\rho_X\geq 7$, not isomorphic to a product of surfaces, and  $f\colon X\dasharrow S$ a quasi-elementary rational contraction onto a surface with  
  $\rho_X-\rho_S\geq 3$. Then $\rho_X-\rho_S= 3$ and $X$ is as in Th.~\ref{CS}$(ii)$.
\end{corollary}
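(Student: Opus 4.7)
The plan is a short deduction from Theorem \ref{zucca} together with Theorem \ref{CS}. First, by Theorem \ref{zucca} the bound $\rho_X-\rho_S\leq 3$ holds, so combined with the hypothesis $\rho_X-\rho_S\geq 3$ it gives the equality $\rho_X-\rho_S=3$. Since this is strictly larger than $1$, the second clause of Theorem \ref{zucca} supplies a factorization $f=h\circ g$ where $g\colon X\dasharrow Y$ is a special rational contraction of fiber type onto a $3$-fold $Y$ and $h\colon Y\dasharrow S$ is elementary. In particular $\rho_X-\rho_Y=2$, so $g$ is non-elementary; moreover a general fiber of a $K$-negative resolution of $g$ is a $\pr^1$, giving $d_g=1$.

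Second, since $X$ is not a product of surfaces, $\rho_X\geq 7$, and $X$ has the rational contraction $g$ onto a $3$-fold, Theorem \ref{CS} applies and yields $\rho_X\leq 9$ together with the dichotomy between case $(i)$ (existence of an elementary rational contraction of $X$ onto a $3$-fold) and case $(ii)$ (the explicit blow-up description from the Fano model of $\Bl_{(\rho_X-2)\mskip1mu\pts}\pr^4$). It remains to conclude that we are in case $(ii)$. I expect this to be the essential obstacle: case $(i)$ corresponds to the residual unclassified situation, while case $(ii)$ is the classified one produced by the arguments in \cite{3folds} precisely when a special non-elementary rational contraction of fiber type onto a $3$-fold is available on $X$. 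Since $g$ is such a contraction, invoking the more precise content of the classification underlying Theorem \ref{CS} (rather than just its dichotomous statement) places $X$ in case $(ii)$ and completes the proof.
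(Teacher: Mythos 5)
Your proposal is correct and is essentially the deduction the paper intends (the corollary is stated without proof as an immediate consequence of Theorems \ref{zucca} and \ref{CS}). Your final step is indeed legitimate: the special non-elementary contraction $g$ with $\dim Y=3$ and $\rho_X-\rho_Y=2$ is precisely the situation classified in \cite[Th.~5.1]{3folds} (the result invoked in Lemma \ref{classification}), and the output of that classification is exactly the description in Th.~\ref{CS}$(ii)$.
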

\begin{prg}[\em Overview of the proof of Th.~\ref{zucca}]\label{overviewzucca}
We first show a preliminary property in Lemma \ref{quadric}, then we prove
Th.~\ref{zucca}.
The proof is quite long and will take the whole section.

The bound $\rho_X-\rho_S\leq 3$ follows easily from Th.~\ref{scala}; the main work is to prove that $f$ factors through a $3$-fold, when it is not elementary. So suppose that $\rho_X-\rho_S\in\{2,3\}$ and let $\tilde{f}\colon \w{X}\to S$ be a $K$-negative resolution of $f$. We analyse the possible factorizations of $f$ in elementary contractions, using Lemmas \ref{montenero} and \ref{quasiel2}, and show that either the statement holds, or $\tilde{f}$ has two distinct factorizations $\w{X}\stackrel{\sigma_i}{\to}Y_i\stackrel{g_i}{\to} S$ where $\sigma_i$ is elementary of type $(3,2)$ and $g_i$ is quasi-elementary and $K$-negative, for $i=1,2$. Set $E_i:=\Exc(\sigma_i)\subset\w{X}$.

The surface $S$ is smooth del Pezzo with $\rho_S\geq 4$; for a $(-1)$-curve $\Gamma\subset S$,  $D:=\tilde{f}^*\Gamma$ is a fixed prime divisor in $\w{X}$. Using results from Section \ref{prelfixed} we show that $D$ is of type $(3,1)^{\sm}$, and that up to exchanging $E_1$ and $E_2$ we have $E_1\cdot C_D=0$, $E_2\cdot C_D=E_2\cdot C_{E_1}=1$, and $E_1\cdot C_{E_2}>1$. Then by applying Lemma \ref{quadric} we deduce that the general fiber of $g_2$ is $\pr^1\times\pr^1$.
Thanks to  this, if $\rho_X-\rho_S=3$, we factor again $g_2$ using Lemma \ref{quasiel2}, and we get the statement. 

We finally prove that in this setting we cannot have $\rho_X-\rho_S=2$,
proceeding by contradiction. We show that $Y_1$ is smooth and $g_1\colon Y_1\to S$ is a $\pr^2$-bundle, while
the general fiber of $\tilde{f}$ is $\Bl_{2\mskip1mu\pts}\pr^2$.
Then we consider a birational map $\alpha\colon S\to\pr^2$, and show that  $\alpha\circ g_1\colon Y_1\to \pr^2$ also factors as $Y_1\stackrel{\text{SQM}}{\dasharrow} \w{Y}_1 \stackrel{\beta}{\to} \pr^2\times\pr^2
\stackrel{h}{\to}\pr^2$, where $h$ is the first projection and $\beta$ is the blow-up of $\rho_X-3$ lines contained in distinct fibers of $h$ (compare with \S\ref{Y}).

Let  $B_2\subset \pr^2\times\pr^2$ be the transform of $E_2\subset\w{X}$; then $B_2$ is a prime divisor of bidegree $(2,1)$, containing the lines blown-up by $\beta$. We also consider $A\subset B_2$ the surface  image of $E_1\subset\w{X}$.
A careful analysis of the possible configurations for $B_2$ and $A$ shows that this case gives a contradiction.
\end{prg}
  \begin{lemma}\label{quadric}
    Let $X$ be a smooth Fano $4$-fold with $\rho_X\geq 7$ and
 $X\dasharrow\w{X}$ a SQM. 
    Let  $f\colon\w{X} \to
    S$ be a $K$-negative contraction onto a surface that  factors as $g\circ\sigma$, where $\sigma\colon\w{X}\to Y$ is a divisorial elementary contraction  of type $(3,2)$.
    $$\xymatrix{{\w{X}}\ar@/^1pc/[rr]^f\ar[r]_{\sigma}&{Y}\ar[r]_g&S
    }$$
    Suppose also that there is a $(-1)$-curve $\Gamma\subset S_{\reg}$ such that $D:=f^*(\Gamma)$ is a fixed prime divisor of type $(3,1)^{\sm}$ with $E\cdot C_D>0$, where $E:=\Exc(\sigma)\subset\w{X}$.

    Then the general fiber of $g$ is $\pr^1\times\pr^1$.
\end{lemma}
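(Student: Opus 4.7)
My plan is to move into a SQM where Lemma~\ref{square} applies and then translate its conclusion into information about the general fibre of $g$.

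First, since $f = g \circ \sigma$ gives $D = f^{*}(\Gamma) = \sigma^{*}(g^{*}(\Gamma))$, the projection formula yields $D \cdot C_{E} = g^{*}(\Gamma) \cdot \sigma_{*}(C_{E}) = 0$, because $C_{E}$ is contracted by $\sigma$. Combined with the hypothesis $E \cdot C_{D} > 0$, Lemma~\ref{EF}$(a)$ makes $D$ and $E$ adjacent, and then Lemma~\ref{disjoint} sharpens this to $E \cdot C_{D} = 1$ together with $E \cap L = \emptyset$ for every exceptional plane $L \subset D$.

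Next, by Th.-Def.~\ref{fixed}$(a),(c)$ there is a SQM $\eta \colon \widetilde{X} \dashrightarrow \widehat{X}$, composed of $D$-negative and $K$-negative flips, after which the transform $\widehat{D}$ is the exceptional divisor of an elementary divisorial contraction $\alpha \colon \widehat{X} \to Z$ of type $(3,1)^{\sm}$. Each such flip contracts an exceptional plane of $\widetilde{X}$ lying inside $D$; by the preceding paragraph every such plane is disjoint from $E$, so $\eta$ is an isomorphism in a neighbourhood of $E$. Consequently the transform $\widehat{E}$ remains a fixed prime divisor of type $(3,2)$ in $\widehat{X}$, and Th.-Def.~\ref{fixed}$(e)$ furnishes an elementary divisorial contraction $\sigma' \colon \widehat{X} \to Y'$ compatible with $\sigma$ through a SQM $\psi_{Y} \colon Y \dashrightarrow Y'$ satisfying $\sigma(E) \subset \dom(\psi_{Y})$.

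Since numerical intersection numbers are preserved under SQMs, the relations $\widehat{D} \cdot C_{\widehat{E}} = 0$ and $\widehat{E} \cdot C_{\widehat{D}} > 0$ hold in $\widehat{X}$, so Lemma~\ref{square} applies to the pair $(\sigma', \alpha)$. This produces the commutative square with a contraction $\sigma'' \colon Y' \to W$ of type $(3,2)$ and $\alpha' \colon Z \to W$ the blow-up of a smooth point, together with the identifications $\widehat{D} \cong \pr_{\pr^{1}}(\mathcal{O}^{\oplus 2} \oplus \mathcal{O}(1))$ and $\sigma'(\widehat{D}) \cong \pr^{3}$, with this $\pr^{3}$ sitting in $Y'$ as the exceptional divisor of $\alpha'$.

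To finish, I would identify the general fibre of $g$. Since $\sigma(E) \not\subset g^{*}(\Gamma)$ (otherwise $D$ would have an $E$-component, contradicting irreducibility), the subvariety $\sigma(D) \subset Y$ coincides with $g^{-1}(\Gamma)$ scheme-theoretically, and $g$ restricts to a surjective morphism $g|_{\sigma(D)} \colon \sigma(D) \to \Gamma \cong \pr^{1}$ whose general fibre is $F_{g}$. Under $\psi_{Y}$, an isomorphism in codimension one, this morphism corresponds to a pencil on $\sigma'(\widehat{D}) = \pr^{3}$. Using the explicit $\pr^{2}$-bundle structure of $\widehat{D}$ over the blown-up curve $C_{0} \subset Z$, the relation $E \cdot C_{D} = 1$, and the fact that $\alpha'$ is the blow-up of a smooth point, one computes the class of $g^{*}(\pt)|_{\sigma(D)}$ to be twice the hyperplane class on $\pr^{3}$. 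Hence the pencil is a pencil of quadrics, whose general member is a smooth quadric surface $\pr^{1} \times \pr^{1}$, and this is $F_{g}$. The main obstacle is this last identification: pinning down the restriction of $g^{*}(\pt)$ on $\pr^{3}$ and verifying that the pencil is of quadrics rather than of hyperplanes (which would give $\pr^{2}$ fibres), which requires tracking divisor classes through the SQM $\psi_{Y}$ and combining the normal-bundle data provided by Lemma~\ref{square} with the intersection numbers established in the first step.
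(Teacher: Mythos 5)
Your first two paragraphs are sound and match the paper's opening moves: $D\cdot C_E=0$ via the projection formula, adjacency from Lemma~\ref{EF}, $E\cdot C_D=1$ and disjointness from the exceptional planes in $D$ via Lemma~\ref{disjoint}, and the observation that the $D$-negative flips therefore do not touch $E$, so that Lemma~\ref{square} can be brought into play. The paper does essentially the same (it runs the induction in terms of exceptional lines so as to invoke Lemma~\ref{iso32}, but the content is the same).

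The final step, however, has a genuine and fatal gap. You identify ``the general fibre of $g$'' with the general fibre of $g|_{\sigma(D)}\colon\sigma(D)\to\Gamma$, i.e.\ with the fibre of $g$ over a \emph{general point of the $(-1)$-curve $\Gamma$}. The lemma is about the fibre over a general point of the surface $S$, and these are different things: $\Gamma$ is precisely the locus over which $D=f^*(\Gamma)$ sits, so the fibres over $\Gamma$ are special fibres, and nothing in your argument relates them to the general fibre (one would at least need flatness, smoothness of the fibres over general points of $\Gamma$, and a deformation argument, none of which you supply). Moreover, the proposed computation over $\Gamma$ cannot work as described: since $\Pic(\pr^3)=\Z\cdot H$ with $H$ ample, every nonzero nef class on $\pr^3$ is ample and there is \emph{no} surjective morphism $\pr^3\to\pr^1$. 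Hence the map $\sigma(D)\to\Gamma$ does not ``correspond to a pencil on $\sigma'(\wi{D})\cong\pr^3$'': the SQM $\psi_Y$ necessarily modifies $\sigma(D)$ along the base locus of that pencil, and the fibres of $g$ over $\Gamma$ are transforms of the pencil members, not the members themselves. So even the statement about the fibres over $\Gamma$ is not established.

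The paper avoids this entirely by a different device: it first contracts $\Gamma$ via $\tau\colon S\to S_1$ and runs the MMP for $D$ \emph{relative to} $\tau\circ f$, so that every flip, the divisorial contraction of $\wi{D}$, and the point blow-up $\alpha'$ produced by Lemma~\ref{square} all take place inside the single fibre over $p=\tau(\Gamma)\in S_1$. Consequently $g\colon Y\to S$ and $g_1\colon W\to S_1$ have literally isomorphic general fibres. The identification of that fibre as $\pr^1\times\pr^1$ is then done by exclusion: one shows $g_1$ is $K$-negative (using Lemma~\ref{target}), so the general fibre $F$ is a smooth del Pezzo surface; if $F\not\cong\pr^1\times\pr^1$ then $W$ is covered by rational curves of anticanonical degree $3$ contracted by $g_1$, and analysing such a curve through the point $q$ blown up by $\alpha'$ contradicts Lemma~\ref{target}. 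If you want to salvage your approach you would need to replace the computation on $\sigma(D)$ by an argument that actually sees a general fibre of $g$, which is exactly what the passage to $S_1$ accomplishes.
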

\begin{proof}
Since $f(C_E)=\{\pt\}$, we have $D\cdot C_E=0$, therefore $D$ and $E$ are adjacent by Lemma \ref{EF}.
  
Let $\tau\colon S\to S_1$ be the contraction of the $(-1)$-curve $\Gamma$, and set $p:=\tau(\Gamma)\in S_1$; the composition $\tau\circ f\colon\w{X}\to S_1$ contracts ${D}$ to the point $p$.   We run an MMP for ${D}$, relative to $\tau\circ f$: this means that we consider $D$-negative small extremal rays in $\NE(\tau\circ f)$ and their flips, until we find a $D$-negative divisorial extremal ray in the relative cone. In this way 
we get a commutative diagram:
$$\xymatrix{{\w{X}}\ar[d]_{{f}}\ar@{-->}[r]^{\ph}&{\wi{X}}\ar[r]^{\alpha}&{Z}\ar[dl]^h\\
S\ar[r]^{\tau}&{S_1}&  }$$
where $\ph$ is a sequence of $D$-negative flips, $\alpha$ is a divisorial elementary contraction with exceptional divisor the transform $\wi{D}\subset\wi{X}$ of $D$, and $h$ is a contraction of fiber type. Note that both $\wi{D}$ and the indeterminacy locus of $\ph^{-1}$ are  contained in the fiber $(h\circ\alpha)^{-1}(p)$.

By Th.-Def.~\ref{fixed}, $Z$ is smooth and is a SQM of a smooth Fano $4$-fold, and  
 $\alpha$ is the blow-up of a smooth curve $C\subset Z$.

 Since $[C_E]$ is extremal in $\NE(\w{X})$, by Lemma \ref{iso32} $E$ is  disjoint from all exceptional lines in $\w{X}$.
 Let us factor $\ph$ as a sequence of $D$-negative flips:
$$\w{X}=\w{X}_1\stackrel{\sigma_1}{\dasharrow} \w{X}_2\stackrel{\sigma_2}{\dasharrow}\cdots\stackrel{\sigma_{r-1}}{\dasharrow}
\w{X}_r=\wi{X}.$$
Let $\wi{E}\subset\wi{X}$ and $E_i\subset\w{X}_i$ be the transforms of $E$.
We show by induction on $i=1,\dotsc,r$ that $E_i$ is disjoint from all exceptional lines in $\w{X}_{i}$. For $i=1$ we have already remarked this.

Suppose that the claim holds in $\w{X}_i$, and consider the flip $\sigma_i\colon \w{X}_i\dasharrow\w{X}_{i+1}$. 
If $\sigma_i$ is not $K$-negative, then its indeterminacy locus is a disjoint union of exceptional lines (see Lemma \ref{SQMFano}), and $X_{i+1}$ will have less exceptional lines than $X_i$, still disjoint from $E_{i+1}$. If instead $\sigma_i$ is $K$-negative, then its indeterminacy locus is a finite disjoint union of exceptional planes $L_j\subset X_i$ (see Lemma \ref{kawamata}) such that $D\cdot C_{L_j}<0$, because $\sigma_i$ is $D$-negative. Then $E_i\cap L_j=\emptyset$ by Lemma \ref{disjoint}, thus in $X_{i+1}$ the divisor $E_{i+1}$ stays disjoint from the new exceptional lines in the indeterminacy locus of $\sigma_i^{-1}$.

We conclude that $\wi{E}\subset\wi{X}$ is disjoint from all exceptional lines in $\wi{X}$, and
by  Lemma \ref{iso32}
the class $[C_{\wi{E}}]$ generates an extremal ray of $\NE(\wi{X})$, which is contracted by the map $h\circ\alpha\colon\wi{X}\to S_1$. Let $\hat\sigma\colon\wi{X}\to\wi{Y}$ be the associated contraction, of type $(3,2)$ with exceptional divisor $\wi{E}$ (see Th.-Def.~\ref{fixed}).

 By Lemma \ref{square} we have a commutative diagram:
$$\xymatrix{{\w{X}}\ar@/_1pc/[dd]_{f} \ar@{-->}[r]^{\ph}\ar[d]^{\sigma}&{\wi{X}}\ar[d]^{\hat\sigma}\ar[r]^{\alpha}&{Z}\ar[d]_{\sigma'}\ar@/^1pc/[dd]^h&\\
  Y \ar@{-->}[r]^{\ph_Y} \ar[d]^g&{\wi{Y}}\ar[r]^{\alpha'}&W\ar[d]_{g_1}\\
S\ar[rr]_{\tau}&&{S_1}}$$
where
$\sigma'$ is a divisorial elementary contraction of type $(3,2)$ with $\Exc(\sigma')=\alpha(\wi{E})$,  
$\alpha'$ is the blow-up of a smooth point $q\in W$ with
$\Exc(\alpha')=\hat\sigma(\wi{D})=:D'\subset\wi{Y}$, and $g_1$ is a contraction of fiber type.

Moreover $\ph_Y$ is a SQM such that $\hat\sigma(\wi{E})\subset\dom(\ph_Y^{-1})$ (by Th.-Def.~\ref{fixed}), therefore
$\wi{Y}\smallsetminus\dom(\ph_Y^{-1})$ is contained in the fiber $(g_1\circ\alpha')^{-1}(p)$, and the same for $D'$. We conclude that
$g\colon Y\to S$, $g_1\circ \alpha'\colon\wi{Y}\to S_1$, and $g_1\colon W\to S_1$, all have isomorphic general fibers.

We show that $F\cong\pr^1\times\pr^1$ where $F\subset W$ is the general fiber of $g_1$; this gives the statement.

By Lemma \ref{singtarget} both $\wi{Y}$ and $W$ have at most isolated, locally factorial, and terminal singularities; in particular $F$ is smooth.

We show that $g_1$ is $K$-negative. Let $\ell\subset W$ be an irreducible curve with $-K_W\cdot\ell\leq 0$, and $\ell_{\wi{Y}}\subset\wi{Y}$ its transform.
Note that $\ell_{\wi{Y}}\not\subset D'$ because $\alpha'(D')=\{q\}$, thus $D'\cdot\ell_{\wi{Y}}\geq 0$ and
$0\geq -K_W\cdot\ell=(-K_{\wi{Y}}+3D')\cdot\ell_{\wi{Y}}\geq -K_{\wi{Y}}\cdot\ell_{\wi{Y}}$.
By Lemma \ref{target} $\ell_{\wi{Y}}$ is an exceptional line, disjoint from $\hat{\sigma}(\wi{E})$, and $K_{\wi{Y}}\cdot\ell_{\wi{Y}}=1$ also gives $D'\cdot\ell_{\wi{Y}}= 0$,
thus $\ell_{\wi{Y}}\cap D'=\emptyset$.

 Then
 the transform $\ell_{\wi{X}}\subset\wi{X}$ is again an exceptional line,
disjoint from both 
 $\wi{E}$ and $\wi{D}$.
It is not difficult to see that $\ell_{\wi{X}}$ must be contained in $\dom(\ph^{-1})$, so that its transform $\ell_{\w{X}}\subset\w{X}$ 
is an exceptional line, disjoint from $D$. Since $f$ is $K$-negative, $f(\ell_{\w{X}})\subset S$ is a curve. Moreover it cannot be the $(-1)$-curve $\Gamma$, otherwise $\ell_{\w{X}}$ would be contained in $D$. We conclude that $\tau(f(\ell_{\w{X}}))=g_1(\ell)\subset S_1$ is a curve, and $g_1$ is $K$-negative, therefore its general fiber
$F$ is a smooth del Pezzo surface.

Suppose by contradiction that $F\not\cong\pr^1\times\pr^1$. Then $F$ is covered by a family of curves of anticanonical degree $3$, thus $W$ is covered by a family of curves of anticanonical degree $3$, contracted by $g_1$. If $C\subset W$ is a curve of the family containing the point $q$ blown-up by $\alpha'\colon\wi{Y}\to W$, then every irreducible component $C_0$ of $C$ satisfies $-K_W\cdot C_0>0$, because $g_1$ is $K$-negative.
Let $C_0$ be an irreducible component of $C$ containing $q$, and
$\wi{C}_0\subset\wi{Y}$ its transform.
If $-K_W\cdot C_0=1$ we get $-K_{\wi{Y}}\cdot\wi{C}_0\leq -2$, and if $-K_W\cdot C_0=3$ we get  $-K_{\wi{Y}}\cdot\wi{C}_0=0$ or $-K_{\wi{Y}}\cdot\wi{C}_0\leq -3$,
in any case 
contradicting Lemma \ref{target}. Therefore it must be  $-K_W\cdot C_0=2$, $-K_{\wi{Y}}\cdot\wi{C}_0\leq -1$, and
 $\wi{C}_0$ is an exceptional line by Lemma \ref{target}. However we must also have $C=C_0\cup C_1$ with $C_1$ irreducible, $-K_W\cdot C_1=1$, and $q\not\in C_1$. Then the transform of $C_1$ in $\wi{Y}$ is an irreducible curve of anticanonical degree one intersecting an exceptional line, contradicting 
Lemma \ref{target}.
Therefore $F\cong\pr^1\times\pr^1$.
\end{proof}
\begin{proof}[Proof of Th.~\ref{zucca}]
 Let us assume that $f$ is not elementary, so that $\rho_X-\rho_S>1$.
  
Since $f$ is quasi-elementary, we have $d_f=\rho_X-\rho_S$, and $S$ is a smooth del Pezzo surface (Lemma \ref{delpezzo}). By Th.~\ref{scala} we have $\rho_X-\rho_S\leq 4$, and if 
$\rho_X-\rho_S=4$, then it should be $S\cong\pr^2$, which is impossible because $\rho_X\geq 7$. Therefore we have  $2\leq \rho_X-\rho_S\leq 3$, in particular $\rho_S\geq \rho_X-3\geq 4$. 

We are left to show that $f$ factors as $X\stackrel{g}{\dasharrow}Y\stackrel{h}{\dasharrow} S$ where $\dim Y=3$, $g$ is special, and $h$ is elementary.
Let us consider a $K$-negative resolution $\tilde{f}\colon\w{X}\to S$ of $f$.
\begin{prg}\label{2}
Either we get the statement, or $\w{X}$ contains two fixed prime divisors $E_1,E_2$ of type $(3,2)$ such that $C_{E_1}$ and $C_{E_2}$ are contracted by $\tilde{f}$ and $(E_1\cdot C_{E_2})(E_2\cdot C_{E_1})>1$.
\end{prg}
\begin{proof} 
  We consider first the case $\rho_X-\rho_S=2$, and apply Lemma \ref{quasiel2} to $f$. If we get $(i)$, we have the statement. If we get $(ii)$,
  then $f$ admits two factorizations $X\stackrel{\alpha_i}{\dasharrow} W_i\stackrel{h_i}{\to}S$ where  $\alpha_i$ is a divisorial elementary rational contraction with exceptional locus dominating $S$ for $i=1,2$, so that $\alpha_i$ is of type $(3,2)$. Moreover $\alpha_1^*\Nu(W_1)\neq
  \alpha_2^*\Nu(W_2)$, hence $\Exc(\alpha_1)\neq\Exc(\alpha_2)$ by Th.-Def.~\ref{fixed}.

 Let $E_i\subset \w{X}$ be the transform of $\Exc(\alpha_{i})$; then $E_i$ is
 a fixed prime divisor of type $(3,2)$, $E_1\neq E_2$, and $C_{E_i}$ is contracted by $\tilde{f}$. Moreover $\R_{\geq 0}[C_{E_i}]$ is an extremal ray of $\NE(\tilde{f})$
 by Lemma \ref{extremal}, and finally 
 $\NE(\tilde{f})=\langle [C_{E_1}],[C_{E_2}]\rangle$. Since $f$ is quasi-elementary, we have $\dim(\NE(\tilde{f})\cap\mov(X))=2$ (see \cite[Prop.~2.22]{eff}), hence
 $(E_1\cdot C_{E_2})(E_2\cdot C_{E_1})>1$ by \cite[Lemma 4.6(a)]{fibrations}.

\smallskip
  
  Suppose now that $\rho_X-\rho_S=3$; we proceed similarly. By Lemma \ref{montenero} $f$ factors as
  $$X\stackrel{g}{\dasharrow} X_2\stackrel{h}{\to} S$$ where $g$ is elementary, either divisorial or of fiber type, and $h$ is of fiber type (because $f$ is quasi-elementary). Moreover $h$ is quasi-elementary by Lemma \ref{h} and 
$\rho_{X_2}-\rho_S=2$, therefore $\dim X_2\neq 3$, and we conclude that $\dim X_2=4$ and 
  $g$ is birational divisorial with $\Exc(g)$ dominating  $S$. 

  Now we apply Lemma \ref{quasiel2} to $h$. If we get $(i)$, then $h$ factors as
  $X_2\stackrel{g_2}{\dasharrow} Y\stackrel{h_2}{\to} S$ where $\dim Y=3$ and both $g_2$ and $h_2$ are elementary of fiber type. Since $\Exc(g)$ dominates $S$, its image in $Y$ is a divisor, hence the composition $g_2\circ g\colon X\dasharrow Y$ is special, and we get  the statement.

  If instead we get $(ii)$, for $i=1,2$ we have a factorization of $h$ as
  $$X_2\stackrel{\alpha_i}{\dasharrow} W_i\stackrel{h_i}{\to} S$$
   where  $\alpha_i$ is a divisorial elementary rational contraction with exceptional locus dominating $S$ for $i=1,2$, so that $\alpha_i$ is of type $(3,2)$.
  Let $E_i\subset \w{X}$ be the transform of $\Exc(\alpha_{i})\subset X_2$. Similarly as in the previous case $\rho_X-\rho_S=2$ we see that $E_i$ is a fixed prime divisor of type $(3,2)$, $C_{E_i}$ is contracted by $\tilde{f}$, and $E_1\neq E_2$. Let moreover  $G\subset \w{X}$ be the transform of $\Exc(g)$, which is again a fixed prime divisor of type $(3,2)$.

  We note that $G$ and $E_i$ are adjacent, because they are both contracted by the composite birational map $\w{X}\stackrel{\text{SQM}}{\dasharrow} X\stackrel{g}{\dasharrow}X_2\stackrel{\alpha_i}{\dasharrow}W_{i}$, where $W_i$ is $\Q$-factorial and $\rho_X-\rho_{W_i}=2$ (see e.g.\ \cite[proof of Lemma 4.2]{small}). 
  By Lemma \ref{EF} we conclude that $G\cdot C_{E_i}=E_i\cdot C_{G}=0$ for $i=1,2$.

 Let us show that
$({E}_{1}\cdot C_{{E}_{2}})({E}_{2}\cdot C_{{E}_{1}})> 1$.
Let $\beta\colon\wi{X}\to W_{1}$ be a resolution of the above map $\w{X}\dasharrow W_{1}$; with a slight abuse of notation, we still denote by $G$ and $E_i$ their transforms in $\wi{X}$.
Then $G,E_1$ are the exceptional divisors for $\beta$, and
$\beta(E_2)\subset W_{1}$ is a divisor that dominates $S$ under $h_1\colon W_1\to S$, therefore
 $\beta(E_2)\cdot \NE(h_{1})>0$. Moreover
$\beta^*(\beta(E_2))=E_2+(E_2\cdot C_{E_1})E_1+(E_2\cdot C_{G})G=E_2+(E_2\cdot C_{E_1})E_1$, and  $[\beta_*(C_{E_2})]\in\NE(h_{1})$.
Hence:
$$0<\beta(E_2)\cdot \beta_*(C_{E_2})=\bigl(E_2+(E_2\cdot C_{E_1})E_1\bigr)
\cdot C_{E_2}=(E_1\cdot C_{E_2})(E_2\cdot C_{E_1})-1,$$
and we conclude that  $(E_{1}\cdot C_{E_{2}})(E_{2}\cdot C_{E_{1}})> 1$.
\end{proof}
\begin{prg}\label{torteria}
  Let $i\in\{1,2\}$. Since $\tilde{f}$ is $K$-negative and  contracts $C_{E_i}$, by Lemma \ref{extremal} 
  $\R_{\geq 0}[C_{E_i}]$ is an extremal ray of $\NE(\tilde{f})$,
and we have a
factorization:
$$\xymatrix{ &{\w{X}}\ar[dl]_{\sigma_1}\ar[dr]^{\sigma_2}\ar[d]^{\tilde{f}} &\\
  {Y_1}\ar[r]_{g_1}&S&{Y_2}\ar[l]^{g_2}}$$
  where $\sigma_i$ is the contraction
  of $\R_{\geq 0}[C_{E_i}]$, $Y_i$ has at most isolated terminal and locally factorial singularities (Lemma \ref{singtarget}), $\sigma_i(E_i)$ dominates $S$, and $g_i$ is quasi-elementary (Lemma \ref{h}).

  We also note that $g_i$ is $K$-negative: suppose otherwise; then by Lemma \ref{target} $g_i$ must contract some exceptional line of  $Y_i$, disjoint from $\sigma_i(E_i)$, and this contradicts the $K$-negativity of $\tilde{f}$.

   Let $F_i\subset Y_i$ be a general fiber of $g_i$, so that $F_i$ is a smooth del Pezzo surface.
\end{prg}
\begin{prg}\label{lucia}
  Let   $\Gamma\subset S$ be a $(-1)$-curve (recall that $\rho_S\geq 4$), and set $D:=\tilde{f}^{-1}(\Gamma)$; then $D$ is a prime divisor because $\tilde{f}$ is quasi-elementary (Lemma \ref{equivalent}), and it is fixed because $\Gamma$ is. Moreover $\tilde{f}^{*}(\Gamma)=mD$ for some $m\in\Z_{>0}$, thus $D\cdot \gamma=0$ for every $\gamma\in\NE(\tilde{f})$, and $D$ is the pullback of some divisor in $S$; this implies that $m=1$ and  $D=\tilde{f}^{*}(\Gamma)$.

  We show that $D$ is of type $(3,1)^{\sm}$.
We have $\tilde{f}(D)=\Gamma$, therefore $\dim\N(D,\w{X})\leq 1+\rho_X-\rho_S\leq 4$
(Lemma \ref{easy}).
On the other hand we have
 $\delta_X\leq 1$ by Th.~\ref{starting}, and we deduce from Lemma \ref{dim32} that  $D$ cannot be of type $(3,2)$.

Moreover for $i=1,2$ we have $D\cdot C_{E_i}=0$, hence $D$ is adjacent to $E_i$ by Lemma \ref{EF}; in $\w{X}$ we have ${D}\cap {E}_i\neq\emptyset$ because ${E}_i$ dominates $S$, thus $\dim({D}\cap{E}_i)=2$ and since the indeterminacy locus of the map $\w{X}\dasharrow X$ is one-dimensional (Lemma \ref{SQMFano}$(a)$), we have $\w{D}\cap \w{E}_i\neq\emptyset$ in $X$, where $\w{D},\w{E}_i\subset X$ are the transforms of $D,E_i$ respectively.

Then $D$ cannot be of type $(3,0)^{\sm}$ by Lemma \ref{30sm}. Moreover $D$ cannot be of type $(3,0)^Q$, otherwise by Lemma \ref{30Q} we get a contradiction with  $(E_1\cdot C_{E_2})(E_2\cdot C_{E_1})>1$. We conclude that $D$ is of type $(3,1)^{\sm}$.
 
We apply Lemma \ref{31sm}. Since $(E_1\cdot C_{E_2})(E_2\cdot C_{E_1})>1$, up to exchanging $E_1$ and $E_2$ we have:
\stepcounter{thm}
\begin{equation}\label{freddo}
E_{1}\cdot C_D=0,\quad E_{2}\cdot C_D=E_{2}\cdot C_{E_{1}}=1,\quad \text{and}\quad E_{1}\cdot C_{E_{2}}\geq 2.\end{equation}
 Note that $E_1\cdot C_{E_2}$ and $E_2\cdot C_{E_1}$ do not depend on $D$,
therefore \eqref{freddo} must hold for 
the pullback $D$ of \emph{every} $(-1)$-curve $\Gamma\subset S$.

  Since $E_2\cdot C_D>0$, we can apply
  Lemma \ref{quadric} to $\tilde{f}=g_2\circ\sigma_2$ and $D$, and deduce that  $F_{2}\cong\pr^1\times\pr^1$, where $F_2\subset Y_2$ is a general fiber of $g_2$.
\end{prg}
\begin{prg} Suppose that $\rho_X-\rho_S=3$, so that
  $\rho_{Y_2}-\rho_S=2$, and apply Lemma \ref{quasiel2} to $g_2\colon Y_2\to S$.
  We show that $(ii)$ cannot happen. Otherwise, $g_2$ factors as  $Y_2\stackrel{\alpha}{\dasharrow} W\stackrel{h}{\to} S$ where $\alpha$ is elementary and divisorial with exceptional locus dominating $S$. Consider
a resolution $\alpha'\colon Y_2'\to W$ of $\alpha$. Then there is a sequence of flips $\psi\colon Y_2\dasharrow Y_2'$, relative to $g_2$, such that 
$\alpha=\alpha'\circ\psi$.
$$\xymatrix{{Y_2}\ar[d]_{g_2}\ar@{-->}[r]^{\psi}\ar@{-->}[rd]^{\alpha}&{Y_2'}\ar[d]^{\alpha'}\\
  S&W\ar[l]_h
  }$$
By Lemma \ref{target} each flip in $\psi$ has locus contained in the smooth locus of the $4$-fold, and is either $K$-negative, or $K$-positive. Therefore the locus of each flip is a finite disjoint union of exceptional planes or lines, and such locus must be contracted to finitely many points by $g_2$. We conclude that $h\circ\alpha'$ sends the indeterminacy locus $Y_2'\smallsetminus\dom(\psi^{-1})$ to a finite set of points, therefore $\psi$ is an isomorphism on the general fiber $F_2$, and 
$h\circ\alpha'$ has general fiber $F_2'\cong \pr^1\times\pr^1$. However this is impossible, because $\alpha'$ must restrict to a non-trivial birational contraction on $F_2'$. 

Therefore we must get $(i)$ from Lemma \ref{quasiel2},
  and $g_2$ factors as
  $Y_2\stackrel{\beta}{\dasharrow} Z\stackrel{k}{\to} S$
  with $\dim Z=3$ and both maps are elementary  of fiber type. 
Since $E_2$ dominates $S$, its image in $Z$ is a surface, $\beta\circ\sigma_2\colon \w{X}\dasharrow Z$ is special,
   and we get the statement.
  \end{prg}
\begin{prg}
  We are left with the case where $\rho_X-\rho_S=2$ and $g_1$ and
  $g_2$ are elementary. We show that this case does not happen, as it leads to a contradiction.

  Note that $\NE(\tilde{f})=\langle [C_{E_1}],[C_{E_2}]\rangle$ (see \ref{torteria}).

 For $i=1,2$ let $r_i$ be the degree of $(g_i)_{| \sigma_i(E_i)}\colon \sigma_i(E_i)\to S$. Then $\sigma_i$ blows-up $F_i$ in $r_i$ points, so if $F\subset\w{X}$ is a general fiber of $\tilde{f}$, then $F$ is a del Pezzo surface with
  \stepcounter{thm}
  \begin{equation}\label{rhoF}
    \rho_F=\rho_{F_i}+r_i.
    \end{equation}

  We show that:
  \stepcounter{thm}
  \begin{equation}\label{rel}
    r_1E_{2}\cdot C_{E_{1}}=r_2E_{1}\cdot C_{E_{2}}.
    \end{equation}
Indeed write $(E_{i})_{|F}=\Gamma_{i1}+\cdots+\Gamma_{ir_i}$; then $\Gamma_{ij}\equiv C_{E_i}$ for every $j$, and
\begin{gather*}
  r_1E_{2}\cdot C_{E_{1}}=E_2\cdot (\Gamma_{11}+\cdots+\Gamma_{1r_1})=(E_{2})_{|F}\cdot_F(\Gamma_{11}+\cdots+\Gamma_{1r_1})\\=(\Gamma_{21}+\cdots+\Gamma_{2r_2})\cdot_F
  (E_1)_{|F}=r_2E_{1}\cdot C_{E_{2}}.
  \end{gather*}
  
  Now \eqref{rhoF}, \eqref{rel}, \eqref{freddo}, and $\rho_{F_2}=2$, give:
$$2+r_{2}=\rho_F=\rho_{F_{1}}+r_{1}=\rho_{F_{1}}+r_{2}E_{1}\cdot C_{E_{2}}\geq 1+2r_2,$$
 and we get  $r_{2}=1$, $F_{1}\cong\pr^2$, $\rho_F=3$,  and
  $r_{1}=E_{1}\cdot C_{E_{2}}=2$.
\end{prg}
\begin{prg}  
  We show that:
  \stepcounter{thm}
\begin{equation}
  \label{wX}
  -K_{\w{X}}+\tilde{f}^*(-K_S)=2E_{1}+3E_{2}.\end{equation}

  Indeed
  $(-K_{\w{X}}-2E_{1}-3E_{2})\cdot C_{E_{i}}=0$ for $i=1,2$, thus 
$-K_{\w{X}}-2E_{1}-3E_{2}=\tilde{f}^*M$ for some divisor $M$ on $S$.

By Lemma \ref{lifting}, for every $(-1)$-curve $\Gamma\subset S$ there exists an exceptional line $\ell\subset \w{X}$ such that $\tilde{f}(\ell)=\Gamma$. We have $E_i\cdot\ell=0$ for $i=1,2$ by Lemma \ref{iso32}, thus
$$-1=(-K_{\w{X}}-2E_{1}-3E_{2})\cdot\ell=\tilde{f}^*M\cdot\ell=M\cdot \tilde{f}_*(\ell),$$
and this implies that 
$M\cdot\Gamma=-1=K_S\cdot\Gamma$ for every $(-1)$-curve $\Gamma\subset S$. Since $S$ is a del Pezzo surface with $\rho_S=\rho_X-2\geq 5$, $\N(S)$ is generated by classes of $(-1)$-curves, and we conclude that $M=K_S$.
\end{prg}
\begin{prg}\label{usseaux}
 Set $B_2':=\sigma_1(E_2)\subset Y_1$. Applying the pushforward $(\sigma_1)_*$ to \eqref{wX} we get:
  \stepcounter{thm}
\begin{equation}\label{formula}
  -K_{Y_1}+g_1^*(-K_S)=3B_2'.
\end{equation}

Restricting to $F_1\cong\pr^2$ gives $\ol_{Y_1}(B_2')_{|F_1}\cong\ol_{\pr^2}(1)$,
in particular $B_2'$ is $g_1$-ample. If $A$ is an ample divisor on $S$, then for $r\gg 0$ the divisor $B_2'+rg_1^*(A)$ is ample on $Y_1$, and we still have $\ol_{Y_1}(B_2'+rg_1^*(A))_{|F_1}\cong\ol_{\pr^2}(1)$.
By \cite[Prop.~3.2.1]{beltrametti_sommese}, this implies that $g_1$ is a $\pr^2$-bundle and
 $Y_1$ is smooth, so that $Y_1$ is a SQM of a smooth Fano $4$-fold $Y'_1$ with $\rho_{Y_1'}\geq 6$ (see Th.-Def.~\ref{fixed}).
\end{prg}
\begin{prg}\label{points}
  Let us choose a birational morphism $\alpha\colon S\to\pr^2$, which blows-up the points $p_1,\dotsc,p_m\in\pr^2$ with  $m=\rho_S-1=\rho_X-3\geq 4$.
  For $i\in\{1,\dotsc,m\}$ let $\Gamma_i\subset S$ be the $(-1)$-curve over $p_i$, and
  set $D_i:=\tilde{f}^*\Gamma_i\subset\w{X}$. Then $D_i$ is a  fixed prime divisor of type $(3,1)^{\sm}$ with $D_i\cdot C_{E_1}=D_i\cdot C_{E_2}=E_1\cdot C_{D_i}=0$ and $E_2\cdot C_{D_i}=1$, for every $i=1,\dotsc,m$ (see \ref{lucia}). Moreover for $i\neq j$ we have $D_i\cap D_j=\emptyset$, therefore $D_i\cdot C_{D_j}=0$. By Rem.~\ref{uffa} this implies that $\langle [E_1],[D_1],\dotsc,[D_m]\rangle$ is a fixed face of $\Eff(\w{X})\cong\Eff(X)$, of dimension $\rho_X-2$.

 For $i\in\{1,\dotsc,m\}$ set $D_i':=\sigma_1(D_i)\subset Y_1$; the divisors $D_1',\dotsc,D_m'$ are contracted to points by $\alpha\circ g_1$. We run a MMP in $Y_1$ for $D_1'+\cdots+D_m'$, relative to $\alpha\circ g_1$, and get
 a commutative diagram: $$\xymatrix{{\w{X}}\ar[dr]_{\tilde{f}}\ar[r]^{\sigma_1}&{Y_{1}}\ar[d]^{g_1}\ar@{-->}[r]^{\xi}&{\w{Y}_1}\ar[r]^{\beta}&W\ar[dl]^h\\
&    S\ar[r]^{\alpha}&{\pr^2}&
  }$$
  where $\xi$ is a SQM, $\beta$ is birational with exceptional divisors the transforms $D_1'',\dotsc,D_m''\subset\w{Y}_1$ of $D_1',\dotsc,D_m'\subset Y_1$, and $h\colon W\to\pr^2$ is an elementary contraction with general fiber $F_W\cong\pr^2$. Moreover it follows from 
Rem.~\ref{uffa} that
  $\beta$ is the blow-up of $m$ pairwise disjoint smooth curves $C_1,\dotsc,C_m\subset W$, and $W$ is smooth and a SQM of a smooth Fano $4$-fold.

  Let $\w{B}_2'\subset\w{Y}_1$ and $B_2\subset W$ be the transforms of $E_{2}$, and set $H=h^*\ol_{\pr^2}(1)\in\Pic(W)$.
 We have $-K_S=\alpha^*(-K_{\pr^2})-\sum_{i=1}^m\Gamma_i$, and by \eqref{formula}
  $-K_{Y_1}+g_1^*\alpha^*(-K_{\pr^2})-\sum_{i=1}^mD_i'=3B_2'$, which in $\w{Y}_1$ gives
  $-K_{\w{Y}_1}+3\beta^*(H)-\sum_{i=1}^mD_i''=3\w{B}_2'$.
  Finally via the pushforward $\beta_*$ we get:
   \stepcounter{thm}
\begin{equation}
  \label{W}
-K_W=3 (B_2-H).
\end{equation}

This implies that
  $B_{2|F_W}\cong\ol_{\pr^2}(1)$, therefore using again  \cite[Prop.~3.2.1]{beltrametti_sommese} as in \ref{usseaux} we get
  $W=\pr_{\pr^2}(\ma{E})$, where $\ma{E}$ is a rank $3$ vector bundle on $\pr^2$.
  Recall that $W$ is a SQM of a smooth Fano $4$-fold. Moreover, since $-K_W$ is divisible by $3$ in $\Pic(W)$,
  $W$ cannot contain exceptional lines nor exceptional planes, so the second elementary contraction of $W$ cannot be small (see Lemma \ref{SQMFano} and Lemma \ref{kawamata}), and $W$ is Fano of index
    $3$. This also implies that $c_1(\ma{E})\equiv 0\mod 3$, and by \cite{szurekwisnnagoya}
   we conclude that $W\cong\pr^2\times\pr^2$ . We can assume that $h\colon \pr^2\times\pr^2\to\pr^2$ is the first projection.

Let us consider the curves $C_1,\dotsc,C_m\subset \pr^2\times\pr^2$ blown-up by $\beta$.    
 We  have
 that $h(C_i)=p_i\in\pr^2$ with $p_i\neq p_j$ for $i\neq j$, so that $C_i$ is a smooth curve in $h^{-1}(p_i)=\pr^2$. If $L\subset h^{-1}(p_i)$ is a general line, we have $-K_{\pr^2\times\pr^2}\cdot L=3$ and $L$ must meet $C_i$, hence $D_i''\cdot\w{L}>0$, where $\w{L}\subset\w{Y}_1$ is 
its transform. On the other hand $\w{L}$ cannot be an exceptional line, because $\w{Y}_1$ can contain at most finitely many of them (see Lemma \ref{SQMFano}), therefore  $-K_{\w{Y}_1}\cdot \w{L}>0$, which implies that $D_i''\cdot\w{L}=1$ 
and  $C_i\cdot_{\pr^2} L=1$, so that $C_i$ is itself a line in $h^{-1}(p_i)=\pr^2$.

We have therefore shown that
$Y_1$ is a SQM of a blow-up of $\pr^2\times\pr^2$ along $m$ lines contained in different fibers of the first projection (see Ex.~\ref{Y}); in particular $\w{Y}_1\smallsetminus\dom(\xi^{-1})$ is the disjoint union of $m$ exceptional planes, given by the transforms of $h^{-1}(p_1),\dotsc,h^{-1}(p_m)$, and $Y_1\smallsetminus\dom\xi$ is the disjoint union of $m$ exceptional lines.
 \end{prg}
 \begin{prg}\label{frecciar}
  Consider the surface $\sigma_1(E_1)\subset B_2'\subset Y_1$, and let $\w{A}\subset\w{B}_2'\subset\w{Y}_1$ and $A\subset B_2\subset \pr^2\times\pr^2$ be its transforms.
 
  Let $i\in\{1,\dotsc,m\}$. It follows from Lemma \ref{carolina} that, whenever $A\cap C_i\neq\emptyset$, $A$ and $C_i$ intersect transversally at finitely many points, where $A$ is smooth.
  
From \eqref{W} we see that $B_2$ has bidegree $(2,1)$ in $\pr^2\times\pr^2$,
hence $h_{|B_2}\colon B_2\to\pr^2$ is generically a $\pr^1$-bundle.
Moreover $C_i\subset B_2$, because $E_2\cdot C_{D_i}>0$ (see \ref{points}); on the other hand $B_2$ cannot contain the whole fiber $h^{-1}(p_i)$, otherwise $\w{B}_2'\subset\w{Y}_1$ would contain an exceptional plane in the indeterminacy locus of $\xi^{-1}$, $\w{B}_2\subset{Y}_1$ would intersect an exceptional line, and $E_2\subset\w{X}$ would intersect an exceptional line too, a contradiction (see Lemmas  \ref{target} and \ref{iso32}).  Therefore $C_i=(h_{|B_2})^{-1}(p_i)$, $h_{|B_2}$ is a $\pr^1$-bundle over $p_i$, and
 $C_i\subset (B_2)_{\reg}$.
  
 Finally we note that since $E_1\cdot C_{E_2}=2$, we have $A\cdot_{B_2}C_i=2$, and we conclude that $A$ and $C_i$ intersect at exactly two points.

We set $C_i=\{p_i\}\times L_i$ with $L_i\subset\pr^2$ a line.
  \end{prg}
  \begin{prg}\label{roma}
    Let $i,j,k\in\{1,\dotsc,m\}$ with $i<j<k$; we show that $L_i\cap L_j\cap L_k=\emptyset$ (in particular this also shows that $L_i\neq L_j$). By contradiction, suppose otherwise, and let $q\in L_i\cap L_j\cap L_k$.

  Let $h'\colon \pr^2\times\pr^2\to\pr^2$ be the second projection,  and 
  in $(h')^{-1}(q)=\pr^2$ consider a general conic $\Gamma$ through $(p_i,q)$, $(p_j,q)$, and $(p_k,q)$ (note that $p_i,p_j,p_k$ are not collinear in $\pr^2$, because $\Bl_{p_i,p_j,p_k}\pr^2$ is a del Pezzo surface, see \ref{points}). Then $-K_{\pr^2\times\pr^2}\cdot \Gamma=6$ and if $\w{\Gamma}\subset\w{Y}_1$ is its transform, we have $D_i''\cdot\w{\Gamma}>0$, $D_j''\cdot\w{\Gamma}>0$,  $D_k''\cdot\w{\Gamma}>0$, hence
  $-K_{\w{Y}_1}\cdot\w{\Gamma}\leq 0$, so that $\w{\Gamma}$ is an exceptional line by Lemma \ref{SQMFano}. However this is impossible, because $\w{Y}_1$ contains at most finitely many exceptional lines (again by Lemma \ref{SQMFano}), while $\Gamma$ moves in a positive dimensional family.
  \end{prg}
\begin{prg}\label{torq}
  Set $\Lambda:=h'(A)\subset\pr^2$, and let $i\in\{1,\dotsc,m\}$. We  have $h'(C_i\cap A)\subset L_i\cap\Lambda$, in particular $L_i\cap \Lambda\neq \emptyset$ because $C_i\cap A\neq\emptyset$. Moreover $\Lambda$ cannot be a point, otherwise it should lie in $L_1\cap L_2\cap L_3$ which is empty by \ref{roma} (recall that $m\geq 4$). 

  Let $q\in L_i\cap \Lambda$; we show that $(p_i,q)\in A$. Suppose otherwise, and let $L$ be a line in $(h')^{-1}(q)$ containing $(p_i,q)$ and meeting $A$.
  Then $-K_{\pr^2\times\pr^2}\cdot L=3$, and if $\w{L}\subset\w{Y}_1$ is its transform, we have $-K_{\w{Y}_1}\cdot\w{L}\leq 1$, $\w{L}\cap\w{A}\neq\emptyset$, and $\w{L}\not\subset\w{A}$,  contradicting Lemma \ref{target}.
  Therefore $(p_i,q)\in A$.

 Since we also have $(p_i,q)\in C_i$, this implies that $h'(C_i\cap A)=L_i\cap \Lambda$, hence $L_i$ intersects $\Lambda$ set-theoretically in two points (see \ref{frecciar}). 
  We conclude that $\Lambda$ is an irreducible curve of degree at least $2$.
\end{prg}
\begin{prg}\label{tauZ}
  Let $\tau\colon Z\to\pr^2\times\pr^2$ be the blow-up of $A$, with exceptional divisor
  $E_1'\subset Z$; we note that 
  $Z$ is a smooth Fano $4$-fold with $\rho_Z=3$ (see Rem.~\ref{uffa}), and there is a birational contraction $\ph\colon X\dasharrow Z$.  

Let $q\in\Lambda$ be a general point, $\Gamma\subset \pr^2\times\{q\}$ a general line, and $\w{\Gamma}\subset Z$ its transform. Note that $A$ has at most isolated singularities (Lemma \ref{singtarget}), therefore $(h'_{|A})^{-1}(q)\subset\pr^2\times\{q\}$ is a smooth curve; let $d$ be its degree.
Then $E_1'\cdot\w{\Gamma}=d$, hence $-K_Z\cdot\w{\Gamma}=3-d\geq 1$, and  we get $d\in\{1,2\}$.

  Let $G_Z\subset Z$ be the transform of $G:=\pr^2\times\Lambda\subset\pr^2\times\pr^2$. We have
  $G\cdot \Gamma=0$ and $\tau^*(G)=G_Z+mE_1'$ with $m\geq 1$, therefore $G_Z\cdot\w{\Gamma}=-md<0$.

  If $d=2$, we see that $G_Z$ is a fixed prime divisor covered by rational curves of anticanonical degree one. Consider the birational contraction $\ph\colon X\dasharrow Z$, and let $G_X\subset X$ be the transform of $G_Z$. Then the indeterminacy locus of $\ph^{-1}\colon Z\dasharrow X$ must be disjoint from the transform of $\pr^2\times\{q\}$ (see \cite[Prop.~2.10]{3folds}), so that $G_X$ 
  is again a fixed prime divisor covered by rational curves $C$ with $-K_X\cdot C=1$. By \cite[Lemma 2.18]{blowup} $G_X$ must be of type $(3,2)$, but this contradicts  $G_X\cdot C=-2m\leq -2$.
\end{prg}
\begin{prg}\label{lambda}
  Therefore $d=1$, and $A$ is $\pr^1$-bundle over $\Lambda$; since $A$ has isolated singularities, this implies that
  $\Lambda$ is smooth.
  
Recall that $B_2$ has bidegree $(2,1)$ in $\pr^2\times\pr^2$,
hence  $h'_{|B_2}\colon B_2\to\pr^2$ is generically a conic bundle.

We notice that $h'_{|B_2}$ does not have $2$-dimensional fibers, otherwise the fibers of $h_{|B_2}$, as curves in $\pr^2$, should all have a common point.  However the curves $C_i$ are fibers of $h_{|B_2}$ (see \ref{frecciar}), and 
by \ref{roma} there is no common point on the lines $L_i$'s. Therefore $h'_{|B_2}\colon B_2\to \pr^2$ is  a conic bundle; let $\Delta\subset\pr^2$ be its discriminant locus, parametrising singular conics.

Since the general fiber of $h'_{|A}$ is a line, we have $\Lambda\subset\Delta$, and $(h'_{|B_2})^*(\Lambda)=A+A'$ where $A'$ is a prime divisor that dominates $\Lambda$ (possibly $A'=A$), such that the general fiber of $h'_{|A'}$ is a line too.

This implies that $B_2$ must be singular, otherwise $h'_{|B_2}$ would be an elementary contraction of a smooth Fano $3$-fold, and $A\cdot C=0$ for the general fiber $C\subset B_2$, so that $A$ should be the pullback of a divisor in $\pr^2$, but it is not. Since $B_2$ is singular,  $h_{|B_2}\colon B_2 \to\pr^2$ must have a $2$-dimensional fiber $F=\{p_0\}\times\pr^2\subset B_2$, and all the fibers of the conic bundle $h'_{B_2}$  contain the point $p_0$.
\end{prg}  
 \begin{prg}
Recall that $\deg\Lambda\geq 2$ by \ref{torq}.
   Suppose first that $\Lambda$ is a conic, and recall that $C_1\subset (B_2)_{\reg}$ and $A\cdot_{B_2} C_1=2$ (see \ref{frecciar}). Then
 $2=\Lambda\cdot_{\pr^2} L_1=(h'_{|B_2})^*(\Lambda)\cdot_{B_2} C_1=(A+A')\cdot_{B_2} C_1=2+A'\cdot_{B_2} C_1$, therefore $A'\cdot_{B_2} C_1=0$. We also have $C_1\not\subset A'$, because $L_1\not\subset\Lambda$, and we conclude that $A'\cap C_1=\emptyset$, namely $A'\cap (h_{|B_2})^{-1}(p_1)=\emptyset$, and $p_1\not\in h(A')$.

 Then $h(A')\subsetneq\pr^2$, so that $A'=L_0\times\Lambda$ where $L_0$ is a fixed line. 
 Note that $B_2$ is the total space of a net of conics; let $\Gamma\subset\pr^2$ be the subvariety parametrising conics of the net that have $L_0$ as a component. It is not difficult to see that
 $\Gamma$ must be linear, and since it contains the conic $\Lambda$, we get $\Gamma=\pr^2$, but this is impossible because $B_2$ is irreducible.
Therefore we conclude that
  $\deg\Lambda>2$.
\end{prg}  
 \begin{prg}
If  $\Delta\subsetneq\pr^2$,  then $\Delta$ is a cubic curve and $\Lambda\subset\Delta$, therefore $\Lambda=\Delta$ and $\Delta$ is smooth (see \ref{lambda}).
A local computation shows that this implies that $B_2$ is smooth too (see e.g.\ \cite[Rem.~3.9 and its proof]{3folds}), but we have already excluded this (see \ref{lambda}).

Therefore $\Delta=\pr^2$ and every fiber of $h'_{|B_2}$ is singular. Then it is not difficult to see that either all conics of the net have a fixed component, or they are all singular at the same point $p_0$ (see for instance \cite[Table 1]{nets}), and
since $B_2$ is irreducible, we must be in the second case.
The fibers of $h'_{|A}$ are components of fibers of $h'_{|B_2}$, hence they all contain $p_0$, and $F\cap A$ is a section of $h'_{|A}$, where $F=\{p_0\}\times\pr^2$. Therefore $F\cap A\cong\Lambda$, and
 $F\cap A$ has degree $\geq 3$. Let $L \subset F$ be a general line, and $\w{L}\subset Z$ its transform (see \ref{tauZ}). Then $E_1'\cdot\w{L}\geq 3$ and $-K_{\pr^2\times\pr^2}\cdot L=3$, thus $-K_Z\cdot\w{L}\leq 0$, a contradiction.
  This concludes the proof of Th.~\ref{zucca}.
\qedhere
\end{prg}  
\end{proof}
\section{Fixed divisors of type $(3,2)$}\label{sec32}
\noindent In this section we show the following result, which implies Th.~\ref{sharp32intro} from the Introduction.
\begin{thm}\label{sharp32}
  Let $X$ be a smooth Fano $4$-fold with $\rho_X\geq 7$, not isomorphic to a product of surfaces, and $E\subset X$ a fixed prime divisor of type $(3,2)$ such that $\N(E,X)\subsetneq\N(X)$.

  Then $\rho_X\leq 9$,
  $X$ has an elementary rational contraction onto a  $3$-fold, and $E$ is of type $(3,2)^{\sm}$. More precisely there is a commutative diagram 
 \stepcounter{thm}
\begin{equation}\label{cavour} \xymatrix{X\ar[d]_{\sigma'}\ar@{-->}[r]^{\xi}&{\w{X}}\ar[r]^{\tilde{f}}\ar[d]_{\sigma}&Y\ar[d]^g\\
W'\ar@{-->}[r] & W\ar[r]^h&Z
}
\end{equation}
where:
\begin{enumerate}[$\bullet$]
\item $W'$ is a smooth Fano $4$-fold and $\sigma'$ is  of type $(3,2)^{\sm}$,
  with exceptional divisor $E$;
\item $\xi$ and $W'\dasharrow W$ are SQM's, and $E\subset\dom(\xi)$;
\item $Y$ is a smooth weak Fano\footnote{Namely $-K_Y$ is nef and big.} $3$-fold and $\tilde{f}$ is an elementary contraction and a conic bundle, finite on $\xi(E)$;
\item
  $Z$ is a smooth del Pezzo surface.
  \end{enumerate}
\end{thm}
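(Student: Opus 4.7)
The plan is to construct an elementary rational contraction $f\colon X\dasharrow Y$ onto a $3$-fold, invoke Theorem \ref{CS} to obtain $\rho_X\leq 9$, and then assemble the full diagram \eqref{cavour}.

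By Theorem \ref{starting} we have $\delta_X\leq 1$; together with the hypothesis $\N(E,X)\subsetneq\N(X)$ this forces $\codim\N(E,X)=1$. Let $\sigma'\colon X\to W'$ be the divisorial elementary contraction of $E$ from Theorem-Definition \ref{fixed} (the SQM there is trivial since $E$ is of type $(3,2)$). By Lemma \ref{singtarget}, $W'$ is $\Q$-factorial Fano with at most isolated ordinary double points supported on $S':=\sigma'(E)$, and $\sigma'$ is the blow-up of $W'$ along $S'$. Applying $\sigma'_*$ to $\N(E,X)$ yields $\dim\N(S',W')=\rho_{W'}-1$, so the class span of curves in $S'$ is a hyperplane in $\N(W')$.

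Exploiting this codimension-one constraint and the Mori dream space structure of $W'$ (inherited from $X$), I would select a facet of $\Mov(W')$ transverse to the curve classes in $S'$, producing a rational contraction of fiber type $W'\dasharrow Z'$. Composing with $\sigma'$ gives a rational contraction $X\dasharrow Z'$ of fiber type; by Theorems \ref{scala} and \ref{brasile} we have $d_{X\dasharrow Z'}\leq 4$ and the map factors through a $3$-fold, producing a rational contraction $X\dasharrow Y$ with $\dim Y=3$. Theorem \ref{CS} then yields $\rho_X\leq 9$ and leaves either case $(i)$ (elementary rational contraction onto a $3$-fold) or case $(ii)$. In case $(ii)$, the fixed prime divisors of type $(3,2)$ are pinned down by Theorem-Definition \ref{fixed} applied to the extremal contractions of the explicit classification, and none of them satisfies $\N(E,X)\subsetneq\N(X)$ (the check uses Lemma \ref{dim32} together with the explicit extremal structure from \cite{3folds}). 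Hence case $(i)$ holds.

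For the fine structure of \eqref{cavour}, take a $K$-negative resolution $\tilde{f}\colon\w{X}\to Y$ (Lemma \ref{Kneg}) of the elementary rational contraction $X\dasharrow Y$, with SQM $\xi\colon X\dasharrow\w{X}$. Since $\tilde{f}$ is elementary of fiber type and $\dim\w{X}-\dim Y=1$, its general fiber is $\pr^1$ and $\tilde{f}$ is a conic bundle; adjunction and the $K$-negativity of $\tilde{f}$ imply that $Y$ is a smooth weak Fano $3$-fold. The assumption $[E]\notin\tilde{f}^*\Nu(Y)$ combined with Remark \ref{greg} forces $\tilde{f}|_{\xi(E)}$ to be finite. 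Extending $\sigma'$ via Theorem-Definition \ref{fixed}$(e)$ through $\xi$ yields $\sigma\colon\w{X}\to W$ and a SQM $W'\dasharrow W$, with $E\subset\dom(\xi)$; compatibility with $W'\dasharrow Z'$ then gives contractions $h\colon W\to Z$ and $g\colon Y\to Z$ fitting in the diagram, and Lemma \ref{delpezzo} applied to the resulting quasi-elementary rational contraction $X\dasharrow Z$ shows that $Z$ is a smooth del Pezzo surface. Finally, any ordinary double point of $W'$ would force a $2$-dimensional fiber of $\sigma'$ (Lemma \ref{singtarget}), whose transform under $\xi$ would map via $\tilde{f}$ to a curve contained in $\xi(E)$, contradicting finiteness of $\tilde{f}|_{\xi(E)}$; hence $W'$ is smooth and $E$ is of type $(3,2)^{\sm}$.

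The hardest points will be ruling out case $(ii)$ of Theorem \ref{CS} via the precise classification of fixed prime divisors there, and producing the compatible SQMs that make the square in \eqref{cavour} commute; both rely on the detailed classification of fixed prime divisors and their adjacencies developed in Section \ref{prelfixed}, together with Theorem-Definition \ref{fixed}$(e)$.
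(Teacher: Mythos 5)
Your outline follows the paper's general strategy (use $\delta_X\le 1$ to get $\codim\N(E,X)=1$, produce a fiber-type rational contraction, land on a $3$-fold, invoke Th.~\ref{CS} for $\rho_X\le 9$, then assemble the diagram), but there are two genuine gaps at the crux of the argument. First, the existence of a fiber-type elementary rational contraction is exactly the hard point, and you do not justify it: ``selecting a facet of $\Mov(W')$ transverse to the curve classes in $S'$'' does not produce a fiber-type contraction, because a facet of the movable cone may well correspond to a \emph{divisorial} elementary rational contraction (see Rem.~\ref{movdual}: $\Mov(X)$ has two kinds of facets, and only those contained in $\partial\Eff(X)$ give fiber-type contractions). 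The paper's proof of this step (\ref{6.3}) is a full contradiction argument: assuming every $2$-dimensional face of $\Mov(X)^{\vee}$ through $\langle[C_E]\rangle$ is spanned by classes $[C_D]$ of fixed divisors, one extracts a special rational contraction $\psi\colon X\dasharrow T$ with $\rho_X-\rho_T=2$ contracting $C_E$, and then rules out $\dim T=3$ via Lemma \ref{classification} (which needs the explicit classification from \cite{3folds}) and $\dim T=2$ via Th.~\ref{zucca}. None of this is replaced by anything in your proposal.

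Second, your route to the $3$-fold is logically backwards: Th.~\ref{brasile} produces a factorization through a $3$-fold only when $d_f\geq 5$ (or in a special subcase of $d_f=4$), while Th.~\ref{scala} says that for these $X$ one always has $d_f\leq 4$. So the two theorems combined give you nothing; you cannot conclude that $X\dasharrow Z'$ factors through a $3$-fold this way. In the paper the $3$-fold $Y$ instead emerges from the face structure: the $2$-dimensional face $\langle[C_E]\rangle+\alpha$ with $\alpha\subset\mov(X)$ gives a codimension-two face $\eta$ of $\Mov(X)$, hence a factorization $X\stackrel{f}{\dasharrow}Y\stackrel{g}{\to}Z$ with $\rho_Z=\rho_X-2$, and one then shows $g$ must be of fiber type (by excluding the divisorial case using $\dim\N(\wi{E},\wi{X})=\rho_X-1$ and Lemma \ref{easy}), which forces $\dim Y=3$ and $\dim Z=2$. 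Your idea of ruling out case $(ii)$ of Th.~\ref{CS} by checking $\N(E,X)=\N(X)$ for the divisors appearing there is in the right spirit (it is essentially the content of Lemma \ref{classification}), but in the paper that lemma is deployed inside the contradiction argument of \ref{6.3}, not as an afterthought; without step \ref{6.3} and the correct construction of $Y$, the proof does not go through.
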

The bound $\rho_X\leq 9$ improves the bound $\rho_X\leq 12$ shown in \cite[Prop.~5.32]{blowup};
we do not know if it is sharp.
Let us also note that, up to flops, there are only three possibilities for the $3$-fold $Y$, see \cite[Cor.~6.4]{3folds}.
\begin{prg}[\em Overview of the proof of Th.~\ref{sharp32}]\label{overviewsharp32}
We consider the cone $\Mov(X)^{\vee}\subset\N(X)$, dual of the cone of movable divisors, that has $\langle [C_E]\rangle$ as a face of dimension one (see Rem.~\ref{movdual}). The main step is to show that there exists a two-dimensional face $\langle [C_E]\rangle+\alpha$ of $\Mov(X)^{\vee}$ with $\alpha\subset\mov(X)$ (in particular this implies that $X$ has an elementary rational contraction of fiber type, see again Rem.~\ref{movdual}).
To prove this, we work by contradiction, and show that otherwise we can produce a special rational contraction of fiber type $\psi\colon X\dasharrow T$, with $\rho_X-\rho_T=2$, and contracting $C_E$. Then we reach a contradiction by applying Lemma \ref{classification} when $\dim T=3$, and Th.~\ref{zucca} when $\dim T=2$.

Once we have the one-dimensional face $\alpha$ of $\Mov(X)^{\vee}$ as above, this gives an elementary rational contraction of fiber type $X\dasharrow Y$. If $X\to W'$ is the elementary contraction of type $(3,2)$ with exceptional divisor $E$, since  $\langle [C_E]\rangle+\alpha$ is a face of $\Mov(X)^{\vee}$, we see that there are elementary rational contractions $Y\dasharrow Z$ and $W'\dasharrow Z$ onto the same target. Then by analysing the maps, and using that $\N(E,X)\subsetneq\N(X)$, we get the statement.
\end{prg}
\begin{proof}[Proof of Th.~\ref{sharp32}]
Since $X$ is not a product of surfaces and $\rho_X\geq 7$, we have $\delta_X\leq 1$ by Th.~\ref{starting}. Therefore
$\delta_X=1$ and $\N(E,X)$ is a hyperplane in $\N(X)$.

  Consider the cone $\Mov(X)^{\vee}\subset\N(X)$, dual of the cone of movable divisors;  we have 
 $\Mov(X)^{\vee}=\langle [C_D]\rangle_{D\text{ fixed}}+\mov(X)$,
 and $\langle[C_E]\rangle$ is a one-dimensional face of $\Mov(X)^{\vee}$, see Rem.~\ref{movdual}.
\begin{prg}\label{6.3}
 There exists a one-dimensional face $\alpha$ of $\Mov(X)^{\vee}$,  contained in $\mov(X)$, such that 
 $\langle[C_E]\rangle+\alpha$ is a face of $\Mov(X)^{\vee}$.
\end{prg}
\begin{proof}
  We proceed by contradiction, and assume that every $2$-dimensional face of 
  $\Mov(X)^{\vee}$ containing $\langle[C_E]\rangle$ has the form
  $\langle[C_E],[C_D]\rangle$ for some fixed prime divisor $D\subset X$. 

  Then
  as in \cite[proof of Prop.~5.32]{blowup} we show that there exists a
 $2$-dimensional face  $\langle[C_E],[C_{E'}]\rangle$ of 
  $\Mov(X)^{\vee}$ where $E'$ is a
  fixed prime divisor of type $(3,2)$ such that  $[C_{E'}]\not\in\N(E,X)$, $E\cap E'\neq\emptyset$, $E\cdot C_{E'}>0$, and $E'\cdot C_{E}>0$. In particular $\langle [C_{E}],[C_{E'}]\rangle\cap\mov(X)\neq\{0\}$ by \cite[Lemma 4.6(a)]{fibrations}.

The $2$-dimensional face  $\langle[C_E],[C_{E'}]\rangle$ of 
$\Mov(X)^{\vee}$ corresponds by duality to the codimension $2$ face
 $\eta:=\Mov(X)\cap C_{E}^{\perp}\cap C_{E'}^{\perp}$
of $\Mov(X)$. Note that $\eta\subset\partial \Eff(X)$, because $\langle [C_{E}],[C_{E'}]\rangle\cap\mov(X)\neq\{0\}$. Let $\eta_0\in\MCD(X)$ be a cone of dimension $\rho_X-2$ with $\eta_0\subset\eta$, and
 $\psi\colon X\dasharrow T$ the  rational contraction such that $\eta_0=\psi^*(\Nef(T))$; then  $\rho_T=\rho_X-2\geq 5$, $\psi$ is of fiber type, and $\dim T\in\{2,3\}$. 

\medskip
 
    We show that $\psi$ is special.
    
    Let $\tau_{\psi}$ be the minimal face of $\Eff(X)$ containing $\eta$ (see Def.-Rem.~\ref{dimfiber}). By Lemma \ref{eubea}, we need to show that 
    $\tau_{\psi}\cap\Mov(X))=\eta$.

    Note that $\tau_{\psi}\cap\Mov(X)$ is a proper face of $\Mov(X)$ and contains $\eta$, therefore if  $\tau_{\psi}\cap\Mov(X))\supsetneq\eta$, then  $\tau_{\psi}\cap\Mov(X)$ must be a facet of $\Mov(X)$ containing $\eta$, and contained in 
 $\partial \Eff(X)$.
  
 On the other hand $\eta$ is contained in exactly two facets of $\Mov(X)$, that are
  $\Mov(X)\cap C_{E}^{\perp}$ and $\Mov(X)\cap C_{E'}^{\perp}$, and both of them are not contained in $\partial \Eff(X)$.  Therefore $\tau_{\psi}\cap\Mov(X)$ cannot be a facet, it must be
 $\tau_{\psi}\cap\Mov(X)=\eta$, and $\psi$ is special.

 Let
$$\tilde{\psi}\colon\w{X}\la T$$
be a $K$-negative resolution of $\psi$, and let $\w{E},\w{E}'\subset\w{X}$ be the transforms of $E,E'$. Since $\eta_0\subset C_E^{\perp}\cap C_{E'}^{\perp}$, $\tilde{\psi}$ contracts both $C_{\w{E}}$ and $C_{\w{E}'}$, and by Lemma \ref{extremal} we have $\NE(\tilde{\psi})=\langle [C_{\w{E}}],[C_{\w{E}'}] \rangle$. In particular, since $\NE(\tilde{\psi})$ does not contain small extremal rays, there are no flips relative to $\tilde{\psi}$, and $\tilde{\psi}\colon\w{X}\to T$ is the unique resolution of $\psi$.

 We have $\dim\N(\w{E},\w{X})=\dim\N(E,X)=\rho_X-1$ by Lemma \ref{dim32}.
If $\dim T=3$, then $\N(\w{E},\w{X})\subsetneq\N(\w{X})$ contradicts Lemma \ref{classification}, therefore
$\dim T=2$.

Since $\psi$ is special, the images of $\w{E},\w{E}'$ in $T$ are either a curve, or $T$ itself. 
If  $\tilde{\psi}(\w{E})$ is a curve in $T$, then  $\dim\N(\w{E},\w{X})\leq 1+\rho_X-\rho_T=3$ (see Lemma \ref{easy}),  a contradiction, and
 similarly for $\w{E}'$. Therefore $\tilde{\psi}(\w{E})=\tilde{\psi}(\w{E}')=T$  and 
 $\tilde{\psi}$ is quasi-elementary, because the general fiber will contain curves $C_{\w{E}}$, $C_{\w{E}'}$.

 Then  Th.~\ref{zucca} implies that
 $\psi$ factors as the composition of two rational elementary contractions $X\stackrel{f}{\dasharrow}Y\stackrel{g}{\dasharrow} T$
 where $\dim Y=3$. Up to composing with a SQM of $Y$, we can assume that $g$ is regular. 
 By taking a resolution $\hat{f}\colon\wi{X}\to Y$ of $f$, we get also a resolution $g\circ\hat{f}\colon\wi{X}\to T$ of $\psi$, hence it must be $\wi{X}=\w{X}$ and $\tilde{\psi}=g\circ\hat{f}$. This implies that $\NE(\hat{f})$ is an extremal ray of $\NE(\tilde{\psi})$, which 
 gives a contradiction, because  $\NE(\tilde{\psi})=\langle [C_{\w{E}}],[C_{\w{E}'}] \rangle$.
\end{proof}
\begin{prg}
Let $\langle[C_E]\rangle+\alpha$ be a $2$-dimensional face  of $\Mov(X)^{\vee}$, with $\alpha$ one-dimensional face of $\mov(X)$ (see \ref{6.3}).
 Dually this means that both $C_E^{\perp}\cap\Mov(X)$ and $\alpha^{\perp}\cap\Mov(X)$ are facets of $\Mov(X)$, and they intersect in a face $\eta$ of $\Mov(X)$ of dimension $\rho_X-2$.

Let us choose two cones $\eta_0,\tau\in\MCD(X)$ as follows: $\eta_0\subset\eta$ of dimension $\rho_X-2$, and $\tau\subset \alpha^{\perp}\cap\Mov(X)$  of dimension $\rho_X-1$ and containing $\eta_0$.

Let $f\colon X\dasharrow Y$ be the elementary rational contraction with $\tau=f^*(\Nef(Y))$; $f$ is of fiber type, because $\alpha\subset \partial\mov(X)$, hence
 $\tau\subset\partial\Eff(X)$.
Since $\eta_0\subset\tau$, there is a contraction $g\colon Y\to Z$ such that $(g\circ f)^*(\Nef(Z))=\eta_0$ (see Rem.~\ref{factor}), and $g$ is elementary because $\dim\eta_0=\rho_X-2$.   Note that since $\rho_Z=\rho_X-2\geq 5$, we have $\dim Z\in \{2,3\}$.

Let $X\stackrel{\xi}{\dasharrow}\w{X}\stackrel{\tilde{f}}{\to} Y$ be a $K$-negative resolution of $f$
and 
$\ph\colon\wi{X}\to Z$ a $K$-negative resolution of $g\circ f$.
We have $\eta_0\subset C_E^{\perp}$, thus $\ph$ contracts $C_E$, moreover 
$\ph$ is $K$-negative: by Lemma \ref{extremal} it factors through an elementary contraction $\sigma\colon \wi{X}\to W$, of type $(3,2)$, with exceptional divisor the transform $\wi{E}\subset\wi{X}$ of $E$.
We have a diagram:
$$\xymatrix{{\w{X}}\ar[d]_{\tilde{f}}\ar@{-->}[rr]&&{\wi{X}}\ar[dl]_{\ph}\ar[d]^{\sigma}\\
  Y\ar[r]_g&Z&W\ar[l]^{h}
  }$$
We also note that $\dim\N(\wi{E},\wi{X})=\dim\N(E,X)=\rho_X-1$ by Lemma \ref{dim32}.
\end{prg}
\begin{prg}
 Since $\sigma$ is of type $(3,2)$ and $h$ is elementary of fiber type, both $W$ and $Z$ are $\Q$-factorial. Moreover $\tilde{f}$ is elementary of fiber type, $Y$ is $\Q$-factorial too, and $g$ cannot be small.

 We show that $g$ is of fiber type. By contradiction, if $g$ is divisorial, then there is a prime divisor $D\subset\w{X}$ such that $\tilde{f}(D)=\Exc(g)$ and hence $\codim g(\tilde{f}(D))>1$. If $\wi{D}\subset\wi{X}$ is the transform of $D$, then $h(\sigma(\wi{D}))=g(\tilde{f}(D))$, thus it must be $\codim \sigma(\wi{D})>1$ and $\wi{D}=\wi{E}$. On the other hand $\dim \ph(\wi{D})\leq 1$, therefore $\dim\N(\wi{D},\wi{X})\leq 3$ (see Lemma \ref{easy}), a contradiction.

 We conclude that $g$ is of fiber type, $\dim Y=3$, $\dim Z=2$, and the composition $g\circ \tilde{f}$ is quasi-elementary. This implies that $\sigma(\wi{E})$ dominates $Z$; moreover $Z$ is a smooth del Pezzo surface (Lemma \ref{delpezzo}).

 In particular $f\colon X\dasharrow Y$ is a rational contraction onto a $3$-fold, and we get $\rho_X\leq 9$ by Th.~\ref{CS}.
\end{prg}
\begin{prg}
Let us consider the $2$-dimensional cone $\NE(\ph)=\NE(\sigma)+R$, where $R$ is an extremal ray of $\NE(\wi{X})$.
We have $\N(\wi{E},\wi{X})\subsetneq \N(\wi{X})$ while $\ph_*(\N(\wi{E},\wi{X}))=\N(\ph(\wi{E}),Z)=\N(Z)$, therefore $\ker\ph_*\not\subset\N(\wi{E},\wi{X})$. On the other hand $\NE(\sigma)\subset \N(\wi{E},\wi{X})$, therefore we must have
$R\not\subset \N(\wi{E},\wi{X})$. We also have $\wi{E}\cdot\NE(\sigma)<0$, and there are curves $C\subset\wi{X}$ contracted by $\ph$ such that $\wi{E}\cdot C>0$, therefore $\wi{E}\cdot R>0$.

Let $F\subset\wi{X}$ be a non-trivial fiber of the contraction of $R$, so that $\N(F,\wi{X})=\R R$. Then $\wi{E}\cap F\neq\emptyset $ (because $\wi{E}\cdot R>0$) and $\dim(\wi{E}\cap F)=0$ (because $R\not\subset \N(\wi{E},\wi{X})$), hence $\dim F=1$.
Since $\ph$ is $K$-negative, this implies that $R$ cannot be small (Lemma \ref{kawamata}). Then there are no small rays in $\NE(\ph)$, therefore $\w{X}=\wi{X}$, $\ph=g\circ\tilde{f}$, $R=\NE(\tilde{f})$, and we get a diagram as \eqref{cavour}. 
Moreover
$\tilde{f}$ is finite on $\w{E}$ because $\NE(\tilde{f})\not\subset \N(\w{E},\w{X})$, hence
every fiber of $\tilde{f}$ has dimension one, so that $\tilde{f}$ is a conic bundle and $Y$ is smooth (see \cite[Th.~(1.2)]{wisn}), and $Y$ is weak Fano by \cite[Cor.~6.4]{3folds}.

We show that $\sigma$ is of type $(3,2)^{\sm}$. By Lemma \ref{singtarget}, we have to show that every fiber of $\sigma$ has dimension $\leq 1$. If $\sigma$ had some $2$-dimensional fiber $F_0$, then $\tilde{f}(F_0)\subset Y$ would be a $2$-dimensional fiber of $g$, which is impossible. This also implies that $W$ is smooth.

By Th.-Def.~\ref{fixed}, there is an elementary contraction $\sigma'\colon X\to W'$ of type $(3,2)$ with exceptional divisor $E$, and $W'$ is Fano; moreover $\sigma'$ and $\sigma$ are locally isomorphic, therefore $E\subset\dom\xi$ where $\xi\colon X\dasharrow\w{X}$ is the SQM, $\sigma'$ is of type $(3,2)^{\sm}$ too, and $W'$ is smooth. This concludes the proof of Th.~\ref{sharp32}.
\qedhere
\end{prg}
\end{proof}

We conclude this section with some technical lemmas that follow from our previous results, and are needed in the sequel.
\begin{lemma}[refined version of \cite{small}, Lemma 7.2]\label{torino}
Let $X$ be a smooth Fano $4$-fold with $\rho_X\geq 7$, and $D\subset X$ a fixed prime divisor of type $(3,0)^Q$. Suppose that there are two fixed prime divisors $E_1,E_2$, of type $(3,2)$, both adjacent to $D$. 
 Then  $X$ has a rational contraction onto a $3$-fold.
  \end{lemma}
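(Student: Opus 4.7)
The plan is to prove the refined lemma by a case distinction on whether $D$ meets both of $E_1, E_2$, the two cases being handled by Theorem~\ref{sharp32} and by the argument of \cite[Lemma 7.2]{small} respectively. Note first that $X$ is not isomorphic to a product of surfaces: if it were, every fixed prime divisor of $X$ would be of type $(3,2)$ (see the Example following Th.-Def.~\ref{fixed}), contradicting the existence of $D$ of type $(3,0)^Q$. In particular $\delta_X\leq 1$ by Theorem~\ref{starting}, and Theorem~\ref{sharp32} may be applied to any fixed prime divisor $E\subset X$ of type $(3,2)$ such that $\N(E, X) \subsetneq \N(X)$.

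Suppose first that $D \cap E_i = \emptyset$ for some $i \in \{1,2\}$, say $i=1$. By Rem.~\ref{cassis} we then have $\N(E_1, X) \subset D^{\perp} \subsetneq \N(X)$. Applying Theorem~\ref{sharp32} to the fixed prime divisor $E_1$ yields an elementary rational contraction $X \dasharrow Y$ with $\dim Y = 3$, which is the desired conclusion.

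Otherwise $D \cap E_1 \neq \emptyset$ and $D \cap E_2 \neq \emptyset$. Then Lemma~\ref{30Q} applies and gives the intersection numbers $E_1 \cdot C_{E_2} = E_2 \cdot C_{E_1} = 1$. These are precisely the hypotheses (and the initial intersection-theoretic conclusions) present in \cite[Lemma 7.2]{small}, whose argument then goes through unchanged in our setting and produces a rational contraction $X \dasharrow Y$ with $\dim Y = 3$.

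The main novelty with respect to \cite[Lemma 7.2]{small} is precisely the case $D \cap E_i = \emptyset$: there the key point is the purely numerical observation that $\N(E_i, X) \subset D^{\perp}$ is a proper subspace of $\N(X)$, which by Theorem~\ref{sharp32} is already enough to force $X$ to admit an elementary rational contraction onto a $3$-fold. The only step that might require some care is checking that Theorem~\ref{sharp32} is legitimately applicable, i.e.\ that $X$ is not a product of surfaces, but this is immediate from the presence of $D$ of type $(3,0)^Q$.
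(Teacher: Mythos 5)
Your overall strategy --- reduce to Th.~\ref{sharp32} via $\N(E_i,X)\subsetneq\N(X)$ --- is the right one, and your first case is handled correctly: if $D\cap E_i=\emptyset$ then $\N(E_i,X)\subset D^{\perp}\subsetneq\N(X)$ by Rem.~\ref{cassis}, and Th.~\ref{sharp32} applies (your verification that $X$ is not a product of surfaces is also correct). The gap is in your second case. You split according to whether $D$ meets both $E_1$ and $E_2$, and assert that when it does, the argument of \cite[Lemma 7.2]{small} ``goes through unchanged'' and directly yields a rational contraction onto a $3$-fold. That is not what that argument establishes. As the paper records, the proof of \cite[Lemma 7.2]{small} yields the dichotomy ``either $\N(E_i,X)\subsetneq\N(X)$ for some $i\in\{1,2\}$, or $X$ has a rational contraction onto a $3$-fold'', and the first alternative is \emph{not} excluded by $D\cap E_i\neq\emptyset$: a fixed prime divisor of type $(3,2)$ meeting $D$ can still have $\N(E_i,X)$ a proper subspace. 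In that branch the original argument concluded via the older \cite[Th.~4.8]{small} (which only gives a Picard number bound), and replacing that step is exactly the content of the refinement --- compare Lemmas \ref{9.2} and \ref{9.3}, whose proofs consist precisely of substituting Th.~\ref{sharp32} for \cite[Th.~4.8]{small}. So the old proof does not go through unchanged in your case 2, and as written that case does not deliver the conclusion. Note also that your case distinction is misaligned with the one actually made in \cite{small}: Lemma \ref{30Q} extracts the intersection numbers $E_1\cdot C_{E_2}=E_2\cdot C_{E_1}=1$ from that proof under the hypothesis $D\cap E_i\neq\emptyset$, but this does not by itself force $\N(E_i,X)=\N(X)$.

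The repair is immediate and uses only the tool you already invoke: in your case 2, run the argument of \cite[proof of Lemma 7.2]{small} to reach the dichotomy above; if $\N(E_i,X)=\N(X)$ for both $i$ the rational contraction onto a $3$-fold is produced there, and otherwise Th.~\ref{sharp32} applies exactly as in your case 1. This is in fact the paper's proof, which makes no case distinction on $D\cap E_i$ at all and applies Th.~\ref{sharp32} uniformly to the alternative $\N(E_i,X)\subsetneq\N(X)$.
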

  \begin{proof}
    Note that $X$ is not isomorphic to a product of surfaces, because it has a fixed prime divisor of type $(3,0)^Q$, hence $\delta_X\leq 1$ by Th.~\ref{starting}. Then
    \cite[proof of Lemma 7.2]{small} shows that either
 $\N(E_i,X)\subsetneq\N(X)$ for some $i\in\{1,2\}$, or $X$ has
   a rational contraction onto a $3$-fold, thus the statement follows from Th.~\ref{sharp32}.
    \end{proof}
\begin{lemma}[refined version of \cite{small}, Lemma 5.13]\label{lori}
  Let $X$ be a smooth Fano $4$-fold with $\rho_X\geq 7$, not isomorphic to a product of surfaces, and $f\colon X\dasharrow S$ a rational contraction onto a surface with $\rho_S=1$. Suppose that there is a unique prime divisor in $X$ contracted to a point by $f$. Then   there is a fixed prime divisor $E\subset X$ of type $(3,2)$ such that $\N(E,X)\subsetneq\N(X)$.
\end{lemma}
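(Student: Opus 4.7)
The plan is to refine the argument of \cite[Lemma 5.13]{small} so as to extract the codimension-one information on $\N(E,X)$ beyond the mere existence of a fixed prime divisor of type $(3,2)$.

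First, by Prop.~\ref{calanques} I would factor $f$ as $X\stackrel{g}{\dasharrow} Y\stackrel{h}{\to} S$, where $g$ is a special rational contraction of fiber type onto a surface and $h$ is birational; Lemma \ref{specialsurf} ensures that $Y$ is smooth. Let $\tilde{g}\colon\w{X}\to Y$ be a $K$-negative resolution of $g$ (Lemma \ref{Kneg}), and let $\w{D}\subset\w{X}$ be the transform of the unique $f$-vertical prime divisor $D\subset X$. Its image $\tilde{g}(\w{D})$ is either a point of $Y$ or an irreducible curve $\Gamma\subset Y$ contracted to a point by $h$. Combining $\rho_S=1$ with the uniqueness of $D$, I would distinguish two cases: either (i)~$h$ is an isomorphism (so $\rho_Y=1$) and $\w{D}$ is the only $\tilde{g}$-vertical divisor; or (ii)~$g$ is equidimensional, $h$ contracts a unique irreducible curve $\Gamma$ (whence $\rho_Y=2$, $S\cong\pr^2$, and $Y\cong\mathbb{F}_1$), and $\tilde{g}^{-1}(\Gamma)=\w{D}$ is prime.

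In either case I would reproduce the MMP-based construction of \cite[Lemma 5.13]{small} to produce a fixed prime divisor $E\subset X$ of type $(3,2)$ whose curve class $[C_E]$ is contracted to a point by $f$: one runs a $\w{D}$-negative MMP relative to $h\circ\tilde{g}$ (or to $\tilde{g}$ in case (i)); Lemma \ref{extremal} guarantees that the resulting divisorial extremal ray is of type $(3,2)$, and Lemma \ref{iso32} allows one to pull the exceptional divisor back to a fixed prime divisor in $X$. By construction $E$ and $D$ are adjacent in the sense of Def.~\ref{padj}.

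The new content, and the hardest step, is to prove $\N(E,X)\subsetneq\N(X)$. Since $X$ is not a product of surfaces and $\rho_X\geq 7$, Th.~\ref{starting} gives $\delta_X\leq 1$, so it suffices to rule out equality. I would argue by contradiction: if $\N(E,X)=\N(X)$, then by choosing the flip sequence appropriately, Lemma \ref{classification} applied to a compatible second $(3,2)$-face in the relative cone would force $f$ to factor through a special contraction $\w{X}\to T\to S$ with $\dim T=3$ and $\rho_X-\rho_T=2$, and would moreover produce a second fixed prime divisor $E'\neq E$ of type $(3,2)$ with $[C_{E'}]$ also contracted by $f$ to a point. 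Transforming back, $E'$ yields a second $f$-vertical prime divisor in $X$ distinct from $D$, contradicting uniqueness. The main obstacle is precisely to arrange the MMP so that Lemma \ref{classification} indeed applies whenever $\N(E,X)=\N(X)$: this requires exploiting the explicit geometry of cases (i) and (ii) together with the bound $d_f\leq 4$ from Th.~\ref{scala}, which forces the minimal face $\tau_f$ of $\Eff(X)$ containing $f^*\Nef(S)$ to have dimension $\geq\rho_X-4\geq 3$ and hence tightly constrains where the transforms of $E$ and $E'$ may sit on the boundary of $\Eff(X)$.
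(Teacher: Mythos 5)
Your reduction of $f$ to a special contraction onto a surface and the MMP step producing a fixed prime divisor $E$ of type $(3,2)$ with $[C_E]$ contracted by $f$ are reasonable and close in spirit to \cite[Lemma 5.13]{small}; the problem is the final step, which is exactly where the new content of the refined statement lies, and it does not go through as described. Two concrete objections. First, Lemma \ref{classification} \emph{concludes} that the two exceptional divisors $E_1,E_2$ of a special contraction onto a $3$-fold satisfy $\N(E_i,\w{X})=\N(\w{X})$, so invoking it cannot yield a contradiction with the assumption $\N(E,X)=\N(X)$: its conclusion is consistent with that assumption. (In the paper this lemma is used in the opposite direction, namely to rule out a $3$-dimensional target when one already knows $\N(\w{E},\w{X})\subsetneq\N(\w{X})$, as in the proof of Th.~\ref{sharp32}.) Second, even granting a second fixed prime divisor $E'\neq E$ of type $(3,2)$ with $[C_{E'}]$ contracted by $f$, this does not contradict the uniqueness of $D$: only the curves $C_{E'}$, not the divisor $E'$, need be contracted to points, and a $(3,2)$-divisor whose covering family lies in fibers of $f$ will in general dominate $S$ or map onto a curve. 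You also acknowledge that you cannot arrange the MMP so that Lemma \ref{classification} applies; so the inequality $\N(E,X)\subsetneq\N(X)$ is not established.

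The paper's argument is much shorter and deploys Th.~\ref{scala} differently. Following \cite[proof of Lemma 5.13]{small} one produces a special rational contraction $g\colon X\dasharrow T$ onto a surface with $\rho_T=2$. Then $\rho_X-\rho_T\geq 5$ while $d_g\leq 4$ by Th.~\ref{scala}, so $g$ cannot be quasi-elementary; and a special, non-quasi-elementary rational contraction onto a surface automatically produces the desired divisor by \cite[Lemma 5.12]{small} (essentially via Lemma \ref{montenero}$(c)$: some factorization of $g$ contains a divisorial elementary piece of type $(3,2)$ whose exceptional divisor maps to a curve in $T$, whence $\dim\N(E,X)\leq 1+\rho_X-\rho_T<\rho_X$ by Lemmas \ref{easy} and \ref{dim32}). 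I would redirect your use of Th.~\ref{scala} to this quasi-elementarity obstruction rather than to constraining the face $\tau_f$.
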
  
\begin{proof}
  As in \cite[proof of Lemma 5.13]{small} we see that there is a special rational contraction $g\colon X\dasharrow T$ with $\dim T=\rho_T=2$.  We have $\rho_X-\rho_T\geq 5$ while $d_g\leq 4$ by Th.~\ref{scala} (see Def.-Rem.~\ref{dimfiber}), hence   $g$ is not quasi-elementary, 
  and the statement follows from [\emph{ibid.}, Lemma 5.12].
 \end{proof}
 \begin{proposition}[refined version of \cite{small}, Prop.~7.1]\label{sabri}
    Let $X$ be a smooth Fano $4$-fold with $\rho_X\geq 7$, $D$ a fixed prime divisor of type $(3,1)^{\sm}$ or $(3,0)^Q$, and $X\dasharrow \w{X}\stackrel{\sigma}{\to} Y$ the associated contraction as in Th.-Def.~\ref{fixed}. Suppose that $Y$ contains a nef prime divisor $H$ covered by a family of rational curves of anticanonical degree one, and such that $H\cap\sigma(\Exc\sigma)=\emptyset$.
    Then   one of the following holds:
    \begin{enumerate}[$(i)$]
     \item 
       $X$ has     a (regular) contraction onto a $3$-fold, that sends $D$ to a point;
     \item there is a fixed prime divisor $E\subset X$ of type $(3,2)$ such that $\N(E,X)\subsetneq\N(X)$.
       \end{enumerate}
  \end{proposition}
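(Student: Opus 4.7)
The strategy is to follow the construction in the original [small, Prop.~7.1] and, at the decisive step, invoke Th.~\ref{sharp32} to upgrade the conclusion in case (ii). The first task is to transfer the data from $Y$ back to $X$: since $H\cap\sigma(\Exc\sigma)=\emptyset$, the prime divisor $H$ lifts to a prime divisor $\w{H}\subset\w{X}$ contained in $\dom(\sigma)$ and isomorphic to $H$, and the covering family of rational curves on $H$ of anticanonical degree one lifts to a covering family $V$ of $\w{H}$ with $-K_{\w{X}}\cdot[V]=1$. Transferring through the SQM $\xi\colon X\dasharrow\w{X}$ and using Lemma \ref{SQMFano} (in particular parts (b) and (c) to preserve the anticanonical degree), one obtains a prime divisor $H_X\subset X$ together with a covering family of rational curves of $H_X$ of anticanonical degree one.

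Next I would run the MMP/factorization construction of the original [small, Prop.~7.1] on $X$ using these data. This produces either a rational contraction of $X$ onto a base of dimension at most $3$, or a fixed prime divisor $E\subset X$ of type $(3,2)$. In the first alternative, Theorems \ref{scala} and \ref{zucca} rule out base dimensions $1$ and $2$ (a base of dimension $2$ would have to factor through a $3$-fold, and $\rho_X\geq 7$ forbids the remaining possibilities); thus the base has dimension exactly $3$. Tracking $D$ through the construction then shows that it is collapsed to a point, and Lemma \ref{Kneg} provides a regular $K$-negative resolution that still sends $D$ to a point, yielding (i).

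For the refined conclusion (ii), when the alternative yields $E$ of type $(3,2)$, I would verify that $H_X\cdot C_E=0$. The point is that in the original construction, $E$ arises from an $H_X$-trivial extremal ray after a sequence of $H_X$-trivial flips, because the entire MMP is driven by a direction numerically orthogonal to $H_X$ (the class $H_X$ comes from a divisor $H$ that was already nef on $Y$, and the $H$-orthogonality of $[V]$ lifts to $H_X$-orthogonality of the curves produced). Lemma \ref{SQMFano} and the adjacency results of Section \ref{prelfixed} ensure that the small modifications along the way preserve this orthogonality. Consequently $\N(E,X)\subset H_X^\perp$, and since $H_X$ is a nonzero class in $\Nu(X)$, the hyperplane $H_X^\perp\subsetneq\N(X)$ is proper, giving $\N(E,X)\subsetneq\N(X)$.

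The main obstacle will be to propagate the orthogonality $H_X\cdot(\,\cdot\,)=0$ cleanly through all the flips and divisorial contractions in the MMP, and to certify in case (i) both the regularity of the $3$-fold contraction and that it sends $D$ to a point. A useful internal consistency check is that, once the refined (ii) is in hand, Th.~\ref{sharp32} applied to $E$ independently delivers an elementary rational contraction of $X$ onto a $3$-fold with $\rho_X\leq 9$, which matches the spirit of case (i) and shows that the two alternatives are tightly linked through the codimension-one condition.
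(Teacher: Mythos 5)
Your plan diverges from what actually happens in this situation, and the two places where it diverges are both genuine gaps. First, the construction of \cite[Prop.~7.1]{small} produces a \emph{regular} contraction $h\colon X\to Z$ with $h(D)=\{\pt\}$ (after checking, which you skip, that the relevant contractions are of fiber type --- this requires a separate argument in the $(3,1)^{\sm}$ case using $\delta_Y\leq 2$ and movability of $[C]+[C_{E_2}]$), and the case $\dim Z=2$ \emph{cannot} be excluded. Your appeal to Th.~\ref{zucca} to rule it out fails because Th.~\ref{zucca} only applies to \emph{quasi-elementary} rational contractions onto surfaces, whereas here $\rho_X-\rho_Z\geq\rho_X-3\geq 4>d_h$ (indeed $d_h\leq 4$ by Th.~\ref{scala} and in fact Lemma \ref{coop} forces $\rho_Z\leq 3$), so $h$ is never quasi-elementary onto a surface. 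In the paper the surface case is precisely the source of conclusion $(ii)$: one shows $\rho_Z=1$ and that $D$ is the \emph{unique} prime divisor contracted to a point, and then Lemma \ref{lori} (which rests on the analysis of non-quasi-elementary special contractions onto surfaces in \cite[Lemma 5.12]{small}) produces the divisor $E$ of type $(3,2)$ with $\N(E,X)\subsetneq\N(X)$. By trying to force the base up to dimension $3$ you lose exactly the branch of the argument that yields $(ii)$.

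Second, your proposed mechanism for $(ii)$ is a non sequitur: even granting that $E$ arises from an $H_X$-trivial extremal ray after $H_X$-trivial flips, the conclusion $H_X\cdot C_E=0$ only says that the single class $[C_E]$ lies in $H_X^{\perp}$. It does not give $\N(E,X)\subset H_X^{\perp}$, which is the span of the classes of \emph{all} curves in $E$; a divisor of type $(3,2)$ whose contracted fibers are $H_X$-trivial can still contain plenty of curves meeting $H_X$ positively (one would need something like $E\cap H_X=\emptyset$, as in Rem.~\ref{cassis}, to conclude). So the codimension-one statement, which is the whole point of the refinement over \cite[Prop.~7.1]{small}, is not established by your argument. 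A minor further point: in case $(i)$ the statement asks for a regular contraction of $X$ itself; Lemma \ref{Kneg} only provides a regular resolution from a SQM $X'$ of $X$, whereas the map $h$ coming out of the construction is already regular on $X$, so no resolution step is needed.
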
  
  \begin{proof}
    We note that since $X$ contains a fixed prime divisor of type $(3,1)^{\sm}$ or $(3,0)^Q$, $X$ is not a product of surfaces, and $\delta_X\leq 1$ by Th.~\ref{starting}.
    
    We follow \cite[proof of Prop.~7.1]{small}, which shows the existence of a commutative diagram:
    $$\xymatrix{X\ar@{-->}[r]^{\ph}\ar[dr]_h&{\w{X}}\ar[r]^{\sigma}&Y\ar[dl]^g\\
      &Z&
      }$$
where $g$ and $h$ are contractions, and $h(D)=g(\sigma(\Exc(\sigma)))=\{\pt\}$.

    We show that $g$ and $h$ are of fiber type.
    If $D$ is $(3,1)^{\sm}$, then $Y$ is a smooth Fano $4$-fold with $\rho_Y\geq 6$, and $\delta_Y\leq 2$ by Lemma \ref{deltaY}. Now if $[C]\in\mov(Y)$, then $g$ is of fiber type. Otherwise as in [\emph{ibid.}, proof of Prop.~7.1] we see that $[C]$ generates an extremal ray of type $(3,2)$ of $\NE(Y)$, with locus $E_1$, and there is another
 fixed prime divisor $E_2\subset Y$, of type $(3,2)$, such that $E_2\cdot C>0$ and  $[C_{E_2}]\in\NE(g)$. Then $E_1\cdot C_{E_2}>0$ \cite[Lemma 2.2(b)]{cdue}, hence $[C]+[C_{E_2}]\in\mov(Y)$ \cite[Lemma 4.6 and its proof]{fibrations}, and again $g$    
 is of fiber type.
The case where 
$D$ is $(3,0)^Q$ is treated in \cite[proof of Prop.~7.1]{small}.

Therefore $g$ and $h$ are of fiber type, and
 [\emph{ibid.}, proof of Prop.~7.1] shows that $\dim Z>1$.
If $\dim Z=3$ we have $(i)$, while 
if $\dim Z=2$, then by [\emph{ibid.}, Lemma 5.14] we have
$\rho_Z=1$ and $D$ is the unique prime divisor contracted to a point by $h$. Hence Lemma \ref{lori}  gives $(ii)$.
   \end{proof} 
    \section{Fano $4$-folds with no small contractions and blow-ups of cubic $4$-folds}\label{cubic}
\noindent In this section we consider Fano $4$-folds with no small contraction, and show the following result (Th.~\ref{finalintro} from the Introduction).
\begin{thm}\label{final}
   Let $X$ be a smooth Fano $4$-fold with $\rho_X\geq 7$, not isomorphic to a product of surfaces, and without small elementary contractions.
             
   Then $\rho_X\leq 9$ and there are a cubic $4$-fold $Z\subset \pr^5$ with at most isolated ordinary double points, and $s=\rho_X-1$ distinct planes  $A_1,\dotsc,A_s\subset Z$ intersecting pairwise in a point, such that $X$ is obtained from $Z$ by blowing-up one plane $A_i$ and successively the transforms of the other ones.\footnote{The resulting $4$-fold does not depend on the order of the blow-ups,  see Lemma \ref{blowuporder}.}

      We have $\Sing(Z)\subset\cup_iA_i$;\footnote{The blow-up of the surfaces $A_i$ resolves the ordinary double points, see Rem~\ref{ODP}.} for each $i< j$ the point $A_i\cap A_j$ is in $Z_{\reg}$, and $A_i\cap A_j\cap A_k=\emptyset$ for $i<j<k$. 
    \end{thm}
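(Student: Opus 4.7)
The plan is to combine the structural work of \cite{32}, which under the same hypotheses already gives the description of $X$ as an iterated blow-up of a cubic $4$-fold along planes with the weaker bound $\rho_X\leq 12$, with the refined tools of the present paper---notably Theorem \ref{sharp32}---so as to improve the bound to $\rho_X\leq 9$ and to obtain the precise combinatorial structure of the plane configuration.

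First I would extract the consequences of the hypothesis ``no small elementary contractions''. Since no flip can originate at $X$, no nontrivial SQM starts from $X$. Then Theorem-Definition \ref{fixed}$(c)$ forces every fixed prime divisor of $X$ to be of type $(3,2)$, as the other three types require at least $\rho_X-4\geq 3$ $K$-negative flips. Next, Theorem \ref{starting} gives $\delta_X\leq 1$, and Lemma \ref{coop} then forces $\rho_Y\leq 3$ for every contraction of fiber type $f\colon X\to Y$; for an \emph{elementary} such $f$ this contradicts $\rho_Y=\rho_X-1\geq 6$. Hence all elementary contractions of $X$ are birational divisorial of type $(3,2)$, and by Corollary \ref{summary} the cone $\Eff(X)$ is generated by classes of such fixed prime divisors. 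Moreover, I would show that each such $E$ must satisfy $\N(E,X)=\N(X)$: otherwise Theorem \ref{sharp32} would produce an elementary rational contraction $X\dasharrow Y$ onto a $3$-fold, which absent SQMs from $X$ would be a regular elementary contraction of fiber type, already excluded.

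Granting this, to obtain the structural conclusion (that $X$ is obtained by successively blowing up planes in a cubic $4$-fold $Z\subset\pr^5$ with at most isolated ODPs, with $\Sing(Z)\subset A_1\cup\cdots\cup A_s$ and the stated pairwise intersection conditions), I would follow the induction of \cite{32} on $\rho_X$: at each step, contract a divisorial $(3,2)$ extremal ray to a Fano $4$-fold with at most isolated ODPs, verify that the no-small-contraction hypothesis is inherited (any small ray upstairs would pull back to either a small ray or an exceptional plane on $X$, both forbidden), and terminate at a Fano $4$-fold of Picard number one, which by index and anticanonical-volume computation is identified with a cubic hypersurface in $\pr^5$. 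The intersection combinatorics of the successive blow-up loci would be forced at each step by the no-small-contractions hypothesis: any line of intersection $A_i\cap A_j$ or any triple incidence $A_i\cap A_j\cap A_k\neq\emptyset$ would produce exceptional planes or lines in $X$, hence small contractions or fixed prime divisors of a type other than $(3,2)$; Lemma \ref{blowuporder} would handle commutation of the blow-ups.

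The hard part will be the sharpening of the bound from $\rho_X\leq 12$ to the sharp $\rho_X\leq 9$. This is where the new input $\N(E,X)=\N(X)$ for every fixed $(3,2)$ divisor $E$ is decisive: translated through the iterated blow-up description, it forces the classes of curves contained in each exceptional divisor (hence in each plane $A_i\subset Z$ together with its intersection points with the other planes) to span $\N(X)$. The technical core would then be a direct numerical analysis of plane configurations in cubic $4$-folds, showing that any configuration of $s\geq 9$ planes pairwise meeting in distinct single points, with no triple intersection and with $\Sing(Z)\subset\cup_i A_i$, must violate this spanning condition for some $E$, yielding a contradiction.
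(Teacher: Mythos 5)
Your preliminary reductions are sound and match the paper's: the absence of small elementary contractions kills all non-trivial SQMs, so Th.-Def.~\ref{fixed}$(c)$ forces every fixed prime divisor to be of type $(3,2)$, and Th.~\ref{sharp32} (an elementary rational contraction onto a $3$-fold would have to be regular, hence of fiber type with $\rho_Y=\rho_X-1$, contradicting $\rho_Y\leq\delta_X+2\leq 3$ from Lemma \ref{coop}) gives $\N(E,X)=\N(X)$ for each of them; the iterated blow-up structure with transversal pairwise intersections and no triple incidences is then essentially Prop.~\ref{rex}. But there are two genuine gaps. First, you attribute to \cite{32} ``the description of $X$ as an iterated blow-up of a cubic $4$-fold''; \cite{32} only gives the bound $\rho_X\leq 12$ and the local structure of elementary contractions (blow-ups of smooth del Pezzo surfaces). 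Identifying the bottom $Z$ of the tower as a \emph{cubic} is the core of the paper's argument, and it is not an ``index and anticanonical-volume computation'' that can be waved at: one must first produce a birational contraction all the way down to $\rho_Z=1$ with one of the blown-up surfaces of Picard number one (Lemma \ref{dolonne} --- non-trivial, since intermediate targets of Picard number $2$ may have both elementary contractions of fiber type onto $\pr^2$, so the descent is not automatic), then show that the restriction $\Pic(Z)\to\Pic(A_1)$ is an isomorphism so that $Z$ is a del Pezzo $4$-fold of some degree $d$, and finally pin down $d=3$. The paper does this last step by extending a contraction of $A_1'\cong\Bl_{2\mskip1mu\pts}\pr^2$ to an elementary contraction $Z'\to W$ onto an index-$2$ Fano $4$-fold, identifying $W$ as the double cover of $\pr^2\times\pr^2$ branched in degree $(2,2)$, and comparing $h^0$ of anticanonical systems; nothing in your sketch replaces this, and del Pezzo $4$-folds of other degrees are not excluded by anything you propose.

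Second, the bound $\rho_X\leq 9$. Your configuration analysis (all planes pairwise meet because $\N(E_i,X)=\N(X)$, and blowing up $A_i\cong\pr^2$ at its $\rho_X-2$ intersection points must yield a del Pezzo surface, namely the center of an elementary contraction) only gives $\rho_X\leq 10$. The boundary case $\rho_X=10$ is excluded in the paper by the exact count $h^0(X,-K_X)=\tfrac12(\rho^2-23\rho+132)$, which equals $1$ for $\rho=10$ and contradicts $h^0(X,-K_X)\geq 2$ (Th.~\ref{h0}); this count depends on $h^0(Z,-K_Z)=55$, i.e.\ on $Z$ being a cubic, and on each pair of planes meeting in exactly one point. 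So the sharpening from $10$ to $9$ is an effective-nonvanishing argument, not a statement about plane configurations violating a spanning condition, and it is unavailable until the first gap is closed.
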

    Let us note that products of del Pezzo surfaces do not have small elementary contractions. 
     Moreover the condition $\rho_X\geq 7$ is necessary in Th.~\ref{final}, as the following example shows.
     \begin{example}
Let $X$ be a Fano $4$-fold with $\rho_X=6$ and $\delta_X=3$ which is not a product of surfaces; there are 9 such families, see Ex.~\ref{es1}. Then $X$ has no small elementary contraction. Indeed there is a quasi-elementary contraction $f\colon X\to S$ where $S$ is a surface with $\rho_S=2$; the cone $\NE(f)$ is described in \cite[\S 6]{delta3}, and it does not contain small extremal rays. On the other hand, if $\NE(X)$ had a small extremal ray $R\not\subset\NE(f)$, then $f$ should be finite on $\Lo(R)$ which is a union of exceptional planes (see Lemma \ref{kawamata}), but this is impossible because $\rho_S=2$.
\end{example}
As remarked in the Introduction, we do not know whether a Fano $4$-fold as in Th.~\ref{final} exists, see \S\ref{excubics} and Question \ref{questioncubic}. See also \S\ref{excubics} for the geometry of the blow-up of a smooth cubic $4$-fold along two planes intersecting in a point; in particular we show that it is Fano.
\begin{remark}\label{conescubic}
  Let $X$ be as in Th.~\ref{final}, and consider   the blow-up $f\colon X\to Y$ of the last surface $S\subset Y$. The proof of Th.~\ref{final}  (in particular Cor.~\ref{important}) will show that $Y$ is Fano with at most isolated ordinary double points lying in $S$, and $S$ is a smooth del Pezzo surface with $\rho_S=\rho_Y=\rho_X-1\in\{6,7,8\}$. Moreover $\NE(S)\cong\NE(Y)$ under the natural map given by the inclusion $S\hookrightarrow Y$, and $\Eff(Y)\cong \NE(S)$ under the restriction. Every conic bundle $S\to\pr^1$ extends to a contraction $Y\to\pr^2$ sending $S$ to a line, and every birational map $S\to\pr^2$ extends to a blow-up map $Y\to Z'$, where $Z'$ is a cubic $4$-fold, sending $S$ to a plane.
\end{remark}  
\begin{prg}[\em Overview of the proof of Th.~\ref{final}]\label{overviewfinal}
  We show first of all that if $X\to T$ is a non-trivial contraction of fiber type, then $T\cong\pr^2$ (Lemma \ref{pasqua}). This relies on both Th.~\ref{scala}, Th.~\ref{starting}, and \cite{3folds}.

  Then we consider birational contractions of $X$.
For the case of an elementary contraction $f\colon X\to Y$,
  it is shown in \cite{32} that $f$ is  the blow-up of a smooth del Pezzo surface $S\subset Y$ with $K_S=K_{Y|S}$, and $Y$ is Fano with at most isolated ordinary double points (Prop.~\ref{cono}). 

  We first generalize this description to every birational contraction $\sigma\colon X\to Z$ (Prop.~\ref{rex}): $\sigma$ is the blow-up of smooth del Pezzo surfaces $A_1,\dotsc,A_{\rho_X-\rho_Z}\subset Z$ that intersect pairwise transversally 
in (at most) finitely many points,  and $Z$ is Fano with at most isolated ordinary double points. Moreover for every $i=1,\dotsc,\rho_X-\rho_Z$  we have
  $K_{A_i}=K_{Z|A_i}$,
  and if $E_i\subset X$ is the exceptional divisor over $A_i$,
  it follows from Th.~\ref{sharp32} that $\N(E_i,X)=\N(X)$ and hence $\N(A_i,Z)=\N(Z)$, so that $\rho_{A_i}\geq\rho_Z$. This also implies that $E_i\cap E_j\neq\emptyset$ and hence $A_i\cap A_j\neq\emptyset$ for every $i<j$. 

  Let us fix $i$ and factor $\sigma$ as $X\stackrel{\sigma_i}{\to} Y_i\stackrel{h_i}{\to}Z$, where $h_i$ blows-up all the surfaces $A_j$ for $j\neq i$, $\rho_{Y_i}=\rho_X-1$, and $\sigma_i$ blows-up the transform $S_i\subset Y_i$ of $A_i$. Then $h_{i|S_i}\colon S_i\to A_i$ blows-up all the points where $A_i$ intersects the other surfaces $A_j$, $j\neq i$. On the other hand, $S_i$ is del Pezzo, thus $\rho_{S_i}\leq 9$. This allows to bound both $\rho_{A_i}$ and the cardinality of $A_i\cap A_j$ (Rem.~\ref{remconti}), and is used repeatedly.

  Then we show that, if for some $i\in\{1,\dotsc,\rho_X-\rho_Z\}$ we have $\rho_{A_i}=\rho_Z\geq 3$, then $\NE(A_i)\cong\NE(Z)$, and one can extend contractions from $A_i$ to $Z$ (Cor.~\ref{important}).

  We use all these preliminary results to prove that 
 there exists a birational contraction $X\to Z$ with $\rho_Z=1$ and such that one of the blown-up surfaces, say $A_1\subset Z$, also has $\rho_{A_1}=1$
 (Lemma \ref{dolonne}).
 With this we can finally show Th.~\ref{final}, let us give an outline.
 
   We have $A_1\cong\pr^2$, $K_{Z|A_1}=K_{A_1}$, and we show that the restriction $\Pic(Z)\to\Pic(A_1)$ is an isomorphism. This implies that $Z$ is Fano with index $3$, namely a del Pezzo $4$-fold, and these $4$-folds are classified. Let $H\in\Pic(Z)$ be such that $-K_Z=3H$, and set $d:=H^4$; then $h^0(Z,-K_Z)=15d+10$ (see \ref{Z}).

  Next we show that there are two other blown-up surfaces, say $A_2$ and $A_3$, that intersect $A_1$  only in one point, trasversally. If $Z'\to Z$ is the blow-up of $A_2$ and $A_3$, and $A_1'\subset Z'$ is the transform of $A_1$, we have $A_1'\cong\Bl_{2\mskip1mu\pts}\pr^2$ and $\rho_{A_1'}=\rho_{Z'}=3$ (see \ref{spezia}), and hence $\NE(A_1')\cong\NE(Z')$ by the previous part of the proof; we also compute that $h^0(Z',-K_{Z'})\in\{15d-10,\dotsc,15d-6\}$. Then we extend the contraction $A_1'\to \pr^1\times\pr^1$ to an elementary birational contraction $Z'\to W$ which is again the blow-up of a surface; $W$ is a Fano $4$-fold with $\rho_W=2$ and index $2$. By studying the contractions of $W$, we show that $W$ is a double cover of $\pr^2\times\pr^2$ with branch  divisor of degree $(2,2)$ (see \ref{malpensa}). This gives $h^0(W,-K_W)=45$, and allows to compute that $h^0(Z',-K_{Z'})=36$ and that $d=3$, so that $Z$ is a cubic $4$-fold, each $A_i$ is a plane, and $A_i\cap A_j$ is just one point for every $i<j$. 
We also get $h^0(Z,-K_Z)=55$ and we can compute $h^0(X,-K_X)$ in terms of the  number of blown-up planes, that are $\rho_X-1$. Finally, using that $h^0(X,-K_X)\geq 0$, we get $\rho_X\leq 9$ (see \ref{atene}).
\end{prg}
           \begin{lemma}\label{pasqua}
             Let $X$ be a smooth Fano $4$-fold with $\rho_X\geq 7$, not isomorphic to a product of surfaces, and without small elementary contractions.
             
Let $f\colon X\dasharrow Y$ be a non-trivial rational contraction of fiber type. Then $f$ is regular and equidimensional, and $Y\cong\pr^2$.
             \end{lemma}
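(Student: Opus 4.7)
The plan is to show first that $f$ is automatically regular, then to rule out $\dim Y\in\{1,3\}$, and finally to conclude $Y\cong\pr^2$ with $f$ equidimensional.

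\emph{Regularity and bound on $\rho_Y$.} Since $X$ has no small elementary contractions, every SQM of $X$ must be the identity: indeed, every SQM begins with the flip of a small elementary contraction of $X$. Equivalently, $\Mov(X)=\Nef(X)$, the fan $\MCD(X)$ has $\Nef(X)$ as its unique top-dimensional cone, and every rational contraction of $X$ is in fact a morphism; in particular $f\colon X\to Y$ is regular. Theorem \ref{starting} gives $\delta_X\leq 1$, so Lemma \ref{coop} applied to $f$ yields $\rho_Y\leq 3$. Combined with $\rho_X\geq 7$, this forbids elementary fiber-type contractions of $X$ altogether, since any such would produce a target of Picard number $\rho_X-1\geq 6>3$.

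\emph{Exclusion of $\dim Y\in\{1,3\}$.} If $\dim Y=3$, Theorem \ref{CS} puts $X$ into case (i) or (ii). Case (i) supplies an elementary rational contraction of $X$ onto a $3$-fold; by regularity this would be an elementary regular fiber-type contraction with target Picard number $\rho_X-1\geq 6$, contradicting the previous paragraph. In case (ii), $X\cong\Bl_S W$ with $W$ the Fano model of $\Bl_{\rho_X-2\mskip1mu\pts}\pr^4$; the non-trivial SQMs of $W$ coming from the flips in its construction lift through the divisorial blow-up $X\to W$ to non-trivial SQMs of $X$, again a contradiction. If $\dim Y=1$, then $Y\cong\pr^1$ and a general fiber $F$ is a smooth Fano $3$-fold; by Theorem \ref{scala} we have $d_f\leq 4$. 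Proposition \ref{P1} then shows that, provided $\rho_F\geq 6$ or $\rho_F\in\{4,5\}$ with $d_f=\rho_F$, $f$ factors as $X\to Z\to\pr^1$ with $\dim Z=3$; by regularity this reduces to the $\dim Y=3$ case just excluded. The residual possibilities with $\rho_F\leq 5$ and $d_f<\rho_F$, or $\rho_F\leq 3$, are dispatched by a refined monodromy analysis on $\Nu(F)$, exploiting the classification of low-Picard-number Fano $3$-folds.

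\emph{Conclusion.} The steps above force $\dim Y=2$. By Proposition \ref{calanques}, factor $f=h\circ g$ with $g\colon X\to T$ a special rational (hence, by the first paragraph, regular) contraction of fiber type and $h\colon T\to Y$ birational. Lemma \ref{specialsurf} makes $T$ smooth, and Lemma \ref{coop} gives $\rho_T\leq 3$. If $\rho_T\geq 2$, $T$ admits a contraction to $\pr^1$, composing with $g$ to give a regular fiber-type contraction $X\to\pr^1$, already excluded. Hence $\rho_T=1$, and since $T$ is a smooth rational surface, $T\cong\pr^2$. The birational morphism $h\colon\pr^2\to Y$ must then be an isomorphism---as $\pr^2$ admits no non-trivial contractions---giving $Y\cong\pr^2$ and $f=g$ special. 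Since for a rational contraction onto a surface ``special'' and ``equidimensional'' are equivalent (as recalled after Lemma \ref{equivalent}), $f$ is equidimensional. The hardest step will be case (ii) of Theorem \ref{CS} in the $\dim Y=3$ exclusion, where one must confirm that the SQMs of the Fano model $W$ always propagate to $X$ regardless of the position of the surface $S$.
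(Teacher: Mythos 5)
Your overall skeleton (regularity from the absence of small elementary contractions, the Picard-number bound $\rho_Y\leq\delta_X+2\leq 3$ from Lemma \ref{coop}, and the final reduction to $\pr^2$ via Prop.~\ref{calanques} and Lemma \ref{specialsurf}) matches the paper, and those parts are fine. But two of your case exclusions have genuine gaps.

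The serious one is $\dim Y=1$. You reduce to Prop.~\ref{P1} when $\rho_F\geq 6$ or $\rho_F\in\{4,5\}$ with $\iota_*$ injective, and then wave at a ``refined monodromy analysis \dots exploiting the classification of low-Picard-number Fano $3$-folds'' for the rest. That residue cannot be dispatched this way: if $\rho_F\leq 2$ (e.g.\ $F$ a Fano $3$-fold of Picard number one) there is no contraction of $F$ onto a surface to extend, so no monodromy argument will produce a factorization through a $3$-fold. The paper kills $\dim Y=1$ in one line, before any case analysis: the general fiber $F$ is then a prime divisor, so $\dim\N(F,X)\geq\rho_X-\delta_X\geq 6$ by Th.~\ref{starting}, while $\dim\N(F,X)=d_f\leq 4$ by Th.~\ref{scala} --- a contradiction. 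You already have all the ingredients for this on the table; you just did not use the Lefschetz defect against the fiber itself.

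The second gap is case $(ii)$ of Th.~\ref{CS} in your $\dim Y=3$ exclusion, which you flag yourself: the claim that the non-trivial SQMs of the Fano model $W$ lift through the blow-up $X\to W$ to non-trivial SQMs (hence small elementary contractions) of $X$ is not proved and is not obvious, since $\Bl_S W$ need not inherit extremality of the flipping classes when $S$ meets the exceptional planes. This can be repaired without that claim: $W$ admits an elementary rational contraction onto a $3$-fold of Picard number $\rho_X-2\geq 5$ (\S\ref{fanomodel}); composing with $X\to W$ and invoking your own first paragraph (every rational contraction of $X$ is regular) contradicts Lemma \ref{coop}. The paper avoids Th.~\ref{CS} altogether here: it factors $f$ through a special rational (hence regular) contraction $f'\colon X\to Y'$ with $\dim Y'=3$, notes $\rho_{Y'}\leq 3$ by Lemma \ref{coop} and $\rho_X-\rho_{Y'}\leq 2$ by \cite[Lemma 3.12]{3folds}, and gets $\rho_X\leq 5$ directly --- a lighter argument that does not depend on the classification results of Th.~\ref{CS}.
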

             \begin{proof}
           Since $X$ has no small elementary contraction, $f$ is regular.    Let $F\subset X$ be a general fiber; we have $\dim\N(F,X)=d_f\leq 4$ by Th.~\ref{scala} (see Def.-Rem.~\ref{dimfiber}). In particular $F$ cannot be a divisor because $\delta_X\leq 1$ (Th.~\ref{starting}) and $\rho_X\geq 7$, hence $\dim Y>1$.

               Suppose by contradiction that $\dim Y=3$. Then by Prop.~\ref{calanques} $f$ factors through a special rational contraction $f'\colon X\dasharrow Y'$ with $\dim Y'=3$. As above $f'$ is regular, and $\rho_{Y'}\leq 3$ by Lemma \ref{coop}. On the other hand $\rho_X-\rho_{Y'}\leq 2$ by \cite[Lemma 3.12]{3folds}, and we have a contradiction.

               We conclude that $\dim Y=2$. As above we can factor $f$ as $X\stackrel{f'}{\to} Y'\to Y$ where $f'$ is equidimensional and $\dim Y'=2$; in particular $Y'$ is a smooth rational surface (Lemma \ref{specialsurf}). If $\rho_{Y'}>1$, then there is a morphism $Y'\to\pr^1$, and the composition $X\to\pr^1$ contradicts the previous part of the proof. Therefore $\rho_{Y'}=1$ and $Y\cong Y'\cong\pr^2$.
             \end{proof}
               \begin{corollary}\label{FBI}
                 Let $X$ be a smooth Fano $4$-fold with $\rho_X\geq 7$, not isomorphic to a product of surfaces, and without small elementary contractions. Let $E\subset X$ be a fixed prime divisor. Then $E$ is of type $(3,2)$ and 
$\N(E,X)=\N(X)$.
               \end{corollary}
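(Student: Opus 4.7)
The plan is to establish the two claims in order, exploiting that the no-small-elementary-contraction hypothesis forbids any flip starting from $X$ itself.

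For the assertion that $E$ is of type $(3,2)$, I would apply Th.-Def.~\ref{fixed} to produce the diagram $X\stackrel{\xi}{\dasharrow}\w{X}\stackrel{\sigma}{\to}Y$ associated with $E$. If $E$ were of type $(3,0)^{\sm}$, $(3,0)^Q$, or $(3,1)^{\sm}$, then part $(c)$ of Th.-Def.~\ref{fixed} would say that $\xi$ factors as a sequence of at least $\rho_X-4\geq 3$ $K$-negative flips. The initial flip in this sequence would be the flip of a small elementary contraction of $X$ itself, contradicting our hypothesis. Hence $\xi$ must be the identity, which is compatible only with $E$ of type $(3,2)$ (and with $\w{X}=X$).

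For the assertion $\N(E,X)=\N(X)$, I would argue by contradiction: assuming $\N(E,X)\subsetneq\N(X)$, Theorem~\ref{sharp32} applies to $E$ (all its hypotheses are in force) and delivers, in particular, an elementary rational contraction $f\colon X\dasharrow Y'$ onto a $3$-fold $Y'$. Since $\dim Y'<\dim X$, $f$ is a non-trivial rational contraction of fiber type. But Lemma~\ref{pasqua} applies equally to $X$ under our hypotheses, forcing every such $f$ to be regular with target $\pr^2$, a surface; this contradicts $\dim Y'=3$, so $\N(E,X)=\N(X)$.

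I do not expect either step to present a genuine obstacle: the heavy inputs (Th.-Def.~\ref{fixed}, Th.~\ref{sharp32}, Lemma~\ref{pasqua}) are all already at hand, and the only real observation is that ``no small elementary contractions on $X$'' plugs directly into the flip count of Th.-Def.~\ref{fixed}$(c)$, while Th.~\ref{sharp32} is incompatible with Lemma~\ref{pasqua} under the standing hypotheses.
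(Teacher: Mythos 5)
Your proposal is correct and follows essentially the same route as the paper: type $(3,2)$ is forced because Th.-Def.~\ref{fixed}$(c)$ would otherwise require a nonempty sequence of flips starting from $X$ (hence a small elementary contraction), and $\N(E,X)=\N(X)$ follows because Th.~\ref{sharp32} would otherwise produce an elementary rational contraction onto a $3$-fold, contradicting Lemma~\ref{pasqua}. The paper's proof is just a terser version of exactly this argument.
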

               \begin{proof}
 Since $X$ has no small   elementary contraction,   $E$ must be of type $(3,2)$           by Th.~-Def.\ref{fixed}$(c)$. Then $\N(E,X)=\N(X)$
               by Th.~\ref{sharp32} and Lemma \ref{pasqua}.
                 \end{proof}
           \begin{proposition}[refinement of \cite{32}, Th.~4.1]\label{cono}
             Let $X$ be a smooth Fano $4$-fold with $\rho_X\geq 7$, not isomorphic to a product of surfaces, and without small elementary contractions.
             
Let $f\colon X\to Y$ be an elementary contraction.
Then
 $f$ is of type $(3,2)$,
 $Y$ is Fano with at most isolated ordinary double points, and $S:=f(\Exc(f))\subset Y$ is a smooth del Pezzo surface with $K_S=K_{Y|S}$. 
\end{proposition}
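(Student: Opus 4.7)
The plan is to reduce the proposition to a sequence of results from the paper and from \cite{32}, adding the necessary refinements. First I would show that $f$ is birational divisorial of type $(3,2)$. Since $f$ is elementary and non-trivial, if it were of fiber type it would be a non-trivial rational contraction of fiber type, so by Lemma~\ref{pasqua} we would have $Y\cong\pr^2$; this contradicts $\rho_Y=\rho_X-1\geq 6$. Hence $f$ is birational, and it is not small by hypothesis, so it is divisorial. Let $D:=\Exc(f)$. By Rem.~\ref{fixedprime} $D$ is a fixed prime divisor, and Cor.~\ref{FBI} yields that $D$ is of type $(3,2)$ with $\N(D,X)=\N(X)$. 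Since $-K_X$ is ample, $\NE(f)$ is a $K$-negative extremal ray, so $f$ is $K$-negative of type $(3,2)$.

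Once this is in place, applying Lemma~\ref{singtarget} I would conclude that $Y$ has at most isolated ordinary double points, hence is locally factorial terminal; $S:=f(\Exc(f))$ has at worst isolated singularities, with $\dim f^{-1}(y_0)=2$ at any singularity of $Y$ or of $S$; and $f$ is the blow-up of $Y$ along $S$. That $Y$ is Fano is given by Th.-Def.~\ref{fixed}(a). Together with \cite[Th.~4.1]{32}, these observations account for everything except the three refinements: $S$ is smooth, $S$ is del Pezzo, and $K_S=K_{Y|S}$.

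For the remaining refinements: if $S$ were singular at $y_0$, by \cite{AW2} the fiber $L:=f^{-1}(y_0)\cong\pr^2$ would be an exceptional plane inside $D$. The plan is to use $\N(D,X)=\N(X)$ together with the absence of small elementary contractions on $X$ to exclude such an $L$; the required rigidity is precisely the content of the refinement step beyond \cite{32}, and this is where I expect the main obstacle to lie. Once $S$ is known smooth, $D\cong\pr_S(N_{S/Y})$ is a $\pr^1$-bundle over $S$, and comparing adjunction $K_D=(K_X+D)|_D$ with the projective bundle formula for $K_D$ gives $K_{Y|S}=K_S+\det N_{S/Y}$; so $K_S=K_{Y|S}$ reduces to $\det N_{S/Y}\equiv 0$, which I would extract by comparing the restriction maps $\Pic(Y)\to\Pic(S)$ and $\Pic(X)\to\Pic(D)$ using the full-rank condition $\N(D,X)=\N(X)$. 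Finally, ampleness of $-K_Y$ restricts to ampleness of $-K_{Y|S}=-K_S$, so $S$ is del Pezzo.
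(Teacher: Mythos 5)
Your reduction of the first half of the statement is correct and is in fact exactly the paper's route: Lemma~\ref{pasqua} rules out the fiber-type case (this is precisely the point where the paper replaces the hypothesis $\rho_X\geq 8$ of \cite[Th.~4.1]{32} by $\rho_X\geq 7$), the absence of small contractions forces $f$ to be divisorial, Rem.~\ref{fixedprime} and Cor.~\ref{FBI} give that $\Exc(f)$ is a fixed prime divisor of type $(3,2)$ with $\N(\Exc(f),X)=\N(X)$, and Lemma~\ref{singtarget} together with Th.-Def.~\ref{fixed} gives the assertions about $Y$. Up to this point there is nothing to object to.

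The gap is in the second half. You treat the smoothness of $S$, the del Pezzo property, and $K_S=K_{Y|S}$ as new refinements to be proved from scratch, and for the smoothness of $S$ you explicitly stop at ``this is where I expect the main obstacle to lie'' without supplying an argument; note also that a $2$-dimensional fiber $f^{-1}(y_0)\cong\pr^2$ over a singular point of $S$ need not be an exceptional plane in the technical sense (the normal bundle need not be $\ol_{\pr^2}(-1)^{\oplus 2}$), so Lemma~\ref{dim32} does not immediately apply. Your sketch for $K_S=K_{Y|S}$ is likewise not an argument: adjunction correctly reduces it to $\det\ma{N}_{S/Y}\equiv 0$, but injectivity or surjectivity of the restriction maps on N\'eron--Severi spaces coming from $\N(D,X)=\N(X)$ does not by itself force a specific class to vanish; a genuine geometric input is needed. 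You have misplaced where the ``refinement'' of \cite[Th.~4.1]{32} lies: that theorem already proves, under $\rho_X\geq 8$, that $f$ is of type $(3,2)$ and that $S$ is a smooth del Pezzo surface with $K_S=K_{Y|S}$, and the paper's entire proof consists in observing that the only use of $\rho_X\geq 8$ there is to exclude certain fiber-type contractions, which in the present setting is supplied by Lemma~\ref{pasqua}. So the statements you could not prove are exactly the ones that should be imported from the reference rather than re-derived.
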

\begin{proof}
The statement follows from \cite[Th.~4.1]{32}. Note that this reference
has the assumption $\rho_X\geq 8$, but this is used only to show that a contraction $g\colon X\to Z$ with $\rho_X-\rho_Z\leq 3$ cannot be of fiber type. In our setting this follows from Lemma \ref{pasqua}, and the same proof applies under the assumption $\rho_X\geq 7$.
Finally $Y$ is Fano by Th.-Def.~\ref{fixed}, and 
 has at most isolated ordinary double points by Lemma \ref{singtarget}.
\end{proof}
\begin{proposition}\label{rex}
  Let $X$ be a smooth Fano $4$-fold with $\rho_X\geq 7$, not isomorphic to a product of surfaces, and without small elementary contractions.
  
   Let $\sigma\colon X\to Z$ be a birational contraction; then $\NE(\sigma)=R_1+\cdots+R_s$ with $R_i$ extremal ray of type $(3,2)$ of $\NE(X)$ for every $i=1,\dotsc,s$, and $s=\rho_X-\rho_Z$. Set $E_i:=\Lo(R_i)$.
   The following holds:
   \begin{enumerate}[$(a)$]
   \item   $Z$ is Fano with at most isolated ordinary double points, and $A_i:=\sigma(E_{i})$ is a smooth del Pezzo surface with $K_{A_i}=K_{Z|A_i}$ and $\N(A_i,Z)=\N(Z)$ for every $i=1,\dotsc,s$;
\item
    for every $i\neq j$ the surfaces $A_i$ and $A_j$ intersect transversally in finitely many points where $Z$ is regular, while $A_i\cap A_j\cap A_k=\emptyset$ if $i<j<k$. Each singular point of $Z$ is contained in some  $A_i$. Moreover $\Exc(\sigma)=E_{1}\cup\cdots\cup E_{s}$ and $\sigma$ is obtained
    by blowing-up one surface $A_i$ and successively the transforms of the other ones, in any order;
  \item every elementary contraction of $Z$ is either of type $(3,2)$, or $Z\to\pr^2$.
    \end{enumerate}
\end{proposition}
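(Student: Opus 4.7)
The plan is to argue by induction on $s=\rho_X-\rho_Z$, combining Prop.~\ref{cono} with the strong constraints available under our hypotheses: Cor.~\ref{FBI} forces every fixed prime divisor of $X$ to have type $(3,2)$ and to satisfy $\N(E,X)=\N(X)$, while Lemma \ref{pasqua} forces every non-trivial fiber-type rational contraction of $X$ to land on $\pr^2$. The base case $s=1$ reduces to Prop.~\ref{cono}; the missing identity $\N(A_1,Z)=\N(Z)$ follows at once since $\ker\sigma_*\subset\N(E_1,X)=\N(X)$, so $\N(A_1,Z)=\sigma_*(\N(X))=\N(Z)$.

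For the inductive step I would first determine the structure of $\NE(\sigma)$. Each of its extremal rays is birational, hence of type $(3,2)$ by Prop.~\ref{cono}, and its locus $E_i$ is a fixed prime divisor of type $(3,2)$ with $\N(E_i,X)=\N(X)$ by Cor.~\ref{FBI}. A key elementary observation is that $E_i\cdot R'\geq 0$ for every extremal ray $R'\neq R_i$ of $\NE(X)$: otherwise $\Lo(R')\subset E_i$, but applying Prop.~\ref{cono} to $R'$ shows $\Lo(R')$ is a prime divisor equal to $E_i$ and $R'=R_i$. Since all the $E_i$'s are contracted by the birational $\sigma$, any non-zero effective class in $\langle[E_1],\ldots,[E_m]\rangle$ is exceptional, hence not movable, so this cone is a fixed face of $\Eff(X)$ and the $E_i$'s are pairwise adjacent; Lemma \ref{EF}$(b)$ then gives $E_i\cdot C_{E_j}=0$ for $i\neq j$. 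This orthogonality forces the $R_i$ to be linearly independent, so $m=s$ and $\NE(\sigma)$ is simplicial.

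With $R_1,\ldots,R_s$ in hand I factor $\sigma=h_i\circ\sigma_i$, where $\sigma_i\colon X\to Y_i$ is the elementary contraction of $R_i$; Prop.~\ref{cono} gives $Y_i$ Fano with at most isolated ordinary double points, $S_i:=\sigma_i(E_i)$ smooth del Pezzo with $K_{S_i}=K_{Y_i|S_i}$. Pushing forward $\N(E_i,X)=\N(X)$ by $\sigma_*$ yields $\N(A_i,Z)=\N(Z)$, and $K_{A_i}=K_{Z|A_i}$ follows from $K_{S_i}=K_{Y_i|S_i}$ by iterated blow-up formulas. The orthogonality $E_j\cdot R_i=0$ for $j\neq i$ translates into $\sigma_i(E_j)\subset Y_i$ being disjoint from the generic fiber of $\sigma_i$ and meeting each non-trivial fiber in at most one reduced point; a local analysis around such intersection points, combined with Lemma \ref{blowuporder}, extracts the claimed geometry of the $A_i$ inside $Z$, namely finite transversal pairwise intersections in $Z_{\reg}$, no triple intersections, and independence of the blow-up order. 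For $(c)$, given an elementary $g\colon Z\to T$: if $g$ is of fiber type then so is $g\circ\sigma\colon X\to T$, forcing $T\cong\pr^2$ by Lemma \ref{pasqua}; if $g$ is birational, it is divisorial (a small ray of $\NE(Z)$ would pull back to a small extremal face of $\NE(X)$ via the explicit blow-up description), and the same analysis of $\NE$ applied to $g\circ\sigma$ (with $\rho_X-\rho_T=s+1$, not requiring the proposition at step $s+1$ but only the structural arguments of the paragraph above) produces an extra $(3,2)$ extremal ray whose image in $Z$ is exactly the exceptional divisor of $g$, so $g$ is of type $(3,2)$.

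The main obstacle I foresee is the geometric step upgrading the numerical orthogonality $E_j\cdot R_i=0$ to the set-theoretic transversality of $A_i\cap A_j$ at smooth points of $Z$ with no triple intersections; one must also handle the technicality that the intermediate $Y_i$ are mildly singular, so the induction cannot be run naively on $h_i\colon Y_i\to Z$ but must rather be organized globally on $X$, which is precisely what the simultaneous analysis of $\NE(\sigma)$ and all the fixed prime divisors $E_i$ is designed to accomplish.
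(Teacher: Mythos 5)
Your overall strategy coincides with the paper's: analyse $\NE(\sigma)$, show every extremal ray is of type $(3,2)$ via Prop.~\ref{cono}, get $\N(A_i,Z)=\N(Z)$ by pushing forward $\N(E_i,X)=\N(X)$ from Cor.~\ref{FBI}, factor $\sigma$ through elementary contractions, and use Lemma~\ref{pasqua} for part $(c)$. However, there are two genuine gaps. First, your route to the key orthogonality $E_i\cdot C_{E_j}=0$ is not sound as written: you assert that $\langle[E_1],\dotsc,[E_m]\rangle$ is a fixed face of $\Eff(X)$ because no nonzero effective class in it is movable. Non-movability does not make a cone a \emph{face} of $\Eff(X)$ (two extremal classes can span a $2$-plane through the interior of the cone, and a big divisor need not be movable), and without faceness you cannot invoke the definition of adjacency or Lemma~\ref{EF}$(b)$. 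The paper obtains $E_i\cdot R_j=0$ directly from \cite[Lemma 2.5]{32}. Your argument could be repaired -- using your own observation that $E_j\cdot R_i\geq 0$, one factors the contraction of the face $R_i+R_j$ as two successive divisorial contractions, concludes the target is $\Q$-factorial with Picard number $\rho_X-2$, and then gets adjacency as in \cite[Lemma 4.2]{small} -- but as stated the inference is invalid.

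Second, the entire geometric content of part $(b)$ -- that $E_i\cap E_j$ is a disjoint union of isolated two-dimensional fibers isomorphic to $\pr^1\times\pr^1$, that $A_i\cap A_j$ is finite, reduced and transversal at points where $Z$, $A_i$, $A_j$ are all smooth, and that the restriction $E'_{j|S_i}$ is a disjoint union of $(-1)$-curves (which is also what makes your ``iterated blow-up formula'' for $K_{A_i}=K_{Z|A_i}$ work, since one needs the exceptional multiplicities to be exactly one) -- is deferred to ``a local analysis around such intersection points''. This is not a routine local computation: the paper imports it wholesale from \cite[Prop.~3.4 and its proof]{32} together with \cite[Lemma 4.15]{small}, and the reducedness of $B_i\cap B_j$ is extracted from the smoothness of $S_i$, not from $E_j\cdot R_i=0$ alone (which only controls intersection numbers with one-dimensional fibers and says nothing a priori about the two-dimensional fibers over the singular points). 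You correctly identify this as the main obstacle, but it is precisely the step your proposal does not supply. The remaining parts (simpliciality of $\NE(\sigma)$, the factorization in arbitrary order, part $(c)$, and the Fano-ness of $Z$, which you do not address but which the paper gets from nefness of $-K_X+E_1+\cdots+E_s$) are either correct or easily completed along the lines you indicate.
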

\begin{proof}
  \begin{prg}\label{Si}
 By Prop.~\ref{cono} each $R_i$ is of type $(3,2)$, and
if $\sigma_i\colon X\to Y_i$ is the contraction of $R_i$, then $Y_i$ has at most isolated ordinary double points, and $S_i:=\sigma_i(E_i)$ is a smooth del Pezzo surface with $K_{S_i}=K_{Y_i|S_i}$. 

By \cite[Lemma 2.5]{32} we also have   $E_{i}\cdot R_j=0$ for every $i\neq j$. In particular $[C_{E_1}],\dotsc,[C_{E_s}]\in\N(X)$ are linearly independent, and 
   $\NE(\sigma)$ is a simplicial cone of dimension $s=\rho_X-\rho_Z$.
\end{prg}
\begin{prg}\label{NE}
   Clearly $E_i\subset\Exc(\sigma)$ for every $i=1,\dotsc,s$. Conversely if $C\subset X$ is an irreducible curve with $[C]\in\NE(\sigma)$,
we have $C\equiv\sum_{j=1}^s\lambda_j C_{E_j}$ with $\lambda_j\in\Q_{\geq 0}$.
Then for every $i=1,\dotsc,s$ we have $E_i\cdot C=-\lambda_i\leq 0$, and either $C\subset E_i$, or $\lambda_i=0$. Therefore 
either $[C]\in R_i$ for some $i\in\{1,\dotsc,s\}$, or $C\subset E_i\cap E_j$ for some $i\neq j$. In particular  $\Exc(\sigma)=E_1\cup\cdots \cup E_{s}$.

Moreover, on the open subset $X\smallsetminus\cup_{j\neq i}E_j$, $\sigma$ coincides with $\sigma_i$, therefore $\dim A_i=2$,  $A_i\smallsetminus   \cup_{j\neq i}A_j$ is smooth, and $Z$ has at most isolated ordinary double points at $A_i\smallsetminus   \cup_{j\neq i}A_j$. We also note that $\N(E_i,X)=\N(X)$ by Cor.~\ref{FBI}, hence $\N(A_i,Z)=\sigma_*(\N(E_i,X))=\N(Z)$, for every $i=1,\dotsc,s$.
\end{prg}
\begin{prg}\label{CUS}
  We show that $\sigma$ factors as the composition of $s$ elementary divisorial contractions of type $(3,2)$ with exceptional divisors $E_1,\dotsc,E_s$, in arbitrary order, and $Z$ is $\Q$-factorial. We proceed by induction on $s$, the statement being clear for $s=1$. Consider a facet of $\NE(\sigma)$, up to renumbering we can assume that it is $R_1+\cdots+R_{s-1}$, and factor $\sigma$ as:
  $$\xymatrix{X\ar@/^1pc/[rr]^{\sigma}\ar[r]_{\sigma'}&{Z'}\ar[r]_f&Z    }$$
  where $\NE(\sigma')=R_1+\cdots+R_{s-1}$. Then the statement holds for $\sigma'$ by induction, $Z'$ is $\Q$-factorial, and $f$ is an elementary birational contraction with exceptional locus $\sigma'(E_s)$, therefore $f$ is divisorial and $Z$ is $\Q$-factorial. Moreover $f$ is of type $(3,2)$ because $\dim A_s=2$. This gives the statement.
\end{prg}
\begin{prg}
 If $g\colon Z\to W$ is an elementary contraction, we can consider the composition $g\circ\sigma\colon X\to W$. If $g$ is of fiber type, then  $W\cong\pr^2$ by Lemma \ref{pasqua}. If instead $g$ is birational, then it is of type $(3,2)$ by \ref{CUS}. This gives $(c)$.
\end{prg}
\begin{prg}\label{caprera}
  Let $i,j\in\{1,\dotsc,s\}$ with $i<j$. We have $E_{i}\cap E_{j}\neq\emptyset$ by Cor.~\ref{FBI} and Rem.~\ref{cassis}. Moreover every connected component
  of  $E_{i}\cap E_{j}$ is isomorphic to $\pr^1\times\pr^1$ with the rulings being curves in $R_i$ and $R_j$  (see \cite[Lemma 4.15]{small} and \cite[Prop.~3.4]{32}). Therefore if $C\subset X$ is an irreducible curve contained in $E_i\cap E_j$, we have $[C]\in R_i+R_j$. Together with \ref{NE}, this implies that for every irreducible curve $C\subset X$ contracted by $\sigma$ we have  $[C]\in R_i+R_j$ for some $i<j$.
  
Let $\sigma_{ij}\colon X\to Y_{ij}$ be the contraction of $R_i+R_j$, and
set  $B_i:=\sigma_{ij}(E_i)$, $B_j:=\sigma_{ij}(E_j)$.
The behaviour of $\sigma_{ij}$ is analyzed in  \cite[Prop.~3.4 and its proof]{32}, where it is shown that:
\begin{enumerate}
  \item $\dim(B_i\cap B_j)=0$ and $E_{i}\cap E_{j}=\sigma_{ij}^{-1}(B_i\cap B_j)$;
\item
  if $B_i\cap B_j=\{y_1,\dotsc,y_m\}$, then  $B_i$, $B_j$, and $Y_{ij}$ are smooth at each $y_a$, and the fiber $T_a:=\sigma_{ij}^{-1}(y_a)$ is an isolated $2$-dimensional fiber of $\sigma_{ij}$ and a connected component  of $E_i\cap E_j$;
   \item  if 
we factor $\sigma_{ij}$ as
$$X\stackrel{\sigma_i}{\la}Y_i\stackrel{g}{\la} Y_{ij}$$
where $g$ is an elementary contraction of type $(3,2)$ with exceptional divisor $E_j':=\sigma_i(E_j)$, then $(E_j')_{|S_i}=C_1+\cdots+C_m$ (recall that
$S_i=\sigma_i(E_i)\subset Y_i$, see \ref{Si})
where $C_1,\dotsc,C_m$ are pairwise disjoint $(-1)$-curves in $S_i$, and fibers of $g$; moreover $C_a=\sigma_i(T_a)$ and $C_a\subset (Y_i)_{\reg}$ for every $a=1,\dotsc,m$.
    \end{enumerate}
 In particular, locally around $y_1,\dotsc,y_m$, $g$ is just the blow-up of the smooth surface $B_j$ in the smooth $4$-fold $Y_{ij}$, thus the restriction $g_{|S_i}\colon S_i\to B_i$ is the blow-up of $B_i$ at the scheme intersection $B_i\cap B_j$. Since $S_i$ is smooth (see \ref{Si}), this intersection must be reduced.
\end{prg}
\begin{prg}\label{torquay}
Let $k\in\{1,\dotsc,s\}$, $k\neq i,j$, and let $T$ be a connected component of $E_i\cap E_j$; note that $\N(T,X)=\R R_i\oplus\R R_j$ is $2$-dimensional. If $T\cap E_k\neq\emptyset$, then $T\subset E_k$, because $E_k\cdot R_i=E_k\cdot R_j=0$. Then the contraction $\sigma_k$ of $R_k$ must be finite on $T$, and $\sigma_k(T)=S_k\subset Y_k$. 
This gives
$\N(S_k,Y_k)=(\sigma_k)_*(\N(T,X))$, $\dim\N(S_k,Y_k)\leq 2$,
and $\dim\N(E_k,X)\leq 1+\dim\N(S_k,Y_k)\leq 3$
(see Lemma \ref{easy}), a contradiction. Hence $E_i\cap E_j\cap E_k=\emptyset$ and $A_i\cap A_j\cap A_k=\emptyset$.

On the open subset $X\smallsetminus\cup_{k\neq i,j}E_k$, $\sigma$ coincides with  $\sigma_{ij}$, and $A_i\cap A_j\subset\sigma(X\smallsetminus\cup_{k\neq i,j}E_k)$. We conclude from \ref{caprera}
 that every
 $A_i$ is smooth,  $A_i$ and $A_j$ intersect transversally at finitely many points where $Z$ is smooth, and $Z$ has at most isolated ordinary double points contained in some $A_i$; in particular $Z$ has locally factorial, terminal singularities (see Rem.~\ref{ODP}). We have shown $(b)$.
\end{prg}
\begin{prg}
  We have $-K_X+E_1+\cdots+E_s=\sigma^*(-K_Z)$. If $R$ is an extremal ray of $\NE(X)$ different from $R_1,\dotsc,R_s$, then $E_i\cdot R\geq 0$ for every $i=1,\dotsc,s$ by  \cite[Lemma 2.3]{32}, thus $-K_X+E_1+\cdots+E_s$ is nef and $(-K_X+E_1+\cdots+E_s)^{\perp}\cap\NE(X)=\NE(\sigma)$. This implies that $-K_Z$ is ample, namely $Z$ is Fano.
\end{prg}
\begin{prg}
 Fix $i\in\{1,\dotsc,s\}$. We have a factorization:
 \stepcounter{thm}
 \begin{equation}\label{luminy} \xymatrix{X\ar@/^1pc/[rr]^{\sigma}\ar[r]_{\sigma_i}&{Y_i}\ar[r]_{h_i}&Z;   }
   \end{equation}
 set $E'_j:=\sigma_i(E_j)\subset Y_i$ for every $j\neq i$. Then  $h_{i|S_i}\colon S_i\to A_i$ is a blow-up of smooth points, with exceptional locus $\cup_{j\neq i}E'_{j|S_i}$, which is a union of pairwise disjoint $(-1)$-curves $\Gamma_a$ (see \ref{caprera}(3)). We have $K_{S_i}=(h_{i|S_i})^*K_{A_i}+\sum_a\Gamma_a$ and $K_{Y_i}=h_i^*K_Z+\sum_{j\neq i} E_j'$; then $K_{S_i}=K_{Y_i|S_i}$ (see \ref{Si})  implies that $(h_{i|S_i})^*K_{A_i}=(h_{i|S_i})^*K_{Z|A_i}$ and finally that $K_{A_i}=K_{Z|A_i}$, so we have $(a)$.\qedhere
\end{prg}
\end{proof}
\begin{remark}\label{antsections}
 In the setting of Prop.~\ref{rex}, let $g\colon Z\to W$ be an elementary birational contraction.
 Then  $B:=g(\Exc(g))\subset W$ is a smooth del Pezzo surface and
 $$h^0(Z,-K_Z)=h^0(W,-K_W)-11+\rho_B.$$
 Indeed by Prop.~\ref{rex} both $Z$ and $W$ are Fano with locally factorial and terminal singularities, so that $h^i(Z,-K_Z)=0$ for $i>0$ and $h^0(Z,-K_Z)=\chi(Z,-K_Z)$, and similarly for $W$. Moreover, still by Prop.~\ref{rex} applied to $g\circ \sigma\colon X\to W$, we have that  $B$ is a smooth del Pezzo surface with $K_B=K_{W|B}$, hence the formula follows from Lemma \ref{blowupformula} and $\chi(B,-K_B)=h^0(B,-K_B)=11-\rho_B$.
               \end{remark}
\begin{remark}\label{remconti}
  In the setting of Prop.~\ref{rex}, set $m_{ij}:=\#(A_i\cap A_j)$ for every $i,j=1,\dotsc,\rho_X-\rho_Z$, $i\neq j$; note that $m_{ij}$ is also the number of connected components of $E_i\cap E_j$ in $X$.

Let $i\in \{1,\dotsc,\rho_X-\rho_Z\}$. The blow-up of $A_i$ at the points of  intersection with $\cup_{j\neq i}A_j$ is the del Pezzo surface $S_i\subset Y_i$ as in \eqref{luminy}, and $\rho_{S_i}\leq 9$, therefore:
  \stepcounter{thm}
  \begin{equation}\label{conti}
    9\geq\rho_{A_i}+\sum_{\substack{j=1\\j\neq i}}^{\rho_X-\rho_Z} m_{ij}\geq \rho_{A_i}+\rho_X-\rho_Z-1.
    \end{equation}
  \end{remark}
    \begin{corollary}\label{important}
    In the setting of Prop.~\ref{rex}, suppose that $\rho_Z\geq 3$ and  $\rho_{A_i}=\rho_Z$ for some $i\in\{1,\dotsc,\rho_X-\rho_Z\}$. Let $\iota\colon A_i\hookrightarrow Z$ be the inclusion.
    Then $\iota_*\colon \N(A_i)\to\N(Z)$ is an isomorphism such that $\iota_*(\NE(A_i))=\NE(Z)$. For every $(-1)$-curve $C\subset A_i$ there exists a (unique) extremal ray $R$ of $\NE(Z)$ such that    $[C]\in R$; moreover $R$ is of type $(3,2)$ and  $E_{R|A_i}=C$, where $E_R:=\Lo(R)$.
  \end{corollary}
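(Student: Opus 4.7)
My plan is to prove the three conclusions in sequence, with the second being the crux. First, since $\N(A_i,Z)=\N(Z)$ by Prop.~\ref{rex}(a), the pushforward $\iota_*$ is surjective; combined with the hypothesis $\rho_{A_i}=\rho_Z$, this forces it to be a linear isomorphism. I then aim to prove $\iota_*(\NE(A_i))=\NE(Z)$, from which the remaining statements will follow readily.

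To prove this equality I first observe that under the assumption $\rho_Z\geq 3$, Prop.~\ref{rex}(c) excludes fiber-type elementary contractions of $Z$ (they would require $\rho_Z=2$), so every extremal ray $R$ of $\NE(Z)$ is of type $(3,2)$. Since $\NE(Z)$ is polyhedral, it suffices to show each such $R$ lies in $\iota_*(\NE(A_i))$; I will in fact prove that $E_R|_{A_i}$ is a single $(-1)$-curve $C$ of $A_i$ with $[C]\in R$. Fix such an $R$, let $g\colon Z\to W$ be its contraction, and set $E_R:=\Lo(R)$, $B:=g(E_R)$. Applying Prop.~\ref{rex} to the birational contraction $g\circ\sigma\colon X\to W$, I obtain that $W$ is Fano with isolated ODPs, and that $A_j^W:=g(A_j)$ together with $B\subset W$ are smooth del Pezzo surfaces intersecting pairwise transversally in finitely many smooth points of $W$. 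The extremal ray $R'$ of $\NE(X)$ satisfying $\sigma_*(R')=R$ is of type $(3,2)$ with $\sigma(E_{R'})=E_R$; by Cor.~\ref{FBI}, $\N(E_{R'},X)=\N(X)\not\subset E_i^{\perp}$, so Rem.~\ref{cassis} yields $E_i\cap E_{R'}\neq\emptyset$. By the description recalled in Rem.~\ref{caprera} (which relies on \cite[Prop.~3.4]{32}) applied to the contraction of the face $R_i+R'$, this intersection is a disjoint union of surfaces isomorphic to $\pr^1\times\pr^1$, one over each point of $A_i^W\cap B$; under $\sigma$ each such component contracts its $R_i$-ruling and maps to the exceptional fiber over the corresponding point of the blow-up $A_i\to A_i^W$, which is a $(-1)$-curve of $A_i$ whose class lies in $R$.

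The main obstacle, and the concluding step of this analysis, is to verify that only one such $(-1)$-curve appears. Setting $m:=|A_i^W\cap B|$, I obtain $m$ pairwise disjoint $(-1)$-curves $C_1,\dotsc,C_m\subset A_i$, all with class in the one-dimensional ray $R$, hence $[C_a]=[C_b]$ in $\N(A_i)$ for all $a,b$. If $m\geq 2$, the relation $0=C_1\cdot C_2=[C_1]^2=-1$ yields a contradiction; hence $m=1$, and $E_R|_{A_i}$ is the desired single $(-1)$-curve $C$ with $[C]\in R$.

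Since $\NE(Z)$ is generated by its extremal rays, this gives $\NE(Z)\subset\iota_*(\NE(A_i))$, and combined with the trivial reverse inclusion we conclude $\iota_*(\NE(A_i))=\NE(Z)$. Finally, every $(-1)$-curve $C\subset A_i$ generates an extremal ray of $\NE(A_i)$ (the Mori cone of a smooth del Pezzo surface with $\rho\geq 3$ is generated by its $(-1)$-curves), so through the isomorphism $\iota_*$ it generates a unique extremal ray $R$ of $\NE(Z)$; by the preceding steps, $R$ is of type $(3,2)$ and $E_{R|A_i}=C$.
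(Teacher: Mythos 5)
Your proof is correct and follows essentially the same route as the paper's: the linear isomorphism from Prop.~\ref{rex}$(a)$ plus $\rho_{A_i}=\rho_Z$, the exclusion of fiber-type rays via Prop.~\ref{rex}$(c)$ and $\rho_Z\geq 3$, the description of $E_{R|A_i}$ as a disjoint union of $(-1)$-curves with class in $R$ drawn from \ref{caprera}--\ref{torquay}, and a numerical argument forcing that union to be a single curve. The only step you elide is why the classes $[C_a]$, which a priori are only proportional as members of the one-dimensional ray $R$, are actually equal: this is pinned down because each $C_a$ is a $(-1)$-curve, so $-K_{A_i}\cdot C_a=1$ fixes the scalar, which is exactly the point the paper makes explicit via $K_{A_i}=K_{Z|A_i}$.
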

  \begin{proof}
We have $K_{A_i}=K_{Z|A_i}$ and  $\N(A_i,Z)=\N(Z)$ by  Prop.~\ref{rex}$(a)$. In particular $\iota_*\colon \N(A_i)\to\N(Z)$ is surjective, thus it is an isomorphism because $\rho_{A_i}=\rho_Z$.
    
We show that  $\iota_*(\NE(A_i))=\NE(Z)$. The inclusion
$\subseteq$
 is clear. Conversely, let $R$ be an extremal ray of $\NE(Z)$. Then $R$ is of type $(3,2)$ by Prop.~\ref{rex}$(c)$ and because $\rho_Z\geq 3$. By the proof of Prop.~\ref{rex} (more precisely \ref{caprera} and \ref{torquay}) we have $E_R\cap A_i\neq\emptyset$ and $E_{R|A_i}=C_1+\cdots+C_m$ with $C_1,\dotsc,C_m$ pairwise disjoint $(-1)$-curves with $[C_j]\in R$, so that $R\in\iota_*(\NE(A_i))$, and we conclude that $\iota_*(\NE(A_i))=\NE(Z)$. Moreover  $-K_Z\cdot C_j=-K_{A_i}\cdot C_j=1$ for every $j$, which implies that $C_j\equiv C_1$ in $Z$. Since $\iota_*$ is injective, we get that $C_j\equiv C_1$ in $A_i$, therefore $C_j=C_1$ and $m=1$, because these are $(-1)$-curves.
  \end{proof}
  \begin{lemma}\label{pasqua2}
             Let $X$ be a smooth Fano $4$-fold with $\rho_X\geq 7$, not isomorphic to a product of surfaces, and without small elementary contractions.
            If there is a contraction
$f\colon X\to \pr^2$, then  $d_f\leq 3$.
\end{lemma}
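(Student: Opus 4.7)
Since Theorem~\ref{scala} gives $d_f\le 4$, it suffices to rule out $d_f=4$, which I would assume for contradiction. Under this hypothesis, Theorem~\ref{scala} (together with the proof of Theorem~\ref{brasile}$(b)$) yields that $f$ is regular and equidimensional, with general fiber $F$ a smooth del Pezzo surface of degree one, so $\rho_F=9$.

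My plan is to iterate Lemma~\ref{montenero} to decompose $f$. Writing $f=h\circ g$ with $g$ elementary: if $g$ were of fiber type, the composition $X\dasharrow$ (target of $g$) would be a rational contraction of fiber type of $X$, whose target must be $\pr^2$ by Lemma~\ref{pasqua}; this forces $\rho_X=2$, contradicting $\rho_X\ge 7$. So $g$ is divisorial, and by the argument used in the proofs of Proposition~\ref{cono} and Lemma~\ref{pasqua} (via Lemma~\ref{Kneg} together with the no-small-contractions hypothesis, which makes every $K$-negative resolution of a rational contraction of $X$ the identity) $g$ is in fact regular of type $(3,2)$. Iterating on $h$, an elementary fiber-type factor at step $k$ would make the composition $X\dasharrow W_{k+1}$ a rational contraction of fiber type of $X$, forcing $W_{k+1}\cong\pr^2$ by Lemma~\ref{pasqua} and hence $\rho_X=k+2$, which is a contradiction for all $k\le \rho_X-3$. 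So every divisorial step occurs first, and we arrive at
\[X\xrightarrow{\sigma}Z\xrightarrow{h}\pr^2,\]
with $\sigma$ a regular birational contraction with $s=\rho_X-2$ exceptional divisors and $h$ an elementary contraction of fiber type with $\rho_Z=2$.

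Next, applying Proposition~\ref{rex} to $\sigma$, $Z$ is Fano with at most isolated ordinary double points, each exceptional divisor $E_i$ of $\sigma$ is of type $(3,2)$, and its image $A_i\subset Z$ is a smooth del Pezzo surface with $K_{A_i}=K_{Z|A_i}$ and $\N(A_i,Z)=\N(Z)$, whence $\rho_{A_i}\ge \rho_Z=2$. Remark~\ref{remconti} then gives $\rho_{A_i}+s-1\le 9$, i.e.\ $\rho_{A_i}\le 12-\rho_X$, forcing $\rho_X\le 10$. By Lemma~\ref{genrho}, the general fiber $F_Z$ of $h$ satisfies $\rho_{F_Z}=d_h\le \rho_Z-1=1$, so $F_Z\cong\pr^2$; consequently $\sigma|_F\colon F\to F_Z$ is a birational morphism from a del Pezzo surface of degree one to $\pr^2$, hence the blow-up of exactly $8$ distinct points, which must be $\bigcup_i(A_i\cap F_Z)$ counted with multiplicity, giving $\sum_{h(A_i)=\pr^2}\deg(h|_{A_i})=8$.

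Turning this into a contradiction is the main obstacle. My plan there is to analyze the second extremal ray of $\NE(Z)$ and, applying Lemma~\ref{pasqua} again to the composition with $\sigma$, show that $Z$ is forced to be a $\pr^2$-bundle over $\pr^2$ (indeed $\pr^2\times\pr^2$ when both rays are of fiber type); and then to show that no configuration of del Pezzo surfaces $A_i\subset Z$ with $\rho_{A_i}\ge 2$ and $\sum\deg(h|_{A_i})=8$ admits a smooth Fano blow-up of Picard number $\ge 7$ without small elementary contractions and not isomorphic to a product of surfaces. In particular the naive construction $X=F\times\pr^2$ is ruled out both because its fiber-type surfaces $A_i=\{\text{pt}\}\times\pr^2$ have $\rho_{A_i}=1<2$ and because $X$ would itself be a product of surfaces, contradicting the hypothesis; executing this last incompatibility for all remaining configurations is the heart of the argument.
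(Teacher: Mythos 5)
Your setup is sound and matches the paper's first step: factoring $f$ as $X\stackrel{\sigma}{\to}Z\stackrel{h}{\to}\pr^2$ with $\sigma$ birational (forced by Lemma~\ref{pasqua}) and $\rho_Z=2$, then invoking Prop.~\ref{rex} and Rem.~\ref{remconti}. But there are two genuine problems. First, your claim that $F_Z\cong\pr^2$ misapplies Lemma~\ref{genrho}: that lemma computes the Picard number of the \emph{generic} fiber over the function field, not of the general geometric fiber, so $d_h=1$ only gives $\dim\N(F_Z,Z)=1$ (e.g.\ $F_Z$ could be $\pr^1\times\pr^1$ with the two rulings identified in $\N(Z)$). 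Second, and decisively, the contradiction is never derived. Your inequality from \eqref{conti} with $\rho_{A_i}\geq 2$ only yields $\rho_X\leq 10$, which does not contradict $\rho_X\geq 7$, and the remaining work is deferred to a classification of ``all remaining configurations'' of surfaces in a $\pr^2$-bundle over $\pr^2$ --- a program that is not executed and would be substantially harder than what is actually needed.

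The paper's route to the contradiction is different and much shorter, and it is the idea you are missing. Since all exceptional curves over a single $A_i$ are numerically proportional in $X$, one gets the exact count $d_f=r+\dim\N(F_Z,Z)=r+1$, where $r$ is the number of blown-up surfaces dominating $\pr^2$; so $d_f=4$ forces $r=3$. Blowing up \emph{only} those three surfaces gives $Z'\to\pr^2$ with $\rho_{Z'}=5<\rho_X$, whose general fiber is still the degree-one del Pezzo $F$ (the remaining centers do not dominate $\pr^2$, so they miss the general fiber). The anticanonical pencils of the fibers then cover $Z'$ by irreducible curves of anticanonical degree one; any further blown-up surface $B\subset Z'$ (there is at least one, as $\rho_{Z'}<\rho_X$) must contain every such curve through a general point of $B$, hence is covered by curves of anticanonical degree one and has $\rho_B=9$. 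Feeding $\rho_B=9$ into \eqref{conti} gives $9\geq\rho_B+\rho_X-\rho_{Z'}-1=\rho_X+3$, the desired contradiction. Without the identification $r=3$ and this covering-family argument, your proposal does not close.
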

\noindent We recall that $d_f=\dim\N(F,X)$ for a general fiber $F\subset X$ of $f$, see Def.-Rem.~\ref{dimfiber}.
             \begin{proof}
               Let us  factor $f$ as $X\stackrel{\sigma}{\to} Z \stackrel{g}{\to}\pr^2$, with $\rho_Z=2$ and $g$ elementary; let $F\subset X$ be a general fiber of $f$ and $F_Z:=\sigma(F)\subset Z$ a general fiber of $g$.
               
               Then $\sigma$ is birational by Lemma \ref{pasqua},
$g$ is elementary of fiber type, and $\dim\N(F_Z,Z)=1$. 
               We apply Prop.~\ref{rex} to $\sigma$;
consider the surfaces $A_1,\dotsc,A_{\rho_X-2}\subset Z$ blown-up by $\sigma$.
Up to renumbering
               let $A_1,\dotsc,A_r$ be those dominating $\pr^2$ under $g$, with $r\in\{0,\dotsc,\rho_X-2\}$.
               Then
               $$d_f=\dim\N(F,X)=r+\dim\N(F_Z,Z)=r+1.$$ 
               
  Suppose by contradiction that $d_f\geq 4$. Then by Th.~\ref{scala} we have $d_f=4$, $r=3$, and  $F$  is a del Pezzo surface of degree one.
               Let us further factor $f$ as
               $$X\stackrel{\sigma''}{\la}Z'\stackrel{\sigma'}{\la} Z\stackrel{g}{\la}\pr^2$$
               where $\sigma'$ is the blow-up of  $A_1,A_2,A_3\subset Z$, $\rho_{Z'}=5$,  and set $F':=\sigma''(F)\subset Z'$. Then the surfaces blown-up by $\sigma''$ do not dominate $\pr^2$, hence $F'$ is contained in the open subset where $\sigma''$ is an isomorphism, so that $F'\cong F$ and $F'$ is a del Pezzo surface of degree one. 
               By considering the anticanonical pencil in $F'$, we see that $Z'$ is covered by a proper family $V$ of irreducible curves of anticanonical degree one (recall that $Z'$ is Gorenstein Fano by Prop.~\ref{rex}).
             
           We have    $\rho_{Z'}<\rho_X$, hence $\sigma''$ is not an isomorphism;  
               let $B\subset Z'$ be a surface blown-up by $\sigma''$. By
      Prop.~\ref{rex}      we know that  $B$ is a smooth del Pezzo surface with $K_{Z'|B}=K_{B}$.  
               Let $p\in B$ be a general point. There is an irreducible curve $\Gamma$ of the family $V$ such that $p\in\Gamma$. Since $-K_{Z'}\cdot\Gamma=1$, we must have $\Gamma\subset B$  (see \cite[Prop.~2.10]{3folds}); moreover
     $-K_{B}\cdot\Gamma=-K_{Z'}\cdot\Gamma=1$.

               We conclude that $B$ is covered by curves of anticanonical degree one, hence $\rho_{B}=9$. On the other hand  \eqref{conti} applied to $\sigma''$  and $A_i=B$ gives
               $9\geq\rho_{B}+\rho_X-\rho_{Z'}-1=\rho_X+3$,  a contradiction.
             \end{proof}
\begin{lemma}\label{dolonne}
  Let $X$ be a smooth Fano $4$-fold with $\rho_X\geq 7$, not isomorphic to a product of surfaces, and without small elementary contractions.

  Then there exists a birational contraction $\sigma\colon X\to Z$ with
$\rho_Z=1$ and such that $\rho_{A_i}=1$ for some $i\in\{1,\dotsc,\rho_X-1\}$
  (notation as in Prop.~\ref{rex}).
\end{lemma}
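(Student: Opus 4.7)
The plan is to produce $\sigma$ in two stages: first, run a divisorial MMP from $X$ to reach a Fano $4$-fold target of small Picard number, then modify the contraction, if necessary, to secure $\rho_{A_i}=1$ for some $i$. Since $X$ has no small elementary contraction by hypothesis, and Lemma~\ref{pasqua} implies that an elementary fiber-type contraction of $X$ would have target $\pr^2$ and thus force $\rho_X=2$ (contradicting $\rho_X\geq 7$), Prop.~\ref{cono} shows that every elementary contraction of $X$ is divisorial of type $(3,2)$. Applying Prop.~\ref{rex}$(c)$ iteratively to the birational contraction built so far from $X$, each intermediate target is a Fano $4$-fold whose elementary contractions are classified as type $(3,2)$ or fiber type to $\pr^2$; a type-$(3,2)$ ray can be contracted to descend further. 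Continue until reaching a birational contraction $\sigma\colon X\to Z$ where $Z$ has no type-$(3,2)$ elementary contraction; by Prop.~\ref{rex}$(c)$ every non-trivial elementary contraction of $Z$ is then $Z\to\pr^2$ of fiber type, forcing $\rho_Z\in\{1,2\}$ (being elementary).

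Ruling out $\rho_Z=2$ is the main obstacle. Suppose $\rho_Z=2$; then $Z$ admits two distinct elementary contractions $g_1,g_2\colon Z\to\pr^2$, and the product $(g_1,g_2)\colon Z\to\pr^2\times\pr^2$ is generically finite (as the pullbacks $g_1^*\ol_{\pr^2}(1)$ and $g_2^*\ol_{\pr^2}(1)$ span $\Nu(Z)$). For each $i$, the composition $g_i\circ\sigma\colon X\to\pr^2$ satisfies $d_{g_i\circ\sigma}\leq 3$ by Lemma~\ref{pasqua2}, and as in its proof equals $r_i+1$, where $r_i\leq 2$ is the number of surfaces $A_j$ blown up by $\sigma$ that dominate $\pr^2$ via $g_i$. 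Generic finiteness of $(g_1,g_2)$ prevents any $A_j$ from dominating under both $g_1$ and $g_2$, so at most $r_1+r_2\leq 4$ of the $\rho_X-2$ surfaces $A_j$ dominate $\pr^2$ under either $g_i$, leaving at least $\rho_X-6\geq 1$ surfaces $A_j$ contracted to curves by both, each mapping generically finitely onto a product of rational curves in $\pr^2\times\pr^2$. Combined with the constraints $\N(A_j,Z)=\N(Z)$ and $K_{A_j}=K_{Z|A_j}$ from Prop.~\ref{rex}$(a)$ and the classification of smooth del Pezzo surfaces admitting two independent morphisms to $\pr^1$, a case analysis---involving intersection-theoretic computations in $\Pic(Z)=\Z H_1\oplus\Z H_2$ and the degree of $(g_1,g_2)$---should yield a contradiction. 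I expect this case analysis to require the most work.

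Once $\rho_Z=1$, inequality~\eqref{conti} becomes $\rho_{A_i}\leq 10-\rho_X$ for every $i$, which forces $\rho_X\leq 9$ (since $\rho_{A_i}\geq 1$) and gives $\rho_{A_i}=1$ for every $i$ when $\rho_X=9$. For $\rho_X\in\{7,8\}$, if no $A_i$ has Picard rank one, then every $A_i$ contains a $(-1)$-curve of $-K_Z$-degree one, forcing $Z$ to have index one. I modify $\sigma$ by picking $A_1$ with $\rho_{A_1}\geq 2$ and using Prop.~\ref{rex}$(b)$ to factor $\sigma=h_1\circ\tau$ with $h_1\colon\hat Y_1\to Z$ the blow-up of $A_1$ alone (so $\rho_{\hat Y_1}=2$), then applying Prop.~\ref{rex}$(c)$ to $\tau\colon X\to\hat Y_1$. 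The second extremal ray of $\NE(\hat Y_1)$ is either of type $(3,2)$---contracting it yields a new birational contraction $X\to Z'$ with $\rho_{Z'}=1$ in which $A_1$ has been replaced by a different surface---or fiber type to $\pr^2$, in which case the composition $X\to\hat Y_1\to\pr^2$ can be analyzed with the same techniques as the previous paragraph to rule out this configuration. Running this modification, invoking \eqref{conti} to ensure the Picard ranks of the blown-up surfaces decrease, the procedure terminates at a birational contraction $X\to Z$ with $\rho_Z=1$ and $\rho_{A_i}=1$ for some $i$.
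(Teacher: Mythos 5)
Your first stage (descending through type-$(3,2)$ contractions until $\rho_Z\in\{1,2\}$) is sound and matches the paper's starting point, which fixes a contraction $\tau\colon X\to W$ with $\rho_W=2$ and case-analyzes its two elementary contractions. But the pivotal step of your plan --- ruling out $\rho_Z=2$ with two fiber-type contractions $g_1,g_2\colon Z\to\pr^2$ by deriving a contradiction --- fails, and this is the genuine gap. That configuration is not contradictory: the surface $B_1$ you isolate (contracted to a curve by both $g_j$) turns out to be $\pr^1\times\pr^1$ (the paper gets this from \cite[Prop.~7.2]{druelNefEff}), which is perfectly compatible with $\N(B_1,Z)=\N(Z)$, $K_{B_1}=K_{Z|B_1}$, and $\rho_{B_1}=\rho_Z=2$; indeed a double cover of $\pr^2\times\pr^2$ branched in a $(2,2)$-divisor appears as such a target later in the paper's own proof of Th.~\ref{final} (see \ref{malpensa}). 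The paper's key idea, which your proposal is missing, is \emph{constructive}: having found $B_1\cong\pr^1\times\pr^1$ and a second surface $B_2$ meeting it in one point, it blows up $B_2$ to get $W'$ with $B_1'\cong\Bl_{2\mskip1mu\pts}\pr^2$ and $\rho_{B_1'}=\rho_{W'}=3$, then uses Cor.~\ref{important} to extend the contraction $B_1'\to\pr^2$ of the two $(-1)$-curves to a birational contraction $W'\to T$ with $\rho_T=1$, in which the image of $B_1'$ has Picard number one. Without a mechanism of this kind your argument cannot proceed.

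There are two further problems in your final paragraph. The inequality \eqref{conti} with $\rho_Z=1$ reads $9\geq\rho_{A_i}+\rho_X-2$, i.e.\ $\rho_{A_i}\leq 11-\rho_X$, not $10-\rho_X$; so for $\rho_X=9$ you only get $\rho_{A_i}\leq 2$, and the case ``all $\rho_{A_i}\geq 2$'' is not excluded for any $\rho_X\in\{7,8,9\}$ by counting alone. And the proposed repair --- re-factoring $\sigma$ through the blow-up of a single $A_1$ and contracting the other ray of $\NE(\hat Y_1)$ --- comes with no argument that the Picard numbers of the new exceptional surfaces decrease or that the procedure terminates; \eqref{conti} does not supply one. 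The paper sidesteps all of this by never leaving the level $\rho_W=2$: in the ``both birational'' case it shows directly that one of the two contracted surfaces has Picard number one (via the relation $(E_1+E_2)\cdot C_1=(E_1+E_2)\cdot C_2=0$ forcing $E_1+E_2\equiv 0$ otherwise), and in the mixed case it shows $\rho_{B_1}=2$ for a suitable $B_1$, so its image under the birational $g_1$ has Picard number one.
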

  \begin{proof}\begin{prg}\label{prel}
  Let  $\tau\colon X\to W$ be a contraction with $\rho_W=2$; it is birational by Lemma \ref{pasqua}, and Prop.~\ref{rex} applies; in particolar $W$ is a Fano $4$-fold with at most ordinary double points as singularities, and $\tau$ blows-up $\rho_X-2\geq 5$ surfaces $B_1,\dotsc,B_{\rho_X-2}\subset W$.  For every $i=1,\dotsc,\rho_X-2$, \eqref{conti} gives $9\geq \rho_{B_i}+\rho_X-3\geq \rho_{B_i}+4$, therefore $\rho_{B_i}\leq 5$.

  Let us consider
  the two elementary contractions  $g_j\colon W\to Z_j$ of $W$, with
  $j\in\{1,2\}$, and set $f_j:=g_j\circ\tau\colon X\to Z_j$.

If $g_j$ is of fiber type, then
by Lemmas \ref{pasqua} and \ref{pasqua2} we have  $Z_j\cong\pr^2$, $f_j$ is equidimensional, and $d_{f_j}\leq 3$; moreover as in the proof of Lemma \ref{pasqua2} we see that at most two surfaces $B_i$ can dominate $\pr^2$ under $g_j$.

\medskip

If $g_j$ is birational, then it must be of type $(3,2)$ by Prop.~\ref{rex}.
Set $E_j:=\Exc(g_j)\subset W$ and $A_j:=g_j(E_j)\subset Z_j$.
Suppose that $\rho_{A_j}>1$; we show that there are at most two indices $i\in\{1,\dotsc,\rho_X-2\}$ such that  $E_{j|B_i}$ is reducible.

 We apply Rem.~\ref{remconti} to $f_j=g_j\circ\tau\colon X\to Z_j$ and the surface $A_j$; the other surfaces blown-up by $f_j$ are $g_j(B_1),\dotsc,g_j(B_{\rho_X-2})$. Set $m_{i}:=\#(g_j(B_i)\cap A_j)$, and note that this is also the number of components of $E_{j|B_i}$ in $W$.  Then \eqref{conti} gives $9\geq\rho_{A_j}+\sum_{i=1}^{\rho_X-2}m_{i}\geq 2+
 \sum_{i}m_{i}$, therefore $\sum_{i=1}^{\rho_X-2}m_{i}\leq 7$ and the sum has $\rho_X-2\geq 5$ terms, so
 we see that there are at most two indices $i\in\{1,\dotsc,\rho_X-2\}$ such that $m_{i}>1$, namely such that $E_{j|B_i}$ is reducible.
 \end{prg}
 \begin{prg}\label{fibertype}
   Suppose that both $g_1$ and $g_2$ are of fiber type.
   
Since $\rho_X-2\geq 5$, by \ref{prel} there is at least  one surface, say $B_1\subset W$, such that $g_j(B_1)\subsetneq\pr^2$ for $j=1,2$. 
    On the other hand $g_j(B_1)=f_j(E_1)$ cannot be a point because $f_j$ is equidimensional, therefore $g_j(B_1)$ is a curve in $\pr^2$. Since $B_1$ is a smooth del Pezzo surface, $g_{j|B_1}$ factors through a conic bundle $h_j\colon B_1\to\pr^1$, such that $h_1$ and $h_2$ define a finite map $h\colon B_1\to\pr^1\times\pr^1$. Since $\rho_{B_1}\leq 5$, \cite[Prop.~7.2]{druelNefEff} implies that $B_1\cong\pr^1\times\pr^1$.

 Similarly as in \ref{prel}, using \eqref{conti} and $\rho_{B_1}=2$,
    we see that there is at least one index $i\in\{2,\dotsc,\rho_X-2\}$  such that  $\#(B_1\cap B_i)=1$; up to renumbering we can assume $i=2$.
Consider the factorization of $\tau$ given by:
  $$X\stackrel{\tau''}{\la} W'\stackrel{\tau'}{\la} W$$
where 
$\tau'$ is the blow-up of $B_2$, and let $B_1'\subset W'$ be the transform of $B_1$. Then $B_1'\cong\Bl_{2\mskip1mu\pts}\pr^2$, so that $\rho_{B_1'}=\rho_{W'}=3$, and we can apply Cor.~\ref{important}.
In particular, let $C_1,C_2\subset B_1'$ be the $(-1)$-curves contracted by $B_1'\to\pr^2$; then for $i=1,2$ the class $[C_i]\in\NE(W')$ generates an extremal ray $R_i$ of type $(3,2)$, with locus $G_i\subset W'$ such that $G_{i|B_1'}=C_i$.
Moreover
 $R_1+R_2$ is a face of $\NE(W')$, because $\NE(W')\cong\NE(B_1')$. 

 Let $h\colon W'\to T$ be the contraction with $\NE(h)=R_1+R_2$, so that $\rho_T=1$.
   We have $G_1\cdot C_2=G_{1|B_1'}\cdot_{B_1'}C_2=C_1\cdot_{B_1'}C_2=0$, and similarly $G_2\cdot C_1=0$. Then it is easy to see that $\Exc(h)\subset G_1\cup G_2$, and $h$ is birational. Moreover $A_T:=h(B_1')\subset T$ is one of the surfaces blown-up by the birational contraction $h\circ\tau''\colon X\to T$, and $\rho_{A_T}=1$ by construction, so we get the statement.
\end{prg}  
\begin{prg}
  Suppose now that $g_1$ and $g_2$ are both birational; recall that $E_j=\Exc(g_j)\subset W$ and $A_j=g_j(E_j)\subset Z_j$ for $j=1,2$. We show that $\rho_{A_j}=1$ for some $j\in\{1,2\}$, which gives the statement.

By contradiction, suppose that $\rho_{A_j}>1$ for $j=1,2$.
Then, by \ref{prel}, there are at 
 most $4$ indices $i\in\{1,\dotsc,\rho_X-2\}$ such that $E_{j|B_i}$ is reducible for some $j\in\{1,2\}$, and since $\rho_X-2\geq 5$,
 there exists an index $i$, say $i=1$, such that  $E_{j|B_1}$ is irreducible
for $j=1,2$. Then
  $E_{j|B_1}=C_j$ is a $(-1)$-curve in $B_1$, with $[C_j]\in\NE(g_j)$ (see \ref{caprera}(3)).

We have $\NE(W)=\NE(g_1)+\NE(g_2)$ and $E_1\cdot \NE(g_1)<0$, hence $E_1\cdot \NE(g_2)>0$. Then $C_1\cdot_{B_1} C_2=E_{1|B_1}\cdot_{B_1} C_2=E_1\cdot C_2>0$, and since
$B_1$ is a smooth del Pezzo surface with $\rho_{B_1}\leq 5$ (see \ref{prel}), it is easy to see that it must be $C_1\cdot_{B_1} C_2=1$.
Conversely this implies that $E_1\cdot C_2=1$, and similarly 
$E_2\cdot C_1=1$. This gives $(E_1+E_2)\cdot C_1=(E_1+E_2)\cdot C_2=0$, which is impossible because $\rho_W=2$, $[C_1]$ and $[C_2]$ generate $\N(W)$, and we would get $E_1+E_2\equiv 0$.
\end{prg}
\begin{prg}
  Finally suppose that $g_1\colon W\to Z_1$ is birational and $g_2\colon W\to\pr^2$ is of fiber type.  Set $E:=\Exc(g_1)\subset W$ and $A:=g_1(E)\subset Z_1$. We have $\NE(W)=\NE(g_1)+\NE(g_2)$, and $E\cdot \NE(g_1)<0$, hence $E\cdot\NE(g_2)>0$.

  If $\rho_{A}=1$ we get the statement, thus let us assume that $\rho_A>1$. Then by \ref{prel} there at most two indices $i\in\{1,\dotsc,\rho_X-2\}$ such that $E_{|B_i}$ is reducible, and at  most two indices $i\in\{1,\dotsc,\rho_X-2\}$ such that $B_i$ dominates $\pr^2$ under $g_2$. Since $\rho_X-2\geq 5$, there is at least one index $i$, say $i=1$, such that $g_2(B_1)\subsetneq\pr^2$ and $E_{|B_1}=C$ a $(-1)$-curve in $B_1$.

  We show that $\rho_{B_1}=2$. Then the surface $g_1(B_1)\subset Z_1$ has Picard number $1$, and this gives the statement.

  Assume by contradiction that $\rho_{B_1}>2$.
As in \ref{fibertype} we see that $g_2(B_1)\subset\pr^2$ is a curve and 
$g_{2|B_1}$ factors through a conic bundle $h\colon B_1\to\pr^1$; since
$\rho_{B_1}>2$, $h$ 
 has at least one reducible fiber $\Gamma_1+\Gamma_2$, where $\Gamma_i$ are $(-1)$-curves with $\Gamma_1\cdot_{B_1}\Gamma_2=1$.
 Let $i\in\{1,2\}$. Note that in $W$ we have $[\Gamma_i]\in\NE(g_2)$ and hence $E\cdot \Gamma_i>0$, therefore  $C\cdot_{B_1}\Gamma_i=E_{|B_1}\cdot_{B_1}\Gamma_i=E\cdot\Gamma_i>0$.

  However it is impossible to have such a configuration $C,\Gamma_1,\Gamma_2$ of $(-1)$-curves in $B_1$, because $\rho_{B_1}\leq 5$ (see \ref{prel}). Indeed, consider a birational map $\ph\colon B_1\to\pr^2$ that contracts $C$ to a point $p\in\pr^2$. The $(-1)$-curves of $B_1$ are given by the exceptional curves for $\ph$, and the transforms of the lines through two blown-up points. 
  Since $C\cdot\Gamma_i>0$, $\ph(\Gamma_i)$ must be a line $\overline{pq_i}$
  where $q_i$  is another blown-up point, but then the two lines meet only at $p$ and $\Gamma_1$ and $\Gamma_2$ should be disjoint in $B_1$.
  This concludes the proof of Lemma \ref{dolonne}.\qedhere
\end{prg}
\end{proof}  
\begin{proof}[Proof of Th.~\ref{final}]
  By Lemma \ref{dolonne} there exists a birational contraction $\sigma\colon X\to Z$ with $\rho_Z=1$ and such that  $\rho_{A_i}=1$ for some $i\in\{1,\dotsc,\rho_X-1\}$ (notation as in Prop.~\ref{rex}). 
  Up to renumbering we can assume that $\rho_{A_1}=1$, so that $A_1\cong\pr^2$.
  Similarly as before (see the proof of Lemma \ref{dolonne}), using \eqref{conti} we see
that there are at least two indexes $j\in\{2,\dotsc,\rho_X-1\}$ with  $\#(A_1\cap A_j)=1$; up to renumbering we can assume that $m_{12}=m_{13}=1$.
\begin{prg}\label{spezia}
  Consider the factorization of $\sigma$ given by:
  $$X\stackrel{\sigma''}{\la} Z'\stackrel{\sigma'}{\la} Z$$
  where $\rho_{Z'}=3$ and $\sigma'$ is the blow-up of $A_2$ and $A_3$. Let $A_1'\subset Z'$ be the transform of $A_1\subset Z$. Then  $\sigma'_{|A_1'}\colon A_1'\to A_1$ blows-up two points, so that $A_1'\cong\Bl_{2\mskip1mu\pts}\pr^2$ and $\rho_{A_1'}=\rho_{Z'}=3$. For $i=2,3$ we denote by $C_i\subset A_1'$ the $(-1)$-curve over the point $A_1\cap A_i$, and $C_0\subset A_1'$ the $(-1)$-curve not contracted by $\sigma'$; recall that $\NE(A_1')=\langle [C_0], [C_2],[C_3]\rangle$.

Let  $\iota\colon A'_1\hookrightarrow Z'$ be the inclusion.
  By Cor.~\ref{important} $\iota_*\colon\N(A'_1)\to\N(Z')$ is an isomorphism that induces an isomorphism between $\NE(A_1')$ and $\NE(Z')$, hence
  $\NE(Z')$ is a simplicial $3$-dimensional cone generated by $[C_0],[C_2],[C_3]$.
  
For $i=0,2,3$ let $R_i$ be the extremal ray of $\NE(Z')$ generated by $[C_i]$ (so that $\NE(\sigma')=R_2+R_3$); by Cor.~\ref{important} each $R_i$ is of type $(3,2)$, with locus $E_i\subset Z'$ such that $E_{i|A_1'}=C_i$.

 This implies that restriction 
  $r'\colon \Pic(Z')\to\Pic(A'_1)$ is an isomorphism. Indeed $r'$ is injective because $\iota_*$ is surjective, and it is surjective 
  because $\Pic(A'_1)$ is generated by the classes of $C_0,C_2,C_3$, which are restrictions of divisors in $Z'$.
  \end{prg}
\begin{prg}\label{birat3}
  The restriction $\Pic(Z)\to\Pic(A_1)$ is an isomorphism.
\begin{proof}
 We have a commutative diagram:
  $$\xymatrix{{\Pic(Z')}\ar[r]^{r'}&{\Pic(A'_1)}\\
{\Pic(Z)}\ar[u]^{(\sigma')^*}\ar[r]^{r}&{\Pic(A_1)}\ar[u]_{(\sigma'_{|A_1'})^*}
}$$
where $r'$ is an isomorphism and $(\sigma')^*$ and $(\sigma'_{|A_1'})^*$ are injective, so that $r$ is injective too.

If $L\in\Pic(A_1)$, consider $(\sigma'_{|A_1'})^*(L)\in\Pic(A_1')$. Since $r'$ is surjective, there exists $M\in\Pic(Z')$ such that $M_{|A_1'}=(\sigma'_{|A_1'})^*(L)$. We have $M\cdot C_i=M_{|A_1'}\cdot C_i=0$ for $i=2,3$, so that $M=(\sigma')^*(M_2)$ for some $M_2\in\Pic(Z)$, and
$(\sigma'_{|A_1'})^*(M_{2|A_1})=(\sigma')^*(M_2)_{|A_1'}=M_{|A_1'}=(\sigma'_{|A_1'})^*(L)$, hence
$M_{2|A_1}=L$ and $r$ is surjective.
\end{proof}
\end{prg}
\begin{prg}\label{Z}
  By Prop.~\ref{rex} $Z$ is a Fano $4$-fold with at most locally factorial, isolated ordinary double points, $\rho_Z=1$, and $K_{A_1}=K_{Z|A_1}$. Since $A_1\cong\pr^2$, \ref{birat3} implies that
   $Z$ has index $3$, namely 
 $Z$ is a del Pezzo variety, see \cite[\S 3.2]{fanoEMS}. Let $d\in\mathbb{N}$ be its degree, \emph{i.e.} $d=H^4$ where $H$ is the ample generator of $\Pic(Z)$ and $-K_Z=3H$. We have $\chi(Z,H)=h^0(Z,H)=d+3$ [\emph{ibid.}, Rem.~3.2.2(ii)], $\chi(Z,-H)=\chi(Z,-2H)=0$, and $\chi(Z,\ol_Z)=1$, which yields $\chi(Z,tH)=\frac{1}{24}(t+1)(t+2)(dt^2+3dt+12)$, and finally $h^0(Z,-K_Z)=15 d+10$.
\end{prg}
\begin{prg}\label{Z'}
  The birational map $\sigma'\colon Z'\to Z$ factors as two blow-ups $Z'\to Z''\to Z$ with center first $A_2\subset Z$ and then the transform $A_3''\subset Z''$ of $A_3\subset Z$. By Prop.~\ref{rex}, $A_2$ is a smooth del Pezzo surface with $-K_{A_2}=-K_{Z|A_2}=3H_{|A_2}$, so that $A_2$ has again index $3$ and $A_2\cong\pr^2$.  Rem.~\ref{antsections} gives 
    $h^0(Z'',-K_{Z''})=h^0(Z,-K_Z)-10=15d$.
    
By applying \eqref{conti} to $X\to Z''$ and $A_3''\subset Z''$ we get $\rho_{A_3''}\leq 10-(\rho_X-\rho_{Z''})=12-\rho_X\leq 5$, thus 
again by Rem.~\ref{antsections} we get
\stepcounter{thm}
  \begin{equation}\label{Z'formula}
    h^0(Z',-K_{Z'})=15d-11+\rho_{A_3''}\in\{15d-10,\dotsc,15d-6\}.
    \end{equation}
\end{prg}
\begin{prg}\label{WW}
  Fix $j\in\{2,3\}$ and consider the contraction $g_j\colon Z'\to Z_j$ with $\NE(g_j)=R_0+R_j$ (see \ref{spezia}), so that $\rho_{Z_j}=1$. In $Z'$ we have $E_{j}\cdot C_0=E_{j|A_1'}\cdot_{A_1'} C_0=C_j\cdot C_0=1$, and similarly $E_{0}\cdot C_j=1$. This implies that $g_j$ must be of fiber type. Indeed if $g_j$ is birational, we can apply Prop.~\ref{rex} to $g_j\circ\sigma''\colon X\to Z_j$, and we get extremal rays $\w{R}_0$, $\w{R}_j$ in $\NE(g_j\circ\sigma'')$, with $(\sigma'')_*(\w{R}_0)=R_0$ and   $(\sigma'')_*(\w{R}_j)=R_j$, and loci $\w{E}_0=(\sigma'')^*(E_0)$, $\w{E}_j=(\sigma'')^*(E_j)$ in $X$. Then we get $\w{E}_j\cdot\w{R}_0>0$ by the projection formula, but it is shown in the proof of Prop.~\ref{rex} that $\w{E}_j\cdot\w{R}_0=0$. 

Therefore $g_j$ is of fiber type, and again
  by considering the composition $g_j\circ\sigma''\colon X\to Z_j$, we get $Z_j\cong\pr^2$ by Lemma \ref{pasqua}.

Let us also note that $(E_0+E_j)\cdot R_0=(E_0+E_j)\cdot R_j=0$, so that $\ol_{Z'}(E_0+E_j)=g_j^*\ol_{\pr^2}(a_j)$ for some $a_j\in\mathbb{N}$.

We have $(E_0+E_j)_{|A_1'}=C_0+C_j$ fiber of a conic bundle $h_j\colon A_1'\to\pr^1$, therefore
$g_j(A_1')\subset \pr^2$ is a curve, and we have a factorization
 $$\xymatrix{ {A_1'}\ar@/^1pc/[rr]^{g_{j|A_1'}}\ar[r]_{h_j}&{\pr^1}\ar[r]_<<<<{\nu_j}&{g_j(A_1')}
 }$$
 where $\nu_j$ is a finite map. Then $h_j^*\ol_{\pr^1}(1)=C_0+C_j=(E_0+E_j)_{|A_1'}=h_j^*\nu_j^*\ol_{\pr^2}(a_j)_{|g_j(A_1')}$, which gives
$\ol_{\pr^1}(1)=\nu_j^*\ol_{\pr^2}(a_j)_{|g_j(A_1')}$.
We conclude that $a_j=1$, $g_j(A_1')$ is a line in $\pr^2$,
 $\nu_j$ is an isomorphism, and $g_{j|A_1'}=h_j$.
  \end{prg}
\begin{prg}\label{malpensa}
  Let now
  $f\colon Z'\to W$ be the contraction of $R_0$; we have $\rho_W=2$, and $f$ is of type $(3,2)$ by Prop.~\ref{rex}$(c)$. Then 
Prop.~\ref{rex} applies to
  the composition
  $f\circ \sigma''\colon X\to W$; in particular $W$ is a Fano $4$-fold with at most locally factorial, isolated ordinary double points.
Set $A_W:=f(A_1')\subset W$; then $A_W$ is one of the surfaces blown-up by 
$f\circ\sigma''$, hence it is a smooth del Pezzo surface with $K_{A_W}=K_{W|A_W}$.

Moreover $g_j\colon Z'\to\pr^2$ factors through $f$, so that  
 $W$ has two elementary contractions 
 $\pi_j\colon W\to\pr^2$ for $j=1,2$, that induce a finite map $\pi\colon W\to\pr^2\times\pr^2$.

 The restriction $f_{|A_1'}$ contracts the ``middle'' $(-1)$-curve $C_0$ of $A_1'\cong\Bl_{2\mskip1mu\pts}\pr^2$, and $A_W\cong 
\pr^1\times\pr^1$. Moreover by \ref{WW} for $j=2,3$ the image $\pi_j(A_W)\subset\pr^2$ is a line, and $\pi_{j|A_W}\colon A_W\to\pr^1$ are the two projections.

As in \ref{spezia} we see that
 the restriction $\Pic(W)\to\Pic(A_W)$ is an isomorphism, and $K_{W|A_W}=K_{A_W}$, therefore $W$ has index two. Moreover  $K_{A_W}=\ol_{\pr^1\times\pr^1}(-2,-2)$ implies that $K_W=\pi_1^*\ol_{\pr^2}(-2)+
 \pi_2^*\ol_{\pr^2}(-2)$.
 
 Consider the general fiber $F_1\subset W$ of $\pi_1\colon W\to\pr^2$. It is a smooth del Pezzo surface with $K_{F_1}=K_{W|F_1}$, thus $F_1$ has index two and $F_1\cong\pr^1\times\pr^1$. Notice that $\deg\pi=\deg(\pi_{2|F_1}\colon F_1\to\pr^2)$. We have
 $$\ol_{\pr^1\times\pr^1}(-2,-2)=K_{F_1}=K_{W|F_1}=\bigl(\pi_1^*\ol_{\pr^2}(-2)+
 \pi_2^*\ol_{\pr^2}(-2)\bigr)_{|F_1}=(\pi_{2|F_1})^*\ol_{\pr^2}(-2),$$
 hence $(\pi_{2|F_1})^*\ol_{\pr^2}(1)=\ol_{\pr^1\times\pr^1}(1,1)$, and we conclude that $\deg \pi=2$. Finally $K_W=\pi^*\ol_{\pr^2\times\pr^2}(-2,-2)=\pi^*K_{\pr^2\times\pr^2}+R$ with $R\sim\pi^*\ol_{\pr^2\times\pr^2}(1,1)$, and
 $W$ is a double cover of $\pr^2\times\pr^2$ with branch locus a divisor of degree $(2,2)$.
We have  $\pi_*\ol_W=\ol_{\pr^2\times\pr^2}\oplus\ol_{\pr^2\times\pr^2}(-1,-1)$, and $$\pi_*\ol_W(-K_W)=\pi_*\ol_W\otimes\ol_{\pr^2\times\pr^2}(2,2)=\ol_{\pr^2\times\pr^2}(2,2)\oplus\ol_{\pr^2\times\pr^2}(1,1),$$
therefore $h^0(W,-K_W)=45$.
\end{prg}
\begin{prg}\label{atene}
  The blow-up map $f\colon Z'\to W$ has center a smooth del Pezzo surface of index two (by Prop.~\ref{rex} and because $W$ has index two), therefore $h^0(Z',-K_{Z'})=h^0(W,-K_W)-9=36$ by Rem.~\ref{antsections}.
Together with \eqref{Z'formula}, this implies that $d=3$ and $Z\subset \pr^5$ is a cubic $4$-fold, see \cite[Th.~3.3.1]{fanoEMS}; moreover $A_i\subset Z$ is a plane for every $i=1,\dotsc,\rho_X-1$.

The intersection $A_i\cap A_j$ is finite, therefore  $A_i$ and $A_j$ intersect only in one point, namely $m_{ij}=1$ for every $i\neq j$ (see Rem.~\ref{remconti}).
Moreover $A_i\cap A_j\cap A_k=\emptyset$ if $i<j<k$ by Prop.~\ref{rex}.

Let us write $\rho:=\rho_X$ for simplicity.
 The map $X\to Z$ factors a sequence of $\rho-1$ blow-ups:
 $$X=X_\rho\stackrel{\sigma_{\rho-1}}{\la}X_{\rho-1}{\la}\cdots\cdots
 {\la} 
 X_{2}\stackrel{\sigma_{1}}{\la} X_{1}=Z,$$
 where, for $i=1,\dotsc,\rho-1$, $\sigma_{i}\colon X_{i+1}\to X_{i}$ blows-up a smooth del Pezzo surface $S_i\subset X_i$  with $K_{X_{i|S_i}}=K_{S_i}$, given by the transform of $A_i\subset Z$. The induced map $S_i\to A_i$ blows-up $\sum_{j=1}^{i-1}m_{ij}=i-1$ points, therefore $\rho_{S_i}=i$, and
 Rem.~\ref{antsections} gives $$h^0(X_{i+1},-K_{X_{i+1}})
=h^0(X_{i},-K_{X_{i}})-11+i,$$
 and $h^0(X_{1},-K_{X_{1}})=55$.
 This gives $h^0(X,-K_X)=\frac{1}{2}(\rho^2-23\rho+132)$; on the other hand $h^0(X,-K_X)\geq 2$ (see  Th.~\ref{h0}), which implies that $\rho_X\leq 9$. This concludes the proof of Th.~\ref{final}.\qedhere
\end{prg}
\end{proof}
\section{Fano $4$-folds with a  divisor of type $(3,0)^{\sm}$}\label{sec30}
\noindent Let $X$ be a Fano $4$-fold with $\rho_X\geq 7$, that is not a product of surfaces; we consider now the case where $X$ has some small elementary contraction.  The following remark shows that such $X$ must contain a fixed prime divisor  of type $(3,0)^{\sm}$, $(3,1)^{\sm}$, or $(3,0)^Q$.
 \begin{remark}\label{united}
Let $X$ be a smooth Fano $4$-fold with $\rho_X\geq 7$, having a small elementary contraction $f\colon X\to Y$. We have $\NE(f)\not\subset\mov(X)=\Eff(X)^{\vee}$, therefore there exists a one-dimensional face $\tau$ of $\Eff(X)$ such that $\tau\cdot\NE(f)<0$. By Cor.~\ref{summary} $\tau$ is a fixed face, hence $\tau=\R_{\geq 0}[D]$ for some  fixed prime divisor $D\subset X$, and $D\cdot\NE(f)<0$. In particular $D$ contains $\Lo(f)$ which is a union of exceptional planes (Lemma \ref{kawamata}), and $D$ cannot be of type $(3,2)$ (Lemma \ref{dim32}). We conclude that $D$ must be of type $(3,0)^{\sm}$, $(3,1)^{\sm}$, or $(3,0)^Q$ (see Th.-Def.~\ref{fixed}).
          \end{remark}

In this section we consider the case where $X$ has a fixed prime divisor of type $(3,0)^{\sm}$, and show the following result (Th.~\ref{30intro} from the Introduction).
\begin{thm}\label{30}
Let $X$ be a smooth Fano $4$-fold with $\rho_X\geq 7$ having a fixed prime divisor of type $(3,0)^{\sm}$. Then one of the following holds:
  \begin{enumerate}[$(i)$]
  \item  $X$ has an elementary rational contraction onto a $3$-fold and $\rho_X\leq 9$;
  \item $\rho_X=7$,
$h^0(X,-K_X)\in\{2,\dotsc,15\}$,
and there is a SQM $X\dasharrow X'$ such that $X'=\Bl_{6\mskip1mu\pts}Y$, $Y$ a smooth Fano $4$-fold with $\rho_Y=1$ and $h^0(Y,-K_Y)=h^0(X,-K_X)+90\in\{92,\dotsc, 105\}$. Moreover every fixed prime divisor of $X$ is of type $(3,0)^{\sm}$.
    \end{enumerate}
\end{thm}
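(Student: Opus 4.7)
The plan is to analyse $X$ through the classification of its fixed prime divisors in Th.-Def.~\ref{fixed}. Applied to $D$, that result produces a SQM $X \stackrel{\xi}{\dasharrow} \w{X}$ and an elementary divisorial contraction $\sigma\colon \w{X} \la Y$ which is the blow-up of a smooth point, where $Y$ is a smooth Fano $4$-fold with $\rho_Y = \rho_X - 1 \geq 6$ and $\delta_Y \leq 2$ (Lemma \ref{deltaY}). I would then split the proof into two main cases according to whether $X$ has a fixed prime divisor of a type different from $(3,0)^{\sm}$.

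In the first case, assume $X$ contains a fixed prime divisor $E \neq D$ not of type $(3,0)^{\sm}$; the goal is to establish case $(i)$, from which $\rho_X \leq 9$ follows by Th.~\ref{CS}. If $E$ is of type $(3,2)$ with $\N(E,X) \subsetneq \N(X)$, Th.~\ref{sharp32} directly produces the required elementary rational contraction onto a $3$-fold. If $E$ is of type $(3,2)$ with $\N(E,X) = \N(X)$, I would exploit the commutativity of the divisorial contractions of $D$ and $E$ (via Th.-Def.~\ref{fixed}(e) applied to both divisors and the structure of the Mori chamber decomposition) to pass to a birational model of smaller Picard number on which inductive analysis applies. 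If $E$ is of type $(3,1)^{\sm}$ or $(3,0)^Q$, I would apply Prop.~\ref{sabri} by producing inside the Fano $4$-fold obtained from contracting $E$ a nef prime divisor covered by rational curves of anticanonical degree one and disjoint from the contracted locus: such a divisor is naturally provided by the transform of the $\pr^3$-exceptional locus of $\sigma$ after a suitable SQM. Prop.~\ref{sabri} then yields either the wanted contraction onto a $3$-fold, or a type-$(3,2)$ divisor with $\N \subsetneq \N(X)$ to which Th.~\ref{sharp32} again applies.

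In the second case, assume every fixed prime divisor of $X$ is of type $(3,0)^{\sm}$; the target is case $(ii)$. Let $D_1,\dotsc,D_k$ be these divisors. I would first verify $D_i \cdot C_{D_j} = 0$ for $i \neq j$, using the disjointness of the image of $D_i$ from the general fiber $\cong\pr^3$ of the contraction of $D_j$ after a suitable SQM. Lemma \ref{linate} then produces a birational rational contraction $X \dasharrow Z$ with exceptional divisors $D_1,\dotsc,D_k$, $\rho_Z = \rho_X - k$, and $Z$ smooth Fano (by iterated application of Th.-Def.~\ref{fixed}). Under our case assumption $Z$ has no fixed prime divisors at all, so $\Eff(Z) = \Nef(Z)$. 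The main obstacle is to show $\rho_Z = 1$ and $k = 6$: since every proper face of $\Eff(Z)$ is necessarily movable and $\delta_Z \leq 2$, I would combine Cor.~\ref{summary}, Lemma \ref{coop}, the fiber-type results of Section \ref{secbrasile}, and the classification of Fano $4$-folds with $\delta \geq 2$ to rule out $\rho_Z \geq 2$; the requirement $k = 6$ would then follow from the Fano condition on $X$ written as $-K_X = \sigma_Z^*(-K_Z) - 3\sum_i E_i$, combined with index and degree bounds for Fano $4$-folds of Picard number one.

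The numerical claim in $(ii)$ reduces to a standard blow-up computation: for each blow-up of a smooth point with exceptional $\pr^3$ of conormal bundle $\ol_{\pr^3}(-1)$, the filtration $-K \subset -K+E \subset -K+2E \subset -K+3E = \sigma^*(-K_Y)$ has graded pieces $\ol_{\pr^3}, \ol_{\pr^3}(1), \ol_{\pr^3}(2)$ contributing $1 + 4 + 10 = 15$ to $h^0(-K)$ (higher cohomology vanishes by Fanoness), so the six blow-ups yield $h^0(Y,-K_Y) = h^0(X,-K_X) + 90$. The lower bound $h^0(X,-K_X) \geq 2$ is Th.~\ref{h0}, while the upper bound $h^0(X,-K_X) \leq 15$ reflects classification bounds on smooth Fano $4$-folds of Picard number one.
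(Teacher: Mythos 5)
Your two-case skeleton (all fixed prime divisors of type $(3,0)^{\sm}$ versus not) matches the paper's split into Propositions \ref{all30} and \ref{one30}, and the per-point contribution of $15$ to $h^0(-K)$ is computed correctly. But there are several genuine gaps. The most serious is structural: your second case assumes that ``all fixed prime divisors of type $(3,0)^{\sm}$'' forces conclusion $(ii)$, and you plan to prove $\rho_Z=1$ and $k=6$ there. This is refuted by the Fano models of $\Bl_{7\mskip1mu\pts}\pr^4$ and $\Bl_{8\mskip1mu\pts}\pr^4$ (\S\ref{fanomodel}): they have $\rho_X=8,9$, every fixed prime divisor of type $(3,0)^{\sm}$, and they land in case $(i)$. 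Accordingly your opening step $D_i\cdot C_{D_j}=0$ for all pairs cannot hold there, and Lemma \ref{linate} does not let you contract everything at once. The paper instead contracts fixed divisors until reaching a Fano $Y$ with \emph{no} fixed prime divisors together with an elementary contraction of fiber type $f\colon Y\to Z$; when $\dim Z=3$ one gets $(i)$, when $\rho_Y=1$ one gets $(ii)$ (or $(i)$ if $Y\cong\pr^4$), and the real work is eliminating $\dim Z\in\{1,2\}$, i.e.\ $\rho_Y=2$ with two fiber-type contractions onto $\pr^1$ or $\pr^2$ (\ref{dimZ2}--\ref{verylast}: index of $Y$, $\Eff(Y)=\langle[\w D_1],[\w D_2]\rangle$, Graber--Harris--Starr sections, and $h^0$ estimates). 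Your proposed tools for ruling out $\rho_Z\geq 2$ do not reach this: Cor.~\ref{summary} and the results of Section \ref{secbrasile} require $\rho\geq 7$ and apply to $X$, not to a blow-down $Z$ of small Picard number, Lemma \ref{coop} concerns fiber-type (not birational) contractions, and smooth Fano $4$-folds with $\rho=1$ are not classified (the bound $h^0(Y,-K_Y)\leq 105$ for $Y\not\cong\pr^4$ is a specific estimate from \cite{blowup}, not a classification bound).

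In your first case there are two further problems. The divisor you feed into Prop.~\ref{sabri} --- the transform of the exceptional $\pr^3$ of the point blow-up --- does not satisfy its hypotheses: that divisor is not nef ($E\cdot\ell=-1$ for a line $\ell\subset\pr^3$) and its covering lines have anticanonical degree $3$, not $1$. And the sub-case ``$E$ of type $(3,2)$ with $\N(E,X)=\N(X)$'' is left to an unspecified ``inductive analysis'' on a model of smaller Picard number, where the $\rho\geq 7$ hypotheses of the key lemmas fail. The paper's Prop.~\ref{one30} avoids both issues by a different mechanism: Cor.~\ref{summary} guarantees a two-dimensional \emph{fixed} face $\langle[D],[B]\rangle$ with $D$ of type $(3,0)^{\sm}$ and $B$ not, i.e.\ an adjacent pair; passing to the blow-down $Y$ of $D$ and analysing the transform $B_Y$ according to its type ($(3,2)$, $(3,0)^Q$ via nef divisors covered by degree-one curves, or $(3,1)^{\sm}$ via a fixed divisor positive on the blown-up curve), one produces in every sub-case a fixed prime divisor $E$ of type $(3,2)$ with $E\cap D=\emptyset$, hence $\N(E,X)\subsetneq\N(X)$, and concludes by Th.~\ref{sharp32}.
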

\begin{prg}[\em Overview of the proof of Th.~\ref{30}]\label{overview30}
The proof follows the same strategy as in \cite{blowup}, where the bound $\rho_X\leq 12$ is shown. Let us note that, in case $(i)$ where $X$ has an elementary rational contraction onto a $3$-fold, the bound $\rho_X\leq 9$ follows from Th.~\ref{CS}.

We treat separately the two cases where every fixed prime divisor is of type $(3,0)^{\sm}$ (Prop.~\ref{all30}), and where there are also fixed prime divisors of other types (Prop.~\ref{one30}).

In the first case, there are a smooth Fano $4$-fold $Y$ and a SQM $X\dasharrow X'$ such that $X'=\Bl_{\pts}Y$, and $Y$ does not have divisorial elementary rational contractions. We compute that $h^0(Y,-K_Y)=h^0(X,-K_X)+15(\rho_X-\rho_Y)$.  

If $Y$ has an elementary rational contraction onto a $3$-fold, then we lift this contraction to $X$, and get $(i)$.
If $\rho_Y=1$, then  either $Y\cong\pr^4$ and we get again $(i)$ (see \S\ref{fanomodel}), or we get $h^0(Y,-K_Y)\leq 105$ from \cite{blowup}, which implies
$(ii)$ (see \ref{rhoY1}).

Otherwise, we show that we can reduce to the situation where
 $\rho_Y=2$ and $Y$ has two distinct elementary rational contractions of fiber type $Y\dasharrow B_i$ where $B_i$ is $\pr^1$ or $\pr^2$, for $i=1,2$; moreover $\rho_X-\rho_Y\geq 5$ gives 
$h^0(Y,-K_Y)\geq 75$.
We show that this case does not happen, by studying explicitly the geometry of $Y$.
 This is the longest part of the proof (see \ref{dimZ2} - \ref{verylast}).

\medskip

Consider now the case where $X$ has a fixed prime divisor of type $(3,0)^{\sm}$ and also some fixed prime divisor of another type. We know that $\Eff(X)$ is generated by classes of fixed prime divisors, and that every $2$-dimensional face is fixed (Cor.~\ref{summary}). Then there must a  fixed prime divisor $D$ of type $(3,0)^{\sm}$ and a fixed prime divisor $B$, not of type $(3,0)^{\sm}$, that are adjacent. We consider the contraction $X\dasharrow Y$ of $D$, so that $Y$ is a smooth Fano $4$-fold, and the transform $B_Y\subset Y$ is still a fixed prime divisor. By analysing 
 the possibilities for $B_Y$, we show that in each case there is a fixed prime divisor $E\subset X$, of type $(3,2)$, such that $D\cap E=\emptyset$. Then we get $(i)$ by Th.~\ref{sharp32}.
\end{prg}
\begin{proposition}[refinement of \cite{blowup}, Th.~4.3]\label{all30}
  Let $X$ be a smooth Fano $4$-fold with $\rho_X\geq 7$, and suppose that every fixed prime divisor of $X$ is of type $(3,0)^{\sm}$. Then the statement of Th.~\ref{30} holds.
  \end{proposition}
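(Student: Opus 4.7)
The plan is to follow, and refine at the bookkeeping level, the argument of \cite[Th.~4.3]{blowup}. First, I would repeatedly contract the fixed prime divisors of $X$ using Th.-Def.~\ref{fixed}: since by hypothesis every such divisor is of type $(3,0)^{\sm}$, each contraction consists of a SQM followed by the blow-up of a smooth point in a smooth Fano $4$-fold. Iterating this procedure (and checking at each step that the transforms of the remaining fixed prime divisors are again fixed and of type $(3,0)^{\sm}$) produces a diagram of the form $X\stackrel{\xi}{\dasharrow} X'=\Bl_{r\,\pts} Y$, where $\xi$ is a SQM, $r=\rho_X-\rho_Y$, and $Y$ is a smooth Fano $4$-fold containing no fixed prime divisors. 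By Rem.~\ref{fixedprime} $Y$ admits no divisorial elementary rational contractions, and by Rem.~\ref{united} no small elementary contractions, so every elementary rational contraction of $Y$ is of fiber type. A direct computation with $\ma{I}_p^3\otimes\ol_Y(-K_Y)$ (or iterating Lemma \ref{blowupformula}-style exact sequences for the blow-up of a smooth point in a $4$-fold, which imposes $\binom{6}{4}=15$ conditions on $-K$) yields the key identity
$$h^0(Y,-K_Y)=h^0(X,-K_X)+15r.$$

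Next I split on the geometry of $Y$. If $Y$ admits an elementary rational contraction onto a $3$-fold, this lifts through $\xi$ and through the sequence of point blow-ups to an elementary rational contraction $X\dasharrow Z$ with $\dim Z=3$, and Th.~\ref{CS} immediately gives $\rho_X\le 9$, hence alternative $(i)$. If $\rho_Y=1$, then either $Y\cong\pr^4$, in which case $X'=\Bl_{r\,\pts}\pr^4$ with $r=\rho_X-1\geq 6$ is a Fano model as in \S\ref{fanomodel} and projection from one of the blown-up points induces an elementary rational contraction onto $\pr^3$, reducing to the previous case; or $Y\not\cong\pr^4$, and I invoke the bound $h^0(Y,-K_Y)\le 105$ established in \cite{blowup} for such $Y$. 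Combined with Th.~\ref{h0} and the formula above, this forces $15(\rho_X-1)\le 103$, hence $\rho_X=7$, $r=6$, and $h^0(Y,-K_Y)=h^0(X,-K_X)+90\in\{92,\dotsc,105\}$, giving alternative $(ii)$ exactly as stated.

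The remaining and main obstacle is the case $\rho_Y\ge 2$ with $Y$ admitting no elementary rational contraction onto a $3$-fold. Here the strategy is to factor any fiber-type contraction of $Y$ using Lemma \ref{montenero} (and, where $\rho_Y-\rho_{\text{base}}=2$, Lemma \ref{quasiel2}), and in this way reduce to $\rho_Y=2$ with two distinct elementary rational contractions of fiber type $Y\dasharrow B_i$, $i=1,2$, each $B_i\in\{\pr^1,\pr^2\}$ (targets of dimension $3$ are excluded by assumption, and larger-dimensional targets are forbidden for fiber-type contractions). Since $\rho_X\ge 7$ we have $r=\rho_X-2\ge 5$, and the identity above together with Th.~\ref{h0} then forces the lower bound $h^0(Y,-K_Y)\ge 2+15\cdot 5=77$ (matching the overview bound $\ge 75$). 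The contradiction is obtained by a direct geometric analysis of $Y$ through the two rational contractions $Y\dasharrow B_i$: examining the possible general fibers (del Pezzo surfaces over $\pr^2$, Fano $3$-folds over $\pr^1$) and combining the two relative anticanonical systems, one bounds $h^0(Y,-K_Y)$ from above and contradicts the lower bound $\ge 77$. This residual case analysis is the technical heart of the proof and the hardest step; the preceding reduction to $\rho_Y=2$ with explicit low-dimensional bases $B_i$ is what makes it accessible and distinguishes the present refinement from the weaker bound $\rho_X\le 12$ in \cite{blowup}.
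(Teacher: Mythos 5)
Your proposal follows essentially the same route as the paper: contract all fixed prime divisors to reach a smooth Fano $4$-fold $Y$ with no fixed prime divisors, use the identity $h^0(Y,-K_Y)=h^0(X,-K_X)+15(\rho_X-\rho_Y)$, handle the cases of a $3$-fold target and $\rho_Y=1$ exactly as in the paper, and reduce the remainder to $\rho_Y=2$ with two fiber-type elementary rational contractions onto $\pr^1$ or $\pr^2$, to be excluded by bounding $h^0(Y,-K_Y)$ from above. Be aware that the final exclusion — which in the paper occupies \ref{flixbus}--\ref{verylast} and requires the index dichotomy, the relation $n(-K_Y)=a_1\w{D}_1+a_2\w{D}_2$ with control of the coefficients via sections from Graber--Harris--Starr, and the exact-sequence estimates — is only sketched in your last paragraph, but the strategy you describe is the one actually carried out.
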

  \begin{proof}
    Let us first note that, if $X$ has an elementary rational contraction onto a $3$-fold, then $\rho_X\leq 9$ by Th.~\ref{CS}, and we get $(i)$.

    We follow  \cite[proof of Lemma 4.22]{blowup}, where it is shown that there is a diagram
    $$X\stackrel{\ph}{\dasharrow} Y\stackrel{f}{\la} Z$$
    where $\ph$ is a sequence of flips and divisorial elementary  contractions, and $f$ is an elementary contraction of fiber type.

    Up to increasing the number of flips and divisorial contractions in $\ph$, we can also assume that $Y$ has no divisorial elementary  rational contractions, hence $Y$ contains no fixed prime divisors (see Rem.~\ref{fixedprime}). 

    By [\emph{ibid.}, Lemma 4.19], up to SQM we can assume that   $Y$ is a smooth Fano $4$-fold, and  there is a resolution
    $\ph'\colon X'\to Y$
    of $\ph$ such that $\ph'$ 
 is  the blow-up of $Y$ in $r:=\rho_X-\rho_Y$ distinct points. Then we have:
    \stepcounter{thm}
    \begin{equation}\label{sections}
     2\leq h^0(X,-K_X)=h^0(Y,-K_Y)-15r,
   \end{equation}
   by Th.~\ref{h0} and [\emph{ibid.}, Prop.~3.3].

   If $\dim Z=3$, then by [\emph{ibid.}, Lemma 4.21] we have $(i)$.
    \begin{prg}\label{rhoY1}
      Suppose that $\rho_Y=1$ and $Z=\{\pt\}$.
      If $Y\cong\pr^4$,
      then  $X$ is the Fano model of $\Bl_{\pts}\pr^4$ and has an elementary rational contraction onto $\Bl_{\pts}\pr^3$ (see \S\ref{fanomodel}), thus we have $(i)$.
      If instead $Y\not\cong\pr^4$, 
 as in [\emph{ibid.}, proof of Th.~3.2, p.~380] we see that $h^0(Y,-K_Y)\leq 105$, which together with \eqref{sections} implies $(ii)$.
\end{prg}
  \begin{prg}\label{dimZ2}
   Suppose that $\dim Z=2$, and note that $Z$ is smooth (Lemma \ref{specialsurf}).   
    We notice that
    $Z$ cannot have divisorial  elementary contractions, otherwise the pullback under $f$ of the exceptional divisor would be a fixed prime divisor in $Y$, against our reductions.
Therefore $Z$ is a minimal smooth rational surface  with no 
divisorial elementary contraction, and we conclude that either $Z\cong\pr^2$ and $\rho_Y=2$, or $Z\cong\pr^1\times\pr^1$ and $\rho_Y=3$. We also note that if $\dim Z=1$, then $Z\cong\pr^1$ and $\rho_Y=2$.
\end{prg}
  \begin{prg}\label{avocado}
   Suppose that $Z\cong\pr^1\times\pr^1$, and consider the blow-up
of the first point $W:=\Bl_{p_1}Y\to Y$ in $\ph'\colon X'\to Y$. The composition $W\to\pr^1\times\pr^1$ is not equidimensional, and by Prop.~\ref{calanques} it factors as
    $$\xymatrix{{W}\ar[r]\ar@{-->}[d]&Y\ar[d]\\
     S\ar[r]&{\pr^1\times\pr^1}
    }$$
    where $W\dasharrow S$ is an elementary rational contraction, $S$ is a smooth surface (see Lemma \ref{specialsurf}), and $S\to\pr^1\times\pr^1$ is the blow-up of a smooth point, thus $S\cong\Bl_{2\mskip1mu\pts}\pr^2$.

    Consider a $(-1)$-curve of $S$ contracted by $S \to \pr^2$, and its pullback $G$ in $W$. This is a fixed
prime divisor, and up to to replacing  $W$ with a SQM, we can assume that $G = \Exc(g)$ for some
divisorial elementary contraction $g \colon W \to Y'$ (see Rem.~\ref{fixedprime}). Then $g$ must be the blow-up of a smooth point (by \cite[Lemma 4.19(2)]{blowup} applied to the composition $X\dasharrow Y'$), and there is an elementary rational contraction $Y'\dasharrow \mathbb{F}_1$.
$$\xymatrix{{W}\ar[r]^g\ar@{-->}[d]&{Y'}\ar@{-->}[d]\\
S\ar[r]&{\mathbb{F}_1}
}$$
Now we consider the blow-up $\mathbb{F}_1\to\pr^2$ and the composition $Y'\dasharrow \pr^2$; it factors again as an elementary divisorial rational contraction $Y'\dasharrow Y''$ and an elementary contraction $Y''\to\pr^2$. Finally, by replacing $Y$ with $Y''$,
 and we can assume that $\rho_Y=2$ and $Z\cong\pr^2$.
\end{prg}
  \begin{prg}
    We are left with the possibility that $\rho_Y=2$ and $Y$ has two distinct elementary rational contractions of fiber type $Y\dasharrow B_i$  where $B_i$ is $\pr^1$ or $\pr^2$, for $i=1,2$, so that $\Mov(Y)=\Eff(Y)$ and the two one-dimensional faces of this cone are the pullbacks of $\Nef(B_i)$, $i=1,2$.
    We are going to show that
   this contradicts $\rho_X\geq 7$.
   \end{prg}
 \begin{prg}\label{flixbus}
Suppose that $Y$ has index $>1$. Then $Y$ cannot have small elementary contractions (see Lemma \ref{kawamata}), therefore 
both maps $Y\to B_i$ must be regular. Since each map must be finite on the fibers of the other one, such fibers are at most $2$-dimensional, and we conclude that $Y$ has two elementary contractions onto $\pr^2$.

Fano $4$-folds with index $>1$ and Picard number $>1$ are classified, see \cite[Cor.~3.1.15, Th.~3.3.1, Th.~7.2.15]{fanoEMS}. An inspection of the classification shows that the only cases where $\rho_Y=2$ and $Y$ has  two elementary contractions onto $\pr^2$ are $\pr^2\times\pr^2$ and 
a double cover of $\pr^2\times\pr^2$ ramified over a divisor of degree $(2,2)$.

On the other hand $Y$ cannot be covered by a family of curves of anticanonical degree $3$ by \cite[Lemma 3.12]{blowup}, and this excludes $\pr^2\times\pr^2$. In the other case
we have $h^0(Y,-K_Y)=45$ (see \ref{malpensa}), and by \eqref{sections} this implies that $r\leq 2$ and $\rho_X\leq 4$, a contradiction.
\end{prg}
  \begin{prg}\label{duo}
We suppose from now on that $Y$ has index one.

For $i=1,2$ let us consider a $K$-negative resolution 
 $f_i\colon Y_i\to B_i$ of $Y\dasharrow B_i$;
we have a diagram:
$$\xymatrix{{Y_1}\ar[d]_{f_1}&Y\ar@{-->}[l]_{\psi_1}\ar@{-->}[r]^{\psi_2}&{Y_2}\ar[d]^{f_2}\\
{B_1}&&{B_2}
}$$
where $\psi_i$ is a SQM.
Fix $i\in\{1,2\}$ and let $D_i\subset Y_i$ be a general fiber of $f_i$ if $B_i=\pr^1$, the pullback of a general line if $B_i=\pr^2$.
Let $\w{D}_i\subset Y$ be the transform of $D_i$.
\end{prg}
  \begin{prg}
    We notice that
    the general fiber $F_i$ of $f_i$ cannot have a covering family of rational curves of anticanonical degree $3$.
    Otherwise, $Y_i$ would have a covering family $V$ of rational curves, of anticanonical degree $3$, contracted to points by $f_i$. However this is excluded as in \cite[4.24]{blowup}.

    This implies that $D_i$ contains an irreducible curve $\Gamma_i$, contracted by $f_i$,  with $c_i:=-K_{D_i}\cdot \Gamma_i\in\{1,2,4\}$ \cite[Cor.~IV.1.15]{kollar}, and $c_i\in\{1,2\}$ if $B_i=\pr^2$.  Notice that $D_i\cdot\Gamma_i=0$ and $-K_{Y_i}\cdot\Gamma_i=c_i$.

    Since $F_i$ is Fano, it is rationally connected.
Consider $f_i\colon Y_i\to B_i$ if $B_i=\pr^1$, or
$f_{i|D_i}\colon D_i\to\pr^1$ 
 if $B_i=\pr^2$. These are families of rationally connected varieties over a curve, and 
by \cite{GraberHarrisStarr} they have a section, namely
there exists $C_i\subset Y_i$ such that  $D_i\cdot C_i=1$.
    \end{prg}
  \begin{prg}\label{colorado}
We have $\Eff(Y)=\langle[\w{D}_1],[\w{D}_2]\rangle$, so there exist positive integers $n,a_1,a_2$ such that
$$n(-K_Y)=a_1\w{D}_1+a_2\w{D}_2,$$
and we can assume that $\gcd(n,a_1,a_2)=1$. 
Now let $\widehat{D}_2\subset Y_1$ be the transform of $D_2$. In $Y_1$ we have 
\stepcounter{thm}
\begin{equation}\label{Y1}
n(-K_{Y_1})=a_1D_1+a_2\widehat{D}_2,
\end{equation}
 and intersecting with $C_1$ we get 
$$n(-K_{Y_1}\cdot C_1)=a_1+a_2\widehat{D}_2\cdot C_1.$$
This implies that $\gcd(a_2,n)$ divides $a_1$, and hence that $\gcd(a_2,n)=1$. Similarly we get that $\gcd(a_1,n)=1$.

Set $d:=\gcd(a_1,a_2)$ and  $a_i:=da_i'$ for $i=1,2$. 
We have
$$n(-K_Y)=d(a'_1\w{D}_1+a'_2\w{D}_2)$$
and $\gcd(n,d)=1$. This implies that $d=1$, because $Y$ has index $1$. 
Now intersecting with $\Gamma_1$ in $Y_1$ we get from \eqref{Y1}:
$$nc_1=a_2\widehat{D}_2\cdot\Gamma_1,$$
which implies that $a_2$ divides $c_1$ and hence that $a_2\in\{1,2,4\}$, and $a_2\in\{1,2\}$ if $B_1=\pr^2$.
Similarly we deduce that $a_1\in\{1,2,4\}$.

Since $\gcd(a_1,a_2)=1$, 
up to exchanging $Y_1$ and $Y_2$ we can assume that $a_1=1$, so that \eqref{Y1} becomes:
\stepcounter{thm}
\begin{equation}\label{Y2}
  n(-K_{Y_1})=D_1+a_2\widehat{D}_2.
\end{equation}  
\end{prg}
  \begin{prg}
Suppose that $B_1\cong\pr^1$. Then $D_1$ is a smooth Fano $3$-fold and $h^0(D_1,-K_{Y_1|D_1})=h^0(D_1,-K_{D_1})=\frac{1}{2}(-K_{D_1})^3+3\leq 35$ \cite[Cor.~7.1.2]{fanoEMS}. Moreover by \eqref{Y2}:
$$-K_{Y_1}-D_1=a_2\widehat{D}_2-(n-1)(-K_{Y_1}),$$
and we know that $h^0(Y_1,-K_{Y_1})=h^0(Y,-K_Y)>0$ by Th.~\ref{h0};
thus we get
\begin{gather*}
h^0(Y_1,-K_{Y_1}-D_1)=h^0\bigl(Y_1,a_2\widehat{D}_2-(n-1)(-K_{Y_1})\bigr)\leq 
h^0(Y_1,a_2\widehat{D}_2)
=h^0(Y_2,a_2D_2)\\=h^0\bigl(Y_2,f_2^*\ma{O}_{B_2}(a_2)\bigr)=h^0\bigl(B_2,\ma{O}_{B_2}(a_2)\bigr)\leq
h^0\bigl(\pr^2,\ma{O}_{\pr^2}(4)\bigr)=15.\end{gather*}

Now the exact sequence
$$0\la\ma{O}_{Y_1}(-K_{Y_1}-D_1)\la\ma{O}_{Y_1}(-K_{Y_1})\la\ma{O}_{D_1}(-K_{Y_1|D_1})\la 0$$
yields 
$$h^0(Y,-K_Y)=h^0(Y_1,-K_{Y_1})\leq h^0(Y_1,-K_{Y_1}-D_1)+h^0(D_1,-K_{Y_1|D_1})\leq 15+35=50,$$
that together with \eqref{sections} implies that $r\leq 3$ and $\rho_X\leq 5$, a contradiction.
\end{prg}
  \begin{prg}\label{verylast}
    Suppose now that $B_1\cong\pr^2$, and
consider the blow-up
of the first point
$\Bl_{p_1}Y\to Y$ in $\ph'\colon X'\to Y$. By \cite[Lemma 4.19(1)]{blowup}, applied to the map $X\dasharrow \Bl_{p_1}Y$, we see that there is a smooth Fano $4$-fold $W$, with $\rho_{W}=3$, and a SQM $\Bl_{p_1}Y\dasharrow W$. Moreover there is a SQM of $X$ which is obtained by blowing-up $W$ at $\rho_X-3$ distinct points, so that using again [\emph{ibid.}, Prop.~3.3]
we have:
 \stepcounter{thm}
 \begin{equation}\label{sections2}h^0(W,-K_{W})=h^0(X,-K_X)+15(\rho_X-3).\end{equation}
 
 Since $p_1$ cannot belong to an exceptional plane [\emph{ibid.}, Lemma 4.4(3)], we see that $p_1\in\dom(\psi_i)$ for $i=1,2$ (see \ref{duo}); we still denote by $p_1$ its image in $Y_i$. Let
 $$\sigma_i\colon W_i\la Y_i$$ be the blow-up of $p_1$, and $E_i\subset W_i$ the exceptional divisor.
Then $W_i$ is another SQM of $\Bl_{p_1}Y$ and $W$.

Consider the composition $f_1\circ\sigma_1\colon W_1\to\pr^2$. As in \ref{avocado}, we see that this map must factor through an elementary rational contraction $W_1\dasharrow \mathbb{F}_1$, where $h\colon \mathbb{F}_1\to \pr^2$ is the blow-up of the point $f_1(p_1)$. 
$$\xymatrix{ {W_1}\ar@{-->}[d]\ar[r]^{\sigma_1}&{Y_1}\ar[d]^{f_1}\\
 {\mathbb{F}_1}\ar[r]^h&{\pr^2}
  }$$
  Recall that $D_1=f_1^*(l)$ where  $l\subset\pr^2$ is a general line (see \ref{duo}).  Using \eqref{Y2} we get:
  \stepcounter{thm}
  \begin{equation}\label{summer}
  \begin{split}
  -nK_{W_1} = & \ \sigma_1^*(-nK_{Y_1})-3nE_1=\sigma_1^*\bigl(D_1+a_2\wi{D}_2\bigr)-3nE_1\\
 = &\ \sigma_1^*\bigl(f_1^*(l)\bigr)+a_2\sigma_1^*(\wi{D}_2)-3nE_1.
  \end{split}
 \end{equation}

  Let $\alpha\colon W_1'\to\mathbb{F}_1$ be a $K$-negative resolution of $W_1\dasharrow\mathbb{F}_1$, $\wi{D}_2'\subset W_1'$ the transform of $\sigma_1^*(\wi{D}_2)$, and
$E_1'\subset W_1'$ the transform of $E_1=\Exc(\sigma_1)$. 
We have
  $E_1'=\alpha^*(e)$ where  $e\subset \mathbb{F}_1$
  is the $(-1)$-curve. Let also
  $\pi\colon\mathbb{F}_1\to\pr^1$ be the $\pr^1$-bundle; we have $h^*(l)=f+e$ where $f\subset \mathbb{F}_1$ is a general fiber of $\pi$. Finally let us consider $F=\alpha^*(f)$  a general fiber of $\pi\circ\alpha\colon W_1'\to\pr^1$.
$$\xymatrix{{W_1'}\ar@{-->}[r]\ar[dr]^{\alpha}& {W_1}\ar@{-->}[d]\ar[r]^{\sigma_1}&{Y_1}\ar[d]^{f_1}\\
{\pr^1}& {\mathbb{F}_1}\ar[l]_{\pi}\ar[r]^h&{\pr^2}
}$$

We note that $F$ is a smooth Fano $3$-fold. Indeed there is a $K$-negative resolution $W_1''\to\pr^1$ of $\pi\circ\alpha$, and the indeterminacy locus of the SQM $W_1'\dasharrow W_1''$ is contracted to points by $\alpha$, therefore it does not meet $F$. Hence $F$ is isomorphic to a general fiber of  $W_1''\to\pr^1$, which is Fano. We also note that $\alpha(F)=f$, so that $\rho_F\geq 2$.
  
In $W_1'$ we get, from \eqref{summer}: 
 \begin{align*}  -nK_{W_1'}=& \ \alpha^*\bigl(h^*(l)\bigr)+a_2\wi{D}_2'-3nE_1'=
                              \alpha^*(f+e)+a_2\wi{D}_2'-3nE_1'\\
   = & \  F+a_2\wi{D}_2'-(3n-1)E_1'.
 \end{align*}
 Recall that $\ol_{Y_2}(D_2)=f_2^*\ol_{B_2}(1)$ (see \ref{duo}), and
 that $a_2\in\{1,2\}$ because $B_1\cong\pr^2$ (see \ref{colorado}). 
  Therefore we have:
  \begin{gather*}
    h^0(W_1',-K_{W_1'}-F)= h^0\bigl(W_1',a_2\wi{D}_2'-(3n-1)E_1'-(n-1)(-K_{W_1'})\bigr)\\ \leq  h^0(W_1',a_2\wi{D}_2')
    =h^0\bigl(W_2,\sigma_2^*(a_2D_2)\bigr)
  =h^0\bigl(W_2,\sigma_2^*\bigl(f_2^*\ol_{B_2}(a_2)\bigr)\bigr)\\ =h^0(B_2,\ol_{B_2}(a_2))\leq  h^0(\pr^2,\ol(2))=6.
\end{gather*}
Moreover $-K_{W_1'|F}=-K_F$ and $h^0(F,-K_F)=\frac{1}{2}(-K_F)^3+3\leq 34$ by classification \cite[Ch.~12]{fanoEMS}, and finally we get $h^0(W,-K_W)=h^0(W_1',-K_{W_1'})\leq  h^0(W_1',-K_{W_1'}-F)+h^0(F,-K_F)\leq 40$. Together with \eqref{sections2}
this implies that $\rho_X\leq 5$, again a contradiction. This concludes the proof of Prop.~\ref{all30}.\qedhere
\end{prg}
\end{proof}
\begin{proposition}[refinement of \cite{blowup}, Th.~5.40]\label{one30}
Let $X$ be a smooth Fano $4$-fold with $\rho_X\geq 7$ having both a fixed prime divisor
of type $(3,0)^{\sm}$ and a fixed prime divisor not of type $(3,0)^{\sm}$.

Then $X$ contains a fixed prime divisor $D$
of type $(3,0)^{\sm}$ and
a fixed prime divisor $E$ of type $(3,2)$ such that $D\cap E=\emptyset$; in particular $\N(E,X)\subsetneq\N(X)$. 
\end{proposition}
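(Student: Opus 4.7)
By Corollary~\ref{summary}, $\Eff(X)$ is a rational polyhedral cone generated by classes of fixed prime divisors, with every face of dimension $\leq 2$ fixed. Intersecting with a generic affine hyperplane produces a convex polytope whose $1$-skeleton is connected; its vertices correspond to fixed prime divisors, and its edges to pairs of adjacent ones. Partitioning the vertices by type, the hypothesis guarantees that both the class ``of type $(3,0)^{\sm}$'' and the class ``of another type'' are non-empty. Walking along a shortest path joining the two colour classes produces a pair of adjacent fixed prime divisors $D$ (of type $(3,0)^{\sm}$) and $B$ (of type $(3,2)$, $(3,0)^Q$, or $(3,1)^{\sm}$).

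If $B$ is of type $(3,2)$, then Lemma~\ref{30sm} immediately gives $D\cap B=\emptyset$; taking $E:=B$ finishes, since by Remark~\ref{cassis} we have $\N(E,X)\subset D^{\perp}\subsetneq\N(X)$.

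For the remaining cases ($B$ of type $(3,1)^{\sm}$ or $(3,0)^Q$) I would contract $D$ using Th.-Def.~\ref{fixed} to obtain $X\stackrel{\xi}{\dasharrow}\w{X}\stackrel{\sigma}{\to}Y$, with $Y$ a smooth Fano $4$-fold satisfying $\rho_Y=\rho_X-1$ and $\delta_Y\leq 2$ by Lemma~\ref{deltaY}; the transform $B_Y\subset Y$ remains a fixed prime divisor of the same type. The plan is then to apply the machinery of \cite[proof of Th.~5.40]{blowup} inside $X$ (and, when $\rho_Y\geq 7$, inside $Y$ via Proposition~\ref{sabri} applied to $B_Y$) to produce a $(3,2)$ fixed prime divisor $E$. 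The \emph{refinement} over \cite{blowup} is then to locate $E$ so that it is disjoint from some fixed prime divisor of type $(3,0)^{\sm}$, possibly different from the initial $D$. When the produced $E'$ in $Y$ does not pass through the point $\sigma(D_{\w{X}})$, it lifts to $E\subset X$ with $D\cap E=\emptyset$ automatically; otherwise one must run the same analysis starting from another adjacent pair $D'$-$B'$, exploiting Lemmas~\ref{30Q}, \ref{31sm}, \ref{disjoint} together with $\delta_X\leq 1$ (Th.~\ref{starting}) to guarantee that eventually some $(3,0)^{\sm}$ divisor sits off the constructed $(3,2)$ divisor. In the low range $\rho_X=7$ (where $\rho_Y=6$), one argues entirely inside $X$ using the structure results of Section~\ref{prelfixed}, valid in the slightly wider setting $\rho=6$, $\delta\leq 2$.

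The main obstacle will be the $(3,0)^Q$ subcase with $D\cap B\neq\emptyset$: Lemma~\ref{30Q} then forces a rigid configuration of $(3,2)$ divisors adjacent to $B$ with $E_1\cdot C_{E_2}=E_2\cdot C_{E_1}=1$, and one has to verify that among all such $(3,2)$ candidates at least one can be paired with a $(3,0)^{\sm}$ divisor it avoids. Once $D\cap E=\emptyset$ is established, the codimension statement $\N(E,X)\subsetneq\N(X)$ is immediate from Remark~\ref{cassis}.
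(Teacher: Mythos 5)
Your opening is sound and matches the paper: since every $2$-dimensional face of $\Eff(X)$ is fixed (Cor.~\ref{summary}) and the cone is generated by fixed prime divisors, connectivity produces an adjacent pair $D$ of type $(3,0)^{\sm}$ and $B$ of another type. Your disposal of the case where $B$ is of type $(3,2)$ via Lemma~\ref{30sm} and Rem.~\ref{cassis} is correct and in fact more direct than the paper's route through $Y$. The problem is that the two remaining cases --- $B$ of type $(3,1)^{\sm}$ or $(3,0)^Q$ --- are precisely where the content of the proposition lies, and for these you only describe a plan and explicitly concede that ``the main obstacle'' and the key verification remain open. That is a genuine gap, not a proof. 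In particular, your fallback of ``running the same analysis starting from another adjacent pair $D'$--$B'$'' has no termination or existence guarantee and cannot be accepted as an argument; and the lemmas you cite for these cases (\ref{30Q}, \ref{31sm}, \ref{disjoint}) are not the ones that actually close them.

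What is missing, concretely, is the construction of the $(3,2)$ divisor $E$. In the paper, after contracting $D$ to a point $p\in Y$ (with $\delta_Y\leq 2$ by Lemma~\ref{deltaY}, $p\notin B_Y$, and the images $C_1,\dotsc,C_s$ of the exceptional lines meeting no anticanonical-degree-one curve of $Y$), the case analysis is on the type of $B_Y$. If $B_Y$ is of type $(3,0)^Q$, one invokes \cite[Prop.~5.18]{blowup} to produce a prime divisor $E\subset Y$ covered by rational curves of anticanonical degree one, so that $E$ avoids all $C_i$; the face $E^{\perp}\cap\mov(Y)$ then contains $[C_1],\dotsc,[C_s]$, and the dimension count $\dim\R([C_1],\dotsc,[C_s])=\dim\N(D,X)-1\geq\rho_Y-1$ forces it to be a facet, whence $E$ is a fixed divisor of type $(3,2)$ whose transform in $X$ is disjoint from $D$. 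If $B_Y$ is of type $(3,1)^{\sm}$, one passes to the blow-down $\w{Y}\to Z$ along a curve $C$, picks a one-dimensional face $\R_{\geq 0}[G]$ of $\Eff(Z)$ with $G\cdot C>0$ (necessarily fixed by \cite[Lemma 5.17]{blowup}), and shows that the transform $G_X$ meets $B$ in a set that is not a union of exceptional planes, which forces $G_X$ to be of type $(3,2)$; only then does Lemma~\ref{30sm} apply to give $G_X\cap D=\emptyset$. Neither construction appears in your proposal, so the argument does not go through as written.
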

\begin{proof}
  We proceed as in \cite[proof of Th.~5.40]{blowup}.
Since $X$ has a fixed prime divisor of type $(3,0)^{\sm}$, $X$ is not isomorphic to a product of surfaces, hence $\delta_X\leq 1$ (Th.~\ref{starting}) and every face of $\Eff(X)$ of dimension $\leq 2$ is fixed (Cor.~\ref{summary}).

Since every one-dimensional face of $\Eff(X)$ is generated by a fixed prime divisor, there must a $2$-dimensional face $\langle [D], [B]\rangle$ such that 
 $D$ is
of type $(3,0)^{\sm}$ and
$B$ is not. This face is fixed, hence
$D$ and $B$ are adjacent.

Let $X\dasharrow \w{X}\stackrel{\sigma}{\to} Y$ be the contraction of $D$ as in Th.-Def.~\ref{fixed}$(a)$, so that $X\dasharrow \w{X}$ is a SQM, $Y$ is a smooth Fano $4$-fold with $\rho_Y\geq 6$, and $\sigma$ is the blow-up of a point $p\in Y$ with exceptional divisor the transform of $D$; we also have $\delta_Y\leq 2$ by Lemma \ref{deltaY}, so that Th.-Def.~\ref{fixed} applies to $Y$ too.

  We note that since every $2$-dimensional face of $\Eff(X)$ is fixed, the same holds for every one-dimensional face of $\Eff(Y)$, and the transform $B_Y\subset Y$ of $B$ is a fixed prime divisor of $Y$ (see \cite[Lemma 2.21]{blowup}). Moreover $p\not\in B_Y$ by [\emph{ibid.}, Lemma 5.41], and $B_Y$ is not of type $(3,0)^{\sm}$ by [\emph{ibid.}, Lemma 4.14].
  
  Let $C_1,\dotsc,C_s\subset Y$ be the images of the exceptional lines of $\w{X}$;  then $C_i$ cannot meet any irreducible curve of anticanonical degree one in $Y$, for every $i=1,\dotsc,s$ [\emph{ibid.}, Lemma 3.11(1)]. 

  If $B_Y$ is of type $(3,2)$, then $B_Y$ is covered by irreducible curves of anticanonical degree one, therefore $B_Y\cap C_i=\emptyset$ for   every $i=1,\dotsc,s$. Then
$B_Y$
is  contained in the domain of the birational map $Y\dasharrow X$, therefore $B\cong B_Y$ and $B$ is of type $(3,2)$ too. Moreover  $B\cap D=\emptyset$, and we have the statement.

  \medskip

Suppose that $B_Y$ is of type $(3,0)^Q$. Note that $Y$ cannot have a covering family of curves of anticanonical degree $3$ (see [\emph{ibid.}, Lemma 3.12]), hence by [\emph{ibid.}, Prop.~5.18] $Y$ must contain a prime divisor $E$ covered by curves of anticanonical degree one. As before, we have $C_i\cap E=\emptyset$ for every $i=1,\dotsc,s$. Moreover, as in [\emph{ibid.}, 5.44] we see that $[C_i]\in\mov(Y)$ for every $i=1,\dotsc,s$. Thus $\tau:=E^{\perp}\cap\mov(Y)$ is a face of $\mov(Y)$ containing $[C_1],\dotsc,[C_s]$.

By [\emph{ibid.}, Lemma 5.7] we have
$$\dim\tau\geq\dim\R([C_1],\dotsc,[C_s])=\dim\N(D,X)-1\geq\rho_X-2=\rho_Y-1.$$
Therefore $\tau$ is a facet of $\mov(Y)$, and dually $\R_{\geq 0}[E]$ is a one-dimensional face of $\Eff(Y)=\mov(Y)^{\vee}$. Hence $E$ is a fixed divisor, it must be of type $(3,2)$ [\emph{ibid.}, Lemma 2.18], and as in the previous part of the proof we see that the transform of $E$ in $X$ is a fixed prime divisor of type $(3,2)$ disjoint from $D$.

\medskip

Finally, if $B_Y$ is of type $(3,1)^{\sm}$, there is a SQM $\ph\colon Y\dasharrow \w{Y}$ and a smooth Fano $4$-fold $Z$ such that $\w{Y}$ is the blow-up of $Z$ along a smooth irreducible curve $C\subset Z$, with exceptional divisor the transform $B_{\w{Y}}\subset\w{Y}$ of $B_Y$. Let $\alpha$ be a one-dimensional face of $\Eff(Z)$ with $\alpha\cdot C>0$. Then $\alpha\not\subset\Mov(Z)$ by [\emph{ibid.}, Lemma 5.17], so that $\alpha=\R_{\geq 0}[G]$ where $G\subset Z$ is a fixed prime divisor; let  $G_{\w{Y}}\subset\w{Y}$, $G_Y\subset Y$, and $G_X\subset X$ be its transforms. 
By \cite[Lemmas 4.2 and 4.4]{small}, $\langle [D],[B],[G_X]\rangle$ is a fixed face of $\Eff(X)$, 
such that $G_X$ is adjacent to both $D$ and $B$ .
Since $G\cdot C>0$, we have $G_{\w{Y}}\cap B_{\w{Y}}\neq\emptyset$, thus $G_Y\cap B_Y$ is non-empty and is not a union of exceptional planes (recall that $\dim (\w{Y}\smallsetminus \dom\ph)=1$ by Lemma \ref{SQMFano}). This implies that also 
$G_X\cap B$   is non-empty and is not a union of exceptional planes. Then  $G_X$ must be of type $(3,2)$ by \cite[Lemma 4.13(b)]{small}, hence $G_X\cap D=\emptyset$ by Lemma \ref{30sm}, and we get the statement.
\end{proof} 
\begin{proof}[Proof of Th.~\ref{30}]
  If every fixed prime divisor of $X$ is of type $(3,0)^{\sm}$, we apply Prop.~\ref{all30}. Otherwise  by  Prop.~\ref{one30} there are a fixed prime divisor $E$ of type $(3,2)$, and another fixed prime divisor $D$, such that
$D\cap E=\emptyset$. Then we have $\N(E,X)\subsetneq\N(X)$ (see Rem.~\ref{cassis}), and we get $(i)$
by Th.~\ref{sharp32}.
\end{proof}  
\section{Fano $4$-folds with a divisor of type $(3,1)^{\sm}$ or $(3,0)^Q$}\label{secsmall}
\noindent In this section we conclude our study of Fano $4$-folds with $\rho\geq 7$ having a small elementary contraction, by considering the case where $X$ has a fixed prime divisor of type $(3,1)^{\sm}$ or $(3,0)^Q$ (Th.~\ref{31}). Then we prove Cor.~\ref{explicitfinal} characterizing Fano $4$-folds with $\rho\geq 9$ that are not products of surfaces; this result implies
Theorems \ref{sharpbound} and \ref{explicit} from the Introduction. Finally we also give an application to Fano $4$-folds with $\rho =8$ (Cor.~\ref{last}).
  \begin{thm}\label{31}
  Let $X$ be a smooth Fano $4$-fold  with $\rho_X\geq 9$ having a fixed prime divisor of type $(3,1)^{\sm}$ or $(3,0)^Q$. Then 
  $X$ has a rational contraction onto a $3$-fold and $\rho_X=9$. 
\end{thm}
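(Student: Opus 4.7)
The plan is to establish that $X$ admits a rational contraction onto a $3$-fold; combined with Th.~\ref{CS} this forces $\rho_X \leq 9$, and the hypothesis $\rho_X \geq 9$ then gives $\rho_X = 9$. Since the fixed prime divisors of a product of del Pezzo surfaces are all of type $(3,2)$, $X$ is not such a product, so Th.~\ref{starting} applies and $\delta_X \leq 1$. In particular Cor.~\ref{summary} is available, and every $2$-dimensional face of $\Eff(X)$ is fixed.

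The proof splits into three cases. If $X$ contains some fixed prime divisor of type $(3,0)^{\sm}$, apply Th.~\ref{30}: option $(ii)$ there forces $\rho_X = 7$ and is excluded by hypothesis, so option $(i)$ provides the desired elementary rational contraction onto a $3$-fold. If $X$ contains a fixed prime divisor $E$ of type $(3,2)$ with $\N(E,X) \subsetneq \N(X)$, apply Th.~\ref{sharp32} directly. In the remaining case every fixed prime divisor of $X$ is of type $(3,1)^{\sm}$, $(3,0)^Q$, or $(3,2)$, and every $(3,2)$ fixed divisor $E$ satisfies $\N(E,X) = \N(X)$.

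In this last case, pick a fixed prime divisor $D$ of type $(3,1)^{\sm}$ or $(3,0)^Q$ and consider the associated contraction $X \stackrel{\xi}{\dasharrow} \w{X} \stackrel{\sigma}{\to} Y$ from Th.-Def.~\ref{fixed}: the target $Y$ is Fano (smooth in the $(3,1)^{\sm}$ case, with at most an isolated ODP in the $(3,0)^Q$ case), $\rho_Y = \rho_X - 1 \geq 8$, and $\delta_Y \leq 2$ by Lemma~\ref{deltaY}. The task is to verify the hypotheses of Prop.~\ref{sabri} by producing a nef prime divisor $H \subset Y$ covered by a family of rational curves of anticanonical degree one, and disjoint from $\sigma(\Exc\sigma)$. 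Once such an $H$ is found, Prop.~\ref{sabri} yields either the desired regular contraction $X \to Z$ with $\dim Z = 3$, or a fixed prime divisor $E \subset X$ of type $(3,2)$ with $\N(E,X) \subsetneq \N(X)$, which contradicts the standing assumption of this case. The subcase where $D$ is of type $(3,0)^Q$ and is adjacent to two distinct $(3,2)$ fixed prime divisors is handled separately by Lemma~\ref{torino}, which directly produces a rational contraction onto a $3$-fold.

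The main obstacle is the construction of $H$: this is the refinement of the analysis in \cite{small} under the sharper numerical assumptions provided here. The idea is to exploit $\rho_Y \geq 8$ together with $\delta_Y \leq 2$: Cor.~\ref{summary} (applied to $X$) guarantees that $\Eff(X)$ has many fixed one-dimensional faces, and their transforms produce fixed prime divisors on $Y$ via $\xi$ and $\sigma$. One then tracks, extremal ray by extremal ray of $\NE(Y)$, the birational geometry of $Y$, using the constraint that every $(3,2)$ fixed divisor of $X$ must satisfy $\N(E,X) = \N(X)$ together with Th.~\ref{sharp32} applied within $Y$ (via $\delta_Y \leq 2$). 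The outcome of this case analysis is that either a nef prime divisor $H$ with the required properties exists in $Y$, or else the configuration already falls under one of the previously handled cases (through Th.~\ref{30}, Th.~\ref{sharp32}, or Lemma~\ref{torino}). In every situation we therefore obtain a rational contraction of $X$ onto a $3$-fold, and Th.~\ref{CS} completes the proof.
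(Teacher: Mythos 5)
Your reductions at the start are sound and match the paper's: excluding products, invoking Th.~\ref{starting} and Cor.~\ref{summary}, disposing of the $(3,0)^{\sm}$ case via Th.~\ref{30}/Prop.~\ref{one30}, and disposing of any fixed prime divisor $E$ of type $(3,2)$ with $\N(E,X)\subsetneq\N(X)$ via Th.~\ref{sharp32}. The appeal to Th.~\ref{CS} to convert ``rational contraction onto a $3$-fold'' into $\rho_X=9$ is also correct.

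The gap is in the core case, which is where all the work lies, and your proposed strategy there is not viable. You plan to \emph{produce} a nef prime divisor $H\subset Y$ covered by rational curves of anticanonical degree one with $H\cap\sigma(\Exc\sigma)=\emptyset$ and then feed it to Prop.~\ref{sabri}. But under your standing assumptions (no rational contraction onto a $3$-fold, and $\N(E,X)=\N(X)$ for every fixed prime divisor of type $(3,2)$), Prop.~\ref{sabri} shows precisely that such an $H$ \emph{cannot exist}: both of its alternatives contradict those assumptions. This is exactly how the paper uses it (property $(g)$ in the proof of the $(3,1)^{\sm}$ case): any nef prime divisor covered by degree-one curves must meet the blown-up curve $C$, hence equals $E_0$. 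So there is nothing to construct, and the contradiction has to be found elsewhere. The paper's actual route in the $(3,1)^{\sm}$ case is entirely different: it first establishes (Prop.~\ref{vivina}) that at least $\rho_X-5$ fixed prime divisors of type $(3,2)$ are adjacent to $D$ with intersection zero against $C_D$; it then produces a movable, non-big sum $B_1+E_1$ giving a special rational contraction $f\colon Y\dasharrow\pr^2$ with general fiber $\pr^2$, describes the facet $\tau_f$ of $\Eff(Y)$ explicitly in terms of the divisors $B_i,E_i$, extracts from a suitable subface a quasi-elementary rational contraction $h\colon Y\dasharrow S$ onto a surface with $\rho_S\geq\rho_Y-6\geq 2$, and finally invokes Th.~\ref{zucca} to force $h$ to be elementary, which contradicts $[E_{\rho_Y-2}]$ lying in the relevant facet. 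The $(3,0)^Q$ case (with no $(3,1)^{\sm}$ divisors) is a separate, equally long argument following \cite{small} with the refined Lemmas \ref{9.2}, \ref{9.3} and \ref{torino}; your remark about Lemma~\ref{torino} covers only one subcase of it. As written, your ``case analysis extremal ray by extremal ray'' is a placeholder for the entire content of the theorem.
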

Note that in the setting of the theorem, it is enough to show that $X$ has a rational contraction onto a $3$-fold, because then $\rho_X=9$ follows from Th.~\ref{CS}.

To prove Th.~\ref{31}, we follow \cite[proofs of Th.~8.1 and Th.~9.1]{small}, where in the same setting the bound $\rho_X\leq 12$ is shown. Thanks to the results in the previous sections, in particular Cor.~\ref{summary}, Th.~\ref{zucca}, and Th.~\ref{sharp32}, and also to Th.~\ref{starting}, we can improve the bound to $9$  and simplify the proofs.
We first treat the case where $X$ has a fixed prime divisor of type $(3,1)^{\sm}$, and then the case where $X$ has a fixed prime divisor of type $(3,0)^{Q}$ (and no fixed prime divisors of type $(3,1)^{\sm}$).
Since the proofs from \cite{small} are quite technical, instead of reporting them in full we will 
point out  which modifications are needed to get our  statement; for the case of a divisor of type $(3,1)^{\sm}$ we will also give a highlight of the main points of the proof.
\begin{proof}[Proof of Th.~\ref{31}, case $(3,1)^{\sm}$]
  We follow \cite[proof of Th.~8.1]{small}. 

  Let us first assume that: \emph{$X$ is a smooth Fano $4$-fold with $\rho_X\geq 8$, having a fixed prime divisor $D$ of type $(3,1)^{\sm}$, and
  with no rational contraction onto a $3$-fold.}
  
Since $X$ has a fixed prime divisor of type $(3,1)^{\sm}$, $X$ is not isomorphic to a product of surfaces.
  Then we have the following properties:
  \begin{enumerate}[$\bullet$]
    \item $\delta_X\leq 1$ (Th.~\ref{starting});
    \item 
  $\N(E,X)=\N(X)$ for every fixed prime divisor $E\subset X$ of type $(3,2)$
  (Th.~\ref{sharp32});
\item every face of $\Eff(X)$ of dimension $\leq 3$ is fixed (Cor.~\ref{summary});
  \item $X$ has no fixed prime divisor of type $(3,0)^{\sm}$  (Prop.~\ref{one30}).
\end{enumerate}

We consider the contraction $X\dasharrow \w{X}\stackrel{\sigma}{\to} Y$ of $D$ as in Th.-Def.~\ref{fixed}, so that $Y$ is a smooth Fano $4$-fold with $\rho_Y\geq 7$ and $\sigma$ is the blow-up of a smooth irreducible curve $C\subset Y$. Then we have:
\begin{enumerate}[$(a)$]
  \item $Y$ has no rational contraction onto a $3$-fold;
    \item 
  $\N(E,Y)=\N(Y)$ for every fixed prime divisor $E\subset Y$ of type $(3,2)$;
\item $Y$ has no fixed prime divisor of type $(3,0)^{\sm}$;
  \item $Y$ is not isomorphic to a product of surfaces;
  \item $\delta_Y\leq 1$.
  \end{enumerate}
Here $(a)$, $(b)$, and $(c)$ follow from the same properties of $X$ as shown in 
 \cite[8.2]{small}, and $(d)$ and $(e)$ are proved in [\emph{ibid.}, 8.3].
We also have:
 \begin{enumerate}[$(a)$]\setcounter{enumi}{5}
    \item  every face of $\Eff(Y)$ of dimension $\leq 2$ is fixed (Cor.~\ref{summary}).
    \end{enumerate}
    
As in [\emph{ibid.}, 8.5], we introduce the following notation: if $C$ belongs to a family of rational curves of anticanonical degree one with locus a divisor, we denote this divisor by $E_0$. Then we have:
  \begin{enumerate}[$(a)$]\setcounter{enumi}{6}
  \item
    if $Y$ contains a nef prime divisor $H$ covered by a family $V$ of rational curves of anticanonical degree one, then $H=E_0$ and $[C]\equiv[V]$.
     \end{enumerate}
     Indeed we must have $H\cap C\neq\emptyset$ by Prop.~\ref{sabri}, $(a)$, and $(b)$. Then $C$ is a member of the family $V$ by [\emph{ibid.}, Lemma 4.21$(a)$], so that $H=E_0$.


    As in [\emph{ibid.}, 8.6 - 8.7], we denote by $E_1,\dotsc,E_r$ the set of fixed prime divisors of $Y$ of type $(3,2)$ and different from $E_0$.
It is shown in [\emph{ibid.}, 8.12 - 8.16] that $r\geq \rho_X-5$, and that  for every $i=1,\dotsc,r$ the transform $\w{E}_i\subset X$ of $E_i$ is a fixed prime divisor of type $(3,2)$ having intersection zero with $C_D$.
 Note that  [\emph{ibid.}, 8.9 and 8.10] are not needed because of $(f)$, and [\emph{ibid.}, 8.11] is replaced by $(g)$. Moreover [\emph{ibid.}, 8.18] shows that every fixed prime divisor of $Y$ is of type $(3,1)^{\sm}$ or $(3,2)$ (and both types do occur), hence by  [\emph{ibid.}, Lemma 4.22] every fixed prime divisor of $X$, adjacent to $D$, is still  of type $(3,1)^{\sm}$ or $(3,2)$. We get the following intermediate result.
 \begin{proposition}[refinement of \cite{small}, Prop.~8.17]\label{vivina}
Let $X$ be a smooth Fano $4$-fold with $\rho_X\geq 8$ and $D\subset X$ a fixed prime divisor of type $(3,1)^{\sm}$. Then  one of the following holds:
  \begin{enumerate}[$(i)$]
  \item $X$ has a rational contraction onto a $3$-fold;
    \item there are at least $\rho_X-5$ fixed prime divisors of type $(3,2)$ adjacent to $D$ and having intersection zero with $C_D$; moreover every fixed prime divisor adjacent to $D$ is of type $(3,1)^{\sm}$ or $(3,2)$.
    \end{enumerate}
\end{proposition}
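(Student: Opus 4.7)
The plan is to prove the contrapositive: assuming $X$ has no rational contraction onto a $3$-fold, I will establish $(ii)$. The first step is to harvest from our earlier results an optimized version of the setup used in \cite[proof of Th.~8.1]{small}. Since $D$ is of type $(3,1)^{\sm}$, $X$ cannot be a product of surfaces, so Th.~\ref{starting} gives $\delta_X\leq 1$; Th.~\ref{sharp32} then forces $\N(E,X)=\N(X)$ for every fixed prime divisor $E\subset X$ of type $(3,2)$; Cor.~\ref{summary} yields that every face of $\Eff(X)$ of dimension at most $3$ is fixed; and Prop.~\ref{one30} rules out fixed prime divisors of type $(3,0)^{\sm}$.

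The next step is to pass to the target of the divisorial contraction of $D$, namely the diagram $X\dasharrow\w{X}\stackrel{\sigma}{\to}Y$ of Th.-Def.~\ref{fixed}, where $Y$ is a smooth Fano $4$-fold with $\rho_Y\geq 7$ and $\sigma$ is the blow-up of a smooth irreducible curve $C\subset Y$. Following \cite[8.2--8.3]{small} verbatim, all the above properties transfer to $Y$: no rational contraction onto a $3$-fold, $\N(E,Y)=\N(Y)$ for every fixed prime divisor of type $(3,2)$, no fixed prime divisor of type $(3,0)^{\sm}$, $Y$ is not a product of surfaces, and $\delta_Y\leq 1$. A further application of Cor.~\ref{summary} shows that every face of $\Eff(Y)$ of dimension at most $2$ is fixed.

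The heart of the argument is a rigidity statement for nef prime divisors $H\subset Y$ covered by rational curves of anticanonical degree one. Prop.~\ref{sabri} (whose hypotheses are ensured by the reductions above) forces any such $H$ to meet $C$; then \cite[Lemma 4.21(a)]{small} shows that $C$ itself belongs to the covering family, so $H$ must coincide with the locus $E_0$ of the family containing $C$. With this constraint in hand, the counting in \cite[8.12--8.16]{small} produces at least $r\geq\rho_X-5$ fixed prime divisors $E_1,\dotsc,E_r$ of $Y$ of type $(3,2)$ distinct from $E_0$, whose transforms in $X$ are again fixed prime divisors of type $(3,2)$ having intersection zero with $C_D$. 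Finally, the classification step \cite[8.18]{small} establishes that every fixed prime divisor of $Y$ is of type $(3,1)^{\sm}$ or $(3,2)$, whence \cite[Lemma 4.22]{small} guarantees that every fixed prime divisor of $X$ adjacent to $D$ is of one of these two types.

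The most delicate step is the counting of the $E_i$, since it requires exhibiting enough type $(3,2)$ divisors of $Y$ disjoint from $C$ and uses the face structure of $\Eff(Y)$ in an essential way. The improvements coming from Section~\ref{sec32} and Cor.~\ref{summary} streamline this substantially: conditions $(c)$ and $(f)$ above make \cite[8.9--8.11]{small} redundant and replace the original dimension estimates with direct consequences of $\delta_X\leq 1$ and Th.~\ref{sharp32}, which is precisely what pushes the bound from $\rho_X\leq 12$ down to the expected range and yields the sharper count $r\geq\rho_X-5$.
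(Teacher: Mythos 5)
Your proposal is correct and follows essentially the same route as the paper: the same initial reductions via Th.~\ref{starting}, Th.~\ref{sharp32}, Cor.~\ref{summary} and Prop.~\ref{one30}, the same transfer of properties to $Y$, the same rigidity statement for nef divisors covered by anticanonical degree-one curves via Prop.~\ref{sabri}, and the same appeal to \cite[8.12--8.16, 8.18, Lemma 4.22]{small} for the count and the classification of adjacent divisors. The only cosmetic difference is that the paper explicitly labels the transferred properties $(a)$--$(g)$ and records that $(f)$ replaces \cite[8.9--8.10]{small} while $(g)$ replaces \cite[8.11]{small}, which you state slightly less precisely but without affecting the argument.
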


We assume from now on that $\rho_X\geq 9$, namely that \emph{$X$ is a smooth Fano $4$-fold with $\rho_X\geq 9$, having a fixed prime divisor $D$ of type $(3,1)^{\sm}$, and
  with no rational contraction onto a $3$-fold.} We show that this gives a contradiction.

Notice that $\rho_Y\geq 8$, in particular
every face of $\Eff(Y)$ of dimension $\leq 3$ is fixed, by
Cor.~\ref{summary} and $(d)$; this replaces [\emph{ibid.}, 8.24].

Let $B\subset Y$ be a 
  fixed prime divisor
 of type $(3,1)^{\sm}$; by $(a)$  and Prop.~\ref{vivina} applied to $Y$ and $B$,  we see that 
 $B$ is adjacent to some divisor among $E_1,\dotsc,E_r$.

 Using this,  [\emph{ibid.}, 8.26 - 8.27]
 shows that there exist a fixed prime divisor $B_1\subset Y$ of type $(3,1)^{\sm}$, and $i\in\{1,\dotsc,r\}$, say $i=1$, such that $B_1+E_1$ is movable and not big, so that the linear system $m|B_1+E_1|$ for $m\gg 0$ defines a rational contraction of fiber type $f\colon Y\dasharrow Z$. By [\emph{ibid.}, Lemma 5.11], $(a)$, and $(b)$, we have $Z\cong\pr^2$ and $f$ is special. 

 We note that $C$ cannot be contracted to a point by $f$, otherwise  the composition $X\dasharrow \pr^2$ has a unique prime divisor (precisely $D$) contracted to a point, and this contradicts Lemma \ref{lori}. Thus $C$ is not contracted to a point,
   and as in [\emph{ibid.}, proof of 8.9], this implies that the general fiber of $f$ is $\pr^2$, hence the minimal face $\tau_f$ of $\Eff(Y)$ containing $f^*\Nef(\pr^2)$ is a facet (see Def.-Rem.~\ref{dimfiber}).

   Then [\emph{ibid.}, 8.29] shows that there are  irreducible curves $\Gamma_1,\dotsc,\Gamma_{\rho_Y-2}\subset\pr^2$, and fixed prime divisors $B_2,\dotsc,B_{\rho_Y-2}\subset Y$ of type $(3,1)^{\sm}$, such that (up to renumbering) $f^*(\Gamma_i)=B_i+E_i$ for $i=1,\dotsc,\rho_Y-2$, and
   $$\tau_f=\langle[B_1],\dotsc,[B_{\rho_Y-2}], [E_1],\dotsc,[E_{\rho_Y-2}]\rangle.$$
   Moreover $\langle [B_1],\dotsc,[B_{\rho_Y-4}], [E_{\rho_Y-3}],[E_{\rho_Y-2}]\rangle$ is a face of $\tau_f$ of dimension $\rho_Y-2$, hence there exists a  facet  $\eta$ of $\Eff(Y)$ such that $$\langle [B_1],\dotsc,[B_{\rho_Y-4}], [E_{\rho_Y-3}],[E_{\rho_Y-2}]\rangle=\tau_f\cap\eta$$
   (see [\emph{ibid.}, 8.32]).

   If $P\subset Y$ is a fixed prime divisor such that $[P]\in\eta\smallsetminus\tau_f$, [\emph{ibid.}, 8.33 - 8.35] shows that, up to renumbering, $P+B_i$ is movable for $i=1,\dotsc,\rho_Y-6$. Let $\eta_1$ be the minimal face of $\eta$ containing $[(\rho_Y-6)P+B_1+\cdots+B_{\rho_Y-6}]$, so that $\eta_1$ contains $[P],[B_1],\dotsc,[B_{\rho_Y-6}]$, and $[P+B_i]\in\eta_1\cap\Mov(Y)$ for every $i=1,\dotsc,\rho_Y-6$; in particular $\eta_1\cap\Mov(Y)$ is a face of $\Mov(Y)$ of dimension $\geq \rho_Y-6$. As in Rem.~\ref{zumba}, we take a cone $\alpha\in\MCD(Y)$ such that $\alpha\subset\eta_1\cap\Mov(Y)$ and $\dim\alpha=\dim(\eta_1\cap\Mov(Y))$, and consider the rational contraction  $h\colon Y\dasharrow S$  such that $\alpha=h^*(\Nef(S))$ (see [\emph{ibid.}, 8.36]).

 Then $h$ is of fiber type and special, by Lemma \ref{eubea}; moreover $\rho_S=\dim(\eta_1\cap\Mov(Y))\geq \rho_Y-6\geq 2$, hence $S\not\cong\pr^1$, and by $(a)$ we have $\dim S=2$.
 By   [\emph{ibid.}, Lemma 5.12] and $(b)$, $h$ must be quasi-elementary. 

 Finally Th.~\ref{zucca}, $(a)$, and $(d)$ imply that $h$ is elementary, therefore
$\rho_S=\rho_Y-1$, $\eta_1=\eta$, and $h^*(\Nu(S))=\R\eta$. On the other hand $[E_{\rho_Y-2}]\in\eta$, and by  [\emph{ibid.}, Lemma 5.11] and $(b)$ this gives a contradiction.
This concludes the first part of the proof of Th.~\ref{31}.
\qedhere
    \end{proof}
    \begin{lemma}[refinement of \cite{small}, Lemma 9.2]\label{9.2}
      Let $X$ be a smooth Fano $4$-fold with $\rho_X\geq 7$ and let $D,E$ be two adjacent fixed prime divisors of type $(3,0)^Q$ in $X$. Let $L\subset E$ be an exceptional plane such that $L\cap D=\emptyset$. 
Then one of the following holds:
  \begin{enumerate}[$(i)$]
  \item $X$ has a rational contraction onto a $3$-fold;
    \item there exists an exceptional plane $M\subset D$ such that $C_M+C_E\equiv C_D+C_L$ and $D\cdot C_M=-1$.
    \end{enumerate}
       \end{lemma}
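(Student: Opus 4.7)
The plan is to follow closely the argument of \cite[proof of Lemma 9.2]{small}, using the stronger tools developed in this paper (notably Prop.~\ref{sabri}, Th.~\ref{sharp32}, and Cor.~\ref{summary}) to replace the weaker alternative ``$\rho_X\leq 12$'' with the sharper ``$X$ has a rational contraction onto a $3$-fold.''

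First I would run the contraction associated to $E$ as in Th.-Def.~\ref{fixed}$(a)$: $X\stackrel{\xi}{\dasharrow}\widetilde{X}\stackrel{\sigma}{\to}Y$, where $\xi$ is a SQM, $\sigma$ is of type $(3,0)^Q$ with exceptional divisor $\widetilde{E}$ a smooth $3$-dimensional quadric, contracted to an isolated ordinary double point $y_0\in Y$ (Rem.~\ref{ODP}), and $Y$ is Fano. Since $L\cap D=\emptyset$ and the indeterminacy locus of $\xi$ is at most one-dimensional (Lemma \ref{SQMFano}), $L$ lies in $\dom(\xi)$ and its transform $\widetilde{L}\subset\widetilde{E}$ is a plane of one of the two rulings of the quadric $\widetilde{E}$. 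Since $D$ and $E$ are adjacent, Lemma \ref{EF} gives $D\cdot C_E=0$, so the transform $\widetilde{D}\subset\widetilde{X}$ of $D$ meets $\widetilde{E}$ in a locus of dimension at most $2$, and $\sigma$ restricts to a birational morphism onto $D_Y:=\sigma(\widetilde{D})\subset Y$, still a fixed prime divisor.

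Next, I would use the ruling structure of the quadric $\widetilde{E}$ to identify, in $Y$, a nef prime divisor $H$ covered by a family of rational curves of anticanonical degree one whose generic member is disjoint from $y_0$: this divisor arises from images of planes in the ruling of $\widetilde{E}$ opposite to the one containing $\widetilde{L}$, together with their degeneration near $\widetilde{D}\cap\widetilde{E}$. Applying Prop.~\ref{sabri} to $Y$ with this $H$ and the divisor $D_Y$, I obtain either a contraction of $X$ onto a $3$-fold sending $D$ to a point (case $(i)$ of Prop.~\ref{sabri}), or a fixed prime divisor of type $(3,2)$ in $Y$ with $\N\subsetneq\N(Y)$; the latter lifts back to $X$ and, via Th.~\ref{sharp32}, yields a rational contraction of $X$ onto a $3$-fold. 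Either alternative gives case $(i)$ of the lemma.

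Assume therefore that neither alternative of Prop.~\ref{sabri} applies; I would then extract the exceptional plane $M\subset D$ as follows. Since the two rulings of $\widetilde{E}$ yield linearly equivalent planes, and $\widetilde{L}$ together with a suitable ruling plane of $\widetilde{D}$ generates a curve class in $\widetilde{X}$ whose $\sigma$-image is numerically equivalent to a line in $\widetilde{L}$, the pullback to $\widetilde{X}$ produces a curve class of the form $C_{\widetilde{M}}+C_{\widetilde{E}}-C_{\widetilde{L}}\equiv C_{\widetilde{D}}$ for some exceptional plane $\widetilde{M}\subset\widetilde{D}$. Transferring this identity back through the SQM $\xi$, together with the fact that all four curves $C_D,C_E,C_L,C_M$ lie in $\dom(\xi)$ respectively $\dom(\xi^{-1})$ by Rem.~\ref{greg} and Lemma \ref{disjoint}, yields the required relation $C_M+C_E\equiv C_D+C_L$ in $\N(X)$, with $D\cdot C_M=-1$ since $\widetilde{M}$ is a ruling plane of the quadric $\widetilde{D}$. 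The main obstacle will be the second step: verifying that the configuration near $\widetilde{D}\cap\widetilde{E}$ genuinely produces a nef prime divisor meeting the hypotheses of Prop.~\ref{sabri}, which requires a careful local analysis of how $\widetilde{D}$ meets $\widetilde{E}$ and of the curve classes arising from the two rulings of each quadric.
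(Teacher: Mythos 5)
Your overall plan (contract $E$, produce a nef divisor covered by degree-one rational curves away from the blown-down locus, feed it to Prop.~\ref{sabri} and Th.~\ref{sharp32} to get case $(i)$, and otherwise build $M$) is in the right spirit: the paper proves this lemma precisely by rerunning the argument of \cite[Lemma 9.2]{small} with Th.~\ref{sharp32} in place of \cite[Th.~4.8]{small}, and Prop.~\ref{sabri} together with Th.~\ref{sharp32} in place of \cite[Prop.~7.1]{small}. But the geometric core of your reconstruction does not work. By Lemma \ref{SQMFano} the indeterminacy locus of $\xi\colon X\dasharrow\w{X}$ is a disjoint union of \emph{exceptional planes}, not a one-dimensional set, and since the flips in $\xi$ are $E$-negative (Th.-Def.~\ref{fixed}$(c)$) these planes are contained in $E$. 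So there is no reason for $L$ to lie in $\dom(\xi)$; on the contrary, $\w{E}$ is a smooth three-dimensional quadric, which contains no plane at all (its Picard group is generated by the hyperplane class, and an adjunction computation rules out any copy of $\pr^2$), so $L$ \emph{must} be one of the flipped planes. Consequently the picture of ``$\w{L}$ a plane of one of the two rulings of $\w{E}$'' is vacuous: a smooth quadric $3$-fold carries a single family of lines and no ruling by planes -- that phenomenon lives on quadric $4$-folds and quadric surfaces. Worse, the divisor $H$ you propose to feed to Prop.~\ref{sabri} is built from ``images of planes in the opposite ruling of $\w{E}$'', but $\sigma$ contracts $\w{E}$ to the single point $y_0$, so nothing inside $\w{E}$ can produce a prime divisor of $Y$, let alone one satisfying $H\cap\sigma(\Exc\sigma)=\emptyset$ as Prop.~\ref{sabri} requires.

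The case division is also logically broken. If, as you assert, the nef divisor $H$ always exists, then Prop.~\ref{sabri} always applies, one of its two alternatives always holds, and each yields case $(i)$ (the second via Th.~\ref{sharp32}); case $(ii)$ of the lemma would then be superfluous. You cannot subsequently ``assume that neither alternative of Prop.~\ref{sabri} applies'': once its hypotheses are verified its conclusion is forced. The genuine dichotomy must be placed earlier, on whether a suitable nef prime divisor covered by anticanonical-degree-one curves and avoiding $\sigma(\Exc(\sigma))$ exists in the target of the contraction; the construction of the exceptional plane $M\subset D$ with $C_M+C_E\equiv C_D+C_L$ and $D\cdot C_M=-1$ belongs entirely to the branch where it does not. (A smaller slip: Lemma \ref{EF}$(b)$ deduces $D\cdot C_E=0$ from adjacency only when $E$ is of type $(3,2)$; here $E$ is of type $(3,0)^Q$, so that step needs a different justification as well.) As written, the proposal neither produces a legitimate input for Prop.~\ref{sabri} nor isolates the situation in which $M$ is to be constructed.
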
 
       \begin{proof} Note that $X$ is not a product of surfaces, because it contains fixed prime divisors of type $(3,0)^Q$, hence $\delta_X\leq 1$ by Th.~\ref{starting}.
The same proof as the one of \cite[Lemma 9.2]{small} applies, just using Th.~\ref{sharp32} instead of [\emph{ibid.}, Th.~4.8], and Prop.~\ref{sabri} and Th.~\ref{sharp32} instead of [\emph{ibid.}, Prop.~7.1].
\end{proof}
 \begin{lemma}[refinement of \cite{small}, Lemma 9.3]\label{9.3}
   Let $X$ be a smooth Fano $4$-fold with $\rho_X\geq 7$ and let $D,E$ be two adjacent fixed prime divisors of type $(3,0)^Q$ in $X$. Suppose that one of the following holds:
   \begin{enumerate}[$(a)$]
   \item every fixed prime divisor is of type $(3,0)^Q$;
   \item there is a fixed prime divisor $F$, of type $(3,2)$, such that $\langle [D],[E],[F]\rangle$ is a fixed face of $\Eff(X)$.
       \end{enumerate}
Then one of the following holds:
  \begin{enumerate}[$(i)$]
  \item $X$ has a rational contraction onto a $3$-fold;
  \item there exist
$L_1,\dotsc,L_{\rho_X-2}\subset E$ exceptional planes, disjoint from $D$, such that $[C_E],[C_{L_1}],\dotsc,[C_{L_{\rho_X-2}}]$ is a basis of $D^{\perp}$. In case $(b)$, we can moreover assume that $C_E\equiv C_F+C_{L_1}$ and $[C_E],[C_F],[C_{L_2}],\dotsc,[C_{L_{\rho_X-2}}]$ is a basis of $D^{\perp}$.
    \end{enumerate}
       \end{lemma}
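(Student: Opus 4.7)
I will follow the strategy of \cite[Lemma 9.3]{small}, streamlined by the stronger results of this paper. The argument proceeds by contradiction: assume $(i)$ fails. Then Theorem~\ref{sharp32} gives $\N(G,X) = \N(X)$ for every fixed prime divisor $G$ of type $(3,2)$; Theorem~\ref{starting} gives $\delta_X \leq 1$; Corollary~\ref{summary} ensures that every face of $\Eff(X)$ of dimension at most $3$ is fixed; and in case $(b)$, Lemma~\ref{torino} yields that $F$ is the unique fixed prime divisor of type $(3,2)$ adjacent to $D$ (and likewise to $E$), while Lemma~\ref{EF}(b) gives $D\cdot C_F = E\cdot C_F = 0$.

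The strategy is iterative, based on Lemma~\ref{9.2}. The initialization produces $L_1$: in case $(a)$, a dimension count applied to the $E$-negative flips in the diagram $X\dasharrow\w{X}\to Y$ of Theorem--Definition~\ref{fixed} (applied to $E$) yields at least one exceptional plane $L\subset E$ disjoint from $D$, and Lemma~\ref{9.2} then gives the relation $C_M + C_E \equiv C_D + C_L$ for some $M\subset D$; in case $(b)$, the pairing analysis within the fixed face $\langle [D],[E],[F]\rangle$ together with the extremality of $[C_F]$ in a suitable SQM (via Lemma~\ref{extremal}) identifies $M$ with a plane inside $D\cap F$, refining the relation to $C_E \equiv C_F + C_{L_1}$. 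For the inductive step, having constructed $L_1, \dotsc, L_k$, I contract the divisors $D, L_1, \dotsc, L_k$ simultaneously via a birational rational contraction with $\Q$-factorial target (Lemma~\ref{linate} and Remark~\ref{uffa}); the transform of $E$ in the target remains adjacent to the transform of $D$ as fixed divisors of the appropriate type, and a new application of Lemma~\ref{9.2} (using that a further $E$-negative flip produces a plane disjoint from $D$ and from the $L_i$'s already chosen) yields $L_{k+1}$. The process terminates at $k = \rho_X - 2$, when $\dim\langle [C_D], [C_E], [C_{L_1}], \dotsc, [C_{L_{\rho_X-2}}]\rangle = \rho_X$.

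The main obstacle is the inductive step: verifying that after each partial contraction the configuration of divisors still allows Lemma~\ref{9.2} to produce a new exceptional plane in $E$ (not merely in its transform) that is disjoint from $D$ and from the previously chosen $L_i$. For the linear independence of the classes $[C_E], [C_{L_1}], \dotsc, [C_{L_{\rho_X-2}}]$ in $D^\perp$, I will use Remark~\ref{movdual}: each $[C_{L_i}]$ generates a distinct one-dimensional face of $\Mov(X)^\vee$, so the classes are pairwise non-proportional, and the dimension count together with the fact that all classes lie in $D^\perp$ (since the $L_i$ are disjoint from $D$ and $D\cdot C_E = 0$ by the adjacency of $D$ and $E$ in a Fano $4$-fold with $\delta_X \leq 1$) forces them to form a basis. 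The second basis statement of case $(b)$ follows by direct substitution using the relation $C_E \equiv C_F + C_{L_1}$.
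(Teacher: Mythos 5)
Your high-level plan matches the paper's: the printed proof of Lemma \ref{9.3} consists precisely of noting that $X$ is not a product of surfaces (so $\delta_X\leq 1$ by Th.~\ref{starting}) and then rerunning \cite[Lemma 9.3]{small} with Lemma \ref{torino} substituted for [\emph{ibid.}, Lemma 7.2] and Th.~\ref{sharp32} for [\emph{ibid.}, Th.~4.8]. You identify exactly these substitutions, together with the roles of Cor.~\ref{summary} and Lemma \ref{9.2}, so at the level of strategy you are aligned with the paper.

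However, your reconstruction of the internal argument has genuine gaps. First, you yourself flag the inductive step --- producing, after contracting $D,L_1,\dotsc,L_k$, a new exceptional plane contained in $E$ itself (not merely in a transform), disjoint from $D$ and from $L_1,\dotsc,L_k$ --- as ``the main obstacle'' and leave it unverified; that step is the heart of the lemma and cannot be left open. Second, your linear-independence argument misapplies Rem.~\ref{movdual}: the one-dimensional faces of $\Mov(X)^{\vee}$ described there are either of the form $\langle[C_G]\rangle$ for $G$ a fixed prime divisor or are contained in $\mov(X)$, and a class $[C_{L_i}]$ of a line in an exceptional plane is of neither kind --- it spans a small extremal ray of $\NE(X)$, and a movable divisor obtained by pulling back an ample divisor through a SQM that flips $L_i$ is negative on $C_{L_i}$, so these classes need not even lie in $\Mov(X)^{\vee}$. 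Third, you justify $D\cdot C_E=0$ ``by adjacency'', but Lemma \ref{EF}$(b)$ gives that implication only when the second divisor is of type $(3,2)$, whereas here $E$ is of type $(3,0)^Q$, so a different argument is required. None of these points is fatal to the overall strategy, but as written the proposal is an outline whose key inductive step and key independence claim are unsupported.
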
 
       \begin{proof}
         Note that $X$ is not a product of surfaces, because it contains fixed prime divisors of type $(3,0)^Q$, hence $\delta_X\leq 1$ by Th.~\ref{starting}.
The same proof as the one of \cite[Lemma 9.3]{small} applies, just using Lemma \ref{torino} instead of [\emph{ibid.}, Lemma 7.2], and Th.~\ref{sharp32} instead of [\emph{ibid.}, Th.~4.8].
         \end{proof}
          \begin{proof}[Proof of Th.~\ref{31}, case $(3,0)^Q$]
Let $X$ be a smooth Fano $4$-fold with $\rho_X\geq 9$ having a fixed prime divisor of type $(3,0)^Q$.
            Note that $X$ is not a product of surfaces,  therefore $\delta_X\leq 1$ by Th.~\ref{starting}.

         If  $X$ has a rational contraction onto a $3$-fold,  then $\rho_X=9$ by Th.~\ref{CS}.
         We assume that  $X$ has no rational contraction onto a $3$-fold, and reach a contradiction.
         By Th.~\ref{30} and by the first part of the proof, every fixed prime divisor of $X$ is of type $(3,0)^Q$ or $(3,2)$.
         
         The same proof as \cite[proof of Th.~9.1]{small} works, with the following modifications.
            \begin{enumerate}[$\bullet$]
            \item By Cor.~\ref{summary}, every face of $\Eff(X)$ of dimension $\leq 4$ is fixed; this simplifies [\emph{ibid.}, 9.4 and 9.6]. 
            \item In [\emph{ibid.}, 9.4 and 9.5] we apply Lemma \ref{torino} instead of  [\emph{ibid.}, Lemma 7.2].
     \item In [\emph{ibid.}, 9.7] we apply Th.~\ref{sharp32} instead of  [\emph{ibid.}, Th.~4.8].           
\item
            In [\emph{ibid.}, 9.12 and 9.13] we apply Lemma \ref{9.3} instead of [\emph{ibid.}, Lemma 9.3].
\item
  In [\emph{ibid.}, 9.14 and 9.24] we apply Lemma \ref{9.2} instead of [\emph{ibid.}, Lemma 9.2].
\item
  In [\emph{ibid.}, 9.20] we get a contradiction, because we are assuming $\rho_X\geq 9$.
  \qedhere
          \end{enumerate}
        \end{proof}
        \begin{corollary}\label{explicitfinal}
  Let $X$ be a smooth Fano $4$-fold with $\rho_X\geq 9$, not isomorphic to a product of surfaces.
Then $\rho_X=9$ and one of the following holds:
\begin{enumerate}[$(i)$]
\item $X$ has an elementary rational contraction onto a $3$-fold;
\item  $X$ is the blow-up of $W$ along a normal surface $S$,
  where $W$ is the Fano model of $\Bl_{7\mskip1mu\pts}\pr^4$, and $S\subset W$ is the transform of a cubic scroll or
  a cone over a twisted cubic in
$\pr^4$, containing the blown-up points;
\item  $X$ is a blow-up of a cubic $4$-fold as in Th.~\ref{finalintro}.
     \end{enumerate}
   \end{corollary}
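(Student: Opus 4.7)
The plan is a direct case analysis that assembles Theorems \ref{final}, \ref{30}, \ref{31}, and \ref{CS}; no new argument is needed, and the only care is to use the hypothesis $\rho_X\geq 9$ to exclude the low Picard number sub-cases arising in those theorems.

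First I would dispose of the case where $X$ has no small elementary contraction. Then Th.~\ref{final} applies and yields $\rho_X\leq 9$ together with the description of $X$ as a blow-up of a cubic $4$-fold as in Th.~\ref{finalintro}. Combined with the hypothesis $\rho_X\geq 9$, this forces $\rho_X=9$ and puts us in case $(iii)$ of the corollary.

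Next, assume $X$ has a small elementary contraction. By Rem.~\ref{united}, $X$ carries a fixed prime divisor of type $(3,0)^{\sm}$, $(3,1)^{\sm}$, or $(3,0)^Q$. If there is a fixed prime divisor of type $(3,0)^{\sm}$, apply Th.~\ref{30}: its alternative $(ii)$ forces $\rho_X=7$, which is incompatible with $\rho_X\geq 9$, so we are in alternative $(i)$ and $X$ has an elementary rational contraction onto a $3$-fold with $\rho_X\leq 9$; this gives case $(i)$ of the corollary with $\rho_X=9$. Otherwise there is no fixed prime divisor of type $(3,0)^{\sm}$, so by Rem.~\ref{united} there is one of type $(3,1)^{\sm}$ or $(3,0)^Q$, and Th.~\ref{31} gives that $X$ has a rational contraction onto a $3$-fold and $\rho_X=9$.

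Finally, in the last situation I would apply Th.~\ref{CS} to this rational contraction onto a $3$-fold: either $X$ has an elementary rational contraction onto a $3$-fold, which is case $(i)$ of the corollary, or $X$ is as in Th.~\ref{CS}$(ii)$. In the latter case, $W$ is the Fano model of $\Bl_{\rho_X-2\mskip1mu\pts}\pr^4=\Bl_{7\mskip1mu\pts}\pr^4$, and $S\subset W$ is the transform of a surface $A\subset\pr^4$ containing the blown-up points, where $A$ is a cubic scroll, a cone over a twisted cubic, or a sextic K3 surface with $A_1$ or $A_2$ singularities. The K3 possibility is excluded by the last clause of Th.~\ref{CS}$(ii)$, which forces $\rho_X=7$, contradicting $\rho_X=9$; thus $A$ is a cubic scroll or a cone over a twisted cubic, producing case $(ii)$ of the corollary. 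The main ``obstacle'' is purely administrative — ensuring that the numerics ($\rho_X=9$ vs.\ $\rho_X=7$) consistently exclude the unwanted branches of Theorems \ref{30} and \ref{CS} — and no further geometric input is needed.
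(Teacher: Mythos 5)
Your proposal is correct and follows essentially the same route as the paper: split on the existence of a small elementary contraction, apply Th.~\ref{final} in the first case, and in the second case use Rem.~\ref{united} together with Theorems \ref{30}, \ref{31}, and \ref{CS}, using $\rho_X\geq 9$ to eliminate the $\rho=7$ branches. The only difference is that you spell out the exclusion of the K3 case in Th.~\ref{CS}$(ii)$, which the paper leaves implicit.
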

  Note that  this result implies
Theorems \ref{sharpbound} and \ref{explicit}.
\begin{proof}  
  If $X$ has no small elementary contractions, then by Th.~\ref{finalintro} we have $\rho_X=9$ and $(iii)$.
  
  If instead $X$ has a small elementary contraction, then 
 by Rem.~\ref{united} $X$ has a fixed prime divisor of type $(3,0)^{\sm}$, 
$(3,1)^{\sm}$, or $(3,0)^Q$.
If $X$ has a fixed prime divisor of type $(3,0)^{\sm}$, then we apply Th.~\ref{30} and get $\rho_X= 9$ and $(i)$.

 Finally, if $X$ has a fixed prime divisor of type $(3,1)^{\sm}$ or $(3,0)^Q$, by Th.~\ref{31} we get that $\rho_X=9$ and $X$ has a rational contraction onto a $3$-fold. Then we apply Th.~\ref{CS}, and get $(i)$ or $(ii)$.
\end{proof}
 \begin{corollary}\label{last}
          Let $X$ be a smooth Fano $4$-fold with $\rho_X=8$. Then one of the following holds:
          \begin{enumerate}[$(i)$]
          \item $X$ has a rational contraction onto a $3$-fold;
          \item every fixed prime divisor of $X$ is of type $(3,1)^{\sm}$ or $(3,2)$;
            \item every fixed prime divisor of $X$ is of type $(3,0)^Q$ or $(3,2)$, and two divisors of type $(3,2)$ cannot be adjacent.
\end{enumerate}
\end{corollary}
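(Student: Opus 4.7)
The plan is to assume that case (i) fails and to deduce either (ii) or (iii) by a case analysis on the types of fixed prime divisors present in $X$. First, Theorem \ref{30} applied with $\rho_X = 8 \neq 7$ shows that the failure of (i) rules out any fixed prime divisor of type $(3,0)^{\sm}$; hence every fixed prime divisor of $X$ is of type $(3,0)^Q$, $(3,1)^{\sm}$, or $(3,2)$. If there is no fixed prime divisor of type $(3,0)^Q$, we are immediately in case (ii). Otherwise, pick a fixed prime divisor $D_Q$ of type $(3,0)^Q$; it remains to prove that $X$ then contains no $(3,1)^{\sm}$-divisor, and that no two fixed $(3,2)$-divisors are adjacent.

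For the first claim, suppose for contradiction that a fixed prime divisor $D_1$ of type $(3,1)^{\sm}$ exists. Since $\rho_X = 8$, Proposition \ref{vivina} applies to $D_1$: as (i) fails, there exist at least $\rho_X - 5 = 3$ fixed prime divisors $E_1, E_2, E_3$ of type $(3,2)$ adjacent to $D_1$ with $E_i \cdot C_{D_1} = 0$. By Theorem \ref{starting} we have $\delta_X \leq 1$ (since $X$ contains $D_Q$ and is hence not a product of surfaces), and by Theorem \ref{sharp32} each $E_i$ satisfies $\N(E_i, X) = \N(X)$; in particular $E_i \cap D_Q \neq \emptyset$ by Remark \ref{cassis}. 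Lemma \ref{torino} then forces that at most one of $E_1, E_2, E_3$ is adjacent to $D_Q$, so at least two of them meet $D_Q$ without being adjacent to it, which by Lemma \ref{EF} gives $D_Q \cdot C_{E_i} \neq 0$ for those indices. Lemma \ref{31sm} applied to $D_1$ with any pair $E_i, E_j$ (and $E_i \cdot C_{D_1} = 0$) further constrains the pairwise intersection numbers $E_i \cdot C_{E_j}$. The contradiction should then be extracted by combining these numerical constraints with Proposition \ref{sabri} applied to $D_Q$, thereby producing the missing rational contraction of $X$ onto a 3-fold.

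For the second claim, suppose two fixed prime divisors $E_1, E_2$ of type $(3,2)$ are adjacent. By Theorem \ref{sharp32} we have $\N(E_i, X) = \N(X)$ for $i=1,2$; by Lemma \ref{EF}, $E_1 \cdot C_{E_2} = E_2 \cdot C_{E_1} = 0$; by Lemma \ref{linate} these two divisors can be jointly contracted to a $\Q$-factorial variety $Z$ with $\rho_Z = 6$. The plan is to exploit the presence of $D_Q$ — which, via Theorem-Definition \ref{fixed}, yields a divisorial elementary contraction $X \dasharrow \widetilde{X} \to Y$ with $Y$ Fano — to produce a fiber-type rational contraction of $Z$ (or of an SQM of $Z$) onto a 3-fold; such a contraction would lift through the SQM-contraction sequence to a rational contraction of $X$ onto a 3-fold, contradicting the failure of (i).

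The main obstacle in both claims is essentially the same: translating the combinatorial positional data of fixed prime divisors in $\Eff(X)$ and their adjacencies into a geometric rational contraction of $X$ onto a 3-fold. This is precisely the type of analysis carried out in Section \ref{secsmall} for the $\rho_X \geq 9$ regime (through Lemmas \ref{9.2} and \ref{9.3}, together with Proposition \ref{vivina}); the $\rho_X = 8$ statement at hand is expected to follow from the same template, terminating at the trichotomy (i)--(iii) rather than at a final numerical contradiction of the form $\rho_X \leq 9$.
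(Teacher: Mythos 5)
Your opening reduction is correct and matches the paper: assuming $(i)$ fails, Th.~\ref{30} (with $\rho_X=8\neq 7$) excludes fixed prime divisors of type $(3,0)^{\sm}$, and if no $(3,0)^Q$ divisor exists you land in $(ii)$. But both of your remaining claims are left as plans rather than proofs, and each contains a genuine gap. For the claim that a $(3,0)^Q$ divisor and a $(3,1)^{\sm}$ divisor cannot coexist, your chain of deductions (three $(3,2)$ divisors $E_i$ adjacent to $D_1$ with $E_i\cdot C_{D_1}=0$, $\N(E_i,X)=\N(X)$, at most one $E_i$ adjacent to $D_Q$, $E_i\cdot C_{E_j}=0$ via Lemma~\ref{31sm}) is fine as far as it goes, but the final step ``the contradiction should then be extracted \ldots with Proposition~\ref{sabri}'' does not work as stated: Prop.~\ref{sabri} requires the target $Y$ of the contraction of $D_Q$ to contain a \emph{nef} prime divisor covered by rational curves of anticanonical degree one and disjoint from the center, and the divisors you have in hand are fixed prime divisors, which are never nef ($E_i\cdot C_{E_i}=-1$); you never produce the required $H$. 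Similarly, for the claim that two $(3,2)$ divisors cannot be adjacent, ``the plan is to exploit the presence of $D_Q$ \ldots to produce a fiber-type rational contraction of $Z$ onto a $3$-fold'' is not substantiated by any construction.

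The paper's actual argument is shorter and purely combinatorial, and it is the idea you are missing. By Cor.~\ref{summary} every face of $\Eff(X)$ of dimension $\leq 3$ is fixed, hence simplicial. The local constraints are: a $(3,1)^{\sm}$ divisor is never adjacent to a $(3,0)^Q$ divisor (the ``moreover'' in Prop.~\ref{vivina}$(ii)$, since $(i)$ fails), and a $(3,0)^Q$ divisor is adjacent to at most one $(3,2)$ divisor (Lemma~\ref{torino}). From these, if $E$ is of type $(3,2)$ and adjacent to some $D$ of type $(3,0)^Q$, every $3$-dimensional face of $\Eff(X)$ containing $\langle[E],[D]\rangle$ is $\langle[E],[D],[D']\rangle$ with $D'$ of type $(3,0)^Q$, and one deduces that \emph{every} fixed prime divisor adjacent to $E$ is of type $(3,0)^Q$. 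Then connectivity of the adjacency graph of extremal rays of $\Eff(X)$ (which is generated by fixed prime divisors, again by Cor.~\ref{summary}) propagates the restriction along any chain starting at a $(3,0)^Q$ divisor: no link of the chain can be $(3,1)^{\sm}$, and no two consecutive links can be $(3,2)$. This yields both of your claims simultaneously and gives the trichotomy, with no need for Prop.~\ref{sabri}, Lemma~\ref{linate}, or a new geometric contraction.
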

\begin{proof}
  Let us assume that $(i)$ does not hold, namely $X$ has no rational contraction onto a $3$-fold; in particular $X$ is not isomorphic to a product of surfaces.
By Th.~\ref{30}, 
$X$ has no fixed prime divisors of type $(3,0)^{\sm}$, hence
every fixed prime divisor of $X$ is of type $(3,2)$, $(3,1)^{\sm}$, or $(3,0)^Q$
(see Th.-Def.~\ref{fixed}).  By Prop.~\ref{vivina}, in $X$ a divisor of type $(3,1)^{\sm}$ and one of type $(3,0)^Q$ cannot be adjacent. Moreover by Lemma \ref{torino} a divisor of type $(3,0)^Q$ can be adjacent to at most one divisor of type $(3,2)$. Let us also note that every face of $\Eff(X)$ of dimension $\leq 3$ is fixed, by Cor.~\ref{summary}, and hence simplicial (see \cite[Lemma 4.2]{small}).

Let $E$ be a fixed prime divisor of type $(3,2)$, and suppose that $E$ is adjacent to a fixed prime divisor $D$ of type $(3,0)^Q$. Then
every $3$-dimensional face of $\Eff(X)$ containing $\langle [E],[D]\rangle$ must be $\langle  [E],[D],[D']\rangle$ with $D'$ of type $(3,0)^Q$. This implies that every fixed prime divisor adjacent to $E$ must be of type $(3,0)^Q$.

Consider now a sequence of fixed prime divisors $D_1,\dotsc,D_r$ such that $D_i$ is adjacent to $D_{i+1}$ for every $i=1,\dotsc,r-1$,  with $D_1$ of type $(3,0)^{Q}$. Then $D_2$ can be either $(3,0)^Q$ or $(3,2)$, and in this last case, $D_3$ must be $(3,0)^Q$. Proceeding in this way, we see that no $D_i$ can be of type $(3,1)^{sm}$. Since $\Eff(X)$ is connected, we must have $(ii)$ or $(iii)$.
  \end{proof}       
  \section{Examples}\label{secexamples}
  \subsection{Fano models of the blow-ups of $\pr^4$ in points}\label{fanomodel}
\noindent Let $X'$ be the blow-up of $\pr^4$ at $r$ general points, with $r\leq 8$. Then there is a SQM $X'\dasharrow X$ such that $X$ is smooth and Fano, see \cite[Ex.~7.2 and references therein]{3folds}; we have
 $\rho_X=1+r\leq 9$.

 If $r\geq 1$, there is an elementary rational contraction $f\colon X\dasharrow Y:=\Bl_{r-1\mskip2mu\pts}\pr^3$.

 Moreover (for $r\geq 2$) there is also 
an
elementary rational contraction $\ph\colon Y\dasharrow S:=\Bl_{r-2\mskip2mu\pts}\pr^2$, and the composition $g:=\ph\circ f\colon X\dasharrow S$ is a quasi-elementary rational contraction with $d_g=\rho_X-\rho_S=2$.

Finally, by composing $g$ with a conic bundle $S\to\pr^1$ (for $r\geq 3$), we get a rational contraction $h\colon X\dasharrow\pr^1$ with $d_h=3$.

 We also note that every fixed prime divisor of $X$ is of type $(3,0)^{\sm}$.
\subsection{Two examples with $\rho=7$ from \cite{3folds}}\label{newex}
\noindent Let $W$ be the Fano model of $\Bl_{q_1,\dotsc,q_5}\pr^4$ (see \S\ref{fanomodel}), and for $i=1,2$ let
$S_i\subset W$ be the transform of the following surface $A_i\subset\pr^4$:
\begin{enumerate}[$\bullet$]
\item $A_1$ is a general cubic rational normal scroll containing $q_1,\dotsc,q_5$;
\item $A_2$ is a sextic K3 surface with $\Sing(A_2)=\{q_1,\dotsc,q_5\}$ and having ordinary double points at each $q_j$; more precisely
$A_2 =Q\cap M$ where $Q$ is a general quadric hypersurface containing $q_1,\dotsc,q_5$, and $M$ is a general cubic hypersurface with ordinary double points at $q_1,\dotsc,q_5$. 
\end{enumerate}  
 Let $\sigma_i\colon X_i\to W$ be the blow-up along $S_i$;
then $X_i$ is a smooth Fano $4$-fold with $\rho_{X_i}=7$ 
(see \cite[\S7.2 and \S7.3]{3folds}).

There is an elementary rational contraction $\psi\colon W\dasharrow Y:=
\Bl_{4\mskip1mu\pts}\pr^3$, and the composition $f_i:=\psi\circ \sigma_i\colon X_i\dasharrow Y$
is a
special rational contraction with $\rho_{X_i}-\rho_{Y}=2$.

Moreover there is an elementary rational contraction $\ph\colon Y\dasharrow S:=\Bl_{3\mskip1mu\pts}\pr^2$, and
the composition $g_i:=\ph\circ f_i\colon X_i\dasharrow S$ is a  quasi-elementary rational contraction with  $d_{g_i}=\rho_{X_i}-\rho_S=3$.

Finally, by composing $g_i$ with a conic bundle $S\to\pr^1$, we get a rational contraction $h_i\colon X_i\dasharrow\pr^1$ with $d_{h_i}=4$.

We also note that $X_i$ contains (at least) six fixed prime divisors of type $(3,2)$: one is $\Exc(\sigma_i)$, and the others are the transforms in $X_i$ of the cones in $\pr^4$ over $A_i$ with vertex $q_j$, for $j=1,\dotsc,5$.

Moreover, for $j=1,\dotsc,5$, let $D_j\subset X_i$ be the transform of the exceptional divisor over $q_j$ in $\Bl_{q_1,\dotsc,q_5}\pr^4\to\pr^4$. Then $D_j$ is a fixed prime divisor 
 of type $(3,1)^{\sm}$ in $X_1$, of type $(3,0)^Q$ in $X_2$  (see [\emph{ibid.}, Section 5, in particular Lemmas 5.39, 5.46, 5.68, 5.74]).
\subsection{A family with an elementary rational contraction onto a surface}
\label{Y}
 \noindent Let $p_1,\dotsc,p_m\in\pr^2$ be general points, $C_1',\dotsc,C_m'\subset\pr^2$ general lines,  $C_i:=\{p_i\}\times C_i'\subset\pr^2\times\pr^2$, and $h\colon \pr^2\times\pr^2\to\pr^2$ the first projection.
  
  Let $\beta\colon\wi{X}\to\pr^2\times\pr^2$ be the blow-up of $C_1,\dotsc,C_m$,   $\wi{D}_i\subset\wi{X}$ the exceptional divisor over $C_i$, and $L_i\subset \wi{X}$ the transform of the fiber $h^{-1}(p_i)$, containing $C_i$.

 Since $\ma{N}_{C_i/\pr^2\times\pr^2}\cong\ol_{\pr^1}(1)\oplus\ol_{\pr^1}^{\oplus 2}$, we have $\wi{D}_i\cong\pr_{\pr^1}(\ol\oplus\ol(1)^{\oplus 2})$, and $\wi{D}_i\cap L_i$ is the flopping curve in $\wi{D}_i$.
 Moreover  $L_i\cong\pr^2$ and  $\ma{N}_{L_i/\wi{X}}\cong\ol_{\pr^2}(-1)^{\oplus 2}$, namely $L_i$ is an exceptional plane in $\wi{X}$.

 Let us consider the composition $h\circ\beta\colon\wi{X}\to\pr^2$.
 The class of every curve in $\wi{D}_i\cup L_i$ is in $\langle [C_{L_i}]\rangle+\NE(\beta)$, so that
 $$\NE(h\circ\beta)=\NE(\beta)+\langle [C_{L_1}],\dotsc,[C_{L_m}]\rangle,$$
and $h\circ\beta$ is $K$-negative.
One can also check that $-K_{\wi{X}}-\sum_{i=1}^m\wi{D}_i$ has positive intersection with every non-zero class in $\NE(\beta)$, while it has zero intersection with every $C_{L_i}$, so that $\langle [C_{L_1}],\dotsc,[C_{L_m}]\rangle$ is an $m$-dimensional face of $\NE(h\circ\beta)$, whose contraction is small.
There is a SQM $\xi\colon\wi{X}\dasharrow\w{X}$, relative to $\beta\circ h$:
$$\xymatrix{
  {\wi{X}}\ar[d]_{\beta}\ar@{-->}[r]^{\xi}& {\w{X}}\ar[d]_{\tilde h}\ar[dr]^g&\\
  {\pr^2\times\pr^2}\ar[r]^<<<<h&{\pr^2}&S\ar[l]_{\alpha}
  }$$
  where $\w{X}$ is smooth, the indeterminacy locus of $\xi$ is $L_1\cup\cdots\cup L_m$, and that of $\xi^{-1}$ is the disjoint union of $m$ exceptional lines $\ell_1,\dotsc,\ell_m$.

  Let $D_i\subset\w{X}$ be the transform of $\wi{D}_i$; we have
$\ell_i\subset D_i$ and $D_i=(\tilde{h})^{-1}(p_i)$.
   The restriction $(\xi)_{|\wi{D}_i}\colon   \wi{D}_i\dasharrow D_i$ is a flop, we still have $D_i\cong \pr_{\pr^1}(\ol\oplus\ol(1)^{\oplus 2})$, but now the flopping curve in $D_i$ is the exceptional line $\ell_i$ (see Fig.~\ref{figura_Y}).
   It is not difficult to check that the transform $\Gamma\subset\w{X}$ of a line in a general fiber of $h\colon\pr^2\times\pr^2\to\pr^2$ is numerically equivalent to a line in a fiber of the $\pr^2$-bundle $D_i\to\pr^1$, and that
   $$\NE(\tilde{h})=\langle[\Gamma],[\ell_1],\dotsc,[\ell_m]\rangle.$$
Let $\alpha\colon S\to\pr^2$ be the blow-up of $p_1,\dotsc,p_m$. Then $\tilde h$ factors as $\alpha\circ g$ where $g\colon \w{X}\to S$ is a $\pr^2$-bundle, and $g(D_i)\subset S$ is the exceptional curve over $p_i$.

In conclusion, the composition $g\circ\xi\colon \wi{X}\dasharrow S$ is an elementary rational contraction onto a surface. We have $\rho_{\wi{X}}=m+2$.

\begin{figure}\caption{The flip over the point $p_i\in\pr^2$, in \S\ref{Y}.}\label{figura_Y}

\bigskip
  
  \input{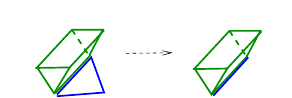tex_t}
\end{figure}
 
\bigskip

\noindent {\bf The case $m\leq 3$.\ }
If $m\leq 3$, then all these varieties are toric, and they can be described explicitly using Batyrev's
language of primitive collections and primitive relations \cite{bat91}, and Sato's description of how primitive relations change after smooth toric blow-ups and blow-downs \cite[\S 4]{sato}.
Let us consider in detail each case.

For $m=1$, $X:=\wi{X}$ is Fano with $\rho_X=3$; it is $G_5$ in Batyrev's classification of toric Fano $4$-folds \cite{bat2}.

For $m=2$, $\wi{X}$ is not Fano: it contains one exceptional line
$\ell$   whose image in $\pr^2$ is the line $\overline{p_1p_2}$. More precisely,  $\ell$ is the transform of the unique curve $\overline{p_1p_2}\times\{q\}\subset\pr^2\times\pr^2$ that intersects both $C_1$ and $C_2$ (namely $q=C_1'\cap C_2'$).
The class $[\ell]$ generates a small extremal ray of $\NE(\wi{X})$;  the associated flip $\wi{X}\dasharrow X$ gives a smooth toric Fano $4$-fold $X$ with $\rho_X=4$, which is $Z_2$ in \cite{bat2}.
$$\xymatrix{
 X& {\wi{X}}\ar[d]_{\beta}\ar@{-->}[l]\ar@{-->}[r]^{\xi}& {\w{X}}\ar[d]_{\tilde h}\ar[dr]^g&\\
&  {\pr^2\times\pr^2}\ar[r]^h&{\pr^2}&S\ar[l]_{\alpha}
  }$$

Finally let us consider the case $m=3$. Now
$\wi{X}$ contains three exceptional lines $\ell_{ij}$, whose images in $\pr^2$ are the lines $\overline{p_ip_j}$, for $1\leq i<j\leq 3$. As before,  $\ell_{ij}$ is the transform of the curve $\overline{p_ip_j}\times\{q_{ij}\}\subset\pr^2\times\pr^2$ with $q_{ij}=C_i'\cap C_j'$.
Again one can consider the flips of these three exceptional lines $\wi{X}\dasharrow X$, and get a smooth toric Fano $4$-fold $X$ with $\rho_X=5$; this is Sato's example \cite[Ex.~4.7]{sato}.  

Therefore for $m\in\{1,2,3\}$ this construction gives a Fano $4$-fold $X$ with $\rho_X=m+2\in\{3,4,5\}$ and an elementary rational contraction $X\dasharrow S$ onto a surface.

\bigskip

\noindent{\bf The case $m\geq 4$.\ } 
\begin{lemma}
If there exists a SQM $\wi{X}\dasharrow X$ such that $X$ is a smooth Fano $4$-fold, then $m\leq 6$.
\end{lemma}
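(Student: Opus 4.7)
Suppose that $X$ is a smooth Fano $4$-fold and that there is a SQM $\wi{X}\dasharrow X$; then $\rho_X=m+2$ and the elementary rational contraction $\wi{X}\dasharrow S$ lifts to an elementary rational contraction $f\colon X\dasharrow S$ of fiber type, with $S=\Bl_{p_1,\ldots,p_m}\pr^2$ and general fiber $\pr^2$. I may assume $m\geq 5$, so that $\rho_X\geq 7$, and I aim to derive a contradiction when $m\geq 7$.

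The first step is to verify that $X$ is not isomorphic to a product of surfaces. The only way a product could carry $f$ with general fiber $\pr^2$ is $X\cong\pr^2\times S$; but the blow-up $\pr^2\times S\to\pr^2\times\pr^2$ has as centres the surfaces $\pr^2\times\{p_i\}$, whose exceptional divisors are trivial $\pr^1$-bundles $\pr^2\times\pr^1$, whereas $\wi{X}\to\pr^2\times\pr^2$ has as centres the curves $\{p_i\}\times C_i'$, with exceptional divisors $D_i\cong\pr(\ol\oplus\ol(1)^{\oplus 2})$, non-trivial $\pr^2$-bundles over $\pr^1$. Under the SQM the prime divisors on the two sides correspond, so a comparison of the extremal rays of $\Eff$ rules out the product case. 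As a bonus, Lemma~\ref{delpezzo} now applies and forces $S$ to be a smooth del Pezzo surface, giving the crude bound $m\leq 8$.

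To sharpen this to $m\leq 6$, I invoke Corollary~\ref{explicitfinal}. If $m\geq 8$ then $\rho_X\geq 10$ forces $X$ to be a product of surfaces, which has just been excluded. Hence $m\leq 7$. In the remaining case $m=7$ we have $\rho_X=9$, and $X$ must fall into one of the cases $(ii)$, $(iii)$, $(iv)$ of the corollary; these are to be excluded one by one.

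The hardest step, and the main obstacle, is ruling out these three cases for our explicit $X$. For case $(ii)$ I would analyse the movable facets of $\Eff(X)$ via Corollary~\ref{summary} together with the fact that $\Eff(X)=\Eff(\wi{X})$ is generated by the $D_i$ and the two pullbacks $\beta^*H_1,\beta^*H_2$: the only movable facets arise from the contraction $f\colon X\dasharrow S$ and from the two regular projections $X\to\pr^2$, none of which factors through a $3$-fold. For cases $(iii)$ and $(iv)$, the explicit blow-up descriptions in Theorems~\ref{CS}$(ii)$ and~\ref{finalintro} would force $X$ to contain fixed prime divisors of type $(3,2)$ with image a cubic scroll, a cone over a twisted cubic, or a plane inside a cubic $4$-fold; the only candidate fixed prime divisors on $X$ coming from $\wi{X}$ are the $D_i$, which are non-trivial $\pr^2$-bundles over $\pr^1$ and cannot be the blow-up of such surfaces. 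A careful case analysis along these lines, using the intersection data recorded in the two theorems, completes the contradiction and yields $m\leq 6$.
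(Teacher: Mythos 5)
Your route is genuinely different from the paper's (which is a one\-/line numerical argument: by \cite[Cor.~3.10 and Lemma 3.8]{blowup}, $h^0(X,-K_X)=\chi(\wi{X},-K_{\wi{X}})=\chi(\pr^2\times\pr^2,\ol(3,3))-15m=100-15m$, and $h^0(X,-K_X)\geq 0$ forces $m\leq 6$), and the first two of your steps are sound: $S$ del Pezzo gives $m\leq 8$, and excluding the product case together with Cor.~\ref{explicitfinal} gives $m\leq 7$. But there is a genuine gap at $m=7$, which is precisely the case your approach cannot close as written. First, your structural input is wrong: $\Eff(X)\cong\Eff(\wi{X})$ is \emph{not} generated by $[D_1],\dotsc,[D_m],[\beta^*H_1],[\beta^*H_2]$. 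For instance the strict transform of $\overline{p_ip_j}\times\pr^2$ (the pullback under $h$ of the line through $p_i$ and $p_j$) is an effective divisor of class $\beta^*H_1-D_i-D_j$, which does not lie in that cone; there are many such classes, and they are exactly the ones that could span additional movable facets. So the claim that ``the only movable facets arise from $f$ and the two projections'' is unsupported, and ruling out case $(ii)$ of Th.~\ref{explicit} (an elementary rational contraction onto a $3$-fold) would require a complete description of $\Mov(X)$ and $\MCD(X)$ for this hypothetical $X$, which you do not have. Second, the exclusion of cases $(iii)$ and $(iv)$ is only asserted: ``the only candidate fixed prime divisors on $X$ coming from $\wi{X}$ are the $D_i$'' is again false for the same reason (e.g.\ the transforms of the divisors $\overline{p_ip_j}\times\pr^2$ are plausibly fixed prime divisors of type $(3,1)^{\sm}$ or $(3,2)$), so the case analysis is not actually carried out.

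The fix is to abandon the classification route for the critical case and use the anticanonical count, which is what the paper does: since $X$ is Fano, $h^0(X,-K_X)=\chi(X,-K_X)$, this Euler characteristic is invariant under SQM, and blowing up a rational curve $C_i$ with $-K_{\pr^2\times\pr^2}\cdot C_i=3$ drops $\chi(-K)$ by $15$; hence $h^0(X,-K_X)=100-15m$, which is negative for $m\geq 7$. If you want to keep your framework, you would still need this (or an equally concrete) computation to kill $m=7$, at which point the appeal to Cor.~\ref{explicitfinal} becomes unnecessary.
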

\begin{proof}
Assume that $X$ exists. We have
 $h^0(X,-K_X)=\chi(X,-K_X)$ because $X$ is Fano, and $\chi(X,-K_X)=\chi(\wi{X},-K_{\wi{X}})$ by \cite[Cor.~3.10]{blowup}. Since $C_i\cong\pr^1$ and $-K_{\pr^2\times\pr^2}\cdot C_i=3$ for every $i=1,\dotsc,m$, by [\emph{ibid.}, Lemma 3.8] we get $$0\leq h^0(X,-K_X)=\chi(\wi{X},-K_{\wi{X}})=\chi(\pr^2\times\pr^2,\ol(3,3))-15m=100-15m$$
 which implies that $m\leq 6$.
\end{proof}
  \begin{question}\label{questionY}
For $m\in\{4,5,6\}$, does there exist a SQM $\wi{X}\dasharrow X$ such that $X$ is a smooth Fano $4$-fold?
  \end{question} 
  \noindent This would give a Fano $4$-fold $X$ with $\rho_X=m+2\in\{6,7,8\}$ and an elementary rational contraction $X\dasharrow S$ onto a surface. This construction shows up in the proof of Th.~\ref{zucca}, see \ref{points}.
 \subsection{A family with $\rho=3$ and a quasi-elementary  contraction onto $\pr^2$}\label{exX}
\noindent Let $A\subset\pr^2\times\pr^2$ be a surface obtained as a complete intersection of two general divisors, $B$ of degree $(2,1)$ and $D$ of degree $(0,2)$; then $A$ is a del Pezzo surface with $K_A^2=2$.
Let $\sigma\colon X\to\pr^2\times\pr^2$ be the blow-up along $A$; we show that $X$ is Fano. This example is inspired by the proof of Th.~\ref{zucca}, see \ref{tauZ}.

 We have
  $$K_X^4=(K_{\pr^2\times\pr^2})^4-3(K_{{\pr^2\times\pr^2}|A})^2-2K_{A}\cdot K_{{\pr^2\times\pr^2}|A}+c_2(\ma{N}_{A/\pr^2\times\pr^2})-K_A^2$$
(see \cite[Lemma 2.4]{3folds}). One can compute that $(K_{\pr^2\times\pr^2})^4=486$, $(K_{{\pr^2\times\pr^2}|A})^2=90$,  $K_{A}\cdot K_{{\pr^2\times\pr^2}|A}=18$,  and $c_2(\ma{N}_{A/\pr^2\times\pr^2})=16$,
 thus $K_X^4=194$.

Let $E\subset X$ be the exceptional divisor, and $\w{B}\subset X$ the transform of $B$; note that $\sigma_{|\w{B}}\colon\w{B}\to B$ is an isomorphism. By adjunction we have
\begin{gather*}
-K_{X|\w{B}}=-K_{\w{B}}+\w{B}_{|\w{B}}=-K_{\w{B}}+(\sigma^*B-E)_{|\w{B}}
=\sigma_{|\w{B}}^*(-K_B+B_{|B}-A)\\
=\sigma_{|\w{B}}^*\bigl((-K_{\pr^2\times\pr^2}-D)_{|B}\bigr)
\end{gather*}
and $-K_{\pr^2\times\pr^2}-D$ is ample, thus $-K_{X|\w{B}}$ is ample.

Let $C\subset X$ be an irreducible curve; we show that $-K_X\cdot C>0$. This is clear if $C\subset\w{B}$ or if $\sigma(C)=\{\pt\}$. Otherwise, let
$H\in |\ol_{\pr^2\times\pr^2}(1,2)|$, so that $\w{B}+\sigma^*H\in|-K_X|$. Then
$\w{B}\cdot C\geq 0$ and $\sigma^*(H)\cdot C>0$, hence $-K_X\cdot C>0$.

We conclude that $-K_X$ is strictly nef and big, hence ample by the base-point-free theorem. Here are the invariants of $X$:
$$K_X^4=194,\ \rho_X=3,\ b_4(X)=h^{2,2}(X)=11,\ b_3(X)=0,\ h^0(X,-K_X)=45.$$

\begin{figure}[b]\caption{A section of the cone of effective divisors of $X$ in \S\ref{exX}.}\label{figuraexX}

\bigskip
  
  \input{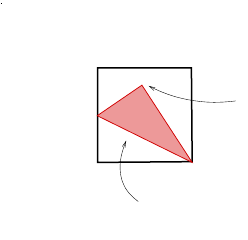tex_t}
\end{figure}

Let $\pi_i\colon \pr^2\times\pr^2\to\pr^2$ be the two projections, and $f_i:=\pi_i\circ\sigma\colon X\to\pr^2$, $i=1,2$.

Since $\pi_1(A)=\pr^2$ and $(\pi_1)_{|A}$ has degree $2$, $f_1$ is a quasi-elementary contraction with general fiber $F\cong\Bl_{2\mskip1mu\pts}\pr^2$, and $d_{f_1}=\dim\N(F,X)=2$.

We also have $\w{B}\cong B$ and $f_{1|\w{B}}$ is a $\pr^1$-bundle. If $C_{\w{B}}\subset\w{B}$ is a fiber of such $\pr^1$-bundle, we have $-K_X\cdot C_{\w{B}}=1$ and $\w{B}\cdot  C_{\w{B}}=-1$, and $\w{B}$ is the exceptional divisor of a divisorial elementary contraction $\alpha\colon X\to Y$ of type $(3,2)^{\sm}$, such that $\NE(f_1)=\NE(\sigma)+\NE(\alpha)$.
$$\xymatrix{X\ar[r]^<<<<<{\sigma}\ar[d]_{\alpha}\ar[dr]_{f_1}&{\pr^2\times\pr^2}\ar[d]^{\pi_1}\\
Y\ar[r]&{\pr^2}
  }$$
  Here $Y$ is a smooth Fano $4$-fold with $\rho_Y=2$, and the map $Y\to\pr^2$ is a quadric bundle.
  
  On the other hand, since $\pi_2(A)\subset\pr^2$ is a conic, $f_2\colon X\to\pr^2$ is equidimensional but not quasi-elementary; the general fiber is $\pr^2$. Moreover $f_2^*(\pi_2(A))=\w{D}+E$ where $\w{D}\subset X$ is the transform of $D$, and $\w{D}\cong\pr^2\times\pr^1$ with normal bundle $\ma{N}_{\w{D}/X}\cong\ol_{\pr^2\times\pr^1}(-2,2)$. In fact $\w{D}$ is the exceptional divisor of a divisorial elementary contraction $\beta\colon X\to Z$ of type $(3,1)$, where $Z$ is singular along the curve $\beta(\w{D})$.

Finally we note that since $\w{B}\cap\w{D}=\emptyset$, one can check that $\NE(\alpha)+\NE(\beta)$ is a face of $\NE(X)$, with a birational contraction $\gamma\colon X\to W$ where $\rho_W=1$.
 If $H_i=f_i^*\ol_{\pr^2}(1)$, we have $\Nef(X)=\Mov(X)=\langle H_1, H_2, \gamma^*\Nef W\rangle$ and $\Eff(X)=\langle H_1, \w{B}, \w{D}, E\rangle$, see Fig.~\ref{figuraexX}.
\subsection{Blow-ups of $\pr^4$ along planes}
\noindent Let $\sigma\colon X_s\to\pr^4$ be the blow-up of $s$ general planes $A_1,\dotsc,A_s\subset\pr^4$, in the following sense: we blow-up one plane $A_i$ and successively the transforms of the other ones.
 Note that each pair of planes intersect pairwise in a point, and $\sigma$ does not depend on the order of the blow-ups,  see Lemma \ref{blowuporder}. We will refer to such a map simply as the blow-up of $A_1,\dotsc,A_s$.

We have $\rho_{X_s}=1+s$, and for $s=1,2$ the $4$-fold $X_s$ is toric and Fano:  $X_1\cong\pr_{\pr^1}(\ol^{\oplus 3}\oplus\ol(1))$, and $X_2$  is $D_{17}$ in \cite{bat2}.

Let $\pi\colon X_1\to\pr^1$ be the  $\pr^3$-bundle, and $\sigma'\colon X_s\to X_1$ the remaining blow-ups. Let also $\w{A}_i\subset X_1$ be the transform of $A_i$ for $i=2,\dotsc,s$. Then $\w{A}_i\cong\mathbb{F}_1$, $\pi_{|\w{A}_i}$ is the $\pr^1$-bundle, and $\w{A}_i$ intersects the general fiber of $\pi$ in a line. The composition $\pi\circ\sigma'\colon X_s\to\pr^1$ is a quasi-elementary contraction with general fiber the blow-up $F_s$ of $\pr^3$ along $s-1$ general lines.
If $X_s$ is Fano, then $F_s$ must be Fano too;
by classification $F_s$ is Fano if and only if $s\leq 4$ (see \cite[Ch.~12]{fanoEMS}), hence we get the following.
\begin{lemma}
If $X_s$ is Fano, then $s\leq 4$ and $\rho_{X_s}\leq 5$.
\end{lemma}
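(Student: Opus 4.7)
The plan is essentially laid out in the paragraph preceding the statement: reduce the question to the Fano-ness of the general fiber $F_s$ of the quasi-elementary contraction $\pi\circ\sigma'\colon X_s\to\pr^1$, and then apply the classification of smooth Fano $3$-folds. I would begin by confirming the geometry already sketched: since $A_1,\dotsc,A_s\subset\pr^4$ are general planes, $X_1=\Bl_{A_1}\pr^4\cong\pr_{\pr^1}(\ol^{\oplus3}\oplus\ol(1))$, and the transforms $\w{A}_i\subset X_1$ for $i=2,\dotsc,s$ meet the general fiber $\pr^3$ of $\pi$ in $s-1$ pairwise disjoint lines (pairwise disjointness uses that the $A_i\cap A_j$ are finitely many points lying off the chosen general $\pr^3$). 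Hence a general fiber $F_s$ of $\pi\circ\sigma'$ is smooth and isomorphic to $\Bl_{(s-1)\text{ lines}}\pr^3$.

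Next, I would argue that if $X_s$ is Fano then $F_s$ must be Fano. Since $F_s$ is a smooth fiber of a contraction $X_s\to\pr^1$ (in particular a local complete intersection in $X_s$ with trivial normal bundle), adjunction gives $-K_{F_s}=(-K_{X_s})_{|F_s}$, which is ample as the restriction of an ample divisor to a projective subscheme. Thus $F_s$ is a smooth Fano $3$-fold.

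The final step is to invoke the classification of smooth Fano $3$-folds (\cite{fanoEMS}, Ch.~12): the blow-up of $\pr^3$ along $k$ general pairwise disjoint lines is Fano if and only if $k\leq 3$. Applied to $F_s$ with $k=s-1$, this forces $s-1\leq 3$, i.e.\ $s\leq 4$, and hence $\rho_{X_s}=1+s\leq 5$. There is no real obstacle: the only subtlety is to check the general-position claim that ensures the $s-1$ lines in a general fiber of $\pi$ are pairwise disjoint, which follows from the genericity of the $A_i$'s (two general planes in $\pr^4$ meet in a single point, so two general $\w{A}_i,\w{A}_j$ meet only over that point, which lies in a single fiber of $\pi$).
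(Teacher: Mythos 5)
Your proof is correct and follows exactly the route the paper itself sketches in the paragraph preceding the lemma: identify the general fiber of $\pi\circ\sigma'\colon X_s\to\pr^1$ as the blow-up of $\pr^3$ along $s-1$ pairwise disjoint general lines, note that a general fiber of a contraction of a Fano variety is Fano by adjunction, and conclude $s-1\leq 3$ from the classification of Fano $3$-folds. The only detail you add beyond the paper's sketch is the verification that the $s-1$ lines in a general fiber are pairwise disjoint, which is a worthwhile check and is argued correctly.
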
  
{\small
\begin{table}[h]
 $\begin{array}{||c|c|c|c|c|c|c||}
   \hline\hline
   
s   & \rho_{X_s} & K_{X_s}^4 & b_4(X_s)=h^{2,2}(X_s) & b_3(X_s)& \chi(X_s,-K_{X_s}) & \text{Fano}\\
      \hline\hline

         0 & 1 & 625 & 1 & 0 & 126 & \pr^4 \\

        \hline

       1 & 2 & 512 & 2  & 0 & 105 & \text{yes, toric}\\

       \hline

     2 & 3 & 405 & 4 & 0 & 85  & \text{yes, toric}\\
       
  \hline

      3 & 4 & 304 & 7  & 0 & 66 & ?\\

        \hline

      4  & 5 & 209 & 11  & 0 & 48 & ?\\
      
\hline\hline
  \end{array}$

\bigskip
  
  \caption{Invariants of the blow-ups of $\pr^4$ along $s$ planes }\label{t1}
\end{table}}
\begin{question}\label{questionP4}
  Is $X_s$ Fano for  $s\in\{3,4\}$? 
\end{question}
\subsection{Blow-ups of quadrics along planes}\label{quadrics}
\noindent Let $Q\subset\pr^5$ be a smooth quadric. Recall that $Q$ contains two families of planes, and distinct planes in the same family intersect pairwise in a point. Let $A_1,\dotsc,A_s\subset Q$ be general planes in the same family, and $\sigma\colon X_s\to Q$ the blow-up of $A_1,\dotsc,A_s$ (see Lemma \ref{blowuporder}), so that $\rho_{X_s}=1+s$.

It is shown in 
\cite{manivel4fold} that $X_5$ is Fano; this implies that $X_s$ is Fano for every $s\leq 5$.
We have  $X_1\cong\pr_{\pr^2}(T_{\pr^2}\oplus\ol_{\pr^2}(2))$ (see \cite[Theorem on first page]{szurekwisnnagoya}), and we also note that $X_2$ is \cite[Fano 3-2]{FTT} and $X_3$ is \cite[Fano 4-2]{FTT}.

For $s\geq 9$ we have $\rho_{X_s}\geq 10$, hence $X_s$ cannot be Fano by Th.~\ref{sharpbound}.
 \begin{question}\label{questionquadric}
    Is $X_s$ Fano for $s\in\{6,7,8\}$?
  \end{question}
  
 For $i,j\in\{1,\dotsc,s\}$, $i<j$, let   $H_{ij}\subset\pr^5$ be the hyperplane spanned by  $A_i$ and $A_j$; then $H_{ij|Q}$ is
 a cone over a smooth quadric surface.
Moreover let $E_i\subset X_s$ be the exceptional divisor over $A_i\subset Q$, for $i=1,\dotsc,s$, and $H\subset X_s$ the pullback of a general hyperplane section.
 
  \smallskip

\noindent {\bf The case $s=2$.\ }
Let $\sigma_1\colon X_1\to Q$ be the blow-up of $A_1$, and $\sigma_2\colon X_2\to X_1$ the blow-up of the transform $\w{A}_2\subset X_1$ of $A_2$.
Moreover let $D_{12}'\subset X_1$ and
$D_{12}\subset X_2$  be the transforms of $H_{12|Q}$.

\begin{figure}[b]\caption{A section of the effective cone of $X_2$, blow-up of a quadric}\label{figuraX2}

\bigskip
  
\input{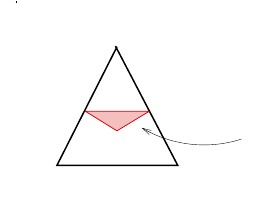tex_t}
\end{figure}

Under the $\pr^2$-bundle $\pi_1\colon X_1\to\pr^2$, the image of $\w{A}_2\subset X_1$ is a line $\Gamma\subset\pr^2$, and $\pi_1^{-1}(\Gamma)=D'_{12}$. The composition $f_1:=\pi_1\circ\sigma_2\colon X_2\to\pr^2$ is an equidimensional contraction with $f_1^{-1}(\Gamma)=E_2\cup D_{12}$, both fixed prime divisors, and $\tau_{f_1}=\langle [E_2],[D_{12}]\rangle$ (see Def.-Rem.~\ref{dimfiber} and Rem.~\ref{tauf}).

Symmetrically, we can also factor $\sigma\colon X_2\to Q$ as $X_2\stackrel{\alpha}{\to} X_1'\to Q$ where $X_1'\to Q$ is the blow-up of $A_2$, and $\alpha\colon X_2\to X_1'$ the blow-up of the transform of $A_1$. We have a $\pr^2$-bundle $\pi_2\colon X_1'\to\pr^2$, and the composition $f_2:=\pi_2\circ\alpha\colon X_2\to\pr^2$ is an equidimensional contraction with  $\tau_{f_2}=\langle [E_1],[D_{12}]\rangle$.

See Fig.~\ref{figuraX2} for a section of $\Eff(X_2)$; we have $H\equiv D_{12}+H_1+H_2$, $f_i^*\ol_{\pr^2(1)}\equiv H-E_i$ for $i=1,2$, and $-K_{X_2}=4H-E_1-E_2$.

{\small$$\xymatrix{
& {X_1'}\ar[r]^{\pi_2}\ar[dl]&{\pr^2}\\
Q  &{X_2}\ar[u]^{\alpha}\ar[l]_{\sigma}\ar[dr]_{f_1}\ar[ur]^{f_2}\ar[d]_{\sigma_2}\ar[r]^<<<<<{\tau}&{\pr^2\times\pr^2}\ar[u]\ar[d]\\
&{X_1}\ar[lu]^{\sigma_1}\ar[r]_{\pi_1}&{\pr^2}
}$$}

The restriction $\sigma_{1|D'_{12}}\colon D'_{12}\to H_{12|Q}$ is a small resolution of the quadric cone, and $D'_{12}\cong \pr_{\pr^1}(\ol\oplus\ol(1)^{\oplus 2})$. Then $D'_{12}$ is smooth and contains $\w{A}_2$, so that $D_{12}\cong D'_{12}$, and if $C_{D_{12}}\subset D_{12}$ is a line in a fiber of the $\pr^2$-bundle, we have $D_{12}\cdot C_{D_{12}}=-1$. In fact $D_{12}$ is the exceptional divisor of a blow-up $\tau\colon X_2\to \pr^2\times\pr^2$ along a smooth rational curve $C$, which is a complete intersection of divisors of degrees $(1,0),(0,1),(1,1)$.

This can be seen via the embedding $X_2\subset P_2$ where $P_2\to\pr^5$ is the blow-up of $A_1$ and then of  $\w{A}_2$;
$P_2$ is a toric variety,
 and it can be described explicitly using Batyrev's
language of primitive collections and primitive relations \cite{bat91}, and Sato's description of how primitive relations change after smooth blow-ups and blow-downs \cite[\S 4]{sato}.
There is a blow-up $\tau_P\colon P_2\to Y:=\pr_{\pr^2}(\ol\oplus\ol(1)^{\oplus 3})$ along a surface $S\cong\pr^1\times\pr^1$, with exceptional divisor the transform in $P_2$ of the hyperplane $H_{12}\subset\pr^5$ spanned by $A_1$ and $A_2$. We have $X_2\cdot\NE(\tau_P)=0$, $\tau_{P|X_2}=\tau$, $\tau_P(X_2)\in|\ol_{Y}(1)|$, and $\tau_P(X_2)\cong\pr^2\times\pr^2$. Moreover the curve $C:=S\cap\tau_P(X_2)\subset\pr^2\times\pr^2$ is the complete intersection of the divisors $\tau(E_1)\in\ol_{\pr^2\times\pr^2}(1,0)$, $\tau(E_2)\in\ol_{\pr^2\times\pr^2}(0,1)$, and a third divisor in $|\ol_{\pr^2\times\pr^2}(1,1)|$.

\bigskip

\noindent {\bf The case $s=3$.\ }
For $i,j\in\{1,2,3\}$, $i<j$, let $D_{ij}\subset X_3$ be the transform of $H_{ij|Q}$.
Moreover
let $\Lambda\subset\pr^5$ be the plane  spanned by the three points $A_1\cap A_2$, $A_1\cap A_3$, and $A_2\cap A_3$; we note that $\Lambda$ is contained in $Q$ and also in every hyperplane $H_{ij}$.
Let $L\subset X_3$ be the transform of $\Lambda$; then $L$ is an exceptional plane contained in $D_{ij}$ for every $i,j$, and $[C_L]$ generates a small extremal ray of $\NE(X_3)$ with locus $L$.

Let us consider the flip $X_3\dasharrow Y$ of $\R_{\geq 0}[C_L]$. We claim that $Y$ is the blow-up of $\pr^4$ at three general lines, with exceptional divisors (the transforms of) $D_{12}$, $D_{13}$, $D_{23}$ (see \cite{PP21} for a description of $\NE(Y)$ and $\Eff(Y)$). The exceptional line $\ell\subset Y$ given by the flip is the transform of the unique line in $\pr^4$ intersecting the three blown-up lines. Moreover the exceptional divisors $E_1,E_2,E_3$ of $\sigma\colon X_3\to Q$ are the transforms of the hyperplanes in $\pr^4$ spanned by two  blown-up lines.

This can be seen again by consider the toric variety $P_3$ obtained by blowing-up $P_2$ along the transform $\w{A}_3$ of the third plane $A_3\subset\pr^5$; we have $X_3\subset P_3$, and $[C_L]$ still generates a small extremal ray of $\NE(P_3)$, with locus $L$, $K_{P_3}\cdot C_L=0$, and $X_3\cdot C_L=-1$. The flop of this small extremal ray yields a variety isomorphic to $P_3$:
$$\pr^5\longleftarrow P_3\dasharrow P_3\stackrel{\psi}{\la} \pr^5.$$
Then  $\psi\colon P_3\to \pr^5$ is again a sequence of three blow-up of planes, with exceptional divisors the transforms of first $H_{12}$, then $H_{13}$, and finally $H_{23}$. The image of $X_3$ is a hyperplane in $\pr^5$, and $\psi$ blows it up in three disjoint lines.

\begin{figure}\caption{A section of the effective cone of $X_3$, blow-up of a quadric}\label{figuraX3}

\bigskip
  
\input{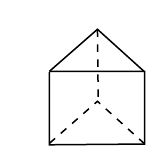tex_t}
\end{figure}

As in the case $r=2$,
for $i\in\{1,2,3\}$ there is an equidimensional contraction $f_i\colon X_3\to\pr^2$ given by the $\pr^2$-bundle obtained blowing-up $Q$ along $A_i$.
We have $E_2+D_{12}\equiv E_3+D_{13}\equiv f_1^*\ol_{\pr^2}(1)$ and $\tau_{f_1}=\langle [E_2],[E_3],[D_{12}],[D_{13}]\rangle$, and similarly for $f_2$ and $f_3$. See Fig.~\ref{figuraX3} for a section of $\Eff(X_3)$.

 \bigskip

 For $s\geq 3$, we still get $\binom{s}{2}$ fixed prime divisors $D_{ij}\subset X_s$ of type $(3,1)^{\sm}$, given by the transforms of $H_{ij|Q}$, for $i,j\in\{1,\dotsc,s\}$, $i<j$. Moreover for $i,j,h\in\{1,\dotsc,s\}$, $i<j<h$,
  the plane $\Lambda_{ijh}\subset\pr^5$ spanned by the three points $A_i\cap A_j$, $A_i\cap A_h$, and $A_j\cap A_h$, is contained in $Q$ and in $H_{ij},H_{ih},H_{jh}$; its transform $L_{ijh}\subset D_{ij}\cap D_{ih}\cap D_{jh}$ is an exceptional plane.

{\small
\begin{table}[h]
 $\begin{array}{||c|c|c|c|c|c|c||}
   \hline\hline
   
s   & \rho_{X_s} & K_{K_s}^4 & b_4(X_s)=h^{2,2}(X_s) & b_3(X_s)& \chi(X_s,-K_{X_s}) & \text{Fano}\\
      \hline\hline

         0 & 1 & 512 & 2 & 0 & 105 & Q \\

        \hline

       1 & 2 & 432 & 3  & 0 & 90 & \text{yes} \\

       \hline

     2 & 3 & 358 & 5 & 0 & 76  & \text{yes}\\
       
  \hline

      3 & 4 & 290 & 8  & 0 & 63 & \text{yes}\\

        \hline

    4  & 5 & 228 & 12  & 0 & 51 & \text{yes}\\

     \hline

       5 & 6 & 172 & 17  & 0 & 40 & \text{yes} \\

       \hline

     6 & 7 & 122 & 23 & 0 & 30 & ? \\
   
  \hline

      7 & 8 & 78 & 30  & 0 & 21& ? \\

        \hline

      8  & 9 & 40 & 38  & 0 & 13 & ?\\
      
\hline\hline
  \end{array}$

\bigskip
  
  \caption{Invariants of the blow-ups of a quadric $4$-fold along $s$ planes}\label{t2}
\end{table}}
\subsection{Blow-ups of cubic $4$-folds along planes}\label{excubics}
\noindent Let  $Z\subset\pr^5$ be a smooth cubic $4$-fold containing $s$ planes
$A_1,\dotsc,A_s\subset Z$ that intersect pairwise in a point, and $A_i\cap A_j\cap A_k=\emptyset$ for $i<j<k$.

Let $X_s\to Z$ the blow-up of $A_1,\dotsc,A_s$ (see Lemma \ref{blowuporder});
then $X_s$ is a smooth projective $4$-fold with $\rho_{X_s}=1+s$, compare Th.~\ref{final} and Rem.~\ref{conescubic}.

For $s=1$, $X_1$ has two elementary contractions, the blow-up $X_1\to Z$, and a quadric bundle $X_1\to\pr^2$, so that $X_1$ is Fano. For $s=2$, we show below that $X_2$ is still Fano. 
On the other hand
for $s\geq 9$ we have $\rho_{X_s}\geq 10$, hence $X_s$ cannot be Fano by Th.~\ref{sharpbound}.  
  \begin{question}\label{questioncubic}
    For which values of $s\in\{3,\dotsc,8\}$ is $X_s$ Fano?
\end{question}

{\small\begin{table}[h]
 $\begin{array}{||c|c|c|c|c|c|c|c||}
   \hline\hline
   
s   & \rho_{X_s} & K_{K_s}^4 & h^{2,2}(X_s) & h^{1,3}(X_s) &b_3(X_s)& \chi(X_s,-K_{X_s}) & \text{Fano}\\
      \hline\hline

         0 & 1 & 243 & 21 & 1 & 0 & 55 & Z \\

        \hline

       1 & 2 & 192 & 22  & 1 & 0 & 45 & \text{yes}\\

       \hline

     2 & 3 & 147 & 24 & 1 & 0 & 36  & \text{yes}\\

  \hline

      3 & 4 & 108 &27 &  1  & 0 & 28 & ?\\

        \hline

    4  & 5 & 75 & 31 &  1 & 0 & 21 & ?\\

     \hline

       5 & 6 & 48 & 36 & 1  & 0 & 15 & ?\\

       \hline

     6 & 7 & 27 & 42 & 1  & 0 & 10  & ?\\

  \hline

      7 & 8 & 12 & 49 & 1  & 0 & 6 & ?\\

        \hline

      8  & 9 & 3 & 57 & 1  & 0 & 3 & ?\\
      
\hline\hline
  \end{array}$

\bigskip
  
  \caption{Invariants of the blow-ups of a cubic $4$-fold along $s$ planes}\label{t3}
\end{table}}

\noindent {\bf The case $s=2$.\ } We keep the same notation as in \S\ref{quadrics}. We still have $X_2\subset P_2$, where $P_2$ is the toric $5$-fold obtained by blowing-up $\pr^5$ along first $A_1$ and then $\w{A}_2$. One can check that $-K_{P_2}-X_2$ is nef and big on $P_2$, and $(-K_{P_2}-X_2)^{\perp}\cap\NE(P_2)=R$ where $R$ is a small extremal ray with $K_{P_2}\cdot R= X_2\cdot R=0$. The locus of $R$ is a surface $T\cong\pr^2$, thus either $T\subset X_2$, or $T\cap X_2=\emptyset$.

Let us consider the  blow-up $P_1\to\pr^5$ of $A_1$. The surface $T\subset P_2$ is the transform of the fiber $F\subset P_1$ of the blow-up over the point $A_1\cap A_2\in\pr^5$. Then in $P_1$ we have $X_1\cap F=\w{A}_2\cap F$ and this is a line in $F\cong\pr^2$; therefore we see that $X_2$ can intersect $T$ at most in a curve, and we conclude that $X_2\cap T=\emptyset$. This shows that $-K_{X_2}=(-K_{P_2}-X_2)_{|X_2}$ is ample, and $X_2$ is Fano.

Let us also consider the blow-up $\tau_P\colon P_2\to Y=\pr_{\pr^2}(\ol\oplus\ol(1)^{\oplus 3})$ as in \S\ref{quadrics}, and its restriction $\tau:=\tau_{P|X_2}\colon X_2\to W:=\tau_P(X_2)$.
If $C\subset P_2$ is a line in a general non-trivial fiber $F_{\tau}\cong\pr^2$ of $\tau_P$, we have
 $X_2\cdot C=1$, therefore $X_2$ intersects $F_{\tau}$ in a line, and $W$ contains the surface $S\cong\pr^1\times\pr^1$ blown-up by $\tau_P$.  Thus we see that $\tau$ is an elementary divisorial contraction of type $(3,2)$ with exceptional divisor $D_{12}$, the transform of $H_{12|Z}$.

The tautological class $\ol_Y(1)$ is nef and big, and it is zero on a small extremal ray $R_Y$ of $\NE(Y)$ with locus $T_Y:=\tau_P(T)$, which is the negative section of the $\pr^3$-bundle $Y\to\pr^2$. In $P_2$ we have $T\cap X_2=\emptyset$ and $T\cap\Exc(\tau_P)=\emptyset$, thus $W\cap T_Y=\emptyset$ in $Y$.
Moreover
$W\in|\ol_Y(2)|$, $-K_Y\sim \ol_Y(4)$, and again we see that $-K_{W}=\ol_Y(2)_{|W}$ is ample, so that $W$ is Fano of index $2$. 
It is not difficult to see that $W$ is a double cover of $\pr^2\times\pr^2$ with branch divisor of degree $(2,2)$ (compare \ref{malpensa}), and it has two elementary contractions $W\to\pr^2$ that are quadric bundles. By composing with $\tau$, we get two equidimensional contractions $f_i\colon X_2\to\pr^2$, $i=1,2$. Fig.~\ref{figuraX2} still gives a section of $\Eff(X_2)$; here $H$ is the pullback of a general hyperplane section of $Z$, and $-K_Z=3H-E_1-E_2$.

\bigskip

\bigskip

\small

\providecommand{\noop}[1]{}
\providecommand{\bysame}{\leavevmode\hbox to3em{\hrulefill}\thinspace}
\providecommand{\MR}{\relax\ifhmode\unskip\space\fi MR }
\providecommand{\MRhref}[2]{%
  \href{http://www.ams.org/mathscinet-getitem?mr=#1}{#2}
}
\providecommand{\href}[2]{#2}


\end{document}